\DeclareSymbolFont{cyrletters}{OT2}{wncyr}{m}{n}
\numberwithin{equation}{section} \numberwithin{figure}{section}
\DeclareMathOperator{\Pic}{Pic}
\DeclareMathOperator{\Spec}{Spec}
 \DeclareMathOperator{\rank}{rank}
\DeclareMathOperator{\Hom}{Hom}
\DeclareMathOperator{\Eff}{Eff}
\DeclareMathOperator{\Br}{Br}
\DeclareMathOperator{\lcm}{lcm}
\DeclareMathOperator{\meas}{meas}
\DeclareSymbolFont{cyrletters}{OT2}{wncyr}{m}{n}
\DeclareMathSymbol{\Sha}{\mathalpha}{cyrletters}{"58}
\DeclareMathSymbol{\Be}{\mathalpha}{cyrletters}{"42}
\newcommand{\bad}{\mathrm{Bad}}  
\newcommand{\ba}{\boldsymbol\alpha}
\newcommand{\bb}{\boldsymbol\beta}
\newcommand{\bg}{\boldsymbol\gamma}
\newcommand{\bd}{\boldsymbol\delta}
\newcommand{\Zprim}{\mathbb{Z}_{\mathrm{prim}}}
\newcommand{\ep}{\varepsilon}
\newcommand{\beql}[1]{\begin{equation}\label{#1}}
\newcommand{\eeq}{\end{equation}}
\renewcommand{\b}[1]{\mathbf{#1}}
\newcommand{\cl}[1]{\mathcal{#1}}
\newcommand{\card}{\#}
\newcommand{\z}{\mathbf{z}}
\newcommand{\x}{\mathbf{x}}
\newcommand{\w}{\mathbf{w}}
\newcommand{\y}{\mathbf{y}}
\renewcommand{\u}{\mathbf{u}}
\renewcommand{\v}{\mathbf{v}}
\newcommand{\sL}{\mathsf{\Lambda}}
\newcommand{\LL}{\mathcal{L}}
\newcommand{\lp}{\log^+}
\renewcommand{\1}{\mathbf{1}}
\renewcommand{\d}{\mathrm{d}}
\newcommand{\Main}{\mathrm{Main}}
\newcommand{\Err}{\mathrm{Err}}
\renewcommand{\O}{\mathcal{O}}
\newcommand\F{\mathbb{F}}
\renewcommand\P{\mathbb{P}}
\newcommand\A{\mathbb{A}}
\newcommand\Z{\mathbb{Z}}
\newcommand\ZZ{\mathbb{Z}}
\newcommand\N{\mathbb{N}}
\newcommand\Q{\mathbb{Q}}
\newcommand\R{\mathbb{R}}
\newcommand{\Adele}{\mathbf{A}}
\newcommand{\bx}{\x}
\newcommand{\Mod}[1]{\;(\operatorname{mod}\,#1)}
\newtheorem{lemma}{Lemma}
\newtheorem{theorem}[lemma]{Theorem}
\newtheorem{proposition}[lemma]{Proposition}
\theoremstyle{definition}
\newtheorem{example}[lemma]{Example}
\numberwithin{lemma}{section}
\begin{document}

\title[Quintic del Pezzo surfaces with a
  conic bundle structure]{Manin's conjecture for quintic del Pezzo
  surfaces with a
  conic bundle structure}

\author[D.R. Heath-Brown] {Roger Heath-Brown}
\address{Roger Heath-Brown\\
  Mathematical Institute\\
  Radcliffe Observatory Quarter\\
  Woodstock Road\\
  Oxford\\
  OX2 6GG\\
  UK.}

\author[D. Loughran]{Daniel Loughran}
\address{Daniel Loughran\\
	Department of Mathematical Sciences \\
	University of Bath \\
	Claverton Down \\
	Bath \\
	BA2 7AY \\
	UK.}
\urladdr{https://sites.google.com/site/danielloughran/}

\subjclass[2020]  
{11D45 (primary),14G05, 14J26 (secondary)}

\begin{abstract}
We investigate Manin's conjecture for del Pezzo surfaces of degree
five with a conic bundle structure, proving matching upper and lower bounds,
and the full conjecture in the Galois general case.
\end{abstract}

\maketitle

\thispagestyle{empty}

\tableofcontents

\section{Introduction}

\subsection{Introduction} \label{sec:intro}
A conjecture of Manin \cite{FMT89} predicts an asymptotic formula for
the number of rational points 
of bounded height on Fano varieties. This conjecture has attracted
considerable interest, particularly 
in dimension two where it concerns \textit{del Pezzo surfaces}. The
most famous examples of del Pezzo surfaces are smooth cubic surfaces,
however here Manin's conjecture remains completely open. 

One of the earliest results in the area is transformational work of de
la Bret\'{e}che \cite{dlB02}, who proved Manin's conjecture for split
del quintic del Pezzo surfaces over $\Q$ (a del Pezzo surface is called
\emph{split} if it is the blow-up of $\P^2$ in a collection of
rational points).
%This was subsequently generalised in \cite{dlBF04} to
This work was subsequently extended by de la Bret\`eche and %R
Fouvry \cite{dlBF04} to cover %R
surfaces given as the blow-up of $\P^2$ in two rational points and
two geometric points which are conjugate under $\Q(i)$. More recently, the work of
de la Bret\'{e}che \cite{dlB02} was revisited and generalised
to arbitrary number fields \cite{BD24,BD25} by Bernert and Derenthal. These
papers %R
used the
\emph{universal torsor approach}. A different approach via conic bundles has also been used to
great success in Manin's conjecture, with Browning \cite{Bro22}
recovering de la Bret\'{e}che's result using the five distinct conic
bundles on the surface, and Browning and Sofos \cite{BS19} obtaining
correct upper and lower bounds for quartic del Pezzo surfaces whenever
there is a conic bundle structure. 

Our first result is a version of \cite[Thm.~1.1]{BS19} but for quintic
del Pezzo surfaces with a conic bundle structure. Every such surface
may be 
embedded as a surface of bidegree $(2,1)$ in $\P^2_\Q \times \P^1_\Q$
(see Lemma \ref{lem:equations}).
%\rog{I've moved these definitions here.}
The $(-1)$-curves on the surface have an explicit description, given
in Lemma \ref{lem:lines}. We take the open set $U$ to be the
complement of these curves in the surface. Then
$$N(U,B) = \{(\x,\y)\in U(\Q) :H(\x)H(\y)\leq B\}$$
is the standard counting function for
rational points in $U$, where the height function is the biprojective height
\begin{equation} \label{eqn:Height}
	H(\x,\y) = H(\x)H(\y) =
        \max\{|x_0|,|x_1|,|x_2|\}\max\{|y_0|,|y_1|\},
\end{equation}
when $x_i,y_i \in \Z$ and $\gcd(x_0,x_1,x_2) = \gcd(y_0,y_1) =
1$.

\begin{theorem} \label{tub}
	Let 
	$$S: \quad y_0Q_0(x_0,x_1,x_2) + y_1Q_1(x_0,x_1,x_2) = 0 
	\quad \subset \P^2_\Q \times \P^1_\Q$$ be a smooth surface of bidegree
	$(1,2)$ and let $U$ be the complement of the $(-1)$-curves in
        $S$. Then
\[B (\log B)^{\rho -1}\ll N(U,B) \ll B (\log B)^{\rho -1}\]
as $B \to \infty$, where $\rho=\rho(S)=\rank \Pic S$ is the rank of
the Picard group of $S$.
\end{theorem}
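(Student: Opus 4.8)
The strategy is to use the conic bundle structure: projection to $\P^1_\Q$ via $\y$ exhibits $S$ as a conic bundle, so for each $\y = (y_0:y_1)$ we must count points of bounded height on the conic
\[
C_\y: \quad y_0 Q_0(x_0,x_1,x_2) + y_1 Q_1(x_0,x_1,x_2) = 0 \subset \P^2_\Q,
\]
and then sum over $\y$ with $H(\y) \le B/H(\x)$. Fixing integral primitive $\y$, write $F_\y(\x) = y_0 Q_0(\x) + y_1 Q_1(\x)$, a ternary quadratic form with coefficients of size $O(H(\y))$ and discriminant $\Delta(\y)$, a binary form in $\y$ of degree $3$ (the discriminant of the pencil). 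The first main step is a uniform count for primitive integral points $\x$ with $F_\y(\x) = 0$ and $|\x| \le X$: by the standard theory of representations by ternary quadratics (descent to the conic, parametrisation of solutions once one is known, plus a local-to-global / geometry-of-numbers argument), one expects
\[
\#\{\x \in \Zprim^3 : F_\y(\x) = 0,\ |\x| \le X\} \ll \frac{X}{\sqrt{|\Delta(\y)|}}\,\Delta^\star(\y) + (\text{small}),
\]
where $\Delta^\star(\y)$ is a multiplicative factor built from the local densities, i.e. essentially $\prod_{p \mid \Delta(\y)} (1 + 1/p)$-type corrections; on average over $\y$ this factor contributes a bounded constant (upper bound) or, more delicately, the right singular series (lower bound). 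Summing the main term $X/\sqrt{|\Delta(\y)|}$ with $X = B/H(\y)$ over primitive $\y$ with $H(\y) \le \sqrt{B}$ (and handling the complementary range $H(\x) \le H(\y)$ symmetrically via the other projection or directly) produces
\[
\sum_{\y} \frac{B}{H(\y)\sqrt{|\Delta(\y)|}} \asymp B \sum_{|\y| \le \sqrt B} \frac{1}{H(\y)\sqrt{|\Delta(\y)|}} \asymp B (\log B)^{?},
\]
and the exponent of $\log B$ is governed by the average size of $1/\sqrt{|\Delta(\y)|}$ weighted by the local factors — this is where the arithmetic of the cubic form $\Delta(\y)$ enters.

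The second main step is therefore to show that this sum over $\y$ has order $B(\log B)^{\rho - 1}$, with $\rho = \rank \Pic S$. Here one uses Lemma \ref{lem:lines}: the $(-1)$-curves, hence the geometry of the singular fibres of the conic bundle, determine how $\Delta(\y)$ factors over $\overline{\Q}$ and the Galois action on its roots, and by the general philosophy (Peyre's constant, or the structure of the relevant Dirichlet series à la de la Bretèche) the number of singular fibres together with the Galois action pins down $\rho$. Concretely, $\Delta(\y)$ is a binary cubic and its splitting type controls the polar behaviour at $s = 1$ of $\sum_\y |\Delta(\y)|^{-s}/H(\y)^{?}$ twisted by the local densities; a Tauberian theorem then gives the power of $\log B$. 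One should cross-check: in the split case, $\Delta$ factors into three rational linear forms, $\rho = 5$ for a quintic del Pezzo surface minus... — and indeed the bookkeeping must yield $(\log B)^{\rho - 1}$ in all the Galois cases. For the lower bound one restricts $\y$ to a positive-density subset (e.g. $\Delta(\y)$ squarefree and coprime to a fixed modulus, with prescribed splitting behaviour guaranteeing local solubility everywhere), so that $C_\y$ has a rational point and the count on each fibre is genuinely $\gg X/\sqrt{|\Delta(\y)|}$; summing gives the matching lower bound.

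The main obstacle is the uniformity of the conic count in the coefficients of $F_\y$, and in particular the honest treatment of the factor $\Delta^\star(\y)$ and of local solubility on average. For the upper bound this is manageable: one can afford the factor $\prod_{p \mid \Delta(\y)}(1 + O(1/p))$ and bound its average by a constant, with the error terms from geometry of numbers (the "small" term above, of order the number of lattice points in a thin region, typically $O(1 + X^{1-\delta}$ or $O(X^\varepsilon)$ after summing) being absorbed. For the lower bound the difficulty is real: one must ensure that a positive proportion of the $\y$ in the relevant box give locally soluble (hence, by Hasse–Minkowski for conics, globally soluble) fibres, and that the main term in the fibral count is not eroded by the coprimality condition $\gcd(x_0,x_1,x_2)=1$ or by the interaction with the height constraint on $\y$. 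I would handle this by a sieve restricting $\y$ to a congruence class modulo a suitable fixed modulus $M$ forcing everywhere-local-solubility of $C_\y$ and forcing $\Delta(\y)$ to have controlled $p$-adic valuations, then invoke an unconditional lower bound of Browning–Sofos type for the number of points on a fibred family of conics; the removal of the $(-1)$-curves only discards $O(B)$ points and is harmless. A secondary technical point is to show that points with $H(\x) = H(\y)$, and the diagonal contributions, do not affect the leading order — this follows from the same uniform estimates applied in a dyadic range.
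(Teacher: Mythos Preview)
Your proposal has the right skeleton for one half of the argument but contains two genuine gaps and one numerical error that would derail the proof.

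First, the numerical slip: the uniform upper bound for primitive points on a ternary quadratic $F_\y$ of determinant $\Delta(\y)$ is of the shape $\kappa(\y)\bigl(1 + X/|\Delta(\y)|^{1/3}\bigr)$ (this is Lemma~\ref{MLQB}), not $X/\sqrt{|\Delta(\y)|}$. Since $\Delta(\y)=C(\y)$ is a cubic in $\y$, the main term is $\asymp X/|\y|$ generically, and with your exponent $1/2$ the sum over $\y$ would converge and give only $O(B)$.

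Second, and more seriously, your claim that the multiplicative factor $\Delta^\star(\y)$ ``on average contributes a bounded constant'' is exactly backwards, and is the heart of the matter. The relevant factor is $\kappa(\y)=\prod_{p\,\|\,C(\y)}(1+\chi(p;\y))$, and its average over $\y$ in a box of side $H$ is of order $H^2(\log H)^{\rho-2}$, not $H^2$. This is where the exponent $\rho-1$ comes from: one $\log$ from the dyadic sum over heights, and $\rho-2$ from the average of $\kappa$. Establishing this average (Proposition~\ref{kAL}) is nontrivial: $\kappa(\y)$ is not the evaluation of a fixed multiplicative function at $C(\y)$, so Shiu's theorem does not apply directly, and the paper adapts the Erd\H{o}s--Shiu machinery in a bespoke way. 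Your later hand-wave about ``polar behaviour of the Dirichlet series'' does not supply this.

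Third, the range $H(\x)\le H(\y)$ cannot be handled ``symmetrically via the other projection'': the projection to $\P^2$ is birational (a blow-up), not a conic bundle, and if you persist with the conic count the ``$+1$'' error term in Lemma~\ref{MLQB} sums to $\sum_{|\y|\le Y}\kappa(\y)\asymp Y^2(\log Y)^{\rho-2}$, which dominates $XY(\log B)^{\rho-2}$ once $Y>X$. The paper uses a completely different method here (Proposition~\ref{P1}): for fixed $\x$ the vector $\y$ is determined up to sign as $(-Q_1(\x),Q_0(\x))/\gcd(Q_0(\x),Q_1(\x))$, so one is reduced to counting $\x$ with $\gcd(Q_0(\x),Q_1(\x))$ in a prescribed range, which becomes a lattice point problem governed by the arithmetic function $f_M$ and yields the bound $S(X,Y)\ll XY\mathcal{L}^{\rho-2}$ for $X\le Y$.

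Finally, the paper does not prove the lower bound directly for Theorem~\ref{tub}; it is quoted from \cite{FLS16}. Your sieve sketch for the lower bound is plausible in spirit but would itself require the $\kappa(\y)$ average on a positive-density set, which is again the missing Proposition~\ref{kAL}.
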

Here, and throughout the paper, all implied constants may depend on $S$, and
on the quadratic forms $Q_0$ and $Q_1$ which define $S$.

We can obtain an asymptotic formula under additional assumptions.

\begin{theorem} \label{thm:main}
Let $S$ and $U$ be as in Theorem \ref{tub}. Assume that the
intersection $Q_0(\x)=Q_1(\x)=0$ has no rational points, and that no
non-trivial rational linear combination $a_0Q_0(\x)+a_1Q_1(\x)$ is singular.
Then $\rho=2$ and
	$$N(U,B) \sim c_{S} B\log B $$
	as $B \to \infty$, where $c_S$ is Peyre's constant for the variety $S$.
	Specifically, we have
	\[c_S=\tfrac23\tau_\infty\prod_p\tau_p,\]
	where $\tfrac23$ is Peyre's effective cone constant, and $\tau_\infty$ 
	and $\tau_p$ are the local densities.
\end{theorem}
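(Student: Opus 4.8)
The plan is to turn the count into a three–dimensional lattice–point problem by means of the conic bundle, to evaluate this by summing an arithmetic function over the value of a greatest common divisor, and finally to match the resulting main term with Peyre's prediction.

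\textbf{Step 1: reduction.} Since $Q_0=Q_1=0$ has no rational point, every $\x\in\P^2(\Q)$ has $(Q_0(\x),Q_1(\x))\neq(0,0)$, so the projection $S\to\P^2$ identifies $S(\Q)$ with $\P^2(\Q)$, the point being $\bigl(\x,(Q_1(\x):-Q_0(\x))\bigr)$. For $\x\in\Zprim^3$, writing $q_i=Q_i(\x)$ and $k=\gcd(q_0,q_1)$, the biprojective height of this point is $\|\x\|_\infty\cdot\max(|q_0|,|q_1|)/k$. No $(-1)$-curve is defined over $\Q$ — a $(-1)$-class $aH+bE\in\Pic S$ would force $a^2-4b^2=-1$, impossible modulo $4$ — so removing them discards only $O(1)$ rational points, and
\[
N(U,B)=\tfrac12\,\#\bigl\{\x\in\Zprim^3:\ \|\x\|_\infty\max(|Q_0(\x)|,|Q_1(\x)|)\le B\gcd(Q_0(\x),Q_1(\x))\bigr\}+O(1).
\]
The same hypotheses make the Galois action on the four geometric base points (equivalently on the three singular geometric fibres) transitive, so $\rho=\rank\Pic S=2$; and a Shapiro-lemma computation gives $\HH^1(\Q,\Pic\bar S)=0$, whence $\Br S/\Br\Q=0$ and $\beta(S)=1$.

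\textbf{Step 2: the main sum.} Sort the $\x$ by $k=\gcd(Q_0(\x),Q_1(\x))$, equivalently by the fibre $\y=(Q_1(\x):-Q_0(\x))$, for which $H(\y)=\max(|q_0|,|q_1|)/k$ and the constraint reads $\|\x\|_\infty\le B/H(\y)$. After extracting the exact-gcd condition and the primitivity of $\x$ by two Möbius inversions, one must count $\x$ in a union of residue classes modulo $k$ of density $\varrho(k)/k^3$, where $\varrho$ is multiplicative with $\varrho(p)=k_p(p-1)+O(1)$ and $k_p:=\#\{\text{base points modulo }p\}$, lying in the region $R_k=\{\x\in\R^3:\Phi(\x)\le Bk\}$ with $\Phi(\x)=\|\x\|_\infty\max(|Q_0(\x)|,|Q_1(\x)|)$. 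Since $\Phi$ is homogeneous of degree $3$, $\vol R_k=c_0Bk$ with $c_0=\vol\{\x:\Phi(\x)\le1\}<\infty$, so the expected contribution of each $k$ is $\asymp c_0B\,\varrho(k)/k^2$; and only $k\ll B^2$ occur, because $k\mid Q_0(\x)$ forces $k\ll\|\x\|_\infty^2\ll(Bk)^{2/3}$.

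\textbf{Step 3: the logarithm and the constant.} Summing the main terms, $N(U,B)\sim c_0B\sum_k\varrho^{\flat}(k)/k^2$ for the multiplicative $\varrho^{\flat}$ carrying the exact-gcd and primitivity corrections. Its Dirichlet series $\sum_k\varrho^{\flat}(k)k^{-2-s}=\prod_p(\text{Euler factor})$ has, because the base locus is a single Galois orbit (so the Burnside/Chebotarev average satisfies $\langle k_p\rangle=1$), a \emph{simple} pole at $s=0$ of the shape $c\,\zeta(1+s)$; hence $\sum_{k\le K}\varrho^{\flat}(k)/k^2\sim c\log K$, and with $K\asymp B^2$ this gives $N(U,B)\sim c'B\log B$, the exponent of $\log B$ being exactly $\rho-1$. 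Identifying $c'$ with $c_S$ is then local bookkeeping: $c_0$ and the archimedean truncation reassemble into $\tau_\infty$; the Euler product, read prime by prime, equals $\prod_p\tau_p$ with $\tau_p=\lim_{m\to\infty}p^{-2m}\#\{\x\bmod p^m:Q_0(\x)\equiv Q_1(\x)\equiv0\}$ the $p$-adic densities (which vanish precisely when a fibre fails to be $p$-adically soluble); and the residual rational factor is Peyre's effective cone constant $\alpha(S)=\tfrac23$, computed from $\Eff S=\mathrm{cone}(E,F)$ with $E$ the sum of the four sections and $F$ the fibre class (the cross-section of the dual cone in the anticanonical hyperplane has lattice length $\tfrac23$). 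As $\beta(S)=1$, this yields $c_S=\tfrac23\tau_\infty\prod_p\tau_p$.

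\textbf{The main obstacle.} The real work is uniform control of the error terms in the lattice-point counts of Step 2, summed over all relevant $k$ (together with a careful archimedean analysis near any real base points). The naive geometry-of-numbers bound is far too crude once $k$ is a fixed power of $B$, because $R_k$ is then long and thin relative to the modulus $k$. One must instead use the conic bundle honestly: for such $k$ the count is that of rational points of bounded height on the smooth conic fibre $C_\y$ (of discriminant $\asymp H(\y)^3$), and one needs asymptotics for these uniform in $\y$ with a power-saving error. Splitting the range of $k$ — equivalently of $H(\y)$ — and combining these uniform conic estimates for large $k$ with direct sieving for small $k$ is the crux, and it is precisely here that quintic del Pezzo surfaces, with only three singular fibres all conjugate, are tractable while the general quartic case of \cite{BS19} is not.
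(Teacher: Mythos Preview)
Your outline has the right architecture --- a lattice-counting regime and a conic-counting regime, glued by a hyperbola-type split --- and that is what the paper does. But Step 2 contains a conceptual slip: sorting by $k=\gcd(Q_0(\x),Q_1(\x))$ is \emph{not} equivalent to sorting by the fibre $\y$. For fixed primitive $\y$ the points $\x$ on $Q_\y=0$ satisfy $(Q_0(\x),Q_1(\x))=\lambda(-y_1,y_0)$ with $\lambda$ ranging over $\Z$, so $k=|\lambda|$ varies freely. The lattice method genuinely sorts by $k$ and is only tractable when $H(\x)\le H(\y)$ (one needs the relevant $d\ll X^2/Y\le X$ for the lattices $\sL(a,d)$ to be small relative to the region); the conic method genuinely sorts by $\y$ and needs $H(\x)>H(\y)$ for the error in the individual conic asymptotic to be beaten. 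The paper's split $N=N_1+N_2$ along $H(\x)=H(\y)$ is precisely what allows each to apply in its own range. Your Step 3 instead computes as if the lattice heuristic were valid for all $k\le B^2$ and then retroactively flags this as the obstacle, which leaves the constant unjustified: the factor $\tfrac23$ does not drop out of a single Euler-product residue but arises in the paper as $\tfrac16+\tfrac12$, the separate leading constants of $N_1$ and $N_2$. Your formula for $\tau_p$ is also wrong --- you have written the density of the zero-dimensional base locus $M$, not of $S$; the correct $\tau_p=(1-1/p)^{2}\lim_k p^{-2k}\#\mathcal{S}(\Z/p^k\Z)$ counts pairs $(\x,\y)$ and never vanishes.

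The part you label ``the main obstacle'' is indeed where the work lies, but you do not indicate how either hypothesis is actually used there. The uniform conic asymptotic (Proposition~\ref{c2}, from \cite{HBconic}) has an error governed by the smallest integral zero $\z_{Q_\y}$; bounding how often this zero is small is itself a point-count on $S$, and it is here (Lemma~\ref{zsmall}) that $M(\Q)=\emptyset$ is essential --- in your Step 1 it merely makes the parametrisation by $\x$ bijective. The irreducibility of $C$ enters separately (Lemma~\ref{apply}) to control first successive minima when averaging the singular series $\mathfrak{S}(Q_\y)$ over $\y$. That average occupies most of Section 6 and requires the Hilbert-symbol product formula together with a delicate decomposition of the bad part of $C(\y)$; none of this is visible in the proposal.
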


Usually specific individual surfaces are treated in the literature, so
this verifies for the first time that Manin's conjecture holds for
infinitely many smooth del Pezzo surfaces of degree at most $5$ (in
higher degree they are amenable to harmonic analysis
techniques). Moreover usually only split, or close to split surfaces,
are treated, but our surfaces are far from split: we show in Lemma
\ref{r2} that the conditions in Theorem \ref{thm:main} are equivalent
to $S$ being the blow-up of $\P^2$ in a collection of four Galois
invariant points with Galois action $A_4$ or $S_4$. 

The best previously know result in this generality is a lower bound
of the correct order of magnitude 
$$N(U,B) \gg B(\log B)^{\rho-1}.$$
This follows from general results counting rational points on
conic bundle surfaces, given by Frei, Loughran and Sofos
\cite[Thm.~1.6]{FLS16}. In fact our
analysis provides an alternative proof of this lower bound, which does
not directly use the conic bundle structure (Theorem \ref{extra}).
Note that lower 
bounds are much easier than upper bounds in general. For example
\cite[\S5.1]{FLS16} shows the correct lower bound for Manin's
conjecture for the Fermat cubic surface 
$$x_0^3 + x_1^3 + x_2^3 + x_3^3 = 0$$
but the best known upper bound in this case is
$N(U,B)\ll_{\varepsilon}B^{4/3+\varepsilon}$, for any $\varepsilon>0$,
due to Heath-Brown \cite{HB4/3}.

To prove Theorem \ref{thm:main}, we use Dirichlet's hyperbola trick.
Thus we split $N(U,B)$ as $N_1(U,B) + N_2(U,B)$, where
\begin{align}
\begin{split} \label{def:N_1_N_2}
N_1(U,B) & = \{(\x,\y)\in U(\Q) :H(\x)H(\y)\leq B, H(\x)\leq H(\y)\} \\
N_2(U,B) & = \{(\x,\y) \in U(\Q) : H(\x)H(\y)\le B, H(\x) > H(\y)\}.
\end{split}
\end{align}
In the first case we observe that each $\x$ produces a unique
projective point $[\y]$,
unless $Q_0$ and $Q_1$ both vanish. The height condition on $(\x,\y)$
then produces a number of lattice point counting problems for complicated
regions.

For the second case we fix $\y$ and count points $\x$ lying on the
corresponding conic. This may be done using work of Heath-Brown
\cite{HBconic}, which allows us to count such points $\x$ with a good
uniformity with respect to $\y$. One of our motivations in writing this
paper is to showcase this process, which is crucial to the proof.
Having obtained an asymptotic formula for each individual conic one
then has to sum the leading terms as $\y$ varies. This turns out to be
surprisingly difficult. Indeed, a result of Serre \cite[Thm.~1]{Ser90}
applied to our setting says that at most $O(B/(\log B)^{1/2})$ of the
conics in our family have a  
rational point; we are therefore summing over a sparse set and need to
carefully keep track 
of those conics with a rational point. This is in contrast, for
example, with the quadric bundles  
considered in \cite{BBH, BHBQB}, where
a positive proportion 
of the quadrics in the family have a rational point.

An important feature of our work is that the ranges in which
$H(\x)$ and $H(\y)$ are roughly of order $\sqrt{B}$ make a negligible
contribution towards the main term. In handling $N_1(U,B)$ for
example, one may therefore assume that $H(\x)$
is smaller than $\sqrt{B}$ by a large power of $\log B$.
This turns out to be crucial for controlling our error terms.

We can estimate $N_1(U,B)$ for
% all $S$ from \eqref{def:N_1_N_2}, without the
all $S$, without needing the %R
assumptions imposed in Theorem \ref{thm:main}.
\begin{theorem}\label{extra}
Let $S$ and $U$ be as in Theorem \ref{tub}. Then
\[N_1(U,B)\sim c_{S,1} B(\log B)^{\rho-1},\]
with 
\[c_{S,1}=\frac{2^{1-\rho}}{3(\rho-1)!}\tau_\infty\prod_p\tau_p.\]
\end{theorem}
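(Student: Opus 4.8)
The plan is to attack $N_1(U,B)$ directly as a lattice-point count, following the strategy sketched in the introduction. Recall that for a point $(\x,\y) \in U(\Q)$ with $H(\x) \le H(\y)$, the integer vector $\x$ determines the projective point $[\y] = [Q_1(\x) : -Q_0(\x)]$ uniquely, since the locus $Q_0(\x) = Q_1(\x) = 0$ is excluded on $U$ (it meets the $(-1)$-curves, or is empty). Thus I would first re-parametrise: write $\x \in \Z^3_{\mathrm{prim}}$ and set $q(\x) = \gcd(Q_0(\x), Q_1(\x))$, so that $(\y_0, y_1) = \pm(Q_1(\x), -Q_0(\x))/q(\x)$ is the primitive representative. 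The height constraint $H(\x) H(\y) \le B$ with $H(\x) \le H(\y)$ becomes
\[
|\x|^2 \cdot \frac{\|(Q_0(\x), Q_1(\x))\|_\infty}{q(\x)} \le B, \qquad |\x| \le \left(\frac{\|(Q_0(\x),Q_1(\x))\|_\infty}{q(\x)}\right)^{1/2},
\]
so this is fundamentally a problem of counting $\x \in \Z^3_{\mathrm{prim}}$ in a region defined by the sizes of the quadratic forms $Q_0, Q_1$, twisted by the arithmetic function $q(\x)$.

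Next I would handle the gcd. Writing $q(\x) = d$ and summing over $d$, I would need an asymptotic count of $\x \in \Z^3$ with $\x \equiv 0$ in the appropriate residue classes mod $d$ (i.e. $d \mid Q_0(\x)$ and $d \mid Q_1(\x)$) lying in the scaled region; this is where the local densities $\tau_p$ enter, via the standard computation that the density of such $\x$ mod $p^k$ assembles into an Euler product. The archimedean factor $\tau_\infty$ and the shape of the real integral come from the volume of the region $\{|\x|^2 \|(Q_0(\x),Q_1(\x))\|_\infty \le B,\ |\x|^4 \le \|(Q_0(\x),Q_1(\x))\|_\infty\}$ after a dyadic decomposition in $t := \|(Q_0(\x),Q_1(\x))\|_\infty / q(\x)$. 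The power of $\log B$ and the constant $2^{1-\rho}/(3(\rho-1)!)$ should emerge from integrating $1$ against $\d t / t$ (roughly) over the range $t \asymp B/|\x|^2$ with $|\x|$ ranging up to $B^{1/2}/(\log B)^{O(1)}$: the $(\rho-1)!$ is the volume of a simplex reflecting the $\rho - 1$ "free" logarithmic directions, the $1/3$ is Peyre's effective cone constant, and the $2^{1-\rho}$ comes from the constraint $H(\x) \le H(\y)$ cutting the count by a factor related to the number of conic bundle structures / the index. I would compare the resulting constant directly with Peyre's constant as computed for conic bundle surfaces (the same bookkeeping done for Theorem \ref{thm:main}, where $\rho = 2$ gives $c_{S,1} = \tfrac16 \tau_\infty \prod_p \tau_p$, exactly half of $c_S$, consistent with the hyperbola split being symmetric in the limit).

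The main obstacle, as the authors themselves flag, is the error-term control in the lattice-point count: the region is genuinely complicated (its boundary is cut out by quadrics, not linear forms), and one must count primitive vectors in homothetic copies of it uniformly over the modulus $d$ and over the dyadic parameter $t$, with an error that beats $B(\log B)^{\rho - 2}$ after summation. The key enabling observation, noted in the excerpt, is that the contribution of the "balanced" range $H(\x) \asymp H(\y) \asymp \sqrt{B}$ is negligible, so one may truncate to $|\x| \le \sqrt{B}/(\log B)^{C}$ for large $C$; this gives room for a power-of-$\log$ saving in the lattice-point error terms via a Davenport-type bound on counting integer points on quadric-bounded regions, together with an estimate for the tail of the $d$-sum. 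A secondary technical point is that one must check that the $(-1)$-curves removed in passing to $U$ contribute only $O(B^{1-\delta})$, which follows from the explicit description in Lemma \ref{lem:lines} since each such curve contributes $O(B^{2/a})$ for its anticanonical degree $a \ge 1$ — here one must be slightly careful about the curves in the fibres of the conic bundle. Once the uniform count is in place, the rest is the volume and Euler-product bookkeeping described above, together with a convexity/Tauberian argument to convert the dyadic sum into the stated asymptotic with the clean constant $c_{S,1}$.
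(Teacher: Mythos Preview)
Your overall strategy matches the paper's: parametrise by $\x$, stratify by $d=\gcd(Q_0(\x),Q_1(\x))$, count lattice points in the resulting regions, and assemble the local densities. However, there are genuine errors and a missing key ingredient. First, your height constraint is misstated: $H(\x)H(\y)\le B$ gives $\|\x\|_\infty\cdot h_Q(\x)/q(\x)\le B$, not $\|\x\|_\infty^2\cdot h_Q(\x)/q(\x)\le B$; and $H(\x)\le H(\y)$ is $\|\x\|_\infty\le h_Q(\x)/q(\x)$, not the square-root version you wrote. Second, your account of the constant is wrong on two counts: the factor $1/3$ is \emph{not} Peyre's effective cone constant (which is $2/3$ when $\rho=2$ and takes the values in Lemma~\ref{lem:alpha} in general), but arises from the integral $\int_{B^{1/3}}^{B^{1/2}}(\log X^3B^{-1})^{\rho-2}\,\d X/X$; and the hyperbola split is \emph{not} symmetric --- the paper shows explicitly in \eqref{eqn:different_alphas} that $N_1$ carries $1/4$ of the mass and $N_2$ carries $3/4$ when $\rho=2$, so $c_{S,1}=\tfrac14 c_S$, not $\tfrac12 c_S$.

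More substantively, your proposed ``Davenport-type bound'' for the lattice count is unlikely to give the uniformity needed: the regions $\{h_Q(\x)\le\lambda\}$ are quadric-bounded and highly anisotropic (Lemma~\ref{QB} shows their volume is only $O(\lambda^{3/2})$), and one needs the count for each lattice $\sL(a,d)$ to be accurate with error smaller than the main term by a power of $\log$. The paper instead introduces smooth weights $W_2,W_3$ and applies Poisson summation over the lattice (Lemma~\ref{rl1}), which gives an asymptotic with error controllable in terms of the successive minima; the weights are then removed by an $(\epsilon,\delta)$-argument (Lemmas~\ref{rw2}--\ref{rw3}). The other missing ingredient is the arithmetic function $f_M(d)$ counting projective solutions of $Q_0\equiv Q_1\equiv 0\pmod d$: its Dirichlet series factors as $\prod_i\zeta_{K_i}(s)$ times something holomorphic (Lemma~\ref{lem:f2}), and it is this that generates both the exponent $\rho-1$ of $\log B$ and the Euler product $\prod_p\tau_p$ (via Lemma~\ref{afl1} relating $f_S$ to $f_M$). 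Without this input your sketch has no mechanism to produce the correct power of $\log B$ for general $\rho$.
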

To prove Theorem \ref{thm:main} we combine Theorem \ref{extra} with the 
following result.
\begin{theorem}\label{x+}
Let $S$ and $U$ be as in Theorem \ref{tub}. Assume that the
intersection $Q_0(\x)=Q_1(\x)=0$ has no rational points, and that no
non-trivial rational linear combination $a_0Q_0(\x)+a_1Q_1(\x)$ is singular.
Then
\[N_2(U,B)\sim c_{S,2} B \log B,\]
with 
\[c_{S,2}=\frac{1}{2}\tau_\infty\prod_p\tau_p.\]
\end{theorem}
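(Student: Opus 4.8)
The plan is to fix $\y = [y_0:y_1]$ with coprime integer entries and count $\x$ on the conic
$C_\y : y_0 Q_0(\x) + y_1 Q_1(\x) = 0$ in $\P^2_\Q$, subject to the height condition $H(\x) \le B/H(\y) =: Z$, and then sum over $\y$ with $H(\y) < H(\x)$, i.e. $H(\y) < \sqrt{B}$. For each $\y$, the hypotheses guarantee that the quadratic form $q_\y := y_0 Q_0 + y_1 Q_1$ is nonsingular (the pencil contains no singular member), and the intersection $Q_0 = Q_1 = 0$ having no rational point means exactly that, should $C_\y$ have a rational point, it is smooth there; so $C_\y$ is either a smooth conic with points everywhere locally or has no rational point. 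Invoking Heath-Brown \cite{HBconic}, for those $\y$ for which $C_\y(\Q) \neq \emptyset$ we obtain an asymptotic $\#\{\x : \x \in C_\y, H(\x) \le Z\} \sim \kappa_\y Z$ as $Z \to \infty$, with an explicit leading constant $\kappa_\y$ and, crucially, an error term uniform in the coefficients of $q_\y$ (hence in $\y$). The leading constant $\kappa_\y$ factors as a product of local densities $\sigma_{\infty,\y} \prod_p \sigma_{p,\y}$ of the conic, times a volume factor coming from the region $H(\x) \le Z$.

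The first main step is therefore to make this per-conic asymptotic completely explicit and to control its error term well enough that summing over $\y$ is legitimate: we need the error, summed over all $\y$ with $H(\y) < \sqrt{B}$, to be $o(B\log B)$. Here we exploit the remark flagged in the introduction: the ranges where $H(\y)$ is within a power of $\log B$ of $\sqrt B$ contribute negligibly, so we may assume $H(\y) \le \sqrt B/(\log B)^A$ for large $A$, giving $Z \ge \sqrt B (\log B)^A$, which provides the room needed to absorb the error terms from \cite{HBconic}. The second main step is the arithmetic of the main term: writing $N_2(U,B) \sim \sum_{\y}' \kappa_\y \cdot (B/H(\y))$ where the primed sum is over $[y_0:y_1]$ with $C_\y(\Q) \neq \emptyset$ and $H(\y) < \sqrt B$. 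One must recognise $\kappa_\y$ as (a normalisation of) the product $\prod_v \sigma_{v,\y}$ of local solubility densities, and then carry out the sum over $\y$. Since $\sum_{H(\y) \le T} 1/H(\y) \asymp \log T$, a naive count over all $\y$ would give $\asymp B \log B$; the subtlety is that only a sparse subset of $\y$ (density $\asymp 1/\sqrt{\log}$ by Serre \cite{Ser90}) actually contributes, so one must instead express the indicator "$C_\y$ locally soluble everywhere" via the local densities $\sigma_{p,\y}$ themselves — these are $0$ for the bad primes and one repackages $\prod_p \sigma_{p,\y}$ as an average that can be summed in $\y$ using a standard Tauberian / Perron argument or a direct manipulation of the relevant Dirichlet series, whose analytic behaviour near $s=1$ will have a pole of order exactly one (producing the single $\log B$) rather than the fractional-power behaviour one might fear from Serre's bound.

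The decisive point is that this reorganisation must produce precisely the adelic integral: we want $\sum_\y' \kappa_\y/H(\y)$, after introducing the height cutoff and letting $B \to \infty$, to converge to $\tfrac12 \tau_\infty \prod_p \tau_p$, where $\tau_\infty, \tau_p$ are the local densities of $S$ itself (not of the individual fibres). This is the compatibility between the conic-bundle fibration picture and Peyre's constant for the total space: the product over $p$ of the fibral densities, weighted and summed against $\y$, must reassemble into the Tamagawa measure on $S(\Adele)$. Verifying this identification — matching the Euler factors and the archimedean factor, and pinning down the rational constant $\tfrac12$ — is where the real work lies; the input from \cite{HBconic} is essentially a black box, but threading its uniformity through the $\y$-sum and then performing the local-global bookkeeping to land exactly on Peyre's prediction is the main obstacle. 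A convenient cross-check is Theorem \ref{extra}: combined with the present result it must yield $c_S = c_{S,1} + c_{S,2} = \tfrac23 \tau_\infty\prod_p\tau_p$ with $\rho = 2$, i.e. $\tfrac16 + \tfrac12 = \tfrac23$, which both constrains and confirms the constant.
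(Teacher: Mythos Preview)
Your broad architecture matches the paper's: fix $\y$, apply \cite{HBconic} to each conic, sum the main terms and bound the error sum. But two load-bearing steps are missing, and you have misidentified where the hypotheses enter.

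\textbf{Error term control.} You claim that restricting to $H(\y)\le\sqrt{B}/(\log B)^A$ ``provides the room needed to absorb the error terms from \cite{HBconic}.'' It does not, on its own. The error bound in \cite{HBconic} (Proposition~\ref{c2} in the paper) depends on the size of the \emph{smallest} integer zero $\z(\y)$ of $Q_\y$: the error is roughly $X^{3/4}(\|\y\|^2/\|\z(\y)\|)^{1/4}/\|\y\|$. If $\|\z(\y)\|$ is bounded, this is useless however large the gap between $X$ and $Y$. The paper's key observation (Lemma~\ref{zsmall}) is that a pair $(\z,\y)$ with $Q_\y(\z)=0$ and both small is a \emph{point on $S$} of small height, so Theorem~\ref{tub} bounds how many such $\y$ there can be. This bootstrap is exactly where the hypothesis $Q_0=Q_1=0$ has no rational point is used: if such a point $\z_0$ existed, every conic would have $\|\z(\y)\|\le\|\z_0\|=O(1)$ and the argument collapses. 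Your proposal uses this hypothesis only for a smoothness claim that is in any case not what it implies.

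\textbf{Main term summation.} You propose to handle $\sum_\y \mathfrak{S}(Q_\y)/H(\y)$ by ``a standard Tauberian/Perron argument.'' The obstruction is concrete: expanding $\mathfrak{S}(Q_\y)\approx\prod_{p\mid C(\y)}(1+\chi(p;\y))$ gives a divisor sum $\sum_{d\mid C(\y)}\chi(d;\y)$ with $d$ ranging up to $|C(\y)|\asymp Y^3$. To exchange the $\y$- and $d$-sums you need $d$ below about $Y^{3/2}$. The paper's device (Lemma~\ref{de}) is to use the Hilbert-symbol product formula $\prod_v\chi_v(Q_\y)=1$ to switch large divisors to their complementary divisors, halving the effective range. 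After this, the inner sum over $\y$ with $d\mid C(\y)$ becomes a lattice point count, and here the second hypothesis---$C(\y)$ irreducible over $\Q$---is essential (Lemma~\ref{apply}): it forces the first successive minimum of the relevant lattice to be genuinely large for most $d$, since a short vector $\y$ with $d\mid C(\y)$ and $C(\y)\neq 0$ can only serve $O(Y^\ep)$ divisors $d$. You use this hypothesis only to say each $q_\y$ is nonsingular, but that already follows from smoothness of $S$ for all but finitely many $\y$; its real role is this lattice-minimum control. Finally, the resulting sum over $d$ is handled (Lemma~\ref{earlier}) via the holomorphy of $\zeta_K(s)/\zeta(s)$ for the quartic field $K$ splitting $M$, which needs solvability of $A_4$ or $S_4$---again a consequence of the specific hypotheses.
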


We point out that $N_1(U,B)$ and $N_2(U,B)$ do not contribute equally
to $N(U,B)$. In the situation of Theorem \ref{thm:main} we have
\begin{equation} \label{eqn:different_alphas}
  \lim_{B\to \infty} \frac{N_1(U,B)}{N(U,B)} = \frac{1}{4},
  \quad \lim_{B\to \infty} \frac{N_2(U,B)}{N(U,B)} = \frac{3}{4}.
\end{equation}
This is in contrast, for example, with the quadric bundle considered
in the work of Browning and Heath-Brown \cite{BHBQB}, where the
corresponding factors arising from the
hyperbola method contribute asymptotically exactly the same
amount. We give a geometric
explanation of this phenomon in \S \ref{sec:effective_cone}; briefly
the hyperbola condition gives a decomposition of the effective cone
into two regions, with the corresponding volumes producing the factors
$\tfrac14$ and $\tfrac34$ respectively.

A more extreme example of this phenomenon is that there
is no contribution to $N(U,B)$ from points with $H(\x)\le cH(\y)^{1/2}$
if $c>0$ is a small enough constant, in contrast again to the 
situation in Browning and Heath-Brown \cite{BHBQB}. We will see in
\S\ref{sec:effective_cone} that 
this too is explained by the geometry of the effective cone.

Our method of proof allows us to count rational points satisfying real
conditions.  
Given that Theorem \ref{thm:main} also allows arbitrary
equations, one can use a suitable change of variables to impose
$p$-adic conditions. This allows us to prove that the rational points
on $S$ are equidistributed, 
in the sense of Peyre \cite[\S3]{Pey95} (we recall relevant background
on equidistribution in \S\ref{sec:equi}). 

\begin{theorem} \label{thm:equi}
	In the setting of Theorem \ref{thm:main}, the rational points
        %        of $U$ are equidistribution
                of $U$ are equidistributed %R
	with respect to Peyre's Tamagawa measure.
\end{theorem}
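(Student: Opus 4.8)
The plan is to deduce equidistribution from a \emph{local} refinement of Theorem~\ref{thm:main}: one re-runs the proof of that theorem with finitely many extra archimedean and non-archimedean conditions imposed on the points being counted, and checks that each such condition alters the leading constant only through the corresponding local factor.

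Recall what must be shown. Write $U(\mathbf{A})$ for the adelic space, $U(\Q)\hookrightarrow U(\mathbf{A})$ for the diagonal embedding, and $\mu=\tau_\infty\prod_p\tau_p$ for the Tamagawa measure on $U(\mathbf{A})$; by Theorem~\ref{thm:main} the constant $c_S=\tfrac23\,\mu(U(\mathbf{A}))$ carries no Brauer--Manin factor, so $\mu$ is the measure against which equidistribution à la Peyre~\cite[\S3]{Pey95} is to be proved, i.e.
\[
\frac{N(U\cap\Omega,B)}{N(U,B)}\;\longrightarrow\;\frac{\mu(\Omega)}{\mu(U(\mathbf{A}))}\qquad(B\to\infty)
\]
for every adelic box $\Omega=\Omega_\infty\times\prod_{p\in T}\Omega_p\times\prod_{p\notin T}U(\Q_p)$, with $T$ a finite set of primes, $\Omega_\infty\subseteq U(\R)$ open with $\tau_\infty$-negligible boundary, and each $\Omega_p\subseteq U(\Q_p)$ open and compact; here $N(U\cap\Omega,B)$ counts the points of $N(U,B)$ that additionally lie in $\Omega$. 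Such boxes form a base of $\mu$-continuity sets for the topology of $U(\mathbf{A})$ and the normalised counting measures are probability measures, so it suffices to prove, for each such $\Omega$,
\[
N(U\cap\Omega,B)\;\sim\;\tfrac23\,\tau_\infty(\Omega_\infty)\Big(\prod_{p\in T}\tau_p(\Omega_p)\Big)\Big(\prod_{p\notin T}\tau_p\Big)\,B\log B .
\]

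For the archimedean condition I would run the proof of Theorems~\ref{extra} and~\ref{x+} with the restriction $(\x,\y)\in\Omega_\infty$ in force. The main terms there are assembled from volumes of explicit regions in $\x$- and $\y$-space carved out by the height inequalities and the equation of $S$ (via the lattice-point counts in the $N_1$ range and Heath-Brown's conic count \cite{HBconic} in the $N_2$ range); intersecting those regions with the fixed open set $\Omega_\infty$ merely replaces $\tau_\infty$ by $\tau_\infty(\Omega_\infty)$ and does not disturb any error estimate. This is the ``real conditions'' flexibility already advertised before the statement.

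For the non-archimedean conditions, by additivity of $N(U\cap\,\cdot\,,B)$ over a partition of each $\Omega_p$ into residue disks it suffices to treat one congruence class $\{\x\equiv\x_0,\ \y\equiv\y_0\ (\mathrm{mod}\ p^k)\}$ at a time. Such a class can be fed into the existing analysis in two equivalent ways. Directly: a congruence on $\y$ restricts the outer sum over $\y$ in $N_1$ and $N_2$, while a congruence on $\x$ replaces the lattice governing the inner count by a finite-index sublattice; in either case one checks that the corresponding main term scales by the relevant $p$-adic density --- essentially linearity of the method in the underlying lattice, and for the conic count a property of Heath-Brown's asymptotic \cite{HBconic}. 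Alternatively, exploiting that Theorem~\ref{thm:main} allows arbitrary defining forms: since $\GL_3(\Z)$ and $\GL_2(\Z)$ act transitively on residue disks of $\P^2$ and $\P^1$ at every level, one may change variables $\x\mapsto M\x$, $\y\mapsto N\y$ with $M\in\GL_3(\Z)$, $N\in\GL_2(\Z)$ to bring the class to a standard shape; the new surface is defined by integral quadratic forms satisfying the hypotheses of Theorem~\ref{thm:main} (which are invariant under linear changes of coordinates) and the height becomes a comparable norm, to which the method --- built for ``complicated regions'' --- is insensitive. Either way, unwinding the local integrals identifies the constant for the congruence class as $c_S\cdot\tau_p(\Omega_p)/\tau_p$, with all other local factors unchanged. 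Multiplying over the finitely many $p\in T$ and combining with the archimedean step yields the displayed asymptotic, hence the theorem.

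The main obstacle is precisely this last bookkeeping: one must verify that inserting a congruence (equivalently, passing to a sublattice or changing coordinates) feeds through the geometry-of-numbers estimates and Heath-Brown's conic count so as to leave exactly the local density $\tau_p(\Omega_p)$ in the leading term, and --- more delicately --- that the control of the error terms survives, in particular the arguments showing that the ranges with $H(\x)\asymp\sqrt{B}$ contribute negligibly. Since only finitely many local conditions are imposed at once, no uniformity beyond that already present in the proof of Theorem~\ref{thm:main} is needed, so this should be a matter of care rather than of new ideas.
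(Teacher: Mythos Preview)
Your overall strategy matches the paper's: reduce to adelic boxes, establish real equidistribution by re-running the proofs of Theorems~\ref{extra} and~\ref{x+} with weights approximating the indicator of $\Omega_\infty$, and handle $p$-adic conditions by a linear change of variables that converts the congruence constraint into a new surface of the same type. The paper carries this out essentially as you describe.

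That said, one detail in your option~(b) is not quite right. The condition $\x\equiv\lambda\mathbf{a}\pmod q$ defines a sublattice of index $q^2$ in $\Z^3$, and no element of $\GL_3(\Z)$ can map this onto $\Z^3$; a unimodular change of variables merely moves the congruence to a standard shape without eliminating it. What the paper actually does is choose a basis $C$ for this sublattice (so $\det C=(q/r)^2$ after a M\"obius step over divisors $r\mid q$ to restore primitivity), and similarly a matrix $D$ for $\y$. The transformed surface $\widetilde S:\widetilde Q_{\bd}(\bg)=0$ then satisfies the hypotheses of Theorem~\ref{thm:main}, but the height condition becomes $\|C\bg\|_\infty\|D\bd\|_\infty\le B'$ rather than $\|\bg\|_\infty\|\bd\|_\infty\le B'$. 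This is more than a ``comparable norm'': the ratio $\|C\bg\|_\infty\|D\bd\|_\infty/\|\bg\|_\infty\|\bd\|_\infty$ varies over $U(\R)$, and the paper handles this by subdividing $\Gamma$ into $K^2$ small squares on each of which the ratio is essentially constant (the factor $\Omega_k$), then letting $K\to\infty$. The verification that $\tau_\infty(\widetilde\Gamma^{(k)})\Omega_k$ recombines with the Jacobian of $C$ to give $\det(CD)^{-1}\tau_\infty(\Gamma^{(k)})$, and that the $p$-adic densities $\widetilde\varpi_p$ together with the M\"obius sum over $r,s\mid q$ reproduce $\varpi_p(q,\mathbf a,\mathbf b)$, is the ``bookkeeping'' you anticipate, but it is several pages of genuine computation rather than a formality.
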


Theorem \ref{thm:equi} implies for example that Theorem \ref{thm:main}
holds with respect to arbitrary choices of adelic metric on the
% anticanonical height (this is \cite[Prop.~3.3]{Pey95}). 
anticanonical height (see \cite[Prop.~3.3]{Pey95}). %R

\subsubsection*{Acknowledgements} 
We are grateful to P. Salberger for
discussions on the arithmetic and geometry of quintic del
Pezzo surfaces and E. Peyre for discussions on the effective cone. 
DL was supported by UKRI Future Leaders Fellowship \texttt{MR/V021362/1}.

\subsection{Notation and conventions} \label{sec:conventions}
In our proof,  we may multiply $Q_0$ and $Q_1$ in Theorem \ref{tub}
through by a suitable integer constant
to produce forms with integral matrices. Indeed as an artificial
manoeuvre for notational convenience we multiply through by a further
factor of 6 so as to ensure that the cubic form
$C(\y)=\det(y_0Q_0+y_1Q_1)$ vanishes
identically modulo $6^3$. Hence in particular we will automatically
have $6\mid \mathrm{Disc}(C)$. By abuse of notation,  we write $Q_0$
and $Q_1$ for the matrices of the forms $Q_0(\x)$ and $Q_1(\x)$.
Throughout the paper we allow any constants implied by the $\ll$ and %R
$O(\ldots)$ notations to depend on the forms $Q_0$ and $Q_1$. %R

Because of the length of this paper it has been necessary to give certain %R
symbols different meanings in different parts of the paper. We hope this will %R
not cause confusion. %R

\section{Preliminaries}
In this section we consider the geometry of the surfaces under
consideration, study the leading constant and various arithmetic
functions related to the $p$-adic local densities, and include some
background on lattice point counting. 

\subsection{Geometry}

Let $k$ be a field of characteristic not equal to $2$.

\begin{lemma} \label{lem:equations}
	Let $S$ be a del Pezzo surface of degree $5$ 
	with a conic bundle structure, defined over a field $k$.
	Then $S$ may be embedded as a surface of bidegree $(2,1)$ in
        $\P^2 \times \P^1$,  
	i.e.~there exist quadratic forms $Q_0,Q_1$ defined over $k$, such that
\[S: \quad y_0Q_0(x_0,x_1,x_2) + y_1Q_1(x_0,x_1,x_2) = 0 \subset \P^2
	\times \P^1.\]
	If $k=\Q$, then a choice of anticanonical height is given by
        \eqref{eqn:Height}. 
\end{lemma}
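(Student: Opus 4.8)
The plan is to construct the map to $\P^2\times\P^1$ from two $k$-rational linear systems on $S$, namely the conic bundle $\pi$ and the residual pencil $|{-}K_S-F|$. Write $\pi\colon S\to\P^1$ for the conic bundle and $F$ for a fibre class, so that adjunction gives $F^2=0$ and $-K_S\cdot F=2$. Set $D:=-K_S-F$, which is again a $k$-rational divisor class, with $D^2=1$, $D\cdot F=2$ and $-K_S\cdot D=3$. First I would check that $D$ is nef: for a $(-1)$-curve $C$ one has $D\cdot C=1-F\cdot C$, and on a quintic del Pezzo surface no $(-1)$-curve is a multisection of $\pi$ of order $\ge2$ — the ten $(-1)$-curves split as the six components of the three reducible fibres plus four sections of $\pi$ — so $F\cdot C\le1$; any other irreducible curve $C$ has $C^2\ge0$, since a del Pezzo surface has no irreducible curve of self-intersection $\le-2$, and then $D\cdot C\ge0$ by the Hodge index theorem, $D$ lying in the positive cone. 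A nef class on a del Pezzo surface is base-point free, so $|D|$ defines a morphism $\phi_D\colon S\to\P(\HH^0(S,D)^\vee)$ over $k$; Riemann--Roch together with $\HH^1(S,D)=\HH^2(S,D)=0$ (Kawamata--Viehweg vanishing, as $D-K_S$ is nef and big) gives $h^0(S,D)=3$, so the target is $\P^2_k$, and since $D^2=1$ the morphism $\phi_D$ is birational. It contracts exactly the $(-1)$-curves $C$ with $D\cdot C=0$, i.e. with $F\cdot C=1$, which are four pairwise disjoint sections of $\pi$ mapping to four distinct points of $\P^2$.

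Next I would form $\psi:=(\phi_D,\pi)\colon S\to\P^2_k\times\P^1_k$, defined over $k$ because $D$ and $F$ are; then $\psi^*\O(1,1)=D+F=-K_S$. To produce the defining equation, compare dimensions in the restriction map $\HH^0(\P^2\times\P^1,\O(2,1))\to\HH^0(S,2D+F)$: the source has dimension $6\cdot2=12$, while $2D+F=-K_S+D$ and Riemann--Roch with Kawamata--Viehweg vanishing give $h^0(S,-K_S+D)=11$, so there is a non-zero element in the kernel. Such an element is a bidegree $(2,1)$ form, necessarily of the shape $y_0Q_0(x_0,x_1,x_2)+y_1Q_1(x_0,x_1,x_2)$ with $Q_0,Q_1$ quadratic over $k$, which vanishes on $\psi(S)$. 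It then remains to see that $\psi$ is a closed immersion onto the hypersurface $X=\{y_0Q_0+y_1Q_1=0\}$. Since $\psi^*\O(1,1)=-K_S$ is ample, $\psi$ contracts no curve, hence is quasi-finite and proper, hence finite. Moreover $\psi$ is injective: if $\psi(s)=\psi(s')$ with $s\ne s'$, then $\phi_D(s)=\phi_D(s')$ forces $s,s'$ onto one contracted curve $C_i$, and then $\pi(s)=\pi(s')$ is impossible since $\pi|_{C_i}$ is an isomorphism; and $\psi$ is unramified, since along $C_i$ one has $\ker\mathrm{d}\psi_s\subseteq T_sC_i\cap\ker\mathrm{d}\pi_s=0$, while elsewhere $\phi_D$ is already unramified. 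A proper, injective, unramified morphism is a closed immersion, so $S\cong\psi(S)\subseteq X$; as $(h_1+h_2)|_X=-K_X$ with $(-K_X)^2=(h_1+h_2)^2(2h_1+h_2)=5=(-K_S)^2$ (here $h_1,h_2$ are the pullbacks of the hyperplane classes), the surface $\psi(S)$ exhausts $X$ and $X$ is smooth.

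Finally, for $k=\Q$, equipping $\O(1,1)$ on $\P^2\times\P^1$ with the standard max-norm adelic metric gives the height $\max_i|x_i|\cdot\max_j|y_j|$ on coprime integral representatives, so pulling back along the embedding $\psi$ and using $\psi^*\O(1,1)=-K_S$ exhibits \eqref{eqn:Height} as a representative of the anticanonical height class. I expect the main obstacle to be the passage from ``$-K_S=\psi^*\O(1,1)$ is very ample'' to ``$\psi$ is a closed immersion onto a bidegree $(2,1)$ hypersurface'': one has to rule out identifications of, and ramification along, the curves contracted by $\phi_D$, which is exactly where the fact that these are precisely the sections of $\pi$ enters. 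Everything else reduces to intersection-number bookkeeping and Riemann--Roch on a del Pezzo surface, together with the standard base-point-freeness of nef divisors there.
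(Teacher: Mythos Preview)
Your argument is correct and lands on the same morphism $(\phi_D,\pi)$ that the paper uses, but you build $\phi_D$ differently. The paper observes that the ten lines on $S_{\bar k}$ form the Petersen graph: six lie in the singular fibres of $\pi$, and inspection of the graph shows the remaining four are pairwise skew; their union is Galois-stable (being the complement of the six fibral lines), so blowing them down gives a $k$-morphism $S\to\P^2$. You instead introduce the class $D=-K_S-F$, check nefness and base-point-freeness, and use Riemann--Roch and vanishing to get $h^0(D)=3$ and $D^2=1$, hence a birational morphism to $\P^2$ contracting exactly the four sections. These are of course the same four curves, and the same map.

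Where the paper simply asserts that the product map ``is easily checked to be a closed immersion'' and that ``a degree calculation'' produces the $(2,1)$ equation, you actually supply those checks: the dimension count $12$ versus $11$ for the kernel, the injectivity/unramifiedness argument along the contracted sections, and the degree comparison $(-K_X)^2=5=(-K_S)^2$ forcing $\psi(S)=X$. So your write-up is more self-contained; the paper's version is shorter but leans on the Petersen-graph picture and leaves the embedding details to the reader (or to the cited \cite[Thm.~5.6]{FLS16}). Either route is fine for this lemma.
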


\begin{proof}
  This is proved in the work of Frei, Loughran and Sofos
  \cite[Thm.~5.6]{FLS16}, but we recall
	the argument for completeness. The conic bundle map $\pi:S \to \P^1$
	has $3$ singular fibres over $\bar{k}$, giving $6$
	lines on $S_{\bar{k}}$.
	However the graph of lines of $S$ is the Peterson graph, and
        an inspection 
	of this graph reveals that the other $4$ lines in $S_{\bar{k}}$ are
	pairwise skew. The union of these lines is defined over $k$, and 
	blowing them down gives a map $S \to \P^2$. The product of
	these maps then gives $S \to \P^2 \times \P^1$ which is easily
	checked to be a closed immersion. A degree calculation shows
        that it has the defining equations given.
	By the adjunction formula, the anticanonical bundle for this
        embedding is $\O(1,1)$, and
	we therefore obtain the stated height function.
\end{proof}

We henceforth assume $S$ has the equation
\beql{ppp}
S: \quad y_0Q_0(x_0,x_1,x_2) + y_1Q_1(x_0,x_1,x_2) = 0 \subset \P^2
\times \P^1.
\eeq

Let $\pi_1$ and $\pi_2$ denote the projections onto $\P^1$ and
$\P^2$ respectively in (\ref{ppp}). Then $\pi_1$ is a conic bundle and
$\pi_2$ is a birational morphism.

We give a simple criterion for smoothness.
\begin{lemma} \label{lem:C} \hfill
The polynomial $C(\y)=\det(y_0Q_0+y_1Q_1)$ is non-zero of degree $3$.
%It is separable if and only if $S$ is smooth.
%
%In which case the zero locus of $C$ in $\P^1$ is exactly the locus of
It is separable if and only if $S$ is smooth, and in this %R
case the zero locus of $C$ in $\P^1$ is exactly the locus of %R
singular fibres of $\pi_1$, 
and every singular quadric in the pencil has rank $2$.
\end{lemma}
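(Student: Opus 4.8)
The plan is to work directly with the pencil of quadratic forms $y_0Q_0 + y_1Q_1$ and its determinant. First I would check that $C(\y)$ is not identically zero: if it were, then every member of the pencil would be a singular quadric in $\P^2$, in particular $Q_0$ itself (or some nearby member if $Q_0$ happens to be singular). A singular ternary quadric has rank at most $2$, so its zero locus contains a line; running through the pencil one sees that $S$ would then contain a ruled surface, or more simply that the fibre of $\pi_1$ over a generic point would be a degenerate conic, contradicting that $S$ is a del Pezzo surface of degree $5$ (which is irreducible and not birationally ruled in a way compatible with $C\equiv 0$). In fact it is cleanest to argue that $C\equiv 0$ forces $S$ to be singular, so this case is subsumed in the equivalence below. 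That $\deg C = 3$ is immediate since $C(\y)=\det(y_0Q_0+y_1Q_1)$ is the determinant of a $3\times 3$ matrix whose entries are linear in $\y$, so $C$ is homogeneous of degree $3$ (and the leading coefficient $\det Q_1$, or $\det Q_0$, is nonzero once we know $C\not\equiv 0$).

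For the main equivalence, I would compute the singular locus of $S\subset\P^2\times\P^1$ via the Jacobian criterion. A point $(\x,\y)\in S$ is singular precisely when the partial derivatives in both the $\x$ and $\y$ variables vanish: the $\y$-derivatives give $Q_0(\x)=Q_1(\x)=0$, and the $\x$-derivatives give $(y_0Q_0+y_1Q_1)\x = 0$, i.e.\ $\x$ is in the kernel of the matrix $M_\y := y_0Q_0+y_1Q_1$. So $S$ is singular iff there exists $[\y]\in\P^1$ with $\det M_\y = 0$ (so $\ker M_\y \ne 0$) together with a nonzero $\x\in\ker M_\y$ satisfying $Q_0(\x)=Q_1(\x)=0$. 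Now if $C$ is separable, each root $[\y_0]$ of $C$ is simple, and I claim the corresponding quadric $M_{\y_0}$ has rank exactly $2$: a simple root of $\det(y_0Q_0+y_1Q_1)$ cannot have the determinant vanishing to order $\ge 2$, and rank $\le 1$ would force this (the determinant of a symmetric matrix of corank $\ge 2$ vanishes to order $\ge 2$ along the pencil — one can see this from the adjugate, whose entries vanish at $[\y_0]$ when the corank is $\ge 2$, forcing $C$ to have a multiple root). Thus $M_{\y_0}$ has a one-dimensional kernel spanned by some $\x_0$. Since $M_{\y_0}\x_0 = 0$ we get $Q_i(\x_0)$ is, up to the scalars $y_{0,0}, y_{1,0}$, read off from $\x_0^{T} Q_i \x_0$; more precisely $y_{0,0}Q_0(\x_0) + y_{1,0}Q_1(\x_0) = \x_0^T M_{\y_0}\x_0 = 0$, so at a rank-$2$ singular fibre one of $Q_0(\x_0), Q_1(\x_0)$ determines the other, and generically both are nonzero — so $(\x_0,\y_0)$ is \emph{not} a singular point of $S$. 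Conversely, if $C$ is not separable it has a repeated root $[\y_0]$; then $\det M_{\y_0}$ vanishes to order $\ge 2$, which forces $\operatorname{rank} M_{\y_0}\le 1$, hence $\dim\ker M_{\y_0}\ge 2$. On a $\ge 2$-dimensional subspace of $\P^2$ the two quadratic forms $Q_0,Q_1$ restrict to forms on a projective line or plane and must have a common zero $\x_0$ (a pencil of quadrics on $\P^1$ always has a base point, as does a single quadric, hence two quadrics on a $\P^{\ge 1}$ share a zero when the space has dimension $\ge 1$); this $\x_0$ lies in $\ker M_{\y_0}$, so $(\x_0,[\y_0])$ is a singular point of $S$. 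This proves $C$ separable $\iff$ $S$ smooth.

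Finally, assuming $S$ smooth (equivalently $C$ separable), the fibre of $\pi_1$ over $[\y]\in\P^1$ is the plane conic $M_\y\x\mapsto \x^T M_\y\x = 0$, which is smooth iff $\det M_\y\ne 0$ iff $C(\y)\ne 0$; so the singular fibres are exactly over the zero locus of $C$, and at each such point $M_\y$ has rank exactly $2$ by the argument above (a simple root of $C$ cannot give corank $\ge 2$). The main obstacle is the linear-algebra fact that corank $\ge 2$ of a symmetric matrix in a one-parameter family forces the determinant to vanish to order $\ge 2$ — equivalently, that a simple determinantal root yields a rank-$2$ (not rank-$\le 1$) quadric; this is elementary via the adjugate matrix but needs to be stated carefully. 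Everything else is the Jacobian criterion and the observation that two ternary quadrics always share a zero on any linear subspace of dimension $\ge 1$.
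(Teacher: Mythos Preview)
The paper's own proof simply defers to Wittenberg \cite[Prop.~3.26]{Wit07} for these standard facts on pencils of conics, so your direct Jacobian approach is a genuinely different route. The strategy is correct, but there is a real error and a real gap.

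The error is in the backward direction. You assert that a repeated root of $C$ forces $\operatorname{rank} M_{\y_0}\le 1$, treating this as the contrapositive of ``corank $\ge 2\Rightarrow$ determinant vanishes to order $\ge 2$''. But that implication only goes one way. For instance, with $Q_0=x_0x_1$ and $Q_1=x_0^2+x_0x_2+x_1x_2$ one finds $C(1,t)\propto t^2(1-t)$, a double root at $t=0$, while $Q_0$ has rank $2$. So the rank-$\le 1$ step fails, and the subsequent claim that ``two quadrics on a $\P^{\ge 1}$ share a zero'' is also false as stated (take $u^2$ and $v^2$ on $\P^1$); what is true, and what you need, is that $Q_0$ and $Q_1$ become \emph{proportional} on $\ker M_{\y_0}$. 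The gap in the forward direction is the phrase ``generically both are nonzero'' for $Q_0(\x_0),Q_1(\x_0)$ at a simple root: this is exactly the point that needs proof.

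Both issues dissolve if you push your adjugate remark one step further. At a rank-$2$ root $\y_0$ with kernel spanned by $\x_0$, one has $\operatorname{adj}(M_{\y_0})=c\,\x_0\x_0^{T}$ with $c\ne 0$, and Jacobi's formula gives
\[
\partial_{y_i}C(\y_0)=\operatorname{tr}\!\bigl(\operatorname{adj}(M_{\y_0})\,Q_i\bigr)=c\,Q_i(\x_0).
\]
Hence, given rank $2$, the root $\y_0$ is simple iff $(Q_0(\x_0),Q_1(\x_0))\ne(0,0)$, i.e.\ iff $(\x_0,\y_0)$ is a smooth point of $S$. This single identity handles both directions in the rank-$2$ case; the genuine rank-$\le 1$ case (which does force a multiple root, as you say) then needs only the proportionality of $Q_0,Q_1$ on the $\ge 2$-dimensional kernel to locate a common zero over $\bar k$.
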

\begin{proof}
	That $C$ has degree $3$ is clear. The remaining properties
  are standard results 
	on pencils of conics and can be found for example in
        Wittenberg \cite[Prop.~3.26]{Wit07}.
\end{proof}

The singular fibres of the conic bundle are thus a pair of lines,
possibly permuted by Galois, meeting in a single point. We say that
such a conic is \emph{split} if each line is defined over $k$. 
We henceforth assume that $S$ is smooth.

\begin{lemma} \label{lem:lines} \hfill
\begin{enumerate}
\item The morphism $\pi_2:S \to \P^2$ is the blow-up 
  in the closed subscheme
  \beql{MDEF}
  M: Q_0(\x) =  Q_1(\x) = 0,
  \eeq
  which is finite \'etale of degree $4$.
\item The complement $U$ of the lines in $S$ is given by
$$U =\{(\x,\y) \in S :(Q_0(\x), Q_1(\x)) \neq (0,0),\quad C(\y) \neq 0\}.$$
	\end{enumerate}
\end{lemma}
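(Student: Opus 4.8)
The plan is to leverage the previous two lemmas together with standard facts about blow-ups and del Pezzo surfaces. For part (1), I would first show that $M$ is a finite \'etale subscheme of $\P^2$ of degree $4$. Finiteness and degree $4$ follow from B\'ezout, since $M$ is the intersection of two conics; the key point is \'etaleness, i.e.~that the two conics $Q_0=0$ and $Q_1=0$ meet transversally. This is exactly where smoothness of $S$ enters: a tangency of the two conics at a point $[\x_0]$ would force the pencil member through $[\x_0]$ to be singular there in a way that lifts to a singularity of $S$ on the fibre over the corresponding point of $\P^1$ — or more cleanly, one can read it off from Lemma \ref{lem:C}, since a non-transverse intersection point would make $C$ inseparable (a double root corresponds to a rank-$1$, i.e.~double-line, member of the pencil, which by the lemma cannot occur when $S$ is smooth; and even a rank-$2$ member tangent to the base locus would again produce a bad root of $C$). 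Once $M$ is finite \'etale of degree $4$, I would identify $\pi_2 \colon S \to \P^2$ with the blow-up of $\P^2$ along $M$: the map $\pi_2$ is a birational morphism of smooth surfaces (birational by Lemma \ref{lem:equations}/the degree calculation, and it is a morphism because $S$ sits in $\P^2\times\P^1$ with the $\P^2$-coordinate well-defined whenever $(Q_0(\x),Q_1(\x))\neq(0,0)$, while the locus $Q_0=Q_1=0$ on $S$ maps onto $M$), so by the universal property / structure theory of birational morphisms between smooth surfaces it factors through successive blow-ups; a dimension and degree count (degree $5$ del Pezzo, so $4$ points blown up from $\P^2$) shows this is a single blow-up, and its centre must be the scheme-theoretic image of the exceptional locus, which is $M$.

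For part (2), I would argue that the open set
\[
V :=\{(\x,\y)\in S : (Q_0(\x),Q_1(\x))\neq(0,0),\ C(\y)\neq 0\}
\]
coincides with the complement $U$ of the $(-1)$-curves. There are $10$ lines on $S_{\bar k}$: the six components of the three singular fibres of $\pi_1$, and the four exceptional curves of $\pi_2$ over the points of $M$. The condition $C(\y)\neq 0$ removes exactly the three singular fibres, by Lemma \ref{lem:C}, hence all six fibre-components. The condition $(Q_0(\x),Q_1(\x))\neq(0,0)$ removes exactly the points of $S$ lying over $M\subset\P^2$, i.e.~the four exceptional curves of $\pi_2$. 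Conversely each of these two conditions only removes points on those lines, so $V$ is precisely the complement of all ten lines, which is $U$. One should check the two excised loci are disjoint and account for all $(-1)$-curves — but on a del Pezzo surface of degree $5$ the $(-1)$-curves are exactly the lines, and the Petersen-graph description in the proof of Lemma \ref{lem:equations} already enumerates them as these six plus four.

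The main obstacle is the \'etaleness claim in part (1): translating "non-transverse intersection of the base conics" into "$S$ is singular" (equivalently "$C$ is inseparable") requires a careful local computation at a putative tangency point, distinguishing the cases where the tangent pencil member has rank $2$ versus rank $1$. I expect this to be the only genuinely non-formal step; everything else is bookkeeping with the Petersen graph, B\'ezout, and the universal property of blowing up. It is plausible that the cleanest route is simply to cite the relevant transversality statement for smooth $(2,1)$ hypersurfaces in $\P^2\times\P^1$ — e.g.~via the Jacobian criterion applied to \eqref{ppp}, where the partial derivatives with respect to $y_0,y_1$ give $Q_0(\x)$ and $Q_1(\x)$, so a singular point of $S$ with $Q_0(\x)=Q_1(\x)=0$ is exactly a singular point of the scheme $M$ — which makes the equivalence "$S$ smooth $\iff$ $M$ \'etale" essentially immediate from the defining equation.
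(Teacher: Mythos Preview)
Your proposal is correct and your treatment of part (2) matches the paper's exactly. For part (1), however, you and the paper take the argument in opposite logical orders. You propose to first establish that $M$ is \'etale (via transversality / the Jacobian criterion applied to the defining equation of $S$), and then identify $\pi_2$ with the blow-up along $M$. The paper instead observes that $\pi_2$ is an isomorphism away from $M$, hence a birational morphism between smooth projective surfaces; the structure theorem for such morphisms says it factors as a composition of blow-ups in \emph{smooth} points, and the centre is exactly $M$ --- so smoothness (hence \'etaleness) of $M$ is a \emph{consequence} rather than something to be checked directly. This sidesteps entirely what you flagged as the ``main obstacle'' (the local computation at a putative tangency point). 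Your Jacobian-criterion route in the final paragraph is a perfectly good alternative and makes the equivalence ``$S$ smooth $\Leftrightarrow$ $M$ smooth'' explicit, but the paper's argument is shorter and avoids any computation.
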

\begin{proof}
	(1) The morphism $\pi_2$ is clearly an isomorphism outside of
        $M$, hence it is a birational 
	morphism. Any birational morphism of smooth projective
        surfaces is the composition 
	of blow-ups in smooth points, hence $\pi_2$ is the blow-up of
        $M$ and $M$ is smooth. 
	As $M$ has degree $4$ it is thus finite \'etale of degree $4$.
	
	(2) The singular fibres of $\pi_1$ give $6$ lines over
        $\bar{k}$; the inverse  
	image of $M$ via $\pi_2$ gives $4$ lines over $\bar{k}$. These are all
	the lines, since a del Pezzo surface of degree $5$ has $10$
        lines over $\bar{k}$. 
\end{proof}

A good way to visualise the lines on $\bar{S}$ is as in  Figure
\ref{fig:blow-up}. The surface is the blow up of $\P^2$ in $4$ points
in general position. These give $4$ lines. The conic bundle is given
by the strict transform of the pencil of conics passing through the
$4$ blown-up points. The remaining $6$ lines, which make up the
singular fibres of the conic bundle, arise from the strict transforms
of the lines through the pairs of the blown up points. 

\begin{figure}[htb] 
\begin{center}
\begin{tikzpicture}
\draw (-1,0) -- (3,0);
\draw (-1,2) -- (3,2);
\draw (0,-1) -- (0,3);
\draw (2,-1) -- (2,3);
\draw (-0.75,-0.75) -- (2.75,2.75);
\draw (-0.75,2.75) -- (2.75,-0.75);
%\tkzDefPoint(0,0);
\node at (0,0) [circle,fill,inner sep=1.5pt]{};
\node at (0,2) [circle,fill,inner sep=1.5pt]{};
\node at (2,0) [circle,fill,inner sep=1.5pt]{};
\node at (2,2) [circle,fill,inner sep=1.5pt]{};
\end{tikzpicture}
\end{center}
\caption{Blow-up model for $\bar{S}$} \label{fig:blow-up}
\end{figure}
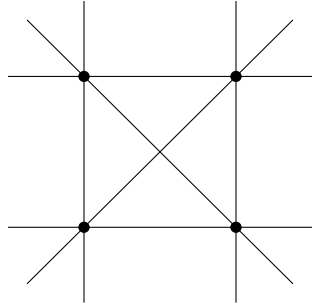 

%In the following statement, we denote by $|M|$ the number of closed points of the subscheme $M$ from \eqref{MDEF}. 

\begin{lemma} \label{lem:Pic}
	We have the following formulas for $\rank \Pic S$.
	
	\noindent (1) Let $\card M$ be the number of closed points of the subscheme $M$
from \eqref{MDEF}. Then
	$$\rank \Pic S = \card M + 1.$$
	(2) We have
  $$\rank \Pic S = 2 + \#\{\text{closed points $P \in \P^1 : \pi_1^{-1}(P)$
	is singular and split}\}.$$	
\end{lemma}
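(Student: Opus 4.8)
The plan is to compute $\rank\Pic S$ by working over $\bar k$ and then taking Galois invariants, using the blow-up description of Lemma \ref{lem:lines}(1). Over $\bar k$ the surface $\bar S$ is the blow-up of $\P^2$ in the four geometric points of $M$, so $\Pic\bar S \cong \Z H \oplus \bigoplus_{i=1}^4 \Z E_i$ has rank $6$, with the Galois action permuting the exceptional curves $E_i$ exactly as it permutes the four geometric points of $M$, and fixing the hyperplane class $H$. Since for a del Pezzo surface $\Pic S = (\Pic\bar S)^{\Gal}$ (the Brauer group obstruction vanishes, e.g. because $S$ has a rational point — take any point on a split fibre, or invoke $H^1(\Gal,\Pic\bar S)=0$ which holds here), we get $\rank\Pic S = 1 + \#\{\Gal\text{-orbits on }\{E_1,\dots,E_4\}\} = 1 + \#\{\Gal\text{-orbits on geometric points of }M\} = 1 + \card M$, since the orbits on geometric points of the finite étale scheme $M$ correspond bijectively to its closed points. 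This proves (1).

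For (2), the strategy is to convert the count of split singular fibres into a count of $\Gal$-orbits among the six lines making up the singular fibres. By Lemma \ref{lem:C} there are three geometric points of $\P^1$ over which $\pi_1$ is singular — the roots of the separable cubic $C$ — and over each such point the fibre is a pair of distinct lines meeting in a point; these six lines together with the four $E_i$ account for all ten lines of $\bar S$. I would use the blow-up picture of Figure \ref{fig:blow-up}: the six singular-fibre lines are the strict transforms $L_{ij}$ of the lines through pairs of blown-up points, and $L_{ij}, L_{kl}$ lie in the same fibre precisely when $\{i,j,k,l\}=\{1,2,3,4\}$. A singular fibre over a closed point $P$ is split iff $\Gal$ fixes that point of $\P^1$ and fixes each of the two lines in the fibre; in that case the pair $\{L_{ij},L_{kl}\}$ contributes two $\Gal$-fixed classes to $\Pic S$, whereas a non-split fibre (whether $P$ is a closed point of degree $2$, or $P$ is rational but the two lines are swapped) contributes only one class (the fibre class $\pi_1^*P$, or the sum $L_{ij}+L_{kl}$). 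Meanwhile $\Pic S$ is generated by the fibre class $F$ of $\pi_1$, a section class, and the components of singular fibres, with exactly one relation per fibre (each singular fibre's two components sum to $F$). Counting: $\rank\Pic S = 2 + (\text{number of split singular fibres})$, since each split fibre adds one new independent class beyond $F$ and the section, while non-split fibres add nothing new.

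To make (2) rigorous I would argue on the lattice level: $\NS\bar S$ has the sublattice spanned by $F$ and the components $\{L_{ij}\}$ of the three singular fibres, with rank $1 + 6 - 3 = 4$ after imposing the three fibre relations (this is the rank of the "vertical" part plus $F$), and adding one section class gives all of $\NS\bar S$ (rank $6$ — consistent with $4+1=5$? no). Let me instead simply take invariants directly: write $\Pic\bar S = \Z F \oplus \Z s \oplus \bigoplus_{\text{3 fibres}} \Z C_i^{-}$ where $C_i^{-}$ is one chosen component of the $i$-th singular fibre (so the other is $F - C_i^{-}$), a $\Z$-basis of rank $5$; wait, that is rank $5$, not $6$ — so one must be careful, as $\rank\Pic\bar S = 7 - d = 2$ for $d=5$. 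Indeed $\rank\Pic\bar S = 10 - 5 + \dots$ — no: $\rank\Pic(\text{dP}_d) = 10-d$, so for $d=5$ it is $5$. Good, so the basis above is correct and $\Gal$ acts by permuting the three singular points of $\P^1$ (hence the index set $i$) and, within a fixed fibre, possibly swapping $C_i^{-} \leftrightarrow F - C_i^{-}$. Taking invariants, $F$ and a suitable section survive (the section class $H - E_i - \dots$ can be replaced by a $\Gal$-fixed combination since $H$ is fixed), contributing $2$; and each fibre whose point is rational and whose two components are individually fixed — i.e. each \emph{split} singular fibre — contributes exactly one further invariant class, namely $C_i^{-}$ itself. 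All other fibres contribute nothing new to the invariants beyond what $F$ already gives. Hence $\rank\Pic S = 2 + \#\{\text{split singular fibres}\}$, which is the claim; and this is consistent with (1) via the action of $\Gal$ on $M \leftrightarrow \{E_1,\dots,E_4\}$ and the relation $L_{ij} = H - E_i - E_j$.

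The main obstacle is bookkeeping: making the passage from (1) to (2) clean requires matching up the two Galois actions — on the four points of $M$ versus on the three points of $\P^1$ carrying singular fibres — via the classical identity $L_{ij} = H - E_i - E_j$ and the fact that the three "pair-partitions" $\{12|34, 13|24, 14|23\}$ of $\{1,2,3,4\}$ are permuted by $S_4$ through the surjection $S_4 \twoheadrightarrow S_3$. One then checks that a singular fibre is split exactly when the corresponding partition is fixed \emph{and} neither of its two blocks is moved, which translates into a condition on the $\Gal$-action on $M$; summing over the three partitions recovers the same count as $\card M + 1$ in every case. I would present this as a short case analysis over the possible image of $\Gal$ in $S_4$, or more efficiently cite that both formulas equal $1 + \dim_{\Q}(\Pic\bar S \otimes \Q)^{\Gal}$ computed via character theory of the permutation representation.
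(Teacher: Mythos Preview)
Your argument for part (1) is correct in substance (modulo the slip where you write ``rank $6$'' for $\Pic\bar S$; it is of course $1+4=5$), but it is more elaborate than the paper's. The paper simply observes that $\pi_2:S\to\P^2$ is the blow-up in the closed points of $M$, and that blowing up a closed point over $k$ increases $\rank\Pic$ by exactly $1$; starting from $\rank\Pic\P^2=1$ this gives $1+\#M$ directly, without passing to $\bar k$ or invoking $\Pic S=(\Pic\bar S)^{\Gal}$. Your route via Galois invariants is valid (a dP$_5$ always has a rational point, so the identification holds), but the paper's is shorter.

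For part (2) the paper just cites \cite[Lem.~2.1]{FLS16}, so your attempt at a direct proof is more ambitious. However, there is a genuine gap: you treat ``split'' as requiring the closed point $P$ to be rational, and you assert that closed points of degree $\ge 2$ are automatically non-split. This is wrong. ``Split'' means the two geometric components of the fibre are each defined over the residue field $k(P)$, i.e.\ the stabiliser in $\Gal$ of a geometric point over $P$ fixes both lines. A degree-$2$ closed point can perfectly well be split in this sense. For instance, take $\Gal$ acting on the four blown-up points via the transposition $(12)$: then $\#M=3$ so $\rank\Pic S=4$ by part (1), and the closed point of $\P^1$ of degree $2$ (corresponding to the orbit $\{13|24,\,14|23\}$ of pair-partitions) has a split fibre, since the stabiliser of $13|24$ acts trivially on $\{L_{13},L_{24}\}$. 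Your count would give $2+1=3$.

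The fix is straightforward and fits your framework: in the quotient $(\Pic\bar S\otimes\Q)/\langle H,F\rangle$ the classes $\bar C_1^-,\bar C_2^-,\bar C_3^-$ carry a signed-permutation action, and for each $\Gal$-orbit $O$ among the three roots this is $\Ind_{G_P}^{\Gal}\chi$ with $\chi$ the sign character recording whether the stabiliser $G_P$ swaps the two components. Frobenius reciprocity gives $\dim(\Ind_{G_P}^{\Gal}\chi)^{\Gal}=\dim\chi^{G_P}$, which is $1$ if $\chi$ is trivial (split over $k(P)$) and $0$ otherwise. Summing over closed points yields exactly $\#\{\text{split closed points}\}$, and adding the two invariants $H,F$ gives the formula.
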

\begin{proof}
The surface $S$ is obtained by blowing up $\P^2$ in  the irreducible
  components of $M$. Each blow up increases the rank of the Picard group
	by $1$. The result then follows from the fact that $\Pic \P^2 \cong \Z$.
For the second part see Frei, Loughran and Sofos \cite[Lem.~2.1]{FLS16}.
\end{proof}

We give a geometric characterisation of the conditions in Theorem
\ref{thm:main}. 

\begin{lemma} \label{r2}
	The following are equivalent.
	\begin{enumerate}
	\item Both $M$ and the zero locus of $C$ admit no rational point.
\item The surface $S$ is a blow-up of $\P^2$ in a closed point of degree $4$
		with splitting field either $A_4$ or $S_4$.
	\end{enumerate}
        %	In which case $\rho(S) = 2$. \dan{added this}
If these equivalent conditions hold then $\rho(S)=2$. %R
\end{lemma}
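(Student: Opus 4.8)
The plan is to identify the combinatorial/Galois-theoretic content of conditions (1) and (2) and show they match, then read off the Picard rank from Lemma \ref{lem:Pic}. First I would unwind condition (2): by Lemma \ref{lem:lines}(1) the surface $S$ is the blow-up of $\P^2$ in the degree-$4$ étale scheme $M$, so $S$ is a blow-up of $\P^2$ in a \emph{single} closed point of degree $4$ precisely when $M$ is irreducible over $k$, i.e.\ $\#M=1$. Thus (2) asserts $\#M=1$ together with a constraint on the Galois group $G\subseteq S_4$ of the splitting field of the corresponding quartic: we must rule out $D_4$, $C_4$, $V_4$ (and $C_3$, $C_2$, etc., which anyway fail transitivity), leaving only the transitive subgroups $A_4$ and $S_4$. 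The key observation is that $G$ acts not just on the four points of $M$ but, via Lemma \ref{lem:lines}(2) and the description in Figure \ref{fig:blow-up}, on the whole configuration of $10$ lines; in particular $G$ permutes the three \emph{pairs} of blown-up points, which are in bijection with the three singular fibres of $\pi_1$, i.e.\ with the three roots of $C(\y)$ in $\P^1$. This gives the natural surjection $S_4 \twoheadrightarrow S_3$ with kernel $V_4$, and under it $A_4\mapsto C_3$, $S_4\mapsto S_3$, while $D_4,C_4\mapsto C_2$ and $V_4\mapsto 1$.

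Next I would translate condition (1) into the same language. The scheme $M$ has no rational point iff $G$ has no fixed point on the four points of $M$, i.e.\ $G$ acts on $M$ without fixed points. Separately, the zero locus of $C$ in $\P^1$ has no rational point iff $G$ has no fixed point on the three singular fibres, i.e.\ the image of $G$ in $S_3$ has no fixed point among the three pairs; a subgroup of $S_3$ acting on $3$ letters has no fixed point iff it is all of $S_3$ or is $C_3$ (the three transpositions and the trivial group each fix at least one letter). So condition (1) says: (a) $G\subseteq S_4$ has no fixed point on $4$ letters, and (b) the image of $G$ in $S_3$ equals $C_3$ or $S_3$. Now I would check that (a)$\wedge$(b) holds exactly for $G\in\{A_4,S_4\}$. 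If the image in $S_3$ is $S_3$ or $C_3$, then $3\mid |G|$, which among subgroups of $S_4$ already forces $G\in\{C_3, V_4\rtimes C_3=A_4, S_4\}$ (and the non-transitive $C_3$ fixes a point of $M$, violating (a)); one still has to confirm $A_4$ and $S_4$ themselves have no fixed point on $4$ letters, which is immediate since both are transitive. Conversely $A_4$ and $S_4$ visibly satisfy (a) and (b). Hence (1)$\iff$(2). Finally, when these hold, $\#M=1$, so Lemma \ref{lem:Pic}(1) gives $\rank\Pic S = \#M + 1 = 2$, i.e.\ $\rho(S)=2$.

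The one point requiring genuine care—the main obstacle—is justifying the claim that the Galois action on $M$ is compatible with the action on the three pairs of points / three singular fibres in the way asserted, i.e.\ that the quotient map "$G\to$ permutations of the three pairs" really is the restriction of the $S_4\to S_3$ quotient and that the three pairs correspond bijectively and $G$-equivariantly to the zeros of $C$. This is where the geometry of the blow-up model (Figure \ref{fig:blow-up}) and Lemma \ref{lem:C} enter: the singular fibres are the strict transforms of the lines through pairs of the four base points, a line through $\{P_i,P_j\}$ being paired with the line through the complementary $\{P_k,P_l\}$ only after blowing up—so one should argue via the incidence structure of the ten lines (equivalently, the Petersen graph already invoked in Lemma \ref{lem:equations}) that $\Gal(\bar k/k)$ acts on $\{$zeros of $C\}$ through the quotient of its action on $M$ by the even-permutation-preserving $S_4\to S_3$. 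Once this equivariance is pinned down, the rest is the elementary subgroup bookkeeping above, and the Picard rank statement is a one-line consequence of Lemma \ref{lem:Pic}.
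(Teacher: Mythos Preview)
Your approach is correct and essentially the same as the paper's: both arguments identify the Galois group $G\subseteq S_4$ acting on the four blown-up points, observe that the action on the zeros of $C$ is the induced action on the three unordered pairs (via the $S_4\twoheadrightarrow S_3$ quotient, or equivalently via the picture in Figure~\ref{fig:blow-up}), and then do a subgroup case analysis, with the Picard rank read off from Lemma~\ref{lem:Pic}.

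Two small remarks. First, the paper's case analysis is slightly slicker than yours: rather than computing images in $S_3$, it just notes that every subgroup of $S_4$ other than $A_4$ and $S_4$ is contained in a conjugate of $D_4$ or of $S_3$; if $G\subseteq D_4$ then one of the three pairings (the ``diagonal'' one) is $G$-stable so $C$ has a rational zero, and if $G\subseteq S_3$ then $M$ has a rational point. Second, your enumeration of subgroups of $S_4$ with $3\mid |G|$ omits $S_3$; this does not affect the conclusion since $S_3\subset S_4$ is a point stabiliser and is excluded by your condition (a), but you should add it to the list. Your concern about justifying the $G$-equivariant bijection between zeros of $C$ and pairs of blown-up points is legitimate but mild: the singular fibres of $\pi_1$ are exactly the strict transforms of the $\binom{4}{2}=6$ lines through pairs of base points, with complementary pairs meeting in a single fibre, and this description is visibly Galois-equivariant.
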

\begin{proof}
	We use Figure \ref{fig:blow-up}.
	Let $G \subseteq S_4$ be the Galois group acting on the four
        blow-up points. 
If $G \subseteq D_4$ then the diagonal lines are Galois invariant, whence
	the zero locus of $C$ admits a rational point. If $G \subseteq S_3$
	then $M$ has a rational point. The only remaining cases are $G = A_4$ or
$G=S_4$, where it is easy to see that the zero locus of $C$ has no rational
	point. The last part is Lemma \ref{lem:Pic}.
\end{proof}

Under the conditions of Lemma \ref{r2} the cubic polynomial $C$ is
irreducible, and we are in the $A_4$ case if and only if $C$ has
cyclic splitting field. 

Our next steps are to calculate Peyre's constant
\cite[Def.~2.5]{Pey95} in our setting. 
Our surfaces $S$ are rational, as one can see by solving
the equation for $\y$. It follows that $S$ will satisfy weak
approximation over $\Q$, and that the cohomological constant is equal
to $1$. Hence 
Peyre's constant  takes the form $\alpha \tau_\infty \prod_p \tau_p$
where $\alpha$ is the cone constant and $\tau_v$
are the corresponding local densities.
%We proceed to calculate Peyre's constant in our case. 

\subsection{The effective cone constant} \label{sec:effective_cone}
\subsubsection{Definition and calculation}
First we recall the definition of this constant. We let $\Eff(S)$
denote the pseudo-effective cone of $S$, i.e. the closure of the cone
of effective divisors. We let $\Pic(S)^{\wedge} = \Hom(\Pic S, \Z)$
and consider the dual cone $\Eff(S)^{\wedge} \subset
\Pic(S)^{\wedge}$. We equip the real vector space $\Pic(S)^{\wedge}_\R
:= \Pic(S)^{\wedge}$ with the Haar measure such that the lattice
$\Pic(S)^{\wedge}$ has covolume $1$. Then the effective cone constant
is defined by 
\begin{equation} \label{def:alpha}
  \alpha(S) = \frac{1}{(\rho - 1)!}\int_{\Eff(S)^{\wedge}}
  \exp(-\langle -K_S, v\rangle) \d v.
\end{equation}
We define the
\emph{blow-up type} of $S$ as follows. Let $M$ be the closed subscheme
$Q_0(\x) = Q_1(\x) = 0$ of $\P^2$. This is a union of closed points
whose total degree sums to $4$. The blow-up type is then defined to be
the unordered list of the degrees of these closed points. For
example, a split surface corresponds to $(1,1,1,1)$. 

\begin{lemma} \label{lem:alpha}
  %\[	\begin{array}{l|l|l|l|l|l|}
  \[	\begin{array}{c|c|c|c|c|c|} %R
	\textrm{Blow-up type} & (1,1,1,1) & (1,1,2) & (1,3) & (2,2) & (4) \\
		\hline
		\alpha(S) & 1/144 &  1/24 & 5/24  & 1/6 & 2/3
	\end{array}\]
\end{lemma}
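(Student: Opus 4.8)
The plan is to compute $\alpha(S)$ directly from the definition \eqref{def:alpha} by writing down, in each of the five cases, an explicit description of $\Pic(\bar S)$ together with its Galois action, then descend to $\Pic(S) = \Pic(\bar S)^{G}$, identify the pseudo-effective cone $\Eff(S)$ as the cone generated by the Galois orbits of the ten $(-1)$-curves, dualise, and integrate. Concretely, $\bar S$ is the blow-up of $\P^2$ in four points, so $\Pic(\bar S) = \Z H \oplus \Z E_1 \oplus \cdots \oplus \Z E_4$ with $-K_{\bar S} = 3H - E_1 - \cdots - E_4$; the ten $(-1)$-curves are the $E_i$ and the six strict transforms $H - E_i - E_j$. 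For each blow-up type the group $G$ permutes the $E_i$ (and hence the lines $H - E_i - E_j$) according to the corresponding transitive-on-orbits action, and $\Pic(S)$ is spanned by $H$ together with the sums of $E_i$ over each Galois orbit. In the split case $(1,1,1,1)$ one gets $\rho = 5$ and recovers the known value $\alpha = 1/144$ (this serves as a sanity check); in the other cases $\rho$ is $4,3,3$ and $2$ respectively, matching Lemma \ref{lem:Pic}.

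The key computational step is, in each case, to read off the extremal rays of $\Eff(S)$. The pseudo-effective cone of a del Pezzo surface is generated by the classes of its $(-1)$-curves, and the effective cone of $S$ (over $\Q$) is the image of $\Eff(\bar S)$ under the averaging/norm map, i.e. it is generated by the Galois-orbit sums of the $(-1)$-curve classes. One then lists these generators explicitly in the basis of $\Pic(S)$ adapted to $-K_S$, computes the dual cone $\Eff(S)^\wedge$ (a simplicial or near-simplicial cone whose rays are easy to write down once the generators of $\Eff(S)$ are known), and evaluates the integral $\int_{\Eff(S)^\wedge} \exp(-\langle -K_S, v\rangle)\, \d v$. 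When $\Eff(S)^\wedge$ is simplicial with primitive ray generators $v_1, \dots, v_\rho$, this integral equals $|\det(v_1,\dots,v_\rho)| \prod_{i} \langle -K_S, v_i\rangle^{-1}$, so the whole thing reduces to two small determinant/linear-algebra computations per case. Multiplying by $1/(\rho-1)!$ then gives the table.

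The main obstacle I expect is bookkeeping rather than conceptual: correctly identifying the Galois action on the ten lines for each blow-up type (in particular keeping straight which of the six lines $H - E_i - E_j$ survive as effective classes and which get merged into orbit sums), verifying that the cone one obtains is indeed the full pseudo-effective cone and not a proper subcone (one must check there are no further extremal rays coming from, e.g., the conic bundle fibre class $2H - E_1 - \cdots - E_4$, which is a sum of two lines and hence not extremal, but this needs to be confirmed), and handling the cases $(1,3)$ and $(2,2)$ where $\Eff(S)^\wedge$ may fail to be simplicial so that the integral must be split into sub-cones before applying the determinant formula. Once the cone structure is pinned down, the integrals are elementary. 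A cross-check is available throughout: $\alpha(S)$ computed here must be consistent with the factorisation $c_S = \tfrac23 \tau_\infty \prod_p \tau_p$ asserted in Theorem \ref{thm:main} in the $(4)$ case, and with the decomposition of $\Eff(S)$ into the two pieces of relative volume $\tfrac14$ and $\tfrac34$ discussed after \eqref{eqn:different_alphas}.
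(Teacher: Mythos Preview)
Your proposal is correct and would give a self-contained proof, but it takes a genuinely different route from the paper. The paper's proof is a one-line citation: it simply reads off the values from Table~8 of Derenthal, Joyce and Teitler \cite{DJT}, who compute the nef cone volumes (equivalently, the constants $\alpha$) for all Galois actions on generalised del Pezzo surfaces. The only work in the paper's proof is matching the five blow-up types to the relevant rows of that table.

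Your direct computation is more laborious but has the advantage of being self-contained and of making the geometry visible; it is essentially how \cite{DJT} itself was produced. Your outline is sound: the Galois-invariant Picard group is indeed spanned by $H$ together with the orbit sums of the $E_i$, and the effective cone of $S$ is generated by the Galois-orbit sums of the ten $(-1)$-curve classes (this is a general fact for del Pezzo surfaces, since $\Eff(\bar S)$ is the cone over the lines and taking $G$-invariants commutes with taking the cone closure). Your caution about non-simplicial dual cones is warranted in the split and near-split cases, though in the low-rank cases $(1,3)$, $(2,2)$ and $(4)$ the cones turn out to be simplicial (for $(4)$ this is exactly Lemma~\ref{lem:effective_cone} and the calculation \eqref{eqn:alpha_2/3}, which you could use as a template). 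The fibre class $2H - \sum E_i$ is indeed never extremal, being the sum of two line classes in each orbit decomposition. If you carry this through you will recover the table, and the cross-checks you mention against Theorem~\ref{thm:main} and the $\tfrac14/\tfrac34$ split will go through.
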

\begin{proof}
	This follows from Derenthal, Joyce and Teitler \cite[Table 8]{DJT}.
	(Note that there is a typographical error in the caption: it should 
	say ``degree 5''  and not ``degree 7''). 	
	Briefly, the blow-up types correspond to the following
	rows (counted from the bottom): $(1,1,1,1)$ --- 1; $(1,1,2)$ --- 2; 
	$(1,3)$ --- 8; $(2,2)$ --- 3, 6; and $(4) $--- 4,5,10.
\end{proof}

Note that $\alpha(S)$ is independent of the Galois type.
(For example, there are two possible Galois actions for (2,2)). 

We now study in more detail the effective cone constant in the setting of
Theorem \ref{thm:main}. 

\begin{lemma} \label{lem:effective_cone}
Assume that $S$ is the blow-up of $\P^2$ in a closed point of degree
$4$. Let $E$ 
be the exceptional divisor and $F$ the fibre of the conic bundle. Then
	\begin{enumerate}
	\item $E$ and $F$ generate an index $2$ subgroup of $\Pic(S)$.
	\item The effective cone $\Eff(S)$ is generated by $E$ and $F$.
	\end{enumerate}
\end{lemma}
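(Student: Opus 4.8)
The plan is to work with the standard blow-up model $S \to \P^2$, where $S$ is obtained by blowing up the degree-$4$ closed point $M$, whose geometric points are the four points in general position of Figure \ref{fig:blow-up}. Over $\bar k$ we have $\Pic(\bar S) = \Z L \oplus \Z E_1 \oplus \cdots \oplus \Z E_4$, where $L$ is the pullback of a line and the $E_i$ are the exceptional curves, with $-K_S = 3L - E_1 - \cdots - E_4$. The Galois group $G$ (one of $A_4$, $S_4$ in the case of interest, though the lemma is stated more generally) acts by permuting the $E_i$, and $\Pic(S) = \Pic(\bar S)^G$. First I would compute this invariant sublattice: since $G$ acts transitively on $\{E_1, \dots, E_4\}$, the invariants are spanned over $\Q$ by $L$ and $E := E_1 + E_2 + E_3 + E_4$, so $\rank \Pic(S) = 2$ (consistent with Lemma \ref{lem:Pic}). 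The fibre class of the conic bundle is $F = 2L - E_1 - E_2 - E_3 - E_4 = 2L - E$, which is manifestly $G$-invariant.

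For part (1), I would show that $E$ and $F$ generate an index-$2$ subgroup of $\Pic(S) = \Pic(\bar S)^G$. The point is that $\Pic(\bar S)^G$ is actually larger than the span of $L$ and $E$: because the four points are in general position, the class $L$ itself need not be the most primitive invariant class available. Concretely, I would exhibit an explicit invariant divisor class not in $\Z E + \Z F$; a natural candidate is $-K_S - F = 3L - E - (2L - E) = L$, so that $\Pic(S) \supseteq \Z L + \Z F$, and one checks $\{L, F\}$ is in fact a $\Z$-basis of $\Pic(S)^G$ (this is where the general position / del Pezzo structure enters — there is no further invariant class). Then since $E = 2L - F$, the sublattice $\Z E + \Z F = \Z(2L - F) + \Z F = 2\Z L + \Z F$ has index $2$ in $\Z L + \Z F = \Pic(S)$, giving (1). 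I would double-check the index via determinants in the basis $\{L, F\}$: $E = 2L - F$ and $F = F$ have matrix $\left(\begin{smallmatrix} 2 & 0 \\ -1 & 1 \end{smallmatrix}\right)$ of determinant $2$.

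For part (2), I would identify $\Eff(S)$. Over $\bar k$, the effective (indeed, extremal) cone of the del Pezzo surface of degree $5$ is generated by the ten $(-1)$-curves. Taking $G$-invariants (equivalently, sums over Galois orbits) of these generators gives generators of $\Eff(S) = \Eff(\bar S)^G$ by a standard descent argument for Mori cones of del Pezzo surfaces — or one can invoke \cite[Table 8]{DJT} / the references already cited. The ten lines fall into two families by Lemma \ref{lem:lines}: the four lines $E_i$ (Galois orbit summing to $E$) and the six lines $L - E_i - E_j$ forming the singular fibres (Galois orbit, when $G$ is transitive on pairs as for $A_4, S_4$, summing to $3L - 2E = $ a positive multiple of $F$; more precisely the sum of the three pairs of opposite lines each gives $F$, and summing all six gives $2F$). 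In all cases the effective cone of $S$ is spanned by the two rays $\R_{\ge 0} E$ and $\R_{\ge 0} F$, giving (2). I would present this by listing the line classes, grouping them into their Galois orbits, and observing every orbit-sum lies in $\R_{\ge 0} E + \R_{\ge 0} F$, while conversely $E$ and $F$ are themselves effective.

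The main obstacle I anticipate is part (1): it is easy to see $E$ and $F$ span $\Pic(S) \otimes \Q$, but proving the index is exactly $2$ — rather than $1$ — requires genuinely identifying the full invariant lattice $\Pic(\bar S)^G$ and exhibiting the "extra" class $L$, and checking that nothing beyond $\Z L + \Z F$ is invariant. This is a concrete but slightly delicate lattice computation that depends on the explicit $G$-action on $\{E_1, \dots, E_4\}$; for the transitive groups $A_4, S_4$ it is clean, but stating it for a general degree-$4$ point (as the lemma does) means one should either restrict attention to the relevant transitive cases or note that the argument only uses transitivity of $G$ on the four geometric points. Part (2), by contrast, should follow smoothly from the known structure of the effective cone of a quintic del Pezzo surface together with Galois descent.
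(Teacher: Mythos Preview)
Your proposal is correct. Part (1) is essentially the paper's argument, just more explicitly laid out: the paper simply states that $\{E,L\}$ is a basis for $\Pic(S)$ and that $2L = E+F$, from which the index-$2$ claim is immediate. Your determinant computation in the basis $\{L,F\}$ is the same content.

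For part (2) you take a genuinely different route. You argue by Galois descent: the effective cone of $\bar S$ is generated by the ten $(-1)$-curves, and taking $G$-orbit sums yields exactly the rays through $E$ and $F$. This is correct and has the virtue of being completely explicit, though it leans on knowing the structure of $\Eff(\bar S)$ and the Galois action on the lines. The paper instead gives a two-line argument that avoids all of this: $E$ and $F$ are effective but neither is big (each is contracted by one of the two morphisms $S\to\P^2$, $S\to\P^1$), so both lie on the boundary of $\Eff(S)$; since $\Eff(S)$ is a $2$-dimensional cone and $E,F$ are linearly independent, they must be its extremal rays. This bigness argument is cleaner and uses nothing about the geometric Picard group or the Galois orbits, only the two obvious contractions already in hand.
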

\begin{proof}
	A basis for the Picard group is given by $E$ and $L$, where $L$ denotes
	the pull-back of a line from $\P^2$. However we have $2L = E+F$
	whence (1) easily follows.
	For (2), recall that the big cone is the interior of the effective cone.
	However neither $E$ nor $F$ is big, so they lie on the
	boundary of the effective cone. Since this is a $2$-dimensional cone
	and $E$ and $F$ are linearly independent, they
	are thus generators.
\end{proof}

We explain for completeness how to show directly from the definition
that $\alpha(S) = 2/3$. We have $-K_X = (E + 3F)/2$. From Lemma
\ref{lem:effective_cone} and  \eqref{def:alpha} we thus obtain 
\begin{equation} \label{eqn:alpha_2/3}
\alpha(S) = \frac{1}{2}\int_{e,f \geq 0} \exp(-(e + 3f)/2)
\d e\, \d f = \frac{2}{3}, 
\end{equation}
in agreement with Lemma \ref{lem:alpha}. (The factor $1/2$ comes from
the fact that $E$ and $F$ only generate an index $2$ subgroup). 

\subsubsection{Hyperbola method}
We now use the effective cone to describe the phenomenon mentioned in
\eqref{eqn:different_alphas}, namely that the two parts of the
hyperbola method contribute different amounts. We achieve this by
using a perspective of Peyre's \cite[\S 4.1]{Pey21}, though our
framework is slightly different (in particular we use a different
measure). 

Let $X$ be a smooth Fano variety over $\Q$. We choose a basis for
$\Pic X$ as well as adelic metrics on the basis. This yields a map 
$$%\begin{equation} \label{eqn:multi_heights}
	\mathbf{h}: X(\Q) \to \Pic(S)^{\wedge}_\R, \quad x \mapsto (L
        \mapsto \log H_L(x)). 
$$%\end{equation}
We equip the space $\Pic(S)^{\wedge}_\R$ with the measure
$$\mu(\mathscr{D}) = \frac{1}{(\rho -
  1)!}\int_{\mathscr{D}}\exp(-\langle -K_S,v \rangle)\mathrm{d}v$$ 
for any measurable subset $\mathscr{D} \subset
\Pic(X)^{\wedge}_\R$. Note that the total measure is $\alpha(X)$ by
\eqref{def:alpha}.

One anticipates that the image of $X(\Q)$ under $\mathbf{h}$ should be
equidistributed with respect to $\mu$. For example, let $C \subset
\Eff(X)^{\wedge}$ be a closed connected subcone. Then one expects that
there is a thin subset $\Omega \subset X(\Q)$ such that 
\begin{align} \label{eqn:Peyre_equidistribution}
\begin{split}
	&\lim_{B \to \infty} \frac{\#\{ x \in X(\Q) \setminus \Omega : 
	H_{-K_X}(x) \leq B, \mathbf{h}(x) \in C\}}{\#\{ x \in X(\Q)
    \setminus \Omega :  
	H_{-K_X}(x) \leq B\}} 
	= \frac{\mu(C \cap \Eff(X)^{\wedge})}{\alpha(X)}.
\end{split}
\end{align}

% We now apply this framework to our setting. We let $L$ done the
% class
We now apply this framework to our setting. We let $L$ denote the class %R
of the pull-back of a line from $\P^2$. By Lemma
\ref{lem:effective_cone} the Picard group is generated by $L$ and $F$
(using the relation $2L = E+F)$. The map $\mathbf{h}$ is given by 
$$
	\mathbf{h}: S(\Q) \to \Pic(S)^{\wedge}_\R, \quad (x,y) \mapsto
        (aL + bF \mapsto a\log H(x) + b \log H(y) ). 
$$
We consider the cone
$$C = \{ v \in \Eff(S)^{\wedge} : \langle v, L \rangle \leq  \langle
v, F \rangle \}.$$ 
We have  $\mathbf{h}(x,y) \in C$ if and only if $H(x) \leq H(y)$. Thus
the following lemma shows that \eqref{eqn:different_alphas} is
compatible with \eqref{eqn:Peyre_equidistribution}. 

\begin{lemma}
	$$\frac{\mu(C)}{\alpha(S)} = \frac{1}{4}, \quad
  \frac{\mu(\Eff(S)^{\wedge} \setminus C)}{\alpha(S)} = \frac{3}{4}.$$ 
%	\dan{Why don't I need to restrict to a compact subregion?}
\end{lemma}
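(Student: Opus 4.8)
The plan is to compute the measure $\mu$ explicitly using coordinates dual to the basis $L, F$ of $\Pic(S)_\R$, and then to intersect with the halfspace defining $C$. First I would set up dual coordinates: write a general element of $\Pic(S)^\wedge_\R$ as the functional sending $aL + bF$ to $a\lambda + b\varphi$, so that $(\lambda,\varphi) \in \R^2$ parametrises $\Pic(S)^\wedge_\R$. Under this identification I need to express the relevant data in coordinates. Since $\Eff(S)$ is generated by $E$ and $F$ (Lemma \ref{lem:effective_cone}) and $E = 2L - F$, the dual cone $\Eff(S)^\wedge$ is cut out by $\langle v, E\rangle \geq 0$ and $\langle v, F\rangle \geq 0$, i.e. by $2\lambda - \varphi \geq 0$ and $\varphi \geq 0$. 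Next, $-K_S = (E+3F)/2 = L + F$ in terms of the $L,F$ basis, so $\langle -K_S, v\rangle = \lambda + \varphi$. Finally the condition $\langle v, L\rangle \leq \langle v, F\rangle$ defining $C$ is simply $\lambda \leq \varphi$.

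The second step is to pin down the normalisation of the measure. The measure $\mu$ is defined using Haar measure on $\Pic(S)^\wedge_\R$ normalised so that the dual lattice $\Pic(S)^\wedge$ has covolume $1$. But $L$ is not an integral generator: by Lemma \ref{lem:effective_cone}(1), $E$ and $F$ form an index-$2$ sublattice of $\Pic(S)$, so $\Pic(S) = \Z L + \Z F$ with $L$ integral; wait — I must be careful, the correct statement is that a genuine $\Z$-basis of $\Pic(S)$ is $\{E, L\}$ with $2L = E+F$, so $\{L, F\}$ spans only an index-$2$ sublattice. Consequently, in the $(\lambda,\varphi)$ coordinates adapted to the $L,F$ basis, the lattice $\Pic(S)^\wedge$ has covolume $\tfrac12$, so $\d\lambda\,\d\varphi = \tfrac12 \cdot(\text{normalised Haar})$, equivalently the normalised Haar measure equals $2\,\d\lambda\,\d\varphi$. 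Hence
\[
\mu(\mathscr D) = \frac{1}{(\rho-1)!}\int_{\mathscr D} e^{-(\lambda+\varphi)}\cdot 2\,\d\lambda\,\d\varphi = 2\int_{\mathscr D} e^{-(\lambda+\varphi)}\,\d\lambda\,\d\varphi,
\]
using $\rho = 2$. This is exactly consistent with \eqref{eqn:alpha_2/3}, which is a good internal check: integrating over all of $\Eff(S)^\wedge$ recovers $\alpha(S) = 2/3$.

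The third step is the actual integration. For $\mu(\Eff(S)^\wedge)=\alpha(S)$ one integrates $2e^{-(\lambda+\varphi)}$ over $\{\varphi \geq 0,\ 2\lambda - \varphi \geq 0\}$, i.e. $\varphi \geq 0$, $\lambda \geq \varphi/2$; one gets $\alpha(S)=2/3$ as above. For $\mu(C)$ one adds the constraint $\lambda \leq \varphi$, so the region is $\{\varphi \geq 0,\ \varphi/2 \leq \lambda \leq \varphi\}$; integrating $\int_0^\infty e^{-\varphi}\big(\int_{\varphi/2}^{\varphi} e^{-\lambda}\,\d\lambda\big)\d\varphi$ and multiplying by $2$ gives a value which should come out to $\tfrac16$, so that $\mu(C)/\alpha(S) = (1/6)/(2/3) = 1/4$. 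Then $\mu(\Eff(S)^\wedge\setminus C) = \alpha(S) - \mu(C) = 2/3 - 1/6 = 1/2$, giving the ratio $3/4$. The whole argument is a short explicit calculation; the only place where care is genuinely required — and the main obstacle to getting the right answer — is the factor of $2$ coming from the index-$2$ discrepancy between the $\{L,F\}$ spanning set and an honest $\Z$-basis of $\Pic(S)$. Getting this normalisation wrong would rescale everything, though since we are ultimately taking the ratio $\mu(C)/\alpha(S)$ it in fact cancels; I would nonetheless present the computation with the correct normalisation so that the intermediate values $\mu(C) = 1/6$ and $\mu(\Eff(S)^\wedge\setminus C) = 1/2$ are themselves meaningful and visibly sum to $\alpha(S) = 2/3$.
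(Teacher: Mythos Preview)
Your approach is essentially the same as the paper's (an explicit integration in dual coordinates), and the final ratios $1/4$ and $3/4$ come out correctly. However, your normalisation discussion contains an error that makes the intermediate values wrong.

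You claim that $\{L,F\}$ spans only an index-$2$ sublattice of $\Pic(S)$. This is false: since $E = 2L - F$ is an \emph{integer} combination of $L$ and $F$, the pair $\{L,F\}$ is a genuine $\Z$-basis of $\Pic(S)$ (just as $\{E,L\}$ is). Consequently the dual coordinates $(\lambda,\varphi)$ already give $\Pic(S)^\wedge$ covolume $1$, and the correct formula is
\[
\mu(\mathscr D) = \int_{\mathscr D} e^{-(\lambda+\varphi)}\,\d\lambda\,\d\varphi,
\]
with no extra factor of $2$. Your claimed ``internal check'' against \eqref{eqn:alpha_2/3} would in fact fail: with your factor of $2$, integrating over $\{\varphi\ge 0,\ \lambda\ge\varphi/2\}$ gives $4/3$, not $2/3$. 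With the correct normalisation one gets $\alpha(S)=2/3$ and $\mu(C)=1/6$, matching the paper's computation (which uses the $\{E,F\}$ coordinates, where the index-$2$ factor genuinely is needed). As you yourself note, the normalisation cancels in the ratio, so your conclusion survives; but the sentence asserting that the intermediate values $\mu(C)=1/6$ and $\mu(\Eff(S)^\wedge\setminus C)=1/2$ ``visibly sum to $\alpha(S)=2/3$'' is only true once the spurious factor of $2$ is removed.
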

\begin{proof}
	We use a similar argument to the calculation \eqref{eqn:alpha_2/3}.
	The cone $\Eff(S)$ is self dual and we have the relation $2L = E + F$.
	Therefore in the integral we restrict to the region $(e + f)/2 \leq f$, 
	which is equivalent to $e \leq f$. 
	We obtain
	$$\frac{1}{2}\int_{f \geq e \geq 0} \exp(-((e + 3f)/2))
        \mathrm{d}e\, \mathrm{d}f 
	= \frac{1}{3}\int_{e \geq 0} \exp(-2e) \mathrm{d}f= \frac{1}{6}$$
	which gives the result on noting that $(1/6)/(2/3) = 1/4$.
%	\dan{the letter $e$ means two different things...!}
\end{proof}

In the same way, the condition $H(\x)\le H(\y)^{1/2}$ corresponds
to the subset of the effective cone in which $(e+f)/2\le f/2$. Since 
this has measure zero, the expression \eqref{eqn:Peyre_equidistribution}
predicts that $0\%$ of rational points satisfy this height condition.

\subsection{The real density}\label{trd}
We next consider the real density $\tau_\infty$. We write
\[h_Q(\x)=||(Q_0(\x),Q_1(\x))||_\infty\]
for typographical convenience.
\begin{lemma}\label{lem:rd}
  We have
\begin{equation}\label{tinf}
  \tau_\infty = \int_{\R^2}
  \frac{\d u_1\, \d u_2}{h_Q(1,u_1,u_2)||(1,u_1,u_2)||_\infty}
    =\frac12\int_{||\v||_{\infty}\le 1}\frac{\d v_0\,\d v_1\,\d v_2}{h_Q(\v)}.
\end{equation}
\end{lemma}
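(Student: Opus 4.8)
The plan is to compute Peyre's real density $\tau_\infty$ directly from its definition as a local integral on $S(\R)$ with respect to the measure associated to the chosen anticanonical metric, and to recognise that the conic bundle structure gives a natural parametrisation of $S(\R)$ over an affine chart of $\P^2$. First I would recall that on the bidegree $(2,1)$ model $S: y_0Q_0(\x)+y_1Q_1(\x)=0$, for a fixed $\x\in\P^2$ with $(Q_0(\x),Q_1(\x))\neq(0,0)$ the fibre of $\pi_2$ is a single point $[\y]=[-Q_1(\x):Q_0(\x)]$ (or $[Q_1(\x):-Q_0(\x)]$), so $\pi_2: S\to\P^2$ is a birational morphism and induces a measure-preserving identification of $S(\R)$ (away from the finitely many fibres in $M(\R)$, a set of measure zero) with $\P^2(\R)$. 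Under this identification the anticanonical height $H(\x)H(\y)$ on $S$ pulls back, on the chart $x_0=1$ with affine coordinates $(u_1,u_2)=(x_1/x_0,x_2/x_0)$, to $\|(1,u_1,u_2)\|_\infty\cdot\|(Q_0(1,u_1,u_2),Q_1(1,u_1,u_2))\|_\infty$ after clearing denominators, which is exactly $h_Q(1,u_1,u_2)\|(1,u_1,u_2)\|_\infty$ up to the homogeneity bookkeeping.

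The key computational step is to write out the Leray/residue form defining $\tau_\infty$. Since $S$ is cut out in $\P^2\times\P^1$ by one bihomogeneous equation, Peyre's measure is the local measure on the real points of this hypersurface coming from the anticanonical metric; on the affine piece where $x_0=1$ and using $[y_0:y_1]$ one eliminates, say, $y_1 = -y_0 Q_0/Q_1$, so that the surface is locally the graph of a function over the $(u_1,u_2)$-plane, and the Leray form becomes $\d u_1\,\d u_2$ divided by the local height normalisation. Carrying out this elimination and tracking the $\|\cdot\|_\infty$-normalisations of both projective factors yields the first displayed equality
\[
  \tau_\infty = \int_{\R^2}\frac{\d u_1\,\d u_2}{h_Q(1,u_1,u_2)\,\|(1,u_1,u_2)\|_\infty}.
\]
I expect this identification of the Leray form with $\d u_1\,\d u_2$, keeping careful track of signs and of the two separate $\|\cdot\|_\infty$ normalisations, to be the main obstacle — it is the one genuinely delicate point, and one must check that the chart $x_0=1$ (rather than $x_1$ or $x_2$) covers $S(\R)$ up to measure zero, which is fine since the locus $x_0=0$ has measure zero in $\P^2(\R)$.

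Finally, for the second equality I would dehomogenise the other way: introduce $\v=(v_0,v_1,v_2)\in\R^3$ and pass from the affine chart to an integral over the cube $\|\v\|_\infty\le 1$. Concretely, the map $\v\mapsto \v/\|\v\|_\infty$ covers $\P^2(\R)$, and a standard polar-type change of variables (scaling $\v = t\cdot(1,u_1,u_2)$ on the region $|v_0|\ge |v_1|,|v_2|$, with $t=v_0$) converts $\d u_1\,\d u_2 /(h_Q(1,u_1,u_2)\|(1,u_1,u_2)\|_\infty)$ into $\d\v/h_Q(\v)$ over the corresponding sixth of the cube, using that $h_Q$ is homogeneous of degree $2$ and $\|\cdot\|_\infty$ of degree $1$, so the Jacobian factors $t^2$ cancel against $h_Q(t\,\cdot) = t^2 h_Q(\cdot)$ after also accounting for $\|(1,u_1,u_2)\|_\infty=1$ on this region. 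Summing the six congruent contributions (by symmetry of the cube under permuting and negating coordinates, and using that $h_Q(\v)=h_Q(-\v)$) and dividing by $2$ for the antipodal identification $\v\sim-\v$ in $\P^2$ gives
\[
  \tau_\infty = \frac12\int_{\|\v\|_\infty\le 1}\frac{\d v_0\,\d v_1\,\d v_2}{h_Q(\v)},
\]
as claimed. The only thing to be careful about here is the factor of $2$ from the double cover $\R^3\setminus\{0\}\to\P^2(\R)$ and the homogeneity count showing the integrand is genuinely scale-invariant of the right weight so that the cube (rather than the sphere) is an admissible fundamental domain.
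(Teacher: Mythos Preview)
Your first equality is handled the same way as in the paper: both use the parametrisation $(u_1,u_2)\mapsto(1,u_1,u_2;-Q_1,Q_0)$ of a dense open subset of $S(\R)$ and appeal directly to Peyre's formula for the local Tamagawa measure.

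For the second equality your route diverges from the paper's, and there is a small gap. You split the cube into six pieces according to which $|v_i|$ is maximal (and its sign), do the change of variables $\v=t(1,u_1,u_2)$ on one piece, and then invoke ``symmetry of the cube under permuting and negating coordinates'' to sum the contributions. But $h_Q(\v)=\|(Q_0(\v),Q_1(\v))\|_\infty$ is \emph{not} invariant under permuting the coordinates of $\v$ --- the forms $Q_0,Q_1$ are generic --- so the integrals over the three pairs of regions are genuinely different and cannot be equated by symmetry. Your approach can be salvaged: carry out the change of variables separately on each of the three regions $|v_i|=\|\v\|_\infty$, and check that the resulting integrals match the three pieces of the $\R^2$ integral cut out by which of $1,|u_1|,|u_2|$ is largest. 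This works, but it is more labour than necessary.

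The paper instead avoids any region decomposition. It uses the evenness $h_Q(-\v)=h_Q(\v)$ to restrict to $v_0\in[0,1]$, then makes the single substitution $v_1=v_0u_1$, $v_2=v_0u_2$ over the entire half-cube (no constraint on which coordinate is largest). The Jacobian $v_0^2$ cancels against the homogeneity $h_Q(\v)=v_0^2\,h_Q(1,u_1,u_2)$, and Fubini gives
\[
\int_0^{\min(1,\,1/|u_1|,\,1/|u_2|)}\d v_0=\frac{1}{\|(1,u_1,u_2)\|_\infty}
\]
directly. This sidesteps permutation symmetry altogether and is considerably cleaner.
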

\begin{proof}
  The surface $S$ is given by the equation
\[y_0Q_0(x_0,x_1,x_2) + y_1Q_1(x_0,x_1,x_2) = 0,\]
so that there is a real parametrisation of a dense open set, given by
\[\R^2 \to S(\R),
\quad (u_1,u_2) \mapsto (1,u_1,u_2; -Q_1(1,u_1,u_2), Q_0(1,u_1,u_2)).\]
Peyre's formula \cite[\S2.2.1]{Pey95} for the Tamagawa measure 
therefore gives the first expression for $\tau_\infty$.

To establish the second formula we write
\[\int_{||\v||_{\infty}\le 1}\frac{\d v_0\d v_1\d v_2}{h_Q(\v)}=
2\int_0^1\int_{[-1,1]^2}\frac{\d v_1\d v_2}{h_Q(\v)}\d v_0.\]
One may then substitute $v_1=v_0u_1$ and $v_2=v_0u_2$ to produce
\begin{eqnarray*}
  \int_0^1\int_{[-1,1]^2}\frac{\d v_1\d v_2}{h_Q(\v)}\d v_0&=&
  \int_0^1\int_{[-1/v_0,1/v_0]^2}\frac{\d u_1\d u_2}{h_Q(1,u_1,u_2)}\d v_0\\
  &=&\int_{\R^2}\left\{\int_0^{\min(1,1/|u_1|,1/|u_2|)}\d v_0\right\}
    \frac{\d u_1\d u_2}{h_Q(1,u_1,u_2)}\\
&=&\int_{\R^2}\frac{\d u_1\,\d u_2}{h_Q(1,u_1,u_2)||(1,u_1,u_2)||_\infty},
\end{eqnarray*}
as required for the lemma.
\end{proof}

The following shows directly that the second integral in Lemma
\ref{tinf} is finite. We handle more general integrals which will
appear later in our work. 

 \begin{lemma}\label{QB}
   The set
   \begin{equation}\label{ms}
     \{\x\in\R^3:\max(|Q_0(\x)|,|Q_1(\x)|)\le \lambda\}
     \end{equation}
   has measure $O(\lambda^{3/2})$.  Moreover we have
   \[ \int_{[-1,1]^3}\frac{\bigl|\log\max(|Q_0(\x)|,|Q_1(\x)|)\bigr|^k}
   {\max(|Q_0(\x)|,|Q_1(\x)|)}\d x_0\d x_1\d x_2\ll_k 1 \]
  for any $k\ge 0$.
 \end{lemma}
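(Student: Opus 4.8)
The plan is to prove both estimates simultaneously by controlling the measure of the sublevel sets $E_\lambda := \{\x \in \R^3 : h(\x) := \max(|Q_0(\x)|,|Q_1(\x)|) \le \lambda\}$, since the logarithmic integral follows from the measure bound by a standard dyadic decomposition. The key geometric input is that, by hypothesis, $Q_0$ and $Q_1$ have no common zero other than the origin in $\P^2$ (this is the condition from Theorem \ref{thm:main}, or at least $M$ being finite étale of degree $4$ from Lemma \ref{lem:lines}, so the only common real zero of the pair $(Q_0,Q_1)$ is $\x = \b 0$). Hence on the unit sphere $\|\x\|_\infty = 1$ the continuous function $h(\x)$ has a positive lower bound $c_0 > 0$ away from... wait, that is false: $h$ does vanish on the sphere at the real points of $M$. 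The correct statement is that the real locus $\{h = 0\} \cap \{\|\x\|_\infty = 1\}$ is a finite set of points (the real points among the $4$ geometric points of $M$), and near each such point $Q_0, Q_1$ cut out a zero-dimensional scheme, so $\max(|Q_0|,|Q_1|)$ grows at least linearly in the distance to that point.

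First I would reduce to a neighbourhood of one such bad point $\x_0$ on the sphere. Away from all bad points $h$ is bounded below by a constant, so that region contributes nothing to $E_\lambda$ for small $\lambda$ and contributes $O(1)$ to the logarithmic integral. Near $\x_0$, after an orthogonal change of coordinates so that $\x_0 = (1,0,0)$ and working in affine coordinates $(u_1, u_2)$, the two forms $Q_0(1,u_1,u_2), Q_1(1,u_1,u_2)$ are polynomials in $(u_1,u_2)$ both vanishing at the origin, and since $M$ is étale (the Jacobian of the pair is nondegenerate at the point, as the scheme is smooth of dimension $0$), the linear parts of $Q_0, Q_1$ at the origin are linearly independent. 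Therefore, shrinking the neighbourhood, the map $(u_1,u_2) \mapsto (Q_0(1,u_1,u_2), Q_1(1,u_1,u_2))$ is a local diffeomorphism onto a neighbourhood of $\b 0$ with bounded Jacobian above and below. Pulling back, $\{h \le \lambda\}$ inside this chart has measure $O(\lambda^2)$, which is even better than $O(\lambda^{3/2})$; summing over the (finitely many) bad points and restoring the third (radial) coordinate $v_0 \in [-1,1]$ — which only costs a factor $O(1)$ — gives $\meas(E_\lambda \cap [-1,1]^3) = O(\lambda^2)$, hence in particular $O(\lambda^{3/2})$. (For the stated scaling-invariant version over all of $\R^3$ one uses homogeneity: $E_\lambda$ is a cone-like region and one integrates the $[-1,1]^3$ estimate against the dilation, again giving $O(\lambda^{3/2})$ since $h$ is homogeneous of degree $2$ and we are in dimension $3$; the exponent $3/2$ is exactly $3/2 = \dim/\deg$.)

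Actually, to match the claimed exponent $3/2$ rather than the sharper $2$ I get near étale points, I should be careful: the bound $O(\lambda^{3/2})$ in \eqref{ms} is presumably the one that holds for the general surfaces of Theorem \ref{tub} where $M$ need not be étale over $\R$ (Lemma \ref{QB} does not assume the hypotheses of Theorem \ref{thm:main}). In that generality $M$ can have a fat point, i.e. $Q_0, Q_1$ can be tangent, and then near a bad point one form behaves like $u_1$ and the other like a nondegenerate quadratic in $u_2$ (plus higher order), so $\{h \le \lambda\}$ locally has measure $\asymp \lambda \cdot \lambda^{1/2} = \lambda^{3/2}$, which is exactly the worst case and explains the exponent. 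The only thing needed is that $Q_0, Q_1$ have no \emph{common line} of zeros — equivalently that no rational (or real) linear combination is of rank $\le 1$ — but by Lemma \ref{lem:C} every singular quadric in the pencil has rank exactly $2$ when $S$ is smooth, so the common real zero locus of the pencil is at worst a finite union of lines through the origin meeting the sphere in finitely many points, and at each such point the pair of forms has the "$u_1$ and quadratic in $u_2$" normal form after a linear change; I would verify this normal form using that the two quadrics are transverse enough (their resultant is nonzero, coming from $C$ separable / $\disc(C) \ne 0$).

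Finally, for the logarithmic integral: write the integrand as $|\log h(\x)|^k / h(\x)$ on $[-1,1]^3 \setminus \{h \le 1\}$... no — the singularity is at $h = 0$, so decompose $[-1,1]^3 = \bigsqcup_{j \ge 0} \{2^{-j-1} < h(\x) \le 2^{-j}\}$ (plus the region $h > 1$ which contributes $O(1)$ as $h$ is bounded above there and the measure is finite). On the $j$-th piece the integrand is $\ll_k j^k \, 2^{j}$ and the measure of the piece is $\ll 2^{-3j/2}$ by the sublevel-set estimate applied with $\lambda = 2^{-j}$; so the contribution is $\ll_k j^k 2^{-j/2}$, and $\sum_{j \ge 0} j^k 2^{-j/2} \ll_k 1$. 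This proves the second claim. The main obstacle is getting the local normal form right at the bad points with the correct multiplicity accounting so that the exponent $3/2$ (and not something worse) comes out; once the sublevel-set measure bound $O(\lambda^{3/2})$ is in hand, the logarithmic integral is a routine dyadic summation.
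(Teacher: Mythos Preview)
Your dyadic argument for the logarithmic integral is correct and is exactly what the paper does for the second assertion. For the measure bound, your homogeneity remark is the right key: since $h$ is homogeneous of degree $2$, one has $E_\lambda=\sqrt{\lambda}\,E_1$ and hence $\meas(E_\lambda)=\lambda^{3/2}\meas(E_1)$, so everything reduces to $\meas(E_1)<\infty$. But your execution of this has two slips. First, the intermediate claim $\meas(E_\lambda\cap[-1,1]^3)=O(\lambda^2)$ is false: the ball $\{||\x||\ll\sqrt{\lambda}\}$ around the \emph{origin} already sits entirely in $E_\lambda$ and contributes $\asymp\lambda^{3/2}$; your case split ``away from bad points on the sphere / near a bad point on the sphere'' simply misses it, and ``restoring the radial coordinate $v_0\in[-1,1]$ only costs $O(1)$'' is not true near $v_0=0$. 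Second, the worry about tangent points is misplaced: Lemma~\ref{lem:lines} shows $M$ is \'etale for every smooth $S$ in Theorem~\ref{tub}, so the local model at each real point of $M$ on the sphere is always $h(\omega)\asymp|\omega-\omega_0|$, never the ``$u_1$ and quadratic in $u_2$'' picture. The clean fix is: in polar coordinates $\meas(E_1)=\tfrac13\int_{S^2}h(\omega)^{-3/2}\,d\sigma(\omega)$, and $h^{-3/2}\asymp|\omega-\omega_0|^{-3/2}$ near each bad direction, which is integrable on the $2$-sphere; this gives $\meas(E_1)<\infty$ and hence the claim.

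The paper takes a genuinely different route. Rather than analysing the real points of $M$, it re-bases the pencil using a \emph{real} root of $C$ (one always exists, since $C$ is a real cubic) to make one generator $Q_0'$ rank~$2$, diagonalises it as $x_0^2+x_1^2$ or $x_0x_1$, and then normalises $Q_1'$ to $q(x_0,x_1)+\xi x_2^2$ (and in the $x_0x_1$ case further to a diagonal $\pm x_0^2\pm x_1^2\pm x_2^2$). The measure of $E_\lambda$ is then computed directly in each of these explicit cases by elementary one-variable estimates, yielding $O(\lambda^{3/2})$ without ever invoking the \'etaleness of $M$ or the scaling $E_\lambda=\sqrt{\lambda}\,E_1$. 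Your (repaired) approach is more conceptual and explains the exponent $3/2=\dim/\deg$; the paper's is more hands-on and self-contained.
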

 \begin{proof}
   In the proof we shall replace
$Q_0$ and $Q_1$ by two other forms $Q_0'=aQ_0+bQ_1$ and
$Q_1'=cQ_0+dQ_1$, say, spanning the same pencil
$\langle Q_0,Q_1\rangle_{\R}$. The choice of these will be determined
by $Q_0$ and $Q_1$. The region $\max(|Q_0(\x)|,|Q_1(\x)|)\le \lambda$ will
then be included in the set $\max(|Q_0'(\x)|,|Q_1'(\x)|)\le \kappa$ for
some $\kappa\ll \lambda$, where the order constant depends only on $a,b,c,d$,
and hence only on our original choice of $Q_0$ and $Q_1$. As the
reader will recall our convention is to allow such dependence in our
order constants.  Similarly, we shall make invertible real linear
transformations 
on the vector $\x$, the choice of which will depend only on $Q_0$ and
$Q_1$. These will multiply the measure of the set (\ref{ms}) by a
factor depending only on $Q_0$ and
$Q_1$. Moreover, given such a linear transformation $T$, a region
$\mu\ll ||\x||_{\infty}\ll \mu$ will be included in a
corresponding region $\mu\ll ||T\x||_{\infty}\ll\mu$, with 
new implied constants depending on $T$.
Such transformations too will
have no effect on the claim in Lemma \ref{QB}.

We recall that $C(a_0,a_1)=\det(a_0Q_0+a_1Q_1)$. By Lemma \ref{lem:C}
this is non-zero and separable. Moreover $a_0Q_0+a_1Q_1$
will have rank at least two unless $a_0=a_1=0$. The form
$C(a_0,a_1)$ will have at least
one real linear factor, so that we may assume without loss of
generality that $Q_0$ has rank 2, after choosing a new basis for the
real pencil $\langle Q_0,Q_1\rangle_{\R}$. Moreover, by making a
suitable real linear transformation on the vector $\x$ we may assume
either that $Q_0(\x)=x_0^2+x_1^2$ or that $Q_0(\x)=x_0x_1$. Then
$Q_1(0,0,1)=\xi$ cannot vanish, since the pencil is nonsingular, so that a
further real linear transformation puts $Q_1(\x)$ in the shape
$q(x_0,x_1)+\xi x_2^2$. After these transformations it still suffices to show
that the set on which $Q_0$ and $Q_1$ are $O(\lambda)$ has 
measure $O(\lambda^{3/2})$.
When $Q_0(\x)=x_0^2+x_1^2$ this is now clear.
When $Q_0(\x)=x_0x_1$ we replace $Q_1$ by $Q_1-\tau Q_0$ for suitable
$\tau$ so as to make $Q_1$ diagonal. We may then assume, after a rescaling of
variables, that $Q_1(\x)=\pm x_0^2\pm x_1^2\pm x_2^2$, say. Our task
is now to bound the measure of the set on which $|x_0x_1|\le\kappa$ and
$|\pm x_0^2\pm x_1^2+x_2^2|\le \kappa$, where $\kappa$ is a suitable constant 
multiple of $\lambda$. When $|x_0|$ and
$|x_1|$ are at most $\sqrt{\kappa}$ we must have $|x_2|\le\sqrt{3\kappa}$
and the contribution to the measure is $O(\kappa^{3/2})$. Alternatively,
if $|x_0|\ge\sqrt{\kappa}$ say, then
$|x_1|\le \kappa/|x_0|\le\sqrt{\kappa}$, and
$|\pm x_0^2+x_2^2|\le 2\kappa$. If the coefficient of $x_0^2$ is $+1$ we
get a set of measure $O(\kappa^{3/2})$ again, and otherwise we find that
\[\big| |x_0|-|x_2|\big|\le\frac{2\kappa}{\big| |x_0|+|x_2|\big|}
\le\frac{2\kappa}{|x_0|}.\]
Thus for any $|x_0|\ge\sqrt{\kappa}$ each of $x_1$ and $x_2$ are
restricted to sets of measure $\ll \kappa/|x_0|$.  The total measure
is therefore
\[\ll\int_{\sqrt{\kappa}}^\infty\frac{\kappa^2}{x_0^2}\d x_0\ll
\kappa^{3/2},\]
completing the proof of the first claim in the lemma.

For the second statement we observe as above that for each
$\lambda\ll 1$ we have
\[\mathrm{meas}\left\{\x\in\R^3:
\tfrac12\lambda<\max(|Q_0(\x)|,|Q_1(\x)|)\le\lambda\right\}\ll\lambda^{3/2}.\]
The result then follows, on summing over values of $\lambda=2^{-n}\ll 1$, 
since the sum
\[\sum_{\lambda=2^{-n}\ll 1} \lambda^{3/2}\frac{|\log\lambda|^k}{\lambda}\]
converges.
\end{proof}

\subsection{$p$-adic densities}\label{padicdens}

Our arguments will encounter arithmetic functions associated to
\begin{eqnarray*}
\mathcal{S}:  &y_0 Q_0(\x) +y_1 Q_1(\x) = 0  &\subset \P^2_\Z\times\P^1_\Z,\\
\mathcal{M}:  &Q_0(\x) = Q_1(\x) = 0 &\subset \P^2_\Z,\\
\mathcal{C}:  &C(\y)=\det(y_0 Q_0 + y_1 Q_1) = 0  &\subset \P^1_\Z,\\
\mathcal{D}:  &y_0Q_0(\x) +y_1 Q_1(\x) = 0,\; C(\y)=0 & \subset \P^2_\Z\times\P^1_\Z.
\end{eqnarray*}

For each of these we define a corresponding counting function,
\[f_S(m)=\card\cl{S}(\Z/m\Z),\;\;\; f_M(m)=\card \cl{M}(\Z/m\Z),\]
\[f_C(m)=\card \cl{C}(\Z/m\Z),\;\;\; f_D(m)=\card \cl{D}(\Z/m\Z), \]
so that
\begin{align}
f_S(m)&=\phi(m)^{-2}\card\left\{(\x,\y)\Mod{m}:
\begin{array}{c}
\gcd(\x,m)=\gcd(\y,m)=1,\\
y_0 Q_0(\x) +y_1 Q_1(\x)\equiv 0\Mod{m}
\end{array}\right\}, \nonumber \\
f_M(m)&=\rule{0mm}{7mm}\phi(m)^{-1}
\card\left\{\x\Mod{m}:
\begin{array}{c}
\gcd(\x,m)=1,\\
Q_0(\x) \equiv Q_1(\x)\equiv 0\Mod{m}
\end{array}\right\} \label{af1} \\
f_C(m)&=\rule{0mm}{7mm}\phi(m)^{-1}\card\{\y\Mod{m}:\,\gcd(\y,m)=1,\,
C(\y)\equiv 0\Mod{m}\}, \nonumber
\end{align}
and
$$
f_D(m)=\phi(m)^{-2}
\card\left\{(\x,\y)\Mod{m}:
\begin{array}{c}
\gcd(\x,m)=\gcd(\y,m)=1,\\
y_0 Q_0(\x) +y_1 Q_1(\x)\equiv 0\Mod{m},\\
C(\y)\equiv 0\Mod{m}
\end{array}\right\}.
$$
These functions are all multiplicative. 

The $p$-adic factor in  Peyre's constant includes both a $p$-adic
density and a convergence factor. The proof of Lemma \ref{lem:Pic}
shows that  
$$L(s, \Pic \bar{S}) = \zeta(s) L(s,\Z[\mathcal{M}])$$
where $\Z[\mathcal{M}]$ denotes the Galois representation given by the
free $\Z$-module on the $\bar{\Q}$-points of $\mathcal{M}$. Standard
properties of Artin $L$-functions therefore imply that we may take
$(1-1/p)^{\rho}$ as the convergence factors. Therefore we deduce that 
\[\tau_p =\left(1-\frac{1}{p}\right)^{\rho}\lim_{k\to\infty}
\frac{\#\mathcal{S}(\Z/p^k\Z)}{p^{2k}}=
\left(1-\frac{1}{p}\right)^{\rho}\lim_{k\to\infty}f_S(p^k)p^{-2k}.\] 
We shall next examine the above limit and prove that 
$\prod_p\tau_p$ is convergent.

%\subsection{Arithmetic functions}\label{secaf}

Our first goal is to calculate $f_S$  in terms of $f_M$. 
Before stating our result we give the following useful lemma
\begin{lemma}\label{ull}
When $(A_0,A_1,p^k)=p^j$ with $0\le j\le k-1$ the congruence
\[A_0y_0+A_1y_1\equiv 0\Mod{p^k}, \quad \gcd(y_0,y_1,p)=1\]
has $p^j\phi(p^k)$ solutions $\y\Mod{p^k}$.
\end{lemma}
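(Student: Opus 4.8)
The plan is to reduce the congruence to one of smaller modulus and then count directly. First I would write $A_i = p^j B_i$ for $i=0,1$; since $\gcd(A_0,A_1,p^k)=p^j$ with $j\le k-1$, both $B_i$ are integers and at least one of $B_0,B_1$ is coprime to $p$, while the congruence $A_0y_0+A_1y_1\equiv 0\Mod{p^k}$ is equivalent to $B_0y_0+B_1y_1\equiv 0\Mod{p^{k-j}}$. Without loss of generality assume $p\nmid B_0$ (the case $p\nmid B_1$ is symmetric). Then $B_0$ is a unit modulo $p^{k-j}$, and the congruence becomes $y_0\equiv cy_1\Mod{p^{k-j}}$ with $c=-B_0^{-1}B_1$.

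Next I would count the pairs $(y_0,y_1)\Mod{p^k}$ satisfying this relation, temporarily ignoring the coprimality constraint. For each of the $p^k$ residues $y_1\Mod{p^k}$, the value of $y_0$ is pinned down modulo $p^{k-j}$ and hence ranges over a coset of $p^{k-j}\Z/p^k\Z$ of size $p^j$; so there are exactly $p^{k+j}$ such pairs in total.

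Then I would subtract the pairs with $p\mid\gcd(y_0,y_1)$. The only point requiring a little care is that, since $k-j\ge 1$, the relation $y_0\equiv cy_1\Mod{p^{k-j}}$ forces $y_0\equiv cy_1\Mod{p}$; hence $p\mid y_1$ automatically implies $p\mid y_0$, and conversely every forbidden pair must have $p\mid y_1$. Thus the forbidden pairs are exactly those with $p\mid y_1$: there are $p^{k-1}$ such residues $y_1$, each giving $p^j$ choices of $y_0$, for a total of $p^{k+j-1}$. Subtracting yields $p^{k+j}-p^{k+j-1}=p^j(p^k-p^{k-1})=p^j\phi(p^k)$, as claimed (note $k\ge 1$, since $0\le j\le k-1$). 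The argument is entirely elementary, so I do not anticipate a genuine obstacle beyond the small bookkeeping point above about which pairs are forbidden.
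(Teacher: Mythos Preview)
Your proof is correct and follows essentially the same approach as the paper: both reduce to $y_0\equiv -B_0^{-1}B_1 y_1\pmod{p^{k-j}}$ and use that, since $k-j\ge 1$, the coprimality condition $\gcd(y_0,y_1,p)=1$ reduces to $p\nmid y_1$. The only cosmetic difference is that you count all pairs and subtract the bad ones, whereas the paper counts the $\phi(p^k)$ good values of $y_1$ directly, each giving $p^j$ choices for $y_0$.
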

\begin{proof}
Let $A_0=B_0p^j$ and $A_1=B_1p^j$. At least one of $B_0$ and $B_1$ is
coprime to $p$,  
and we suppose without loss of generality that $p\nmid B_0$. Then
\[B_0y_0+B_1y_1\equiv 0\Mod{p^{k-j}}\]
so that 
$y_0\equiv -B_0^{-1}B_1y_1\Mod{p^{k-j}}$. Since $k-j\ge 1$ we see that
$\gcd(y_0,y_1,p)=1$ if and only if $p\nmid y_1$. Thus there are
$\phi(p^k)$ admissible values of  
$y_1$ modulo $p^k$, and each of these determines exactly one value of
$y_0$ modulo $p^{k-j}$. The result then follows. 
\end{proof}

We now give our formula relating $f_S$ and $f_M$.
\begin{lemma}\label{afl1}
For any prime $p$ and any $k\ge 1$ we have
\[\frac{f_S(p^k)}{p^{2k}}=1+p^{-1}+p^{-2}+\frac{f_M(p^k)}{p^{k+1}}+
(1-p^{-1})\sum_{j=1}^k\frac{f_M(p^j)}{p^j}.\]
In particular $f_S(p) = p^2 + (f_M(p) + 1)p + 1.$
\end{lemma}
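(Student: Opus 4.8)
The plan is to count points $(\x,\y) \pmod{p^k}$ on $\mathcal{S}$ by fixing $\x$ and counting the admissible $\y$, stratifying according to the $p$-divisibility of the pair $(Q_0(\x), Q_1(\x))$. Concretely, write $A_i = Q_i(\x)$ and let $p^j \parallel \gcd(A_0,A_1,p^k)$, meaning $j \in \{0,1,\dots,k\}$ where $j=k$ is interpreted as $A_0 \equiv A_1 \equiv 0 \pmod{p^k}$. For a fixed $\x$ with $\gcd(\x,p)=1$, Lemma \ref{ull} tells us that when $0 \le j \le k-1$ the congruence $A_0 y_0 + A_1 y_1 \equiv 0 \pmod{p^k}$ in primitive $\y$ has exactly $p^j \phi(p^k)$ solutions, while when $j = k$ (i.e.\ $p^k \mid A_0, A_1$) every primitive $\y$ works, giving $\phi(p^k) \cdot (p+1) p^{k-1}$ solutions — this last count is just $\#\P^1(\Z/p^k\Z) \cdot \phi(p^k)$ because the weighting by $\phi(p^k)^{-1}$ in the definition of $f_S$ accounts for the scaling ambiguity. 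Thus
\[
\phi(p^k)^{-1} f_S(p^k) \cdot \text{(normalisation)} \;=\; \sum_{j=0}^{k-1} p^j \, \#\{\x \bmod p^k : \gcd(\x,p)=1,\ v_p(Q_0(\x),Q_1(\x)) = j\} \;+\; (p+1)p^{k-1}\, \#\{\x : p^k \mid Q_0,Q_1\},
\]
where I am being slightly schematic about the $\phi$-weights; after dividing by $\phi(p^k)^2$ as in the definition \eqref{af1} of $f_S$ the identity should come out clean.

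The key step is then to express the stratification counts $\#\{\x \bmod p^k : v_p(Q_0(\x),Q_1(\x)) \ge j\}$ in terms of $f_M(p^j)$. The point is that $Q_0(\x) \equiv Q_1(\x) \equiv 0 \pmod{p^j}$ cuts out $\mathcal{M} \bmod p^j$, so the number of primitive $\x \bmod p^j$ satisfying this is $\phi(p^j) f_M(p^j)$; lifting to $\x \bmod p^k$ multiplies by $p^{3(k-j)}$ (each coordinate lifts freely), and then converting from the $\phi(p^j)$-normalisation to counting primitive vectors mod $p^k$ requires care with a factor of $\phi(p^k)/\phi(p^j)$, i.e.\ $p^{k-j}$ since $j \ge 1$. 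So $\#\{\x \bmod p^k \text{ primitive} : p^j \mid Q_0(\x), Q_1(\x)\} = \phi(p^k) p^{3(k-j)} p^{-(k-j)} \cdot (\text{stuff})$ — again I will pin down the exact exponent by a small bookkeeping check. Writing $G(j) = \#\{\x \bmod p^k \text{ primitive} : v_p \ge j\}$, one has $\#\{v_p = j\} = G(j) - G(j+1)$, and then Abel summation (summation by parts) converts $\sum_j p^j (G(j)-G(j+1))$ into the telescoped form $\sum_j (p^j - p^{j-1}) G(j) + \text{boundary}$, which is exactly the shape $(1-p^{-1})\sum_{j=1}^k f_M(p^j)/p^j$ appearing in the claimed formula, with the $j=0$ stratum (where $G(0)$ counts all primitive $\x$, contributing the $1 + p^{-1} + p^{-2}$ term after normalisation) and the $j=k$ boundary term (contributing $f_M(p^k)/p^{k+1}$) peeled off separately.

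The main obstacle will be getting all the normalisation factors — the $\phi(p^k)^{-2}$ in the definition of $f_S$, the $\phi(p^j)^{-1}$ in the definition of $f_M$, and the free-lifting factors $p^{3(k-j)}$ for $\x$ and the $p^j$ from Lemma \ref{ull} — to combine into exactly the stated rational expression, and in particular handling the $j=0$ and $j=k$ strata correctly since Lemma \ref{ull} is stated only for $0 \le j \le k-1$ and $f_M(p^0) = 1$ needs its own treatment. The cleanest way to avoid sign errors is to first verify the $k=1$ case by hand: there $j \in \{0,1\}$, the $j=0$ stratum (where $(Q_0,Q_1) \not\equiv (0,0)$) contributes $(p^2 + p - f_M(p))\cdot 1 \cdot 1$ choices of $[\y] \in \P^1(\F_p)$ lying on a point (namely exactly $1$ point of $\P^1(\F_p)$ for each such $\x$), giving roughly $p^2 + p$ modulo the $f_M$ correction, while the $j=1$ stratum contributes $(p+1) f_M(p)$; adding gives $f_S(p) = p^2 + (f_M(p)+1)p + 1$, matching the ``in particular'' clause. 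Once the $k=1$ case checks out, the general $k$ case follows by the same stratification plus the summation-by-parts step, and a second sanity check is to confirm that as $k \to \infty$ the formula yields a convergent limit $f_S(p^k)/p^{2k} \to 1 + p^{-1} + p^{-2} + (1-p^{-1})\sum_{j \ge 1} f_M(p^j)/p^j$ (using $f_M(p^k)/p^{k+1} \to 0$, which holds since $f_M(p^j) \ll 1$ as $\mathcal{M}$ is finite étale of degree $4$), consistent with the convergence of $\prod_p \tau_p$ asserted after the lemma.
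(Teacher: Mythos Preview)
Your approach is essentially the same as the paper's: stratify the primitive $\x\Mod{p^k}$ by $j=v_p\bigl(\gcd(Q_0(\x),Q_1(\x),p^k)\bigr)$, count the $\x$ in each stratum via $f_M(p^j)$ (using that $\#\{\x\Mod{p^k}\text{ primitive}:p^j\mid Q_0,Q_1\}=p^{3(k-j)}\phi(p^j)f_M(p^j)$ for $j\ge 1$), apply Lemma~\ref{ull} for the $\y$-count, and telescope. Your $k=1$ sanity check has a small slip (the $j=0$ stratum has $p^2+p+1-f_M(p)$ projective $\x$, not $p^2+p-f_M(p)$), but the strategy and all the substantive steps match the paper exactly.
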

\begin{proof}
For the proof we start from the expression (\ref{af1}) and classify
$\x$ according to the value of the highest common factor
$\gcd(Q_0(\x),Q_1(\x),p^k)$. The number of choices for $\x$ with
$\gcd(Q_0(\x),Q_1(\x),p^k)=1$ will be  
\begin{eqnarray*}
\lefteqn{\card\{\x\Mod{p^k}:\,\gcd(\x,p)=1\}}\\
&&-\card\{\x\Mod{p^k}:\,\gcd(\x,p)=1,\,
Q_0(\x) \equiv Q_1(\x)\equiv 0\Mod{p}\}\\
&=&\left(p^{3k}-p^{3k-3}\right)-p^{3k-3}\phi(p)f_M(p),
\end{eqnarray*}
and for each such $\x$ there will be $\phi(p^k)$ possible $\y$, by
Lemma \ref{ull}.  
This case therefore contributes
\[\frac{\left(p^{3k}-p^{3k-3}\right)-p^{3k-3}\phi(p)f_M(p)}{\phi(p^k)p^{2k}}=
1+p^{-1}+p^{-2}-p^{-2}f_M(p)\]
to $f_S(p^k)p^{-2k}$.

There are $\phi(p^k)f_M(p^k)$ choices of $\x$ with
$\gcd(Q_0(\x),Q_1(\x),p^k)=p^k$. This time there are $p^{2k}-p^{2k-2}$
possible $\y$ for each such $\x$. The corresponding contribution to
$f_S(p^k)p^{-2k}$ is therefore 
\[(1+p^{-1})p^{-k}f_M(p^k).\]

When $\gcd(Q_0(\x),Q_1(\x),p^k)=p^j$ with $j$ in the remaining range $1\le
j\le k-1$ the number of relevant $\x$ is 
\begin{eqnarray*}
  \lefteqn{\card\{\x\Mod{p^k}:\,\gcd(\x,p)=1,\, Q_0(\x)
    \equiv Q_1(\x)\equiv 0\Mod{p^j}\}}\\
&&-\card\{\x\Mod{p^k}:\,\gcd(\x,p)=1,\,
Q_0(\x) \equiv Q_1(\x)\equiv 0\Mod{p^{j+1}}\}\\
&=& p^{3k-3j}\phi(p^j)f_M(p^j)-p^{3k-3j-3}\phi(p^{j+1})f_M(p^{j+1}),
\end{eqnarray*}
and for each such $\x$ there are $p^j\phi(p^k)$ corresponding values
of $\y$, by Lemma \ref{ull}. We therefore get a contribution 
\[\frac{\left(p^{3k-3j}\phi(p^j)f_M(p^j)-p^{3k-3j-3}\phi(p^{j+1})
  f_M(p^{j+1})\right)p^j} 
{\phi(p^k)p^{2k}}=p^{-j}f_M(p^j)-p^{-j-2}f_M(p^{j+1})\]
to $f_S(p^k)p^{-2k}$, for each $j$.

In total we then find that
\begin{eqnarray*}
\frac{f_S(p^k)}{p^{2k}}&=&1+p^{-1}+p^{-2}-p^{-2}f_M(p)+(1+p^{-1})p^{-k}f_M(p^k)\\
&&\hspace{1cm}+\sum_{j=1}^{k-1}\left(p^{-j}f_M(p^j)-p^{-j-2}f_M(p^{j+1})\right),
\end{eqnarray*}
and the lemma follows.
\end{proof}

We can also express $f_S(p)$ in terms of $f_C(p)$ and $f_D(p)$.
\begin{lemma}\label{afl2}
For any prime $p$ we have
\[f_S(p)=p^2+2p+1-(p+1)f_C(p)+f_D(p)\]
and hence
\[pf_M(p)+(p+1)f_C(p)=f_D(p)+p.\]
\end{lemma}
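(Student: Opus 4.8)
The plan is to count $\mathcal{S}(\F_p)$ by fibering over $\P^1_{\F_p}$ via the conic bundle map $\pi_1$, and separately to count $\mathcal{C}(\F_p)$ and $\mathcal{D}(\F_p)$ in the same way, so that the three counts become expressible through a single partition of $\P^1(\F_p)$ according to the rank and split/non-split type of the fibre $y_0Q_0 + y_1Q_1$. First I would recall that $f_S(p) = \phi(p)^{-2}\#\{(\x,\y): \gcd(\x,p)=\gcd(\y,p)=1,\ y_0Q_0(\x)+y_1Q_1(\x)\equiv 0\}$, and similarly for $f_C, f_D$; since $\phi(p) = p-1$ and $f_C(p)$, $f_D(p)$ count the primitive $\y$ (resp. $(\x,\y)$) solutions divided by the appropriate power of $p-1$, the cleanest route is to pass to honest point counts on the projective schemes and use $\#\P^1(\F_p) = p+1$.

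Concretely, I would write $\#\mathcal{S}(\F_p) = \sum_{[\y]\in\P^1(\F_p)} \#\{[\x]\in\P^2 : Q_{[\y]}(\x) = 0\}$, where $Q_{[\y]} = y_0Q_0 + y_1Q_1$. By Lemma \ref{lem:C} every member of the pencil over $\F_p$ has rank $\geq 2$ (a rank-1 or rank-0 form would make $C$ inseparable or identically zero), so each fibre is a plane conic that is either smooth (rank $3$), a split pair of lines (rank $2$, reducible over $\F_p$), or a non-split pair of lines (rank $2$, a conjugate pair meeting in an $\F_p$-point). The number of $\F_p$-points is $p+1$ in the smooth case, $2p+1$ in the split-rank-$2$ case, and $1$ in the non-split-rank-$2$ case. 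Now $\mathcal{C}(\F_p)$ is precisely the set of $[\y]$ with $C(\y)=0$, i.e. the rank-$2$ fibres, so $f_C(p) = (\text{number of rank-2 fibres})$ once I reconcile the $\phi$-normalisation; and $\mathcal{D}(\F_p)$, being the points of $\mathcal{S}$ lying over $\mathcal{C}$, contributes $2p+1$ for each split rank-$2$ fibre and $1$ for each non-split one. Let $a$ be the number of smooth fibres, $b$ the number of split rank-$2$ fibres, $c$ the number of non-split rank-$2$ fibres, so $a + b + c = p+1$ and $b + c = \#\mathcal{C}(\F_p)$ (suitably normalised). Then $\#\mathcal{S}(\F_p) = a(p+1) + b(2p+1) + c$, while $\#\mathcal{D}(\F_p) = b(2p+1) + c$. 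Substituting $a = (p+1) - (b+c)$ gives $\#\mathcal{S}(\F_p) = (p+1)^2 - (p+1)(b+c) + \#\mathcal{D}(\F_p)$, which is exactly $f_S(p) = p^2 + 2p + 1 - (p+1)f_C(p) + f_D(p)$ after translating back to the $f$-normalisation.

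The second identity then follows by eliminating: from this new formula and from Lemma \ref{afl1}, which gives $f_S(p) = p^2 + (f_M(p)+1)p + 1$, one subtracts to obtain $(f_M(p)+1)p + 1 = p^2 + 2p + 1 - (p+1)f_C(p) + f_D(p) - p^2 = 2p + 1 - (p+1)f_C(p) + f_D(p)$, hence $p f_M(p) + (p+1)f_C(p) = f_D(p) + p$, as claimed. The main obstacle I anticipate is purely bookkeeping: carefully checking the $\phi(p) = p-1$ normalisations so that the affine counting functions $f_S, f_C, f_D$ convert correctly into the projective point counts $\#\mathcal{S}(\F_p), \#\mathcal{C}(\F_p), \#\mathcal{D}(\F_p)$ — in particular verifying that a rank-$2$ conic $\ell_1\ell_2 = 0$ with $\ell_i$ defined over $\F_p$ really has $2p+1$ points in $\P^2(\F_p)$ (the two lines each contribute $p+1$, overlapping in their common point) and that the non-split case, where the conic is a single $\F_p$-rational point, contributes exactly $1$. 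Everything else is a short linear manipulation.
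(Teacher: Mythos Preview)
Your approach is essentially the paper's, phrased projectively rather than via affine primitive-vector counts: both fibre $\mathcal{S}(\F_p)$ over $\P^1(\F_p)$, use that smooth fibres contribute $p+1$ points each, and absorb the singular-fibre contribution as $f_D(p)$. The paper's version is slightly leaner in that it never splits the singular fibres into split and non-split cases---that refinement plays no role in the identity and only reappears later, in Lemma~\ref{LC}.

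There is one gap. Your appeal to Lemma~\ref{lem:C} to force every fibre over $\F_p$ to have rank at least $2$ is not valid: that lemma concerns the pencil over the base field where $S$ is smooth, not the reduction modulo $p$. For primes of bad reduction the pencil over $\F_p$ can degenerate further; indeed, under the paper's conventions the matrices $Q_0$ and $Q_1$ vanish identically modulo $2$ and $3$, so every fibre there has rank $0$. The fix is simply to drop the rank claim, since you never actually use it. The contribution from \emph{all} $[\y]$ with $C(\y)\equiv 0\pmod p$ is $f_D(p)$ by definition, whatever the ranks of those fibres, so your substitution
\[
\#\mathcal{S}(\F_p)=\bigl((p+1)-f_C(p)\bigr)(p+1)+f_D(p)
\]
goes through unchanged. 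The paper verifies $p=2$ by hand; in your setup this becomes automatic once the rank hypothesis is removed, since for $p=2$ there are no smooth fibres, $f_C(2)=p+1$, and the identity reduces to $f_S(2)=f_D(2)$.
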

\begin{proof}
One can check this directly in the case $p=2$, using the fact
that $Q_0,Q_1$ and $C$ have been chosen to vanish modulo 2.
In general we work from (\ref{af1}) and first consider points with $p\nmid C(\y)$. There
will be $p^2-1-\phi(p)f_C(p)$ such vectors $\y$. The corresponding
conics are non-singular modulo $p$, so that each produces $p^2-1$
choices of $\x$. When $p\mid C(\y)$ there are $\phi(p)^2f_D(p)$ pairs
$(\x,\y)$ by the definition of $f_D$. It follows that 
\[\phi(p)^2f_S(p)=\left(p^2-1-\phi(p)f_C(p)\right)(p^2-1)+\phi(p)^2f_D(p),\]
yielding the first assertion of the lemma. The second assertion then
follows via  
the case $k=1$ of Lemma \ref{afl1}.
\end{proof}

For primes of good reduction there is a simple formula for
$f_D(p)$. It is convenient to introduce the notation 
\[Q_\y(\x)=y_0 Q_0(\x)+y_1 Q_1(\x).\]
When $Q_\y$ has rank 2 over $\F_p$ we write $\chi(p;\y)=+1$ when the
form factors over $\F_p$ and $\chi(p;\y)=-1$ otherwise; and we set
$\chi(p;\y)=0$ when the rank is different from 2. If
$a=[\y]\in\P^1_{\F_p}$ we write $\chi(p;a)=\chi(p;\y)$, which is
clearly well defined. 
\begin{lemma}\label{LC}
Suppose that $p\nmid\mathrm{Disc}(C)$. Then
\[f_D(p)=f_C(p)+p\sum_{\substack{a\in\P_{\F_p}^1\\ C(a)=0}}(1+\chi(p;a)),\]
\[f_M(p)+f_C(p)=1+\sum_{\substack{a\in\P_{\F_p}^1\\ C(a)=0}}(1+\chi(p;a)),\]
and
\[f_M(p)=1+\sum_{\substack{a\in\P_{\F_p}^1\\ C(a)=0}}\chi(p;a).\]
\end{lemma}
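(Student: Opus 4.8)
The plan is to evaluate $f_D(p)$ geometrically, by slicing the incidence scheme $\mathcal{D}$ over the zero locus of $C$ in $\P^1$ and counting $\F_p$-points on each degenerate conic of the pencil; the remaining two identities then follow by formal manipulation with Lemma~\ref{afl2}.

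First I would unwind the projective bookkeeping in the definition of $f_D(p)$. Since every point of $\P^2_{\F_p}$ (resp.\ $\P^1_{\F_p}$) has exactly $\phi(p)$ nonzero representatives, and since the zero locus of $Q_\y$ depends only on the class $a=[\y]\in\P^1_{\F_p}$, the definition rearranges to
\[
  f_D(p)=\sum_{\substack{a\in\P^1_{\F_p}\\ C(a)=0}}N(a),\qquad
  N(a):=\#\{[\x]\in\P^2_{\F_p}:Q_a(\x)=0\},
\]
and the same bookkeeping gives $f_C(p)=\#\{a\in\P^1_{\F_p}:C(a)=0\}$.

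The second step is to determine $N(a)$ for the $a$ with $C(a)=0$. Here the hypothesis enters: since $6\mid\mathrm{Disc}(C)$ by our normalisation, $p\nmid\mathrm{Disc}(C)$ forces $p\ge 5$, so $\F_p$ has characteristic $\neq 2$ and the binary cubic $C\bmod p$ is separable; Lemma~\ref{lem:C} applied over $\F_p$ then shows that every such $Q_a$ has rank exactly $2$. Diagonalising, $Q_a$ is projectively equivalent to $\alpha w_0^2+\beta w_1^2$ for suitable independent linear forms $w_0,w_1$ with $\alpha\beta\neq 0$. If $-\beta/\alpha$ is a square in $\F_p$ (equivalently $\chi(p;a)=+1$), this factors into two distinct linear forms, so its zero locus is a pair of lines meeting in a point and $N(a)=2(p+1)-1=2p+1$; otherwise ($\chi(p;a)=-1$) the zero locus is the single point $\{w_0=w_1=0\}$ and $N(a)=1$. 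In both cases $N(a)=1+p(1+\chi(p;a))$, and summing over the $f_C(p)$ points $a$ with $C(a)=0$ yields the first formula.

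Finally, substituting the first formula into the identity $pf_M(p)+(p+1)f_C(p)=f_D(p)+p$ of Lemma~\ref{afl2} and dividing by $p$ gives $f_M(p)+f_C(p)=1+\sum_{C(a)=0}(1+\chi(p;a))$, the second formula; since $\sum_{C(a)=0}1=f_C(p)$, subtracting this from both sides gives $f_M(p)=1+\sum_{C(a)=0}\chi(p;a)$, the third. I do not expect a genuine obstacle: the only delicate point is the passage from the pencil $\langle Q_0,Q_1\rangle$ over $\Q$ to its reduction over $\F_p$, and that is precisely what separability of $C\bmod p$ licenses through Lemma~\ref{lem:C}.
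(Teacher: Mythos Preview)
Your proof is correct and follows essentially the same route as the paper: rewrite $f_D(p)$ as a sum of point counts $N(a)$ over the $a\in\P^1_{\F_p}$ with $C(a)=0$, evaluate $N(a)$ according to whether the rank-$2$ form $Q_a$ splits, then derive the remaining two identities from Lemma~\ref{afl2}. The only minor difference is that you invoke Lemma~\ref{lem:C} over $\F_p$ to conclude the rank is exactly $2$, whereas the paper only rules out rank $0$ and then handles both the rank $1$ and rank $2$ cases (the rank $1$ case contributing $N(a)=p+1=1+p(1+\chi(p;a))$ with $\chi(p;a)=0$, so the formula is consistent either way); your argument is in fact the sharper one.
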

\begin{proof}
If $Q_\y$ were to vanish identically over $\F_p$ for some choice of
$\y$ coprime to $p$ then the cubic form $C$ would have a linear factor
of multiplicity 3 modulo
$p$, contradicting our assumption that $p\nmid\mathrm{Disc}(C)$.
We can therefore assume that $Q(\y)$ always has rank 1 or 2 for
the points $a=[\y]$ under consideration. When $Q_\y$ has rank 1 over
$\F_p$ it has $p+1$ projective zeros modulo $p$. In the rank 2 case
the number of zeros is $2p+1$ if $Q_\y$ factors, and 1 if not. This
produces the first assertion of the lemma. The second part is then an
immediate consequence of  Lemma \ref{afl2}, and the final assertion 
follows from the definition of $f_C$.
\end{proof}

We next consider the size of $f_C(m)$ and $f_M(m)$. 
\begin{lemma}\label{fests}
If $p\nmid\mathrm{Disc}(C)$ we have $f_C(p^e)=f_C(p)\le 3$ for all
$e\ge 1$, while if  
$p\mid\mathrm{Disc}(C)$ we have $f_C(p^e)\ll_p 1$. If $\ep>0$ is fixed
we have $f_C(m)\ll\tau_3(m)\ll_\ep m^{\ep}$. 

There is a finite set $B$ of primes of bad reduction for $\cl{M}$ such
that $f_M(p^e)=f_M(p)\le 4$ for all $p\not\in B$ and all $e\ge 1$.
When $p\in B$ we have $f_M(p^e)\ll_p 1$ for all $e\ge 1$. Moreover if
$\ep>0$ is fixed we have $f_M(m)\ll\tau_4(m)\ll_\ep m^{\ep}$. 
\end{lemma}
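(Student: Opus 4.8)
The statement splits into two parallel halves, one for $f_C$ and one for $f_M$, and both follow the same template: use the étale/good-reduction structure to reduce to prime moduli, count points over $\F_p$ crudely, then invoke multiplicativity and a divisor-bound estimate. First I would treat $f_C$. The zero locus $\mathcal C \subset \P^1_\Z$ of the separable cubic $C(\y)$ is finite over $\Z[1/\mathrm{Disc}(C)]$, so for $p \nmid \mathrm{Disc}(C)$ Hensel's lemma gives $f_C(p^e) = f_C(p)$ for all $e \ge 1$, and $f_C(p)$ is just the number of $\F_p$-points of a degree-$3$ subscheme of $\P^1$, hence at most $3$. For the finitely many $p \mid \mathrm{Disc}(C)$ one has the trivial bound $f_C(p^e) \ll_p 1$ simply because $\mathcal C$ is a fixed scheme (for each such $p$ the sequence $f_C(p^e)$ is eventually constant, or one can just bound it by the number of $\Z/p^e$-points directly; in any case it is $O_p(1)$ uniformly in $e$). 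Multiplicativity of $f_C$ (already noted in the text) then yields $f_C(m) \le \prod_{p^e \| m} \max(3, C_p)$, and pulling out the finitely many bad primes into an $O(1)$ constant gives $f_C(m) \ll \tau_3(m)$; the bound $\tau_3(m) \ll_\ep m^\ep$ is standard.

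For $f_M$ the argument is identical in structure, using Lemma \ref{lem:lines}(1): the scheme $\mathcal M$ is finite étale of degree $4$ over the generic fibre, hence is finite étale over $\Z[1/N]$ for some integer $N$, and we let $B$ be the set of primes dividing $N$ (the primes of bad reduction for $\mathcal M$). For $p \notin B$, étaleness plus Hensel give $f_M(p^e) = f_M(p)$ for all $e$, and $f_M(p) = \#\mathcal M(\F_p) \le \deg \mathcal M = 4$. For $p \in B$ we again have $f_M(p^e) \ll_p 1$ uniformly in $e$ because $\mathcal M$ is a fixed finite-type $\Z$-scheme. Multiplicativity then gives $f_M(m) \ll \tau_4(m) \ll_\ep m^\ep$.

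I do not expect any serious obstacle here; the lemma is essentially bookkeeping around the good-reduction facts already established (Lemma \ref{lem:C} for $C$, Lemma \ref{lem:lines} for $\mathcal M$). The one point requiring a little care is the uniformity in $e$ of the bound $f_M(p^e) \ll_p 1$ (and likewise for $f_C$) at the bad primes: one must observe that although the individual bound depends on $p$, it does not depend on the exponent $e$. This is immediate because for a finite $\Z$-scheme the fibre $\mathcal M(\Z/p^e\Z)$ stabilises once $e$ exceeds the valuation of the relevant discriminant/conductor, so $\sup_e f_M(p^e) < \infty$ for each fixed $p$. With that remark in place, the passage $f_C(m) \ll \tau_3(m)$, $f_M(m) \ll \tau_4(m)$ is just multiplicativity together with the elementary fact that $f \le g$ pointwise on prime powers with $g$ multiplicative and $f$ multiplicative forces $f \le g$, after absorbing the finitely many exceptional prime powers into an implied constant.
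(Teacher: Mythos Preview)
Your proposal is correct and follows the same overall architecture as the paper: Hensel at good primes, degree bound on $\F_p$-points, and multiplicativity to assemble the global estimate. For $f_C$ (both good and bad primes) and for $f_M$ at good primes, your argument is essentially identical to the paper's.

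The one place where the treatments genuinely diverge is $f_M(p^e)\ll_p 1$ at bad primes. You invoke the general principle that a finite $\Z$-scheme with \'etale generic fibre has $\#\mathcal M(\Z/p^e\Z)$ eventually constant in $e$; this is correct, but your justification (``stabilises once $e$ exceeds the valuation of the relevant discriminant/conductor'') is a sketch rather than a proof, and one should note that the principle fails for arbitrary finite $\Z$-schemes (e.g.\ $\Z[x]/(x^2)$), so the \'etaleness of the generic fibre is doing real work. The paper instead gives a completely explicit argument: it uses the Nullstellensatz to produce an integer $R$ and polynomial identities showing that the $2\times 2$ Jacobian minors $\Delta_{ij}$ of $(Q_0,Q_1)$ cannot all vanish to high $p$-order at a primitive point, then carries out a hands-on Hensel lift with these minors to show the count of lifts is $O_p(p^e)$ uniformly in the base point. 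The paper's route is longer but elementary and self-contained; yours is shorter but leans on a structural fact that, if you were writing this out in full, would itself require a compactness-plus-Hensel argument of comparable length.
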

Here $\tau_k(m)$ is the $k$-fold divisor function. We hope the reader
will not confuse this with the notation $\tau_p$ used for the local densities
for our del Pezzo surface $S$. We should also remind the reader that our 
order constants are always allowed to depend on
$Q_0$ and $Q_1$, but not on other parameters unless explicitly
mentioned. 

The above result is part of a much more general phenomenon. 
However we have chosen to give an argument in terms of the explicit 
equations for $\mathcal{C}$ and $\mathcal{M}$.

\begin{proof}
If $p\nmid\mathrm{Disc}(C)$ then $C$ cannot vanish identically over
$\F_p$, so that it has at most 3 projective zeros. Moreover Hensel's
Lemma shows that $f_C(p^e)=f_C(p^{e+1})$ for $e\ge 1$. When
$p\mid\mathrm{Disc}(C)$ the $p$-adic valuation of $\nabla C(\y)$ is
bounded below as $\y$ runs over vectors coprime to $p$. Hence Hensel's
Lemma shows that $f_C(p^e)$ is bounded for each such $p$. To estimate
$f_C(m)$ in general we can ignore the boundedly many prime factors of
$m$ which divide 
$\mathrm{Disc}(C)$, and deduce that $f_C(m)\ll \tau_3(m)$, whence
$f_C(m)\ll_\ep m^\ep$ for any fixed $\ep>0$.

To treat $f_M$ we begin by defining quadratic forms
\[\Delta_{ij}=\det\left(\begin{array}{cc} \frac{\partial Q_0}{\partial x_i} &
\frac{\partial Q_0}{\partial x_j} \\
\frac{\partial Q_1}{\partial x_i} & \frac{\partial Q_1}{\partial x_j} 
\end{array}\right),\quad (i,j)=(0,1), (1,2),\mbox{ or } (2,0).\]
These vanish simultaneously if and only if the Jacobian of $Q_0$ and
$Q_1$ has rank less than 2. Since our surface $S$ is smooth it follows
that $Q_0,Q_1,\Delta_{01},\Delta_{12}$ 
and $\Delta_{20}$ vanish simultaneously only at $\x=\mathbf{0}$. The
Nullstellensatz then implies that there are positive integers $R$ and
$d$, and forms  
\[F_{0k}, F_{1k}, F_{01k},F_{12k},F_{20k}\in\Z[x_0,x_1,x_2]\]
such that
\[F_{0k}(\x)Q_0(\x)+F_{1k}(\x)Q_1(\x)+F_{01k}(\x)\Delta_{01}(\x)+
F_{12k}(\x)\Delta_{12}(\x)+F_{20k}(\x)\Delta_{20}(\x)\]
\beql{elim}
=Rx_k^d,\quad k=0,1,2.
\eeq
Since $R$ may be considered fixed once $Q_0$ and $Q_1$ are given, we
may take $B$ as the set of primes dividing $R$. For a prime $p\nmid R$
the relation (\ref{elim}) shows that the intersection $Q_0=Q_1=0$ is
smooth over $\F_p$, and so has at most 4 points. Moreover the Jacobian
of $Q_0$ and $Q_1$ has full rank modulo $p$ at these points, so that any projective
point modulo $p^e$ lifts uniquely to a point modulo $p^{e+1}$, by
Hensel's Lemma. Thus $f_M(p^e)=f_M(p)\le 4$ for $e\ge 1$ whenever
$p\nmid R$. 

Now suppose that $p^r|| R$ with $r\ge 1$. For each $\x_0\in\Z^3$
coprime to $p$ and any $e\ge r$ we will estimate 
\beql{CF}
\card\{\u\Mod{p^e}:\, Q_0(\x_0+p^{r+1}\u)\equiv 
Q_1(\x_0+p^{r+1}\u)\equiv 0\Mod{p^{r+1+e}}\},
\eeq
and it will be sufficient to show that the number of $\u$ is
$O_p(p^e)$, uniformly in $\x_0$. 
We have 
\[Q_i(\x_0+p^{r+1}\u)=Q_i(\x_0)+p^{r+1}\nabla Q_i(\x_0).\u+p^{2r+2}Q_i(\u).\]
Thus there are no possible $\u$ unless $p^{r+1}\mid Q_i(\x_0)$ for $i=1$
and 2, as we now assume. The relation (\ref{elim}) then shows that  
$\gcd(\Delta_{01}(\x_0),\Delta_{12}(\x_0),\Delta_{20}(\x_0),p^{r+1})=p^s$
for some exponent $s\le r$, and we may suppose without loss of generality
that  
$p^s|| \Delta_{01}(\x_0)$. If we now set
\[Q_0'(\x)=\frac{\partial Q_1}{\partial x_1}(\x_0)Q_0(\x)-
\frac{\partial Q_0}{\partial x_1}(\x_0)Q_1(\x)\]
and
\[Q_1'(\x)=-\frac{\partial Q_1}{\partial x_0}(\x_0)Q_0(\x)+
\frac{\partial Q_0}{\partial x_0}(\x_0)Q_1(\x)\]
we find that
\[Q'_0(\x_0+p^{r+1}\u)=Q'_0(\x_0)
+p^{r+1}\{\Delta_{01}(\x_0)u_0-\Delta_{12}(\x_0)u_2\}+p^{2r+2}Q'_0(\u)\]
and
\[Q'_1(\x_0+p^{r+1}\u)=Q'_1(\x_0)
+p^{r+1}\{\Delta_{01}(\x_0)u_1-\Delta_{20}(\x_0)u_2\}+p^{2r+2}Q'_1(\u).\]
If $p^{r+1+e}$ divides both $Q_0(\x_0+p^{r+1}\u)$ and $Q_0(\x_0+p^{r+1}\u)$ then
$p^{r+1+e}$ divides both $Q_0'(\x_0+p^{r+1}\u)$ and $Q_1'(\x_0+p^{r+1}\u)$, and
since $e\ge r$ we now see that there can be no solution $\u$ unless
$p^{r+s+1}$ divides both $Q'_0(\x_0)$ and $Q'_1(\x_0)$. In the latter
case we find that  
$q_0(\u)\equiv q_1(\u)\equiv 0\Mod{p^{e-s}}$, with
\[q_0(\u)=a_0+\lambda u_0-b_0u_2+p^{r-s+1}Q_0'(\u),\quad
q_1(\u)=a_1+\lambda u_1-b_1 u_2+p^{r-s+1}Q_1'(\u),\]
where
\[a_0=p^{-r-s-1}Q_0'(\x_0),\quad a_1=p^{-r-s-1}Q_1'(\x_0),\quad 
\lambda= p^{-s}\Delta_{01}(\x_0),\]
\[b_0=p^{-s}\Delta_{12}(\x_0),\quad\mbox{and}\quad
b_1=p^{-s}\Delta_{20}(\x_0).\]
By assumption $p^s||\Delta_{01}(\x_0)$, so that $\lambda$ is coprime to
$p$. Thus for any fixed value of $u_2$ the congruences $q_0(\u)\equiv
q_1(\u)\equiv 0\Mod{p}$ have a unique solution $(u_0,u_1)\Mod{p}$,
which lifts to a unique solution modulo $p^{e-s}$. We deduce that
the congruences $q_0(\u)\equiv q_1(\u)\equiv 0\Mod{p^{e-s}}$ have
exactly $p^{e-s}$ solutions $\u$ modulo $p^{e-s}$, whence
the counting function (\ref{CF}) is $O_p(p^e)$ as required. This is
enough to show that $f_M(p^e)\ll_p 1$, and the bound
$f_M(m)\ll\tau_4(m)\ll_\ep m^\ep$ then follows by the same argument
that we used for $f_C(m)$.  
\end{proof}

Lemmas \ref{afl1} and \ref{fests} produce the following corollary.
\begin{lemma}\label{tl}
  The sum $\sum_{k=1}^\infty f_M(p^k)p^{-k}$ converges, and the limit
 \[\varpi_p=\lim_{k\to\infty}\frac{f_S(p^k)}{p^{2k}}\]
 exists. Indeed
 \[\varpi_p=1+p^{-1}+p^{-2}+(1-p^{-1})\sum_{j=1}^\infty\frac{f_M(p^j)}{p^j}\]
and
\[\frac{f_S(p^k)}{p^{2k}}=\varpi_p+O(p^{-k-1}).\] 
Finally, in the notation of \S \ref{padicdens} we have
\[\tau_p=(1-p^{-1})^{\rho}\varpi_p=1+\frac{f_M(p)-\rho+1}{p}+O(p^{-2}).\]
\end{lemma}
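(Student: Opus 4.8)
The plan is to prove Lemma \ref{tl} by assembling the pieces already in place. First I would establish convergence of $\sum_{k\ge 1} f_M(p^k)p^{-k}$: by Lemma \ref{fests} we have $f_M(p^k)\ll_p 1$ for every prime, and moreover $f_M(p^k)=f_M(p)\le 4$ for all $k\ge 1$ whenever $p\notin B$. In either case $f_M(p^k)p^{-k}$ is bounded by a constant times $p^{-k}$, so the series converges (geometrically). This immediately gives, via the formula in Lemma \ref{afl1}, that
\[
\frac{f_S(p^k)}{p^{2k}} = 1+p^{-1}+p^{-2}+\frac{f_M(p^k)}{p^{k+1}}
+(1-p^{-1})\sum_{j=1}^k\frac{f_M(p^j)}{p^j}
\]
converges as $k\to\infty$ to the claimed value $\varpi_p$, since the term $f_M(p^k)p^{-k-1}\to 0$ and the finite sum converges to the infinite sum.

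Next I would quantify the rate of convergence. Subtracting,
\[
\varpi_p - \frac{f_S(p^k)}{p^{2k}} = -\frac{f_M(p^k)}{p^{k+1}}
+(1-p^{-1})\sum_{j=k+1}^\infty \frac{f_M(p^j)}{p^j}.
\]
Here I use $f_M(p^j)\ll_p 1$ to bound the tail: both the single term and the tail sum are $O_p(p^{-k-1})$, where the implied constant may depend on $p$ through the bad set $B$. But one must be slightly careful to get the uniform statement $O(p^{-k-1})$ with no dependence on $p$: for $p\notin B$ we have $f_M(p^j)\le 4$ for all $j$, so the single term is $\le 4p^{-k-1}$ and the tail is $\le 4(1-p^{-1})\sum_{j\ge k+1}p^{-j}\le 4p^{-k-1}/(1-p^{-1})\cdot(1-p^{-1})=4p^{-k}\cdot p^{-1}/(1-p^{-1})$, hence $O(p^{-k-1})$ uniformly. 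For the finitely many $p\in B$ the bound $O_p(p^{-k-1})$ absorbs into an overall implied constant depending only on $Q_0,Q_1$ (since $B$ is finite and determined by $Q_0,Q_1$), which is within our conventions. This yields $f_S(p^k)/p^{2k}=\varpi_p+O(p^{-k-1})$.

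Finally I would read off $\tau_p$. By the formula derived in \S\ref{padicdens}, $\tau_p=(1-p^{-1})^\rho\lim_{k\to\infty}f_S(p^k)p^{-2k}=(1-p^{-1})^\rho\varpi_p$. To obtain the asymptotic expansion I expand both factors to order $p^{-2}$: from the formula for $\varpi_p$,
\[
\varpi_p = 1+p^{-1}+p^{-2}+(1-p^{-1})\left(\frac{f_M(p)}{p}+O(p^{-2})\right)
= 1+\frac{1+f_M(p)}{p}+O(p^{-2}),
\]
using $f_M(p^j)\ll 1$ to bound $\sum_{j\ge 2}f_M(p^j)p^{-j}=O(p^{-2})$. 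Meanwhile $(1-p^{-1})^\rho = 1-\rho p^{-1}+O(p^{-2})$. Multiplying,
\[
\tau_p = \left(1-\frac{\rho}{p}+O(p^{-2})\right)\left(1+\frac{1+f_M(p)}{p}+O(p^{-2})\right)
= 1+\frac{f_M(p)-\rho+1}{p}+O(p^{-2}),
\]
as claimed. I do not expect any serious obstacle here; the only point requiring mild care is keeping the implied constant in $O(p^{-k-1})$ uniform in $p$, which is handled by splitting off the finite bad set $B$ and using the clean bound $f_M(p^k)\le 4$ at good primes from Lemma \ref{fests}.
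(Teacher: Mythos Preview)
Your proof is correct and follows exactly the approach the paper intends: the paper simply states that the lemma is a corollary of Lemmas \ref{afl1} and \ref{fests}, and you have filled in precisely those details. Your care about uniformity in $p$ for the $O(p^{-k-1})$ bound, handled by separating the finite bad set $B$ from the good primes where $f_M(p^k)\le 4$, is exactly what is needed and is consistent with the paper's convention that implied constants may depend on $Q_0,Q_1$.
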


%\rog{I've added a new lemma}

We have shown that $\varpi_p=\lim_{k\to\infty}f_S(p^k)p^{-2k}$, and it will be convenient to
have also the following alternative expression.
\begin{lemma}\label{varpialt}
Let
\[\widehat{\cl{S}}(q)=\card\left\{(\x,\y)\Mod{q}: y_0 Q_0(\x) +y_1 Q_1(\x)\equiv 0\Mod{q}
\right\}.\]
Then
\[\varpi_p=\lim_{k\to\infty}p^{-4k}\widehat{\cl{S}}(p^k).\]
\end{lemma}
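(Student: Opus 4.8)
The plan is to stratify the unrestricted count $\widehat{\cl{S}}(p^k)$ according to the $p$-adic contents of $\x$ and of $\y$, and to use the bihomogeneity of the form $Q_\y(\x)=y_0Q_0(\x)+y_1Q_1(\x)$ to express each stratum through the \emph{primitive} counting function $g_S(p^m):=\phi(p^m)^2f_S(p^m)$ for a smaller modulus. Write $N_k:=p^{-4k}\widehat{\cl{S}}(p^k)$; we must show $N_k\to\varpi_p$. First I would record the elementary fact that, for $0\le a<k$, the number of $\x\in(\Z/p^k\Z)^3$ with $p$-content exactly $p^a$ is the number of primitive vectors modulo $p^{k-a}$, namely $p^{3(k-a)}(1-p^{-3})$; in addition there is the single vector $\x\equiv\mathbf{0}$. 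The analogous statement for $\y\in(\Z/p^k\Z)^2$ has $3$ replaced by $2$. Writing such vectors as $\x=p^a\u$ and $\y=p^b\v$ with $\u$ primitive modulo $p^{k-a}$ and $\v$ primitive modulo $p^{k-b}$, bihomogeneity gives $Q_\y(\x)=p^{2a+b}Q_\v(\u)$, so the congruence $Q_\y(\x)\equiv0\Mod{p^k}$ holds automatically when $2a+b\ge k$, and is equivalent to $Q_\v(\u)\equiv0\Mod{p^{m}}$ with $m:=k-2a-b$ when $2a+b<k$.

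Next I would dispose of the strata with $2a+b\ge k$ (including the stratum $\x\equiv\mathbf{0}$), on which the congruence is vacuous: a crude count of the available $(\x,\y)$ bounds their total contribution to $\widehat{\cl{S}}(p^k)$ by $O(p^{7k/2})$, that is, $O(p^{-k/2})$ towards $N_k$, which is negligible. For a stratum with $m\ge 1$, the congruence depends only on $\u,\v$ modulo $p^m$; since $k-a\ge m$ and $k-b\ge m$, and since every lift of a vector primitive modulo $p^m$ remains primitive, counting lifts shows that the number of $(\x,\y)$ in this stratum satisfying the congruence is $p^{3(k-a-m)+2(k-b-m)}g_S(p^m)$. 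Substituting $g_S(p^m)=p^{2m}(1-p^{-1})^2f_S(p^m)$ and simplifying the exponents shows that this stratum contributes
\[(1-p^{-1})^2\,p^{3a+b-2k}\,f_S(p^m)\]
towards $N_k$.

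Finally I would sum over the remaining strata. By Lemma \ref{tl}, $f_S(p^m)=p^{2m}\varpi_p+O(p^{m-1})$, and since $2m+3a+b-2k=-a-b$ and $m+3a+b-2k-1=a-k-1$, a stratum with $m\ge 1$ contributes $(1-p^{-1})^2\varpi_p\,p^{-a-b}+O(p^{a-k-1})$ towards $N_k$. Since $0\le b<k-2a$ forces $a<k/2$, summing the error term over the $O(k)$ relevant pairs $(a,b)$ gives $O(kp^{-k/2-1})=o(1)$; and as $\sum_{a,b\ge0}p^{-a-b}=(1-p^{-1})^{-2}$ while the omitted tail satisfies $\sum_{2a+b\ge k}p^{-a-b}\le\sum_{a+b\ge k/2}p^{-a-b}=o(1)$ (because $2a+b\ge k$ forces $a+b\ge k/2$), the main term tends to $(1-p^{-1})^2\varpi_p\cdot(1-p^{-1})^{-2}=\varpi_p$. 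Hence $N_k\to\varpi_p$, as required.

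The displayed reductions are routine bookkeeping — keeping track of which lifts of primitive vectors stay primitive and getting every power of $p$ exactly right. The one point that demands care, and where I expect the only real obstacle, is verifying that both the heavy-content strata and the error term coming from Lemma \ref{tl} are genuinely negligible after the normalisation by $p^{-4k}$; this relies precisely on the implication $2a+b<k\Rightarrow a<k/2$.
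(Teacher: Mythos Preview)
Your proof is correct and follows essentially the same approach as the paper: stratify by the $p$-contents $p^a,p^b$ of $\x,\y$, use bihomogeneity to reduce to the primitive count $\cl{N}(p^m)=\phi(p^m)^2f_S(p^m)$ with $m=k-2a-b$, discard the heavy-content strata, and sum the geometric series $\sum p^{-a-b}$. The only cosmetic difference is that the paper truncates at $2a+b<\tfrac34 k$ and uses mere convergence of $f_S(p^m)/p^{2m}$, whereas you truncate at $2a+b<k$ and invoke the explicit error bound from Lemma~\ref{tl}; one small slip is that there are $O(k^2)$ pairs $(a,b)$, not $O(k)$, but the geometric sum in $a$ still gives your stated bound $O(kp^{-k/2-1})$, so the argument goes through.
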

\begin{proof}
We set
\[\cl{N}(q)=\card\left\{(\x,\y)\Mod{q}:\begin{array}{c}
\gcd(\x,q)=\gcd(\y,q)=1,\\ y_0 Q_0(\x) +y_1 Q_1(\x)\equiv 0\Mod{q}\end{array}\right\},\]
so that $f_S(q)=\phi(q)^{-2}\cl{N}(q)$. We can express $\widehat{\cl{S}}(p^k)$ in terms of $\cl{N}(q)$
by classifying the pairs $(\x,\y)$ according to the values $\gcd(\x,p^k)=p^a$ and
$\gcd(\y,p^k)=p^b$. Suppose the contribution to $\widehat{\cl{S}}(p^k)$ from terms with given 
values of $a$ and $b$ is denoted $\widehat{\cl{S}}(p^k;a,b)$. When $2a+b<n$ we write $\x=p^a\u$
and $\y=p^b\v$, so that $\u$ and $\v$ are coprime to $p$ and run modulo $p^{k-a}$ and 
$p^{k-b}$ respectively. Moreover we have $v_0Q_0(\u)+v_1Q_1(\u)\equiv 0\Mod{p^{k-2a-b}}$
It follows that $\widehat{\cl{S}}(p^k;a,b)=p^{7a+3b}\cl{N}(p^{k-2a-b})$. We may also derive a trivial upper 
bound by noting that there are at most $p^{3k-3a}$ choices for $\u$ and at most $p^{2k-2b}$
choices for $\v$. This leads to the estimate $\widehat{\cl{S}}(p^k;a,b)\le p^{5k-3a-2b}$.  We will use this 
trivial estimate when $2a+b\ge\tfrac34 k$, so that $3a+2b\ge \tfrac32(2a+b)\ge\tfrac98 k$. 
Such terms $a,b$ contribute a total $O(k^2p^{4k-k/8})$ to $\widehat{\cl{S}}(p^k)$, so that
\[p^{-4k}\widehat{\cl{S}}(p^k)=\sum_{2a+b<\tfrac34 k} p^{-a-b}\cdot p^{-4(k-2a-b)}\cl{N}(p^{k-2a-b})
+O(k^2p^{-k/8}).\]
However  $p^{-4(k-2a-b)}\cl{N}(p^{k-2a-b})=(1-1/p)^2f_S(p^{k-2a-b})p^{-2(k-2a-b)}$,
which tends to $(1-1/p)^2\varpi_p$ as $k\to\infty$, uniformly for $2a+b\le\tfrac34 k$.
We therefore deduce that
\[p^{-4k}\widehat{\cl{S}}(p^k)\to\sum_{a,b=0}^\infty p^{-a-b}(1-1/p)^2\varpi_p \]
as $k\to\infty$, and the lemma follows on evaluating the infinite sum.
\end{proof}

We next give a more precise description of $f_M(p)$, for ``good'' primes.

\begin{lemma}\label{lem:f1}
Let $m=\card M$ be the number of closed points of the subscheme $M$ from (\ref{MDEF}),
as in Lemma \ref{lem:Pic}, so that $m=\rho-1$. Then
there are number fields $K_1,...,K_m$, not
necessarily distinct, whose degrees sum to 4, and a finite set $B$, such that
\[f_M(p) = \sum_{i=1}^m\#\{\mbox{primes ideals }\mathfrak{p}\subset\O_{K_i}
: N(\mathfrak{p}) = p\}\]
for all primes $p\not\in B$.
\end{lemma}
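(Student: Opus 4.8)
The plan is to identify the scheme $\mathcal{M}$ over $\Z$ with (an open part of) $\Spec$ of a product of rings of integers, and then read off the point count as a sum of prime-splitting counts. First I would recall from Lemma \ref{lem:lines}(1) that $M = \{Q_0(\x) = Q_1(\x) = 0\} \subset \P^2_\Q$ is finite \'etale of degree $4$ over $\Q$. Hence $M = \Spec A$ for some \'etale $\Q$-algebra $A$ of dimension $4$, and $A$ decomposes as a product $A = K_1 \times \cdots \times K_m$ of number fields, where $m = \card M$ is the number of closed points; these are the $K_i$ in the statement, and their degrees sum to $4 = \deg M$. By Lemma \ref{lem:Pic}(1) we have $m = \rho - 1$, as claimed. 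The affine model $\mathcal{M} \subset \P^2_\Z$ of Section \ref{padicdens} then spreads $M$ out over $\Z$, and for all but finitely many primes $p$ — those dividing the "bad" set $B$ coming from Lemma \ref{fests} (equivalently, from discriminants of the $K_i$ and the resultant data in \eqref{elim}) — the base change $\mathcal{M}_{\Z[1/N]}$ is finite \'etale over $\Z[1/N]$ and agrees with $\Spec(\O_{K_1} \times \cdots \times \O_{K_m})[1/N]$.

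With this identification in hand, the count is immediate. For a good prime $p$, the $\F_p$-points of a finite \'etale $\Z_{(p)}$-scheme $\Spec(\prod_i \O_{K_i} \otimes \Z_{(p)})$ are counted by the number of residue fields of size $p$ in each factor, i.e.
\[
\card \mathcal{M}(\F_p) = \sum_{i=1}^m \#\{\mathfrak{p} \subset \O_{K_i} : N(\mathfrak{p}) = p\}.
\]
It remains only to reconcile $\card\mathcal{M}(\F_p)$ with $f_M(p)$ as defined in \eqref{af1}: for a good prime, the affine cone over $\mathcal{M}$ modulo $p$ has exactly $f_M(p)$ projective points (this is precisely the normalisation built into the definition $f_M(m) = \phi(m)^{-1}\#\{\ldots\}$), and since $\mathcal{M}$ is smooth over $\F_p$ of dimension $0$ these projective points are in bijection with $\mathcal{M}(\F_p)$. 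Combining the two displays gives the lemma for all $p \notin B$, after possibly enlarging $B$ to absorb the finitely many primes of bad reduction already isolated in Lemma \ref{fests} together with the primes dividing $\prod_i \disc(K_i)$.

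The main obstacle — such as it is — is the bookkeeping around "spreading out": one must check that the naive integral model $\mathcal{M} \subset \P^2_\Z$ cut out by the chosen integral forms $Q_0, Q_1$ really does become finite \'etale and isomorphic to $\bigsqcup_i \Spec \O_{K_i}$ after inverting a suitable integer, rather than merely dominating it or differing at a few extra primes. This is handled exactly by the elimination identity \eqref{elim} from the proof of Lemma \ref{fests}: the primes $p \nmid R$ are precisely those for which $\{Q_0 = Q_1 = 0\}$ is smooth of dimension $0$ over $\F_p$ with the Jacobian of full rank, which forces the local ring at each point to be unramified, hence matches $\O_{K_i}$ localised. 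So no new work is needed beyond citing \eqref{elim} and the structure theory of \'etale algebras; the set $B$ may be taken to be the union of the $B$ from Lemma \ref{fests} and the ramified primes of the $K_i$ (the latter being automatically contained in the former).
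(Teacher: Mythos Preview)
Your proposal is correct and follows essentially the same approach as the paper: decompose $M$ as $\bigsqcup_{i=1}^m \Spec K_i$ with $\sum_i [K_i:\Q]=4$, then read off $f_M(p)$ as the count of degree-one primes in the $K_i$ for primes of good reduction. The paper's proof is considerably more terse (it simply asserts the decomposition and says ``the assertion of the lemma then follows''), whereas you spell out the spreading-out step and the identification of $\mathcal{M}(\F_p)$ with degree-one primes, and you helpfully tie the bad set $B$ to the elimination identity \eqref{elim}; but the underlying argument is the same.
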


\begin{proof}
  The scheme
  \[Q_0(\x) = Q_1(\x) = 0 \quad \subset \P^2_\Q\]
is reduced, has degree $4$ and dimension $0$. Thus it may be written
as the disjoint union $\sqcup_{i=1}^m \Spec K_i$ where the $K_i$ are
number fields whose degrees sum to $4$. The assertion of the lemma
then follows.
\end{proof}

We can now examine the asymptotic behaviour of sums involving $f_M(n)$.
\begin{lemma}\label{lem:f2}
  The Dirichlet series
  \[F(s)=\sum_{n=1}^\infty f_M(n)n^{-s}\]
  has an analytic continuation to $\sigma>\tfrac12$ as a meromorphic
  function whose only singularity is a pole of order $\rho-1$ at
  $s=1$. Moreover there is a constant $A$ such that
  $F(\sigma+it)=O(|t|^A)$ in the region $\sigma\ge\tfrac34$, $|t|\ge 1$.

 It follows that there is an exponent $\alpha<1$ and a polynomial $P$ of
  degree $\rho-2$ such that
  \[\sum_{n\le x}f_M(n)=xP(\log x)+O(x^{\alpha}).\]
  Moreover, there is a constant $A'$ depending only on $Q_0$ and $Q_1$
  such that
  \beql{dded}
  \sum_{p\le x}f_M(p)p^{-1}=(\rho-1) \log\log x+A'+O\big((\log x)^{-1}\big).
  \eeq
  Finally, the product
  \[\mathfrak{S}_S:=\prod_p\tau_p\]
  is convergent, with
  \[\prod_{p>x}\tau_p=1+O((\log x)^{-1}).\]
  \end{lemma}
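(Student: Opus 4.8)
The plan is to deduce everything in Lemma~\ref{lem:f2} from the structural identity $F(s)=\prod_p\big(\sum_{k\ge0}f_M(p^k)p^{-ks}\big)$ together with the description of $f_M(p)$ in Lemma~\ref{lem:f1} and the Artin $L$-function $L(s,\Z[\mathcal{M}])=\prod_{i=1}^m\zeta_{K_i}(s)$. First I would compare $F(s)$ with $\prod_{i=1}^m\zeta_{K_i}(s)$: the Euler factors agree at all primes $p\notin B$ by Lemma~\ref{lem:f1} (note $f_M(p^k)=f_M(p)$ there), while at the finitely many primes $p\in B$ the local factor of $F$ is $\sum_k f_M(p^k)p^{-ks}$, which by the bound $f_M(p^k)\ll_p 1$ from Lemma~\ref{fests} converges absolutely and defines a nonzero holomorphic function on $\sigma>0$ (indeed it is a rational function of $p^{-s}$ with no pole for $\sigma>0$, using $f_M(p^k)\le 4$-type control is not needed, only boundedness). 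Hence
\[
F(s)=G(s)\prod_{i=1}^m\zeta_{K_i}(s),
\]
where $G(s)$ is holomorphic and nonzero on $\sigma>0$, being a finite product over $p\in B$ of such local corrections (ratios of the two Euler factors). Since each $\zeta_{K_i}$ continues meromorphically to $\C$ with a simple pole at $s=1$ and is holomorphic and nonvanishing for $\sigma>1-c$ away from $s=1$ (standard zero-free region near $s=1$; but we only need continuation and the pole structure for $\sigma>\tfrac12$, which is classical), the product has a pole of order exactly $m=\rho-1$ at $s=1$ and is otherwise holomorphic in $\sigma>\tfrac12$. The polynomial growth bound $F(\sigma+it)=O(|t|^A)$ for $\sigma\ge\tfrac34$, $|t|\ge1$ follows from the convexity/Phragm\'en--Lindel\"of bounds for Dedekind zeta functions, multiplied together and times the bounded factor $G$.

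Next I would extract the mean value $\sum_{n\le x}f_M(n)=xP(\log x)+O(x^\alpha)$ by a standard Tauberian/Perron argument: apply the truncated Perron formula to $F(s)$, shift the contour past $s=1$ to a line $\sigma=\alpha$ with $\tfrac12<\alpha<1$, picking up the residue at the pole of order $\rho-1$ which contributes $x\cdot P(\log x)$ with $\deg P=\rho-2$; the horizontal and vertical contour integrals are controlled using the polynomial growth bound just established, giving an error $O(x^\alpha)$ after possibly adjusting $\alpha$. (Since $f_M(n)\ge0$ one could alternatively invoke a ready-made Tauberian theorem, but the contour shift is cleanest given the growth bound.)

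For \eqref{dded} I would pass to $\log$ of the relevant $L$-functions. Writing $\sum_{p\le x}f_M(p)p^{-1}$, the defining property $f_M(p)=\sum_{i=1}^m\#\{\mathfrak{p}\subset\O_{K_i}:N\mathfrak{p}=p\}$ for $p\notin B$ means $\sum_{p\le x}f_M(p)p^{-1}=\sum_{i=1}^m\sum_{N\mathfrak{p}\le x,\ \deg\mathfrak{p}=1}N\mathfrak{p}^{-1}+O(1)$, and Mertens' theorem for number fields (equivalently, $\log\zeta_{K_i}(s)\sim-\log(s-1)$ near $s=1$ combined with partial summation / the prime ideal theorem with classical error term) gives each inner sum as $\log\log x+c_i+O((\log x)^{-1})$; the higher-degree primes contribute a convergent sum. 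Summing over $i$ gives $(\rho-1)\log\log x+A'+O((\log x)^{-1})$. Finally, for the Euler product $\mathfrak{S}_S=\prod_p\tau_p$: from Lemma~\ref{tl} we have $\tau_p=1+\tfrac{f_M(p)-\rho+1}{p}+O(p^{-2})$, so $\log\tau_p=\tfrac{f_M(p)-(\rho-1)}{p}+O(p^{-2})$, and $\sum_{p>x}\log\tau_p=\sum_{p>x}\tfrac{f_M(p)}{p}-(\rho-1)\sum_{p>x}\tfrac1p+O\big((\log x)^{-1}\big)$. By \eqref{dded} and Mertens, the difference of the two divergent pieces is $\big((\rho-1)\log\log y+A'\big)-\big((\rho-1)\log\log y+(\rho-1)M+o(1)\big)$ evaluated between $x$ and $\infty$, so the $\log\log$ terms cancel and the tail is $O((\log x)^{-1})$; exponentiating gives convergence of $\mathfrak{S}_S$ and $\prod_{p>x}\tau_p=1+O((\log x)^{-1})$.

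The main obstacle is bookkeeping rather than depth: one must be careful that the bad-prime correction factor $G(s)$ is genuinely holomorphic \emph{and nonvanishing} on $\sigma>\tfrac12$ (so that the order of the pole at $s=1$ is exactly $\rho-1$ and not smaller), which uses that each bad Euler factor $\sum_k f_M(p^k)p^{-ks}$ has constant term $1$ and hence is a unit in the ring of Dirichlet series convergent in a half-plane; and one must marshal the classical analytic facts about Dedekind zeta functions (meromorphic continuation to $\sigma>\tfrac12$ with the stated growth, and the Mertens-type estimate with error $O((\log x)^{-1})$ for degree-one prime ideals) in a form uniform enough that all implied constants depend only on the fields $K_i$, hence only on $Q_0,Q_1$.
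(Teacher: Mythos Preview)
Your overall strategy matches the paper's, but there is a concrete error in the first step. You claim that at primes $p\notin B$ the Euler factor of $F(s)$ agrees exactly with that of $\prod_i\zeta_{K_i}(s)$, using $f_M(p^k)=f_M(p)$. This is false: the Euler factor of $F$ at such $p$ is
\[
1+f_M(p)\sum_{k\ge1}p^{-ks}=\frac{1+(f_M(p)-1)p^{-s}}{1-p^{-s}},
\]
whereas the Euler factor of $\prod_i\zeta_{K_i}$ is $\prod_i\prod_{\mathfrak{p}\mid p}(1-N\mathfrak{p}^{-s})^{-1}$. These coincide only to first order in $p^{-s}$; for instance if a single quadratic $K_i$ has $p$ split then the former is $(1+p^{-s})/(1-p^{-s})$ while the latter is $(1-p^{-s})^{-2}$, with ratio $1-p^{-2s}$. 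Consequently $G(s)$ is \emph{not} a finite product over $p\in B$ as you assert, and your justification of its analytic properties collapses.

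The fix is exactly what the paper does: write $G(s)=\prod_p G_p(s)$ as an infinite Euler product, observe that $G_p(s)=1+O(p^{-2\sigma})$ for $p\notin B$ (by the computation above) and $G_p(s)=1+O(p^{-\sigma})$ for $p\in B$ (using the boundedness of $f_M(p^e)$ from Lemma~\ref{fests}), and conclude that $G$ converges absolutely to a bounded holomorphic function on $\sigma\ge\tfrac34$. The nonvanishing $G(1)\ne 0$, which you rightly flag as needed for the pole order to be exactly $\rho-1$, then follows from positivity of every local factor at $s=1$. With this correction the remainder of your argument---the Perron contour shift, the Mertens-type estimate via the Prime Ideal Theorem and partial summation, and the tail bound for $\prod_{p>x}\tau_p$ from $\log\tau_p=(f_M(p)-\rho+1)/p+O(p^{-2})$---is essentially identical to the paper's proof.
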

\begin{proof}
  We begin by writing
  \[F(s)=G(s)\prod_{i=1}^m \zeta_{K_i}(s)\]
for an appropriate Euler product $G(s)$, where $m=\rho-1$ as before. For primes $p\not\in B$ Lemma
\ref{lem:f1} shows that the corresponding Euler factor in $G(s)$ will
be $1+O(p^{-2\sigma})$ when $\sigma>0$, while for primes $p\in B$ it is
$1+O(p^{-\sigma})$. Here we use Lemma \ref{fests} to control the size
of $f_M(p^e)$ for $e\ge 2$.

The above estimates for the Euler factors of $G(s)$ show that $G(s)$
is holomorphic for $\sigma>\tfrac12$ and bounded for
$\sigma\ge\tfrac34$, say, and the first assertion of the lemma
follows.

The second assertion is then an easy exercise with Perron's formula.

For the next statement we observe that
\[\sum_{p\le x}f_M(p)=\sum_{i=1}^m\pi(x;K_i)+O(x^{1/2})\]
by Lemma \ref{lem:f1}, the error term accounting for primes $p\in B$
along with any prime ideals of degree 2 or more. The Prime Ideal Theorem
then shows that
% \[\sum_{p\le x}f_M(p)=m\int_2^x\frac{\d t}{\log t}+
\[\sum_{p\le x}f_M(p)=m\int_2^x\frac{\d t}{\log t}+ %R
O\big(x\exp(-c\sqrt{\log x})\big)\]
with a constant $c>0$ depending only on the $K_i$ (and hence only on
$Q_0$ and $Q_1$). The claim then follows by partial summation.

Finally, we see from Lemma \ref{tl} that
\[\sum_{x<p\le y}\log\tau_p=
\sum_{x<p\le y}\left\{\frac{f_M(p)-m}{p}+O(p^{-2})\right\}
\ll (\log x)^{-1},\]
by (\ref{dded}). The claimed result then follows.
\end{proof}

We also need information about another function which will occur much
later in the paper. 
\begin{lemma}\label{earlier}
For any integer $N\ge 3$ we define a multiplicative function $f(*;N)$ by setting
  \[f(p;N)=\big(f_M(p)-1\big)\frac{p^2-p}{p(p+1)-f_C(p)},\;\;\;(p>N)\]
  and $f(p^e;N)=0$ if $p\le N$ or $e\ge 2$.  Then if $\rho=2$  we have
  \[\sum_{d\le D}f(d;N)/d=1+O((\log N)^{-1})\]
  for $N\le(\log D)^2$.
\end{lemma}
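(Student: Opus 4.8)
The plan is to study the Dirichlet series $D_N(s)=\sum_{d\ge1}f(d;N)\,d^{-s}=\prod_{p>N}\bigl(1+f(p;N)p^{-s}\bigr)$, to pin down its behaviour near $s=1$, and then to extract $\sum_{d\le D}f(d;N)/d$ by truncated Perron's formula. Since $\rho=2$, Lemma \ref{lem:f1} (with $m=\rho-1=1$) gives $M=\Spec K$ for a single quartic number field $K$, so that $f_M(p)=\card\{\mathfrak{p}\mid p\text{ in }\O_K:N(\mathfrak{p})=p\}\in\{0,1,\dots,4\}$ for all but finitely many $p$; and by Lemma \ref{fests} $f_C(p)=O(1)$, so the weight $w(p):=(p^2-p)/(p(p+1)-f_C(p))$ satisfies $0<w(p)\le1$ and $w(p)=1+O(1/p)$. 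Consequently $f(p;N)=(f_M(p)-1)w(p)=f_M(p)-1+O(1/p)$ with $|f(p;N)|=O(1)$ at every $p>N$, the function $f(\cdot;N)$ is supported on squarefree $d$ all of whose prime factors exceed $N$, and $|f(d;N)|\le C_0^{\omega(d)}$ for some constant $C_0$.

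First I would note that the \emph{full} sum already produces the main term. By the last display of Lemma \ref{tl}, $1+f(p;N)/p=1+(f_M(p)-1)/p+O(p^{-2})=\tau_p\bigl(1+O(p^{-2})\bigr)$, so
\[
\sum_{d\ge1}\frac{f(d;N)}{d}=D_N(1)=\prod_{p>N}\Bigl(1+\frac{f(p;N)}{p}\Bigr)=\Bigl(\prod_{p>N}\tau_p\Bigr)\prod_{p>N}\bigl(1+O(p^{-2})\bigr)=1+O\bigl((\log N)^{-1}\bigr),
\]
by Lemma \ref{lem:f2} for the first product and absolute convergence for the second (equivalently, take logarithms and combine $\log(1+x)=x+O(x^2)$ with \eqref{dded} and Mertens' theorem, which give $\sum_{p>N}(f_M(p)-1)/p=O((\log N)^{-1})$). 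It therefore suffices to prove that the tail $\sum_{d>D}f(d;N)/d$ is negligible, say of size $O(\exp(-c\sqrt{\log D}))$; since $N\le(\log D)^2$, this is $o((\log N)^{-1})$ and is absorbed into the error term above.

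For the tail I would use the analytic continuation of $D_N(s)$. Comparing the local factor $1+f(p;N)p^{-s}$ with the local Euler factor $\ell_p(s)$ of $\zeta_K(s)/\zeta(s)$ at $p$ --- both equal $1+(f_M(p)-1)p^{-s}+O(p^{-2\re s})$, with discrepancy $O(p^{-1-\re s})$ arising from $w(p)-1$ and from prime ideals of degree $\ge2$ --- one obtains a factorisation
\[
D_N(s)=\frac{\zeta_K(s)}{\zeta(s)}\;E^{(N)}(s)\prod_{p\le N}\ell_p(s)^{-1},
\]
where the Euler product $E^{(N)}(s):=\prod_{p>N}(1+f(p;N)p^{-s})/\ell_p(s)$ is holomorphic and $O(1)$, uniformly in $N$, on $\re s\ge\tfrac12+\delta$, and where $\zeta_K(s)/\zeta(s)$ --- the simple poles of $\zeta_K$ and $\zeta$ cancelling --- is holomorphic and non-vanishing on $\re s\ge1$ and in the classical zero-free region $\re s>1-c_0/\log(|\im s|+2)$, with at most polynomial growth there (exactly the analytic input behind the appeal to the prime ideal theorem in the proof of Lemma \ref{lem:f2}). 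In particular $D_N(s)$ is holomorphic at $s=1$, with value the quantity $1+O((\log N)^{-1})$ found above. The only factor that can grow with $N$ is the finite product $\prod_{p\le N}\ell_p(s)^{-1}$; its logarithm is $\ll\sum_{p\le N}p^{-\re s}+O(1)$, and on a \emph{shallow} vertical contour $\re s=1-c/\log T$ (with $c<c_0$) one has $\sum_{p\le N}p^{-\re s}\le N^{c/\log T}\sum_{p\le N}p^{-1}\ll(\log\log N)\exp(c\log N/\log T)\ll\log\log D$ once $N\le(\log D)^2$ and $\log T\gg\sqrt{\log D}$, whence $\prod_{p\le N}\ell_p(s)^{-1}\ll(\log D)^{O(1)}$ uniformly for $|\im s|\le T$.

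Finally I would apply truncated Perron to $\sum_{d\le D}(f(d;N)/d)\,d^{-s}=D_N(s+1)$: truncate at height $T=\exp(c'\sqrt{\log D})$, and move the contour from $\re s=1/\log D$ leftwards to the line $\re s=-c/\log T$, crossing the simple pole of $1/s$ at $s=0$, which contributes the residue $D_N(1)=1+O((\log N)^{-1})$. On that line $|D^s|=D^{-c/\log T}=\exp(-(c/c')\sqrt{\log D})$ while $|D_N(s+1)|\ll(\log D)^{O(1)}(\log(|\im s|+3))^{O(1)}$ by the previous paragraph, so the shifted integral is $\ll\exp(-c''\sqrt{\log D})$; the two horizontal segments and the Perron truncation error (handled with $|f(d;N)|\le C_0^{\omega(d)}$, $\sum_{n\le y}C_0^{\omega(n)}\ll y(\log y)^{O(1)}$, and the standard estimates for terms with $d$ near $D$) are likewise $\ll(\log D)^{O(1)}/T\ll\exp(-c'''\sqrt{\log D})$. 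This yields $\sum_{d\le D}f(d;N)/d=1+O((\log N)^{-1})$. The step I expect to be the main obstacle is precisely the uniformity just invoked: obtaining a bound for $D_N(s)$ on a line to the left of $\re s=1$ that is overcome by the Perron saving \emph{uniformly for $N\le(\log D)^2$}. Crude absolute bounds on the omitted Euler factors $\prod_{p\le N}\ell_p(s)^{-1}$ are useless here; one must work on a contour shallow enough that $N^{c/\log T}=1+o(1)$, which is exactly what the hypothesis $N\le(\log D)^2$ (together with $\log T\asymp\sqrt{\log D}$) permits.
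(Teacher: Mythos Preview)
Your argument is correct and arrives at the same conclusion by a somewhat different route. Both you and the paper factor $D_N(s)=F(s;N)$ as $(\zeta_K(s)/\zeta(s))\cdot(\text{correction})$, and both evaluate the value at $s=1$ (equivalently the residue of $F(s+1;N)D^s/s$ at $s=0$) as $1+O((\log N)^{-1})$ via \eqref{dded}. The difference lies in the contour shift used to bound the tail. The paper invokes the theorem of Uchida and van der Waall --- that $\zeta_K(s)/\zeta(s)$ is entire when the normal closure of $K/\Q$ has solvable Galois group, here $A_4$ or $S_4$ --- which allows moving to a fixed line $\sigma\ge 3/4$; there the entire correction factor is bounded crudely by $\exp(N^{1/4})$, and the resulting error $O(D^{-c}e^{N^{1/4}})$ is acceptable once $N\le(\log D)^2$. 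You avoid this Artin-type input by staying within the classical zero-free region of $\zeta$, on a shallow line $\sigma=1-c/\log T$ with $T=\exp(c'\sqrt{\log D})$; the uniformity issue you correctly flag as the main obstacle --- bounding $\prod_{p\le N}\ell_p(s)^{-1}$ for $N\le(\log D)^2$ --- is resolved precisely because $N^{c/\log T}=1+o(1)$ on that line. Your approach is thus more elementary, at the cost of the weaker tail bound $\exp(-c\sqrt{\log D})$ in place of the paper's power saving $D^{-c}$; since the lemma only asserts $O((\log N)^{-1})$, either suffices. One minor remark: you assert $\zeta_K/\zeta$ is non-vanishing in the zero-free region, but you only need (and only use) that it is holomorphic there with polynomial growth, which follows from $\zeta(s)\ne 0$ together with standard convexity bounds for $\zeta_K$.
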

\begin{proof}
  % Since $m=1$ we write the Dirichlet series
  Since $\rho=2$ we write the Dirichlet series %R
\[F(s;N)=\sum_{d=1}^\infty f(d;N)d^{-s}\] 
as
\[F(s;N)=G_1(s)\frac{\zeta_K(s)}{\zeta(s)},\]
where $K$ is the quartic field over which the points of $Q_0=Q_1=0$
are defined. When $p>N$ 
the Euler factors 
of $F(s;N)$ are $1+\big(f_M(p)-1\big)p^{-s}+O(p^{-\sigma-1})$ for $\sigma>0$.
On the other hand, if $B$ is the finite set occurring in Lemma
\ref{lem:f1} and $\sigma>0$, the Euler factor of
$\zeta(s)^{-1}\zeta_{K}(s)$ will be 
$1+\big(f_M(p)-1\big)p^{-s}+O(p^{-2\sigma})$ whenever
$p\not\in B$.  It follows that the Euler factors of $G_1(s)$ are
$1+O(p^{-\sigma-1})+O(p^{-2\sigma})$ for $p>N$, while for $p\le N$
they are trivially $1+O(p^{-\sigma})$.  We then see that $G_1(s)$ will
be holomorphic for $\sigma\ge \tfrac34$, say, and will satisfy the
bound 
\[ G_1(s)\ll\exp\{\sum_{p\le N}O(p^{-3/4})\}\ll\exp(N^{1/4})\]
there.
The normal closure of $K/\mathbb{Q}$ has Galois group $A_4$ or $S_4$ %R
which are both solvable. It then follows from work of Uchida %R
\cite{Uch} and van der Waall \cite{vdW}, independently, that %R
$\zeta_{K}(s)/\zeta(s)$ is holomorphic of finite order. %R
% We recall at this point that $\zeta_{K}(s)/\zeta(s)$ 
% is holomorphic of finite order, by familiar results on the Artin
% conjecture for $L$-functions
A standard application of Perron's formula then shows that
 \[ \sum_{d\le D}f(d;N)/d=\mathrm{Res}
 \left(F(s+1;N)\frac{D^s}{s};\, s=0\right) +O(D^{-c}e^{N^{1/4}})\]
for an appropriate numerical constant $c>0$. The error term is
suitably small when  
$N\le(\log D)^2$. The function $F(s;N)$ has a removable singularity 
at $s=1$ so that the residue is just $\lim_{s\to 0} F(s+1;N)$
We now define temporarily
\[\alpha(s,p)=\log\left(1+f(p;N)p^{-s}\right)\]
for real $s\in[1,2]$. These functions will be continuous, with
\[\alpha(s,p)=(f_M(p)-1)p^{-s}+O(p^{-2})\]
for $p>N$.  It follows from Lemma \ref{lem:f2} coupled with partial
summation that $\sum_p\alpha(s,p)$ is uniformly convergent on $[1,2]$ 
and hence represents a continuous function. Indeed the sum for $p>N$ 
will be $O((\log N)^{-1})$.  We deduce that
\begin{eqnarray*}
\lim_{s\to 0} F(s+1;N)
&=&\exp\left\{\lim_{s\to 0}\sum_p\alpha(s+1,p)\right\}\\
&=&\exp\left\{\sum_p\alpha(1,p)\right\}\\
&=&1+O\left((\log N)^{-1}\right),
\end{eqnarray*}
and the lemma follows
\end{proof}

Finally we record the following 
simple facts about the $k$-fold divisor function $\tau_k(n)$,
which we will use repeatedly in some of our later estimations.
\begin{lemma} \label{lem:tau_k}
For any $x\ge 2$ we have
\[\sum_{n\le x}\tau_k(n)\ll_k x(\log x)^{k-1},\]
and hence
\begin{align*}
	\sum_{n \leq x} \frac{\tau_k(n)}{n^\alpha} &\ll_\alpha
	\begin{cases}
		 x^{1-\alpha} (\log x)^{k-1}, & \alpha < 1, \\
		 (\log x)^k, & \alpha = 1,
	\end{cases} \\
	\sum_{n \geq x} \frac{\tau_k(n)}{n^\alpha} &\ll_\alpha
	x^{1-\alpha} (\log x)^{k-1}, \quad \,\,\, \alpha > 1,
\end{align*}
for any $\alpha \in \R$.

We also have
\[\tau_{k_1}(n)\tau_{k_2}(n) \leq \tau_{k_1 k_2}(n)\]
\end{lemma}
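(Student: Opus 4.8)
The plan is to dispatch the three assertions in turn, all of which are entirely standard. \textbf{Step 1: the bound $\sum_{n\le x}\tau_k(n)\ll_k x(\log x)^{k-1}$.} I would argue by induction on $k$. The case $k=1$ is immediate from $\sum_{n\le x}1\le x$. For the inductive step I would use the Dirichlet convolution identity $\tau_k=\tau_{k-1}\ast 1$ to write $\sum_{n\le x}\tau_k(n)=\sum_{d\le x}\sum_{m\le x/d}\tau_{k-1}(m)$, bound the inner sum by the inductive hypothesis as $\ll (x/d)\bigl(\log(2x/d)\bigr)^{k-2}$, and then sum over $d\le x$; the elementary estimate $\sum_{d\le x}\frac1d\bigl(\log(2x/d)\bigr)^{k-2}\ll(\log x)^{k-1}$ (e.g.\ by comparison with $\int_0^{\log x}(\log 2x-u)^{k-2}\,\d u$) closes the induction.

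\textbf{Step 2: the weighted sums.} Writing $T(t):=\sum_{n\le t}\tau_k(n)\ll t(\log t)^{k-1}$, the three remaining bounds follow by partial summation. For $\alpha<1$ one has $\sum_{n\le x}\tau_k(n)n^{-\alpha}=T(x)x^{-\alpha}+\alpha\int_1^x T(t)t^{-\alpha-1}\,\d t$, and both terms are $\ll x^{1-\alpha}(\log x)^{k-1}$ since $\int_1^x t^{-\alpha}(\log 2t)^{k-1}\,\d t\ll x^{1-\alpha}(\log x)^{k-1}$ when $\alpha<1$. The case $\alpha=1$ is the same computation, the integral now contributing $\int_1^x t^{-1}(\log 2t)^{k-1}\,\d t\ll(\log x)^k$. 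For the tail sum with $\alpha>1$ the analogous manipulation, using $T(t)t^{-\alpha}\to0$ and $\int_x^\infty t^{-\alpha}(\log 2t)^{k-1}\,\d t\ll x^{1-\alpha}(\log x)^{k-1}$, gives $\sum_{n\ge x}\tau_k(n)n^{-\alpha}\ll x^{1-\alpha}(\log x)^{k-1}$.

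\textbf{Step 3: the pointwise inequality $\tau_{k_1}(n)\tau_{k_2}(n)\le\tau_{k_1k_2}(n)$.} Since $\tau_j$ is the $j$-fold Dirichlet convolution of the constant function $1$, associativity gives $\tau_{k_1k_2}=\tau_{k_1}^{\ast k_2}$, so that $\tau_{k_1k_2}(n)=\sum_{d_1\cdots d_{k_2}=n}\prod_{i=1}^{k_2}\tau_{k_1}(d_i)$, a sum of exactly $\tau_{k_2}(n)$ terms. I would then invoke submultiplicativity of $\tau_{k_1}$, namely $\tau_{k_1}(d_1\cdots d_{k_2})\le\prod_i\tau_{k_1}(d_i)$: multiplying ordered $k_1$-fold factorisations of the individual $d_i$ together componentwise produces \emph{every} ordered $k_1$-fold factorisation of $d_1\cdots d_{k_2}$, so the product of the factorisation counts is at least $\tau_{k_1}(d_1\cdots d_{k_2})$. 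Consequently each summand above is $\ge\tau_{k_1}(n)$, and since there are $\tau_{k_2}(n)$ summands the inequality follows. (Alternatively one reduces at once to prime powers by multiplicativity and checks $\binom{e+k_1-1}{k_1-1}\binom{e+k_2-1}{k_2-1}\le\binom{e+k_1k_2-1}{k_1k_2-1}$.)

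There is no genuine obstacle here: the only ingredient that is not purely mechanical is the submultiplicativity of $\tau_{k_1}$ used in Step 3, and even that is a one-line counting observation; the sole point requiring a little care is keeping the exact exponents on $\log x$ correct through the partial summations of Step 2.
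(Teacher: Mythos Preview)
Your proof is correct and covers the same ground as the paper's, with only minor methodological differences. For the first estimate the paper invokes Perron's formula while you give the more elementary inductive argument via $\tau_k=\tau_{k-1}\ast 1$; both are standard and yours avoids complex analysis entirely. The partial summation step is identical. For the pointwise inequality the paper goes directly to prime powers and proves $\binom{a+k_1-1}{k_1-1}\binom{a+k_2-1}{k_2-1}\le\binom{a+k_1k_2-1}{k_1k_2-1}$ by induction on $a$, which is exactly the alternative you mention in parentheses; your primary route through submultiplicativity of $\tau_{k_1}$ is a valid and arguably cleaner global argument, since the surjectivity claim (that componentwise products of $k_1$-fold factorisations of the $d_i$ yield every $k_1$-fold factorisation of $n$) reduces prime-by-prime to filling a nonnegative integer matrix with prescribed row and column sums, which is always possible.
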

\begin{proof}
The first estimate is a standard application of Perron's formula and the next 
results follow by partial summation. For the final bound it is enough to show
that
\[ \tau_{k_1}(p^{a})\tau_{k_2}(p^{a}) =
\binom{a + k_1 -1}{k_1 -1}  \binom{a + k_2 -1}{k_2 -1} \leq
\binom{a + k_1k_2 -1}{k_1k_2 - 1} = \tau_{k_1k_2}(p^{a}),\]
where the central inequality is easily established by induction on $a$.
\end{proof}

\subsection{Lattices}\label{seclat}

We complete our preliminaries by giving two results on counting points in
lattices. The first is well-known; we include a proof for
completeness.  

\begin{lemma}\label{easyL}
Let $\mathcal{R}\subset\R^2$ be a bounded closed set, whose boundary
is a rectifiable closed curve of length
$L$.  Then if $\sL$ is a 
2-dimensional lattice in $\R^2$ we have
\[\card(\sL\cap\mathcal{R})=\frac{\meas(\mathcal{R})}{\det(\sL)}
  +O(L/\lambda_1)+O(1),\]
  where $\lambda_1$ is the first successive minimum of $\sL$, with
  respect to the $||\cdot||_\infty$-norm.
\end{lemma}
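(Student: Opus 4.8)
The plan is to reduce the general lattice to the standard lattice $\Z^2$ by a linear change of variables and then apply a standard boundary-counting argument. First I would choose a basis $\b{v}_1, \b{v}_2$ of $\sL$ realising (up to a bounded factor) the successive minima, so that $\|\b{v}_1\|_\infty = \lambda_1 \le \lambda_2 = \|\b{v}_2\|_\infty$ and $\det\sL \asymp \lambda_1\lambda_2$ by Minkowski's second theorem. Writing $T\colon \R^2 \to \R^2$ for the linear map sending the standard basis to $\b{v}_1, \b{v}_2$, one has $\card(\sL \cap \mathcal{R}) = \card(\Z^2 \cap T^{-1}\mathcal{R})$, $\meas(T^{-1}\mathcal{R}) = \meas(\mathcal{R})/\det\sL$, and the boundary of $T^{-1}\mathcal{R}$ is a rectifiable closed curve whose length I would bound by $O(L/\lambda_1)$, using that $\|T^{-1}\|_{\mathrm{op}} \ll \lambda_1^{-1}$ (since the shortest vector has length $\lambda_1$, the dual/inverse operator norm is controlled by $1/\lambda_1$).

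Next I would invoke the classical lattice-point count for $\Z^2$: for a bounded region $\mathcal{R}'$ whose boundary has length $L'$, one has $\card(\Z^2 \cap \mathcal{R}') = \meas(\mathcal{R}') + O(L' + 1)$. The usual proof covers $\mathcal{R}'$ by unit squares centred at integer points; each square entirely inside contributes its full area, each square entirely outside contributes nothing, and the squares meeting the boundary are $O(L'+1)$ in number and contribute an error of the same order. Applying this with $\mathcal{R}' = T^{-1}\mathcal{R}$ and $L' \ll L/\lambda_1$ yields
\[
\card(\sL \cap \mathcal{R}) = \frac{\meas(\mathcal{R})}{\det\sL} + O\!\left(\frac{L}{\lambda_1}\right) + O(1),
\]
which is exactly the claim.

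The step requiring the most care is the bound on the number of boundary unit squares, equivalently the length estimate for $\partial(T^{-1}\mathcal{R})$. The point is that although $T^{-1}$ may stretch distances by a large factor $\asymp \lambda_1^{-1}$ in one direction, it is precisely this worst-case factor that enters, and one must check that the rectifiability of the curve is preserved and its length multiplied by at most $\|T^{-1}\|_{\mathrm{op}} \ll \lambda_1^{-1}$; this follows since for any rectifiable curve $\gamma$ one has $\mathrm{length}(T^{-1}\gamma) \le \|T^{-1}\|_{\mathrm{op}}\cdot\mathrm{length}(\gamma)$. A minor subtlety is that the region need not be convex or connected, but the covering-by-squares argument only uses that the boundary is a rectifiable curve of finite length, so no convexity is needed. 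One should also note that the implied constants are absolute, as no dependence on $\mathcal{R}$ or $\sL$ beyond $L$ and $\lambda_1$ enters.
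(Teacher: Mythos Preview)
Your proposal is correct and follows essentially the same approach as the paper: reduce to the case $\sL=\Z^2$ via the linear map $T$ whose columns form a Minkowski-reduced basis, bound $\|T^{-1}\|\ll 1/\lambda_1$ using $\det\sL\asymp\lambda_1\lambda_2$, and then invoke the classical $\Z^2$ result (which the paper attributes to Landau rather than sketching the unit-square covering). The only cosmetic difference is that the paper works with the $\ell^2$-norm in the intermediate steps, which is immaterial in $\R^2$.
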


We recall that a curve is said to be rectifiable if it has finite %R
arc-length. Any piece-wise continuously differentiable curve will %R
be rectifiable. %R

\begin{proof}
  This is a version of the well-known Lipschitz Principle. When
  $\sL=\Z^2$ the above result was proved by Landau \cite[page
    186]{landau}. We proceed to deduce the general case. Let
  $\mathbf{u},\mathbf{v}$ be a basis for $\sL$ with
  $||\mathbf{u}||_2=\lambda_1$ and
  $||\mathbf{v}||_2=\lambda_2$. We then want to count pairs
  $(n_1,n_2)\in\Z^2$ with $n_1\mathbf{u}+n_2\mathbf{v}\in\mathcal{R}$. If we
  write $M$ for the matrix with columns $\mathbf{u}$ and $\mathbf{v}$ then
  $|\det(M)|=\det(\sL)$ and
  \[||M^{-1}||_2\ll \lambda_2/\det(\sL)\ll 1/\lambda_1.\]
  Moreover
$n_1\mathbf{u}+n_2\mathbf{v}\in\mathcal{R}$ if and only if
$(n_1,n_2)^T\in M^{-1}\mathcal{R}$. However $M^{-1}\mathcal{R}$ has
 measure $\meas(\mathcal{R})/\det(\sL)$ and boundary length of order
 $||M^{-1}||_2L\ll L/\lambda_1$, and the result follows.
 \end{proof}
 
Given a positive integer $m$ and an element $a \in \P^1(\Z/m\Z)$ we
define a lattice 
\[\sL(a,m) = \{ \y \in \Z^2 : \exists k \in \Z
\mbox{ such that } \y \equiv k a \Mod m\}\] 
For each $m$ there are $m\prod_{p|m}(1+p^{-1})$ distinct lattices of
this sort, which will partition $\Zprim^2$. We will have $\det(\sL(a,m))=m$, 
and hence $\lambda_1\ll m^{1/2}$, where $\lambda_1$ is the first 
successive minimum of $\sL(a,m)$.
  
 Now we adapt  Lemma \ref{easyL} to handle primitive lattice points in $\sL(a,m)$.
\begin{lemma}\label{easyL2}
  Let $\mathcal{R}$ be as in Lemma \ref{easyL}, and suppose that
  \[\mathcal{R}\subseteq[-R,R]\times[-R,R]\]
for some $R\ge 2$.  Then
\begin{eqnarray*}
  \card\{\y\in\Zprim^2\cap\sL(a,m)\cap\mathcal{R}\}&=&
  \frac{6}{\pi^2}\prod_{p\mid
    m}\left(\frac{p}{p+1}\right)\frac{\meas(\mathcal{R})}{m}\\
&&\hspace{1cm}\mbox{}+O\left(\frac{\tau(m)(R+L)}{\lambda_1}\log R\right),
  \end{eqnarray*}
  where $\lambda_1$ is the first successive minimum of $\sL(a,m)$.
\end{lemma}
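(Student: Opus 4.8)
The plan is to reduce the count of primitive lattice points in $\sL(a,m) \cap \mathcal{R}$ to the count of all lattice points handled by Lemma~\ref{easyL}, via a M\"obius inversion over the scaling parameter. A point $\y \in \sL(a,m)$ is primitive precisely when $\gcd(y_0,y_1) = 1$, so writing $\gcd(y_0,y_1) = d$ and $\y = d\z$, we have
\[
\card\{\y \in \Zprim^2 \cap \sL(a,m) \cap \mathcal{R}\}
= \sum_{d \geq 1} \mu(d)\, \card\{\z \in \Z^2 : d\z \in \sL(a,m) \cap \mathcal{R}\}.
\]
The key observation is that $d\z \in \sL(a,m)$ is equivalent to $\z$ lying in a certain lattice: if $e = \gcd(d,m)$, then $d\z \equiv ka \Mod m$ is solvable in $k$ iff $e \mid$ (the relevant coordinates), and one checks that $\{\z : d\z \in \sL(a,m)\} = \sL(a', m/e) \cdot (\text{something})$ — more precisely it is a lattice $\sL_d$ of determinant $m/\gcd(d,m)$ scaled appropriately, and the condition $d\z \in \mathcal{R}$ means $\z \in \tfrac1d \mathcal{R}$. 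So each summand is $\card(\sL_d \cap \tfrac1d\mathcal{R})$ for a lattice $\sL_d$ with $\det(\sL_d) = dm/\gcd(d,m)$ (coming from scaling $\sL(a,m)$-type lattices by $d$), and first minimum $\gg d\lambda_1/\gcd(d,m) \gg \lambda_1$ after the rescaling.

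Applying Lemma~\ref{easyL} to each summand, the region $\tfrac1d\mathcal{R}$ has measure $\meas(\mathcal{R})/d^2$ and boundary length $L/d$, and sits inside $[-R/d,R/d]^2$. The main term contributes
\[
\meas(\mathcal{R}) \sum_{d \geq 1} \frac{\mu(d)}{d^2 \cdot dm/\gcd(d,m)}
= \frac{\meas(\mathcal{R})}{m} \sum_{d \geq 1} \frac{\mu(d)\gcd(d,m)}{d^3},
\]
and a standard multiplicative computation evaluates $\sum_d \mu(d)\gcd(d,m)/d^3 = \prod_p(1 - 1/p^3)\prod_{p\mid m}\frac{1 - 1/p^2}{1 - 1/p^3} = \tfrac{1}{\zeta(3)}\cdot(\text{junk})$ — wait, one must be careful: the determinant of the scaled lattice is $d^2 \cdot \det(\sL(a,m)) / (\text{index gain})$, and the correct bookkeeping should yield the factor $\tfrac{6}{\pi^2}\prod_{p\mid m}\tfrac{p}{p+1}$. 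The cleanest route is: the lattices $\sL(a,m)$ as $a$ varies partition $\Zprim^2$ into $m\prod_{p\mid m}(1+p^{-1})$ classes, so summing the asserted formula over all $a$ must recover $\card(\Zprim^2 \cap \mathcal{R}) = \tfrac{6}{\pi^2}\meas(\mathcal{R}) + O(\cdots)$; this forces the constant, and one verifies the $d$-sum gives exactly this by checking the Euler factors at $p \mid m$ and $p \nmid m$ separately.

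For the error term, each summand contributes $O\big((R/d + L/d)/\lambda_1 + 1\big)$, but I only need to sum over $d \ll R$ since for $d > 2R$ the set $\tfrac1d\mathcal{R}$ contains no nonzero lattice point and only possibly the origin (contributing $O(1)$ total after noting $\mu$-cancellation, or simply absorbing it). Thus the error is
\[
\sum_{d \ll R} \left( \frac{R + L}{d\lambda_1} + 1 \right)
\ll \frac{(R+L)\log R}{\lambda_1} + R.
\]
Here I must sharpen: the trivial $O(1)$ per summand over $d \ll R$ gives $O(R)$, which is too large; the standard fix is to use $\card(\sL_d \cap \tfrac1d\mathcal{R}) = 0$ unless $R/d \gg \lambda_1$ (since the first minimum of $\sL_d$ is $\gg \lambda_1$), i.e.\ unless $d \ll R/\lambda_1$, so the number of nonzero summands is $O(R/\lambda_1)$, giving total error $O\big(\tfrac{(R+L)\log R}{\lambda_1}\big)$ after also folding in the divisor factor. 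The $\tau(m)$ in the statement arises because the index $[\Z^2 : \{\z : d\z \in \sL(a,m)\} \cdot d]$ and the resulting lattice depend on $\gcd(d,m)$, so the lattices $\sL_d$ for different $d$ sharing a common $\gcd$ with $m$ must be grouped, and controlling their first minima uniformly costs a factor $\tau(m)$; alternatively one writes $d = d_1 d_2$ with $d_1 \mid m^\infty$, $\gcd(d_2,m)=1$, sums over $d_1$ (a divisor-type sum producing $\tau(m)$) and $d_2$ freely. \textbf{The main obstacle} I anticipate is precisely this $\gcd(d,m)$ bookkeeping: identifying the scaled lattice $\{\z : d\z \in \sL(a,m)\}$, its determinant and first successive minimum as functions of $\gcd(d,m)$, and then carrying out the arithmetic sum over $d$ cleanly enough to extract both the clean constant $\tfrac{6}{\pi^2}\prod_{p\mid m}\tfrac{p}{p+1}$ and the error term with only a $\tau(m)$ loss rather than something larger.
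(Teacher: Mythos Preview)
Your overall strategy---M\"obius inversion over the common divisor $d$, identification of the resulting lattice, application of Lemma~\ref{easyL} to each term, then an arithmetic sum---is exactly what the paper does. However, two specific points in your execution go wrong, and fixing them is precisely the ``$\gcd(d,m)$ bookkeeping'' you flag at the end.

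First, the lattice identification and determinant. The set $\{\z\in\Z^2: d\z\in\sL(a,m)\}$ is exactly $\sL(a,m/\gcd(d,m))$, which has determinant $m/\gcd(d,m)$, not $dm/\gcd(d,m)$. This is why your main-term sum acquires a $d^3$ in the denominator instead of the correct $d^2$; once corrected, the sum is $\sum_d \mu(d)\gcd(d,m)/d^2$, whose Euler factors are $1-p^{-2}$ for $p\nmid m$ and $1-p^{-1}$ for $p\mid m$, giving $\tfrac{6}{\pi^2}\prod_{p\mid m}\tfrac{p}{p+1}$ directly without any consistency heuristic.

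Second, and more seriously, your claim that the first minimum of $\sL_d$ is $\gg\lambda_1$ is false. Since $\sL(a,m)\subseteq\sL(a,m/\gcd(d,m))$, the first minimum $\lambda_1(d)$ of the latter satisfies $\lambda_1(d)\le\lambda_1$. The correct lower bound is $\lambda_1(d)\ge\lambda_1/\gcd(d,m)$ (because $\gcd(d,m)\z\in\sL(a,m)$ for any $\z\in\sL(a,m/\gcd(d,m))$). Hence the set $\sL_d\cap d^{-1}\mathcal{R}$ is empty only once $d/\gcd(d,m)>2R/\lambda_1$, not once $d>R/\lambda_1$. Your truncation fails already for $d\mid m$: then $\sL_d=\Z^2$ has first minimum $1$, so the term can be nonzero for all such $d\le R$. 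The paper therefore restricts to $d/\gcd(d,m)\le 2R/\lambda_1$ and evaluates the three resulting sums (main term, $L/d\lambda_1(d)$-error, and $O(1)$-error) by writing $d=ke$ with $k\mid m$ and $\gcd(e,m/k)=1$, summing over $e\le 2R/\lambda_1$ and then over $k\mid m$; the latter sum is where the factor $\tau(m)$ appears. This is exactly the $d=d_1d_2$ decomposition you propose at the end, so your anticipated fix is the right one.
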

\begin{proof}
  We begin with the fact that
  \[\card\{\y\in\Zprim^2\cap\sL(a,m)\cap\mathcal{R}\}=
    \card\{\y\in\Zprim^2\cap\sL(a,m)\cap\mathcal{R}:\,\y\not=\mathbf{0}\}\]
\begin{equation}\label{pf1}
=\sum_{d=1}^\infty\mu(d)
\card\{\y\in\Z^2\cap\sL(a,m)\cap\mathcal{R}:\,\y\not=\mathbf{0},\, d\mid\y\}.
\end{equation}
  Here we will have $\y=d\z\in\sL(a,m)$ if and only if
  $\z\in\sL(a,m/\gcd(d,m))$. We write $\lambda_1(d)$ for
  the first successive minimum of $\sL(a,m/\gcd(d,m))$. Then if
  $\y\in\sL(a,m)$ has length $\lambda_1$, we see that $\y$ is also
  in $\sL(a,m/\gcd(d,m))$, whence $\lambda_1(d)\le\lambda_1$. On the
  other hand, if $\z\in\sL(a,m/\gcd(d,m))$ has length $\lambda_1(d)$,
  we see that $\gcd(d,m)\z$ is in $\sL(a,m)$, so that
  $\lambda_1\le\gcd(d,m)\lambda_1(d)$.

  We now observe that
 \begin{eqnarray*}
   \lefteqn{\{\y\in\Z^2\cap\sL(a,m)\cap
     \mathcal{R}:\,\y\not=\mathbf{0},\, d\mid\y\}}\\
   &=&\{d\z:\, \z\in\Z^2\cap\sL(a,m/\gcd(d,m))\cap d^{-1}\mathcal{R},\,
   \z\not=\mathbf{0}\}.
\end{eqnarray*}
This set will be empty if $\lambda_1(d)>R/d$, and hence if
$\lambda_1>R\gcd(d,m)/d$. In general, its cardinality will be
\[\frac{\meas(\mathcal{R})/d^2}{\det(\sL(a,m/(d,m)))}
+O\left(\frac{L/d}{\lambda_1(d)}\right)+O(1),\]
by Lemma \ref{easyL}. It then follows from (\ref{pf1}) that
\begin{eqnarray*}
\lefteqn{\card\{\y\in\Zprim^2\cap\sL(a,m)\cap\mathcal{R}\}}\\
&=&
\sum_{\substack{d\\ d/(d,m)\le 2R/\lambda_1}}\mu(d)\left\{
\frac{\meas(\mathcal{R})/d^2}{m/\gcd(d,m)}
+O\left(\frac{\gcd(d,m)L}{d\lambda_1}\right)+O(1)\right\}.
\end{eqnarray*}
To complete the proof it therefore remains to show that
\begin{equation}\label{pf2}
  \sum_{\substack{d\\ d/\gcd(d,m)\le 2R/\lambda_1}}\mu(d)\frac{(d,m)}{d^2}=
  \frac{6}{\pi^2}\prod_{p\mid m}\left(\frac{p}{p+1}\right)
+O(\tau(m)\lambda_1/R),
  \end{equation}
that
\begin{equation}\label{pf3}
\sum_{\substack{d\\ d/\gcd(d,m)\le 2R/\lambda_1}}\frac{\gcd(d,m)}{d}\ll\tau(m)\log R,
  \end{equation}
and that
\begin{equation}\label{pf4}
\sum_{\substack{d\\ d/\gcd(d,m)\le 2R/\lambda_1}}1\ll\tau(m)R/\lambda_1.  
\end{equation}
These suffice since
$\meas(\mathcal{R})\ll R^2$ and $\lambda_1^2\ll m$.

For (\ref{pf2}) we have
\begin{eqnarray*}
\sum_{\substack{d\\ d/\gcd(d,m)\le 2R/\lambda_1}}\mu(d)\frac{\gcd(d,m)}{d^2}&=&
\sum_{k\mid m}\sum_{\substack{e\le 2R/\lambda_1\\ \gcd(e,m/k)=1}}
\mu(ek)\frac{k}{(ek)^2}\\
&=&\sum_{k\mid m}\frac{\mu(k)}{k}
\sum_{\substack{e\le 2R/\lambda_1\\ \gcd(e,m)=1}}\mu(e)e^{-2}\\
&=&\sum_{k\mid m}\frac{\mu(k)}{k}\left\{
\sum_{\substack{e=1\\ \gcd(e,m)=1}}^\infty\mu(e)e^{-2}+O(\lambda_1/R)\right\}\\
&=&\frac{6}{\pi^2}\prod_{p\mid m}\left(\frac{p}{p+1}\right)
+O(\tau(m)\lambda_1/R).
\end{eqnarray*}
Similarly for (\ref{pf3}) we have
\begin{eqnarray*}
\sum_{\substack{d\\ d/\gcd(d,m)\le 2R/\lambda_1}}\frac{\gcd(d,m)}{d}&=&
\sum_{k\mid m}\sum_{\substack{e\le 2R/\lambda_1\\ \gcd(e,m/k)=1}}e^{-1}\\
&\ll& \tau(m)\log R.
\end{eqnarray*}
Finally for (\ref{pf4}) we note that
\begin{eqnarray*}
\sum_{\substack{d\\ d/\gcd(d,m)\le 2R/\lambda_1}}1&=&
\sum_{k\mid m}\sum_{\substack{e\le 2R/\lambda_1\\ \gcd(e,m/k)=1}}1\\
&\ll& \tau(m)R/\lambda_1.
\end{eqnarray*}
This establishes the required estimates, and so completes the proof.
\end{proof}

\section{Upper bounds via lattices}\label{CVL}

In this section we give an upper bound for the number of solutions of
\[y_0Q_0(\x)+y_1Q_1(\x)=0,\;\;\;(Q_0(\x),Q_1(\x))\not=(0,0)\]
with $\x\in\Z^3_{\mathrm{prim}}$ and $\y\in\Z^2_{\mathrm{prim}}$, subject to
$X<||\x||_\infty\le 2X$ and $Y<||\y||_\infty\le 2Y$.
Later, we will give a more refined version of
this approach, producing an asymptotic formula. 
The parameters $X$ and $Y$ will be used extensively in 
this paper, and we assume throughout that $X,Y\ge 1$.

When $Q_0(\x)$ and $Q_1(\x)$ are not both zero 
we will have
\[\y=\pm\frac{(-Q_1(\x),Q_0(\x))}{\gcd(Q_0(\x),Q_1(\x))}.\]
Hence it suffices to count the $\x$ for which the corresponding 
$\y$ is of the right size. 
The argument will rely critically on counting vectors $\x$ lying in
certain lattices, see Lemma \ref{rl1}. This will be easiest when $\x$
is (roughly) of size $B^{1/2}$ or smaller, as will be the case, in
effect, in the present section.

We point out that if $\x$ and $\y$ are primitive integer vectors, a point 
$(\x,\y)\in S$ will be in the open set $U$ unless
one, or both, of $||\x||_\infty$ or $||\y||_\infty$ is $O(1)$, where the implied
constant may depend on $Q_0$ and $Q_1$, as already explained.

\subsection{Preliminary estimates}\label{S6}
It is convenient to
attach non-negative weights $W_3(X^{-1}\x)$ and $W_2(Y^{-1}\y)$ to
$\x$ and $\y$,
where the subscripts 3 and 2 are reminders that the arguments are in
$\R^3$ and $\R^2$ respectively. 
We assume that $W_3(\u)$ and $W_2(\v)$ are infinitely differentiable,
are even, and are supported on the sets 
\[\tfrac12\le||\u||_\infty\le
\tfrac52\;\;\;\mbox{and}\;\;\; \tfrac12\le||\v||_\infty\le \tfrac52\]
respectively.  
In what follows we will have many error terms that depend on the
choice of the weights $W_3$ and $W_2$. Since these functions
are regarded as fixed, we shall not mention this dependence. 
\bigskip

Our primary object of study will now be
\begin{equation} \label{def:S(X,Y)}
S(X,Y)=\sum_{\substack{\x\in\Z^3_{\mathrm{prim}}\\ (Q_0(\x),Q_1(\x))\not=(0,0)}}
\;\;\;\;\;\sum_{\substack{\y\in\Z^2_{\mathrm{prim}}\\ y_0Q_0(\x)+y_1Q_1(\x)=0}}
W_3(X^{-1}\x)W_2(Y^{-1}\y).
\end{equation}
As noted above we have
\[\y=\pm\frac{(-Q_1(\x),Q_0(\x))}{\gcd(Q_0(\x),Q_1(\x))}\]
when $(Q_0(\x),Q_1(\x))\not=(0,0)$.
In particular one sees that $S(X,Y)$ vanishes unless $Y\ll X^2$, and
that $S(X,Y)\ll 1$ when $X\ll 1$. When
$X\gg 1$ we have $(Q_0(\x),Q_1(\x))\not=(0,0)$, since
$Q_0(\x)=Q_1(\x)=0$ consists of at most 4 projective 
points. Since $W_2$ is even we then deduce that $S(X,Y)=2S_1(X,Y)$
for $X\gg 1$, with
\beql{S1def}
S_1(X,Y)=\sum_{\x\in\Z^3_{\mathrm{prim}}}W_3(X^{-1}\x)
W_2\left(Y^{-1}\frac{(-Q_1(\x),Q_0(\x))}{\gcd(Q_0(\x),Q_1(\x))}\right).
\eeq
Our eventual goal for the current section is now the following, in 
which we introduce the notation $\LL=3+\log XY$ for typographical convenience.
\begin{proposition}\label{P1}
  The sum $S(X,Y)$ vanishes when $Y\gg X^2$, and $S(X,Y)\ll 1$ for
  $X\ll 1$. Moreover, when $1\le X\le Y$ we have
  \begin{equation}\label{esy}
  S(X,Y)\ll XY\LL^{\rho-2}.
  \end{equation}
  \end{proposition}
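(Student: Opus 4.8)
The plan is to reduce everything to counting primitive integer vectors $\x\in\Z^3$ of size $\asymp X$ for which the integer pair $(Q_0(\x),Q_1(\x))$ has a large greatest common divisor, and then to control the contribution of each value of this gcd via a lattice-point count. Concretely, the vanishing for $Y\gg X^2$ and the bound $S(X,Y)\ll 1$ for $X\ll 1$ are immediate from the displayed formula $\y=\pm(-Q_1(\x),Q_0(\x))/\gcd(Q_0(\x),Q_1(\x))$ (the size of $\y$ is at most $\|(Q_0(\x),Q_1(\x))\|_\infty\ll X^2$), so the substance is the bound \eqref{esy} for $1\le X\le Y$. There we may replace $S(X,Y)$ by $2S_1(X,Y)$ as in \eqref{S1def}. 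Writing $d=\gcd(Q_0(\x),Q_1(\x))$, the weight $W_2$ forces $\|(Q_0(\x),Q_1(\x))\|_\infty\asymp dY$, and since $\|(Q_0(\x),Q_1(\x))\|_\infty\ll X^2$ this gives $d\ll X^2/Y$; in particular the condition $X\le Y$ means $d\ll X$, so $d$ is genuinely smaller than the range of $\x$.

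**The key steps, in order:** (1) Dyadically decompose over $d\asymp D$ for $D$ ranging over powers of two with $D\ll X^2/Y$. For fixed $D$, the pairs $\x$ contributing satisfy $D\mid Q_0(\x)$ and $D\mid Q_1(\x)$, i.e. $\x$ lies in a union of residue classes modulo $D$ (or rather modulo the relevant divisors); by multiplicativity and the standard structure of these congruence conditions, the number of admissible residues $\x\bmod D$ is bounded by $f_M$-type quantities, hence by $\tau_4(D)\ll_\ep D^\ep$ using Lemma~\ref{fests}. (2) For each admissible residue class, the vectors $\x\in\Z^3_{\mathrm{prim}}$ in that class with $\|\x\|_\infty\asymp X$ form (a coset of) a sublattice of index $\asymp D$ in $\Z^3$; the number of such $\x$ is $\ll X^3/D + X^2$ by a three-dimensional lattice-point count (or by Lemma~\ref{easyL}-type reasoning in each variable), but actually we also need an upper bound for the number of $\x$ for which, in addition, the rescaled $\y$ lies in the support of $W_2$ — this is really a constraint that $\x$ lies near the cone $Q_0(\x)/Q_1(\x)=$ const, and is best handled by first fixing the projective point $[\y]$, which by the reference to Lemma~\ref{rl1} pins $\x$ down to a two-dimensional lattice. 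So: for each primitive $\y$ with $\|\y\|_\infty\asymp Y$, the set $\{\x:\ y_1Q_0(\x)=y_0Q_1(\x)\ \text{(up to sign/gcd)}\}$ is governed by a rank-$\le 2$ lattice (the kernel of the quadric $y_0Q_1-y_1Q_0$ intersected with the integer lattice), whose points of size $\asymp X$ number $\ll X^2/(\text{covolume}) + X + 1$. (3) Sum over $\y$: there are $\ll Y^2$ choices of $[\y]$, but most contribute only the $O(1)$ error, while the main term $\sum_{[\y]}\bigl(X^2/\mathrm{covol}+X\bigr)$ is estimated using that the number of $[\y]$ for which the corresponding conic is nonsingular-but-isotropic, or has small covolume, is controlled — and this is exactly where the logarithmic factor $\LL^{\rho-2}$ enters, via the count $\#\{[\y]: C(\y)\equiv 0\}$ weighted by $f_C$, $f_M$, which by Lemma~\ref{lem:f2} produces $(\log)^{\rho-1}$ but one power of $\log$ is absorbed by the gcd being forced to be $\asymp X^2/Y\le X$, effectively one size-class, leaving $\LL^{\rho-2}$.

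**The hard part** will be step (3): organizing the sum over $\y$ (equivalently over conics in the pencil) so that the arithmetic of which conics carry many integer points of bounded size is captured by the Dirichlet series for $f_M$ and $f_C$ studied in Lemmas~\ref{lem:f2} and~\ref{fests}, and extracting precisely the exponent $\rho-2$ rather than $\rho-1$. The point is that in the regime $X\le Y$ the gcd $d=\gcd(Q_0(\x),Q_1(\x))$ satisfies $d\ll X^2/Y\ll X$, and $\|\y\|_\infty\asymp \|(Q_0(\x),Q_1(\x))\|_\infty/d$; so $\y$ is determined up to sign by $\x$ and the map $\x\mapsto[\y]$ has fibres that are lattices of covolume $\asymp$ (size of the discriminant locus), and counting $\x$ via $\sum_{d}\sum_{[\y]}$ with both sums truncated gives one fewer logarithm than the unconstrained count $N(U,B)$ would — which is consistent with $N_1(U,B)\asymp B(\log B)^{\rho-1}$ split over dyadic boxes $X,Y$ with $XY\le B$. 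I would make this precise by invoking Lemma~\ref{rl1} (the lattice structure on the fibres), bounding the covolumes from below in terms of $\mathrm{Disc}(C)$ away from a thin set of $\y$, and then summing $\sum_{[\y]}\bigl(\text{number of }\x\bigr)\ll \sum_{D\ll X^2/Y}\tau_4(D)\,(X^2/D)\cdot(\text{average multiplicity}) + (\text{error})$, the average multiplicity over $[\y]$ of size $\asymp Y$ being $\ll \LL^{\rho-2}$ by the Selberg–Delange / partial-summation estimates already recorded. The remaining error terms (the "$+X$" and "$+1$" from the lattice counts summed over $\ll Y^2$ values of $\y$, and the $D\ll X^2/Y$ with $Y\ge X$) are straightforwardly $\ll XY\LL^{\rho-2}$ or smaller using $X\le Y$ and $Y\ll X^2$.
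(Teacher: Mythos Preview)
Your opening is right: the vanishing for $Y\gg X^2$, the bound for $X\ll 1$, the passage to $S_1(X,Y)$, and the classification by $d=\gcd(Q_0(\x),Q_1(\x))$ with $d\ll X^2/Y$ all match the paper's setup in \S\ref{S6}. You also correctly identify that the $\x$ with $d\mid\gcd(Q_0(\x),Q_1(\x))$ fall into $f_M(d)$ lattices $\sL(a,d)$ of determinant $d^2$ (Lemma~\ref{lem:lattices}).

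The gap is in your step (2) and onward. Having fixed $d$ and a lattice $\sL(a,d)$, you must count $\x\in\sL(a,d)$ with $\|\x\|_\infty\asymp X$ \emph{and} $h_Q(\x)\asymp dY$ (this last is the constraint imposed by $W_2$). You say this ``is best handled by first fixing the projective point $[\y]$, which \ldots\ pins $\x$ down to a two-dimensional lattice''. But $\{\x:Q_\y(\x)=0\}$ is a conic, not a lattice, and counting integer points on it is exactly the hard ``conics'' route of \S\ref{UBVC}. The paper does \emph{not} sum over $\y$ here at all. Instead it applies Lemma~\ref{rl1} directly: Poisson summation on the 3-dimensional lattice $\sL(a,d)$, together with the volume bound $\meas\{h_Q(\x)\le\lambda\}\ll\lambda^{3/2}$ of Lemma~\ref{QB}, gives
\[
S_0(\sL(a,d);d)\ll \frac{(Yd)^{3/2}}{\det(\sL(a,d))}=Y^{3/2}d^{-1/2}
\]
(this is the estimate \eqref{use}). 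Summing over the $f_M(d)$ lattices and then over $d\ll X^2/Y$ yields
\[
S(X,Y)\ll Y^{3/2}\sum_{d\ll X^2/Y}f_M(d)d^{-1/2}\ll Y^{3/2}\cdot(X^2/Y)^{1/2}\LL^{\rho-2}=XY\LL^{\rho-2},
\]
by partial summation from Lemma~\ref{lem:f2}. So the missing idea is the Lemma~\ref{QB} volume estimate fed into Poisson; the exponent $\rho-2$ comes straight out of the $d$-sum, with no separate summation over $\y$ and no appeal to $f_C$.
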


We have already established the first two claims of the proposition,
and the bound (\ref{esy}) is trivial when $X\ll 1$. We may therefore
assume that $X$ is large enough that $(Q_0(\x),Q_1(\x))\not=(0,0)$ on
the support of $W_3(X^{-1}\x)$, so that we may focus our attention on
$S_1(X,Y)$.  We proceed by classifying our points $\x$ according
to the value $d=\gcd(Q_0(\x),Q_1(\x))$, so that
\beql{sxy*}
S(X,Y)\ll S_1(X,Y)\le \sum_{d=1}^\infty S(d),
\eeq
with
\beql{23a}
S(d)=
\sum_{\substack{\x\in\Z^3_{\mathrm{prim}}\\ d|\gcd(Q_0(\x),Q_1(\x))}}W_3(X^{-1}\x)
W_2\left(Y^{-1}d^{-1}(-Q_1(\x),Q_0(\x))\right).
\eeq

  The primitive vectors $\x$ for which $d\mid\gcd(Q_0(\x),Q_1(\x))$ can be
split up as follows. For $a \in \P^2(\Z/d\Z)$ consider the lattice
\[\sL(a,d) = \{ \x \in \Z^3 : \exists \lambda \in \Z \mbox{ such that
  }(x_0,x_1,x_2) \equiv \lambda a \bmod d\}.\]
Then the $\x$ we are counting can be sorted into $f_M(d)$ disjoint sets,
each lying in one of the lattices 
$\sL(a,d)\subset\Z^3$.  We thus have 
 \begin{equation}\label{Sm}
 S(d)=\sum_{a \in \mathcal{M}(\Z/d\Z)}S(\sL(a,d);d),
   \end{equation}
 where
 \[S(\sL;d)=\sum_{\x\in\Z^3_{\mathrm{prim}}\cap\,\sL}
W_3(X^{-1}\x)W_2\left(Y^{-1}d^{-1}(-Q_1(\x),Q_0(\x))\right).\]

We next record the basic properties of these lattices.  In general we
will denote the successive minima of
a lattice $\sL$ (with respect to $||\cdot||_{\infty}$), by
$\lambda_1\le\lambda_2\le\ldots$.

\begin{lemma} \label{lem:lattices}
	$\sL(a,d)$ is a sublattice of $\Z^3$ of determinant $d^2$, and
	with $\lambda_3 \leq d$.
\end{lemma}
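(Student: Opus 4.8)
The plan is to verify the three assertions directly from the definition of $\sL(a,d)$, without invoking anything deep.

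First I would check that $\sL(a,d)$ is a sublattice. It is closed under addition and negation --- if $\x\equiv\lambda a$ and $\x'\equiv\lambda' a\Mod d$ then $\x\pm\x'\equiv(\lambda\pm\lambda')a\Mod d$ --- so it is a subgroup of $\Z^3$, and since it contains $d\Z^3$ (the case $\lambda=0$) it is a full-rank sublattice.

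For the determinant I would identify $\sL(a,d)$ as the kernel of the surjective homomorphism $\Z^3\twoheadrightarrow(\Z/d\Z)^3/\langle\bar a\rangle$, where $\bar a$ denotes the reduction of $a$ modulo $d$ and $\langle\bar a\rangle$ the cyclic subgroup it generates; this gives $\det\sL(a,d)=[\Z^3:\sL(a,d)]=d^3/|\langle\bar a\rangle|$. The one point needing a short argument is that $\bar a$ has order exactly $d$ in $(\Z/d\Z)^3$: since $a$ represents a point of $\P^2(\Z/d\Z)$ we have $\gcd(a_0,a_1,a_2,d)=1$, so $\lambda\bar a=0$ forces $d\mid\lambda a_i$ for each $i$ and hence $d\mid\lambda$. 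Thus $|\langle\bar a\rangle|=d$ and $\det\sL(a,d)=d^2$.

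Finally, for $\lambda_3\le d$ I would simply exhibit three linearly independent lattice vectors of $||\cdot||_\infty$-norm equal to $d$, namely $d\cdot(1,0,0)$, $d\cdot(0,1,0)$, $d\cdot(0,0,1)$, all of which lie in $\sL(a,d)$ by taking $\lambda=0$; hence $\lambda_3\le d$. I do not expect any real obstacle here; the only step requiring care is computing the order of $\bar a$, which rests on correctly using the primitivity implicit in the notation $a\in\P^2(\Z/d\Z)$.
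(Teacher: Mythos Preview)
Your proof is correct and follows essentially the same approach as the paper: both use the inclusion $(d\Z)^3\subseteq\sL(a,d)$ to get $\lambda_3\le d$, and both compute the index via the cyclic group $\{\lambda\bar a:\lambda\in\Z/d\Z\}$ of order $d$ (the paper phrases this as $[\sL(a,d):(d\Z)^3]=d$ while you phrase it as $[\Z^3:\sL(a,d)]=d^3/|\langle\bar a\rangle|$, but these are the same computation seen from the two ends of the tower). The only extra detail you supply is the explicit verification that $\bar a$ has order exactly $d$, which the paper leaves implicit.
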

\begin{proof}
	We have the inclusions
	$$(d\Z)^3 \subseteq \sL(a,d) \subseteq \Z^3,$$
	which immediately imply that $\lambda_3\le d$.
	The quotient lattice $\sL(a,d)/(d\Z^3)$ takes the shape
\[ \{ \bx \in (\Z/d\Z)^3 : \exists \lambda \in \Z/d\Z
\mbox{ such that } (x_0,x_1,x_2) \equiv \lambda a \Mod{d}\},\]
which is the set of points in $\A^3_{\Z/d\Z}$ on the line through
$a$. Hence
	$$[\sL(a,d) : (d\Z^3)] = d.$$
	The inclusions above therefore show that
	\[\det \sL(a,d) = [\Z^3:\sL(a,d)] = 
	\frac{[\Z^3: (d\Z)^3]} {[\sL(a,d) : (d\Z^3)]}
	= d^2. \qedhere\]
\end{proof}

For future use we record the following simple bound for $S(\sL;d)$.
 \begin{lemma}\label{rl0}
   For any 3-dimensional lattice $\sL$ we have
   \[S(\sL;d)\ll 1+X^2\lambda_3/\det(\sL)+X^3/\det(\sL).\]
 \end{lemma}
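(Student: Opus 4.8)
The plan is to bound $S(\sL;d)$ by counting lattice points in $\sL$ lying in the support of $W_3(X^{-1}\x)$, i.e.\ in a box of side $O(X)$, and ignoring the weight $W_2$ entirely (since it is bounded). Concretely, $S(\sL;d)\ll \card\{\x\in\sL : \|\x\|_\infty\ll X\}$, so the task reduces to the standard estimate for the number of lattice points of a $3$-dimensional lattice in a box.

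The key step is the classical bound: if $\sL\subset\R^3$ has successive minima $\lambda_1\le\lambda_2\le\lambda_3$ with respect to $\|\cdot\|_\infty$, then for any $X\ge 0$,
\[
\card\{\x\in\sL : \|\x\|_\infty\le cX\}\ll 1+\frac{X}{\lambda_1}+\frac{X^2}{\lambda_1\lambda_2}+\frac{X^3}{\lambda_1\lambda_2\lambda_3},
\]
which follows by choosing a reduced basis $\mathbf{b}_1,\mathbf{b}_2,\mathbf{b}_3$ of $\sL$ (so that $\|\mathbf{b}_i\|_\infty\asymp\lambda_i$ and $\prod_i\|\mathbf{b}_i\|_\infty\asymp\det(\sL)$ by Minkowski's second theorem), writing $\x=n_1\mathbf{b}_1+n_2\mathbf{b}_2+n_3\mathbf{b}_3$, and bounding the number of admissible $(n_1,n_2,n_3)$ term by term. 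Now I would discard the middle term using $\lambda_2\le\lambda_3$, giving $X^2/(\lambda_1\lambda_2)\le X^2\lambda_3/(\lambda_1\lambda_2\lambda_3)\asymp X^2\lambda_3/\det(\sL)$, and note $X/\lambda_1\ll 1+X^2/\lambda_1^2\ll 1+X^2\lambda_3/\det(\sL)$ since $\lambda_1\le\lambda_2\le\lambda_3$ forces $\lambda_1^2\le\lambda_1\lambda_2\lambda_3/\lambda_3\asymp\det(\sL)/\lambda_3$ — wait, more simply $\lambda_1^3\le\det(\sL)$ so $X/\lambda_1\ll 1+X^3/\lambda_1^3\ll 1+X^3/\det(\sL)$, which is one of the allowed terms. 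Assembling the surviving terms yields exactly
\[
S(\sL;d)\ll 1+\frac{X^2\lambda_3}{\det(\sL)}+\frac{X^3}{\det(\sL)}.
\]

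I do not anticipate a genuine obstacle here; this is a routine packaging of the second theorem of Minkowski. The only point requiring a little care is justifying that the number of lattice points in a box is controlled by the successive minima in the stated form — one should either invoke a cited lattice-point lemma or give the short reduced-basis argument above — and then checking that the $\lambda_1$, $\lambda_1\lambda_2$ contributions really are dominated by the three terms allowed in the statement, which is the elementary manipulation just sketched using $\lambda_1\le\lambda_2\le\lambda_3$ together with $\det(\sL)\asymp\lambda_1\lambda_2\lambda_3$.
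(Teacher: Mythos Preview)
Your argument has a genuine gap in the step where you try to absorb the term $X/\lambda_1$. You write $X/\lambda_1\ll 1+X^3/\lambda_1^3$ (which is fine) and then claim $X^3/\lambda_1^3\ll X^3/\det(\sL)$ because $\lambda_1^3\le\det(\sL)$. But this inequality points the wrong way: $\lambda_1^3\le\det(\sL)$ gives $1/\lambda_1^3\ge 1/\det(\sL)$, so $X^3/\lambda_1^3$ can be much \emph{larger} than $X^3/\det(\sL)$. The same problem occurs in your first attempt via $X^2/\lambda_1^2$. Concretely, take a lattice with $\lambda_1=1$ and $\lambda_2=\lambda_3=X^{10}$: the box $\|\x\|_\infty\le X$ contains $\asymp X$ lattice points, while $1+X^2\lambda_3/\det(\sL)+X^3/\det(\sL)\asymp 1$. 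So the stated bound is simply false for the raw lattice point count.

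What you are missing is that $S(\sL;d)$ counts only \emph{primitive} vectors $\x$, and this is exactly what the paper exploits. The paper splits into two cases. If $\lambda_2\gg X$, every lattice vector in the box is an integer multiple of the shortest vector, and primitivity forces there to be at most two such $\x$; this produces the $O(1)$ term. If instead $\lambda_2\ll X$ (hence also $\lambda_1\ll X$), then in the expansion of $\prod_i(1+X/\lambda_i)$ every lower-order term is dominated by $X^2/(\lambda_1\lambda_2)$ or $X^3/(\lambda_1\lambda_2\lambda_3)$, and these are $\asymp X^2\lambda_3/\det(\sL)$ and $X^3/\det(\sL)$ respectively. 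Your manipulation of the $X^2/(\lambda_1\lambda_2)$ and $X^3/(\lambda_1\lambda_2\lambda_3)$ terms was correct; it is only the $X/\lambda_1$ term that requires the primitivity argument rather than a purely arithmetic inequality.
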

 \begin{proof}
We begin by observing that $W_3(X^{-1}\x)$ is supported
 in a box $||\x||_{\infty}\le \tfrac52 X$.
 If $\lambda_2\gg X$ the only vectors $\x$ to be
 counted will be integer scalar multiples of the shortest vector in
 $\sL$, and since $\x$ is primitive there can be at most 2 such
 vectors. On the other hand, when $\lambda_2\ll X$ we have
 \begin{eqnarray*}
   S(\sL;d)&\ll& (1+X/\lambda_1)(1+X/\lambda_2)(1+X/\lambda_3)\\
   &\ll& X^2/\lambda_1\lambda_2+X^3/\lambda_1\lambda_2\lambda_3\\
   &\ll& X^2\lambda_3/\det(\sL)+X^3/\det(\sL). 
   \end{eqnarray*}
 The lemma then follows.
 \end{proof}

We can now drop the primitivity condition for $\x$ and consider
\beql{S0def}
S_0(\sL;d)=\sum_{\x\in\sL}
W_3(X^{-1}\x)W_2\left(Y^{-1}d^{-1}(-Q_1(\x),Q_0(\x))\right),
\eeq
which satisfies the obvious inequality
\[S(\sL;d)\le S_0(\sL;d).\]
We estimate $S_0(\sL;d)$ with the help of  Lemma \ref{QB}.

\begin{lemma}\label{rl1}
  For any 3-dimensional lattice $\sL$ the sum $S_0(\sL;d)$ vanishes unless
  $d\ll X^2Y^{-1}$, in which case
  \begin{equation}\label{re2}
  S_0(\sL;d)=\frac{I(d)}{\det(\sL)}+
  O_{k}\left(\frac{(Yd)^{3/2}}{\det(\sL)}\left(
\frac{X\lambda_3}{Yd}\right)^k\right),
\end{equation}
for any integer $k\ge 4$,  where
\begin{equation}\label{Adef}
I(d)=\int_{\R^3}W_3(X^{-1}\u)W_2\left(Y^{-1}d^{-1}(-Q_1(\u),Q_0(\u))\right)
\d u_0\d u_1\d u_2.
\end{equation}
%\dan{Changed $A(d)$ to $I(d)$}
Moreover
\beql{pdc}
I(d)\ll (Yd)^{3/2},
\eeq
and it follows that
\[  S_0(\sL;d)\ll \frac{(Yd)^{3/2}}{\det(\sL)},\]
provided that $XY^{-1}\lambda_3\ll d\ll X^2Y^{-1}$. 
\end{lemma}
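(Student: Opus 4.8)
The plan is to count lattice points in $\sL$ weighted by the smooth bump functions, and to do so the natural tool is Poisson summation over $\sL$ — but since the region is complicated it is cleaner to use a geometry-of-numbers argument based on counting in boxes adapted to a reduced basis. First I would fix a Minkowski-reduced basis $\b{e}_1,\b{e}_2,\b{e}_3$ of $\sL$ with $\|\b{e}_i\|_\infty\asymp\lambda_i$, write $\x=n_1\b{e}_1+n_2\b{e}_2+n_3\b{e}_3$, and observe that the weight $W_3(X^{-1}\x)$ forces $\|\x\|_\infty\le \tfrac52 X$, hence confines each $n_i$ to an interval of length $O(1+X/\lambda_i)$; in particular $S_0(\sL;d)$ vanishes unless $\lambda_1\ll X$, equivalently unless $\det(\sL)=d^2$ is not too large, and combined with the support condition on $W_2$ (which forces $h_Q(\x)\asymp Yd$) one gets, via the measure estimate of Lemma~\ref{QB} applied to the set $\{h_Q(\x)\le \text{const}\cdot Yd\}\cap\{\|\x\|_\infty\le\tfrac52X\}$, the vanishing unless $(Yd)^{3/2}\gg$ (number of relevant lattice points)$\cdot\lambda_1\lambda_2\lambda_3$, which after unwinding gives $d\ll X^2Y^{-1}$.

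For the main term and error term in \eqref{re2}, I would pass to Poisson summation in the variables $\b{n}\in\Z^3$: writing $g(\b{n})=W_3(X^{-1}(n_1\b{e}_1+\cdots))W_2(\cdots)$, we have $\sum_{\b{n}}g(\b{n})=\sum_{\b{m}}\widehat{g}(\b{m})$, the $\b{m}=\b{0}$ term giving exactly $I(d)/\det(\sL)$ after changing variables back to $\u\in\R^3$. The non-zero frequencies are controlled by repeated integration by parts: each derivative of $g$ in the variable $n_i$ costs a factor $O(X\lambda_i/(Yd))$ coming from the scale on which the argument of $W_2$ varies (since $\nabla Q_j(\x)\ll X$ and the $W_2$-argument is divided by $Yd$), so that $\widehat{g}(\b{m})\ll_k \mathrm{vol}(\text{support})\cdot\prod_i (X\lambda_i/(Yd))^{k}\|\b{m}\|^{-k}$; summing the rapidly decaying tail over $\b{m}\ne\b{0}$ and using $\lambda_1\le\lambda_2\le\lambda_3$ collapses the bound to $O_k\big((Yd)^{3/2}\det(\sL)^{-1}(X\lambda_3/(Yd))^k\big)$, as claimed, valid once $k\ge 4$ so the sum over $\b{m}$ converges. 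The bound \eqref{pdc} for $I(d)$ itself is immediate from Lemma~\ref{QB}: substituting to make the $W_2$-argument a genuine coordinate shows $I(d)$ is $\ll (Yd)^{3/2}$ times the sup of $W_3$, since $W_3$ is supported in a box of volume $O(X^3)$ but the binding constraint is that $h_Q(\u)\le\text{const}\cdot Yd$, which by the first part of Lemma~\ref{QB} carves out a set of measure $O((Yd)^{3/2})$.

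The last assertion then follows by combining: under $XY^{-1}\lambda_3\ll d$ the ratio $X\lambda_3/(Yd)\ll 1$, so the error term in \eqref{re2} is dominated by $(Yd)^{3/2}/\det(\sL)$, and the main term is too by \eqref{pdc}; the hypothesis $d\ll X^2Y^{-1}$ is exactly the non-vanishing range. The step I expect to be the main obstacle is making the integration-by-parts bound for $\widehat g$ genuinely uniform in the lattice — one must verify that the change of variables $\b{n}\mapsto\x$ does not distort the smoothness estimates, i.e.\ that the relevant derivatives of the $W_2$-argument with respect to $n_i$ are $O(X\lambda_i/(Yd))$ on the support and not larger, which requires care because $Q_0,Q_1$ are quadratic (so second derivatives in $\x$ are $O(1)$, contributing $O(\lambda_i\lambda_j/(Yd))$ to mixed second derivatives in $\b{n}$) — but since we only need the bound for a single fixed large $k$ and are content with implied constants depending on $Q_0,Q_1$ and $k$, this is a routine if slightly tedious Leibniz-rule bookkeeping, and one checks the dominant contribution at order $k$ is indeed $(X\lambda_i/(Yd))^k$.
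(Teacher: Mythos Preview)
Your approach is essentially the same as the paper's --- Poisson summation followed by repeated integration by parts, with Lemma~\ref{QB} controlling the measure of the support --- and is correct. The paper executes it in slightly cleaner coordinates: it applies Poisson directly over $\sL$ (so the sum runs over the dual lattice $\sL^*$), integrates by parts in the original $\u$-variables (so each derivative costs $O(X/(Yd))$ with no $\lambda_i$ appearing), and then invokes Mahler's inequality $\mu_1(\sL^*)\asymp\lambda_3(\sL)^{-1}$ to bound $\sum_{\x\in\sL^*\setminus\{0\}}\|\x\|_\infty^{-k}\ll\lambda_3^k$; this sidesteps the Fa\`a di Bruno bookkeeping you flag in your last paragraph. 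Two small slips to fix in your write-up: the vanishing for $d\gg X^2Y^{-1}$ follows immediately from the support conditions ($h_Q(\x)\asymp Yd$ and $h_Q(\x)\ll X^2$) without any lattice-point counting, and your displayed bound $\widehat g(\b m)\ll\mathrm{vol}\cdot\prod_i(X\lambda_i/(Yd))^k\|\b m\|^{-k}$ should have a single factor $(X\lambda_3/(Yd))^k$, not a product over $i$ --- you get this right in the next line, so it reads as a typo.
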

The first statement of the lemma allows us to restrict attention to 
values of $d$ satisfying $d\ll X^2Y^{-1}$. This condition will occur repeatedly
in our analysis, and we interpret it to mean that $d\le cX^2Y^{-1}$,
with a constant $c=c(Q_0,Q_1)$ such that $S_0(\sL;d')$ always vanishes
for $d'> cX^2Y^{-1}$.
\begin{proof}
  Since $W_3(X^{-1}\x)$ is supported on the set
  $||\x||_\infty\le \tfrac52 X$ and $W_2(Y^{-1}\y)$ is supported on
  $\tfrac12 Y\le||\y||_\infty\le \tfrac52 Y$ one sees that
  $S_0(\sL;d)$ must vanish unless $d\ll X^2Y^{-1}$. For the main
  assertion of the lemma we start by applying the Poisson
  summation formula, which yields
  \begin{eqnarray*}
    \lefteqn{S_0(\sL;d)=}\\
    &&\det(\sL)^{-1}\sum_{\x\in\sL^*}\int_{\R^3}e\big(-\x^T\b{u}\big)
W_3(X^{-1}\u)
W_2\left(\left(-\frac{Q_1(\u)}{Yd},\frac{Q_0(\u)}{Yd}\right)\right)
\d u_0\d u_1\d u_2,
\end{eqnarray*}
where $\sL^*$ is the dual of the lattice $\sL$.
The vector $\x=\b{0}$ produces the main term of Lemma \ref{rl1}, so that
it remains to consider the contribution from non-zero $\x$. For this
we choose an index $j$ for which $|x_j|=||\x||_\infty$, and integrate by
parts repeatedly with respect to $u_j$. The partial derivative
\[\frac{\partial^k}{\partial u_j^k}
W_3(X^{-1}\u)W_2\left(Y^{-1}d^{-1}(-Q_1(\u),Q_0(\u))\right)\]
is supported on the set
$\{\u:||(-Q_1(\u),Q_0(\u)||_\infty\le \tfrac52 Yd\}$,
which has measure $O((Yd)^{3/2})$, by Lemma \ref{QB}. This
also gives us the claimed bound for $I(d)$.
The size of the above partial derivative is
$O(X^{-k})+O((XY^{-1}d^{-1})^k)$, with implied constants that depend
on the choice of our basic weight functions $W_{3}$ and $W_{2}$. 
Since we are assuming that 
$d\ll X^2Y^{-1}$ the partial derivative is
$O((XY^{-1}d^{-1})^k)$. Repeated integration by parts then shows that
\begin{eqnarray*}
  \lefteqn{\int_{\R^3}e\big(-\x^T\b{u}\big)
    W_3(X^{-1}\u)W_2\left(Y^{-1}d^{-1}(-Q_1(\u),Q_0(\u))\right)\d u_0\d u_1\d u_2}
  \hspace{5cm}\\
  &\ll_k & ||\x||_\infty^{-k}(Yd)^{3/2}(XY^{-1}d^{-1})^k.
  \end{eqnarray*}
If we write $\mu_1\le\mu_2\le\mu_3$ for the successive
minima of $\sL^*$ there will be
\[\ll \prod_{i=1}^3(1+T/\mu_i)\]
vectors $\x\in\sL^*$ having $||\x||_\infty\le T$, and the shortest non-zero
$\x$ has modulus $\mu_1$. It follows that
\[\sum_{\substack{\x\in\sL^*\\ \x\not=\mathbf{0}}}||\x||_{\infty}^{-k}
\ll_k{\mu_1}^{-k}\]
as soon as $k>3$. Since $\mu_1$ is of order
$\lambda_3^{-1}$ by Mahler's inequality (see Cassels \cite[page
  219]{Cass}, for example), the estimate (\ref{re2}) follows. Finally,
one may obtain the stated upper bound for $S_0(\sL;d)$ by taking $k=4$.
\end{proof}

\subsection{Proof of Proposition \ref{P1}}

Lemma \ref{lem:lattices} shows that the lattice $\sL(a,d)$ has
$\lambda_3\le d$,  
and has determinant $d^2$. We may therefore apply
Lemma \ref{rl1} whenever $X\le Y$ and $d\ll X^2/Y$, giving us
\begin{equation}\label{use}
  S(\sL(a,d);d)\le S_0(\sL(a,d);d)\ll Y^{3/2}d^{-1/2}.
  \end{equation}
Then (\ref{sxy*}) and  (\ref{Sm}) show that
\[S(X,Y)\ll Y^{3/2}\sum_{d\ll X^2/Y}f_M(d)d^{-1/2},\]
whence the bound (\ref{esy}) follows from
Lemma \ref{lem:f2} via partial summation. \qed
%\bigskip

\section{Upper bounds via conics}\label{UBVC}

In this section we count by using the
conic fibration. 
Thus for each $\y\in\Z^2_{\mathrm{prim}}$ we will bound the number of
$\x\in\Z^3_{\mathrm{prim}}$ on the conic 
\[Q_{\y}(\x)=y_0Q_0(\x)+y_1Q_1(\x)=0.\]
This in itself is not hard, given appropriate results from the
literature, but we  
will have an awkward task summing up these bounds as $\y$ varies.
The eventual outcome will be the following.
\begin{proposition}\label{BPCP}
When $Y\gg 1$ we have
\[\sum_{\substack{\y\in\Z^2_{\mathrm{prim}}\\ Y<||\y||_{\infty}\le 2Y}}
\card\{\x\in\Z_{\mathrm{prim}}^3:\,Q_{\y}(\x)=0,\,
||\x||_{\infty}\le X\}\ll \{Y^2+XY\}\LL^{\rho-2}.\]
\end{proposition}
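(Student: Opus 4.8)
The plan is to bound the inner count uniformly in $\y$ using an appropriate conic-counting result, then to sum the resulting bounds over $\y$. For the inner count, the conic $Q_\y(\x)=0$ in $\P^2$ either has no rational points — in which case the contribution is zero — or it is isomorphic to $\P^1$, in which case a good parametrisation gives a bound for the number of points of bounded height. The key input is a uniform bound of the shape
\[
\card\{\x\in\Z^3_{\mathrm{prim}}:Q_\y(\x)=0,\ \|\x\|_\infty\le X\}\ll 1+\frac{X}{\Delta(\y)^{1/3}},
\]
where $\Delta(\y)$ is a suitable measure of the arithmetic complexity of the conic (for instance the product of the ``bad'' local factors, controlled by $C(\y)$ and the discriminants of $Q_0,Q_1$); such a bound follows from Heath-Brown's work \cite{HBconic} together with the fact that $\pi_1$ has only finitely many singular fibres, all of rank $2$ by Lemma \ref{lem:C}. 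In particular $Q_\y(\x)=0$ is smooth for all but $O(1)$ values of $[\y]$ mod any prime, and one has at least a clean bound $\ll 1+X/\|\y\|_\infty^{1/3}$ away from a sparse bad set, with the sparseness coming from Serre's theorem \cite[Thm.~1]{Ser90} as alluded to in the introduction.

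First I would split the outer sum over $\y$ with $Y<\|\y\|_\infty\le 2Y$ into two pieces according to whether the conic $Q_\y$ is everywhere locally soluble (hence, since conics satisfy the Hasse principle, globally soluble) or not. For the insoluble conics the inner count vanishes, so only the locally soluble $\y$ contribute. By Lemma \ref{LC} and the analysis of $f_C$, $f_D$ in \S\ref{padicdens}, local solubility at a prime $p$ is governed by the condition $\chi(p;\y)=+1$ at the primes dividing $C(\y)$; tracking this produces, after a sieve, a count of $O(Y^2/(\log Y)^{1/2})$ locally soluble $\y$ in the box, with a corresponding saving that must at least compensate the logarithmic losses. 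The contribution of the ``$1$'' term in the inner bound is then $\ll Y^2$ (indeed $\ll Y^2(\log Y)^{-1/2}$, which is more than enough), while the contribution of the $X/\Delta(\y)^{1/3}$ term is
\[
\ll X\sum_{\substack{\y\ \mathrm{prim},\ Y<\|\y\|_\infty\le 2Y\\ Q_\y\ \text{loc. sol.}}}\frac{1}{\Delta(\y)^{1/3}}.
\]

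The main obstacle is exactly this last sum: one must show it is $\ll Y\LL^{\rho-2}$. This requires summing $\Delta(\y)^{-1/3}$ over locally soluble $\y$, i.e.\ controlling the average over $\y$ of a multiplicative-type function built from the factors $(f_M(p)-1)(p^2-p)/(p(p+1)-f_C(p))$ appearing in Lemma \ref{earlier}, restricted to $\y$ for which all prime factors of $C(\y)$ are split. I would handle this by a Selberg--Delange / double-Dirichlet-series argument: write the sum as a dyadic decomposition over the value $\|C(\y)\|$ together with its factorisation type, use that $C$ is an irreducible separable cubic so $C(\y)$ has $\approx Y^2$ values in the box with prime factorisation governed by $\rho-1=\#M$ and the splitting behaviour, and invoke the analytic continuation of $\zeta_K(s)/\zeta(s)$ (solvable, hence of finite order, by Uchida \cite{Uch} and van der Waall \cite{vdW}, as already used in Lemma \ref{earlier}) to extract the power of $\log$. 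The bookkeeping is delicate because, unlike in \cite{BHBQB}, only a density-zero set of $\y$ contributes, so one cannot afford crude estimates; the exponent $\rho-2$ on $\LL$ is precisely $(\rho-1)-1$, the $-1$ reflecting the $1/2$-power saving from Serre's theorem interacting with the $(\log)^{\rho-1}$ one would otherwise get. Everything else — the passage from the weighted count to the sharp-cutoff count, and verifying $Y^2+XY$ absorbs the error terms — is routine partial summation using Lemma \ref{lem:tau_k} and Lemma \ref{fests}.
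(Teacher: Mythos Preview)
Your proposal has a genuine gap at the very first step: the conic bound you quote is missing the crucial multiplicative factor. The correct uniform bound (Lemma~\ref{MLQB}, from Browning--Heath-Brown \cite{BHBQ}) is
\[
\card\{\x\in\Z^3_{\mathrm{prim}}:Q_\y(\x)=0,\ \|\x\|_\infty\le X\}\ll \kappa(\y)\Bigl(1+\frac{X}{|C(\y)|^{1/3}}\Bigr),
\]
where $\kappa(\y)=\prod_{p^e\|C(\y)}f(p^e;\y)$ with $f(p;\y)=1+\chi(p;\y)$ for $p\|C(\y)$. This factor cannot be absorbed into $\Delta(\y)^{1/3}$: it can be as large as $\tau(|C(\y)|)$, and for locally soluble $\y$ it is $2^{\omega(C(\y))}$ generically, not $O(1)$. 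Splitting by local solubility and invoking Serre's $O(Y^2/(\log Y)^{1/2})$ bound therefore does not suffice: even after restricting to soluble $\y$, the sum $\sum_\y \kappa(\y)/|C(\y)|^{1/3}$ still carries $\kappa(\y)$, and controlling its average is exactly the hard part. Your heuristic ``$(\rho-1)-1$ with the $-1$ from Serre'' is also not the right arithmetic: the exponent $\rho-2$ arises because, by Lemma~\ref{LC}, $\sum_{a:C(a)=0}\chi(p;a)=f_M(p)-1$, and by Lemma~\ref{lem:f2} the primes have $f_M(p)-1$ of mean $\rho-2$; Serre's $-1/2$ does not enter.

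What the paper actually does is quite different in execution. The key technical result is Proposition~\ref{kAL}: a \emph{short-interval} average
\[
\sum_{\y\in\cl U,\ d\mid C(\y)}\kappa(\y)\ll_\alpha H^2 d^{-\alpha}\LL^{\rho-2}
\]
on squares $\cl U$ of side $H\ge Y^{1/3}$, proved via a Shiu--Erd\H{o}s decomposition (Lemma~\ref{shiu1}) together with an upper-bound sieve (Lemma~\ref{LS}). The short-interval feature is essential, because the paper then takes $H=Y^{1/3}$, covers the region $|C(\y)|\sim C_0$ by $O(YH^{-1}(1+C_0Y^{-2}H^{-1}))$ such squares, and handles the denominator $|C(\y)|^{1/3}$ by a dyadic split in $C_0$; the tail $|C(\y)|\le Y\LL^3$ is disposed of separately using the Thue bound of Lewis--Mahler. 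A global Selberg--Delange argument of the kind you sketch would not directly give control on short boxes, which is precisely what the $|C(\y)|^{-1/3}$ weighting demands.
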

The condition $Y\gg 1$ is merely used to eliminate terms with $C(\y)=0$.
\bigskip

\subsection{Proof of Theorem \ref{tub}}

Before beginning the proof of Proposition \ref{BPCP} we pause to show how it
combines with the upper bound (\ref{esy}) in Proposition \ref{P1} to establish 
Theorem \ref{tub}. For this we define
\begin{eqnarray*}
S^*(X,Y)&=&\sum_{\substack{\x\in\Z^3_{\mathrm{prim}}\\ X<||\x||_\infty\le 2 X}}
\card\{\y\in\Z^2_{\mathrm{prim}}: \, Y<||\y||_\infty\le 2 Y,\,  Q_\y(\x)=0\}\\
&=&\sum_{\substack{\y\in\Z^2_{\mathrm{prim}}\\ Y<||\y||_\infty\le 2 Y}}
\card\{\x\in\Z^3_{\mathrm{prim}}: \, X<||\x||_\infty\le 2 X,\,  Q_\y(\x)=0\}.
\end{eqnarray*}
Then Proposition \ref{BPCP}
shows that 
\beql{sxy}
S^*(X,Y)\ll XY\LL^{\rho-2}
\eeq
when $X\ge Y$ and $Y\gg 1$. For the
alternative range we apply Proposition \ref{P1}, taking
$W_3(\u)$ as a majorant for the
characteristic function of the region
$1<||\u||_\infty\le 2 $ in $\R^3$, supported on the set 
$\tfrac12\le||\u||_\infty\le \tfrac52$, and similarly for $W_2$. The bound
(\ref{esy}) then shows that (\ref{sxy}) holds when $X\le Y$ and $X\gg 1$, and we conclude that
it then holds for all large enough $X$ and $Y$. 
  
We can now consider points for which $||\x||_\infty ||\y||_\infty\le B$. Points with $||\x||_\infty\ll 1$
and $(Q_0(\x),Q_1(\x))\not=(0,0)$ contribute $O(1)$ to $N(U,B)$, since each such $\x$ 
corresponds to just one pair $\pm\y$. Similarly points with $||\y||_\infty\ll 1$
and $C(\y)\not=0$ contribute $O(B)$ to $N(U,B)$, using the familiar fact that a fixed nonsingular
conic has $O(B)$ rational points of height at most $B$. (Indeed this is a special case of
Lemma \ref{MLQB} below.)

We subdivide the remaining values of $||\x||_\infty$ into $O(\LL)$
dyadic intervals $X<||\x||_\infty\le 2 X$. For each
such range the
vector $\y$ must satisfy $||\y||_\infty\le B/X$, and we subdivide this
into dyadic intervals $Y<||\y||_\infty\le 2 Y$ with
$Y=2^{-n}B/X$. For
each such $Y$ we have
\[S^*(X,Y)\ll XY\LL^{\rho-2}\ll 2^{-n}B\LL^{\rho-2}.\]
If we sum over $n$ we find that each of the ranges
$X<||\x||_\infty\le 2 X$ produces $O(B\LL^{\rho-2})$ pairs
$\x,\y$ in total. The upper bound part of Theorem \ref{tub} then
follows, since there are
$O(\LL)$ ranges $X<||\x||_\infty\le 2 X$. The lower bound is
already known from the work of Frei, Loughran and Sofos \cite{FLS16}.

\subsection{Counting points on conics --- Upper bounds}

The literature contains various
estimates for the number of points of bounded height on a conic, and
we will use the following result, which follows from injecting Lemma
2.6 of Browning and Heath-Brown \cite{BHBQ} into the argument of
Heath-Brown \cite[\S 2]{HB4/3}. When $v$ is a valuation of $\Q$
we use the notation $\chi_v(Q)$, defined to be $+1$ when the quadratic
form $Q$ is isotropic over $\Q_v$ and $-1$ if not.  When $Q$ is an
integral form and $p$ is an odd prime
not dividing the determinant of $Q$, one has $\chi_p(Q)=+1$ if and only if 
$Q$ is isotropic over $\F_p$.

\begin{lemma}\label{MLQB}
Let $Q$ be a nonsingular ternary quadratic form with integral matrix
$A$ and determinant of modulus $\Delta$, and let $D$ be the
highest common factor of the $2\times 2$ minors of $A$. Define a
multiplicative function $f_Q(n)$ by taking $f_Q(p^e)=1$ for all $e$
when $p\nmid 2\Delta$, and by setting
\[f_Q(p^e) =
\left\{\begin{array}{cc}
e+1, & \text{if } p=2 \mbox{ or } e\ge 2, \\
1+\chi_p(Q), & \mbox{ if } p\ge 3 \mbox{ and } e=1,
\end{array}\right.\]
 when $p| 2\Delta$.  Then
\[\card\{\x\in\Z_{\mathrm{prim}}^3:\,Q(\x)=0,\,||\x||_{\infty}\le X\}\ll
\kappa(Q)\left\{1+\frac{XD^{1/2}}{\Delta^{1/3}}\right\},\]
where
\[\kappa(Q) =\prod_{p^e||\Delta}f_Q(p^e).\]
\end{lemma}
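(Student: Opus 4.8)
The plan is to run the argument of Heath-Brown \cite[\S 2]{HB4/3} essentially verbatim, substituting the sharper $p$-adic information of \cite[Lemma 2.6]{BHBQ} at the one place where the dependence on $\Delta$ enters. First I would dispose of the trivial case: a primitive integral zero of $Q$ exists only if $Q$ is isotropic over $\Q$, so if $Q$ is anisotropic over $\Q$ the left-hand side is $0$ and there is nothing to prove. (This does not require $\kappa(Q)=0$: the factor $1+\chi_p(Q)$ only forces $\kappa(Q)=0$ when some odd $p\,\|\,\Delta$ has $Q$ anisotropic over $\Q_p$, but in every case with no rational solution the stated bound holds trivially.) Henceforth assume $Q$ has a primitive rational zero $\x_0$, which we fix.

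Next comes the parametrisation. Stereographic projection from $\x_0$ gives a bijection from $\P^1(\Q)$ to the rational points of the conic; explicitly, every primitive integral zero of $Q$ equals $\pm\, g(u,v)^{-1}\L(u,v)$ for a primitive $(u,v)\in\Z^2$, where $\L=(\ell_0,\ell_1,\ell_2)$ is a fixed triple of integral binary quadratic forms built from $Q$ and $\x_0$, and $g(u,v)=\gcd(\ell_0(u,v),\ell_1(u,v),\ell_2(u,v))$. Two structural facts are needed here, both from the classical theory of the conic as used in \cite[\S2]{HB4/3}: (i) $g(u,v)$ divides a fixed integer $E$ whose radical divides $2\Delta$, so $g$ ranges only over divisors of $E$; and (ii) the $3\times 3$ coefficient matrix $M$ of $\L$ satisfies $|\det M|\asymp\Delta$ up to factors bounded in terms of the content of $Q$, and the region $\mathcal R_T=\{(u,v)\in\R^2:\|\L(u,v)\|_\infty\le T\}$ (which is empty unless $Q$ is isotropic over $\R$) is bounded and has area $\ll 1+TD^{1/2}\Delta^{-1/3}$, the exponent of $D$ recording the imbalance of the forms $\ell_i$. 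Thus the count is $\ll\sum_{g\mid E}\#\{(u,v)\text{ primitive}:g\mid\ell_i(u,v)\ \forall i,\ (u,v)\in\mathcal R_{gX}\}$.

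For each $g$ the congruences $g\mid\ell_i(u,v)$ confine $(u,v)$ to a union of sublattices of $\Z^2$ whose number and indices are governed by the count of zeros of $Q$ modulo $g$. Applying a Lipschitz-type lattice point count (for instance Lemma \ref{easyL}, since $\mathcal R_{gX}$ is semialgebraic of bounded degree and hence has rectifiable boundary of length $\ll(gX)^{1/2}$) to each sublattice bounds the inner cardinality by area/determinant plus a boundary term $O(1+gX\lambda_1^{-1})$. Summing over $g\mid E$ one is left with a finite Euler product over the primes $p\mid\Delta$; this is precisely where \cite[Lemma 2.6]{BHBQ} is injected, since that lemma supplies the number of zeros of a ternary quadratic form modulo prime powers with the $p$-dependence packaged exactly as the factors $f_Q(p^e)$, so that the Euler product collapses to $\kappa(Q)$ rather than to a cruder divisor bound $\tau(\Delta)$. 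Combining this with the area estimate $\ll 1+XD^{1/2}\Delta^{-1/3}$ from the previous step yields the claim.

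The main obstacle is the uniform bookkeeping rather than any single difficult estimate: one must control the parametrisation — the forms $\ell_i$, their pointwise content $g(u,v)$, and $\det M$ — uniformly in $Q$, and then show that the arithmetic sum over the contents $g$ assembles exactly into $\kappa(Q)$ together with the correct companion power $D^{1/2}\Delta^{-1/3}$ in the size term. This is the content of \cite[\S2]{HB4/3}, refined by the sharp local count of \cite[Lemma 2.6]{BHBQ}; beyond checking that the substitution goes through there is nothing conceptually new to do.
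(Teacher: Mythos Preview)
Your proposal is correct and matches the paper's approach exactly: the paper does not give a self-contained proof but simply states that the lemma ``follows from injecting Lemma 2.6 of Browning and Heath-Brown \cite{BHBQ} into the argument of Heath-Brown \cite[\S 2]{HB4/3}'', which is precisely the programme you outline. Your added remark that the anisotropic case is trivial (left-hand side zero) regardless of whether $\kappa(Q)$ vanishes is a useful clarification that the paper leaves implicit.
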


We note that we always have $0\le\kappa(Q)\le\tau(\Delta)$. Indeed it may 
happen that $\kappa(Q)=0$, in which case $Q(\x)=0$
has no solutions in $\mathbb{Z}_{\mathrm{prim}}^3$. 

We will apply the lemma to $Q(\x)=Q_{\y}(\x)$,
which has determinant $C(\y)$, and we write $D_{\y}$ and $\kappa(\y)$ for the
corresponding $D$ and $\kappa$. When $p||C(\y)$, the form $Q_\y$ will have 
rank 2 over $\mathbb{F}_p$, and it follows that in this situation one has 
$\chi_p(Q_\y)=\chi(p;\y)$, in the notation of Lemma \ref{LC}. 

Thus, if we define a function 
$f(n;\y)$ to be multiplicative with respect to $n$  and to satisfy
\beql{;def}
f(p^e;\y) =
\begin{cases}
e+1, & \text{if } p=2 \mbox{ or } e\ge 2, \\
1+\chi(p;\y), & \mbox{ if } p\ge 3 \mbox{ and } e=1,
\end{cases}
\eeq
we will have
\[\kappa(\y) =\prod_{p^e|| C(\y)}f(p^e;\y).\]
Lemma \ref{MLQB} now shows that
\begin{eqnarray}\label{FF}
\lefteqn{\sum_{\substack{\y\in\Z^2_{\mathrm{prim}}\\ Y<||\y||_{\infty}\le 2Y
    \\ C(\y)\not=0}}\card\{\x\in\Z_{\mathrm{prim}}^3:\,Q_{\y}(\x)=0,\,
||\x||_{\infty}\le X\}}\hspace{3cm}\nonumber\\
&\ll&\sum_{\substack{\y\in\Z^2_{\mathrm{prim}}\\
    Y<||\y||_{\infty}\le 2Y \\ C(\y)\not=0}}\kappa(\y)
\left\{1+\frac{XD_{\y}^{1/2}}{|C(\y)|^{1/3}}\right\},
\end{eqnarray}
and we begin by examining
$D_{\y}$.
\begin{lemma}\label{DyL}
  We have
  \[D_{\y}\ll_{Q_0,Q_1}1\]
  for every $\y\in\Z_{\mathrm{prim}}^2$. 
\end{lemma}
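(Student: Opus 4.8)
We have $D_{\y} \ll_{Q_0,Q_1} 1$ for every $\y \in \Z^2_{\mathrm{prim}}$, where $D_\y$ is the highest common factor of the $2 \times 2$ minors of the matrix of the quadratic form $Q_\y = y_0 Q_0 + y_1 Q_1$.

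The plan is to realise $D_\y$ as the greatest common divisor of a fixed finite list of binary forms in $\y$ which, thanks to smoothness of $S$, have no common projective zero, and then to extract a uniform bound by the Nullstellensatz, much as in the proof of Lemma \ref{fests}. First I would write out the $2\times 2$ minors of the matrix $y_0Q_0+y_1Q_1$: these are, up to sign, the entries of its adjugate, and since $Q_0$ and $Q_1$ have integral matrices they form a finite list $m_1(\y),\dots,m_r(\y)$ of binary quadratic forms with coefficients in $\Z$, depending only on $Q_0$ and $Q_1$. For any $\y\ne\mathbf 0$ one has $D_\y=\gcd\big(m_1(\y),\dots,m_r(\y)\big)$; in particular $D_\y\ge 1$ whenever $\y$ is primitive, since $Q_\y$ has rank at least $2$ over $\Q$.

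The key geometric input is Lemma \ref{lem:C}: as $S$ is smooth, every non-trivial member $y_0Q_0+y_1Q_1$ of the pencil has rank $2$ or $3$, so there is no point of $\P^1_{\bar\Q}$ at which all the $m_i$ vanish. By the homogeneous Nullstellensatz the irrelevant ideal $(y_0,y_1)$ lies in $\sqrt{(m_1,\dots,m_r)}$ inside $\Q[y_0,y_1]$; clearing denominators of a representation of $y_0^{N}$ and $y_1^{N}$ produces a nonzero integer $R=R(Q_0,Q_1)$, an integer $N\ge 1$, and forms $G_{0i},G_{1i}\in\Z[y_0,y_1]$ with
\[\sum_{i=1}^r G_{0i}(\y)\,m_i(\y)=R\,y_0^{N},\qquad \sum_{i=1}^r G_{1i}(\y)\,m_i(\y)=R\,y_1^{N}.\]
(Alternatively one may take $R$ to be the resultant of two suitably chosen integral linear combinations of the $m_i$, which again have no common zero.)

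To finish, fix a prime $p$ and set $k=v_p(D_\y)$. Since $p^{k}\mid m_i(\y)$ for every $i$, the two displayed identities give $p^{k}\mid R\,y_0^{N}$ and $p^{k}\mid R\,y_1^{N}$; as $\y$ is primitive, $p$ divides at most one of $y_0,y_1$, so in either case $p^{k}\mid R$, i.e.\ $v_p(D_\y)\le v_p(R)$. This holds for every prime $p$, hence $D_\y\mid R$ and $D_\y\le|R|\ll_{Q_0,Q_1}1$, as required. There is no real analytic obstacle here beyond setting up the Nullstellensatz cleanly; the only point needing care is that the bound is a genuine constant in the sense of the paper's conventions, which is automatic since $R$ is built solely from the coefficients of $Q_0$ and $Q_1$.
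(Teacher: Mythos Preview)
Your proof is correct and takes a genuinely different route from the paper's. You realise the $2\times 2$ minors as binary quadratic forms $m_i(\y)$ with no common projective zero over $\bar\Q$ (by Lemma \ref{lem:C}) and then apply the Nullstellensatz to produce a single integer $R$ with $D_\y\mid R$ for every primitive $\y$; this is exactly the device the paper itself uses in the proof of Lemma \ref{fests}, so you are borrowing an argument the authors clearly like. The paper instead splits into primes $p\nmid\mathrm{Disc}(C)$, where Lemma \ref{lem:C} over $\bar\F_p$ gives rank $\ge 2$ directly and hence $p\nmid D_\y$, and primes $p\mid\mathrm{Disc}(C)$, for which it runs a $p$-adic compactness argument: if $v_p(D_{\y_n})\to\infty$ along a sequence of primitive $\y_n$, a convergent subsequence in $\Z_p^2$ would force a rank-$\le 1$ member of the pencil over $\Q_p$, contradicting Lemma \ref{lem:C}. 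Your approach is more uniform and in principle effective (one can compute $R$), whereas the paper's compactness argument is non-constructive but has the advantage that it transposes verbatim to the archimedean analogue in Lemma \ref{DyLR}.
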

\begin{proof}
%  Let $p$ be an odd prime. Then applying Lemma \ref{lem:C} to
%  $k=\bar{\F}_p$ implies that $Q_{\y}$ has rank at least 2 over
%  $\mathbb{F}_p$, for every $\y\in\Z_{\mathrm{prim}}^2$, for any
%  prime $p$ not dividing $\mathrm{Disc}(C)$. It follows that
%  $p\nmid D_{\y}$ whenever $p$ is coprime to $\mathrm{Disc}(C)$.
Let $p$ be an odd prime not dividing $\mathrm{Disc}(C)$. It follows in %R
particular that $p$ is odd. Then applying Lemma \ref{lem:C} to %R
  $k=\bar{\F}_p$ implies that $Q_{\y}$ has rank at least 2 over %R
$\mathbb{F}_p$, for every $\y\in\Z_{\mathrm{prim}}^2$, whence %R
$p\nmid D_{\y}$ for such primes. %R
Now suppose $p$ does divide
$\mathrm{Disc}(C)$. We need to show that if $p^e|| D_{\y}$ then
$e$ is bounded in terms of $Q_0$ and $Q_1$, uniformly for 
$\y\in\Z_{\mathrm{prim}}^2$.  Suppose
to the contrary that there is a sequence of vectors
$\y_n\in\Z_{\mathrm{prim}}^2$ for which the corresponding exponents
$e$ tend to infinity. Embedding $\Z_{\mathrm{prim}}^2$ in $\Z_p^2$
we may use compactness to find a subsequence of the $\y_n$ converging
to $\y^*\in\Z_p^2-\{(0,0)\}$, say. It then follows by continuity
that $D_{\y^*}=0$, whence $Q_{\y^*}$ has rank at most 1. We now apply
Lemma \ref{lem:C} with $k=\overline{\Q}_p$, to give us 
the desired contradiction, since $\mathrm{Disc}(C)\neq 0$ over $k$.
\end{proof}

Although we will not need it until much later it seems appropriate at
this point to record the real-variable analogue of Lemma \ref{DyL}.
\begin{lemma}\label{DyLR}
Suppose that the matrix for $Q_\y$ has eigenvalues
$\mu_1,\mu_2,\mu_3$, ordered with $|\mu_1|\le|\mu_2|\le|\mu_3|$.  Then
\[||\y||_\infty\ll|\mu_2|\le|\mu_3|\ll||\y||_\infty,\]
and
\[\frac{|C(\y)|}{||\y||^2_\infty}\ll|\mu_1|\ll\frac{|C(\y)|}{||\y||^2_\infty}.\]
\end{lemma}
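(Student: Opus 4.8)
The statement to prove is Lemma~\ref{DyLR}, which relates the eigenvalues $\mu_1,\mu_2,\mu_3$ of the matrix of $Q_\y = y_0 Q_0 + y_1 Q_1$ to $\|\y\|_\infty$ and $|C(\y)| = |\mu_1\mu_2\mu_3|$. My plan is as follows.

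\medskip

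\textbf{Upper bounds on the eigenvalues.} The entries of the symmetric matrix $Q_\y = y_0 Q_0 + y_1 Q_1$ are linear in $\y$, so the operator norm of $Q_\y$ is $O(\|\y\|_\infty)$, with the implied constant depending only on $Q_0,Q_1$ (as permitted throughout the paper). Hence $|\mu_3| \ll \|\y\|_\infty$, and a fortiori $|\mu_1|, |\mu_2| \ll \|\y\|_\infty$. Combining $|\mu_2| \le |\mu_3| \ll \|\y\|_\infty$ with $|C(\y)| = |\mu_1 \mu_2 \mu_3|$ immediately gives $|\mu_1| = |C(\y)|/(|\mu_2||\mu_3|) \gg |C(\y)|/\|\y\|_\infty^2$.

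\medskip

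\textbf{Lower bound on $|\mu_2|$ (equivalently, on $|\mu_2|\,|\mu_3|$).} This is the substantive point, and the key input is Lemma~\ref{DyL} together with Lemma~\ref{lem:C}. Observe that $|\mu_2||\mu_3|$ is, up to sign, the entry of the adjugate $\mathrm{adj}(Q_\y)$ one would read off after diagonalising — more invariantly, the eigenvalues of $\mathrm{adj}(Q_\y)$ are $\mu_2\mu_3, \mu_1\mu_3, \mu_1\mu_2$, so $\|\mathrm{adj}(Q_\y)\|_{\mathrm{op}} = |\mu_2\mu_3|$. The entries of $\mathrm{adj}(Q_\y)$ are (up to sign) the $2\times 2$ minors of the matrix of $Q_\y$, and these are quadratic forms in $\y$ whose common divisor governs $D_\y$ in the integral setting. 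I would argue by a compactness/continuity argument exactly parallel to the proof of Lemma~\ref{DyL}: the function $\y \mapsto \|\mathrm{adj}(Q_\y)\|_{\mathrm{op}}/\|\y\|_\infty^2$ is continuous and positively homogeneous of degree $0$ on $\R^2 \setminus \{0\}$, hence attains its infimum on the compact set $\|\y\|_\infty = 1$. If that infimum were $0$ there would be a $\y^* \ne 0$ with $\mathrm{adj}(Q_{\y^*}) = 0$, i.e.\ $Q_{\y^*}$ of rank $\le 1$; but Lemma~\ref{lem:C} (applied over $\R$, using that $S$ is smooth so $C$ is separable and every member of the real pencil has rank $\ge 2$) forbids this. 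Therefore $|\mu_2\mu_3| \gg \|\y\|_\infty^2$, and since $|\mu_2| \le |\mu_3| \ll \|\y\|_\infty$ this forces $|\mu_2| \gg \|\y\|_\infty$, hence also $|\mu_3| \gg \|\y\|_\infty$. Feeding $|\mu_2||\mu_3| \ll \|\y\|_\infty^2$ back into $|C(\y)| = |\mu_1||\mu_2\mu_3|$ gives $|\mu_1| \ll |C(\y)|/\|\y\|_\infty^2$, completing both chains of inequalities.

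\medskip

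\textbf{Main obstacle.} The only real content is the lower bound $|\mu_2\mu_3| \gg \|\y\|_\infty^2$, and the obstacle is purely one of bookkeeping: one must be sure that no member of the \emph{real} pencil degenerates to rank $\le 1$. This is exactly the real analogue of what was needed in Lemma~\ref{DyL}, and it follows from Lemma~\ref{lem:C} since $S$ is assumed smooth, so $C$ is separable over $\overline{\Q}$ — in particular $C$ has no repeated root over $\R$ and every form $a_0Q_0 + a_1Q_1$ with $(a_0,a_1)\ne(0,0)$ has rank at least $2$. Once this is granted, everything else is continuity, homogeneity, and the identity $|C(\y)| = |\mu_1\mu_2\mu_3|$.
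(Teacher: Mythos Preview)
Your argument is correct and follows essentially the same route as the paper: compactness on $\|\y\|_\infty=1$, continuity, and the fact (from Lemma~\ref{lem:C}) that no nonzero member of the pencil has rank at most $1$. The paper argues directly that $|\mu_2|/\|\y\|_\infty$ is bounded below (since a limit with $\mu_1=\mu_2=0$ is impossible), whereas you route through $\|\mathrm{adj}(Q_\y)\|_{\mathrm{op}}=|\mu_2\mu_3|$; this is a cosmetic repackaging of the same idea. One small slip: in your final sentence you should feed in the \emph{lower} bound $|\mu_2\mu_3|\gg\|\y\|_\infty^2$ (which you just proved), not the upper bound, to obtain $|\mu_1|\ll|C(\y)|/\|\y\|_\infty^2$.
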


\begin{proof}
  Since the matrix for $Q_\y$ has entries of size $O(||\y||_\infty)$
  it is clear that $|\mu_3|\ll||\y||_\infty$. Moreover
  $\mu_1\mu_2\mu_3$ is of exact order $|C(\y)|$, so it suffices to show
  that $|\mu_2|\gg ||\y||_\infty$. Suppose to the contrary we have a
  sequence of vectors $\y_n$ for which the corresponding ratios
  $|\mu_2|/||\y_n||_\infty$ tend to zero. Without loss of generality
  these can be rescaled so that $||\y_n||_\infty =1$. Then, by
  compactness we can pick out a convergent subsequence, tending to
  $\y^*$, say. Since the $\mu_j$ vary continuously with $\y$ it
  follows that the matrix $Q_{\y^*}$ has $\mu_1=\mu_2=0$. However
  this is impossible, since $Q_\y$ always has rank at least 2. This
  completes the proof.
  \end{proof}

In order to use the estimate (\ref{FF}) we now need to understand
how $\kappa(\y)$ behaves on average, but in order to control the
factor $|C(\y)|^{1/3}$ in the denominator we will need to examine
averages over small squares $(Y_0,Y_0+H]\times(Y_1,Y_1+H]$, in which
we take $H=Y^{1/3}$. The result we will prove is the following.
\begin{proposition}\label{kAL}
  Let $|Y_0|,|Y_1|\le 2Y$ and $Y^{1/3}\le H\le Y$, and write
  \[\cl{U}=\Z_{\mathrm{prim}}^2\cap(Y_0,Y_0+H]\times(Y_1,Y_1+H].\]
Then
\[  \sum_{\substack{\y\in\cl{U}\\ C(\y)\not=0\\ d\mid C(\y)}}\kappa(\y)\ll_{\alpha}
H^2d^{-\alpha}\LL^{\rho-2}\]
for any constant $\alpha<1$ and any positive integer $d\le Y^{1/5760}$.
\end{proposition}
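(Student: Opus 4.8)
The plan is to expand $\kappa(\y)$ as a divisor sum over $C(\y)$, invert the order of summation, evaluate the resulting congruence sums by completion for small moduli, and control the large moduli by keeping the cancellation carried by the characters $\y\mapsto\chi(p;\y)$. First I would record a decomposition: telescoping the definition \eqref{;def} gives $\kappa(\y)=\sum_{k\mid C(\y)}g(k;\y)$, where $g(\,\cdot\,;\y)$ is multiplicative with $g(p^j;\y)=f(p^j;\y)-f(p^{j-1};\y)$; hence $g(p;\y)=\chi(p;\y)$ for $p\nmid 2\disc(C)$, while $|g(p^j;\y)|\le 3$ in general, so that $|g(k;\y)|\le\tau(k)$ always and $|g(k;\y)|\le 1$ whenever $k$ is squarefree and coprime to $2\disc(C)$. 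The key arithmetic inputs are Lemma~\ref{LC}, which gives $\sum_{a\in\P^1_{\F_p},\,C(a)=0}\chi(p;a)=f_M(p)-1$ for good $p$, and \eqref{dded}, which says that $f_M(p)-1$ has mean value $\rho-2$ over primes; this is what will ultimately produce the exponent $\rho-2$. Substituting and swapping the sums reduces the proposition to estimating $\sum_{k\ge 1}S_k$, where $S_k=\sum_{\y\in\cl U,\ \lcm(d,k)\mid C(\y)}g(k;\y)$.

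Fix a small $\delta>0$. For $k\le H^{1-\delta}$ I would evaluate $S_k$ by completing the sum modulo $m=\lcm(d,k)$: splitting $\cl U$ into residue classes modulo $m$ and stripping the primitivity condition by Möbius inversion over divisors coprime to $m$ (in the spirit of Lemma~\ref{easyL2}), the main term is a constant times $(H^2/m^2)\prod_{p\mid m}\tfrac{p}{p+1}\cdot A(d,k)$, where $A(d,k)=\sum_{\b r\bmod m,\ \gcd(\b r,m)=1,\ C(\b r)\equiv 0}g(k;\b r)$ factors by the Chinese remainder theorem into $f_C(p)\phi(p)$ at primes $p\mid d$ with $p\nmid k$ and into $\phi(p)\big(f_M(p)-1\big)$ at primes $p\mid k$, by Lemma~\ref{LC}; primes in $\gcd(d,k)$, non-squarefree $k$, and the finitely many $p\mid 2\disc(C)$ contribute only lower order. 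The $d$-dependent part of the main term is $\ll f_C(d)/d\ll_\alpha d^{-\alpha}$ by Lemma~\ref{fests}, while the sum of the remaining factor over squarefree $k\le H^{1-\delta}$ coprime to $d$ is an Euler product $\ll\prod_{p\le H^{1-\delta}}\big(1+(f_M(p)-1)/p+O(p^{-2})\big)\ll(\log H)^{\rho-2}$ by \eqref{dded}. The incomplete-box errors sum to $\ll f_C(d)(\log H)^{O(1)}\big(H^{2-\delta}+dH^{2-2\delta}\big)$, which is $\ll H^2 d^{-\alpha}\LL^{\rho-2}$ because $d\le Y^{1/5760}\le H^{1/1920}$ and $\delta$ is fixed; so this range is admissible.

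The main obstacle is the tail $k>H^{1-\delta}$, where completion fails and --- since $\rho-2$ can be strictly less than the number of irreducible factors of $C$ --- one cannot replace $|g(k;\y)|$ by $\tau(k)$ or by any $\y$-independent majorant without losing a power of $\log B$ (already when $\rho=2$, where $\LL^{\rho-2}=1$). One must therefore retain the oscillation of $\y\mapsto g(k;\y)$. I would split this tail into dyadic ranges and, in each, open the box sum by Poisson summation on $\cl U$: the zero frequency reproduces the same Euler-product main term, whose total contribution is again $\ll H^2 d^{-\alpha}\LL^{\rho-2}$, while the nonzero frequencies are governed by the complete sums $\sum_{\b r\bmod k,\ C(\b r)\equiv 0}g(k;\b r)\,e(\b c\cdot\b r/k)$; reducing to prime moduli $p\nmid 2\disc(C)$, these are Gauss-type sums along the separable cubic $C=0$ (where $Q_{\b r}$ has rank $2$) and admit square-root cancellation by Weil's bounds. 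Summing over the frequencies and over the $O(\log Y)$ dyadic ranges, and again using the large gap between $d\le Y^{1/5760}$ and $H\ge Y^{1/3}$ to absorb the accumulated $Y^\varepsilon$- and $d^\varepsilon$-losses, gives a contribution $\ll H^2 d^{-\alpha}$; the complementary $k$, so large that the divisibility $\lcm(d,k)\mid C(\y)$ with $C(\y)\ll Y^3$ confines $\y$ to a sparse set, is handled by a direct counting estimate. I expect this to be the hard part of the argument: extracting genuinely power-saving bounds for these incomplete character sums uniformly in a modulus that may be a positive power of $H$, and organising the dyadic bookkeeping so that the final estimate stays clean --- it is this bookkeeping that forces the explicit exponent $1/5760$.
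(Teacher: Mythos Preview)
Your divisor-sum decomposition $\kappa(\y)=\sum_{k\mid C(\y)}g(k;\y)$ and the treatment of the range $k\le H^{1-\delta}$ are sound and would indeed produce the main term with the correct exponent $\rho-2$, via Lemma~\ref{LC} and~\eqref{dded}. The gap is in the tail $k>H^{1-\delta}$. The complete sums you write down,
\[
\sum_{\substack{\b r\bmod p\\ C(\b r)\equiv 0,\ p\nmid\b r}} \chi(p;\b r)\,e_p(\b c\cdot\b r)
\;=\;\sum_{\substack{a\in\P^1(\F_p)\\ C(a)=0}}\chi(p;a)\sum_{\lambda\in\F_p^*}e_p(\lambda\,\b c\cdot\tilde a),
\]
are not sums along a curve: the locus $C=0$ in $\P^1$ is zero-dimensional (at most three points), so Weil's bounds have nothing to bite on. The inner Ramanujan sum is $p-1$ precisely for the single projective point $a$ orthogonal to $\b c$, and if that point happens to satisfy $C(a)=0$ the local sum has genuine size $\asymp p$, not $O(\sqrt{p})$. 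Hence $|\hat S_k(\b c)|$ is not $\ll k^{1/2+\ep}$ in general, and the Poisson tail does not close. Nor can one fall back on the trivial bound $|g(k;\y)|\le 1$ for squarefree $k$: summing $\sum_{k>K,\ k\mid C(\y)}1$ recovers a divisor function of $C(\y)$ and loses the exponent $\rho-2$ you need (already fatal when $\rho=2$).

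The paper takes a completely different route that never expands $\kappa(\y)$ as a divisor sum and therefore never needs cancellation from $\chi(p;\y)$. It uses the Erd\H{o}s--Shiu ``anatomy of integers'' method: write $|C(\y)|=ab$ with $a$ the $z$-smooth part (for $z=H^{1/240}$), use $\kappa(\y)\ll f(a;\y)$ since $b$ has $O(1)$ prime factors, and bound $\sum_{\y:\,a\mid C(\y)}f(a;\y)$ via an upper-bound sieve (Lemma~\ref{LS}) applied fibre by fibre over residue classes of $\y$. The exponent $\rho-2$ then emerges purely multiplicatively from the identity $\theta(p)=f_C(p)+f_M(p)-1$ (equation~\eqref{ext}) together with Lemma~\ref{lem:f2}. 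The three-way split in Lemma~\ref{shiu1} (small $a$; medium $a$ with large $P^+(a)$; medium $a$ with tiny $P^+(a)$) and Rankin's trick are what replace your Poisson step; this is also where the exponent $1/5760$ ultimately comes from.
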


For our current purposes we shall just use $d=1$,  but other values will 
be required later. The next section will be devoted to our treatment of 
Proposition \ref{kAL},
but for now we content ourselves by showing how one may use it to
deduce Proposition \ref{BPCP}. 

In fact we will prove a statement slightly more general than Proposition \ref{BPCP}.
\begin{lemma}\label{BPCP+}
For any $\delta>0$ we have
\begin{eqnarray*}
\lefteqn{\sum_{\substack{\y\in\Z^2_{\mathrm{prim}}\\ Y<||\y||_{\infty}\le 2Y
    \\ 0<|C(\y)|\le\delta Y^3}}\card\{\x\in\Z_{\mathrm{prim}}^3:\,Q_{\y}(\x)=0,\,
||\x||_{\infty}\le X\}}\hspace{4cm}\\
&\ll& \{Y^2+(\LL^{-1}+\delta^{2/3})XY\}\LL^{\rho-2}.
\end{eqnarray*}
\end{lemma}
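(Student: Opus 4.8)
The plan is to combine the pointwise conic count of Lemma~\ref{MLQB} with the uniform estimate of Proposition~\ref{kAL}. Applying Lemma~\ref{MLQB} to $Q=Q_\y$, whose determinant is $C(\y)$, and using Lemma~\ref{DyL} to dispose of the factor $D_\y\ll 1$, one bounds the left-hand side by
\[\sum_{\substack{\y\in\Z^2_{\mathrm{prim}}\\ Y<\|\y\|_\infty\le 2Y\\ 0<|C(\y)|\le\delta Y^3}}\kappa(\y)\left(1+\frac{X}{|C(\y)|^{1/3}}\right)=\Sigma_1+X\Sigma_2,\]
say. For $\Sigma_1=\sum_\y\kappa(\y)$ I would cover the region $Y<\|\y\|_\infty\le 2Y$ by $O(1)$ squares of side $H=Y$ and apply Proposition~\ref{kAL} with $d=1$ to each, getting $\Sigma_1\ll Y^2\LL^{\rho-2}$, which is already within the target.

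The substance is the estimate $X\Sigma_2\ll\{Y^2+(\LL^{-1}+\delta^{2/3})XY\}\LL^{\rho-2}$. Here I would decompose dyadically according to the size of the nonzero integer $|C(\y)|$: writing $\Sigma(T)=\sum\kappa(\y)$ over those $\y$ in the range with $T<|C(\y)|\le 2T$, one has $X\Sigma_2\ll X\sum_{1\le T\le 2\delta Y^3}T^{-1/3}\Sigma(T)$, the sum over powers of two. Since $C$ is separable (Lemma~\ref{lem:C}), the $\y$ contributing to $\Sigma(T)$ lie in an $O(T/Y^2)$-neighbourhood of the at most three real lines comprising $\{C=0\}$, a set of measure $O(T/Y)$; in particular the top range $T\asymp\delta Y^3$ meets a set of measure $O(\delta Y^2)$. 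I would bound $\Sigma(T)$ by covering this neighbourhood by squares of side $H_T\in[Y^{1/3},Y]$ and applying Proposition~\ref{kAL} on each, summing over $T$. The top dyadic blocks should then produce the $\delta^{2/3}XY\LL^{\rho-2}$ contribution — the exponent $2/3$ coming from $\sum_{T\le\delta Y^3}T^{2/3}\ll(\delta Y^3)^{2/3}$ weighted against $T^{-1/3}$ — while the remaining blocks feed into the $Y^2\LL^{\rho-2}$ and $\LL^{-1}XY\LL^{\rho-2}$ terms after a split at a threshold $T_1$.

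The hard part will be the small blocks, where the strip around $\{C=0\}$ is so thin that applying Proposition~\ref{kAL} to a whole square of side $Y^{1/3}$ is wasteful. I would treat a very-close range $T\le T_1$, for a threshold $T_1\asymp Y^{3/2}\LL^{-A}$, by elementary means: each such $\y$ satisfies $C(\y)=m$ for a fixed small integer $m$, the Thue equation $C(\y)=m$ has $\ll\tau_3(|m|)$ solutions, and $\kappa(\y)\le\tau(|C(\y)|)$, so by Lemma~\ref{lem:tau_k} the contribution is $\ll X\sum_{1\le|m|\le T_1}\tau_9(|m|)|m|^{-1/3}\ll XT_1^{2/3}\LL^{O(1)}$, which is $\ll\LL^{-1}XY\LL^{\rho-2}$ for $A$ large enough. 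For the intermediate blocks $T_1<T\le Y^{7/3}$ the necessary saving must come from divisibility: writing $e=|C(\y)|$ one has $e\mid C(\y)$, and near a $\Q$-rational component $\ell$ of $C$ one has the stronger $\ell(\y)\mid C(\y)$, which allows one to feed a nontrivial modulus $d$ (with $d\le Y^{1/5760}$, as permitted by Proposition~\ref{kAL}) into the estimate and pick up its $d^{-\alpha}$ gain; combined with the sparsity of integer points near the irrational components of $C$, and a careful matching of the logarithmic exponents, this should close the bound.

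Finally, Proposition~\ref{BPCP} follows on taking $\delta$ to be any fixed constant with $|C(\y)|\le\delta Y^3$ whenever $\|\y\|_\infty\le 2Y$, so that $\delta^{2/3}=O(1)$; the hypothesis $Y\gg 1$ there only serves to remove the finitely many $\y$ with $C(\y)=0$.
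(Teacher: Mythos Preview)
Your overall plan matches the paper's: reduce via Lemma~\ref{MLQB} and Lemma~\ref{DyL} to $\Sigma_1+X\Sigma_2$, bound $\Sigma_1$ by Proposition~\ref{kAL}, decompose $\Sigma_2$ dyadically in $|C(\y)|$, and treat the smallest values by counting solutions of the Thue equation $C(\y)=n$. The differences are in the details, and one of them is a genuine gap.

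The problem is your threshold $T_1\asymp Y^{3/2}\LL^{-A}$. The Thue equation $C(\y)=m$ is not known to have $\ll\tau_3(|m|)$ solutions; the Lewis--Mahler bound is $O_\ep(|m|^\ep)$ with an implied constant depending on $\ep$. Combined with $\kappa(\y)\ll|m|^\ep$, your small-block contribution is then $\ll_\ep X\,T_1^{2/3+2\ep}$, and with $T_1\asymp Y^{3/2}$ this is $\gg XY^{1+\ep}$, which exceeds the target. The threshold that actually works is much lower: the paper takes $T_1=Y\LL^{3}$, for which $T_1^{2/3+2\ep}\ll Y^{2/3+3\ep}$ is safely below $Y$.

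With that smaller threshold, your ``intermediate'' range disappears entirely, and the vague strategy you sketch for it (feeding a divisor $d$ of $|C(\y)|$ into Proposition~\ref{kAL}, or exploiting a $\Q$-rational linear factor of $C$) is neither needed nor sound: the smallness of $|C(\y)|$ is an archimedean condition and does not produce a \emph{fixed} modulus $d$ one can sum over, and $C$ need not have any rational factor. In fact the paper simply fixes $H=Y^{1/3}$ throughout and observes that the strip $\{|C(\y)|\le C_0\}$ inside $\|\y\|_\infty\le 2Y$ can be covered by $O\bigl(YH^{-1}(1+C_0Y^{-2}H^{-1})\bigr)$ such squares; Proposition~\ref{kAL} with $d=1$ then gives a contribution $\ll(Y^{4/3}C_0^{-1/3}+Y^{-1}C_0^{2/3})\LL^{\rho-2}$ for each dyadic block, and summing over $Y\LL^3\le C_0\le\delta Y^3$ yields exactly $(\LL^{-1}+\delta^{2/3})Y\LL^{\rho-2}$. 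Adapting $H_T$ to $T$, as you suggest, only worsens the first term.
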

This is clearly sufficient for Proposition \ref{BPCP}, taking $\delta$ to be a suitably 
large constant. The general case will be needed elsewhere later.

\begin{proof}
The region $Y<||\y||_{\infty}\le 2Y$ can be covered using $O(Y^2H^{-2})$
squares of type $\cl{U}$, so that Proposition \ref{kAL} yields
\[\sum_{\substack{\y\in\Z^2_{\mathrm{prim}}\\ Y<||\y||_{\infty}\le 2Y
    \\ C(\y)\not=0}}\kappa(\y)\ll Y^2\LL^{\rho-2}.\]
In view of Lemmas \ref{MLQB} and \ref{DyL} it therefore remains to show that
\beql{C1}
\sum_{\substack{\y\in\Z^2_{\mathrm{prim}}\\ Y<||\y||_{\infty}\le 2Y \\ 0<|C(\y)|\le\delta Y^3}}
\frac{\kappa(\y)}{|C(\y)|^{1/3}}\ll \{\LL^{-1}+\delta^{2/3}\}Y\LL^{\rho-2}.
\eeq

We will apply Proposition \ref{kAL} with $H=Y^{1/3}$.
We begin by asking how many squares $\cl{U}$ are needed to cover
a region given by the constraints $Y<||\y||_{\infty}\le 2Y$ and
$|C(\y)|\le C_0$. Let us choose once and for all a factorization of
$C$ as $L_1L_2L_3$ over $\mathbb{C}$, and write $|L_i(\y)|=\ell_i$.
Suppose that $\ell_i$ is smallest for
$i=1$, say, so that $\ell_1\le C_0^{1/3}$. Let $L_1(\y)=ay_1+by_2$ and
$L_2(\y)=cy_1+dy_2$. The determinant $ad-bc$ will be non-zero since
the form $C$ has no repeated factors. We then find that
$y_1=\{dL_1(\y)-bL_2(\y)\}/\{ad-bc\}$, whence $|y_1|\ll\ell_1+\ell_2$,
since $a,d,b,c$ are fixed, given $Q_1$ and $Q_2$. Recalling that
we took $\ell_1\le\ell_2$, we
deduce that $y_1\ll\ell_2$, and similarly $y_2\ll\ell_2$. However we
assumed that $Y<||\y||_{\infty}\le 2Y$, whence $\ell_2\gg Y$.
In the same way we have
$\ell_3\gg Y$, and the assumption that $|C(\y)|\le C_0$ then yields
$\ell_1\ll C_0Y^{-2}$. The points $\y$ under consideration therefore
lie in the intersection of the square $||\y||_{\infty}\le 2Y$ with the
narrow strip $|L_1(\y)|\ll C_0Y^{-2}$, which may not be aligned with
the axes. Since the linear form $L_1$ is fixed as soon as $Q_0$ and
$Q_1$ are specified, we now see that we can cover the
region in question with 
\beql{ndd}
\ll YH^{-1}(1+C_0Y^{-2}H^{-1})
\eeq
squares $\cl{U}$, bearing in mind that $H=Y^{1/3}\le Y$.

We now subdivide the available range for $C(\y)$ into dyadic
subintervals of the type $C_0/2<|C(\y)|\le C_0$, on each of which
\[\frac{\kappa(\y)}{|C(\y)|^{1/3}}\ll \kappa(\y)C_0^{-1/3}.\]
We require $O(YH^{-1}(1+C_0Y^{-2}H^{-1}))$ squares $\cl{U}$ for
each dyadic subinterval, and each such square $\cl{U}$
contributes
\[\ll C_0^{-1/3}H^2\LL^{\rho-2}\]
to the sum (\ref{C1}), by Proposition \ref{kAL}. We therefore deduce that
\begin{eqnarray*}
 \sum_{\substack{\y\in\Z^2_{\mathrm{prim}}\\ Y<||\y||_{\infty}\le
       2Y \\ C_0/2<|C(\y)|<C_0}}
   \frac{\kappa(\y)}{|C(\y)|^{1/3}}
 &\ll& YH^{-1}(1+C_0Y^{-2}H^{-1})C_0^{-1/3}H^2\LL^{\rho-2}\\
&=&(YHC_0^{-1/3}+Y^{-1}C_0^{2/3})\LL^{\rho-2}
\end{eqnarray*}
for each $C_0\ll Y^3$. We will use this to cover the range
\[Y\LL^3\le |C(\y)|\le\delta Y^3,\]
giving a satisfactory contribution
$O(\{\LL^{-1}+\delta^{2/3}\}Y\LL^{\rho-2})$ to (\ref{C1}). 

To handle small values of $C(\y)$, we bound the number of solutions $\y$
of the equation $C(\y)=n$, when $n\not=0$. It was shown by Lewis
and Mahler \cite{LM} 
that there are $O_{\ep}(|n|^{\varepsilon})$ solutions for any fixed $\ep>0$, with
an implied constant depending on $\ep$ and on the cubic form $C$.
We next observe that 
\beql{ke}
\kappa(\y)\le\tau(|C(\y)|)\ll Y^{\ep},
\eeq
whence  the contribution to (\ref{C1}) arising from terms with
$0<|C(\y)|\le Y\LL^3$ is
\[\ll\sum_{n\le Y\LL^3}\frac{Y^{\ep}}{n^{1/3}}\card\{\y:C(\y)=n\}
\ll\sum_{n\le Y\LL^3}\frac{Y^{2\ep}}{n^{1/3}}\ll Y^{2/3+3\ep}.\]
We then choose $\ep=1/10$, say, to obtain a satisfactory bound.
\end{proof}

\subsection{Proof of Proposition \ref{kAL}}

In proving Proposition \ref{kAL} it naturally suffices to consider the case 
$H=Y^{1/3}$, and we shall assume that we are in this situation for the 
rest of this section. We will draw on ideas from the work of Shiu
\cite{shiu} and its predecessor Erd\H{o}s \cite{Erd}, as exposed in
the proof of Theorem 3.2 of Browning and Heath-Brown \cite{BHBQ}. However
$\kappa(\y)$ is not the result of evaluating a multiplicative function
at values of a binary form, so that previous variants of the method do
not apply directly. The first stage in the argument is summarized in
the following lemma, in which we use the standard functions $P^+(n)$
and $P^-(n)$ to denote the largest and smallest prime factors of $n$
respectively, with the convention that $P^+(1)=1$ and $P_-(1)=\infty$.
\begin{lemma}\label{shiu1}
  Let
  \[U(a;\tau)=\sum_{\substack{\y\in\mathcal{U}\\ a\mid C(\y)\\
    P^-(C(\y)/a)\ge\tau}}f(a;\y)\]
and
\[U_0(a)=\card\{\y\in\mathcal{U}:\,  a\mid C(\y)\},\]
and set $z=H^{1/240}=Y^{1/720}$, so that $d\le z^{1/8}$.  Then
 \beql{TTT}
  \sum_{\substack{\y\in\mathcal{U}\\ C(\y)\not=0\\ d|C(\y)}}\kappa(\y)\ll T_1+T_2+T_3,
  \eeq
with
\[T_1=\sum_{a\le z}U([a,d];z),\]
\[T_2=\sum_{\substack{\log z<q\le z^2\\ q\;\mathrm{prime}}}z^{1500/\log q}
\sum_{\substack{z<a\le z^4\\ P^+(a)=q}}U([a,d];q),\]
and
\[T_3=z^{1/4}\sum_{\substack{z<a\le z^2\\ P^+(a)\le\log z}}U_0([a,d]).\]
\end{lemma}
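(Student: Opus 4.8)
The plan is to adapt the Erd\H{o}s--Shiu method for sums of multiplicative functions along polynomial values, in the shape of the proof of Theorem~3.2 of \cite{BHBQ}; the only new feature is that the factors $f(p^e;\y)$ depend on $\y$, but since just the uniform bound $f(p^e;\y)\le e+1\le 2^e$ is used, the combinatorial skeleton carries over. The starting point is that $f(\cdot\,;\y)$ is multiplicative: writing $a$ for the $z$-smooth part of $C(\y)$ (the product of $p^{v_p(C(\y))}$ over primes $p\le z$) and $b=C(\y)/a$ for the complementary $z$-rough divisor, we have $\kappa(\y)=f(a;\y)f(b;\y)$, and since $|C(\y)|\ll Y^3$ while every prime dividing $b$ exceeds $z=Y^{1/720}$ we get $\Omega(b)\le\log(Y^3)/\log z=2160$, so $f(b;\y)\ll 1$ and $\kappa(\y)\ll f(a;\y)$. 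Thus it suffices to bound $\sum_{\y}f(a(\y);\y)$ over the relevant $\y\in\mathcal{U}$, and since the terms of $T_1,T_2,T_3$ are non-negative it is enough to exhibit, for each such $\y$, one of the three sums into which its contribution can be absorbed. We may assume $\kappa(\y)\ne 0$, so that no factor $f(p^e;\y)$ occurring below vanishes.

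The split is according to the size and prime structure of $a=a(\y)$. If $a\le z$: since $d\le z^{1/8}$ is $z$-smooth and $d\mid C(\y)$ we have $d\mid a$, so $[a,d]=a$ and $P^-(C(\y)/[a,d])=P^-(b)>z$; hence $\y$ is counted in $U([a,d];z)$ with weight $f([a,d];\y)=f(a;\y)$, and summing over the $\le z$ values of $a$ gives a contribution $\ll T_1$. If $a>z$, peel prime factors off $a$ from the top: set $A^{(0)}=a$, and while the largest prime factor $q^{(i)}$ of the current smooth number $A^{(i)}$ exceeds $\log z$ and the part $A^{(i+1)}$ of $A^{(i)}$ supported on primes $<q^{(i)}$ satisfies $A^{(i+1)}>z^3$, replace $A^{(i)}$ by $A^{(i+1)}$; as the $q^{(i)}$ strictly decrease the process stops. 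In the first terminal case we reach a stage at which the part $A^{(i+1)}$ of $C(\y)$ supported on primes below $q:=q^{(i)}$ has $A^{(i+1)}\le z^3$; put $a_0=A^{(i+1)}q^{j}$ for a suitable $j\ge 1$ (which exists because $A^{(i+1)}\le z^3$, $q\le z$, and $q$ divides $C(\y)$), so that $z<a_0\le z^4$, $P^+(a_0)=q\in(\log z,z^2]$, $[a_0,d]\mid C(\y)$, and $[a_0,d]$ contains every prime factor of $C(\y)$ below $q$ to full multiplicity, whence $P^-(C(\y)/[a_0,d])\ge q$ and $\y$ is counted in $U([a_0,d];q)$; since $[a_0,d]$ agrees with $C(\y)$ at every prime $<q$ one gets
\[
\frac{\kappa(\y)}{f([a_0,d];\y)}\;\le\;\prod_{p\ge q}\bigl(v_p(C(\y))+1\bigr)\;\le\;2^{\sum_{p\ge q}v_p(C(\y))}\;\le\;2^{\log(Y^3)/\log q}\;<\;z^{1500/\log q},
\]
so the contribution of $\y$ is absorbed into $T_2$. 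In the second terminal case the peeling exhausts all prime factors of $a$ exceeding $\log z$, leaving the $(\log z)$-smooth part $g$ of $C(\y)$ with $g>z^3$ (this also covers an $a>z$ that is already $(\log z)$-smooth, where $g=a$); then $\kappa(\y)\ll f(a;\y)=f(g;\y)\prod_i f\bigl((q^{(i)})^{v_{q^{(i)}}};\y\bigr)$, and since $\tau(g)=g^{o(1)}$ and $\sum_i v_{q^{(i)}}\le\log(Y^3)/\log\log z$ each factor is $z^{o(1)}$, so $\kappa(\y)\le z^{1/4}$ once $Y$ is large; a greedy partial-product argument yields a $(\log z)$-smooth divisor $a_0\mid g$ with $z<a_0\le z^2$, and then $\y$ is counted in $U_0([a_0,d])$ and absorbed into $T_3$. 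Summing each case over the relevant $\y$ gives \eqref{TTT}.

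The main obstacle is the combinatorial bookkeeping of the peeling procedure: one must check that it terminates in exactly one of the two stated ways with $a_0$ in the prescribed range and $q^{(i)}\in(\log z,z^2]$ throughout, and that the interplay with $d$ is harmless --- in particular that $[a_0,d]\mid C(\y)$, that $[a_0,d]$ contains all prime factors of $C(\y)$ below $q^{(i)}$ to full multiplicity (so that the $P^-$-condition defining $U([a_0,d];q)$ is met), that $[a_0,d]$ stays in range given $d\le z^{1/8}$, and that no quotient $f(p^{v_p(C(\y))};\y)/f(p^{v_p([a_0,d])};\y)$ is of the shape $0/0$, which is exactly where $\kappa(\y)\ne 0$ is used (a prime with $v_p(C(\y))\ge 2$ forces the relevant $\chi(p;\y)=0$). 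The numerical constants ($z=H^{1/240}$, the weight $z^{1500/\log q}$, and the exponents $z^4,\,z^2,\,z^{1/4},\,z^{1/8}$) are chosen with enough slack that, once the right divisor $a_0$ has been extracted, these verifications and the elementary estimates $\Omega(b)\le 2160$, $\tau(g)=z^{o(1)}$, $2^{\sum_i v_{q^{(i)}}}=z^{o(1)}$ are all routine.
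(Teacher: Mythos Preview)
Your approach follows the same Erd\H{o}s--Shiu strategy as the paper, but with a different combinatorial decomposition: you start from the full $z$-smooth part of $C(\y)$ and peel large primes off the top, whereas the paper builds $a=p_1\cdots p_j$ from the bottom as the product of the $j$ smallest prime factors (with multiplicity) of $|C(\y)|$, with $j$ maximal subject to $a\le z^2$. Both routes lead to the same three-case split, and your handling of $T_1$ and $T_3$ is essentially sound.

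There is, however, a genuine error in your $T_2$ argument. The parenthetical claim that ``a prime with $v_p(C(\y))\ge 2$ forces the relevant $\chi(p;\y)=0$'' is false: $\chi(p;\y)$ records only the rank of $Q_\y$ over $\F_p$, and a matrix congruent to $\mathrm{diag}(1,1,0)$ modulo $p$ has rank $2$ (so $\chi=\pm 1$) regardless of whether $p^2$ divides the integral determinant. Consequently your inequality
\[
\frac{\kappa(\y)}{f([a_0,d];\y)}\le\prod_{p\ge q}\bigl(v_p(C(\y))+1\bigr)
\]
can fail outright: if some odd prime $p$ has $v_p([a_0,d])=1$ (for instance $p=q$ with your minimal $j$ equal to $1$, or $p\mid d$ with $p>q$ and $v_p(d)=1$), together with $\chi(p;\y)=-1$ and $v_p(C(\y))\ge 2$, then $f(p;\y)=0$ forces $f([a_0,d];\y)=0$ while $\kappa(\y)>0$, so $\y$ contributes nothing to $U([a_0,d];q)$ and cannot be absorbed into that $T_2$-term at all. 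This is not a $0/0$ issue but an honest division by zero.

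The paper sidesteps this difficulty at the prime $q=P^+(a)$ by working instead with the submultiplicativity inequality $f(mn;\y)\le f(m;\y)\tau(n)$ (valid unless some prime $p\|m$ also divides $n$) and the adjustment $a'\in\{a,ap_j\}$: since $p_j\mid a$, passing to $a'=ap_j$ whenever $p_j\mid b$ guarantees $v_q(a')\ge 2$ whenever $q^2\mid C(\y)$, so the exceptional case of the inequality never arises at $q$. Your ``$j$ minimal'' construction provides no such guarantee, and your attempted justification via $\chi(p;\y)=0$ does not repair it.
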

Here $[a,d]$ is the lowest common multiple of $a$ and $d$, as usual.

\begin{proof}
  We begin by recalling that $\kappa(\y)=f(C(\y);\y)$. We then
observe that the definition of $f(p^e;\y)$ shows that
\[f(p^{e+f};\y)\le f(p^e;\y)\tau(p^f)\]
except possibly when $e=1$ and $f\ge 1$, whence in general
\beql{fin}
f(mn;\y)\le f(m;\y)\tau(n),
\eeq
except possibly when there is a prime $p||m$ such that $p|n$.

Now let $|C(\y)|=p_1\cdots p_r$ with 
\[p_1\le\dots\le p_r,\]
and choose $j$ maximally with $a=p_1\cdots p_j\le z^2$, taking $j=0$ if $p_1>z^2$. We then
put $b=a^{-1}C|(\y)|$. We will consider three cases. If $a\le z$,
then either $j=r$ or $ap_{j+1}>z^2$, since $j$ was chosen
maximally. For this second alternative we have
$p_{j+1}>z^2/a\ge z\ge a\ge p_j$. Either way we are able to
apply (\ref{fin}) to show that
$\kappa(\y)\le f(a;\y)\tau(b)$. Since $p_{j+1}>z$ and
$C(\y)\ll ||\y||_{\infty}^3\ll Y^3$ we see that $\Omega(b)\ll 1$, whence
$\kappa(\y)\ll f(a;\y)$. We should also note
that in this case we have $P^-(b)>z$. We then see that this first
case leads to the term $T_1$ in the lemma.

The second case to consider is that in which $z<a\le z^2$, with
$p_j>\log z$. If $p_j\nmid b$ we set $a'=a$ and $b'=b$, while if 
$p_j|b$ we take $a'=ap_j$ and $b'=b/p_j$. Then (\ref{fin}) will 
apply to $a'$ and $b'$ so that $\kappa(\y)\le f(a';\y)\tau(b')$, with 
$a'\le ap_j\le a^2\le z^4$.   Moreover 
\[\Omega(b')\le\frac{3\log Y+O(1)}{\log p_j},\]
so that $\tau(b')\le2^{\Omega(b')}\ll Y^{(\log 8)/(\log p_j)}$ and
\[\kappa(\y)\ll f(a';\y)z^{1500/\log p_j}.\]
Since $P_-(b')\ge P^+(a')$ we see that this second case leads to 
the term $T_2$ in the lemma.

Finally, in the third case where $z<a\le z^2$ but $p_j\le\log z$ we
will merely use the 
estimate (\ref{ke}) to show that
\[\kappa(\y)\ll z^{1/4}.\]
The lemma then follows on combining these three cases together.
\end{proof}

It turns out that the analysis of $T_2$ is similar to that of $T_1$,
but slightly more complicated. In contrast $T_3$ is rather
easy to bound, since there are rather few integers with $P^+(a)\le\log z$.

We begin to estimate $U(b;\tau)$ and $U_0(b)$ by splitting the sums
into congruence
classes for $\y$. To do this we check from the definition that
$f(p^e;\y)$ only depends
on $\y$ modulo $p^e$, whence $f(b;\y)$ only depends
on $\y$ modulo $b$. Thus if we drop the condition that $\y$ should be
primitive, and set
\begin{eqnarray*}
\lefteqn{U(b;\tau,\y_0)=}\\
&&\card\{\y\in\Z^2\cap(Y_0,Y_0+H]\times(Y_1,Y_1+H]:
\,\y\equiv\y_0(\mbox{mod }b),\,
P^-(C(\y)/b)\ge\tau\}
\end{eqnarray*}
and
\[U_0(b;\y_0)=\card\{\y\in\Z^2\cap(Y_0,Y_0+H]\times(Y_1,Y_1+H]:
\,\y\equiv\y_0(\mathrm{mod}\;b)\},\]
then
\beql{Uat}
U(b;\tau)\le \sum_{\substack{\y_0(\mathrm{mod}\;b)\\ C(\y_0) \equiv 0 \;(\mathrm{mod}\;b)}}
   \hspace{-25pt}{}^*\;\;\;
   f(b;\y_0)U(b;\tau,\y_0)
   \eeq
   and
\beql{Va}
U_0(b)=\sum_{\substack{\y_0(\mathrm{mod}\; b)\\ C(\y_0) \equiv 0 \;(\mathrm{mod}\;b)}}
\hspace{-25pt}{}^*\;\;\; U_0(b;\y_0),
\eeq
where $\Sigma^*$ indicates that $\gcd(y_0,y_1,b)=1$.
We immediately note that
\[U_0(b;\y_0)\ll H^2b^{-2}+1,\]
whence (\ref{Va}) yields
\beql{T3e}
U_0(b)\ll \phi(b)f_C(b)\left\{H^2b^{-2}+1\right\}.
\eeq
To handle $U(b;\tau,\y_0)$ we will use the following lemma, which we
will prove later in this section.
%\dan{Is there any way to rearrange this section so that proofs appear next to lemma statements?}\rog{I've stated the lemma again, just before it gets proved}

\begin{lemma}\label{LS}
Define an arithmetic function by setting
\[g(k)=k^{-2}\card\{\z(\mathrm{mod}\; k):\,k\mid C(\z)\}.\]  
Then for any $\tau\le H^{1/120}$ we have
\[U(b;\tau,\y_0)\ll
H^2b^{-2}\prod_{\substack{p<\tau\\ p\nmid b\mathrm{Disc}(C)}}(1-g(p))
  +H^{7/4}.\]
\end{lemma}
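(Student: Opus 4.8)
The plan is to read $U(b;\tau,\y_0)$ as an upper-bound sieve estimate for the values of the binary cubic $C$ in the box $(Y_0,Y_0+H]\times(Y_1,Y_1+H]$ along the progression $\y\equiv\y_0\Mod b$. First I would set aside the $O(H)$ integer solutions of $C(\y)=0$ in the box (they lie on the at most three rational lines dividing $C$, hence contribute $\ll H\ll H^{7/4}$). For the remaining $\y$, put $P=\prod_{p<\tau,\,p\nmid b\mathrm{Disc}(C)}p$; if $C(\y)\ne 0$ and $P^-(C(\y)/b)\ge\tau$, then a prime $p<\tau$ with $p\nmid b$ dividing $C(\y)$ would also divide $C(\y)/b$, which is impossible, so $\gcd(C(\y),P)=1$. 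Therefore
\[
U(b;\tau,\y_0)\le \card\{\y\in(Y_0,Y_0+H]\times(Y_1,Y_1+H]:\ \y\equiv\y_0\Mod b,\ \gcd(C(\y),P)=1\}+O(H).
\]

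Next I would record the local counts. For squarefree $e\mid P$ one has $\gcd(e,b)=1$, so by the Chinese Remainder Theorem the conditions $\y\equiv\y_0\Mod b$ and $e\mid C(\y)$ confine $\y$ to $\rho_C(e):=\card\{\z\Mod e:\ e\mid C(\z)\}=e^2g(e)$ residue classes modulo $be$, and counting lattice points of a box of side $H$ in each such class gives
\[
\card\{\y\in\text{box}:\ \y\equiv\y_0\Mod b,\ e\mid C(\y)\}=\frac{g(e)}{b^2}H^2+O\!\left(\rho_C(e)\Big(\frac{H}{be}+1\Big)\right).
\]
Since $C$ is a nonzero binary cubic it has at most three projective zeros over $\F_p$, so $\rho_C(p)\le 1+3(p-1)<3p$; thus $g$ is multiplicative with $g(p)\le 3/p$, the associated sieve has bounded dimension (at most $3$), and $\tau_3(e)\rho_C(e)\ll_\ep e^{1+\ep}$ for squarefree $e\mid P$.

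I would then feed this into a standard upper-bound sieve of small dimension — for instance the fundamental lemma of sieve theory, or Brun's sieve — with sifting limit $\tau$ and level $D=\tau^{10}$, so that the sifting parameter $\log D/\log\tau=10$ is comfortably large for a sieve of dimension at most $3$. The main term is $\frac{H^2}{b^2}\prod_{p\mid P}(1-g(p))$, which is exactly the product $\frac{H^2}{b^2}\prod_{p<\tau,\,p\nmid b\mathrm{Disc}(C)}(1-g(p))$ in the statement, while the remainder is at most
\[
\sum_{e\le D}\tau_3(e)\,\rho_C(e)\Big(\frac{H}{be}+1\Big)\ll_\ep \frac{H}{b}D^{1+\ep}+D^{2+\ep}.
\]
Since $\tau\le H^{1/120}$ we have $D=\tau^{10}\le H^{1/12}$, so $\frac{H}{b}D^{1+\ep}\ll H^{1+\ep}\ll H^{7/4}$ and $D^{2+\ep}\ll H^{1/6+\ep}\ll H^{7/4}$; adding the $O(H)$ from the vanishing locus of $C$ yields the claimed error $O(H^{7/4})$.

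The routine ingredients here are the lattice-point count in progressions and the size estimate for the remainder. The one step needing care is the reduction in the first paragraph — recognising that primes dividing $b$ (and $\mathrm{Disc}(C)$) cannot be sifted out, which is precisely why the sieve main term matches the product $\prod_{p<\tau,\,p\nmid b\mathrm{Disc}(C)}(1-g(p))$ appearing in the statement rather than a larger one. Beyond that there is no genuine obstacle: the hypothesis $\tau\le H^{1/120}$ is far more than the argument requires, merely ensuring a fixed power of $\tau$ stays well below $H^{7/4}$.
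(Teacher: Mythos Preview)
Your proof is correct and follows essentially the same approach as the paper: both apply an upper-bound sieve of dimension at most $3$ to the sequence $C(\y)$ along the progression $\y\equiv\y_0\Mod b$, sifting out primes $p<\tau$ with $p\nmid b\,\mathrm{Disc}(C)$. The paper cites Iwaniec--Kowalski Corollary~6.2 with level $\tau^{30}$ and parametrises by $\z=(\y-\y_0)/b$, while you invoke the fundamental lemma with level $\tau^{10}$; the remainder estimates differ only cosmetically, and your explicit removal of the vanishing locus $C(\y)=0$ is harmless (indeed unnecessary once there is at least one sifting prime, since then $\gcd(0,P)\ne 1$).
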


In the notation of (\ref{af1}) we have 
\[g(p)=\frac{1+(p-1)f_C(p)}{p^2},\]
and since Lemma \ref{fests} shows that $f_C(p)\le 3$  for 
$p\nmid \mathrm{Disc}(C)$ we deduce that
\[\prod_{\substack{p<\tau\\ p\nmid b\mathrm{Disc}(C)}}(1-g(p))\ll
\prod_{\substack{p<\tau\\ p\nmid b\mathrm{Disc}(C)}}(1-f_C(p)/p).\]
Thus, in view of (\ref{Uat}) and Lemma \ref{LS}, for any $\tau\le H^{1/120}$
we will have
\beql{KK}
U(b;\tau)\ll \phi(b)\theta(b)\left\{H^2b^{-2}
\prod_{\substack{p<\tau\\ p\nmid b\mathrm{Disc}(C)}}(1-f_C(p)/p)+H^{7/4}\right\},
\eeq
where 
\beql{thetadef}
\theta(b)=\frac{1}{\phi(b)}
\sum_{\substack{\y_0(\mathrm{mod}\;b)\\ C(\y_0) \equiv 0 \;(\mathrm{mod}\;b)}}
\hspace{-25pt}{}^*\;\;\; f(b;\y_0).
\eeq
Since $f(d;\y)$ only depends on the value of $\y$ modulo $d$,
standard arguments show that $\theta$ is multiplicative.   We
trivially have $0\le f(p^e;\y_0)\le e+1$ and it follows that
\beql{thest}
0\le \theta(b)\le f_C(b)\tau(b)\ll \tau_3(b)\tau(b )\ll\tau(b)^3
\eeq
for every integer $b$, by Lemma \ref{fests}.

We are now ready to estimate the sums $T_1, T_2$ and $T_3$ occurring in
(\ref{TTT}).  We have
\begin{eqnarray*}
T_1&=&\sum_{a\le z}U([a,d];z)\\
&\ll&\sum_{a\le z}\phi([a,d])\theta([a,d])\left\{H^2[a,d]^{-2}
\prod_{\substack{p<z\\ p\,\nmid\, [a,d]\mathrm{Disc}(C)}}(1-f_C(p)/p)+H^{7/4}\right\}.
\end{eqnarray*}
Since $\theta([a,d])\ll ad$ by (\ref{thest}), the term $H^{7/4}$ produces a total
$O(z^3d^2H^{7/4})$. This makes a satisfactory
contribution, $O(H^2d^{-1})$ say, to (\ref{TTT}) since $z= H^{1/240}$ and 
$d\le z^{1/8}$.
Moreover, since $f_C(p)\le 3$ whenever $p\nmid \mathrm{Disc}(C)$, by 
Lemma \ref{fests}, we may
replace the condition $p\nmid [a,d]\mathrm{Disc}(C)$ by
$p\nmid \mathrm{Disc}(C)$ at the cost of a factor $O([a,d]^3\phi([a,d])^{-3})$.
We now have
\beql{T1b}
T_1\ll H^2d^{-1}+
H^2\prod_{\substack{p<z\\ p\nmid \mathrm{Disc}(C)}}(1-p^{-1}f_C(p))
\sum_{a\le z}\frac{[a,d]\theta([a,d])}{\phi([a,d])^2}.
\eeq
Writing $a=uv$, with $u,d1$ coprime and $v \mid d^\infty$ we find that
\beql{uv}
\sum_{a\le z}\frac{[a,d]\theta([a,d])}{\phi([a,d])^2}=
\sum_{\substack{u\le z\\ \gcd(u,d)=1}}\frac{u\theta(u)}{\phi(u)^2}
\sum_{\substack{v\le z/u\\ v| d^\infty}}\frac{[v,d]\theta([v,d])}{\phi([v,d])^2}.
\eeq
For the inner sum we drop the condition $v\le z/u$. Then if $d=\prod p^e$ we see that
\[\sum_{\substack{v\le z/u\\ v| d^\infty}}\frac{[v,d]\theta([v,d])}{\phi([v,d])^2}\le
\prod_{p|d}\left\{\sum_{f=0}^\infty\frac{[p^f,p^e]\theta([p^f,p^e])}{\phi([p^f,p^e])^2}\right\}.\]
When $f\le e$ we have
\[\frac{[p^f,p^e]\theta([p^f,p^e])}{\phi([p^f,p^e])^2}\ll (e+1) p^{-e}\]
by (\ref{thest}), while for $f>e$ we have similarly
\[\frac{[p^f,p^e]\theta([p^f,p^e])}{\phi([p^f,p^e])^2}\ll (f+1)p^{-f}.\]
It follows that
\[\sum_{f=0}^\infty\frac{[p^f,p^e]\theta([p^f,p^e])}{\phi([p^f,p^e])^2}\ll (e+1)^2 p^{-e},\]
and hence that
\[\sum_{\substack{v\le z/u\\ v| d^\infty}}\frac{[v,d]\theta([v,d])}{\phi([v,d])^2}\ll_{\alpha}
d^{-\alpha}\]
for any fixed $\alpha<1$. Inserting this into (\ref{uv}) we see that (\ref{T1b}) becomes
\[T_1\ll H^2d^{-1}+
H^2d^{-\alpha}\prod_{\substack{p<z\\ p\nmid \mathrm{Disc}(C)}}(1-p^{-1}f_C(p))
\sum_{\substack{u\le z\\ \gcd(u,d)=1}}\frac{u\theta(u)}{\phi(u)^2}.\]
For the sum on the right we may drop the condition $\gcd(u,d)=1$, whence
\begin{eqnarray*}
  \sum_{\substack{u\le z\\ (u,d)=1}}\frac{u\theta(u)}{\phi(u)^2}\le \prod_{p\le z}\sigma_p,
\end{eqnarray*}
with
\[\sigma_p=1+\sum_{f=1}^{\infty}p^{-f}\frac{\theta(p^f)}{(1-p^{-1})^2}
=(1+\theta(p)/p)(1+O(p^{-2})).\]
It then follows that
\[T_1\ll H^2d^{-1}+
H^2d^{-\alpha}\prod_{\substack{p<z\\ p\nmid \mathrm{Disc}(C)}}(1-p^{-1}f_C(p))
\prod_{p\le z}(1+p^{-1}\theta(p)).\]
Since $f_C(p)$ and $\theta(p)$ are both $O(1)$ we have
\[\prod_{\substack{p<z\\ p\nmid \mathrm{Disc}(C)}}(1-p^{-1}f_C(p))
\prod_{p\le z}(1+p^{-1}\theta(p))\ll
\exp\left\{\sum_{\substack{p\le z\\ p\nmid \mathrm{Disc}(C)}}
\frac{\theta(p)-f_C(p)}{p}\right\}.\]
However (\ref{thetadef}), (\ref{;def}) and Lemma \ref{LC} show that
\beql{ext}
\theta(p)=f_M(p)+f_C(p)-1,
\eeq
for $p\nmid \mathrm{Disc}(C)$, and
then Lemma \ref{lem:f2} shows that the product is $O(\LL^{\rho-2})$, as
required for Proposition \ref{kAL}.

We turn next to $T_2$, for which (\ref{KK}) yields
\beql{T2T2}
T_2=\sum_{\substack{\log z<q\le z^2\\ q\;\mathrm{prime}}}z^{1500/\log q}\; T_2(q),
\eeq
with
\[T_2(q)=
\sum_{\substack{z<a\le z^4\\ P^+(a)=q}}\phi([a,d])\theta([a,d])\left\{H^2[a,d]^{-2}
\prod_{\substack{p<q\\ p\,\nmid\, [a,d]\mathrm{Disc}(C)}}(1-f_C(p)/p)+H^{7/4}\right\}.\]
The term $H^{7/4}$ contributes a total
$O(z^{12}d^2H^{7/4})$, say, to $T_2(q)$, and since $z=H^{1/240}$ and $d\le z^{1/8}$ 
this is at most $O(H^2d^{-1}z^{-2})$.  Moreover, 
as in our treatment of $T_1$ we may amend the condition 
$p\,\nmid\, [a,d]\mathrm{Disc}(C)$ so as to
become $p\nmid \mathrm{Disc}(C)$, at a cost of a factor
$O([a,d]^3\phi([a,d])^{-3})$. We therefore have
\beql{BB}
T_2(q)\ll H^2d^{-1}z^{-2}+H^2
\prod_{\substack{p<q\\ p\nmid \mathrm{Disc}(C)}}(1-p^{-1}f_C(p))
\sum_{\substack{z<a\le z^4\\ P^+(a)=q}}\frac{[a,d]\theta([a,d])}{\phi([a,d])^2}.
\eeq

The sum over $a$ requires more care than was needed for $T_1$, and we
will apply Rankin's trick.  We set 
\beql{deldef}
\delta=1600/\log q
\eeq
and observe that
\begin{eqnarray}\label{LL}
  \sum_{\substack{z<a\le z^4\\ P^+(a)=q}}\frac{[a,d]\theta([a,d])}{\phi([a,d])^2}&\le&
  \sum_{\substack{z<a\le z^4\\ P^+(a)=q}}\frac{[a,d]\theta([a,d])}{\phi([a,d])^2}
  \left(\frac{a}{z}\right)^{\delta}\nonumber\\
  &\le & z^{-\delta}\sum_{\substack{a=1\\ P^+(a)=q}}^{\infty}
  \frac{a^{\delta}[a,d]\theta([a,d])}{\phi([a,d])^2}\nonumber\\
  &=&z^{-\delta}\prod_{p}\sigma_p.
\end{eqnarray}
for appropriate factors $\sigma_p$. We will have $\sigma_p=1$ unless $p\le q$
or $p|d$. When $p<q$ and $p\nmid d$ our choice (\ref{deldef}) of $\delta$ produces
\begin{eqnarray*}
  \sigma_p&=&1+\sum_{f=1}^{\infty}p^{(\delta-1)f}
  \frac{\theta(p^f)}{(1-p^{-1})^2}\\
  &=& 1+p^{\delta-1}\theta(p)+O(p^{2\delta-2})\\
  &=& 1+p^{-1}\theta(p)+O\left(p^{-1}\frac{\log p}{\log q}\right)+O(p^{-3/2}),
\end{eqnarray*}
 provided that $Y$ (and hence also $H$, $z$ and $q$) is large enough.
Thus
\begin{eqnarray}\label{MM}
  \prod_{\substack{p<q\\ p\nmid d}}\sigma_p&=&
\exp\left\{\sum_{\substack{p<q\\ p\nmid d}}\log \sigma_p\right\}\nonumber\\
&\ll &\exp\left\{\sum_{p<q}\left(p^{-1}\theta(p)+
  O\left(p^{-1}\frac{\log p}{\log q}\right)+O(p^{-3/2})\right)\right\}\nonumber\\
  &\ll& \exp\left\{\sum_{p<q}p^{-1}\theta(p)\right\}.
\end{eqnarray}
We next consider the case in which $p<q$ and $p|d$. If
$p^e||d$ we see that
\[  \sigma_p= (1-p^{-1})^2\left\{\sum_{f=0}^{\infty}
  p^{\delta f-\max(e,f)}\theta(p^{\max(e,f)})\right\}
 \ll (e+1)^4 p^{(\delta-1)e} \]
 by (\ref{thest}). Similarly if $p>q$ and $p|d$ we will have
 \[\sigma_p=\frac{p^e}{\phi(p^e)^2} \theta(p^e)\ll (e+1)^3p^{-e}\]
 when $p^e|| d$. Finally, if $q^e||d$ with $e\ge 0$
  we find that
  \[  \sigma_q= (1-q^{-1})^2\left\{\sum_{f=1}^{\infty}
  q^{\delta f-\max(e,f)}\theta(q^{\max(e,f)})\right\}
 \ll (e+1)^4 q^{(\delta-1)\max(e,1)}. \]
 Combining these various results we conclude that there is a
 constant $C$, depending only on the quadratic forms $Q_0$ 
 and $Q_1$, such that 
 \[\prod_{p|d\,\mathrm{or}\, p=q}\sigma_p\le 
 C^{\omega(d)}\tau(d)^4[d,q]^{\delta-1}\ll_\ep d^{\ep}[d,q]^{\delta-1}\ll_\ep 
 d^{2\ep}[d,q]^{-1}\]
 for any fixed $\ep>0$. Here we use the facts that $d^\delta\ll_\ep d^\ep$ 
 and $q^\delta\ll 1$. 
 
 The estimates (\ref{LL}) and (\ref{MM}) now show that
\[ \sum_{\substack{z<a\le z^4\\ P^+(a)=q}}\frac{[a,d]\theta([a,d])}{\phi([a,d])^2}
\ll_\ep z^{-\delta}d^{2\ep}[d,q]^{-1}\exp\left\{\sum_{p<q}p^{-1}\theta(p)\right\}.\]
In view of  (\ref{BB}) we will need to consider
\[\prod_{\substack{p<q\\ p\nmid \mathrm{Disc}(C)}}(1-p^{-1}f_C(p))
\exp\left\{\sum_{p<q}p^{-1}\theta(p)\right\}.\]
However
\[\prod_{\substack{p<q\\ p\nmid \mathrm{Disc}(C)}}(1-p^{-1}f_C(p))
=\exp\left\{\sum_{\substack{p<q\\ p\nmid \mathrm{Disc}(C)}}
  \log(1-p^{-1}f_C(p))\right\}\]
  and
\begin{eqnarray*}
\lefteqn{\sum_{\substack{p<q\\ p\nmid \mathrm{Disc}(C)}}\log(1-p^{-1}f_C(p))
+\sum_{p<q}p^{-1}\theta(p)}\hspace{2cm}\\
&=&
-\sum_{\substack{p<q\\ p\nmid \mathrm{Disc}(C)}}p^{-1}f_C(p)
+\sum_{p<q}p^{-1}\theta(p)+O(1)\\
&=&\sum_{p<q}\frac{\theta(p)-f_C(p)}{p}+O(1)\\
&=&\sum_{p<q}\frac{f_M(p)-1}{p}+O(1)\\
&=&(\rho-2)\log\log q +O(1),
\end{eqnarray*}
by (\ref{ext}) and Lemma \ref{lem:f2}. It therefore follows from (\ref{BB}) that
\beql{fst}
T_2(q)\ll_{\ep} H^2d^{-1}z^{-2}+H^2(\log q)^{\rho-2}z^{-\delta}d^{2\ep}[d,q]^{-1}.
\eeq
The contribution to (\ref{T2T2}) arising from the first term is
\[\ll H^2 d^{-1}z^{-2}
\sum_{\substack{\log z<q\le z^2\\ q\;\mathrm{prime}}}z^{1500/\log q}\ll H^2 d^{-1}.\]
When $q\mid d$ the second term of (\ref{fst}) is 
$O_\ep(H^2\LL^{\rho-2}z^{-1600/\log q}d^{2\ep-1})$, giving a contribution
\[\ll_\ep H^2\LL^{\rho-2}d^{3\ep-1}\]
to (\ref{T2T2}), since there are $O_\ep(d^\ep)$ primes $q|d$. Finally, when $q\nmid d$ the
second term of (\ref{fst}) is $O_\ep(H^2\LL^{\rho-2}z^{-1600/\log q}d^{2\ep-1}q^{-1})$, 
and the corresponding contribution to (\ref{T2T2}) is
\[\ll_{\ep}H^2 \LL^{\rho-2}d^{2\ep-1}
\sum_{\substack{\log z<q\le z^2\\ q\;\mathrm{prime}}}z^{-100/\log q}q^{-1}.\]
However
\[\sum_{\substack{z^{1/k}<q\le z^{2/k}\\ q\;\mathrm{prime}}}z^{-100/\log q}q^{-1}
\ll e^{-50k}\sum_{\substack{z^{1/k}<q\le z^{2/k}\\ q\;\mathrm{prime}}}q^{-1}
\ll e^{-50k},\]
and summing over $k$ we see that primes $q\nmid d$ contribute 
$O_\ep(H^2 \LL^{\rho-2}d^{2\ep-1})$. Thus
\[T_2\ll_\ep H^2d^{3\ep-1}\LL^{\rho-2},\]
which suffices for Proposition \ref{kAL}, on choosing $\ep=(1-\alpha)/3$.

Finally we examine $T_3$. The estimate (\ref{T3e}) shows that
\[T_3\ll z^{1/4}\sum_{\substack{z<a\le z^2\\ P^+(a)\le\log z}}
\phi([a,d])f_C([a,d])\left\{H^2[a,d]^{-2}+1\right\}.\]
Since $z=H^{1/240}$ and $d\le z^{1/8}$ we have $1\ll H^2[a,d]^{-2}$. Moreover 
Lemma \ref{fests} shows that $f_C([a,d])\ll z^{1/4}$, say, while
\[\phi([a,d])[a,d]^{-2}\le [a,d]^{-1}\le a^{-1}\le a^{-1/4}z^{-3/4}.\]
Thus
\[T_3\ll z^{1/2}H^2\sum_{\substack{z<a\le z^2\\ P^+(a)\le\log z}}a^{-1/4}z^{-3/4}
\ll z^{-1/4}H^2\sum_{\substack{a=1\\ P^+(a)\le\log z}}^{\infty}a^{-1/4}.\]
The infinite sum is
\begin{eqnarray*}
\prod_{p\le\log z}\left\{\sum_{f=0}^\infty p^{-f/4}\right\}&=&
\exp\left\{\sum_{p\le\log z} O(p^{-1/4})\right\}\\
&\ll&\exp\{O((\log z)^{3/4})\}\\
&\ll& z^{1/8},
\end{eqnarray*}
say, whence $T_3\ll H^2 z^{-1/8}$.  This is satisfactory for Proposition 
\ref{kAL}, since we have $d\le z^{1/8}$ and $\rho\ge 2$. \qed
\bigskip

It remains to prove Lemma \ref{LS}, which we restate here for %R
convenience. %R

\noindent {\bf Lemma 4.8.(again).} %R
{\em Define an arithmetic function by setting} %R
\[g(k)=k^{-2}\card\{\z(\mathrm{mod}\; k):\,k\mid C(\z)\}.\] %R 
{\em Then for any} $\tau\le H^{1/120}$ {\em we have} %R
\[U(b;\tau,\y_0)\ll %R
H^2b^{-2}\prod_{\substack{p<\tau\\ p\nmid b\mathrm{Disc}(C)}}(1-g(p)) %R
  +H^{7/4}.\] %R

\begin{proof} For the proof %R
we apply a sieve upper bound, sifting out primes
$p<\tau$ which do not divide $b\mathrm{Disc}(C)$.
There are many possible
bounds that we could apply, and we choose Corollary 6.2 of Iwaniec
and Kowalski \cite{IK}. We 
first note that $g(d)$ is a multiplicative function,
with $0\le g(d)\le 1$, and that $g(p)\le 3/p<1$ when
$p\nmid b\mathrm{Disc}(C)$.  (Here we recall that we had artificially arranged 
at the outset that $\mathrm{Disc}(C)$ is divisible by 6.)  Thus
\[\prod_{w\le p<z}(1-g(p))^{-1}\le \prod_{w\le p<z}(1-3/p)^{-1}\ll
\left(\frac{\log z}{\log w}\right)^3\]
when $w\ge 4$, whence condition (6.34) of Iwaniec and Kowalski
\cite{IK} holds, with $\kappa=3$ and with $K$ depending only on $Q_0$
and $Q_1$.  We will apply the sieve bound to $C(\y_0+b\z)$, where
$\z$ runs over a square $S$ of side-length $H/b$.  When $d$ is coprime
to $b\mathrm{Disc}(C)$ we have
\[g(d)=d^{-2}\card\{\z(\mathrm{mod}\; d):\,d\mid C(\y_0+b\z)\},\]  
so that
\[\card\{\z\in\Z^2\cap S: d\mid C(\y_0+b\z)\}=
g(d)d^2\left(\frac{H}{bd}+O(1)\right)^2=
g(d)\frac{H^2}{b^2}+r_d,\]
say, with $r_d\ll H^{3/2}$ when $d\le H^{1/2}$.  Then, taking $s=30$
in \cite[Corollary 6.2]{IK} we find that
\[U(b;\tau,\y_0)\ll H^2b^{-2}\prod_{\substack{p<\tau\\ p\nmid
    b\mathrm{Disc}(C)}}(1-g(p))+\sum_{d<\tau^{30}}|r_d|.\] 
If $\tau\le H^{1/120}$ the remainder sum is $O(H^{7/4})$, and Lemma \ref{LS}
follows.
\end{proof}

\section{Asymptotics via lattices}

In this section we will refine the methods of \S \ref{CVL}, so
as to produce an asymptotic formula for $S(X,Y)$ from \eqref{def:S(X,Y)}.

\begin{proposition}\label{P1*}
There is a constant $A=A(Q_0,Q_1)>0$ such that
  \[S(X,Y)=\frac{2\mathfrak{S}_S}{(\rho-2)!}
  \int_1^{\infty}(\log  t)^{\rho-2}t^{-2}I(t)\d t+O(XY\LL^{\rho-3})\]
  for $X\LL^A\le Y\le X^2\LL^{-A}$.
Here $\mathfrak{S}_S$ and $I(t)$ are given by Lemma \ref{lem:f2} and
(\ref{Adef}) respectively. 
\end{proposition}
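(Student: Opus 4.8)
The plan is to refine the lattice method of Section~\ref{CVL}, replacing the crude inequality $S(\sL(a,d);d)\le S_0(\sL(a,d);d)$ used there by the asymptotic evaluation of Lemma~\ref{rl1}, and then to reassemble the main terms into the stated integral. Since $X\LL^A\le Y\le X^2\LL^{-A}$ forces $X\gg\LL^{2A}$, we are in the regime $X\gg1$ where $S(X,Y)=2S_1(X,Y)$, so it suffices to prove the formula for $S_1(X,Y)$ with the main term halved. Write $S_1(X,Y)=\sum_{d\ge1}\tilde S(d)$, where $\tilde S(d)$ collects the primitive $\x$ with $\gcd(Q_0(\x),Q_1(\x))=d$ exactly; the support of $W_2$ forces $\|(Q_0(\x),Q_1(\x))\|_\infty\asymp dY$, hence $d\ll X^2/Y$, and the weight attached to such an $\x$ is precisely the one occurring in $S_0(\,\cdot\,;d)$. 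First I would detect the equality $\gcd(Q_0(\x),Q_1(\x))=d$ by a Möbius sum over a divisor $e$ (imposing $ed\mid Q_0(\x),Q_1(\x)$) and partition the congruence condition over the lattices $\sL(a,ed)$ with $a\in\mathcal M(\Z/ed\Z)$ as in Section~\ref{CVL}, giving $\tilde S(d)=\sum_e\mu(e)\sum_{a\in\mathcal M(\Z/ed\Z)}S(\sL(a,ed);d)$; then I would remove the primitivity of $\x$ inside each lattice by a second Möbius sum, $S(\sL(a,ed);d)=\sum_\ell\mu(\ell)\,S_0(\sL(a,ed)\cap\ell\Z^3;d)$, noting that the weight parameter $d$ is unchanged. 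Every inner term is then of the form $S_0(\Lambda;d)$ for an explicit lattice $\Lambda=\sL(a,ed)\cap\ell\Z^3$ with $\det\Lambda\gg(ed)^2\ell^3$ and $\lambda_3(\Lambda)\le ed\ell$ --- exactly the object estimated by Lemma~\ref{rl1}.

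For the main term I would apply Lemma~\ref{rl1} with a large fixed $k$ in the range of small parameters $e\ell\le(Y/X)\LL^{-A'}$: there $X\lambda_3(\Lambda)/(Yd)\le Xe\ell/Y\le\LL^{-A'}$, so the error in (\ref{re2}) is $\ll(Yd)^{3/2}(\det\Lambda)^{-1}\LL^{-A'k}$, and after summing over all $a$ (using $\#\mathcal M(\Z/m\Z)=f_M(m)\ll\tau_4(m)$, Lemma~\ref{fests}), over $e,\ell$ and over $d\ll X^2/Y$, this contributes $\ll XY\LL^{O(1)-A'k}$, hence $O(XY\LL^{\rho-3})$ for $k$ large. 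Collecting the main terms $I(d)/\det\Lambda$ from (\ref{Adef}) and carrying out the $a$-, $e$- and $\ell$-sums prime by prime yields $\sum_{d\ll X^2/Y}g(d)d^{-2}I(d)$ for a multiplicative $g$ whose $p$-th Euler factor combines the count of $\Z/p\Z$-points of $\mathcal M$ with the two sieve corrections. Using Lemmas~\ref{afl1}, \ref{afl2}, \ref{tl} and \ref{varpialt} one checks that $g(p)=f_M(p)+O(p^{-1})$ and, crucially, that $\sum_d g(d)d^{-s}=\bigl(\prod_{i=1}^{\rho-1}\zeta_{K_i}(s)\bigr)H(s)$ with $H$ holomorphic and nonzero for $\sigma>\tfrac12$, where the $K_i$ are the residue fields of $\mathcal M$ (Lemma~\ref{lem:f1}; there are $m=\rho-1$ of them by Lemma~\ref{lem:Pic}), and that the leading Laurent coefficient of this Dirichlet series at $s=1$ equals exactly $\mathfrak S_S=\prod_p\tau_p$ --- this is where $\tau_p=(1-1/p)^\rho\varpi_p$ and the convergence factors of Section~\ref{padicdens} enter. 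Consequently, by Perron's formula (as in Lemmas~\ref{lem:f2} and \ref{earlier}, using solvability of the Galois closures of the $K_i$ so that $\zeta_{K_i}(s)/\zeta(s)$ continues), $\sum_{d\le D}g(d)/d=\tfrac{\mathfrak S_S}{(\rho-1)!}(\log D)^{\rho-1}+O((\log D)^{\rho-2})$, and partial summation of $\sum_d g(d)d^{-2}I(d)$ against this, together with $I(t)\ll(Yt)^{3/2}$ and $I(t)=0$ for $t\gg X^2/Y$ from Lemma~\ref{rl1}, converts the sum into $\tfrac{\mathfrak S_S}{(\rho-2)!}\int_1^\infty(\log t)^{\rho-2}t^{-2}I(t)\,\d t+O(XY\LL^{\rho-3})$.

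It remains to bound the large-parameter contribution $e\ell>(Y/X)\LL^{-A'}$, for which I would use only the unconditional estimates of Sections~\ref{CVL} and \ref{UBVC}: the upper-bound clause of Lemma~\ref{rl1} when $\lambda_3(\Lambda)\ll Yd/X$, Lemma~\ref{rl0} (together with the observation that the $W_2$-weight restricts the ``$\lambda_2\gg X$'' case to an $O(1)$ contribution) otherwise, and Proposition~\ref{P1} for crude cross-checks. The point is that $\det\Lambda\gg(ed)^2\ell^3$ forces rapid convergence of the $\ell$-tail, while each unit of $e$ beyond the threshold $Y/X$ costs a power of $X/Y\le\LL^{-A}$, so the whole régime contributes $O(XY\LL^{\rho-3})$ once $A$ is large enough; a separate trivial estimate disposes of the $O(1)$ from $\x$ lying along one of the finitely many directions with $Q_0(\x)=Q_1(\x)=0$. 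Combining everything gives $S(X,Y)=2S_1(X,Y)=\tfrac{2\mathfrak S_S}{(\rho-2)!}\int_1^\infty(\log t)^{\rho-2}t^{-2}I(t)\,\d t+O(XY\LL^{\rho-3})$, as required (the integral being $\ll XY\LL^{\rho-2}$, one factor of $\LL$ above the error).

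The main obstacle is the main-term bookkeeping in the second step: disentangling the exact-gcd sieve and the primitivity sieve prime by prime and reconciling their combined local effect with the Euler factor of $\tau_p$ computed in Section~\ref{padicdens}, so that the leading Laurent coefficient of $\sum_d g(d)d^{-s}$ at $s=1$ comes out precisely as $\mathfrak S_S/(\rho-1)!$ with a pole of the correct order $\rho-1$ (via the $\rho-1$ residue fields of $\mathcal M$, whose zeta functions continue by solvability, exactly as exploited in Lemma~\ref{earlier}). Everything else is a matter of keeping every error term one factor of $\LL$ below the trivial bound --- which is precisely why the hypotheses $X\LL^A\le Y\le X^2\LL^{-A}$ and the freedom to enlarge $k$ in Lemma~\ref{rl1} are imposed.
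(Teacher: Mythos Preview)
Your approach is essentially the paper's: M\"obius to pass from exact to divisibility gcd, lattice decomposition into $\sL(a,ed)$, primitivity sieve, Poisson via Lemma~\ref{rl1}, then factoring the resulting Dirichlet series against $\prod_i\zeta_{K_i}(s)$ and applying Perron with partial summation to reach $\tfrac{\mathfrak S_S}{(\rho-2)!}\int_1^\infty(\log t)^{\rho-2}t^{-2}I(t)\,\d t$. The paper's organization differs only cosmetically --- it controls errors via a sequence of explicit truncation lemmas (first restricting to $X^2Y^{-1}\LL^{-2}<d\ll X^2Y^{-1}$, then to $h\le\LL^{10}$ and $r\le\LL^{20}$, applying Lemma~\ref{rl1}, and only afterwards re-extending $h,r$ to infinity) rather than your single small/large $e\ell$ dichotomy --- and one minor point: solvability of the Galois closures is not invoked here, only polynomial vertical growth of the $\zeta_{K_i}$.
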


For the proof we will first estimate various error terms, and then go
on to show that 
the main term can be put in the shape given above. 

\subsection{Proposition \ref{P1*} --- Error terms}
As in \S \ref{S6}, when $X\gg 1$ we have $S(X,Y)=2S_1(X,Y)$, with
$S_1(X,Y)$ given by (\ref{S1def}). We sort our points $\x$ according
to the value $d=\gcd(Q_0(\x),Q_1(\x))$, so that
\[S_1(X,Y)=\sum_{d=1}^\infty S_1(X,Y;d),\]
with
\[S_1(X,Y;d)=\sum_{\substack{\x\in\Z^3_{\mathrm{prim}}\\ \gcd(Q_0(\x),Q_1(\x))=d}}
W_3(X^{-1}\x)W_2\left(Y^{-1}d^{-1}(-Q_1(\x),Q_0(\x))\right).\]
The sum vanishes unless $d\ll X^2Y^{-1}$, and we proceed to show
that only values of $d$ close to $X^2Y^{-1}$ can make a
substantial contribution.
\begin{lemma}\label{L1}
  We have
\[  \sum_{d\le X^2Y^{-1}\LL^{-2}}S_1(X,Y;d)\ll XY\LL^{\rho-3}\]
for any $X\le Y$.
\end{lemma}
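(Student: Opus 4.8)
The plan is to recycle the lattice estimate \eqref{use} from the proof of Proposition \ref{P1} over the truncated range of $d$, and to observe that summing it against the mean value of $f_M$ yields a saving of one power of $\LL$ once $d$ is restricted to lie somewhat below $X^2Y^{-1}$. The case of bounded $X$ is trivial (the left-hand side is then $O(1)$, which is $\ll XY\LL^{\rho-3}$ since $XY\LL^{\rho-3}\gg 1$ for all $X,Y\ge 1$, using $\LL\ge 3$ and $\rho\ge 2$), so I may assume $X$ is large enough that $(Q_0(\x),Q_1(\x))\ne(0,0)$ on the support of $W_3(X^{-1}\x)$.

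Since $W_2,W_3\ge 0$ we have $S_1(X,Y;d)\le S(d)$, with $S(d)$ as in \eqref{23a}; by \eqref{Sm} this is a sum of $f_M(d)$ terms $S(\sL(a,d);d)$, each of which is $\ll Y^{3/2}d^{-1/2}$ for $X\le Y$ and $d\ll X^2Y^{-1}$ by \eqref{use} (built on Lemma \ref{lem:lattices}, which supplies $\det\sL(a,d)=d^2$ and $\lambda_3\le d$). Hence
\[S_1(X,Y;d)\ll f_M(d)\,Y^{3/2}d^{-1/2}\qquad(d\ll X^2Y^{-1}),\]
the left-hand side vanishing for larger $d$.

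Then I would sum over $d\le Z:=X^2Y^{-1}\LL^{-2}$ (the assertion being trivial if $Z<1$). By Lemma \ref{lem:f2} we have $\sum_{n\le t}f_M(n)\ll t(\log(t+2))^{\rho-2}$, so partial summation gives $\sum_{d\le Z}f_M(d)d^{-1/2}\ll Z^{1/2}(\log(Z+2))^{\rho-2}$. Since $Z\le X^2\le(XY)^2$ (using $Y\ge 1$) we have $\log(Z+2)\ll\LL$, and therefore
\[\sum_{d\le Z}S_1(X,Y;d)\ll Y^{3/2}Z^{1/2}\LL^{\rho-2}=Y^{3/2}\bigl(X^2Y^{-1}\bigr)^{1/2}\LL^{-1}\cdot\LL^{\rho-2}=XY\LL^{\rho-3},\]
as required.

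I do not anticipate any real obstacle; the estimate is essentially a bookkeeping consequence of \eqref{use} and Lemma \ref{lem:f2}. The point worth stressing is that summing the same per-$d$ bound over the \emph{entire} admissible range $d\ll X^2Y^{-1}$ merely reproduces the weaker estimate $XY\LL^{\rho-2}$ of Proposition \ref{P1}: the improvement comes precisely from cutting the sum off at $d\le X^2Y^{-1}\LL^{-2}$, which gains a factor $(\LL^{-2})^{1/2}=\LL^{-1}$ because $\sum_{d\le Z}f_M(d)d^{-1/2}$ grows like $Z^{1/2}$ up to logarithmic factors.
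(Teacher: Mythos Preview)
Your proof is correct and follows essentially the same route as the paper: bound $S_1(X,Y;d)\le S(d)$, decompose via \eqref{Sm}, apply the lattice estimate \eqref{use} to get $S(d)\ll f_M(d)Y^{3/2}d^{-1/2}$, and then sum using the partial-summation consequence $\sum_{d\le x}f_M(d)d^{-1/2}\ll x^{1/2}(\log x)^{\rho-2}$ of Lemma~\ref{lem:f2}. Your explicit handling of the bounded-$X$ and $Z<1$ cases, and the closing remark explaining where the $\LL^{-1}$ saving comes from, are welcome additions but do not alter the argument.
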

\begin{proof}
We have $S_1(X,Y;d)\le S(d)$, with $S(d)$ given by (\ref{23a}). Then, using (\ref{Sm})
and (\ref{use}) we have
  \begin{eqnarray}\label{L1s}
\sum_{d\le X^2Y^{-1}\LL^{-2}}S(d)&=&\sum_{d\le X^2Y^{-1}\LL^{-2}}\;\;
\sum_{a \in \mathcal{M}(\Z/d\Z)}S(\sL(a,d);d)\nonumber\\
&\ll &\sum_{d\le X^2Y^{-1}\LL^{-2}}f_M(d)Y^{3/2}d^{-1/2}.
  \end{eqnarray}
  However Lemma \ref{lem:f2} shows that
  \[\sum_{d\le x}f_M(d)d^{-1/2}\ll x^{1/2}(\log x)^{\rho-2},\]
so that the sum (\ref{L1s}) is $O(XY\LL^{\rho-3})$,  
  as required.
\end{proof}

When $d$ is in the remaining range
$X^2Y^{-1}\LL^{-2}< d\ll X^2Y^{-1}$ we write
\begin{eqnarray*}
  \lefteqn{S_1(X,Y;d)}\\
  &=&
  \sum_{\substack{\x\in\Z^3_{\mathrm{prim}}\\ \gcd(Q_0(\x),Q_1(\x))=d}}
  W_3(X^{-1}\x)W_2\left(Y^{-1}d^{-1}(-Q_1(\x),Q_0(\x))\right)\\
  &=& \sum_{h=1}^{\infty}\mu(h)S(d,dh),
  \end{eqnarray*}
where
\[S(d,dh)=\sum_{\substack{\x\in\Z^3_{\mathrm{prim}}\\ dh|\gcd(Q_0(\x),Q_1(\x)}}
W_3(X^{-1}\x)W_2\left(Y^{-1}d^{-1}(-Q_1(\x),Q_0(\x))\right),\]
and we plan to show that only relatively small integers $h$ can
make a substantial contribution. 
\begin{lemma}\label{L2}
  We have
  \[\sum_{X^2Y^{-1}\LL^{-2}< d\ll X^2Y^{-1}}\;\;\sum_{h\ge \LL^{10}}
  S(d,dh)\ll XY\LL^{\rho-3}\]
  for $1\ll X\le Y\LL^{-9}$.
\end{lemma}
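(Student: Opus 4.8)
The plan is to follow the pattern of the proof of Lemma~\ref{L1}. As in \eqref{Sm}, dropping the primitivity condition on $\x$ and sorting the remaining vectors into the $f_M(dh)$ lattices $\sL(a,dh)$ with $a\in\mathcal{M}(\Z/dh\Z)$ gives
\[
S(d,dh)\ \le\ \sum_{a\in\mathcal{M}(\Z/dh\Z)}S_0(\sL(a,dh);d),
\]
and by Lemma~\ref{lem:lattices} each such lattice has determinant $(dh)^2$ and third successive minimum $\lambda_3\le dh$. We also recall that $S(d,dh)$ vanishes unless $d\ll X^2Y^{-1}$, and that $dh\mid\gcd(Q_0(\x),Q_1(\x))\ll X^2$ forces $h\ll X^2/d\ll Y\LL^2$ throughout; since $X\le Y\LL^{-9}$ we have $Y/X\ge\LL^9$.

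\emph{The range $\LL^{10}\le h\le Y/X$.} Here $X\lambda_3\le X\,dh\le Yd$, so Lemma~\ref{rl1} applies in its strong form and gives $S_0(\sL(a,dh);d)\ll (Yd)^{3/2}(dh)^{-2}$. Summing over $a$ costs a factor $f_M(dh)\le\tau_4(dh)\le\tau_4(d)\tau_4(h)$; then $\sum_{h\ge\LL^{10}}\tau_4(h)h^{-2}\ll\LL^{-9}$ and $\sum_{d\ll X^2Y^{-1}}\tau_4(d)d^{-1/2}\ll (X/Y^{1/2})\LL^{3}$, both by Lemma~\ref{lem:tau_k}. This produces a total
\[
\ll\ Y^{3/2}\cdot\LL^{-9}\cdot(X/Y^{1/2})\LL^{3}\ =\ XY\LL^{-6}\ \ll\ XY\LL^{\rho-3},
\]
since $\rho\ge2$.

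\emph{The range $h>Y/X$.} Now Lemma~\ref{rl1} no longer supplies a bound decaying in $h$, so I interchange the order of summation and pass to $\x$. Writing $e=\gcd(Q_0(\x),Q_1(\x))$, any pair $(d,h)$ contributing has $dh\mid e$, $h\ge\LL^{10}$ and $d\gg X^2Y^{-1}\LL^{-2}$, hence $e\ge dh\gg X^2Y^{-1}\LL^{8}$; moreover for fixed $\x$ the number of such pairs is at most $\sum_{d\mid e}\tau(e/d)=\tau_3(e)$, while the weights $W_3,W_2$ are bounded. Partitioning $\x$ according to the value of $e$ and then relaxing $e=\gcd(Q_0(\x),Q_1(\x))$ to $e\mid Q_0(\x),\ e\mid Q_1(\x)$, we obtain
\[
\sum_{X^2Y^{-1}\LL^{-2}<d\ll X^2Y^{-1}}\ \sum_{h>Y/X}S(d,dh)\ \ll\ \sum_{X^2Y^{-1}\LL^{8}\ll e\ll X^2}\tau_3(e)\,\#\bigl\{\x\in\Z^3_{\mathrm{prim}}:\|\x\|_\infty\le\tfrac52X,\ e\mid Q_0(\x),\ e\mid Q_1(\x)\bigr\}.
\]
Sorting $\x$ into the $f_M(e)$ lattices $\sL(a,e)$ (each of determinant $e^2$ with $\lambda_3\le e$) and applying the crude bound of Lemma~\ref{rl0}, the inner cardinality is $\ll f_M(e)\bigl(1+X^2/e+X^3/e^2\bigr)$. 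Inserting this and splitting into three sums over $e$, the lower cut-off $e\gg X^2Y^{-1}\LL^{8}$ together with the estimates for sums of $\tau_3(e)f_M(e)$ coming from Lemma~\ref{lem:f2} (and Lemma~\ref{lem:tau_k}) bounds the total by $O\bigl(X^2\LL^{O(1)}\bigr)+O\bigl(XY\LL^{\rho-7}\bigr)$. Since the hypothesis gives $X^2\le XY\LL^{-9}$, both terms are $\ll XY\LL^{\rho-3}$. Combining the two ranges proves the lemma.

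The main obstacle is this last step. In the range $h>Y/X$ the lattice-point counts lose the clean $h^{-2}$ decay, and one is forced onto the much cruder Lemma~\ref{rl0}; to still land inside the budget $\LL^{\rho-3}$ one must track the cut-off $e\gg X^2Y^{-1}\LL^{8}$ carefully through every divisor sum, and --- crucially --- use the sharp asymptotics for $\sum_{e\le x}f_M(e)$ from Lemma~\ref{lem:f2} in place of the lossy bound $f_M\le\tau_4$, so that the surviving power of $\log XY$ is comfortably absorbed by the factor $X^2\le XY\LL^{-9}$ supplied by the hypothesis.
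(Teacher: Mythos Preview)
Your first range is fine, but the second-range argument does not close. By passing to $e=\gcd(Q_0(\x),Q_1(\x))$ and bounding the number of admissible pairs $(d,h)$ by $\tau_3(e)$, you pick up an extra $\tau_3$-factor that Lemma~\ref{lem:f2} cannot absorb: that lemma controls $\sum_{n\le x}f_M(n)$, not $\sum_{n\le x}\tau_3(n)f_M(n)$. Even granting the natural Dirichlet-series extension, the product $\tau_3\cdot f_M$ has mean $3f_M(p)$ on primes, so the relevant sums carry an exponent $3(\rho-1)$ of $\LL$; the $X^2/e$-term then yields $X^2\LL^{3\rho-3}\le XY\LL^{3\rho-12}$, which exceeds the target $XY\LL^{\rho-3}$ once $\rho=5$. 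Your claimed bound $O(XY\LL^{\rho-7})$ for the $X^3/e^2$-term likewise does not follow from the inputs you cite.

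The paper avoids the whole difficulty by dispensing with both the range split and the detour through $\x$: one applies Lemma~\ref{rl0} (not Lemma~\ref{rl1}) directly to $S(\sL(a,dh);d)$, obtaining $S(d,dh)\ll f_M(dh)\{1+X^2/(dh)+X^3/(dh)^2\}$. Since $dh\ll X^2$ the ``$1$'' is absorbed into $X^2/(dh)$. Writing $n=dh$ introduces only a single $\tau(n)$ for the factorization count, so with $f_M\le\tau_4$ and $\tau\cdot\tau_4\le\tau_8$ one gets $X^2\sum_{n\ll X^2}\tau_8(n)/n\ll X^2\LL^8\le XY\LL^{-1}$ from the hypothesis $X\le Y\LL^{-9}$, and similarly $X^3\sum_{n\ge X^2Y^{-1}\LL^8}\tau_8(n)/n^2\ll XY\LL^{-1}$. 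This is uniform in $\rho$ and needs no appeal to Lemma~\ref{lem:f2}.
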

\begin{proof}
  Since $X\gg 1$ the quadratic forms $Q_0(\x)$ and $Q_1(\x)$ cannot
  both vanish for any relevant $\x$.  We may therefore suppose
  that $dh\ll X^2$ whenever $dh|\gcd(Q_0(\x),Q_1(\x))$. By the same argument 
  that produced (\ref{Sm}) we have
  \beql{25a}
  S(d,dh)=\sum_{a \in \mathcal{M}(\Z/dh\Z)}S(\sL(a,dh);d).
  \eeq
  Lemmas \ref{lem:lattices} and \ref{rl0} show that
  \[S(\sL(a,dh);d)\ll 1+X^2/(dh)+X^3/(dh)^2, \]
  whence
 \begin{eqnarray*}
 \lefteqn{\sum_{X^2Y^{-1}\LL^{-2}< d\ll X^2Y^{-1}}\;\;\sum_{h\ge \LL^{10}}
  S(d,dh)}\hspace{2cm}\\
  &\ll& \sum_{X^2Y^{-1}\LL^{-2}< d\ll X^2Y^{-1}}\;\;
  \sum_{\substack{h\ge \LL^{10}\\ dh\ll X^2}} f_M(dh)\left\{1+
  \frac{X^2}{dh}+\frac{X^3}{(dh)^2}\right\}\\
  &\ll& \sum_{X^2Y^{-1}\LL^{-2}< d\ll X^2Y^{-1}}\;\;
  \sum_{\substack{h\ge \LL^{10}\\ dh\ll X^2}} \tau_4(dh)\left\{
  \frac{X^2}{dh}+\frac{X^3}{(dh)^2}\right\},
  \end{eqnarray*}
  on using Lemma \ref{fests}.
The term $X^2/dh$ contributes
\[\ll X^2\sum_{n\ll X^2}\tau(n)\tau_4(n)/n\ll X^2\LL^8\ll XY\LL^{-1}\ll XY\LL^{\rho-3}\]
by Lemma \ref{lem:tau_k}, on using the inequality
$\tau(n)\tau_4(n) \leq \tau_8(n)$,
while the term $X^3/(dh)^2$ contributes
\begin{eqnarray*}
&\ll& X^3\sum_{n> X^2Y^{-1}\LL^8}\tau(n)\tau_4(n)/n^2\\
&\ll& X^3\{X^2Y^{-1}\LL^8\}^{-1}\LL^7\\
&\ll& XY\LL^{-1},
\end{eqnarray*}
via a similar application of Lemma \ref{lem:tau_k}.  This is enough, since
$\rho\ge 2$.
\end{proof}

Lemmas \ref{L1} and \ref{L2} now show that
\[S_1(X,Y)=\sum_{\substack{X^2Y^{-1}\LL^{-2}< d\ll X^2Y^{-1}\\
    h\le\LL^{10}}}\mu(h)S(d,dh)+O(XY\LL^{\rho-3})\]
provided that $X\le Y\LL^{-9}$. In view of (\ref{25a}) we
now seek an asymptotic formula for
$S(\sL(a,m);d)$. We begin by writing
\[S(\sL(a,m);d)=\sum_{r=1}^{\infty}\mu(r)
\sum_{\substack{\x\in\sL(a,m)\\ r\mid\x}}
W_3(X^{-1}\x)W_2\left(Y^{-1}d^{-1}(-Q_1(\x),Q_0(\x))\right).\]
Terms with $\x=\mathbf{0}$ or $||\x||_\infty>\tfrac52 X$ make no contribution,
so we can restrict the $r$-sum to the range $r\ll X$.
The conditions $\x\in\sL(a,m)$ and $r|\x$ define a 3-dimensional
lattice which we denote by $\sL_r(a,m)$. The inner sum is 
$S_0(\sL_r(a,m);d)$ in the notation (\ref{S0def}), so that
\begin{eqnarray*}
S_1(X,Y;d)&=&\sum_{\substack{X^2Y^{-1}\LL^{-2}< d\ll X^2Y^{-1} \\
    h\le\LL^{10}}}\mu(h)\;\;\sum_{a \in \mathcal{M}(\Z/dh\Z)}
\sum_{r\ll X}\mu(r)S_0(\sL_r(a,dh);d)\\
&&\hspace{3cm}{}+O(XY\LL^{\rho-3})
\end{eqnarray*}
for the same range of $X$ as before. We proceed to show that large values
of $r$ make a small overall contribution.

\begin{lemma}\label{L3}
We have
  \[\sum_{\substack{X^2Y^{-1}\LL^{-2}< d\ll X^2Y^{-1} \\ h\le\LL^{10}}}\;\;
  \sum_{a \in \mathcal{M}(\Z/dh\Z)}\;\sum_{\LL^{20}\le r\ll X}S_0(\sL_r(a,dh);d)\ll
  XY\LL^{\rho-3}\]
    when $X\le Y\LL^{-10}$.
\end{lemma}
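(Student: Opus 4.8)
The plan is to estimate $S_0(\sL_r(a,dh);d)$ by elementary lattice point counting and to sum the bound over $r$, $a$, $d$ and $h$; the constraint $r\ge\LL^{20}$ forces $r$ to be large and so makes every term small. Fix $d,h$ in the stated ranges and $a\in\mathcal{M}(\Z/dh\Z)$, write $m=dh$ and $g=\gcd(r,m)$. First I would record the geometry of $\sL_r(a,m)=r\Z^3\cap\sL(a,m)$: since $\sL_r(a,m)\subseteq r\Z^3$ every non-zero vector has $\|\cdot\|_\infty\ge r$, so $\lambda_1\ge r$; since $\sL(a,m)\supseteq m\Z^3$ we have $\sL_r(a,m)\supseteq\lcm(r,m)\Z^3$, so $\lambda_3\le\lcm(r,m)=rm/g$; and, writing $\sL(a,m)=m\Z^3+\Z a$, the index identity $[\Z^3:\sL_r(a,m)]=r^3m^2/[\Z^3:r\Z^3+\sL(a,m)]$ together with $r\Z^3+\sL(a,m)\supseteq g\Z^3+\Z a$ and $[\Z^3:g\Z^3+\Z a]=g^2$ (as $a$ is primitive) gives $\det\sL_r(a,m)=r^3m^2/g^2$.

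Next I would invoke the version of Lemma \ref{rl0} for $S_0$ in place of $S$: running the proof of Lemma \ref{rl0}, but replacing the bound $O(1)$ for the number of lattice points in the box when $\lambda_2\gg X$ by $O(1+X/\lambda_1)$ (no primitivity now being imposed), gives
\[S_0(\sL;d)\ll 1+\frac{X}{\lambda_1}+\frac{X^2\lambda_3}{\det\sL}+\frac{X^3}{\det\sL}\]
for any $3$-dimensional lattice $\sL$. Since $S_0(\sL_r(a,m);d)$ vanishes unless $r\le\tfrac52X$ (every contributing $\x$ is a non-zero multiple of $r$), feeding in the three quantities above yields, for such $r$,
\[S_0(\sL_r(a,m);d)\ll\frac{X}{r}+\frac{X^2g}{r^2m}+\frac{X^3g^2}{r^3m^2}.\]
Summing over $r\ge\LL^{20}$, the first term contributes $O(X\log X)=O(X\LL)$, while for the other two I would use the elementary estimates $\sum_{r\ge R}\gcd(r,m)/r^2\ll\tau(m)/R$ and $\sum_{r\ge R}\gcd(r,m)^2/r^3\ll\tau(m)/R$ (split $r=gr_1$ and sum over $g\mid m$), which with $R=\LL^{20}$ give contributions $O(X^2\tau(m)m^{-1}\LL^{-20})$ and $O(X^3\tau(m)m^{-2}\LL^{-20})$.

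Finally I would sum over $a\in\mathcal{M}(\Z/m\Z)$, using $f_M(m)\ll\tau_4(m)$ from Lemma \ref{fests}, and then over $d$ and $h$, bounding the divisor functions submultiplicatively (for instance $\tau_4(dh)\tau(dh)\le\tau_8(d)\tau_8(h)$) and applying Lemma \ref{lem:tau_k}; here the lower cutoff $d>X^2Y^{-1}\LL^{-2}$ is needed to control the $X^3/(r^3m^2)$ contribution. The three totals come out as $O(X^3Y^{-1}\LL^{15})$, $O(X^2\LL^{-11})$ and $O(XY\LL^{-11})$: the last two are $O(XY\LL^{\rho-3})$ at once since $X\le Y$ and $\rho\ge2$, and the first is $O(XY\LL^{\rho-3})$ because $X\le Y\LL^{-10}$ forces $Y^2\ge X^2\LL^{20}$ — this is the only place the hypothesis is used. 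The bookkeeping is otherwise routine; the one point requiring care is that the true determinant $\det\sL_r(a,m)=r^3m^2/g^2$ is smaller than the naive $r^3m^2$, so the factor $g=\gcd(r,m)$ must be carried through both $\det\sL_r(a,m)$ and $\lambda_3(\sL_r(a,m))$, and it is the decay of $\sum_{r\ge\LL^{20}}\gcd(r,m)^{\nu}/r^{\nu+1}$ ($\nu=1,2$) that makes the $r$-sum converge with room to spare.
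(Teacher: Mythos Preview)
Your proof is correct and follows essentially the same route as the paper's: both start from the bound $S_0(\sL;d)\ll 1+X/\lambda_1+X^2\lambda_3/\det\sL+X^3/\det\sL$, feed in $\lambda_1\ge r$, $\lambda_3\le\lcm(r,dh)$ and $\det\sL_r(a,dh)=r\lcm(r,dh)^2$ (your $r^3m^2/g^2$ is the same number), and exploit $r\ge\LL^{20}$ to make the $r$-sum small before summing over $a,d,h$ with divisor-function bounds. The only differences are bookkeeping: the paper loosens the third term to $X^3/(\LL^{20}\,dh\,\lcm(dh,r))$ and uses $\sum_{r\ll X}\lcm(dh,r)^{-1}\ll(dh)^{-1}\tau(dh)\LL$, whereas you sum $\sum_{r\ge R}\gcd(r,m)^2/r^3\ll\tau(m)/R$ directly (saving an inessential factor of $\LL$), and the paper groups $m=dh$ while you keep $d$ and $h$ separate.
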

\begin{proof}
We begin the proof by noting that if $\lambda_j$ are the successive minima of
$\sL_r(a,dh)$ then
\begin{eqnarray*}
  S_0(\sL_r(a,dh);d)&\ll& \prod_{j=1}^3(1+X/\lambda_j)\\
  &\ll&
1+X/\lambda_1+X^2/\lambda_1\lambda_2+X^3/\lambda_1\lambda_2\lambda_3\\
&\ll &1+X/\lambda_1+X^2\lambda_3/\det(\sL_r(a,dh))+X^3/\det(\sL_r(a,dh)).
\end{eqnarray*}
We now require bounds for $\lambda_1$ and $\lambda_3$.
Since $\sL(a,dh)$ arises from the condition
$\x\equiv\lambda\mathbf{a}$(mod $dh)$ with $\gcd(\mathbf{a},dh)=1$, it
is clear that the successive minima of $\sL_r(a,dh)$ will have
$\lambda_1\ge r$ and $\lambda_3\le\lcm(dh,r)$. Moreover we will have
$\det(\sL_r(a,dh))=r\lcm(dh,r)^2$. Since $\LL^{20}\le r\ll X$ it follows that
\begin{eqnarray*}
S_0(\sL_r(a,dh))&\ll& \frac{X}{r}+\frac{X^2}{r\lcm(dh,r)}+\frac{X^3}{r\lcm(dh,r)^2}\\
&\ll& \frac{X}{r}+\frac{X^2}{rdh}+\frac{X^3}{\LL^{20}dh\lcm(dh,r)}.
\end{eqnarray*}
On summing over the range $\LL^{20}\le r\ll X$ the first and second 
terms produce $O(X\LL)+O(X^2 \LL/(dh))$.
Moreover
\[\sum_{r\ll X}\frac{1}{\lcm(dh,r)}\le\sum_{k\mid dh}
\sum_{\substack{r\ll X\\k\mid r}}
\frac{k}{rdh}=\sum_{k\mid dh}\sum_{s\ll X/k}\frac{1}{sdh}
\ll (dh)^{-1}\tau(dh)\LL.\]
Using Lemma \ref{lem:tau_k} and the bound $f_M(dh)\le\tau_4(dh)$ from
Lemma \ref{fests} we deduce that
\begin{eqnarray*}
 \lefteqn{\sum_{\substack{X^2Y^{-1}\LL^{-2}< d\ll X^2Y^{-1} \\ h\le\LL^{10}}}
\sum_{a \in \mathcal{M}(\Z/dh\Z)}\;\;\sum_{r\ge \LL^{20}}S_0(\sL_r(a,dh);d)}\\
  &\ll&
  \sum_{X^2Y^{-1}\LL^{-2}< m\ll X^2Y^{-1}\LL^{10}}\tau(m)\tau_4(m)
 \left \{X+\frac{X^2}{m}+\frac{X^3\tau(m)}{\LL^{20} m^2}\right\}\LL\\
 &  \ll& X^3Y^{-1}\LL^{18}+X^2\LL^9+XY\LL^{-1}.
\end{eqnarray*}
The lemma then follows.
\end{proof}

Lemma \ref{L3} allows us to conclude that
\begin{eqnarray}\label{xx}
S_1(X,Y)&=&
\sum_{\substack{X^2Y^{-1}\LL^{-2}< d\ll X^2Y^{-1} \\ h\le\LL^{10}}}
\mu(h)\sum_{a \in \mathcal{M}(\Z/dh\Z)}\;\sum_{r\le\LL^{20}}\mu(r)S_0(\sL_r(a,dh);d)
\nonumber\\
&&\hspace{3cm}{}+O(XY\LL^{\rho-3})
\end{eqnarray}
for $X\le Y\LL^{-10}$. We next use Lemma \ref{rl1} to derive the
following lemma.

\begin{lemma}\label{rl2}
  We have
  \[S_1(X,Y)=
  \sum_{\substack{X^2Y^{-1}\LL^{-2}< d\le X^2Y^{-1} \\ h\le\LL^{10}, \, r\le\LL^{20}}}
\mu(h)\mu(r)\frac{I(d)f_M(dh)}{r\lcm(dh,r)^2}+O(XY\LL^{\rho-3})\]
provided that $X\le Y\LL^{-31}$.
\end{lemma}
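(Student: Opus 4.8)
The plan is to start from \eqref{xx}, which already expresses $S_1(X,Y)$ (for $X\le Y\LL^{-10}$, and up to the permitted error $O(XY\LL^{\rho-3})$) as the triple sum
\[\sum_{\substack{X^2Y^{-1}\LL^{-2}<d\ll X^2Y^{-1}\\ h\le\LL^{10}}}\mu(h)\sum_{a\in\mathcal{M}(\Z/dh\Z)}\sum_{r\le\LL^{20}}\mu(r)\,S_0(\sL_r(a,dh);d),\]
and then to replace each inner quantity $S_0(\sL_r(a,dh);d)$ by its main term from Lemma \ref{rl1}. Recall from the proof of Lemma \ref{L3} that $\det(\sL_r(a,dh))=r\lcm(dh,r)^2$, independently of $a$, and that the third successive minimum of $\sL_r(a,dh)$ satisfies $\lambda_3\le\lcm(dh,r)$. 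Since $h\le\LL^{10}$ and $r\le\LL^{20}$ we have $\lcm(dh,r)\le dhr\le d\LL^{30}$, so the ratio controlling the error in Lemma \ref{rl1} obeys
\[\frac{X\lambda_3}{Yd}\le\frac{X\LL^{30}}{Y}\le\LL^{-1}\]
as soon as $X\le Y\LL^{-31}$. The essential point is that the factor $d$ cancels here, which is precisely what makes the error term of Lemma \ref{rl1} controllable uniformly over the range of $d$.

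Fixing any integer $k\ge 4$ --- we may take $k=40$ --- Lemma \ref{rl1} then yields, throughout the range of summation,
\[S_0(\sL_r(a,dh);d)=\frac{I(d)}{r\lcm(dh,r)^2}+O\!\left(\frac{(Yd)^{3/2}}{r\lcm(dh,r)^2}\,\LL^{-k}\right),\]
both sides vanishing once $d$ exceeds the relevant constant multiple of $X^2Y^{-1}$. The main term is read off immediately: the quantity $I(d)/(r\lcm(dh,r)^2)$ does not depend on $a$ and $\card\mathcal{M}(\Z/dh\Z)=f_M(dh)$, so summing over $a$ produces $\mu(h)\mu(r)\,I(d)f_M(dh)/(r\lcm(dh,r)^2)$, which is exactly the sum claimed in the statement (the upper cut-off $d\le X^2Y^{-1}$ being read, as in the remark after Lemma \ref{rl1}, as the constant multiple beyond which $I(d)$ vanishes).

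It then remains to check that the accumulated error is $O(XY\LL^{\rho-3})$. Using $\lcm(dh,r)\ge dh\ge d$ we bound $(Yd)^{3/2}/(r\lcm(dh,r)^2)\le Y^{3/2}d^{-1/2}$; summing over the $f_M(dh)\le\tau_4(dh)\le\tau_4(d)\tau_4(h)$ points $a$, over $r\le\LL^{20}$, over $h\le\LL^{10}$ and finally over $d\le cX^2Y^{-1}$, and invoking Lemma \ref{lem:tau_k} for the $d$- and $h$-sums, the total error is
\[\ll\LL^{-k}\,Y^{3/2}\Big(\sum_{r\le\LL^{20}}1\Big)\Big(\sum_{h\le\LL^{10}}\tau_4(h)\Big)\sum_{d\le cX^2Y^{-1}}\frac{\tau_4(d)}{d^{1/2}}\ll\LL^{34-k}\,Y^{3/2}(X^2Y^{-1})^{1/2}=\LL^{34-k}\,XY,\]
recalling $\LL=3+\log XY$. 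With $k=40$ this is $\ll XY\LL^{-6}\ll XY\LL^{\rho-3}$, since $\rho\ge 2$, which completes the proof.

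I expect the only step requiring genuine care to be this final bookkeeping: keeping an honest account of every power of $\LL$ produced by the sums over $d$, $h$, $r$ and $a$, and checking that the fixed exponent $k$ chosen in Lemma \ref{rl1} beats them all. Everything else reduces to the identity $\det(\sL_r(a,dh))=r\lcm(dh,r)^2$, the bound $\lambda_3\le\lcm(dh,r)\le d\LL^{30}$, and the standard estimates of Lemmas \ref{lem:tau_k} and \ref{fests}.
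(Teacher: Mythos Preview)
Your proof is correct and follows the same approach as the paper: start from \eqref{xx}, apply Lemma \ref{rl1} to each $S_0(\sL_r(a,dh);d)$ using $\det(\sL_r(a,dh))=r\lcm(dh,r)^2$ and $\lambda_3\le\lcm(dh,r)\le dhr$, then sum the error terms. The only difference is bookkeeping efficiency: the paper retains the factors $h^{-2}r^{-3}$ that arise from writing the error as $Y^{3/2}d^{-1/2}h^{-2}r^{-3}(XY^{-1}hr)^k$, so that the $h$- and $r$-sums converge and $k=4$ already suffices, whereas you discard these and compensate by taking $k=40$; either way the conclusion is the same.
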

\begin{proof}
Lemma \ref{rl1} produces %R
\[S_0(\sL_r(a,dh);d)=\frac{I(d)}{r\lcm(dh,r)^2}
+O_k\left(\frac{(Yd)^{3/2}}{r\lcm(dh,r)^2}\left(
\frac{X\lcm(dh,r)}{Yd}\right)^k\right),\]
and we immediately obtain the main term in Lemma \ref{rl2}.
Since $k\ge 4$ and $\lcm(dh,r)\le dhr$ the error term above is 
\[\ll_k Y^{3/2}d^{-1/2}h^{-2}r^{-3}\left(XY^{-1}hr\right)^k.\]
However our assumptions on $X,Y,h$ and $r$ yield
$XY^{-1}hr\le\LL^{-1}$, whence
\[S_0(\sL_r(a,dh);d)=\frac{I(d)}{r\lcm(dh,r)^2}
+O_k\left(Y^{3/2}d^{-1/2}h^{-2}r^{-3}\LL^{-k}\right).\]
Since $f_M(dh)\ll\tau_4(dh)\le\tau_4(d)\tau_4(h)$ by Lemma \ref{fests},
we see from Lemma~\ref{lem:tau_k} that the 
contribution of the error term to (\ref{xx}) is
\[\ll_k \sum_{X^2Y^{-1}\LL^{-2}< d\ll X^2Y^{-1}}
  Y^{3/2}d^{-1/2}\LL^{-k}\tau_4(d)  \ll_k XY\LL^{3-k}.\]
This is $O(XY\LL^{-1})$ if we choose $k=4$, and the lemma follows.
\end{proof}
\bigskip

We must now estimate the error incurred on extending the
summations over $r,h$ and $d$ to run over all positive integers.  We 
begin this process by dealing with $h$ and $r$.
\begin{lemma}\label{hr}
  We have
\[S_1(X,Y)=\sum_{X^2Y^{-1}\LL^{-2}< d\ll X^2Y^{-1} }\;\;\sum_{h,r=1}^\infty
\mu(h)\mu(r)\frac{I(d)f_M(dh)}{r\lcm(dh,r)^2}+O(XY\LL^{\rho-3})\]
provided that $X\le Y\LL^{-31}$.
\end{lemma}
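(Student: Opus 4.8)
The strategy is to pass from the truncated formula of Lemma~\ref{rl2} to the asserted one by extending the summations over $h$ and $r$ to all of $\N$, and to show that the resulting tail is negligible. Since $I(d)$ depends only on $d$, the difference of the two right-hand sides equals $\sum_{d}I(d)E(d)$, where $d$ runs over $X^2Y^{-1}\LL^{-2}<d\ll X^2Y^{-1}$ (outside this range $I(d)=0$) and
\[E(d)=\sum_{h>\LL^{10}}\sum_{r\ge1}\mu(h)\mu(r)\frac{f_M(dh)}{r\lcm(dh,r)^2}
+\sum_{h\le\LL^{10}}\sum_{r>\LL^{20}}\mu(h)\mu(r)\frac{f_M(dh)}{r\lcm(dh,r)^2}.\]
The double series $\sum_{h,r}f_M(dh)/(r\lcm(dh,r)^2)$ will be seen to converge absolutely, so this rearrangement is valid, and it suffices to prove $\sum_d I(d)\,|E(d)|\ll XY\LL^{\rho-3}$.

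The heart of the matter is an estimate for the inner $r$-sum. Writing $\lcm(n,r)=nr/\gcd(n,r)$ gives the identity $\tfrac1{r\lcm(n,r)^2}=\gcd(n,r)^2/(n^2r^3)$; grouping $r$ according to $g=\gcd(n,r)\mid n$, using $\sum_{s>R/g}s^{-3}\ll(g/R)^2$ when $g\le R$, $\sum_{s\ge1}s^{-3}\ll1$ in general, and $\sum_{g\mid n,\,g\le R}g\le R\,\tau(n)$, one obtains
\[\sum_{r\ge1}\frac1{r\lcm(n,r)^2}\ll\frac{\tau(n)}{n^2},\qquad
\sum_{r>R}\frac1{r\lcm(n,r)^2}\ll\frac{\tau(n)}{n^2R}\qquad(R\ge1).\]
The essential feature is that the second bound keeps the full factor $n^{-2}$; a cruder estimate such as $\lcm(n,r)^2\ge nr$ would lose a factor of $n$ here, which (see below) would be fatal.

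Applying these with $n=dh$ and $R=\LL^{20}$, dropping the Möbius factors, and using $f_M(dh)\le\tau_4(dh)$ (Lemma~\ref{fests}), $\tau_4(dh)\tau(dh)\le\tau_8(dh)$ (Lemma~\ref{lem:tau_k}) and $\tau_8(dh)\le\tau_8(d)\tau_8(h)$, one gets
\[|E(d)|\ll\frac{\tau_8(d)}{d^2}\Bigl(\sum_{h>\LL^{10}}\frac{\tau_8(h)}{h^2}+\LL^{-20}\sum_{h\ge1}\frac{\tau_8(h)}{h^2}\Bigr)\ll\frac{\tau_8(d)}{d^2}\LL^{-9},\]
since $\sum_{h>H}\tau_8(h)h^{-2}\ll H^{-1}(\log H)^{7}$ and $\sum_{h\ge1}\tau_8(h)h^{-2}\ll1$ by Lemma~\ref{lem:tau_k} (this also justifies the absolute convergence claimed above). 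Finally, inserting $I(d)\ll(Yd)^{3/2}$ from \eqref{pdc} and using again that $I(d)$ vanishes for $d\gg X^2Y^{-1}$, Lemma~\ref{lem:tau_k} gives
\[\sum_d I(d)\,|E(d)|\ll\LL^{-9}Y^{3/2}\sum_{d\ll X^2Y^{-1}}\frac{\tau_8(d)}{d^{1/2}}\ll\LL^{-9}Y^{3/2}(X^2Y^{-1})^{1/2}\LL^{7}=XY\LL^{-2},\]
which is $\ll XY\LL^{\rho-3}$ since $\rho\ge2$.

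The delicate point — and the reason for the care over the $r$-sum — is keeping the factor $d^{-2}$ in the bound for $|E(d)|$. Indeed $\sum_{d\ll X^2Y^{-1}}I(d)\,d^{-1}$ has order $X^3$, which exceeds $XY$ by the factor $X^2/Y$, and this factor is unbounded in the admissible range $X\LL^{A}\le Y\le X^2\LL^{-A}$; so a bound for $|E(d)|$ furnishing only $d^{-1}$ (with any fixed power of $\LL$ to spare) would be useless, whereas $\sum_{d\ll X^2Y^{-1}}I(d)\,d^{-2}$ has the correct order $XY$.
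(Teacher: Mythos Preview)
Your proof is correct and follows essentially the same approach as the paper: both extend the $h$- and $r$-sums by exploiting the key estimate $\sum_{r>R}1/(r\lcm(n,r)^2)\ll\tau(n)/(n^2R)$, obtained by grouping according to $\gcd(n,r)$. The paper treats the two extensions sequentially (first $h$, using the cruder bound $\lcm(dh,r)\ge dh$; then $r$, using the $\gcd$-decomposition), whereas you package both into a single estimate for $|E(d)|$; this is a minor organisational difference, and your emphasis on preserving the factor $d^{-2}$ correctly identifies the one genuinely delicate point.
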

\begin{proof}
According to Lemma \ref{rl1} we have
$I(d)\ll (Yd)^{3/2}$. Moreover we will have
$f_M(dh)\ll\tau_4(dh)\le\tau_4(d)\tau_4(h)$ as before. Thus extending the
$h$-summation gives an error
\[\ll Y^{3/2}\sum_{\substack{X^2Y^{-1}\LL^{-2}\le d\ll X^2Y^{-1} \\ r\le\LL^{20}}}
\frac{\tau_4(d)d^{3/2}}{r}\sum_{h>\LL^{10}}\frac{\tau_4(h)}{\lcm(dh,r)^2}.\]
If we use the lower bound $\lcm(dh,r)\ge dh$ we find that
\[\sum_{h>\LL^{10}}\frac{\tau_4(h)}{\lcm(dh,r)^2}\ll d^{-2}\LL^{-9},\]
by Lemma \ref{lem:tau_k}.  We have
\[\sum_{r\le\LL^{20}}r^{-1}\ll\LL,\]
and 
\[\sum_{d\ll X^2Y^{-1}}\tau_4(d)d^{-1/2}\ll (X^2Y^{-1})^{1/2}\LL^3,\]
so that the overall error is $O(XY\LL^{-5})$. Similarly, if we now
extend the $r$-summation, the error will be
\[\ll Y^{3/2}\sum_{X^2Y^{-1}\LL^{-2}< d\ll X^2Y^{-1}}
\tau_4(d)d^{3/2}\sum_{h=1}^{\infty}\tau_4(h)
\sum_{r>R}\frac{1}{r\lcm(dh,r)^2},\]
where, for typographic convenience we have temporarily set $R=\LL^{20}$.  If we write
$\gcd(dh,r)=k$ we see that 
\begin{eqnarray*}
\sum_{r>R}\frac{1}{r\lcm(dh,r)^2}&\le&\sum_{k\mid dh}k^2d^{-2}h^{-2}
\sum_{\substack{r>R\\ k\mid r}}^{\infty}r^{-3}\\
&\ll&\sum_{k\mid dh}k^2d^{-2}h^{-2}\min(k^{-3},k^{-1}R^{-2})\\
&\ll&\sum_{k\mid dh}k^2d^{-2}h^{-2}k^{-2}R^{-1}\\
&=&d^{-2}h^{-2}\tau(dh)R^{-1}.
\end{eqnarray*}
The overall error resulting from
extending the $r$-summation is therefore
\[\ll Y^{3/2}\sum_{X^2Y^{-1}\LL^{-2}< d\ll X^2Y^{-1}}
\tau_4(d)d^{-1/2}\sum_{h=1}^{\infty}\tau_4(h)h^{-2}\tau(dh)R^{-1}.\]
Since $\tau(dh)\le\tau(d)\tau(h)$ this is
\begin{eqnarray*}
&\ll& Y^{3/2}R^{-1}\sum_{X^2Y^{-1}\LL^{-2}\le d\ll X^2Y^{-1}}
\tau_4(d)\tau(d)d^{-1/2}\\
&\ll& Y^{3/2}R^{-1}(X^2Y^{-1})^{1/2}\LL^7
\end{eqnarray*}
by a further application of Lemma \ref{lem:tau_k}.
On recalling that $R=\LL^{20}$ we now see that this is
$O(XY\LL^{-13})$, which is satisfactory for the lemma.  
\end{proof}

\subsection{Proposition \ref{P1*} --- Evaluating the main term}

In this section we complete the proof of Proposition \ref{P1*}. 
Continuing from Lemma \ref{hr}, the triple sum on the right takes the form
\begin{equation}\label{f0A}
  \sum_{X^2Y^{-1}\LL^{-2}< d\ll X^2Y^{-1}}f_0(d)I(d),
  \end{equation}
with
\[f_0(d)=\sum_{h,r=1}^\infty\mu(h)\mu(r)f_M(dh)r^{-1}\lcm(dh,r)^{-2}.\]
It will be convenient to write the range of summation above as 
$D_0< d\le D_1$ with $D_0=X^2Y^{-1}\LL^{-2}$ and
$D_1$ of order $X^2Y^{-1}$, chosen so that
$I(d)$ vanishes for $d> D_1$. 

We proceed to split $h$ and $r$ as $h=h_1h_2$ and $r=r_1r_2$ with
$\gcd(h_1r_1,d)=1$ and $h_2r_2\mid d^{\infty}$. Since only square-free
values of $h$ and $r$ can make non-zero contributions this
latter condition can be written as the requirement that $h_2\mid d$
and $r_2\mid d$.  The sum for $f_0(d)$ now factors as $f_1(d)f_2(d)$, with
\[f_1(d)=\sum_{\substack{h_1,r_1=1\\ \gcd(h_1r_1,d)=1}}^\infty\mu(h_1)\mu(r_1)
  f_M(h_1)r_1^{-1}\lcm(h_1,r_1)^{-2}\]
and
  \[f_2(d)=\sum_{h_2\mid d}\sum_{r_2\mid d}\mu(h_2)\mu(r_2)f_M(dh_2)r_2^{-1}
\lcm(dh_2,r_2)^{-2} .\]
The sum $f_1$ is a product of local factors $f_3(p)$, say, for primes
$p\nmid d$, with
\[f_3(p)=1-f_M(p)p^{-2}-p^{-3}+f_M(p)p^{-3}=
(1-p^{-1})(1+p^{-1}+p^{-2}-f_M(p)p^{-2}).\]
Looking back to the definition of $f_M$ we note that $f_3(p)$ is
strictly positive for all primes $p$ outside a certain finite set. We
will write $P$ for the product of the primes in this bad set, so that
$f_3(p)>0$ for $p\nmid P$.
We also see that $f_2$ is a multiplicative function given by
\begin{eqnarray*}
  f_2(p^e)
&=&f_M(p^e)p^{-2e}-f_M(p^{e+1})p^{-2e-2}-f_M(p^e)p^{-2e-1}+f_M(p^{e+1})p^{-2e-3}\\
  &=&p^{-2e}(1-p^{-1})(f_M(p^e)-f_M(p^{e+1})p^{-2})
  \end{eqnarray*}
for $e\ge 1$. We therefore conclude that $f_0(d)$ vanishes unless $P|d$, 
in which case $f_0(d)=\kappa_0 f_4(d)d^{-2}$,
where
\[\kappa_0=\prod_{p\nmid P}f_3(p)\]
and $f_4$ is a multiplicative function defined by taking
\[f_4(p^e)=\frac{p^{2e}f_2(p^e)}{f_3(p)}=
\frac{f_M(p^e)-f_M(p^{e+1})p^{-2}}{1+p^{-1}+p^{-2}-f_M(p)p^{-2}},\quad
p\nmid P,\, e\ge 1\]
and
\[f_4(p^e)=p^{2e}f_2(p^e)=(1-p^{-1})(f_M(p^e)-f_M(p^{e+1})p^{-2}),\quad p\mid P,\, e\ge 1.\]
We proceed to investigate the asymptotic behaviour of the sum
\[\Sigma(D)=\sum_{\substack{d\le D\\ P|d}}f_4(d)\]
as $D\to\infty$, using the Perron formula. We begin by examining the function
\[F_4(s)=\sum_{\substack{d=1\\ P|d}}^{\infty}f_4(d)d^{-s}=
\prod_{p|P}\left\{\sum_{e=1}^\infty f_4(p^e)p^{-s}\right\}
\prod_{p\nmid P}\left\{\sum_{e=0}^\infty f_4(p^e)p^{-s}\right\}.\]
The Euler factors of this take the approximate shape $1+f_M(p)p^{-s}+\ldots$ for large 
primes $p$. In the light of Lemma \ref{lem:f1}, we therefore compare $F_4(s)$ with
$\prod_{i=1}^{\rho-1}\zeta_{K_i}(s)$. We have
\[F_4(s)=\prod_{i=1}^{\rho-1}\zeta_{K_i}(s)\prod_p G_p(s),\]
where
\[G_p(s)=1+O(p^{-\sigma-1})+O(p^{-2\sigma}),\;\;\;(p\nmid P,\;\;\sigma>0).\]
When $p\mid P$ the corresponding factor $G_p(s)$ will be holomorphic and 
bounded for $\sigma>0$. It follows that
we may write $F_4(s)=G_1(s)\prod_{i=1}^{\rho-1}\zeta_{K_i}(s)$, with a function $G_1(s)$ 
which is holomorphic and bounded for $\sigma\ge\tfrac34$. 
The standard analysis based on Perron's formula now shows that
\begin{equation}\label{asform}
  \Sigma(D)=\mathrm{Res}\left(\frac{F_4(s)D^s}{s};\, s=1\right)
  +O(D^{1-\delta})
  \end{equation}
with a constant $\delta>0$ depending only on $Q_0$ and $Q_1$.
The residue will take the shape $DP(\log D)$ for some polynomial $P$
of degree at most $\rho-2$, and the coefficient of
$(\log D)^{\rho-2}$ will be
\[\lim_{s\to 1}\frac{(s-1)^{\rho-1}F_4(s)}{(\rho-2)!}=
\lim_{s\to 1}\frac{\zeta(s)^{1-\rho}F_4(s)}{(\rho-2)!}=\frac{\kappa_1}{(\rho-2)!},\]
say.  Thus (\ref{asform}) has leading term 
$\kappa_1 D(\log D)^{\rho-2}/(\rho-2)!$.
By a similar argument to that used in the proof of Lemma \ref{earlier}
we find that
\[\kappa_1=\prod_{p|P}\left(1-\frac{1}{p}\right)^{\rho-1}
\left\{\sum_{e=1}^{\infty}f_4(p^e)p^{-e}\right\}
\prod_{p\nmid P}\left(1-\frac{1}{p}\right)^{\rho-1}
\left\{1+\sum_{e=1}^{\infty}f_4(p^e)p^{-e}\right\},\]
where the final product is only conditionally convergent. 

We are now ready to estimate the sum (\ref{f0A}), using partial summation.
We have
\begin{eqnarray*}
\sum_{D_0<d\le D_1}f_0(d)I(d)
&=&\kappa_0\sum_{\substack{D_0< d\le D_1\\ P|d}}f_4(d)d^{-2}I(d)\\
&=&\kappa_0
\left(\left[\Sigma(t)t^{-2}I(t)\right]_{D_0}^{D_1}-
\int_{D_0}^{D_1} \Sigma(t)\frac{\d}{\d t}\{t^{-2}I(t)\}\d t\right).
\end{eqnarray*}
The asymptotic formula (\ref{asform}) produces a main term
\[T=\kappa_0
\int_{D_0}^{D_1} \frac{\d}{\d t}\{tP(\log t)\}t^{-2}I(t)\d t\]
and an error term
\[E\ll  D_1^{-1-\delta}I(D_1)+D_0^{-1-\delta}I(D_0)+
\int_{D_0}^{D_1}t^{1-\delta}\left|\frac{\d}{\d t}\{t^{-2}I(t)\}\right|\d t.\]

To estimate the error term $E$ we first note that $I(D_1)=0$ and that
\[I(D_0)D_0^{-1-\delta}\ll (YD_0)^{3/2}D_0^{-1-\delta}=XY\LL^{-1}D_0^{-\delta},\]
which is satisfactory for Proposition \ref{P1*}. We also observe that
\[\frac{\d}{\d t}\{t^{-2}I(t)\}\ll \left(t^{-3}+X^2Y^{-1}t^{-4}\right)(Yt)^{3/2},\]
by the same argument that produced (\ref{pdc}).  Hence
\[\int_{D_0}^{D_1}t^{1-\delta}\left|\frac{\d}{\d t}\{t^{-2}I(t)\}\right|\d t
\ll XY\LL D_0^{-\delta}.\]
This too is satisfactory for Proposition \ref{P1*}, provided that
$Y\le X^2\LL^{-A}$ with a suitably large constant $A$.
For the main term $T$ we observe that
\[\int_{D_0}^{D_1}(\log t)^kt^{-2}I(t)\d t\ll XY\LL^k,\]
for any fixed $k\ge 0$, on using the bound (\ref{pdc}).
We may use this to estimate the contribution
from powers of $\log t$ of degree strictly less than $\rho-2$. Moreover, 
\[\int_1^{D_0}(\log t)^{\rho-2}t^{-2}I(t)\d t\ll 
Y^{3/2}D_0^{1/2}\LL^{\rho-2}\ll XY\LL^{\rho-3},\]
whence
\[T=\frac{\kappa_0\kappa_1}{(\rho-2)!}\int_1^{\infty}(\log t)^{\rho-2}t^{-2}I(t)\d t
+O(XY\LL^{\rho-3}).\]
To complete the proof of Proposition \ref{P1*} it now suffices to observe that
\[  \kappa_0\kappa_1=\prod_p\varpi_p\left(1-\frac{1}{p}\right)^{\rho}=\prod_p\tau_p
=\mathfrak{S}_S\]
with
\begin{eqnarray*}
\varpi_p&=&1+p^{-1}+p^{-2}-f_M(p)p^{-2}+\sum_{e=1}^{\infty}
\{f_M(p^e)-f_M(p^{e+1})p^{-2}\}p^{-e}\\\
&=&1+p^{-1}+p^{-2}+(1-p^{-1})\sum_{e=1}^{\infty}f_M(p^e)p^{-e},
\end{eqnarray*}
as in Lemma \ref{tl}.
\qed

\subsection{Proof of Theorem \ref{extra}}\label{pt13}

In this section we will complete the proof of Theorem
\ref{extra}. Before doing this we pause to take stock of what has
been achieved so far, and what remains to be done. 
We have the universal upper bound
\[S(X,Y)\ll XY\LL^{\rho-2}\]
for sufficiently large $X,Y$, by (\ref{sxy}). The implied constant
here depends on the weights $W_2$ and $W_3$, and on the quadratic
forms $Q_0$ and $Q_1$. Moreover, Proposition \ref{P1*}
tells us that
\beql{P1a}
S(X,Y)=2\frac{\mathfrak{S}_S}{(\rho-2)!}
\int_1^{\infty}(\log t)^{\rho-2}t^{-2}I(t)\d t+O(XY\LL^{\rho-3})
\eeq
for $X\LL^A\le Y\le X^2\LL^{-A}$, where
\[I(t)=\int_{\R^3}W_3\left(X^{-1}\u\right)
W_2\left(Y^{-1}t^{-1}\big(- Q_1(\u),Q_0(\u)\big)\right)\d u_0\d u_1\d u_2.\]
In addition we should recall that
$S(X,Y)$ vanishes when $Y\gg X^2$, as noted in Proposition \ref{P1}.

It is now apparent that further investigation is required to make the real
density comparable to $\tau_\infty$, given by
the expressions (\ref{tinf}). We will also need to
handle the ranges $X^2\LL^{-A}\le Y\ll X^2$ and $X\ll Y\le
X\LL^A$ which are not covered by our asymptotic formula. Our upper
bound estimate will suffice for this. Moreover it will be 
necessary to count points satisfying
$||\x||_\infty ||\y||_\infty\le B$ using the weighted sums
$S(X,Y)$. To handle the weights $W_2$ and $W_3$ 
we will ultimately use an ``$(\ep,\delta)$-argument'', 
of a kind familiar from elementary real analysis. 

Our first step is to remove the weight $W_2$ from $S(X,Y)$ in \eqref{P1a}.

\begin{lemma}\label{rw2}
Let $\eta\in(0,\tfrac12)$ be given, and write
\[S^{(0)}(X,Y)=\sum_{\substack{\x\in\Z^3_{\mathrm{prim}}\\ (Q_0(\x),Q_1(\x))\not=(0,0)}}
\;\sum_{\substack{\y\in\Z^2_{\mathrm{prim}}\\  y_0Q_0(\x)+y_1Q_1(\x)=0\\
Y< ||\y||_\infty\le (1+\eta)Y}}
W_3(X^{-1}\x).\]
Then if $X\LL^A\le Y\le X^2\LL^{-A}$ we have
\[S^{(0)}(X,Y)=2\frac{\mathfrak{S}_S}{(\rho-2)!}I^{(0)}(X,Y)
+O_{\eta}(XY\LL^{\rho-3})+O(\eta^2 XY\LL^{\rho-2}),\]
where
\[I^{(0)}(X,Y)=\int_1^{\infty}\frac{(\log t)^{\rho-2}}{t^2}
\int_{Yt\le h_Q(\u)\le (1+\eta)Yt}W_3(X^{-1}\u)\d u_0\d u_1\d u_2\d t.\]
\end{lemma}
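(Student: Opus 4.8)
The plan is to sandwich the sharp condition on $\y$ between two smooth weights and to apply Proposition \ref{P1*} to each of the resulting weighted sums. Since the hypotheses $X\LL^{A}\le Y\le X^2\LL^{-A}$ force $X\gg 1$, the forms $Q_0,Q_1$ do not both vanish on the support of $W_3(X^{-1}\x)$, and each such $\x$ produces precisely the two primitive vectors $\pm(-Q_1(\x),Q_0(\x))/d$ with $d=\gcd(Q_0(\x),Q_1(\x))$, of common norm $h_Q(\x)/d$. Hence $S^{(0)}(X,Y)=2S_1^{(0)}(X,Y)$, where $S_1^{(0)}(X,Y)=\sum_{\x\in\Z^3_{\mathrm{prim}}}W_3(X^{-1}\x)\,\one[\,Yd<h_Q(\x)\le(1+\eta)Yd\,]$; this is exactly the sum \eqref{S1def} with the weight $W_2$ replaced by the indicator of the annulus $\{\v:1<||\v||_\infty\le 1+\eta\}$. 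Put $\delta=\eta^{4}$ and choose $C^{\infty}$, even weights $W_2^{\pm}$ supported on $\tfrac12\le||\v||_\infty\le\tfrac52$ (there is room, as $\eta<\tfrac12$) with $0\le W_2^{-}\le\one[\,1<||\v||_\infty\le1+\eta\,]\le W_2^{+}$ and $W_2^{+}-W_2^{-}$ bounded by $1$ and supported in the $\delta$-neighbourhoods of $||\v||_\infty=1$ and $||\v||_\infty=1+\eta$; such weights exist whose $C^{k}$-norms are $\ll_k\delta^{-k}$.

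Write $S_1^{W_2}$ and $I_{W_2}$ for the quantities \eqref{S1def} and \eqref{Adef} formed with a weight $W_2$. Pointwise comparison of weights gives $S_1^{W_2^{-}}(X,Y)\le S_1^{(0)}(X,Y)\le S_1^{W_2^{+}}(X,Y)$. Since the proof of Proposition \ref{P1*} uses only that $W_2$ is $C^{\infty}$, even and supported on $\tfrac12\le||\v||_\infty\le\tfrac52$, it applies verbatim with $W_2$ replaced by $W_2^{\pm}$; the range $X\LL^{A}\le Y\le X^2\LL^{-A}$ and the constant $A$ are unchanged, and only the implied constant acquires a dependence on finitely many $C^{k}$-norms of $W_2^{\pm}$, hence on $\eta$. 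This gives
\[S_1^{W_2^{\pm}}(X,Y)=\frac{\mathfrak{S}_S}{(\rho-2)!}\int_1^{\infty}(\log t)^{\rho-2}t^{-2}I_{W_2^{\pm}}(t)\,\d t+O_{\eta}\!\bigl(XY\LL^{\rho-3}\bigr).\]

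Next I would identify the main term. As the annulus indicator lies between $W_2^{-}$ and $W_2^{+}$, the integral $\int_1^{\infty}(\log t)^{\rho-2}t^{-2}I_{\one}(t)\,\d t$ lies between the two integrals above, and inserting that indicator directly into \eqref{Adef} shows $I_{\one}(t)=\int_{Yt\le h_Q(\u)\le(1+\eta)Yt}W_3(X^{-1}\u)\,\d u_0\,\d u_1\,\d u_2$, so this integral is precisely $I^{(0)}(X,Y)$. Hence, after multiplying by $2$, the lemma follows once one shows
\[\int_1^{\infty}(\log t)^{\rho-2}t^{-2}\bigl(I_{W_2^{+}}(t)-I_{W_2^{-}}(t)\bigr)\,\d t\ll\eta^{2}XY\LL^{\rho-2}\]
with an absolute implied constant. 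Since $W_2^{+}-W_2^{-}$ is bounded by $1$ and supported near $||\v||_\infty\in\{1,1+\eta\}$, the integrand at $t$ is bounded by the measure of the set of $\u$ with $||\u||_\infty\le\tfrac52X$ for which $h_Q(\u)/(Yt)$ lies within $\delta$ of $1$ or of $1+\eta$; this vanishes unless $Yt\ll X^2$, and otherwise is $\ll\delta^{1/2}(Yt)^{3/2}$ by the thin-slab estimate below. Integrating over $1\le t\ll X^2/Y$ yields $\ll\delta^{1/2}Y^{3/2}(X^2/Y)^{1/2}\LL^{\rho-2}=\delta^{1/2}XY\LL^{\rho-2}=\eta^{2}XY\LL^{\rho-2}$; the same computation shows $I^{(0)}(X,Y)<\infty$.

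The one genuinely new ingredient — and the main obstacle — is the uniform thin-slab estimate
\[\meas\{\u\in\R^3:\lambda\le h_Q(\u)\le(1+\sigma)\lambda\}\ll\sigma^{1/2}\lambda^{3/2}\qquad(\lambda>0,\ 0<\sigma\le1),\]
which Lemma \ref{QB} supplies only for $\sigma\asymp1$ (a single dyadic block) and with no saving as $\sigma\to0$, so it must be proved separately. I would establish it by revisiting the proof of Lemma \ref{QB}: after the same linear changes of variable (reducing to $Q_0(\x)=x_0^2+x_1^2$ or $x_0x_1$ and $Q_1$ diagonal) and the scaling $\u=\lambda^{1/2}\w$, which turns the slab into $\{1\le h_Q(\w)\le1+\sigma\}$, one splits according to whether $h_Q(\w)$ equals $|Q_0(\w)|$ or $|Q_1(\w)|$. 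On the first part one gains a factor $\sigma$ because $Q_0(\w)$ is then confined to an interval of length $\sigma$; on the second part, after fixing the remaining coordinates the surviving coordinate $w_2$ satisfies $w_2^{2}\in I$ for an interval $I$ of length $\ll\sigma$, hence ranges over a set of measure $\le 2|I|^{1/2}\ll\sigma^{1/2}$. (A more careful argument gives the sharper $\ll\sigma\lambda^{3/2}$, which would permit $\delta=\eta^{2}$; the weaker form already suffices.) Feeding this bound into the preceding paragraph completes the proof.
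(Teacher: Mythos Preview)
Your overall strategy matches the paper's: construct smooth minorant/majorant weights $W_2^{\pm}$ for the indicator of $1<||\v||_\infty\le 1+\eta$, apply Proposition~\ref{P1*} to each, and bound the gap between the two resulting main terms. You are also right that the exponent $A$ in Proposition~\ref{P1*} depends only on $Q_0,Q_1$, so the admissible range for $Y$ is unchanged when $W_2$ is replaced by $W_2^{\pm}$; only implied constants pick up an $\eta$-dependence.

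The genuine difference lies in how the gap $V^{(+)}-V^{(-)}$ is estimated. You fix $t$ and bound the $\u$-integral by a thin-slab volume estimate $\meas\{\lambda\le h_Q(\u)\le(1+\sigma)\lambda\}\ll\sigma^{1/2}\lambda^{3/2}$, which is a new lemma requiring its own (somewhat delicate) case analysis, and which forces you to take transition width $\delta=\eta^4$ to recover the $O(\eta^2)$ error. The paper instead takes transition width $\eta^2$ and \emph{swaps the order of integration}: for each fixed $\u$ the constraint $(1\pm\eta^2)Yt\approx h_Q(\u)$ confines $t$ to an interval of length $O(\eta^2 h_Q(\u)/Y)$, on which $(\log t)^{\rho-2}t^{-2}\ll\LL^{\rho-2}(Y/h_Q(\u))^2$, so the inner $t$-integral is $\ll\eta^2 Y\LL^{\rho-2}/h_Q(\u)$. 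One then integrates over $\u$ using only the finiteness of $\int W_3(\v)h_Q(\v)^{-1}\d\v$, which is already available from Lemma~\ref{QB}. This avoids your thin-slab lemma entirely and gives the same $O(\eta^2 XY\LL^{\rho-2})$ with less work. Your route is valid, but the Fubini trick is the shortcut you missed.
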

We should remark here that the implied constant in the first error
term above may depend on the weight $W_3$, in addition to $\eta$, and
similarly that the second error term has an implied constant that may
depend on $W_3$.  We remind the reader that $h_Q(\u)$ is shorthand 
for $||(Q_0(\u),Q_1(\u))||_\infty$, as in \S \ref{trd}.
\begin{proof}
Given our small positive $\eta$ we fix an infinitely
differentiable even weight
function $W_2^{(-)}(\u)$ supported on the set
$1\le||\u||_\infty\le 1+\eta$,
and taking values in the range $[0,1]$, with the further property that
$W_2^{(-)}(\u)=1$ whenever
\[1+\eta^2\le||\u||_\infty\le 1+\eta-\eta^2.\]
We note in particular that $W_2^{(-)}$ will be supported on the set
$\tfrac12\le||\u||_\infty\le \tfrac52$, as required.
The weight $W_2^{(-)}(\u)$ can be thought of as  a
good approximation to the
characteristic function for the set
$1\le||\u||_\infty\le 1+\eta$; and indeed it will be a minorant
for this characteristic function. The
choice of function will depend on $\eta$, so that the error term in
(\ref{P1a}), which previously had an implied constant depending on
$W_2$ and $W_3$, now has an error term depending on $\eta$ and $W_3$.
In an analogous way we can construct a corresponding majorant
$W_2^{(+)}$ for the characteristic function of the set
$1\le||\u||_\infty\le 1+\eta$, supported this time on
the set
\[1-\eta^2\le||\u||_\infty\le 1+\eta+\eta^2.\]

The formula (\ref{P1a}) now becomes
\[S^{(\pm)}(X,Y)=2\frac{\mathfrak{S}_S}{(\rho-2)!}V^{(\pm)}
+O_{\eta}(XY\LL^{\rho-3}),\]
where
\[S^{(\pm)}(X,Y)=\sum_{\substack{\x\in\Z^3_{\mathrm{prim}}\\ (Q_0(\x),Q_1(\x))\not=(0,0)}}
\;\;\sum_{\substack{\y\in\Z^2_{\mathrm{prim}}\\  y_0Q_0(\x)+y_1Q_1(\x)=0}}
W_3(X^{-1}\x)W_2^{(\pm)}(Y^{-1}\y)\]
and
\[V^{(\pm)}=\int_1^{\infty}(\log t)^{\rho-2}t^{-2}I^{(\pm)}(t)\d t,\]
with
%\dan{I changed $A(t)$ to $I(t)$. It seems to have caused some issues
%  here as $I^{(\pm)}$ has two meanings. Any idea how to fix? The
%  $I^{(\pm)}$ at least seems to only exist in this proof. I don't
%  think there is a conflict with the notation $I^{(0)}(X,Y)$ which
%  appears here and elsewhere} 
%  \rog{I've used $V^{\pm}$, having changed section 4 to free up $V$ for use here.}
\[I^{(\pm)}(t)=
\int_{\R^3}W_3(X^{-1}\u)
W_2^{(\pm)}\left(Y^{-1}t^{-1}(-Q_1(\u),Q_0(\u))\right)\d u_0\d u_1\d u_2.\]
In view of our choices for $W_2^{(\pm)}$ we have
\[S^{(-)}(X,Y)\le\sum_{\substack{\x\in\Z^3_{\mathrm{prim}}\\ (Q_0(\x),Q_1(\x))\not=(0,0)}}
\;\;\sum_{\substack{\y\in\Z^2_{\mathrm{prim}}\\  y_0Q_0(\x)+y_1Q_1(\x)=0\\
Y< ||\y||_\infty\le (1+\eta)Y}}
W_3(X^{-1}\x)\le S^{(+)}(X,Y).\]
Similarly the integral
\[I^{(0)}(X,Y)=\int_1^{\infty}(\log t)^{\rho-2}t^{-2}
\int_{Yt\le h_Q(\u)\le (1+\eta)Yt}W_3(X^{-1}\u)
\d u_0\d u_1\d u_2\d t\]
lies between $V^{(-)}$ and $V^{(+)}$, whence 
\[V^{(+)}=I^{(0)}(X,Y)+O(V^{(+)}-V^{(-)}),\]
and similarly for $V^{(-)}$.  However
\[I^{(+)}-I^{(-)}\ll
\int_1^{\infty}(\log t)^{\rho-2}t^{-2}\left(m_1(t)+m_2(t)\right)\d t,\]
where 
\[m_1(t)=\int_{(1-\eta^2)Yt\le h_Q(\u)\le(1+\eta^2)Yt}
W_3(X^{-1}\u) \d u_0\d u_1\d u_2\]
and
\[m_2(t)=
\int_{(1+\eta-\eta^2)Yt\le h_Q(\u)\le(1+\eta+\eta^2)Yt}
W_3(X^{-1}\u) \d u_0\d u_1\d u_2.\]
Thus for example we have
\[\int_1^{\infty}(\log t)^{\rho-2}t^{-2}m_1(t)\d t=
\int_{\R^3}W_3(X^{-1}\u)I^{(1)}(\u)\d u_0\d u_1\d u_2\]
where $I^{(1)}(\u)$ is the integral of $(\log t)^{\rho-2}t^{-2}$,
subject to $t\ge 1$ and
\[(1-\eta^2)Yt\le h_Q(\u)\le(1+\eta^2)Yt.\]
We deduce that the relevant values of
$t$ have order of magnitude  
$Y^{-1}h_Q(\u)$, and are restricted to an interval of length 
$O(\eta^2 Y^{-1}h_Q(\u))$. It follows that
\[I^{(1)}(\u)\ll \eta^2 Y\LL^{\rho-2}h_Q(\u)^{-1}.\]
There is a similar estimate for the integral $I^{(2)}(\u)$ arising
from $m_2(t)$ and we deduce that
\begin{eqnarray*}
I^{(+)}-I^{(-)}
&\ll&\eta^2 Y\LL^{\rho-2}\int_{\R^3}
\frac{W_3(X^{-1}\u)}{h_Q(\u)}\d u_0\d u_1\d u_2\\
&=&\eta^2 XY\LL^{\rho-2}\int_{\R^3}
\frac{W_3(\u)}{h_Q(\u)}\d u_0\d u_1\d u_2.
\end{eqnarray*}
The integral above is bounded in terms of $Q_0$ and $Q_1$, by
Lemma \ref{QB}, and we deduce that $I^{(+)}-I^{(-)}\ll \eta^2 XY\LL^{\rho-2}$.
This produces the second error term in the lemma.
\end{proof}

Our next move is to sum over appropriate values of $Y$, so as to cover
the full range for $||\y||_\infty$. 
\begin{lemma}\label{8.2}
Set
\[S^{(1)}(B;X)=\sum_{\substack{\x\in\Z^3_{\mathrm{prim}}\\ (Q_0(\x),Q_1(\x))\not=(0,0)}}
\;\; \sum_{\substack{\y\in\Z^2_{\mathrm{prim}}\\  y_0Q_0(\x)+y_1Q_1(\x)=0\\
||\x||_\infty\le ||\y||_\infty\le B/X}}W_3(X^{-1}\x).\]
Then if $B^{1/3}(\log B)^A\le X\le B^{1/2}(\log B)^{-A}$ with a suitably large
constant $A$, we have 
\begin{eqnarray}\label{8.2Est}
S^{(1)}(B;X)&=&2\frac{\mathfrak{S}_S}{(\rho-2)!}B(\log X^3B^{-1})^{\rho-2}
\int_{\R^3}\frac{W_3(\v)}{h_Q(\v)}\d v_0\d v_1\d v_2\nonumber\\
&&\hspace{1cm}\mbox{}+O_{\eta}(B(\log B)^{\rho-3})+O(\eta B(\log B)^{\rho-2})
\end{eqnarray}
for any $\eta>0$.
\end{lemma}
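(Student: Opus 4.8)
The plan is to slice the range of $\|\y\|_\infty$ into intervals of ratio $1+\eta$, evaluate the count on each slice via Lemma \ref{rw2}, and then sum the resulting integrals $I^{(0)}$, whose contributions will telescope. Concretely, put $Y_0=X(\log B)^A$ and $Y_j=(1+\eta)^jY_0$, and let $J$ be least with $(1+\eta)Y_J\ge B/X$. Choosing $A$ large enough (in particular larger than the constant appearing in Lemma \ref{rw2}, noting that $\LL=3+\log XY_j\asymp\log B$ throughout), the hypotheses $B^{1/3}(\log B)^A\le X\le B^{1/2}(\log B)^{-A}$ force $Y_j$ to lie in the range of applicability of Lemma \ref{rw2} for $0\le j\le J$ and also for $Y=B/X$. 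Since $W_3$ is supported on $\tfrac12\le\|\u\|_\infty\le\tfrac52$ and $Y_j\ge X(\log B)^A$, every pair contributing to $S^{(0)}(X,Y_j)$ automatically satisfies $\|\x\|_\infty<\|\y\|_\infty$. The half-open intervals $(Y_j,Y_{j+1}]$ partition $(Y_0,Y_{J+1}]\supseteq(X(\log B)^A,B/X]$, so $\sum_{j=0}^{J}S^{(0)}(X,Y_j)$ differs from $S^{(1)}(B;X)$ only by two contributions: (i) pairs in $S^{(1)}$ with $\|\y\|_\infty\le X(\log B)^A$, and (ii) the overshoot, pairs with $B/X<\|\y\|_\infty\le Y_{J+1}$. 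For (i) such pairs have $\tfrac12X\le\|\x\|_\infty\le\|\y\|_\infty\le X(\log B)^A$; covering the $\|\y\|_\infty$-range by $O(\log\log B)$ dyadic intervals and invoking the upper bound \eqref{sxy} bounds them by $\ll X^2(\log B)^{A+\rho-1}=O(B(\log B)^{\rho-3})$, using $X^2\le B(\log B)^{-2A}$. For (ii) the overshoot lies in $(B/X,(1+\eta)B/X]$, so its count is at most $S^{(0)}(X,B/X)$; Lemma \ref{rw2} at $Y=B/X$ together with the bound $I^{(0)}(X,B/X)\ll\eta B(\log B)^{\rho-2}$ (proved below) shows it is $O(\eta B(\log B)^{\rho-2})+O_\eta(B(\log B)^{\rho-3})$.

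Applying Lemma \ref{rw2} to each $S^{(0)}(X,Y_j)$ and summing, the error terms $O_\eta(XY_j(\log B)^{\rho-3})$ and $O(\eta^2XY_j(\log B)^{\rho-2})$ sum, via the geometric series $\sum_jY_j\ll\eta^{-1}B/X$, to $O_\eta(B(\log B)^{\rho-3})$ and $O(\eta B(\log B)^{\rho-2})$ respectively. It remains to evaluate $\sum_{j=0}^{J}I^{(0)}(X,Y_j)$. Substituting $\u=X\v$ and using $h_Q(X\v)=X^2h_Q(\v)$ gives
\[I^{(0)}(X,Y)=X^3\int_{\R^3}W_3(\v)\left(\int_{\substack{t\ge1\\ X^2h_Q(\v)/((1+\eta)Y)\,\le\, t\,\le\, X^2h_Q(\v)/Y}}\frac{(\log t)^{\rho-2}}{t^2}\,\d t\right)\d v_0\,\d v_1\,\d v_2.\]
Because $(1+\eta)Y_j=Y_{j+1}$, the inner $t$-ranges for consecutive $j$ are adjacent, so summing over $0\le j\le J$ telescopes them to $[X^2h_Q(\v)/Y_{J+1},\,X^2h_Q(\v)/Y_0]\cap[1,\infty)$. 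The upper endpoint $X^2h_Q(\v)/Y_0=Xh_Q(\v)(\log B)^{-A}$ contributes only a convergent tail of size $O(B(\log B)^{\rho-3})$ after integration; and since $X^2h_Q(\v)/Y_{J+1}$ differs from $X^3h_Q(\v)/B$ by a factor in $[(1+\eta)^{-1},1]$, replacing it by $X^3h_Q(\v)/B$ costs, by a short-interval estimate on $(\log t)^{\rho-2}t^{-2}$ together with Lemma \ref{QB}, only $O(\eta B(\log B)^{\rho-2})$ — the same estimate supplies the bound on $I^{(0)}(X,B/X)$ used above. Hence, up to admissible errors,
\[\sum_{j=0}^{J}I^{(0)}(X,Y_j)=X^3\int_{\R^3}W_3(\v)\,G\!\left(\frac{X^3h_Q(\v)}{B}\right)\d v_0\,\d v_1\,\d v_2,\qquad G(c)=\int_{\max(1,c)}^{\infty}\frac{(\log t)^{\rho-2}}{t^2}\,\d t.\]

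For $c\ge1$ one computes $G(c)=c^{-1}\sum_{i=0}^{\rho-2}\frac{(\rho-2)!}{i!}(\log c)^i=c^{-1}(\log c)^{\rho-2}+O\!\left(c^{-1}(\log c)^{\rho-3}\right)$, while the locus $h_Q(\v)<B/X^3$ (where $c<1$) contributes $\ll X^3(B/X^3)^{3/2}=O(B(\log B)^{-3A/2})$ by Lemma \ref{QB}. Feeding $G$ in, the lower-order pieces $c^{-1}(\log c)^i$ with $i\le\rho-3$ yield $O(B(\log B)^{\rho-3})$, and the leading term becomes $B\int_{h_Q(\v)\ge B/X^3}W_3(\v)h_Q(\v)^{-1}\bigl(\log(X^3h_Q(\v)/B)\bigr)^{\rho-2}\,\d\v$. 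Writing $\log(X^3h_Q(\v)/B)=\log(X^3B^{-1})+\log h_Q(\v)$, expanding by the binomial theorem, and using the finiteness of $\int W_3(\v)h_Q(\v)^{-1}|\log h_Q(\v)|^k\,\d\v$ from Lemma \ref{QB} together with $\log(X^3B^{-1})\le\log B$, isolates the main term $B(\log X^3B^{-1})^{\rho-2}\int_{\R^3}W_3(\v)h_Q(\v)^{-1}\,\d\v$, every remaining term being $O(B(\log B)^{\rho-3})$. Assembling all the pieces yields \eqref{8.2Est}.

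The delicate point is the bookkeeping in the last two steps: one must extract the single leading term $B(\log X^3B^{-1})^{\rho-2}\int W_3/h_Q$ from the telescoped integral while showing that the $\eta$-discretization error is \emph{exactly} $O(\eta B(\log B)^{\rho-2})$ — which is why a geometric grid is used, so that the relevant $\eta$-sums converge cleanly — and that all the other pieces are $O(B(\log B)^{\rho-3})$. This last point depends crucially on the weighted integrability estimates of Lemma \ref{QB} near the locus $h_Q(\v)=0$, and on the hypothesis $X\ge B^{1/3}(\log B)^A$, which forces $\log(X^3B^{-1})\gg\log\log B$ so that none of the logarithmic factors appearing in $G$ is spuriously large.
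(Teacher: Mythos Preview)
Your proof is correct and follows essentially the same strategy as the paper: geometrically slice the $\|\y\|_\infty$-range, apply Lemma \ref{rw2} on each slice, telescope the integrals $I^{(0)}$, and extract the main term via Lemma \ref{QB}. The only differences are cosmetic --- the paper runs the geometric progression downward from $B/X$ with a cutoff at $Y_1\approx BX^{-1}(\log B)^{-1}$ (so the upper endpoint lands exactly at $B/X$ with no overshoot), and in the final step it substitutes $t=X^3B^{-1}h_Q(\v)s$ to obtain a function $J(x)=\int_1^\infty(\lp xs)^{\rho-2}s^{-2}\,\d s$ rather than your $G(c)$; but these lead to the same bounds by the same use of Lemma \ref{QB}.
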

As with Lemma \ref{rw2}, the error terms have implied constants that
may depend on $W_3$.  The constant $A$ here is not necessarily the same as in
Proposition \ref{P1*} or Lemma \ref{rw2}.
\begin{proof}
We begin by noting the general upper bound
\[S^{(0)}(X,Y)\ll XY\LL^{\rho-2},\;\;\;(X,Y\gg 1),\]
which follows from (\ref{sxy}).
We proceed to sum $S^{(0)}(X,Y)$ for values 
\[Y=BX^{-1}(1+\eta)^{-n}\;\;\;(n\ge 1). \]
Let $Y_1$ be the largest such value for which
$Y_1\le BX^{-1}(\log B)^{-1}$. Then for $Y<Y_1$ our trivial bound 
for $S^{(0)}(X,Y)$ produces a total $O(\eta^{-1}B(\log B)^{\rho-3})$.
The reader should observe here that $||\x||_\infty <Y_1$, so that the
awkward cut-off around $||\y||_\infty=||\x||_\infty$ is handled by our 
general upper bound.
For the range $Y_1\le Y\le BX^{-1}(1+\eta)^{-1}$
we may use Lemma \ref{rw2}. In this case we will have 
$X\LL^A\le Y\le X^2\LL^{-A}$
provided that we adjust the value of $A$ appropriately. 
 The error terms from 
Lemma \ref{rw2} contribute $O_\eta(B(\log B)^{\rho-3})+O(\eta B(\log B)^{\rho-2})$
and we deduce that
\begin{eqnarray}\label{e1}
S^{(1)}(B;X)&=&\sum_Y S^{(0)}(X,Y)+O(\eta^{-1}B(\log B)^{\rho-3})\nonumber\\
&=&2\frac{\mathfrak{S}_S}{(m-1)!}\sum_Y I^{(0)}(X,Y)\nonumber\\
&&\hspace{1cm}+
O_\eta(B(\log B)^{\rho-3})+O(\eta B(\log B)^{\rho-2}),
\end{eqnarray}
where the summations are for
\[Y_1\le Y=BX^{-1}(1+\eta)^{-n}\le BX^{-1}(1+\eta)^{-1}.\]

For the main term we see that
\begin{eqnarray*}
\sum_Y I^{(0)}(X,Y)&=&\int_1^{\infty}\frac{(\log t)^{\rho-2}}{t^2}
\int_{Y_1t\le h_Q(\u)\le BX^{-1}t}W_3(X^{-1}\u)
\d u_0\d u_1\d u_2\,\d t\\
&=&\int_{\R^3}W_3(X^{-1}\u)
\int_{\max\{XB^{-1}h_Q(\u),1\}}^{Y_1^{-1} h_Q(\u)}
\frac{(\log t)^{\rho-2}}{t^2}\d t\,\d u_0\d u_1\d u_2.
\end{eqnarray*}
We can extend the range of the inner integral to run up to infinity, 
at an overall cost  
\begin{eqnarray*}
  &\ll& (\log B)^{\rho-2}Y_1\int_{\R^3}\frac{W_3(X^{-1}\u)}{h_Q(\u)}
  \d u_0\d u_1\d u_2\\
&=&(\log B)^{\rho-2}Y_1X\int_{\R^3}\frac{W_3(\u)}{h_Q(\u)}\d u_0\d u_1\d u_2.
\end{eqnarray*}
The second integral is bounded, by Lemma \ref{QB}, whence
\begin{eqnarray*}
\sum_Y I^{(0)}(X,Y)&=&\int_{\R^3}W_3(X^{-1}\u)
\int_{\max\{XB^{-1}h_Q(\u),1\}}^{\infty}
\frac{(\log t)^{\rho-2}}{t^2}\d t\d u_0\d u_1\d u_2\\
&&\hspace{2cm}+O(B(\log B)^{\rho-3}).
\end{eqnarray*}
We now substitute $\u=X\v$, and then $t=X^3B^{-1}h_Q(\v)s$ to see that
\[\sum_Y
I^{(0)}(X,Y)=B\int_{\R^3}\frac{W_3(\v)}{h_Q(\v)}J
\left(X^3B^{-1}h_Q(\v)\right)\d v_0\d v_1\d v_2+O(B(\log B)^{\rho-3}),\] 
where we have temporarily written
\[J(x)=\int_1^\infty\frac{(\lp xs)^{\rho-2}}{s^2}\d s\]
with
\[\lp t:=\max(\log t,1).\]

We plan to replace 
\[J\left(X^3B^{-1}h_Q(\v)\right)=
\int_1^\infty\frac{(\lp X^3B^{-1}h_Q(\v)s)^{\rho-2}}{s^2}\d s\]
with
\[\int_1^\infty\frac{(\log X^3B^{-1})^{\rho-2}}{s^2}\d s=
\left(\log X^3B^{-1}\right)^{\rho-2}.\]
When $X^3B^{-1}h_Q(\v)s\ge 1$ we find that
  \begin{eqnarray*}
  \left(\lp(X^3B^{-1}h_Q(\v)s)\right)^{\rho-2}&=&(\log X^3B^{-1})^{\rho-2}\\
  && \hspace{-5mm}+O\!\left(\{1+\log s+\left|\log h_Q(\v)\right|\}^{\rho-2}
  (\log B)^{\rho-3}\right).
\end{eqnarray*} 
(The reader may wish to consider separately the cases $\rho=2$ and $\rho\ge 3$.)
On the other hand, when $X^3B^{-1}h_Q(\v)s\le 1$ we see that
\[\left(\lp(X^3B^{-1}h_Q(\v)s)\right)^{\rho-2}-(\log X^3B^{-1})^{\rho-2}
=-(\log X^3B^{-1})^{\rho-2}.\]
Thus if $X^3B^{-1}h_Q(\v)\ge 1$ it follows 
that
\[J\left(X^3B^{-1}h_Q(\v)\right)-\left(\log X^3B^{-1}\right)^{\rho-2}
\ll(\log B)^{\rho-3}\left\{1+\left|\log h_Q(\v)\right|\right\}^{\rho-2},\]
while if $X^3B^{-1}h_Q(\v)\le 1$ we see that
\begin{eqnarray*}
\lefteqn{J\left(X^3B^{-1}h_Q(\v)\right)-\left(\log X^3B^{-1}\right)^{\rho-2}}\hspace{3cm}\\
&\ll & (\log B)^{\rho-3}\left|\log h_Q(\v)\right|^{\rho-2}+(\log B)^{\rho-2}.
\end{eqnarray*}
We then conclude that
\begin{eqnarray*}
\lefteqn{\sum_Y I^{(0)}(X,Y)-
B(\log X^3B^{-1})^{\rho-2}\int_{\R^3}\frac{W_3(\v)}{h_Q(\v)}\d v_0\d v_1\d v_2}\\
&\ll& B(\log B)^{\rho-3}+B(\log B)^{\rho-3}\int_{\R^3}
\frac{W_3(\v)}{h_Q(\v)}
\left\{1+\left|\log h_Q(\v)\right|\right\}^{\rho-2}\d v_0\d v_1\d v_2\\
&&\hspace{1cm}\mbox{}+B(\log B)^{\rho-2}\int_{h_Q(\v)\le BX^{-3}}
\frac{W_3(\v)}{h_Q(\v)}\d v_0\d v_1\d v_2.
\end{eqnarray*}
The first integral is $O(1)$, by Lemma \ref{QB}. For the second, we
consider dyadic ranges $\tfrac12\lambda<h_Q(\v)\le\lambda$, with
$\lambda\le BX^{-3}\le\LL^{-3A}$. Each such range contributes
$O(\lambda^{1/2})$, by a second application of Lemma \ref{QB}, so that
the second integral is  
\[\ll\sum_{\lambda=2^n\le BX^{-3}}\lambda^{1/2}\ll \LL^{-3A/2}.\]
We therefore deduce that
\[\sum_Y I^{(0)}(X,Y)=B(\log X^3B^{-1})^{\rho-2}\int_{\R^3}\frac{W_3(\v)}{h_Q(\v)}
\d v_0\d v_1\d v_2+O(B(\log B)^{\rho-3})\]
when $A$ is large enough. The required estimate (\ref{8.2Est}) then
follows from (\ref{e1}). 
\end{proof}

We now remove the weight $W_3$, using the same ideas as for Lemma
\ref{rw2}. This will require the introduction of a second small
parameter, $\xi$. 
\begin{lemma}\label{rw3}
Let $\eta,\xi\in(0,\tfrac12)$ and set
\[S^{(2)}(B;X)=
\sum_{\substack{\x\in\Z^3_{\mathrm{prim}}\\  X<||\x||_\infty\le(1+\xi)X\\
    (Q_0(\x),Q_1(\x))\not=(0,0)}}\;\;\; \sum_{\substack{\y\in\Z^2_{\mathrm{prim}}\\ 
 y_0Q_0(\x)+y_1Q_1(\x)=0\\ ||\x||_\infty\le||\y||_\infty\le B/X}}1.\]
Then if $B^{1/3}(\log B)^A\le X\le B^{1/2}(\log B)^{-A}$ with a
suitably large constant $A$, we have 
\begin{eqnarray*}
  S^{(2)}(B;X)&=&4\xi\frac{\tau_\infty\mathfrak{S}_S}{(\rho-2)!}
  B(\log X^3B^{-1})^{\rho-2}+O(\xi^2 B(\log B)^{\rho-2})\\
&&\hspace{1cm}\mbox{}+O_{\eta,\xi}(B(\log B)^{\rho-3})
+O_\xi(\eta B(\log B)^{\rho-2}).
\end{eqnarray*}
\end{lemma}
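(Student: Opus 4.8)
The plan is to run the same $(\eta,\xi)$-sandwiching device as in the proof of Lemma~\ref{rw2}, but now using it to strip off the weight $W_3$ rather than $W_2$. For the given $\xi\in(0,\tfrac12)$ I would first fix two infinitely differentiable even weights $W_3^{(-)}$ and $W_3^{(+)}$ which minorise and majorise the characteristic function of the shell $X<||\x||_\infty\le(1+\xi)X$: concretely, take $W_3^{(-)}$ supported on $1\le||\u||_\infty\le 1+\xi$ with $W_3^{(-)}=1$ on $1+\xi^2\le||\u||_\infty\le 1+\xi-\xi^2$, and $W_3^{(+)}$ supported on $1-\xi^2\le||\u||_\infty\le 1+\xi+\xi^2$ with $W_3^{(+)}=1$ on $1\le||\u||_\infty\le 1+\xi$. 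Since $\xi<\tfrac12$ both are supported in $\tfrac12\le||\u||_\infty\le\tfrac52$, so Lemma~\ref{8.2} applies to the sums $S^{(1),\pm}(B;X)$ obtained by inserting $W_3^{(\pm)}(X^{-1}\x)$ in place of $W_3(X^{-1}\x)$. Pointwise (away from a set of measure zero) one has $W_3^{(-)}(X^{-1}\x)\le\one[X<||\x||_\infty\le(1+\xi)X]\le W_3^{(+)}(X^{-1}\x)$, and attaching the common inner sum over $\y$ gives the sandwich $S^{(1),-}(B;X)\le S^{(2)}(B;X)\le S^{(1),+}(B;X)$.

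Next I would apply Lemma~\ref{8.2} to $S^{(1),\pm}(B;X)$. Its error terms acquire a dependence on the chosen weight, hence on $\xi$, producing precisely the terms $O_{\eta,\xi}(B(\log B)^{\rho-3})$ and $O_\xi(\eta B(\log B)^{\rho-2})$ appearing in the statement, while the main term becomes $2\tfrac{\mathfrak{S}_S}{(\rho-2)!}B(\log X^3B^{-1})^{\rho-2}\int_{\R^3}W_3^{(\pm)}(\v)h_Q(\v)^{-1}\,\d v_0\d v_1\d v_2$. To evaluate this integral I would exploit the homogeneity of $h_Q$: since $Q_0,Q_1$ are quadratic, $h_Q(T\v)=T^2h_Q(\v)$, so the substitution $\v=T\w$ in the second formula of Lemma~\ref{lem:rd} gives $\int_{||\v||_\infty\le T}h_Q(\v)^{-1}\,\d v_0\d v_1\d v_2=2\tau_\infty T$, and hence $\int_{a\le||\v||_\infty\le b}h_Q(\v)^{-1}\,\d v_0\d v_1\d v_2=2\tau_\infty(b-a)$ for $0<a<b$. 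Sandwiching $W_3^{(\pm)}$ between the indicators of the shells $[1,1+\xi]$ and $[1-\xi^2,1+\xi+\xi^2]$ (respectively $[1+\xi^2,1+\xi-\xi^2]$), and using that $h_Q^{-1}$ is locally integrable by Lemma~\ref{QB}, yields $\int_{\R^3}W_3^{(\pm)}(\v)h_Q(\v)^{-1}\,\d v_0\d v_1\d v_2=2\xi\tau_\infty+O(\xi^2)$, with an $O(\xi^2)$ depending only on $Q_0,Q_1$.

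Putting the pieces together, Lemma~\ref{8.2} gives
\begin{align*}
S^{(1),\pm}(B;X) &= 4\xi\frac{\tau_\infty\mathfrak{S}_S}{(\rho-2)!}B(\log X^3B^{-1})^{\rho-2}+O\bigl(\xi^2 B(\log X^3B^{-1})^{\rho-2}\bigr)\\
&\quad +O_{\eta,\xi}\bigl(B(\log B)^{\rho-3}\bigr)+O_\xi\bigl(\eta B(\log B)^{\rho-2}\bigr),
\end{align*}
and since the hypotheses $B^{1/3}(\log B)^A\le X\le B^{1/2}(\log B)^{-A}$ force $0<\log X^3B^{-1}\le\tfrac12\log B$, the first error is $O(\xi^2 B(\log B)^{\rho-2})$. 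As this identical estimate holds for both the $+$ and the $-$ version, the sandwich $S^{(1),-}\le S^{(2)}\le S^{(1),+}$ yields the claimed asymptotic for $S^{(2)}(B;X)$. I expect no serious obstacle here beyond careful bookkeeping of which implied constants depend on $\eta$ and which on $\xi$; the one point worth stressing is that the discrepancy between the smooth weights $W_3^{(\pm)}$ and the sharp cutoff contributes an error of size $O(\xi^2 B(\log B)^{\rho-2})$ which is genuinely free of any $\eta$-dependence, because it arises purely from the main term and not from the error terms of Lemma~\ref{8.2}.
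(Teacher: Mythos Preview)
Your proposal is correct and follows essentially the same approach as the paper: construct smooth majorant/minorant weights $W_3^{(\pm)}$ for the shell $1\le\|\u\|_\infty\le 1+\xi$, apply Lemma~\ref{8.2} to each, and use the homogeneity $h_Q(\lambda\v)=\lambda^2 h_Q(\v)$ together with Lemma~\ref{lem:rd} to show that $\int_{a\le\|\v\|_\infty\le b}h_Q(\v)^{-1}\,\d\v=2\tau_\infty(b-a)$, whence the main-term integrals for $W_3^{(\pm)}$ differ from $2\xi\tau_\infty$ by $O(\xi^2)$. The paper's proof is organised in exactly this way (it isolates the two claims \eqref{suff} and \eqref{suff1} and proves them via the same scaling identity), and your tracking of which implied constants depend on $\eta$ versus $\xi$ matches the paper's.
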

\begin{proof}
Following the same strategy as for Lemma \ref{rw2}, we apply Lemma
\ref{8.2} with two weights $W_3^{(+)}(\v)$ and $W_3^{(-)}(\v)$, which
are majorants and minorants for the characteristic function of the set
$1\le||\v||_\infty\le 1+\xi$, supported respectively on  
$1-\xi^2\le||\v||_\infty\le 1+\xi+\xi^2$ and
$1+\xi^2\le||\v||_\infty\le 1+\xi-\xi^2$. We fix these weights once
and for all, so that when we apply Lemma \ref{8.2} the error terms are 
$O_{\eta,\xi}(B(\log B)^{\rho-3})$ and $O_\xi(\eta B(\log B)^{\rho-2})$.
Thus to complete the proof of Lemma \ref{rw3} it will
be enough to show that 
\beql{suff}
\int_{\R^3}\frac{W_3^{(+)}(\v)-W_3^{(-)}(\v)}{h_Q(\v)}\d v_0\d v_1\d v_2\ll \xi^2
\eeq
and that
\beql{suff1}
\int_{1\le||\v||_\infty\le 1+\xi}\frac{\d v_0\d v_1\d v_2}{h_Q(\v)}=2\xi\tau_\infty.
\eeq
To accomplish this we consider the integral
\[I(Q_0,Q_1;\lambda)=\int_{||\v||_\infty\le\lambda}\frac{\d v_0\d v_1\d v_2}{h_Q(\v)},\]
which is finite by Lemma \ref{QB}.
We note that $I(Q_0,Q_1;1)=2\tau_\infty$ by Lemma \ref{lem:rd}.
The substitution $\v=\lambda\u$ shows that $I(Q_0,Q_1;\lambda)=\lambda
I(Q_0,Q_1;1)$. 
Thus
\begin{eqnarray*}
\int_{1-\xi^2\le||\v||_\infty\le 1+\xi^2}\frac{\d v_0\d v_1\d v_2}{h_Q(\v)}&=&
I(Q_0,Q_1;1+\xi^2)-I(Q_0,Q_1;1-\xi^2)\\
&=&2\xi^2 I(Q_0,Q_1;1)\\
&\ll& \xi^2.
\end{eqnarray*}
Similarly we have
\[\int_{1+\xi-\xi^2\le||\v||_\infty\le
  1+\xi+\xi^2}\frac{\d v_0\d v_1\d v_2}{h_Q(\v)}
\ll\xi^2,\]
and (\ref{suff}) follows. Moreover we find that
\[\int_{1\le||\v||_\infty\le 1+\xi}\frac{\d v_0\d v_1\d v_2}{h_Q(\v)}=
I(Q_0,Q_1;1+\xi)-I(Q_0,Q_1;1),\]
which is enough to yield the relation (\ref{suff1}).
This completes the proof of the lemma.
\end{proof}

We next consider the range $X<||\x||_\infty\le 2X$.
\begin{lemma}\label{s31}
  Suppose that $B^{1/3}(\log B)^A\le X\le \tfrac12B^{1/2}(\log B)^{-A}$ with a
  suitably large constant $A$. Then if we write 
\[S^{(3)}(B;X)=\sum_{\substack{\x\in\Z^3_{\mathrm{prim}}\\  X<||\x||_\infty\le
    2X\\ (Q_0(\x),Q_1(\x))\not=(0,0)}}\;\;\; 
\sum_{\substack{\y\in\Z^2_{\mathrm{prim}}\\ C(\y)\not=0\\ y_0Q_0(\x)+y_1Q_1(\x)=0\\
 ||\x||_\infty\le   ||\y||_\infty\le B/||\x||_\infty}}1\]
we will have
\begin{eqnarray}\label{S3}
S^{(3)}(B;X)&=&4\log 2\frac{\tau_\infty\mathfrak{S}_S}{(\rho-2)!}B
(\log X^3B^{-1})^{\rho-2}+O(K^{-1}B(\log B)^{\rho-2})\nonumber\\
&&\hspace{1cm}\mbox{}+O_{\eta,K}(B(\log B)^{\rho-3})
+O_K(\eta B(\log B)^{\rho-2}).
\end{eqnarray}
for any positive integer $K$ and any $\eta>0$.
\end{lemma}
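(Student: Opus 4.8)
The plan is to obtain Lemma \ref{s31} from Lemma \ref{rw3} by subdividing the dyadic range $X<||\x||_\infty\le 2X$ into $K$ thin shells of relative width $\xi:=2^{1/K}-1$ and summing. We may assume $K\ge 2$, so that $\xi\in(0,\tfrac12)$ as Lemma \ref{rw3} requires; the case $K=1$ is trivial, since then the claimed error $O(K^{-1}B(\log B)^{\rho-2})$ already dominates the main term and the bound $S^{(3)}(B;X)\ll B(\log B)^{\rho-2}$ follows by dyadic summation of the universal estimate \eqref{sxy}. Put $X_j=X2^{j/K}$ for $0\le j\le K$, so $X_0=X$, $X_K=2X$ and $X_{j+1}=(1+\xi)X_j$, and let $S^{(3)}_j$ denote the part of $S^{(3)}(B;X)$ with $X_j<||\x||_\infty\le X_{j+1}$.

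The first step is to note that, for $X$ large, the constraint $C(\y)\ne 0$ in $S^{(3)}$ is automatic on the relevant range: since $C$ is a fixed separable binary cubic, any primitive $\y$ with $C(\y)=0$ has $||\y||_\infty=O(1)$, incompatible with $||\y||_\infty\ge||\x||_\infty>X_j\to\infty$. Hence $S^{(3)}_j$ agrees with the analogous count for $S^{(2)}$ once the cut-off on $||\y||_\infty$ is adjusted. Precisely, since $B/X_{j+1}\le B/||\x||_\infty<B/X_j$ for $\x$ in the $j$-th shell, replacing $B/||\x||_\infty$ by $B/X_j$, respectively by $B/X_{j+1}=\big(B/(1+\xi)\big)/X_j$, gives the sandwich
\[
S^{(2)}\big(B/(1+\xi);X_j\big)\ \le\ S^{(3)}_j\ \le\ S^{(2)}(B;X_j).
\]
Each $X_j$ lies in $[X,2X)$; provided the constant $A$ in the statement is chosen slightly larger than the one in Lemma \ref{rw3}, both $(B,X_j)$ and $(B/(1+\xi),X_j)$ satisfy that lemma's hypotheses for $B$ large, because passing from $B$ to $B/(1+\xi)\asymp B$ perturbs the relevant logarithms only by $O(1)$ while leaving the powers of $\log B$ with room to spare.

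The next step is to apply Lemma \ref{rw3} to both ends of the sandwich. Using $B/(1+\xi)=B+O(\xi B)$ and $\log\big((1+\xi)X_j^3B^{-1}\big)=\log\big(X_j^3B^{-1}\big)+O(\xi)$, the discrepancy between the two main terms is absorbed into the $O(\xi^2B(\log B)^{\rho-2})$ error, yielding
\[
S^{(3)}_j=4\xi\,\frac{\tau_\infty\mathfrak{S}_S}{(\rho-2)!}\,B\big(\log X_j^3B^{-1}\big)^{\rho-2}+O\big(\xi^2B(\log B)^{\rho-2}\big)+O_{\eta,K}\big(B(\log B)^{\rho-3}\big)+O_K\big(\eta B(\log B)^{\rho-2}\big).
\]
Summing over $0\le j\le K-1$, the error terms contribute $O(K\xi^2B(\log B)^{\rho-2})+O_{\eta,K}(B(\log B)^{\rho-3})+O_K(\eta B(\log B)^{\rho-2})$, and since $\xi\asymp1/K$ the first is $O(K^{-1}B(\log B)^{\rho-2})$; this is where the quadratic dependence on $\xi$ in Lemma \ref{rw3} is essential, and I expect it to be the main point to get right. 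For the main term, write $c=\log(X^3B^{-1})$ and $u_j=(j/K)\log 2$, so $\log X_j^3B^{-1}=c+3u_j$; since $\xi=e^{(\log 2)/K}-1=(\log 2)/K+O(K^{-2})$, the sum $\sum_{j=0}^{K-1}\xi(c+3u_j)^{\rho-2}$ is a Riemann sum for $\int_0^{\log 2}(c+3u)^{\rho-2}\,\mathrm{d}u$ with total error $O(K^{-1}(\log B)^{\rho-2})$, and the integral equals $\tfrac1{3(\rho-1)}\big[(c+3\log 2)^{\rho-1}-c^{\rho-1}\big]=\log 2\,(\log X^3B^{-1})^{\rho-2}+O((\log B)^{\rho-3})$ by the binomial theorem. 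Substituting these evaluations gives \eqref{S3}. Apart from the error bookkeeping just described, the only care needed is the vanishing of the $C(\y)\ne0$ condition and the transfer of Lemma \ref{rw3}'s hypotheses to the perturbed parameters $X_j$ and $B/(1+\xi)$.
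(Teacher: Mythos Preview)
Your proof is correct and follows essentially the same approach as the paper: subdivide the dyadic shell into $K$ thin shells of relative width $\xi=2^{1/K}-1$, sandwich each piece between two values of $S^{(2)}$, apply Lemma~\ref{rw3}, and sum. The only cosmetic difference is that the paper handles the main term by writing $(\log X_j^3B^{-1})^{\rho-2}=(\log X^3B^{-1})^{\rho-2}+O((\log B)^{\rho-3})$ directly and then using $K\xi=\log 2+O(K^{-1})$, whereas you pass through a Riemann sum and the closed-form integral; both give the same outcome, and your explicit treatment of the case $K=1$ is a small extra bit of care.
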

\begin{proof}
If we write
\[S^{(4)}(B;X)=\sum_{\substack{\x\in\Z^3_{\mathrm{prim}}\\  X<||\x||_\infty\le
    (1+\xi)X\\ (Q_0(\x),Q_1(\x))\not=(0,0)}}\;\;\; 
\sum_{\substack{\y\in\Z^2_{\mathrm{prim}}\\ C(\y)\not=0\\ y_0Q_0(\x)+y_1Q_1(\x)=0\\
 ||\x||_\infty\le   ||\y||_\infty\le B/||\x||_\infty}}1\]
we will have $S^{(2)}(B(1+\xi)^{-1};X)\le S^{(4)}(B;X)\le S^{(2)}(B;X)$. Note that the 
condition $C(\y)\not=0$ holds automatically, since $||\y||_\infty\ge||\x||_\infty\gg 1$.
Thus Lemma~\ref{rw3} shows that
\begin{eqnarray*}
  S^{(4)}(B;X)&=&4\xi\frac{\tau_\infty\mathfrak{S}_S}{(\rho-2)!}
  B(\log X^3B^{-1})^{\rho-2}+O(\xi^2 B(\log B)^{\rho-2})\\
&&\hspace{1cm}\mbox{}+O_{\eta,\xi}(B(\log B)^{\rho-3})
+O_\xi(\eta B(\log B)^{\rho-2}).
\end{eqnarray*}

We now set
$\xi=\exp\{K^{-1}\log 2\} -1$, so that $\xi\ll K^{-1}$. Then 
\[S^{(3)}(B;X)=\sum_{1\le n\le K}S^{(4)}(B; X(1+\xi)^{n-1}),\]
and we deduce that 
\begin{eqnarray*}
S^{(3)}(B;X)&=&4\xi\frac{\tau_\infty\mathfrak{S}_S}{(\rho-2)!}B
\sum_{1\le n\le K}(\log X^3(1+\xi)^{3n-3}B^{-1})^{\rho-2}\\
&&\hspace{-1.5cm} \mbox{}+O(K^{-1} B(\log B)^{\rho-2})
+O_{\eta,K}(B(\log B)^{\rho-3})+O_K(\eta B(\log B)^{\rho-2}).
\end{eqnarray*}
However
\[(\log X^3(1+\xi)^{3n-3}B^{-1})^{\rho-2}=
(\log X^3B^{-1})^{\rho-2}+O((\log B)^{\rho-3})\]
for $n\le K$.  (This holds trivially when $\rho=2$.) Thus 
(\ref{S3}) follows, since $K\xi=\log 2+O(K^{-1})$.
\end{proof}

We are now ready to complete the proof of Theorem \ref{extra}.
\begin{proof}[Proof of Theorem \ref{extra}]
When $||\x||_\infty\ll X\ll B^{1/3}(\log B)^{-1}$ the corresponding $\y$ has 
\[||\y||_\infty\ll ||\x||_\infty^2\ll B^{2/3}(\log B)^{-2},\]
by the argument from the introduction to \S \ref{S6}.
It follows that 
\[||\x||_\infty||\y||_\infty\ll B(\log B)^{-3},\]
so that the contribution
from all such points is of order $N(U,cB(\log B)^{-3})$ for a suitable constant
$c$. Thus points with $||\x||_\infty\ll X\ll B^{1/3}(\log B)^{-1}$ make a 
negligible contribution, by Theorem \ref{tub}.

For the remaining
ranges not covered by Lemma \ref{s31}, namely
\[B^{1/3}(\log X)^{-1}\ll X\ll B^{1/3}(\log X)^A\;\;\;\mbox{and}\;\;\;
B^{1/2}(\log B)^{-A}\ll X\ll B^{1/2},\]
we apply the universal bound
(\ref{sxy}). Summing over dyadic values of $Y$ with
of $X\ll Y\ll B/X$ this will suffice to show that  
\beql{S3gb}
S^{(3)}(B;X)\ll B(\log B)^{\rho-2}.
\eeq
whenever $X\gg 1$.

We proceed to sum $S^{(3)}(B;X)$ over dyadic values
\[X=B^{\1/2}2^{-n}\gg B^{1/3}(\log B)^{-1}\]
using (\ref{S3}) where 
applicable, and otherwise the bound (\ref{S3gb}). This
produces 
\begin{eqnarray*}
\sum_X S^{(3)}(B;X)&=&4\log 2\frac{\tau_\infty\mathfrak{S}_S}{(\rho-2)!}B
\sum_{B^{1/3}\le X=B^{1/2}2^{-n}< B^{1/2}} (\log X^3B^{-1})^{\rho-2}\\
&&\hspace{1cm} \mbox{}+O(B(\log B)^{\rho-2}\log\log B)
+O(K^{-1} B(\log B)^{\rho-1})\\
&&\hspace{1cm} \mbox{}+O_{\eta,K}(B(\log B)^{\rho-2})
+O_K(\eta B(\log B)^{\rho-1}).
\end{eqnarray*}
For the main term, we have
\begin{eqnarray*}
\lefteqn{\sum_{B^{1/3}\le X=B^{1/2}2^{-n}< B^{1/2}} (\log X^3B^{-1})^{\rho-2}}
\hspace{2cm}\\
&=&
\sum_{1\le n<(\log B)/(6\log 2)}(\tfrac12\log B-3n\log 2)^{\rho-2}\\
&=&\int_0^{(\log B)/(6\log 2)}(\tfrac12\log B-3x\log 2)^{\rho-2}\d x 
+O((\log B)^{\rho-2})\\
&=&\frac{(\tfrac12 \log B)^{\rho-1}}{3(\rho-1)\log 2}+O((\log B)^{\rho-2}).
\end{eqnarray*}
Since each point $x\in\P^3(\Q)$ corresponds to two values of $\x\in\Zprim^3$,
and similarly for $y\in\P^2(\Q)$, we deduce that
\begin{eqnarray*}
N_1(U,B)&=&\card\{(x,y)\in U(\Q): \, H(x,y)\le B,\, H(x)\le H(y)\}\\
&=&\tfrac14\sum_{X=B^{1/2}2^{-n} }S^{(3)}(B;X)+O(B(\log B)^{\rho-2})\\
&=&\frac{2^{1-\rho}\tau_\infty\mathfrak{S}_S}{3(\rho-1)!}B(\log B)^{\rho-1}
+O(B(\log B)^{\rho-2}\log\log B)\\
&&\hspace{1cm}+O(K^{-1} B(\log B)^{\rho-1})+O_{\eta,K}(B(\log B)^{\rho-2})\\
&&\hspace{2cm}+O_K(\eta B(\log B)^{\rho-1}).
\end{eqnarray*}

We can now complete the proof of the theorem. Given a small
$\ep>0$ we need to show that there is a corresponding $B(\ep)$ such
that the four error terms above have a total smaller than $\ep
B(\log B)^{\rho-1}$ whenever $B\ge B(\ep)$. Suppose that the order
constants 
for the four error terms are $c_1, c_2, c_3(\eta,K)$ and $c_4(K)$
respectively. We begin by fixing a value $K\in\N$ with
$c_2K^{-1}<\ep/3$. We then fix $\eta>0$ with  
$c_4(K)\eta<\ep/3$. It then suffices to choose $B(\ep)$ with 
\[\frac{c_1\log\log B(\ep)}{\log B(\ep)}+\frac{c_3(\eta,K)}{\log B(\ep)}<\ep/3. \qedhere\]
\end{proof}

\section{Asymptotics via conics}

In this section, just as in \S \ref{UBVC}, we aim to estimate $S(X,Y)$ by
counting points $\x$ on each individual conic $Q_\y(\x)=0$, and
summing the results with respect to $\y$.  However we now aim to prove
an asymptotic formula, which will only be possible if $X$ is suitably
large compared to $Y$. Thus we shall assume that $X\ge Y$ in the
current section.  

When $q(\x)\in\ZZ[\x]$ is a non-singular integral ternary quadratic
form and $W$ is a smooth compactly supported non-negative
weight function we define 
\[N(X,q;W)=\sum_{\x\in\Z^3_{\mathrm{prim}},\,\,\,q(\x)=0}W(X^{-1}\x),\]
whence
\[S(X,Y)=\sum_{\y\in\Z^2_{\mathrm{prim}}}
W_2(Y^{-1}\y)N(X,Q_\y;W_3),\]
provided that $X$ and $Y$ are large enough to ensure that
$(Q_0(\x),Q_1(\x))\not=(0,0)$ and $C(\y)\not=0$. 
 The asymptotic formula for $N(X,q;W)$ takes the shape
  \beql{NME}
  N(X,q;W)=\Main(X,q;W)+\Err(X,q;W)
  \eeq
  where the main term $\Main(X,q;W)$ is as predicted by the
  Hardy--Littlewood circle method, see 
  Heath-Brown \cite[Theorem 8 and Corollary 2]{HBcirc}.
  It will be convenient for us to write $N(X,q)$ in place of %R
 $N(X,q;W_3)$, and similarly $\Main(X,q)$ %R
  and $\Err(X,q)$ for $\Main(X,q;W_3)$ and $\Err(X,q;W_3)$; %R
  but the reader should note
that we will consider another weight function as well as $W_3$.   
  
It turns out that regions where $C(\y)$ is small make a negligible
contribution.  
For the remaining $\y$  we will first show that
  $\Err(X,Q_\y)$ is suitably small on average. We then have the task of
  summing the main terms $\Main(X,Q_\y)$, which turns out to be
  surprisingly difficult. Our approach for these two steps
  will require our del Pezzo surface to be suitably generic.
We shall discuss the appropriate conditions further at the
points where their necessity arises. 

Our ultimate goal in the first phase of the argument is the following,
in which we write 
\beql{cEd}
\mathcal{E}=\exp\left\{\frac{\LL}{\sqrt{\log\LL}}\right\}
\eeq
for convenience.
\begin{proposition}\label{ultgoal}
Suppose that neither $\cl{M}$ nor $\cl{C}$ have rational points, so
that $\rho=2$, by Lemma \ref{r2}. Then
when $Y\le X\cl{E}^{-8}$ we have
  \begin{eqnarray*}
 S(X,Y)&=& XY\left\{\int_{\R^2} W_2(\y)\sigma_\infty(Q_\y;W_3)\d y_0\d y_1\right\}
\mathfrak{S}_S\\
&&\hspace{1cm}\mbox{}+O_\eta\left(XY(\log\log 3Y)^{-1/5}\right)+O(\eta^{2/3}XY) 
\end{eqnarray*}
for any $\eta>0$, where
\[\sigma_\infty(Q_\y;W_3)=\int_{-\infty}^\infty\int_{\mathbb{R}^3}
W_3(\x)e(-\theta Q_\y(\x))\d x_0\d x_1\d x_2 \d \theta.\]
\end{proposition}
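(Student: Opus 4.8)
\textbf{Proof proposal for Proposition \ref{ultgoal}.}

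The plan is to estimate $S(X,Y)=\sum_{\y\in\Z^2_{\mathrm{prim}}}W_2(Y^{-1}\y)N(X,Q_\y;W_3)$ by inserting the decomposition \eqref{NME} and treating the main term and error term separately. First I would dispose of the contribution of $\y$ with $|C(\y)|$ small: as in Lemma \ref{BPCP+} (with a parameter $\delta\to0$), the points with $0<|C(\y)|\le\delta Y^3$ contribute $O(\delta^{2/3}XY)$ to $S(X,Y)$ once $X\ge Y$, and since $Y\gg1$ there are no $\y$ on the support of $W_2(Y^{-1}\y)$ with $C(\y)=0$; this will eventually be absorbed into the $O(\eta^{2/3}XY)$ term after relating $\delta$ to $\eta$. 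So it suffices to handle the range $\delta Y^3<|C(\y)|\ll Y^3$.

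For the error terms I would show $\sum_{\y}W_2(Y^{-1}\y)|\Err(X,Q_\y)|\ll XY(\log\log 3Y)^{-1/5}$, or something of that order. Here one uses the bound for $\Err(X,q)$ coming from Heath-Brown \cite[Cor.~2]{HBcirc} — roughly $\Err(X,q)\ll_\varepsilon (X^{3/2+\varepsilon}+\ldots)$ uniformly, sharpened on average by exploiting that $Q_\y$ varies — together with the condition $Y\le X\mathcal{E}^{-8}$ which creates enough room between $Y$ and $X$ (here $\mathcal{E}=\exp(\LL/\sqrt{\log\LL})$ as in \eqref{cEd}) to win a factor of size $(\log\log 3Y)^{-1/5}$. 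The genericity hypothesis (neither $\mathcal{M}$ nor $\mathcal{C}$ has a rational point, so $\rho=2$ by Lemma \ref{r2}) enters crucially in controlling the divisor-type weights $\kappa(\y)$ on average via Proposition \ref{kAL}, since the number of $\y$ with $|C(\y)|\sim C_0$ and many prime factors must be kept small; the sparsity of conics with a rational point (Serre's theorem, as discussed in the introduction) is what makes the averaged error genuinely small rather than just $O(XY)$.

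For the main term, $\Main(X,Q_\y;W_3)=X\,\sigma_\infty(Q_\y;W_3)\prod_p\sigma_p(Q_\y)$ in the circle-method normalisation, where $\sigma_\infty$ is the singular integral written in the statement and $\sigma_p(Q_\y)$ are the local densities of the conic $Q_\y(\x)=0$. The task is then to evaluate
\[
\sum_{\y\in\Z^2_{\mathrm{prim}}}W_2(Y^{-1}\y)\,\sigma_\infty(Q_\y;W_3)\prod_p\sigma_p(Q_\y).
\]
The singular series $\mathfrak{S}(Q_\y):=\prod_p\sigma_p(Q_\y)$ is a multiplicative function of $\y$ in the sense that each local factor depends only on $\y\bmod p^k$; I would expand $\mathfrak{S}(Q_\y)$ in terms of the congruence data recorded in \S\ref{padicdens}, in particular using Lemmas \ref{LC}, \ref{afl2} and the functions $f_C,f_M,f_D$, and using $\chi(p;\y)$ from Lemma \ref{LC} to capture whether the conic is locally split. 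One then sums over $\y$ dyadically in each residue class; the key point is that the average of $\mathfrak{S}(Q_\y)$ over $\y$ of size $Y$ should reproduce exactly $\mathfrak{S}_S=\prod_p\tau_p$ — this is where the identity $pf_M(p)+(p+1)f_C(p)=f_D(p)+p$ from Lemma \ref{afl2}, and the relation $\theta(p)=f_M(p)+f_C(p)-1$ together with Lemmas \ref{lem:f2} and \ref{earlier}, do the bookkeeping. The infinitely differentiable weight $W_2$ lets one smooth the sum over $\y$ via Poisson summation / Euler--Maclaurin, with the zero frequency giving $\int_{\R^2}W_2(\y)\sigma_\infty(Q_\y;W_3)\,\mathrm{d}\y\cdot\mathfrak{S}_S\cdot XY$ and the nonzero frequencies contributing to the error, using rapid decay of $\widehat{W_2}$ and cancellation in the resulting exponential sums twisted by $\chi(p;\y)$.

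The main obstacle will be the second step combined with the genericity-dependent averaging: controlling $\sum_\y W_2(Y^{-1}\y)\mathfrak{S}(Q_\y)$ with an error of size only $(\log\log 3Y)^{-1/5}$ smaller than the main term. Because only $O(Y^2/(\log Y)^{1/2})$ of the conics have a rational point (so $\mathfrak{S}(Q_\y)$ is "usually $0$" in a weighted sense), one is summing over a sparse, arithmetically delicate set, and the local factors at ramified primes and at primes dividing $\mathrm{Disc}(C)$ must be handled uniformly — this is precisely the difficulty flagged in the introduction. The $(\log\log 3Y)^{-1/5}$ shape of the error strongly suggests that the decisive input is a Shiu/Erd\H{o}s-type bound (as in Proposition \ref{kAL}) applied to the $\y$-sum of $\kappa(\y)$ against the smooth weight, together with partial summation over dyadic ranges of $|C(\y)|$; packaging all of this so that the constant out front is exactly $\mathfrak{S}_S$ (and not merely $\mathfrak{S}_S(1+o(1))$ with an uncontrolled $o(1)$) is the part I expect to require the most care.
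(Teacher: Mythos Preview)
Your overall shape is right --- split into $\Main+\Err$, dispose of small $|C(\y)|$ via Lemma~\ref{BPCP+}, and average $\mathfrak{S}(Q_\y)$ to recover $\mathfrak{S}_S$ --- but both of the two central steps are missing their actual mechanisms, and you have mislocated where the genericity hypotheses enter.

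For the error terms, the bound from \cite{HBcirc} alone is not what is used. The paper invokes \cite[Theorem~6]{HBconic} (stated here as Proposition~\ref{c2}), whose error term carries a factor $(\|Q_\y\|^2/\|\z(\y)\|_\infty)^{1/4}$, where $\z(\y)$ is a \emph{smallest} integer zero of $Q_\y$. One then splits according to whether $\|\z(\y)\|_\infty$ is large or small. When it is large the error is visibly small; when it is small, the pair $(\z(\y),\y)$ is itself a rational point on $S$ of height $\ll YZ$, so the number of such $\y$ is bounded by $N(U,2YZ)$ via Theorem~\ref{tub} --- a bootstrap back to the surface. This is exactly where $\mathcal{M}(\Q)=\emptyset$ is used (Lemma~\ref{zsmall}): if $Q_0=Q_1=0$ had a rational point $\z_0$, then \emph{every} conic would have $\|\z(\y)\|_\infty\le\|\z_0\|_\infty$, and Proposition~\ref{c2} would be useless. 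Proposition~\ref{kAL} plays no role here; your attribution of the genericity hypothesis to it is wrong.

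For the main terms, Poisson summation on $W_2$ is not the route taken and would face the obstacle that $\mathfrak{S}(Q_\y)$ is a genuinely arithmetic (non-smooth) function of $\y$. The paper instead covers the support of $W_2$ by small squares $\cl V$, freezes $W_2\sigma_\infty$ on each (Lemma~\ref{sib}), and reduces to evaluating $\sum_{\y\in\cl U}\mathfrak{S}(Q_\y)$. The key device is the global product formula for Hilbert symbols, which yields $\prod_{p\mid C(\y)/\beta}(1+\chi(p;\y))=\sum_{d\mid C(\y)/\beta}\chi(d;\y)$ and, via \eqref{hilb}, lets one apply Dirichlet's hyperbola method to restrict to $d\le Y^{3/2}$ (Lemma~\ref{de}). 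One then sorts $\y$ into lattices $\sL(a,P^5[d,f^2])$ and counts with Lemma~\ref{easyL2}; it is precisely in bounding the first successive minima of these lattices (Lemma~\ref{apply}) that the irreducibility of $C$ over $\Q$, i.e.\ $\mathcal{C}(\Q)=\emptyset$, is used. The identity from Lemma~\ref{earlier} then completes the $d$-sum. Your proposal correctly anticipates that Lemmas~\ref{afl2}, \ref{LC}, \ref{earlier} and the functions $f_C,f_M$ are the bookkeeping ingredients, but the Hilbert-reciprocity/hyperbola step and the lattice-minimum argument are the ideas you are missing, and the $(\log\log 3Y)^{-1/5}$ error actually arises from the square side $H\asymp Y/\log\log 3Y$ together with Lemma~\ref{second}, not from Proposition~\ref{kAL} applied to $\kappa(\y)$.
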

We emphasize that the first $O$-term, with a subscript $\eta$,
has an implied constant which depends on $\eta$, (as well as $Q_0$ and
$Q_1$), while the implied
constant for the second $O$-term is independent of $\eta$. We fix
$\eta$ throughout this section and it will appear frequently in our
results. 

During the course of the argument we will only introduce the
conditions on $\cl{M}$ and $\cl{C}$ as they arise, so that we will not
initially assume that $\rho=2$. The reader should recall that $\cl{M}$
is the variety 
$Q_0(\x)=Q_1(\x)=0$, while $\cl{C}$ is given by $C(\y)=0$.

\subsection{Summing the error terms}

Our goal in this section is to show that the error terms $\Err(X,Q_\y)$
are suitably small on average, in regions where $C(\y)$ is reasonably large. 
To describe this latter condition we introduce a small real parameter 
$\eta\in(0,1)$, and restrict to vectors $\y$ in the square $||\y||_\infty\le Y$ 
that satisfy $|C(\y)|\ge\eta Y^3$. To state our result we will use the
parameter $\mathcal{E}$. We note that
this grows faster than any power of
$\LL$, and more slowly than any positive power of $XY$. Moreover one
has $\tau(n)^A\ll_A \mathcal{E}$ for $n\ll (XY)^A$, for any positive
constant $A$.

We then have the following.
\begin{proposition}\label{cdiff}
  Let $X\ge Y\mathcal{E}^8$ and assume that the intersection
  $\cl{M}$, given by $Q_0=Q_1=0$, has no rational points.  Then
\[\sum_{\substack{\y\in\Zprim^2,\; |C(\y)|\ge \eta Y^3\\ ||\y||_{\infty}\le Y}}
|\Err(X,Q_{\y})|\ll_\eta XY\LL^{-1}.\]
\end{proposition}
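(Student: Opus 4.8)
The plan is to bound the total error by invoking an averaged (second-moment style) estimate for $\Err(X,Q_\y)$ over the family of conics, in the spirit of the large-sieve / delta-method averaging used by Heath-Brown for quadratic forms. I would proceed as follows.

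\medskip

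\noindent\textbf{Step 1: the shape of the error term.}
Recall from \cite{HBcirc} (Theorem 8 and Corollary 2) that
$$N(X,Q_\y)=\Main(X,Q_\y)+\Err(X,Q_\y),$$
and that the error term admits a bound of the form
$$\Err(X,Q_\y)\ll_\varepsilon X^{1+\varepsilon}\left(\tfrac{X}{\|Q_\y\|^{1/?}}+\cdots\right)+\text{(terms controlled by the modulus }\Delta=|C(\y)|).$$
More precisely, the delta-method analysis expresses $\Err(X,Q_\y)$ as a sum over moduli $q$ of incomplete exponential sums attached to $Q_\y \bmod q$, and the size of those sums is governed by $\gcd$-type quantities depending on $C(\y)$ and on the invariant $D_\y$ (the g.c.d.\ of the $2\times2$ minors), which by Lemma \ref{DyL} is $O(1)$. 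The first thing I would do is record the pointwise bound for a single conic and isolate the dependence on $\|\y\|_\infty\le Y$, $\Delta=|C(\y)|\ge \eta Y^3$, and $X$: because $X\ge Y\mathcal E^8$ is very large compared to $Y$, the ``large $X$'' regime is favourable and the main term dominates comfortably, leaving an error which is a small power of $\mathcal E$ smaller than $XY/|{\rm square count}|$ type quantities.

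\medskip

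\noindent\textbf{Step 2: averaging over $\y$.}
Rather than summing pointwise bounds (which would lose logarithms), I would open up $\Err(X,Q_\y)$ via its delta-method expansion and sum over $\y$ \emph{before} estimating the exponential sums. Write $\Err(X,Q_\y)=\sum_{q\le Q_0}c_q(\y)$ where $c_q(\y)$ involves a complete exponential sum $S_q(\y)$ mod $q$ and a smooth weighted integral. Summing $\sum_{\|\y\|\le Y}|c_q(\y)|$ and using multiplicativity of $S_q(\y)$ in $q$ together with standard bounds for quadratic exponential sums (Gauss-sum evaluations, bounds $|S_q(\y)|\ll q^{3/2}\gcd(q,\text{stuff})^{1/2}$), one reduces to sums of the form $\sum_{q}\sum_{\|\y\|\le Y}|S_q(\y)|/q^{k}$ which, after swapping order and using that the $\y$ with $p\mid C(\y)$ form a set controlled by $f_C(p)$ (Lemma \ref{fests}), become manageable. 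The hypothesis $|C(\y)|\ge\eta Y^3$ keeps the modulus $\Delta=|C(\y)|$ of size $\asymp_\eta Y^3$, which removes the small-$C(\y)$ pathologies, and the hypothesis that $\cl M$ has no rational point is what guarantees (via Lemma \ref{lem:C} applied over $\bar{\mathbb F}_p$ and $\bar{\mathbb Q}_p$, as in the proof of Lemma \ref{DyL}) that $Q_\y$ never degenerates too badly, so the Gauss sums do not blow up.

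\medskip

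\noindent\textbf{Step 3: extracting the saving $\LL^{-1}$.}
After the $\y$-average, one is left with a bound of the form $XY\cdot\mathcal E^{O(1)}\cdot X^{-\delta}$ or $XY\cdot\mathcal E^{O(1)}\cdot(Y/X)^{\delta}$ for some small $\delta>0$ coming from the genuine power saving in the circle-method error term. Since $X\ge Y\mathcal E^8$, we have $(Y/X)^\delta\le \mathcal E^{-8\delta}$; and because $\mathcal E=\exp(\LL/\sqrt{\log\LL})$ grows faster than any power of $\LL$, the product $\mathcal E^{O(1)}\cdot\mathcal E^{-8\delta}$ is $\le \mathcal E^{-c}$ for large $X,Y$, which is certainly $\ll\LL^{-1}$ (indeed much smaller). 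This is exactly why $\mathcal E$ was defined this way: it absorbs the $\tau(n)^A$ and $\mathcal E^{O(1)}$ losses while still being negligible against any fixed power of $\LL$. One then collects the contributions of all moduli $q$ (there are $O(\mathcal E)$ of them effectively, after the smooth weight cuts the tail) and concludes.

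\medskip

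\noindent\textbf{Main obstacle.}
The delicate point is \emph{Step 2}: carrying out the average of the complete exponential sums $S_q(Q_\y)$ over $\y$ uniformly, keeping track of how the singular-modulus factors depend on $C(\y)$ and on the minors $D_\y$. One must show that the ``bad'' $\y$ (those for which $Q_\y$ has extra congruence degeneracies mod some prime power) are sufficiently rare; this is where the no-rational-point hypothesis on $\cl M$ and the separability of $C$ (Lemma \ref{lem:C}) are essential, and where one needs the quantitative counts $f_C(p^e)\ll_p 1$, $f_M(p^e)\ll_p1$ from Lemma \ref{fests}. Handling these uniformly, rather than prime-by-prime, while retaining a genuine power saving in $X$, is the technical heart of the proposition.
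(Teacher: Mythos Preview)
Your sketch misses the key mechanism of the paper's proof, and your explanation of where the hypothesis $\cl{M}(\Q)=\emptyset$ enters is incorrect.

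The paper does \emph{not} open the delta-method expansion and average exponential sums over $\y$. Instead it imports a pointwise bound from \cite{HBconic} (Proposition~\ref{c2} here) of the shape
\[
\Err(X,Q_\y)\ll_\eta \mathcal{E}\Bigl\{1+\frac{X^{3/4}}{\|\y\|_\infty}\Bigl(\frac{\|\y\|_\infty^2}{\|\z(\y)\|_\infty}\Bigr)^{1/4}\Bigr\},
\]
where $\z(\y)$ is a minimal nontrivial integer zero of $Q_\y$. One then splits according to whether $\|\z(\y)\|_\infty\ge Z$ or $\le Z$. The large-zero case is summed directly (Lemma~\ref{RZ}). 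For the small-zero case the crucial observation is that the pair $(\z(\y),\y)$ is itself a rational point on the del Pezzo surface $S$ of height $\ll YZ$; since $\cl{M}(\Q)=\emptyset$ this point lies in $U$, and Theorem~\ref{tub} bounds the number of such $\y$ by $\ll\mathcal{E}YZ$ (Lemma~\ref{zsmall}). Optimising $Z=Y\mathcal{E}^{-3}$ gives $\ll\mathcal{E}^{-1/4}XY$.

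This is exactly where, and why, $\cl{M}(\Q)=\emptyset$ is needed: if $\cl{M}$ had a rational point $\z_0$, then $\z(\y)$ would be uniformly bounded by $\|\z_0\|_\infty$ for \emph{every} $\y$, the splitting would collapse, and Proposition~\ref{c2} would give nothing useful (the paper says this explicitly after Lemma~\ref{zsmall}). Your claim that the hypothesis merely prevents $Q_\y$ from ``degenerating too badly'' so that ``Gauss sums do not blow up'' is wrong: the non-degeneracy of $Q_\y$ modulo primes comes from $D_\y\ll 1$ (Lemma~\ref{DyL}), which holds regardless of whether $\cl{M}(\Q)$ is empty.

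As for your Step~2, summing the naive pointwise circle-method error $X^{3/4+\varepsilon}$ over $\asymp Y^2$ values of $\y$ falls well short of $XY\LL^{-1}$ in the regime $X\asymp Y\mathcal{E}^8$, since $\mathcal{E}$ is subpolynomial. Extracting extra cancellation by averaging the exponential sums over $\y$ before estimating is conceivable in principle, but you give no indication of how to carry this out for a two-parameter family of ternary forms, nor any substitute for the recursive count of small zeros. Without that, the proposal has a genuine gap.
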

We will comment later, in the proof of Lemma \ref{zsmall}, on the
condition that the intersection $Q_0=Q_1=0$ should have no rational
points.

We begin the argument by describing $\Main(X,q;W)$, which is the main term
 in (\ref{NME}), in generality. This will involve a product of $p$-adic
densities given by 
\[\mathfrak{S}(q)=\prod_p\sigma(p;q),\]
where
\begin{eqnarray*}
  \sigma(p;q)&=&(1-p^{-1})\lim_{k\to\infty}p^{-2k}\card\{\x\Mod{p^k}:\,
  q(\x)\equiv 0\Mod{p^k}\}\\
  &=&\lim_{k\to\infty}p^{-2k}\card\{\x\Mod{p^k}:\,p\nmid\x,
  q(\x)\equiv 0\Mod{p^k}\},
\end{eqnarray*}
(see Heath-Brown \cite[Theorem 8 and Corollary 2]{HBcirc}).  The reader
should note that this is the $p$-adic density for $q=0$ seen as a
projective variety, while papers on the circle method often work with
the $p$-adic density for the affine cone.

We also need to define the singular integral 
\beql{quant}
\sigma_{\infty}(q;W)=
\lim_{T\to\infty}\int_{\R^3}W(\x)K_T(q(\x))\d x_0\d x_1\d x_2,
\eeq
where
\[  K_T(t)=T\max\{1-T|t|\,,\,0\}.\]
Equivalently we have
\[\sigma_{\infty}(q;W)=\int_{-\infty}^\infty J(\theta,q;W)\d \theta\]
with
\beql{Jdef}
J(\theta,q;W)=\int_{\R^3}W(\x)e(-\theta q(\x))\d x_0\d x_1\d x_2.
\eeq
In the special case $W=W_3$ we just set
\[\sigma_{\infty}(q)=\sigma_{\infty}(q;W_3).\]
 
With this notation we have
\[N(X,q;W)\sim \Main(X,q;W)\;\;\; (X\to\infty),\]
with
\[\Main(X,q;W)=\tfrac12\sigma_{\infty}(q;W)\mathfrak{S}(q)X,\]
see Heath-Brown \cite[Theorem 8]{HBcirc}.
\bigskip

Before proceeding further we present some estimates for
$\sigma(p;Q_\y)$, $\mathfrak{S}(Q_\y)$ and $\sigma_\infty(Q_\y;W)$.
We can use Lemma \ref{MLQB} to estimate $N(X,Q_\y)$. In view of 
Lemma \ref{DyL} we find, on letting $X\to\infty$, that
\[\sigma_{\infty}(Q_\y)\mathfrak{S}(Q_\y)\ll \frac{\kappa(\y)}{|C(\y)|^{1/3}}.\]
A second application of Lemma \ref{MLQB}, along with the definition 
(\ref{NME}) now shows that
\beql{fSe1}
\Err(X,Q_\y)\ll \kappa(\y)\left\{1+\frac{X}{|C(\y)|^{1/3}}\right\}.
\eeq
This will allow us to estimate averages of $\Err(X,Q_\y)$ when 
$C(\y)$ is small, by summing over small squares, using Proposition \ref{kAL}.

We will need information about the local densities $\sigma(p;Q_\y)$.
These are discussed elsewhere,
see Frei, Loughran and Sofos \cite[Propositions 3.4, 3.5 \&
  3.6]{FLS16}, for example. However we establish enough for our
purposes here. We assume throughout that $\y\in\Z^2$ is primitive,
with $C(\y)\not=0$.  It will be convenient to define
\beql{ssd}
\sigma^*(m;\y)=
m^{-2}\card\{\x\Mod{m}:\,\gcd(\x,m)=1,\,Q_{\y}(\x)\equiv 0\Mod{m}\},
\eeq
so that $\sigma(p,Q_\y) = \lim_{k \to \infty} \sigma^*(p^k,\y).$
We begin by examining $\sigma(p;Q_\y)$ in two easy cases.
\begin{lemma}\label{2ec}
  If $p\nmid C(\y)$ then
  \[\sigma(p;Q_\y)=1-p^{-2}=\sigma^*(p^k;\y), \quad \text{ for all }k\ge 1.\]
  Moreover if $p|| C(\y)$ then
  \[\sigma(p;Q_\y)=(1-p^{-1})(1+\chi(p;\y))=\sigma^*(p^k;\y),
  \quad \text{ for all }k\ge 2.\]
Lastly, when $p|| C(\y)$ we have
\[\chi(p;\y)=\frac{\sigma^*(p;\y)}{1-p^{-1}}-1-p^{-1}.\]
\end{lemma}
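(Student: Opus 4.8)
\textbf{Proof plan for Lemma \ref{2ec}.}

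The plan is to compute the local density $\sigma(p;Q_\y)$ in each of the two cases by a direct Hensel-lifting argument, working from the definition $\sigma(p;Q_\y)=\lim_{k\to\infty}\sigma^*(p^k;\y)$ in \eqref{ssd}. First, suppose $p\nmid C(\y)$. Then $Q_\y$ is a nonsingular quadratic form over $\F_p$, so its projective zero set $\{Q_\y=0\}\subset\P^2_{\F_p}$ is a smooth conic; it has exactly $p+1$ points, and since each lies on the affine cone minus the origin, the number of $\x\Mod p$ with $p\nmid\x$ and $Q_\y(\x)\equiv 0$ is $(p-1)(p+1)=p^2-1$. Thus $\sigma^*(p;\y)=1-p^{-2}$. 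By the Jacobian criterion, every such $\F_p$-point is smooth on $\{Q_\y=0\}$, so Hensel's lemma lifts each point modulo $p^k$ uniquely to one modulo $p^{k+1}$; hence $\sigma^*(p^k;\y)=1-p^{-2}$ for all $k\ge 1$, and this is also $\sigma(p;Q_\y)$.

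Next, suppose $p\| C(\y)$. By Lemma \ref{lem:C} (applied over $\bar\F_p$) the reduction of $Q_\y$ modulo $p$ has rank exactly $2$. Such a form has $2p+1$ projective zeros over $\F_p$ if it splits (i.e.\ $\chi(p;\y)=+1$) and exactly $1$ if it does not (i.e.\ $\chi(p;\y)=-1$); in either case the number of primitive solutions $\x\Mod p$ is $(p-1)(1+\chi(p;\y))\cdot$(correction at the singular point). The cleaner route is to count directly modulo $p^k$: after a $\Z_p$-change of variables we may assume $Q_\y\equiv u_0 x_0^2+u_1 x_1^2\Mod{p^k}$ up to a unit times $p\cdot x_2^2$-type terms — more precisely, since $Q_\y$ has a one-dimensional radical modulo $p$ and $p\|C(\y)$, one diagonalises over $\Z_p$ as $Q_\y\sim a_0 x_0^2+a_1 x_1^2+a_2 x_2^2$ with $p\nmid a_0a_1$ and $p\|a_2$. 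Counting primitive solutions of $a_0x_0^2+a_1x_1^2+a_2x_2^2\equiv 0\Mod{p^k}$ for $k\ge 2$: if $p\nmid x_0$ or $p\nmid x_1$ then $a_0x_0^2+a_1x_1^2\equiv -a_2x_2^2\Mod{p^k}$ forces (looking mod $p$) that $-a_0^{-1}a_1$ is a square, i.e.\ $\chi(p;\y)=+1$, and a short Hensel count gives $(1-p^{-1})\cdot 2\cdot p^{2k}$ solutions in the split case and $0$ in the nonsplit case; the remaining solutions (with $p\mid x_0,x_1$, hence $p\nmid x_2$) are governed by $a_2x_2^2\equiv 0\Mod{p^k}$ with $p\|a_2$, impossible for $k\ge 2$. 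Dividing by $p^{2k}$ yields $\sigma^*(p^k;\y)=(1-p^{-1})(1+\chi(p;\y))$ for all $k\ge 2$, which is the stated value of $\sigma(p;Q_\y)$. Finally, the $k=1$ formula: the count at level $p$ must also include the contribution from the singular point of the rank-$2$ conic, which contributes the discrepancy, and carrying out the explicit $p$-adic count at $k=1$ gives $\sigma^*(p;\y)=(1-p^{-1})(1+p^{-1}+\chi(p;\y))$; solving for $\chi(p;\y)$ produces the displayed formula $\chi(p;\y)=\sigma^*(p;\y)/(1-p^{-1})-1-p^{-1}$.

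The main obstacle is the bookkeeping at level $k=1$ in the rank-$2$ case: one must correctly account for the single singular point on the (split or nonsplit) degenerate conic, which is why $\sigma^*(p;\y)$ differs from $\sigma^*(p^k;\y)$ for $k\ge 2$ by the term $(1-p^{-1})p^{-1}$. I would handle this by the explicit diagonal model above — reducing to counting primitive solutions of $a_0x_0^2+a_1x_1^2+a_2x_2^2\equiv 0\Mod p$ with $p\nmid a_0a_1$, $p\mid a_2$ — which splits cleanly into the case $(x_0,x_1)\not\equiv(0,0)$, giving $(p-1)(1+\chi(p;\y))p/2\cdot(\text{units})$ after a Legendre-symbol computation, and the case $x_0\equiv x_1\equiv 0$, $p\nmid x_2$, giving exactly $p-1$ further solutions; together these total $(p-1)(1+p^{-1}+\chi(p;\y))p$ primitive solutions modulo $p$, hence $\sigma^*(p;\y)=(1-p^{-1})(1+p^{-1}+\chi(p;\y))$ as claimed. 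Everything else is routine Hensel lifting.
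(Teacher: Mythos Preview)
Your proposal is correct and follows essentially the same route as the paper: both arguments handle $p\nmid C(\y)$ by counting the $p+1$ points on a smooth conic and lifting via Hensel, and both handle $p\| C(\y)$ by diagonalising $Q_\y$ over $\Z_p$ as $a_0x_0^2+a_1x_1^2+a_2x_2^2$ with $p\nmid a_0a_1$, $p\|a_2$, then counting directly. The only cosmetic difference is that in the split case the paper makes one further change of variable to the form $x_0x_1+pdx_2^2$, which streamlines the $k\ge 2$ count slightly; your version, staying with the diagonal model and invoking Hensel at the smooth points while ruling out the singular branch for $k\ge 2$, reaches the same numbers. Your $k=1$ bookkeeping is a little loosely written (the expression ``$(p-1)(1+\chi(p;\y))p/2\cdot(\text{units})$'' should simply read $2(p-1)p$ in the split case and $0$ in the nonsplit case), but the final totals $(p-1)$ and $2p^2-p-1$, and hence $\sigma^*(p;\y)=(1-p^{-1})(1+p^{-1}+\chi(p;\y))$, are exactly what the paper obtains.
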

We refer the reader to the preamble to Lemma~\ref{LC}, where
$\chi(p;\y)$ was defined. We should also point out
that we arranged in \S \ref{sec:conventions}
that $C$ should vanish identically modulo $6^3$. Thus the primes of
relevance here all have $p\ge 5$. 
\begin{proof}
  When $p\nmid C(\y)$ one has
\[\card\{\x\Mod{p^k}:\,p\nmid\x,Q_{\y}(\x)\equiv 0\Mod{p^k}\}\hspace{2cm}\]
\[\hspace{2cm}=
p^{2k-2}\card\{\x\Mod{p}:\,p\nmid\x,Q_{\y}(\x)\equiv 0\Mod{p}\}\]
for $k\ge 1$, and one then finds that $\sigma(p;Q_\y)=1-p^{-2}$. If
$p|| C(\y)$ then $Q_\y$ can be diagonalised
over $\Z_p$ as $ax_0^2-bx_1^2+pcx_2^2$ for certain integers $a,b,c$
coprime to $p$. If $ab$ is a quadratic non-residue of $p$ one then
sees that  
\[\card\{\x\Mod{p^k}:\,p\nmid\x,Q_{\y}(\x)\equiv 0\Mod{p^k}\}=0\]
for $k\ge 2$, so that $\sigma(p;Q_\y)=0$. On the other hand if
$ab$ is a quadratic residue of $p$ then $Q_\y$ is equivalent over
$\Z_p$ to $x_0x_1+pdx_2^2$, with $p\nmid d$. If $p^k\mid x_0x_1+pdx_2^2$
with $k\ge 2$ and $p\nmid\x$ then $p$ divides exactly one of $x_0$ and
$x_1$, and one finds that
\[\card\{\x\Mod{p^k}:\,p\nmid\x,Q_{\y}(\x)\equiv 0\Mod{p^k}\}=
2p^k(p^k-p^{k-1}),\]
whence $\sigma(p;Q_\y)=2(1-p^{-1})$ in this case. Moreover one sees
that $\sigma^*(p;\y)=p^{-2}(p-1)$ when $ab$ is a non-residue of $p$ and
$\sigma^*(p;\y)=p^{-2}(2p^2-p-1)$ when $ab$ is a quadratic
residue. Thus
\[\sigma^*(p;\y)=(1-p^{-1})\{1+p^{-1}+\chi(p;\y)\}\]
for $p||C(\y)$, and the final assertion of the lemma follows.
\end{proof}

For the remaining
primes the expression for $\sigma(p;Q_\y)$ is potentially more
complicated, but we do have the following result.
\begin{lemma}\label{ymod1}
  Let $p$ be prime, and suppose that 
  $p^d|| C(\y)$ with $d\ge 0$.  Then for odd primes $p$ and $k\ge 2d+1$ we have
\[\sigma(p;Q_\y)=\sigma^*(p^k;\y),\]
while for $p=2$ and $k\ge 2d+3$ we have
\[\sigma(2;Q_\y)=\sigma^*(2^k;\y),\]
\end{lemma}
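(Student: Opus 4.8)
The plan is to use the identity $\sigma(p;Q_{\y})=\lim_{k\to\infty}\sigma^{*}(p^{k};\y)$ recorded just before Lemma~\ref{2ec}, and to show that this limit is already attained at the stated threshold. Put $d'=d+v_{p}(2)$, so that $2d'+1$ equals $2d+1$ for odd $p$ and $2d+3$ for $p=2$; it then suffices to prove $\sigma^{*}(p^{k+1};\y)=\sigma^{*}(p^{k};\y)$ for every $k\ge 2d'+1$. Recall from \S\ref{sec:conventions} that $A_{\y}=y_{0}Q_{0}+y_{1}Q_{1}$ is an integral symmetric matrix, with $Q_{\y}(\x)=\x^{T}A_{\y}\x$, $\nabla Q_{\y}(\x)=2A_{\y}\x$ and $\det A_{\y}=C(\y)$. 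The first step is the a priori bound $v_{p}(\nabla Q_{\y}(\x))\le d'$ for every primitive $\x\in\Z_{p}^{3}$: comparing $p$-adic valuations in the adjugate identity $\mathrm{adj}(A_{\y})\,\nabla Q_{\y}(\x)=2C(\y)\x$, the left side has valuation at least $v_{p}(\nabla Q_{\y}(\x))$ since $\mathrm{adj}(A_{\y})$ has integral entries, while the right side has valuation exactly $v_{p}(2)+d=d'$ because $\x$ is primitive and $p^{d}\|C(\y)$.

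The second step is a one-variable Newton (Hensel) argument. Fix $k\ge 2d'+1$, let $\x_{0}\in\Z_{p}^{3}$ be primitive with $v_{p}(Q_{\y}(\x_{0}))\ge k$, and set $e=v_{p}(\nabla Q_{\y}(\x_{0}))\le d'$, so that $k\ge 2e+1$. Picking a coordinate $\mathbf{e}_{j}$ with $v_{p}\bigl((\nabla Q_{\y}(\x_{0}))_{j}\bigr)=e$ and expanding $f(\lambda)=Q_{\y}(\x_{0}+\lambda\mathbf{e}_{j})$, one has $v_{p}(f(0))\ge k>2e=2v_{p}(f'(0))$, so Newton's lemma produces a unique $\lambda^{*}\in\Z_{p}$ with $f(\lambda^{*})=0$ and $v_{p}(\lambda^{*})\ge k-e\ge e+1$. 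Hence $\x^{*}:=\x_{0}+\lambda^{*}\mathbf{e}_{j}$ is a genuine primitive $\Z_{p}$-zero of $Q_{\y}$, congruent to $\x_{0}$ modulo $p^{k-e}$, and $v_{p}(\nabla Q_{\y}(\x^{*}))=e$ because $\nabla Q_{\y}$ is $\Z_{p}$-linear and $v_{p}(\x^{*}-\x_{0})\ge k-e>e$. Conversely, if $\x^{*}$ is a primitive zero of $Q_{\y}$ with $v_{p}(\nabla Q_{\y}(\x^{*}))=e\le d'$ and $\mathbf{h}\in p^{\,k-e}\Z_{p}^{3}$, then $Q_{\y}(\x^{*}+\mathbf{h})=\nabla Q_{\y}(\x^{*})\cdot\mathbf{h}+Q_{\y}(\mathbf{h})$ has valuation at least $\min\bigl(e+(k-e),\,2(k-e)\bigr)\ge k$, and $v_{p}(\nabla Q_{\y}(\x^{*}+\mathbf{h}))=e$. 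Thus, letting $Z_{e}$ denote the set of primitive $\Z_{p}$-zeros $\x^{*}$ of $Q_{\y}$ with $v_{p}(\nabla Q_{\y}(\x^{*}))=e$, the primitive solutions of $Q_{\y}(\x)\equiv 0\ (\mathrm{mod}\ p^{k})$ with $v_{p}(\nabla Q_{\y}(\x))=e$ are precisely the residues modulo $p^{k}$ in $\bigcup_{\x^{*}\in Z_{e}}\bigl(\x^{*}+p^{\,k-e}\Z_{p}^{3}\bigr)$, and as balls of radius $p^{-(k-e)}$ these are pairwise equal or disjoint.

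The third step is the count. Inside one ball $\x^{*}+p^{\,k-e}\Z_{p}^{3}$ with $\x^{*}\in Z_{e}$, solve $Q_{\y}(\x^{*}+\mathbf{h})=0$ for $h_{j}$ in terms of the remaining two coordinates: since $\mathbf{h}$ ranges over $p^{\,k-e}\Z_{p}^{3}$ and $k-e>e$, the coefficient of $h_{j}$ in this quadratic has valuation exactly $e$ throughout the ball, so Hensel's lemma identifies $Z\cap\bigl(\x^{*}+p^{\,k-e}\Z_{p}^{3}\bigr)$ with the graph of an analytic function on a two-dimensional ball of radius $p^{-(k-e)}$ taking values in $p^{\,k-e}\Z_{p}$. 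Hence the number $N_{e}(m)$ of residues of $Z_{e}$ modulo $p^{m}$ satisfies $N_{e}(m+1)=p^{2}N_{e}(m)$ for all $m\ge k-e$. As the solutions counted above number $N_{e}(k-e)\,p^{3e}$, their contribution to $\sigma^{*}(p^{k};\y)$ is $p^{-2k}N_{e}(k-e)\,p^{3e}=N_{e}(k-e)\,p^{3e-2k}$, and the recursion $N_{e}(k+1-e)=p^{2}N_{e}(k-e)$ shows this is unchanged when $k$ is replaced by $k+1$. Summing over $e\in\{0,1,\dots,d'\}$ gives $\sigma^{*}(p^{k+1};\y)=\sigma^{*}(p^{k};\y)$ for $k\ge 2d'+1$, hence equality with the limit $\sigma(p;Q_{\y})$, which is the claim ($2d'+1=2d+1$ for odd $p$ and $=2d+3$ for $p=2$). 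The main obstacle is this last bookkeeping: checking that the Newton balls attached to distinct zeros are equal or disjoint, that together they exhaust all primitive solutions once $k\ge 2d'+1$ (which is exactly where existence and uniqueness in Newton's lemma are used), and that the local graph description really produces the clean multiplicative factor $p^{2}$; for $p=2$ the only new feature is the extra $v_{2}(2)=1$ in the a priori bound, and this is precisely what raises the threshold from $2d+1$ to $2d+3$.
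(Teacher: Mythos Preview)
Your argument is correct and proceeds by a genuinely different route from the paper. Both proofs rest on the same a priori gradient bound $v_p(\nabla Q_\y(\x))\le d+v_p(2)$ for primitive $\x$, obtained from the adjugate identity. From there the paper uses character orthogonality: writing the point count as $p^{-j}\sum_{u\bmod p^j}S(uQ_\y;p^j)$ with $S$ a Gauss-type sum, it splits off the terms with $p\mid u$ to get a recursion and then shows that the terms with $p\nmid u$ vanish once $j\ge 2d+2$ (respectively $2d+4$), because the inner sum over a coset $\x_1+p^h\x_2$ already vanishes by the gradient bound. Your approach instead lifts each approximate solution to a genuine $\Z_p$-zero via one-variable Newton, stratifies by $e=v_p(\nabla Q_\y)$, and counts residues of the $p$-adic conic directly through the implicit-function parametrisation. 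The paper's route is shorter and avoids any stratification; yours is more geometric and makes the $p^2$-to-$1$ lifting ratio explicit. One step that deserves a sentence more than you give it: the inference ``graph of an analytic function $\Rightarrow N_e(m+1)=p^2N_e(m)$'' requires the implicit function $g$ to be $1$-Lipschitz in the $p$-adic metric. This holds precisely because you chose $j$ so that $(\nabla Q_\y)_j$ has the \emph{minimal} valuation $e$ among all components, whence $|\partial g/\partial h_i|_p=|(\nabla Q_\y)_i/(\nabla Q_\y)_j|_p\le 1$; equivalently, a second application of Hensel at a nearby base point shows $v_p(g(\mathbf{u})-g(\mathbf{u}'))\ge v_p(\mathbf{u}-\mathbf{u}')$ whenever the latter is at least $k-e$.
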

\begin{proof}
We begin by observing that
\[\card\{\x\Mod{p^j}:\,p\nmid\x,Q_{\y}(\x)\equiv 0\Mod{p^j}\}=
p^{-j}\sum_{u\Mod{p^j}}S(uQ_\y;p^j),\]
where
\[S(q;p^j)=\sum_{\substack{\x\Mod{p^j}\\ p\nmid\x}}e_{p^j}(q(\x)).\]
If $j\ge 2$ we have $S(uQ_\y;p^j)=p^3S(p^{-1}uQ_\y;p^{j-1})$ when $p\mid u$, 
and we deduce that
\begin{eqnarray*}
  \lefteqn{\card\{\x\Mod{p^j}:\,p\nmid\x,Q_{\y}(\x)\equiv
    0\Mod{p^j}\}}\hspace{2cm}\\
  &=&p^2\card\{\x\Mod{p^{j-1}}:\,p\nmid\x,Q_{\y}(\x)\equiv
  0\Mod{p^{j-1}}\}\\
  &&\hspace{2cm}\mbox{}+p^{-j}\sum_{\substack{u\Mod{p^j}\\ p\nmid u}}S(uQ_\y;p^j)
\end{eqnarray*}
whenever $j\ge 2$. We now claim that if $p\nmid u$ then $S(uQ_\y;p^j)$
vanishes when 
$j\ge 2d+2$ and $p\ge 3$, and for $j\ge 2d+4$ when $p=2$. Hence 
$\sigma^*(p^j;\y)=\sigma^*(p^{j-1};\y)$ for $j\ge 2d+2$ or $j\ge 2d+4$
as appropriate, which will suffice for the lemma. 

To establish the claim we choose $h=j-d-1$ for $p$ odd, and $h=j-d-2$
for $p=2$, and we write 
\[S(uQ_\y;p^j)=\sum_{\substack{\x_1\Mod{p^h}\\ p\nmid\x_1}}\;\; \sum_{\x_2\Mod{p^{j-h}}}
e_{p^j}(uQ_\y(\x_1+p^h\x_2)).\]
Since we will necessarily have $2h\ge j$ we may deduce that
\[Q_\y(\x_1+p^h\x_2)\equiv Q_y(\x_1)+p^h\x_2.\nabla
Q_\y(\x_1)\Mod{p^j}, \]
whence the sum over $\x_2$ will vanish unless
$p^j| p^hu\nabla Q_\y(\x_1)$. However both $u$ and $\x_1$ are
coprime to $p$ so that this condition would imply that the matrix for
$2Q_\y$ has rows that are linearly dependent modulo $p^{j-h}$. When $p$ is 
odd it would follow that $p^{j-h}\mid C(\y)$, while for $p=2$ the corresponding 
conclusion would be that $2^{j-h-1}\mid C(\y)$. This yields a contradiction,  
establishing the claim.
\end{proof}

 We will also need some general bounds for $\sigma^*(p^k;\y)$,
 applicable even when $p^2|C(\y)$. 
 \begin{lemma}\label{gb2}
 Let $D_{\y}$ be the highest common factors of the $2\times 2$ minors
 of the matrix for $Q_{\y}$. Then if $p^d|| C(\y)$ and $p\nmid D_{\y}$
 we have $\sigma^*(p^k;Q_{\y})\le d+1$ whenever $k>d\ge 0$. 
 \end{lemma}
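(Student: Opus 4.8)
The plan is to localise: since $\GL_3(\Z_p)$ acts bijectively on $(\Z/p^k\Z)^3$ preserving primitivity, and rescaling $Q_\y$ by a $p$-adic unit does not alter its zero locus, the number $\sigma^*(p^k;\y)$ depends only on the $\Z_p$-equivalence class of $Q_\y$. For $p$ odd the hypothesis $p\nmid D_\y$, together with $p^d\|C(\y)=\det Q_\y$, forces the (sorted) valuations of the elementary divisors of the Gram matrix of $Q_\y$ to be $0,0,d$; hence $Q_\y$ is $\Z_p$-equivalent to $h(x_0,x_1)+p^d u_2 x_2^2$ with $u_0,u_1,u_2\in\Z_p^\times$ and $h(x_0,x_1)=u_0x_0^2+u_1x_1^2$ a $p$-adically nonsingular binary form. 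Thus for odd $p$ it suffices to show that there are at most $(d+1)p^{2k}$ primitive $\x\bmod p^k$ with $h(x_0,x_1)+p^d u_2x_2^2\equiv 0\Mod{p^k}$. If $d=0$ then $Q_\y$ is nonsingular mod $p$ and $\sigma^*(p^k;\y)=1-p^{-2}\le 1$, so we may assume $d\ge 1$.

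First suppose $h$ is anisotropic over $\Q_p$, equivalently $\bar h$ is anisotropic over $\F_p$. Then $v_p(h(x_0,x_1))=2\min(v_p x_0,v_p x_1)$, so if $p\mid x_2$ and the congruence holds then $p$ divides $x_0$ and $x_1$ as well, violating primitivity; hence $p\nmid x_2$ for every solution, $v_p(h(x_0,x_1))$ must equal $d$, and there are no solutions unless $d$ is even. Writing $x_i=p^{d/2}\xi_i$ in that case and using that the fibres of the norm form $h=c$ over $\F_p$ (for $c$ a unit) have size $p+1$ and lift smoothly by Hensel, one finds the number of primitive solutions is exactly $(1-p^{-2})p^{2k}$ (and $0$ when $d$ is odd); either way this is $\le p^{2k}\le(d+1)p^{2k}$.

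Now suppose $h$ is isotropic, so (using $p$ odd) $h=u_1\ell_1\ell_2$ for a $\Z_p$-basis $\ell_1,\ell_2$ of $\Z_p^2$. Rename $(\ell_1,\ell_2,x_2)=(s,t,r)$, so we count primitive $(s,t,r)\bmod p^k$ with $st\equiv -up^dr^2\Mod{p^k}$, $u\in\Z_p^\times$. Stratify by $w=v_p(r)$ and $(a,b)=(v_p(s),v_p(t))$: since $v_p(st)=a+b$ and $v_p(up^dr^2)=d+2w$, a solution forces either $a+b=d+2w<k$ (the ``matching'' regime) or $a+b\ge k$ and $d+2w\ge k$. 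In the matching regime, fixing the unit parts of $s$ and $r$ determines the unit part of $t$ modulo $p^{k-(a+b)}$, and a direct count yields $(1-p^{-1})^2p^{-w}p^{2k}$ solutions for each admissible $(a,b,w)$, independently of $a,b$. For $w=0$ (then $r$ is a unit and primitivity is automatic) this occurs for each of the $d+1$ pairs $(a,b)$ with $a+b=d$, contributing exactly $(d+1)(1-p^{-1})^2p^{2k}$ — this is the source of the factor $d+1$. For $w\ge 1$ primitivity forces $\{a,b\}=\{0,d+2w\}$, contributing $2(1-p^{-1})^2p^{-w}p^{2k}$, with total over $w\ge 1$ at most $2(1-p^{-1})p^{2k}/p$. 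In the regime $a+b\ge k$, $d+2w\ge k$, primitivity forces one of $s,t$ to vanish mod $p^k$, and one bounds the count by $2(p-1)p^{2k-2}=2(1-p^{-1})p^{2k}/p$. Summing, the number of primitive solutions is at most
\[
(1-p^{-1})\bigl[(d+1)(1-p^{-1})+4/p\bigr]p^{2k}=(1-p^{-1})\bigl[(d+1)+(3-d)/p\bigr]p^{2k},
\]
which is $\le (d+1)p^{2k}$ for all $d\ge 1$ (equivalently $(p-1)(3-d)\le p(d+1)$). This proves the lemma for odd $p$.

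For $p=2$ one has $d\ge 3$ by the running convention that $6^3\mid C$. The rank-$2$ unimodular Jordan constituent of $Q_\y$ over $\Z_2$ is, from the short list of such forms, either anisotropic (handled as above) or the even hyperbolic plane $2x_0x_1$ (or $2x_0^2+2x_0x_1+2x_1^2$); in the latter sub-cases the same stratification applies, the only differences being that $2$-adic squares are detected modulo $8$ rather than $p$, so the Hensel counts carry harmless $O(1)$ factors which are absorbed by $d+1\ge 4$. I expect the main obstacle to be exactly this bookkeeping — checking that the residual strata ($w\ge 1$ in the matching regime, the $a+b\ge k$ regime, and the $p=2$ Hensel adjustments) together contribute strictly less than the slack $(d+1)-(d+1)(1-p^{-1})^2$ left by the leading term, so that the clean bound $d+1$, rather than $d+1+O(1)$, is what comes out.
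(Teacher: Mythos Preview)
Your argument for odd $p$ is correct and gives the bound $\sigma^*(p^k;\y)\le d+1$. However, you have overlooked that the case $p=2$ simply does not occur: under the paper's standing convention (\S\ref{sec:conventions}) the matrices of $Q_0,Q_1$ lie in $6\,\mathrm{M}_3(\Z)$, so every $2\times 2$ minor of $Q_\y$ is divisible by $36$, whence $2\mid D_\y$ always. Thus $p\nmid D_\y$ forces $p$ odd, and your hedged final paragraph on $p=2$ can be deleted rather than completed. The paper's proof makes exactly this observation in its first line.

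The methods are genuinely different. After diagonalising modulo $p^k$ as $A_0x_0^2+A_1x_1^2+A_2x_2^2$ with $p\nmid A_0A_1$ and $p^d\|A_2$, the paper proves $N(p;k,d)\le(d+1)p^{2k}$ by induction on $d$: solutions with $p\mid x_0,x_1$ (hence $p\nmid x_2$) contribute at most $p^4N(p;k-2,d-2)\le(d-1)p^{2k}$, while solutions with $p\nmid x_0x_1$ contribute at most $2p^{2k}$ by an elementary count over $\F_p$ followed by Hensel lifting. Your approach instead splits on whether the binary part is isotropic and, in the isotropic case, stratifies directly by the valuation triple $(v_p s,v_p t,v_p r)$; this is more laborious but yields the sharper information that the leading contribution is exactly $(d+1)(1-p^{-1})^2p^{2k}$ from the $w=0$ stratum, with all remaining strata contributing $O(p^{2k-1})$. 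The paper's induction is shorter and avoids the isotropy case split; your stratification makes the origin of the factor $d+1$ transparent.
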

 Since $D_{\y}\ll 1$, by Lemma \ref{DyL}, the lemma applies to all $p\gg 1$.
 
 \begin{proof}
 Since $p\nmid D_\y$ the prime $p$ must be odd. We can therefore
 diagonalize $Q_\y$ modulo $p^k$, as $A_0x_0^2+A_1x_1^2+A_2x_2^2$. The
 assumption $p\nmid D_\y$ shows that at most one of the coefficients
 $A_i$ is a multiple of $p$, and we will assume without loss of
 generality that $p^d|| A_2$ and $p\nmid A_0 A_1$. Under these
 conditions we write 
 \[N(p;k,d)=\card\{\x\Mod{p^k}: p\nmid \x,\, p^k\mid A_0x_0^2+A_1x_1^2+A_2x_2^2\},\]
Our goal is then to show that 
\beql{ine}
N(p;k,d)\le(d+1)p^{2k}
\eeq
for $k>d\ge 0$.

We argue by induction on $d$. The cases $d=0$ and $d=1$ of (\ref{ine}) follow 
directly from the methods used to establish Lemma \ref{2ec}, so we assume that 
$d\ge 2$. In general, we will have two cases. When $p$ divides both $x_0$ and 
$x_1$ the condition $p\nmid\x$ implies that $p\nmid x_2$. Writing 
$x_0=py_0,x_1=py_1$ and $x_2=y_2$ we then see that the corresponding contribution to $N(p;k,d)$ is 
\begin{eqnarray*}
\lefteqn{\card\{y_0,y_1\Mod{p^{k-1}},\, y_2\Mod{p^k}: p\nmid y_2,\, p^{k-2}\mid A_0y_0^2+A_1y_1^2+p^{-2}A_2y_2^2\}}\hspace{1cm}\\
&=&
p^4\card\{\y\Mod{p^{k-2}}: p\nmid y_2,\, p^{k-2}\mid A_0y_0^2+A_1y_1^2+p^{-2}A_2y_2^2\}\\
&\le&p^4 N(p;k-2,d-2).
\end{eqnarray*}
At the last step we merely use the fact that $p\nmid y_2$ implies $p\nmid\y$. By our 
induction assumption the contribution to $N(p;k,d)$ from solutions with $p\mid \x$ 
is therefore at most $(d-1)p^{2k}$.

Alternatively, if $p\nmid x_0$ say, then we also have $p\nmid x_1$, and vice-versa. For any 
given value of $x_2$ there are at most $2p$ solutions $z_0,z_1$ of \[A_0z_0^2+A_1z_1^2\equiv -A_2x_2^2\Mod{p}\]
with $p\nmid z_0 z_1$, and each of these lifts to $p^{k-1}$ solutions modulo $p^k$.
 It follows that
 \[\card\{\x\Mod{p^{k}}: p\nmid x_0 x_1,\, p^{k}\mid A_0x_0^2+A_1x_1^2+p^{-2}A_2x_2^2\}
 \le 2p^{2k}.\]
 Combining this with our result for the case in which $p$ divides both $x_0$ and $x_1$ we find that
  \[N(p;k,d) \le (d-1)p^{2k}+2p^{2k}=(d+1)p^{2k}\]
 which completes the induction step.
 \end{proof}

For primes dividing $D_\y$ we will use the following crude bound.
\begin{lemma}\label{gb}
  When $\gcd(\y,p)=1$ we have
  \[\sigma^*(p^k;\y)\ll k\]
  for $k\ge 1$.
\end{lemma}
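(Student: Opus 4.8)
Recalling the definition \eqref{ssd}, the assertion $\sigma^*(p^k;\y)\ll k$ is equivalent to the congruence count
\[
N_k(\y):=\card\{\x\Mod{p^k}:\,p\nmid\x,\ Q_{\y}(\x)\equiv 0\Mod{p^k}\}\ll k\,p^{2k},
\]
with an implied constant depending only on $Q_0,Q_1$. The plan is first to normalise $Q_{\y}$ $p$-adically, then to invoke Lemma~\ref{gb2} wherever possible, and finally to dispose of the leftover cases by a stratification of the solutions according to the $p$-adic valuations of their coordinates. For the normalisation, observe that the $p$-content of $Q_{\y}$ (the minimum $p$-adic valuation of its coefficients) is bounded uniformly in primitive $\y$: if $\Delta_0\ne 0$ denotes the gcd of the $2\times 2$ minors of the $6\times 2$ matrix of coefficient vectors of $Q_0,Q_1$ (nonzero because $\langle Q_0,Q_1\rangle$ is a genuine pencil), then a congruence $y_0\mathbf q_0+y_1\mathbf q_1\equiv 0\Mod{p^e}$ with $\y$ primitive forces $e\le v_p(\Delta_0)$. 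Dividing out this bounded content and passing to $\Z_p$, we may assume $Q_{\y}$ is $p$-primitive; for $p$ odd we diagonalise $Q_{\y}\sim\mathrm{diag}(B_0,B_1,B_2)$ with $v_p(B_0)\le v_p(B_1)\le v_p(B_2)$. Primitivity gives $v_p(B_0)=0$, and Lemma~\ref{DyL} (which says $D_{\y}\ll 1$) gives $v_p(B_1)=v_p(D_{\y})\le\delta$ for a fixed $\delta=\delta(Q_0,Q_1)$, whereas $v_p(B_2)$ (which is of the size of $v_p(C(\y))$) may be arbitrarily large.

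When $p\nmid 2D_{\y}$ and $k>v_p(C(\y))$ we are exactly in the situation of Lemma~\ref{gb2}, with $d=v_p(C(\y))<k$, so $\sigma^*(p^k;\y)\le d+1\le k$. This settles all primes outside a fixed finite set, as long as $k$ exceeds $v_p(C(\y))$. It remains to treat (i) the range $k\le v_p(C(\y))=v_p(B_2)$, and (ii) the finitely many primes $p\mid 2D_{\y}$ (including $p=2$, where one uses the analogous $\Z_2$-normal form with $2\times 2$ blocks).

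In case (i), since $v_p(B_2)\ge k$ we have $B_2x_2^2\equiv 0\Mod{p^k}$ for every $x_2$, so the congruence collapses to $p^k\mid B_0x_0^2+B_1x_1^2$ with $x_2$ free. Stratifying the $(x_0,x_1)$-count by $v_p(x_0)$ (or, when $-B_1/B_0$ is a square modulo $p^k$, by factoring the binary form and counting pairs $(u,v)$ with $p^k\mid uv$) shows
\[
\card\{(x_0,x_1)\Mod{p^k}:\,p^k\mid B_0x_0^2+B_1x_1^2\}\ll_\delta k\,p^k,
\]
the factor $k$ coming genuinely from the $O(k)$ valuation strata; hence $N_k(\y)\ll_\delta k\,p^{2k}$. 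In case (ii) one runs the same valuation stratification on $\mathrm{diag}(B_0,B_1,B_2)$, now with $v_p(B_0)=0$ and $v_p(B_1)\le\delta$: here $p\nmid x_0$ forces the sum to be a $p$-adic unit, hence gives no solution, so $p\mid x_0$, and one descends in $v_p(x_0)$ (and then in $v_p(x_1)$), each descent costing only a bounded factor and there being $O(k)$ steps before reaching the radical direction $x_0\equiv x_1\equiv 0$; this yields $N_k(\y)\ll_p k\,p^{2k}$, with implied constant depending on $p$ and hence on $Q_0,Q_1$.

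The main obstacle is that $v_p(B_2)$ is unbounded, so there is no reduction to a unimodular form and the bound genuinely carries the factor $k$: one has to keep track of the $O(k)$ strata in the $p$-adic valuations of the coordinates, and one must separately handle the finitely many primes dividing $2D_{\y}$ — in particular $p=2$ — where Lemma~\ref{gb2} is unavailable and the diagonalisation must be replaced by the $\Z_2$-normal form. In every case the final implied constant depends only on $Q_0$ and $Q_1$, in keeping with our conventions.
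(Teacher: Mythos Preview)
Your approach is correct in outline but considerably more case-heavy than the paper's. The paper handles all primes and all $k$ uniformly: after diagonalising (unimodularly for odd $p$, with a determinant-$4$ integer matrix for $p=2$), Lemma~\ref{DyL} yields $\gcd(A_0A_1,A_0A_2,A_1A_2,p^k)\ll 1$, so after relabelling one may take $\gcd(A_0A_1,p^k)\ll 1$. The variable $x_2$ is then treated as completely free, reducing everything to the single estimate
\[
\card\{x_0,x_1\Mod{p^k}:\,A_0x_0^2+A_1x_1^2\equiv n\Mod{p^k}\}\ll kp^k
\]
\emph{uniformly in $n$}, proved by stratifying on $\gcd(x_0,x_1,p^k)=p^h$ and using that each of $A_0,A_1$ has bounded $p$-part. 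This one reduction subsumes your case~(i), your case~(ii), and the range where you invoke Lemma~\ref{gb2}; that invocation is valid but unnecessary.

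Your case~(ii) is the weak point. The opening step (``$p\nmid x_0$ forces the sum to be a unit'') is fine since $p\mid D_{\y}$ gives $v_p(B_1)\ge 1$, but after one descent the coefficient valuations permute and the iterative ``descend in $v_p(x_0)$, then in $v_p(x_1)$'' is not made precise: you have not identified what invariant decreases at each step, nor why each of the $O(k)$ steps contributes only $O(p^{2k})$ to the total. Making this rigorous essentially forces you to free $x_2$ and bound the resulting binary inhomogeneous congruence uniformly in its right-hand side --- which is exactly the paper's argument. Likewise, waving at ``the analogous $\Z_2$-normal form with $2\times 2$ blocks'' for $p=2$ is not a proof; the paper instead diagonalises modulo $2^k$ via a determinant-$4$ matrix and notes that the key bound $\gcd(A_0A_1,p^k)\ll 1$ persists because $D_{\y}\ll 1$ even at $p=2$.
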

\begin{proof}
  If $p$ is odd we can diagonalize $Q_\y$ modulo $p^k$ with a
  unimodular matrix, while for
  $p=2$ we can diagonalize $Q_\y$ modulo $2^k$ with a matrix
  of determinant $4$. (By this we mean that $Q_\y(M\x)$ is diagonal modulo 
  $2^k$, where $M$ is an integer matrix of determinant 4.) It thus suffices 
  to consider zeros of a diagonal
  form $A_0x_0^2+A_1x_1^2+A_2x_2^2$. It follows from Lemma \ref{DyL}
  that
  \[\gcd(A_0A_1,A_0A_2,A_1A_2,p^k)\ll 1,\]
  even when $p=2$. We can assume that
  $\gcd(A_0A_1,p^k)\ll 1$ by re-labelling the variables if necessary,
  and then it suffices to show that 
  \[\card\{x_0,x_1\,(\mbox{mod }p^k):\, p^k\mid A_0x_0^2+A_1x_1^2-n\}
  \ll kp^k\]
  uniformly for every integer $n$.

  To establish this we classify pairs $x_0,x_1$ according to the
  value of $\gcd(x_0,x_1,p^k)$, which we take to be $p^h$. When 
  $h\ge k/2$ the number of such
  pairs is $O(p^{2k-2h})$, which is satisfactory. For each remaining
  value of $h$ we claim that
 \[ \card\{z_0,z_1\,(\mbox{mod }p^{k-2h}):\,\gcd(z_0,z_1,p)=1,\,
  p^{k-2h}\mid A_0z_0^2+A_1z_1^2-n'\} \ll p^{k-2h},\]
  for any $n'$.  This will be sufficient, since each pair $z_0,z_1$
  corresponds to $p^{2h}$ pairs $x_0,x_1$.   However the congruence
  $A_0z_0^2\equiv n''($mod $p^e)$ has $O(1)$ solutions $z_0$ coprime
  to $p$ when $\gcd(A_0,p^e)\ll 1$, and similarly for the congruence
  $A_1z_1^2\equiv n''($mod $p^e)$ when $\gcd(A_1,p^e)\ll 1$.
The lemma now follows.
  \end{proof}

Combining Lemmas \ref{2ec}, \ref{ymod1}, \ref{gb2}, and \ref{gb} we
deduce the following bound for $\mathfrak{S}(Q_\y)$. 
\begin{lemma}\label{mfSb}
We have
  \[\mathfrak{S}(Q_\y)\ll_{Q_0,Q_1}\kappa(\y)\le\tau(C(\y))\]
  for any primitive $\y$ with $C(\y)\not=0$.
\end{lemma}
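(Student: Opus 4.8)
The plan is to bound the Euler product $\mathfrak{S}(Q_\y)=\prod_p\sigma(p;Q_\y)$ factor by factor, comparing each $\sigma(p;Q_\y)$ with the corresponding local factor $f\bigl(p^{v_p(C(\y))};\y\bigr)$ of $\kappa(\y)$ from $(\ref{;def})$. First I would fix, once and for all, a finite set $B=B(Q_0,Q_1)$ of primes containing $2$ and $3$ together with every prime that can divide $D_{\y}$ for some primitive $\y$; this set is finite because $D_{\y}\ll 1$ by Lemma \ref{DyL}. Note that $2,3\in B$ always divide $C(\y)$ (we arranged $6^3\mid C(\y)$), and that any prime dividing $D_{\y}$ also divides $C(\y)$, since it then divides every $2\times2$ minor of the matrix of $Q_\y$ and hence its determinant.

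For a prime $p\notin B$ we have $p\nmid 2D_{\y}$, and the desired bound follows directly from the earlier lemmas. If $p\nmid C(\y)$ then $\sigma(p;Q_\y)=1-p^{-2}\le 1$ by Lemma \ref{2ec}; if $p\| C(\y)$ then $\sigma(p;Q_\y)=(1-p^{-1})(1+\chi(p;\y))\le 1+\chi(p;\y)=f(p;\y)$, again by Lemma \ref{2ec}; and if $p^d\| C(\y)$ with $d\ge 2$, then Lemma \ref{ymod1} gives $\sigma(p;Q_\y)=\sigma^*(p^{2d+1};\y)$ while Lemma \ref{gb2}, applicable since $p\nmid D_{\y}$ and $2d+1>d$, gives $\sigma^*(p^{2d+1};\y)\le d+1=f(p^d;\y)$. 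Thus for $p\notin B$ we have $\sigma(p;Q_\y)\le f\bigl(p^{v_p(C(\y))};\y\bigr)$ when $p\mid C(\y)$ and $\sigma(p;Q_\y)\le 1$ otherwise, whence
\[\prod_{p\notin B}\sigma(p;Q_\y)\ \le\ \prod_{p\mid C(\y)}f\bigl(p^{v_p(C(\y))};\y\bigr)\ =\ \kappa(\y),\]
since each factor of $\kappa(\y)$ is $\ge 1$.

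It remains to handle the finitely many primes $p\in B$, and here the goal is to show $\sigma(p;Q_\y)\ll_p 1+v_p(C(\y))$ uniformly over primitive $\y$ with $C(\y)\ne0$. Writing $d=v_p(C(\y))$, Lemma \ref{ymod1} shows $\sigma(p;Q_\y)=\sigma^*(p^{k};\y)$ for any $k\ge 2d+3$, and Lemma \ref{gb} bounds $\sigma^*(p^{k};\y)\ll_p k$; taking $k=2d+3$ gives $\sigma(p;Q_\y)\ll_p d+1$. For $p\in B$ dividing $C(\y)$ we have $f(p^d;\y)=d+1$ (the case "$e\ge 2$ or $p=2$" of $(\ref{;def})$, using $d\ge 1$), so $\sigma(p;Q_\y)\ll_p f(p^d;\y)$; and for $p\in B$ not dividing $C(\y)$, $\sigma(p;Q_\y)=1-p^{-2}\le 1$. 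Multiplying over the fixed finite set $B$ therefore contributes only a constant $\prod_{p\in B}O_p(1)=O_{Q_0,Q_1}(1)$, and combining with the previous step yields $\mathfrak{S}(Q_\y)\ll_{Q_0,Q_1}\kappa(\y)$. Finally $\kappa(\y)\le\tau(C(\y))$ follows at once from $f(p^e;\y)\le e+1$, as already recorded after Lemma \ref{MLQB}.

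The only genuine obstacle is the uniformity in $\y$ at the bad primes: one must be sure the implied constants are independent of $\y$. This is precisely where $D_{\y}\ll 1$ (Lemma \ref{DyL}) is used, to make $B$ finite, and where Lemma \ref{ymod1} is essential, allowing the delicate limit $\sigma(p;Q_\y)=\lim_k\sigma^*(p^k;\y)$ to be replaced by a single finite-level count to which the crude bound of Lemma \ref{gb} applies with an explicit dependence on $v_p(C(\y))$. One should also note in passing that the product $\prod_p\sigma(p;Q_\y)$ converges, which is automatic since $\sigma(p;Q_\y)=1-p^{-2}$ for all $p\nmid 2C(\y)$ by Lemma \ref{2ec}.
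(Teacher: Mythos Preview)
Your approach is exactly the paper's (which merely cites Lemmas \ref{2ec}, \ref{ymod1}, \ref{gb2}, \ref{gb}), and the prime-by-prime comparison of $\sigma(p;Q_\y)$ with the local factor $f(p^{v_p(C(\y))};\y)$ of $\kappa(\y)$ is the right idea. There is, however, a small slip. You assert that ``each factor of $\kappa(\y)$ is $\ge 1$'' and that ``for $p\in B$ dividing $C(\y)$ we have $f(p^d;\y)=d+1$''; both fail for an odd prime $p\ge 5$ in $B$ with $p\|C(\y)$ and $\chi(p;\y)=-1$, where $f(p;\y)=0$. This case is not excluded: your set $B$ collects primes dividing $D_{\y'}$ for \emph{some} $\y'$, and for the particular $\y$ at hand one may still have $p\|C(\y)$. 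Your crude bound $\sigma(p;Q_\y)\ll_p d+1=2$ then does not yield $\sigma(p;Q_\y)\ll_p 0$, and the displayed inequality $\prod_{p\notin B}\sigma(p;Q_\y)\le\kappa(\y)$ is not justified when such a zero factor sits inside $B$.

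The repair is immediate. For \emph{every} prime with $p\|C(\y)$ (necessarily $p\ge 5$ since $6^3\mid C(\y)$), apply Lemma \ref{2ec} directly to get $\sigma(p;Q_\y)=(1-p^{-1})(1+\chi(p;\y))\le f(p;\y)$, regardless of whether $p\in B$; in particular if $\chi(p;\y)=-1$ then both sides vanish. Only the case $p\in B$ with $v_p(C(\y))\ge 2$ (which always includes $p=2,3$) genuinely requires Lemmas \ref{ymod1} and \ref{gb}, and there indeed $f(p^d;\y)=d+1$. With this adjustment one has $\sigma(p;Q_\y)\le c_p\,f(p^{v_p(C(\y))};\y)$ for every $p\mid C(\y)$, with $c_p=1$ for $p\notin B$ and $c_p=O_p(1)$ for $p\in B$; multiplying over all $p$ gives $\mathfrak{S}(Q_\y)\ll_{Q_0,Q_1}\kappa(\y)$ at once, without the intermediate displayed inequality.
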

The reader should recall that $\kappa(\y)=\kappa(Q_\y)$ was defined in
Lemma \ref{MLQB}.
\bigskip

Next we examine the singular integral $\sigma_\infty(Q_\y;W)$. We 
remind the reader of
the definition of $J(\theta,Q_\y;W)$ in \eqref{Jdef}.

\begin{lemma}\label{pdsi}
 We have 
  \beql{JB}
  J(\theta,Q_\y;W)\ll_W
  \min\left(1\,,\,\frac{1}{|\theta|\,||\y||_\infty}\right)
  \min\left(1\,,\,\frac{||\y||_\infty}{\sqrt{|\theta C(\y)|}}\right).
  \eeq
	Hence
  \[  \sigma_\infty(Q_\y;W)\ll_W
  \frac{1}{||\y||_\infty}\log\left(2+\frac{||\y||_\infty^3}{|C(\y)|}\right).\]
  Moreover
  \[\sigma_\infty(Q_\y)-\sigma_\infty(Q_{\y'})\ll_W
  ||\y-\y'||_\infty^{1/5}|C(\y)|^{-2/5}\]
 for any $\y,\y'\in\R^2$ with $0<|C(\y)|\le|C(\y')|$.
\end{lemma}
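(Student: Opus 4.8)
The plan is to prove the three assertions of Lemma \ref{pdsi} in turn, starting from the integral representation \eqref{Jdef} of $J(\theta,Q_\y;W)$ and exploiting the eigenvalue structure of $Q_\y$ recorded in Lemma \ref{DyLR}. First I would diagonalise: after an orthogonal change of variables in $\x$ (absorbing the smooth cutoff into a new smooth cutoff $\widetilde W$ with comparable support), $Q_\y(\x) = \mu_1 x_0^2 + \mu_2 x_1^2 + \mu_3 x_2^2$ with $|\mu_1|\le|\mu_2|\le|\mu_3|$, and by Lemma \ref{DyLR} we have $|\mu_2|,|\mu_3|\asymp \|\y\|_\infty$ and $|\mu_1|\asymp |C(\y)|/\|\y\|_\infty^2$. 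Then $J(\theta,Q_\y;W)$ factors (up to the interaction through $\widetilde W$, which is harmless after a partition of unity into unit boxes) into one-dimensional oscillatory integrals $\int W_j(x_j) e(-\theta\mu_j x_j^2)\,dx_j$. For a smooth compactly supported $W_j$ the standard bound on such an integral is $\ll \min(1, |\theta\mu_j|^{-1/2})$, together with the non-oscillatory bound $\ll \min(1,(|\theta\mu_j|)^{-N})$ obtained by repeated integration by parts away from the origin. Combining the three factors: the $x_2$-factor (with $|\mu_3|\asymp\|\y\|_\infty$) gives $\min(1,(|\theta|\|\y\|_\infty)^{-1})$ after using an integration-by-parts estimate with $N=1$ on that smooth integral (no stationary phase issue survives because we may also use the $x_1$ variable), while the three square-root bounds multiply to give $\min(1,|\theta^3\mu_1\mu_2\mu_3|^{-1/2}) = \min(1,(|\theta|^3|C(\y)|)^{-1/2})\|\y\|_\infty$ up to constants; taking the better of the available bounds yields \eqref{JB}. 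Some care is needed to present this so that the two factors in \eqref{JB} genuinely both hold — the first from treating the largest eigenvalue direction, the second from the product of all three square-root bounds — and I would simply state that both bounds are available and take whichever is smaller.

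For the second assertion I would integrate \eqref{JB} over $\theta\in\R$. Split at $|\theta| = \|\y\|_\infty^{-1}$ and at $|\theta| = \|\y\|_\infty^3/|C(\y)|^{? }$ appropriately: for small $|\theta|$ the integrand is $\ll 1$ on an interval of length $\asymp \|\y\|_\infty^{-1}$; for $|\theta|$ in an intermediate range the bound $|\theta|^{-1}\|\y\|_\infty^{-1}\cdot 1$ contributes a logarithm, truncated above where the second factor $\|\y\|_\infty/\sqrt{|\theta C(\y)|}$ becomes the binding constraint, namely $|\theta|\asymp \|\y\|_\infty^2/|C(\y)|$; and for large $|\theta|$ the product $|\theta|^{-1}\|\y\|_\infty^{-1}\cdot \|\y\|_\infty|\theta C(\y)|^{-1/2} = |\theta|^{-3/2}|C(\y)|^{-1/2}$ is integrable and contributes $\ll \|\y\|_\infty^{-1}\cdot(\text{something bounded})$. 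Collecting the three ranges gives $\sigma_\infty(Q_\y;W)\ll \|\y\|_\infty^{-1}\log(2 + \|\y\|_\infty^3/|C(\y)|)$, the log coming precisely from the length of the intermediate logarithmic range, which is $\asymp \log(\|\y\|_\infty^2/|C(\y)| \cdot \|\y\|_\infty) = \log(\|\y\|_\infty^3/|C(\y)|)$.

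For the third (Lipschitz-type) assertion, write $\sigma_\infty(Q_\y) - \sigma_\infty(Q_{\y'}) = \int_\R \big(J(\theta,Q_\y;W_3) - J(\theta,Q_{\y'};W_3)\big)\,d\theta$. The integrand difference I would estimate in two ways: trivially by the sum of the two bounds \eqref{JB}, and by a mean-value estimate $J(\theta,Q_\y) - J(\theta,Q_{\y'}) \ll |\theta|\,\|\y-\y'\|_\infty\cdot(\text{size of }\partial_\y J)$, using that $Q_\y - Q_{\y'}$ is a quadratic form with coefficients $O(\|\y-\y'\|_\infty)$, so that $\partial_\y J(\theta,Q_\y;W_3) \ll |\theta|$ on the support of $W_3$ (the $\x$-integral being over a bounded region). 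Thus $J(\theta,Q_\y) - J(\theta,Q_{\y'}) \ll |\theta|\|\y-\y'\|_\infty$, and also $\ll |\theta|^{-3/2}|C(\y)|^{-1/2}$ for large $\theta$ (here using $|C(\y)|\le|C(\y')|$, so the worse of the two $C$-values is $C(\y)$, and with care near $C(\y')$ small one still controls things since $\|\y-\y'\|_\infty$ is the controlling quantity). Interpolating: use the bound $|\theta|\|\y-\y'\|_\infty$ for $|\theta| \le \Theta$ and $|\theta|^{-3/2}|C(\y)|^{-1/2}$ for $|\theta| > \Theta$, with $\Theta$ chosen to balance, i.e. $\Theta^2\|\y-\y'\|_\infty \asymp \Theta^{-1/2}|C(\y)|^{-1/2}$, giving $\Theta \asymp (\|\y-\y'\|_\infty^{-1}|C(\y)|^{-1/2})^{2/5}$ and total $\ll \Theta^2\|\y-\y'\|_\infty \asymp \|\y-\y'\|_\infty^{1/5}|C(\y)|^{-2/5}$. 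The main obstacle is getting the exponent bookkeeping in this interpolation exactly right and making sure the intermediate $\theta$-range (where neither extreme bound is sharp and one should really use the finer bound \eqref{JB} rather than the crude $|\theta|^{-3/2}$) does not produce an extra logarithm or a worse exponent; I expect a slightly more careful three-range split, rather than a clean two-range split, but the final exponent $1/5$ in $\|\y-\y'\|_\infty$ and $-2/5$ in $|C(\y)|$ should emerge as the balance point, and the weak exponent $1/5$ is exactly what makes the argument robust enough to absorb such losses.
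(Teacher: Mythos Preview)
Your proposal is correct and follows essentially the same route as the paper: the bound \eqref{JB} comes from the product $\prod_j\min(1,|\theta\mu_j|^{-1/2})$ (via diagonalisation and one-variable stationary phase, as in \cite[\S4.4]{BHBQB}) combined with Lemma~\ref{DyLR}, the second assertion by integrating over $\theta$, and the Lipschitz estimate by balancing the mean-value bound $|J(\theta,Q_\y)-J(\theta,Q_{\y'})|\ll|\theta|\,\|\y-\y'\|_\infty$ against the tail bound $|\theta|^{-3/2}|C(\y)|^{-1/2}$. Your hedging about a possible three-range split and stray logarithms in the third part is unnecessary: the clean two-range split at $A=\|\y-\y'\|_\infty^{-2/5}|C(\y)|^{-1/5}$ already gives $A^2\|\y-\y'\|_\infty + A^{-1/2}|C(\y)|^{-1/2}\ll\|\y-\y'\|_\infty^{1/5}|C(\y)|^{-2/5}$ with no loss, exactly as the paper does it.
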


\begin{proof}
  By the argument in Browning and Heath-Brown \cite[\S
    4.4]{BHBQB} (which relates to forms in 4 variables, rather than
  ternary forms) we have
  \[J(\theta,Q_\y;W)\ll_W
  \prod_{j=1}^3\min\left(1\,,\,\frac{1}{\sqrt{|\theta
      \mu_j|}}\right),\]
  where $\mu_1,\mu_2,\mu_3$ are the eigenvalues of the matrix for
  $Q_\y$. Lemma \ref{DyLR} then shows \eqref{pdsi}.
The first claim of the lemma then follows on integrating over $\theta$.

For the second claim we note that
\[e(-\theta Q_\y(\x))-e(-\theta Q_{\y'}(\x))\ll_W |\theta| \cdot ||\y-\y'||_\infty\]
 on the support of $W$, whence
 \[J(\theta,Q_\y)-J(\theta,Q_{\y'})\ll_W |\theta| \cdot ||\y-\y'||_\infty.\]
 The bound (\ref{JB}) shows that
 $J(\theta,Q_\y)\ll_W |\theta|^{-3/2}|C(\y)|^{-1/2}$ for all $\theta$, whence
\begin{eqnarray*}
\sigma_\infty(Q_\y)-\sigma_\infty(Q_{\y'})&\ll_W&
\int_{-A}^A |\theta| \cdot ||\y-\y'||_\infty \d \theta+\int_{|\theta|\ge A}
|\theta|^{-3/2}|C(\y)|^{-1/2} \d \theta\\
&\ll_W& A^2||\y-\y'||+A^{-1/2}|C(\y)|^{-1/2}
\end{eqnarray*}
for any $A>0$. The lemma then follows on taking
$A=||\y-\y'||_{\infty}^{-2/5}|C(\y)|^{-1/5}$.
\end{proof}

Having completed these preliminaries we can begin the
proof of Proposition~\ref{cdiff}.  This will
require the error term $\Err(X,q;W)$ to be estimated with a suitable
dependence on the coefficients of the quadratic form $q$.  In
order to have such an asymptotic we will require the coefficients of
$q$ to be suitably small compared to $X$.
A satisfactory estimate is given by Heath-Brown
\cite[Theorem 6]{HBconic}. To describe this latter result we introduce a
little more notation. Since $q$ is an integral form $2q$ will be represented by
a symmetric integral matrix, $Q$ say, and we set
$\Delta(q)=\tfrac12\det(Q)$.  The reader should note that with this 
convention we will have $\Delta(Q_\y)=4C(\y)$.  We write $||q||$ for 
the $L^2$-norm of the entries of $Q$, and define
\[\xi(q)=\frac{||q||^3}{|\Delta(q)|}.\]
Finally we set
\[W_0(\x)=\left\{\begin{array}{cc}
\exp\left\{-\frac{1}{1-||\x||_2^{2}}\right\}, &
||\x||_2<1,\\ 0, & \mbox{otherwise.}\end{array}\right.\]
We are now ready to state the following consequence of Theorem 6 of
Heath-Brown \cite{HBconic}. (The latter works with $||\z||_2$, but we
can clearly replace this with $||\z||_{\infty}$.)
\begin{proposition}\label{c2}
  Suppose that the form $q$ is primitive and isotropic over $\Q$, and
 let $\z_q$ be  
  a non-trivial integer zero with $||\z||_{\infty}$ minimal.  Then
\[\Err(X,q;W)\ll_W \tau(|\Delta(q)|)+\sigma_{\infty}(q;W_0)\mathfrak{S}(q)
4^{\omega(|\Delta(q)|)}\xi(q)
\left(\frac{||q||^2}{||\b{z}_q||_{\infty}}\right)^{1/4}X^{3/4}\LL.\] 
\end{proposition}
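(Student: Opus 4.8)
Proposition~\ref{c2} is a transcription of Heath-Brown's \cite[Theorem~6]{HBconic} into the notation set up just above it, so the plan is to match the two statements term by term rather than to argue from scratch. Heath-Brown's theorem provides, for a primitive ternary quadratic form $q$ which is isotropic over $\Q$, an asymptotic formula for the weighted count $N(X,q;W)$ of primitive integral zeros of $q$ of size $\asymp X$: the leading term is a product of a singular integral, a singular series and $X$, while the error term is made completely explicit in terms of $\det(2q)$, the Euclidean size of the entries of the matrix of $2q$, and the Euclidean size of a least integral zero of $q$ (together with a term of size $\tau(|\det(2q)|)$ accounting for ranges where the leading term is small). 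The first step of the deduction is to identify the two local factors: the singular integral here is $2\sigma_\infty(q;W)$, with $\sigma_\infty(q;W)$ the quantity in \eqref{quant}, and the singular series is $\mathfrak{S}(q)=\prod_p\sigma(p;q)$. At this point one must be attentive to the normalisation: the densities $\sigma(p;q)$ are those of the \emph{projective} conic $q=0$, not of the affine cone (the two differ by a factor $1-p^{-1}$ at each prime). Since $N(X,q;W)$ counts primitive vectors, the projective normalisation is what \cite{HBconic} should deliver, but this has to be checked. Granting it, the leading term equals $\Main(X,q;W)=\tfrac12\sigma_\infty(q;W)\mathfrak{S}(q)X$, so that $\Err(X,q;W)=N(X,q;W)-\Main(X,q;W)$ is exactly Heath-Brown's error term.

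The second step is purely notational: with $|\det(2q)|=2|\Delta(q)|$, with $||q||$ the Euclidean size of the entries of $2q$, and with $\xi(q)=||q||^3/|\Delta(q)|$, Heath-Brown's error term becomes a bounded multiple of the right-hand side of Proposition~\ref{c2}, save that \cite{HBconic} measures a least zero of $q$ in the Euclidean norm while Proposition~\ref{c2} uses $||\z_q||_\infty$. In three variables $||\z||_\infty\le||\z||_2\le\sqrt{3}\,||\z||_\infty$ for every $\z$, so a shortest Euclidean zero and a shortest sup-norm zero of $q$ have comparable lengths and replacing $||\z_q||_2$ by $||\z_q||_\infty$ costs only a bounded factor --- this is the substitution already flagged in the text. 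Finally one verifies that the hypotheses of \cite[Theorem~6]{HBconic} are met: $q$ is assumed primitive and isotropic over $\Q$ in Proposition~\ref{c2}, which is all that is needed. That completes the deduction.

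For completeness, here is the mechanism underlying \cite[Theorem~6]{HBconic}, which would equally be the basis of a direct proof. Because $q$ is isotropic it has the rational point $[\z_q]$, and projection from $[\z_q]$ rationally parametrises the conic $q=0$ by $\P^1$: a primitive zero $\x$ of $q$ is, up to a scalar of bounded size, $\x=q(\v)\z_q-2\langle\z_q,\v\rangle_q\v$, where $\langle\cdot,\cdot\rangle_q$ is the bilinear form attached to $q$ and $\v$ ranges over primitive vectors of a fixed rank-$2$ sublattice complementary to $\z_q$. Thus $\x$ is given by a triple of binary quadratic forms in $\v=(s,t)$ whose coefficients are bounded polynomially in $||q||$ and $||\z_q||$, and $N(X,q;W)$ turns into the number of primitive $(s,t)$ for which this triple lies in a box of side $\asymp X$ (up to a greatest common divisor of bounded effect). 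Estimating this lattice-point count for a region cut out by conics is done by a circle-method argument: the main term reconstitutes $\sigma_\infty(q;W)$ and $\mathfrak{S}(q)$, and the error, of order $X^{3/4}$ with precisely the factors $\xi(q)$ and $(||q||^2/||\z_q||_\infty)^{1/4}$, is controlled by the sizes of the parametrising quadratics via the minor-arc estimates. The real work --- carried out in \cite{HBconic} --- is to track every implied constant through this analysis explicitly in $q$; that done, the only obstacle at the present level is the bookkeeping of the first two paragraphs, namely matching the main term to $\Main(X,q;W)$, getting the projective-versus-affine normalisation of $\mathfrak{S}(q)$ right, and passing between the $\ell^2$- and $\ell^\infty$-sizes of the least zero.
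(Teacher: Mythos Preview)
Your proposal is correct and matches the paper's approach exactly: the paper does not prove Proposition~\ref{c2} but simply states it as a consequence of \cite[Theorem~6]{HBconic}, noting only that the $\ell^2$-norm used there for the least zero can be replaced by $||\cdot||_\infty$. Your account of the term-by-term matching and the norm equivalence is precisely what is needed, and your additional paragraph on the mechanism behind \cite{HBconic} is helpful context but not required.
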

When $q$ is not isotropic over $\Q$ one readily verifies that either
$\sigma_{\infty}(q;W_0)$ or $\mathfrak{S}(q)$ vanishes, so that the
result remains true. If $q$ is not primitive, but $q=kq_0$ for some
primitive form $q_0$, then $\mathfrak{S}(q)=|k|\mathfrak{S}(q_0)$ and  
$\sigma_\infty(q;W)=|k|^{-1}\sigma_\infty(q_0;W)$. One may then readily use 
Proposition \ref{c2} for $q_0$ to obtain a version for $q$. This will
take exactly the same shape, except that the implied constant will
depend on $k$.

We next state some basic estimates involving
$||Q_{\y}||$ and $\xi(Q_{\y})$.
\begin{lemma}\label{c1}
Suppose that $\y$ is primitive, with $||\y||_{\infty}\ll Y$ and $|C(\y)|\ge\eta Y^3$. 
Let $c(Q_\y)$ be the content of $Q_\y$, (namely 
    the highest common factor of the coefficients).  Then:
  \begin{enumerate}
  \item[(i)]
   \[c(Q_\y)\ll 1,\] 
      \item[(ii)] 
    \[  ||\y||_{\infty}\ll ||Q_{\y}||\ll ||\y||_{\infty},\]
  \item[(iii)]
    \[1 \ll_\eta \xi(Q_{\y})\ll_\eta 1,\]
  and
  \item[(iv)]
  \[\sigma_{\infty}(Q_\y;W_0)\mathfrak{S}(Q_\y)
4^{\omega(|C(\y)|)}\xi(Q_\y)\LL  \ll_\eta \frac{\mathcal{E}}{||\y||_{\infty}}\]
	where $\mathcal{E}$ is as in \eqref{cEd}.
   \end{enumerate}
\end{lemma}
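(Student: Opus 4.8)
The plan is to prove the four parts of Lemma~\ref{c1} essentially in order, using compactness/homogeneity arguments of the type already employed for Lemmas~\ref{DyL} and \ref{DyLR}, together with the bounds on the local densities established in Lemmas~\ref{2ec}--\ref{mfSb} and \ref{pdsi}.

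\textbf{Parts (i) and (ii).} The content $c(Q_\y)$ divides all coefficients of $y_0Q_0+y_1Q_1$, which are linear forms in $\y$ with fixed integer coefficients depending only on $Q_0,Q_1$; since $\y$ is primitive, $c(Q_\y)$ divides the (fixed, nonzero) resultant-type quantity obtained by eliminating, hence $c(Q_\y)\ll 1$. This gives (i). For (ii), the entries of the matrix of $2Q_\y$ are linear in $\y$, so $||Q_\y||\ll||\y||_\infty$ is immediate. For the lower bound, if $||Q_\y||\ll||\y||_\infty$ failed along some sequence $\y_n$ with $||\y_n||_\infty=1$, compactness would produce a limit $\y^*\ne\mathbf 0$ with $Q_{\y^*}=0$ identically, i.e. $y_0^*Q_0+y_1^*Q_1\equiv0$, forcing $Q_0$ and $Q_1$ to be proportional, contradicting Lemma~\ref{lem:C} (nonsingular pencil). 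Hence $||\y||_\infty\ll||Q_\y||$.

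\textbf{Part (iii).} By definition $\xi(Q_\y)=||Q_\y||^3/|\Delta(Q_\y)|=||Q_\y||^3/(4|C(\y)|)$. Using (ii) and $|C(\y)|\le ||\y||_\infty^3\ll||Q_\y||^3$ (the coefficients of $C$ are bounded), we get $\xi(Q_\y)\gg 1$. Using (ii) again together with the hypothesis $|C(\y)|\ge\eta Y^3\gg_\eta||\y||_\infty^3$ gives $\xi(Q_\y)\ll_\eta 1$.

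\textbf{Part (iv).} This is the main point and will need the bounds assembled earlier. First, $\sigma_\infty(Q_\y;W_0)\ll 1/||\y||_\infty$: apply Lemma~\ref{pdsi} (with $W=W_0$, for which the argument is identical to $W_3$) together with the lower bound $|C(\y)|\ge\eta Y^3\gg_\eta||\y||_\infty^3$, which makes the logarithmic factor $\log(2+||\y||_\infty^3/|C(\y)|)\ll_\eta 1$. Next, by Lemma~\ref{mfSb} we have $\mathfrak S(Q_\y)\ll\kappa(\y)\le\tau(|C(\y)|)$, and $4^{\omega(|C(\y)|)}\le\tau_4(|C(\y)|)$; since $|C(\y)|\ll Y^3\le(XY)^3$, both $\tau(|C(\y)|)$ and $\tau_4(|C(\y)|)$ are $\ll\mathcal E^{1/2}$, say, by the remark preceding Proposition~\ref{cdiff} (namely $\tau(n)^A\ll_A\mathcal E$ for $n\ll(XY)^A$). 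Combining this with $\xi(Q_\y)\ll_\eta1$ from (iii) and the trivial bound $\LL\ll\mathcal E^{1/2}$ (since $\mathcal E$ grows faster than any power of $\LL$), we obtain
\[
\sigma_\infty(Q_\y;W_0)\mathfrak S(Q_\y)4^{\omega(|C(\y)|)}\xi(Q_\y)\LL
\ll_\eta\frac{1}{||\y||_\infty}\cdot\mathcal E^{1/2}\cdot\mathcal E^{1/2}\cdot\mathcal E^{1/2},
\]
which is slightly worse than claimed; to recover the stated $\mathcal E/||\y||_\infty$ one should instead bound the two divisor factors and $\LL$ jointly: $\tau(|C(\y)|)\,\tau_4(|C(\y)|)\,\LL\le\tau_8(|C(\y)|)\,\LL\ll\mathcal E$, again by the same remark. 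This yields exactly $\ll_\eta\mathcal E/||\y||_\infty$, completing the proof.

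\textbf{Expected obstacle.} None of the steps is deep; the only thing requiring care is part (iv), specifically being economical with the crude divisor and $\LL$ bounds so that the product lands inside a single factor of $\mathcal E$ rather than a power of it — this is purely bookkeeping, handled by Lemma~\ref{lem:tau_k} (the inequality $\tau_{k_1}\tau_{k_2}\le\tau_{k_1k_2}$) and the defining growth property of $\mathcal E$. The compactness arguments in (i) and (ii) are routine variants of Lemma~\ref{DyL}.
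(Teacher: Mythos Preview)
Your proposal is correct and follows essentially the same route as the paper, which disposes of (i) by citing Lemma~\ref{DyL} (the content $c(Q_\y)$ divides $D_\y$), proves (ii) by exactly the compactness argument you give, calls (iii) trivial, and refers to Lemmas~\ref{mfSb} and~\ref{pdsi} for (iv).

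One cosmetic remark on (iv): your detour through $\mathcal E^{3/2}$ and the subsequent ``fix'' via $\tau_8$ is unnecessary. The defining property of $\mathcal E$ is that $\tau(n)^A\ll_A\mathcal E$ and $\LL^B\ll_B\mathcal E$ for \emph{any} fixed $A,B$; in particular $\tau(n)^A\ll\mathcal E^{1/2}$ and $\LL^B\ll\mathcal E^{1/2}$ for any $A,B$, so $\tau(|C(\y)|)\cdot 4^{\omega(|C(\y)|)}\cdot\LL\le\tau(|C(\y)|)^{3}\cdot\LL\ll\mathcal E$ directly, without passing through $\tau_8$.
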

\begin{proof}
  Part (i) is an immediate consequence of Lemma \ref{DyL}.
The upper bound in part (ii) is trivial. To prove
  the lower bound we observe that $||Q_{\y}||$ is a
  continuous function of $\y$, and hence attains its lower bound on
  the compact set $||\y||_{\infty}=1$. This lower bound cannot be zero, since the
  two forms $Q_0$ and $Q_1$ are not proportional. Thus
  $||Q_{\y}||\gg 1$ on the set $||\y||_{\infty}=1$, and the general result then
  follows by homogeneity.
  Part (iii) of the lemma is also trivial, while part (iv) follows from Lemmas \ref{mfSb} and \ref{pdsi}.
\end{proof}

Lemma \ref{c1} enables us to deduce from Proposition \ref{c2} that
\beql{cme}
\Err(X,Q_{\y})\ll_\eta  \mathcal{E}
  \left\{1+\frac{X^{3/4}}{||\y||_{\infty}}
  \left(\frac{||\y||_{\infty}^2}{||\z(\y)||_{\infty}}\right)^{1/4} \right\},
  \eeq
 where we have put $\z(\y)=\z_{Q_{\y}}$.  This will give us a good
 bound unless $\z(\y)$ is too
 small. Specifically, we now have the following obvious conclusion.
\begin{lemma}\label{RZ}
   We have
\[\sum_{\substack{\y\in\Zprim^2\\ Y<||\y||_{\infty}\le 2Y,\; |C(\y)|\ge\eta Y^3
         \\  ||\z(\y)||_{\infty}\ge Z}}
  |\Err(X,Q_{\y})|\ll_\eta
   \mathcal{E}\left\{Y^2+XY(Y^2/XZ)^{1/4}\right\}.\]
Moreover, when $Y\le X^{1/6}$ we have
   \[\sum_{\substack{\y\in\Zprim^2\\ Y<||\y||_{\infty}\le 2Y,\; |C(\y)|\ge\eta Y^3}}
          |\Err(X,Q_{\y})| \ll_\eta (XY)^{7/8}.\]
 \end{lemma}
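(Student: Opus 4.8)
The plan is to derive the $(XY)^{7/8}$ bound in the second part of Lemma~\ref{RZ} from the first part together with an elementary lower bound for $||\z(\y)||_\infty$. The key observation is that a primitive conic $Q_\y(\x)=0$ which has a rational point has a primitive integer zero $\z(\y)$ that cannot be too small: one always has $||\z(\y)||_\infty \gg 1$, but in fact we can do better by a determinant/area argument. Indeed, if $||\z(\y)||_\infty$ were small then the point $\z(\y)$ would force $|C(\y)|$ itself to be correspondingly small, since $C(\y)$ is (up to a constant) $\tfrac12\det(Q_\y)$ and $Q_\y$ vanishes on $\z(\y)$; more precisely, the results of Heath-Brown~\cite{HBconic} that underlie Proposition~\ref{c2} show $||\z(\y)||_\infty \gg |\Delta(Q_\y)|^{1/2}/||Q_\y|| \gg |C(\y)|^{1/2}/Y \gg \eta^{1/2} Y^{1/2}$ when $|C(\y)|\ge \eta Y^3$ and $||\y||_\infty \asymp Y$. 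So we may take $Z = c\,\eta^{1/2}Y^{1/2}$ for a suitable constant $c$ in the first displayed estimate.

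Substituting this choice of $Z$ into the first part of the lemma gives
\[
\sum_{\substack{\y\in\Zprim^2\\ Y<||\y||_\infty\le 2Y,\ |C(\y)|\ge\eta Y^3}}
|\Err(X,Q_\y)|
\ll_\eta \mathcal{E}\left\{Y^2 + XY\left(\frac{Y^2}{X\cdot \eta^{1/2}Y^{1/2}}\right)^{1/4}\right\}
\ll_\eta \mathcal{E}\left\{Y^2 + X^{3/4}Y^{11/8}\right\}.
\]
Now impose $Y\le X^{1/6}$. Then $Y^2 \le Y^{11/8}\cdot Y^{5/8} \le X^{3/4}Y^{11/8}$ is dominated by the second term (since $X^{3/4}\ge Y^{5/8}$ already for $Y\le X^{1/6}$), and moreover $X^{3/4}Y^{11/8} = (XY)^{7/8}\cdot X^{3/4-7/8}Y^{11/8-7/8} = (XY)^{7/8}\,X^{-1/8}Y^{1/2} \le (XY)^{7/8}\,X^{-1/8}X^{1/12} = (XY)^{7/8}X^{-1/24}$. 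So the whole sum is $\ll_\eta \mathcal{E}\,X^{-1/24}(XY)^{7/8}$. Finally, $\mathcal{E}=\exp\{\LL/\sqrt{\log\LL}\}$ grows more slowly than any positive power of $XY$ (as already noted in the text preceding Proposition~\ref{cdiff}), so $\mathcal{E}\,X^{-1/24}\ll 1$ for $X$ large; hence the sum is $\ll_\eta (XY)^{7/8}$, as required. (For $X$ bounded the claim is trivial since the sum has boundedly many terms, each $O(1)$.)

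The one genuine point to nail down carefully is the lower bound $||\z(\y)||_\infty \gg_\eta Y^{1/2}$ for the minimal zero. This is where I expect the only real work: one needs to extract from Heath-Brown~\cite{HBconic} (or argue directly via Minkowski's theorem applied to the lattice of integer points on the conic, whose covolume is controlled by $|\Delta(Q_\y)|^{1/2}$) the statement that the smallest primitive solution $\z$ of a ternary quadratic equation $q(\x)=0$ satisfies $||\z||_\infty \gg |\Delta(q)|^{1/2}/||q||$. Given Lemma~\ref{c1}(ii) and the hypothesis $|C(\y)|\ge\eta Y^3$, this immediately yields $||\z(\y)||_\infty \gg_\eta Y^{1/2}$. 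With that in hand the rest is the routine arithmetic above; there is no further obstacle.
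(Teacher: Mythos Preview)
Your claimed lower bound $||\z||_\infty \gg |\Delta(q)|^{1/2}/||q||$ for a general isotropic integral ternary form is false: take $q = N x_0 x_1 + N x_0 x_2 + (N+1)x_1 x_2$, which is primitive with $|\Delta(q)|\asymp N^3$ and $||q||\asymp N$, yet has the zero $(1,0,0)$. No such inequality appears in \cite{HBconic}, and the Minkowski heuristic does not apply since the integer zeros of a conic do not form a lattice in $\Z^3$.

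That said, in the specific setting of $Q_\y$ the conclusion $||\z(\y)||_\infty\gg Y^{1/2}$ \emph{does} hold, but it requires the extra hypothesis $\mathcal{M}(\Q)=\emptyset$, which Lemma~\ref{RZ} does not assume. (Proof: if $Q_\y(\z)=0$ with $\y$ primitive then $(Q_0(\z),Q_1(\z))=s(-y_1,y_0)$ for some integer $s$; under $\mathcal{M}(\Q)=\emptyset$ one has $s\neq 0$, so $h_Q(\z)\ge ||\y||_\infty>Y$, whence $||\z||_\infty^2\gg Y$.) So your argument, once repaired, proves a weaker statement than the lemma as written.

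The paper avoids all of this by using the trivial bound $||\z(\y)||_\infty\ge 1$, i.e.\ taking $Z=1$ in the first display (equivalently, using \eqref{cme} directly). This yields
\[
\sum |\Err(X,Q_\y)| \ll_\eta \mathcal{E}\{Y^2 + X^{3/4}Y^{3/2}\},
\]
and for $Y\le X^{1/6}$ one has $X^{3/4}Y^{3/2}=(XY)^{7/8}X^{-1/8}Y^{5/8}\le (XY)^{7/8}X^{-1/48}$ and $Y^2\le (XY)^{7/8}X^{-11/16}$, so the sum is $\ll_\eta \mathcal{E}X^{-1/48}(XY)^{7/8}\ll_\eta (XY)^{7/8}$. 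This is both simpler and proves the lemma without any hypothesis on $\mathcal{M}$; the assumption $\mathcal{M}(\Q)=\emptyset$ is deferred to Lemma~\ref{zsmall}, which is exactly where small $\z(\y)$ are handled.
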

 The second statement follows from (\ref{cme}) in view of the trivial
 bound $||\z(\y)||_{\infty}\ge 1$.

When $||\z(\y)||_{\infty}\le Z$ we will estimate $\Err(X,Q_\y)$ trivially 
and prove the following result.
  \begin{lemma}\label{zsmall}
Suppose that the intersection $Q_0=Q_1=0$ has no rational points. Then we have
\[\sum_{\substack{\y\in\Zprim^2\\ Y<||\y||_{\infty}\le 2Y,\, |C(\y)|\ge \eta Y^3
         \\  ||\z(\y)||_{\infty}\le Z}} |\Err(X,Q_{\y})| \ll_\eta  \mathcal{E}^2XZ\]
   whenever $Z\le Y\le X$.
 \end{lemma}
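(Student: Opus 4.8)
\textbf{Proof plan for Lemma \ref{zsmall}.}
The plan is to estimate $\Err(X,Q_\y)$ by the trivial bound coming from the main term and the point count, and then control how often the minimal zero $\z(\y)$ can be as small as $Z$. First I would record that for each admissible $\y$ (primitive, $Y<\|\y\|_\infty\le 2Y$, $|C(\y)|\ge\eta Y^3$) we have the trivial inequality
\[
|\Err(X,Q_\y)|\ll N(X,Q_\y)+\Main(X,Q_\y)\ll \mathcal{E}+\mathcal{E}\cdot\frac{X}{\|\y\|_\infty}\ll \mathcal{E}\,\frac{X}{Y},
\]
using Lemma \ref{MLQB} together with Lemma \ref{DyL} for the point count, and Lemmas \ref{mfSb} and \ref{pdsi} (as packaged in Lemma \ref{c1}(iv)) for the main term; here $|C(\y)|\gg Y^3$ removes the denominators. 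Thus it suffices to bound the number of $\y$ in the range with $\|\z(\y)\|_\infty\le Z$, and then multiply by $\mathcal{E}X/Y$; we want the count to be $O_\eta(\mathcal{E}Z Y)$.

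The heart of the matter is therefore a \emph{geometry-of-numbers} count: if $\z(\y)=\z_{Q_\y}$ is the minimal nonzero integer zero of $Q_\y$ and $\|\z(\y)\|_\infty\le Z$, then $\z(\y)$ lies in a bounded box, so I would sum over all $\z\in\Z^3_{\mathrm{prim}}$ with $1\le\|\z\|_\infty\le Z$ and count the $\y$ with $Q_\y(\z)=0$. For fixed $\z$, the equation $y_0Q_0(\z)+y_1Q_1(\z)=0$ is a single linear condition on $\y\in\Z^2$; provided $(Q_0(\z),Q_1(\z))\ne(0,0)$ this forces $[\y]$ to be a fixed point of $\P^1$, so there are $O(Y/\gcd)$ vectors $\y$ with $\|\y\|_\infty\le 2Y$, and crudely $O(Y)$ of them. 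Summing over the $O(Z^3)$ choices of $\z$ would give $O(Z^3 Y)$, which is far too weak. To do better I would instead use that $Q_0(\z)=Q_1(\z)=0$ has no solution $\z\in\Z^3_{\mathrm{prim}}$ with $\z\ne\b 0$ — this is exactly where the hypothesis that the intersection $\mathcal{M}\colon Q_0=Q_1=0$ has no rational points enters — so for every relevant $\z$ the pair $(Q_0(\z),Q_1(\z))$ is a genuinely nonzero vector, defining a single point $a(\z)\in\P^1(\Q)$, and the $\y$ we are counting lie on the rank-$2$ lattice $\sL(a(\z),\cdot)$ of §\ref{seclat}; a point $\y$ with $\|\y\|_\infty\le 2Y$ can arise from $\z$ only when that lattice contains a nonzero vector of size $\le 2Y$, i.e. when $\|(Q_0(\z),Q_1(\z))\|_\infty\ll Y$ in the appropriate sense. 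Since $\|\z\|_\infty\le Z\le Y\le X$, the quadratic values $Q_i(\z)$ are automatically $O(Z^2)=O(Y^2)$, so this is not by itself restrictive; the real saving comes from the reverse direction, bounding for how many $\z$ a \emph{given} $\y$ can be the minimal zero.

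So the cleaner organisation is to sum over $\y$ first. For fixed $\y$ with $|C(\y)|\ge\eta Y^3$ the conic $Q_\y(\x)=0$ is nonsingular, and by Lemma \ref{MLQB} (or directly) the number of $\x\in\Z^3_{\mathrm{prim}}$ on it with $\|\x\|_\infty\le Z$ is $\ll\kappa(\y)(1+ZD_\y^{1/2}/|C(\y)|^{1/3})\ll\mathcal{E}(1+Z/Y)\ll\mathcal{E}$, using $D_\y\ll 1$ (Lemma \ref{DyL}), $|C(\y)|\gg Y^3$, $Z\le Y$, and $\kappa(\y)\le\tau(|C(\y)|)\ll\mathcal{E}$. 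In particular $\y$ can only have $\|\z(\y)\|_\infty\le Z$ if the conic has a zero of size $\le Z$ at all, and in that case it has $O(\mathcal{E})$ of them — but for the count we just need: the number of $\y$ in our range with $\|\z(\y)\|_\infty\le Z$ is at most $\sum_{1\le\|\z\|_\infty\le Z}\#\{\y:\|\y\|_\infty\le 2Y,\ Q_\y(\z)=0\}$. Using that $\mathcal{M}$ has no rational points to guarantee $(Q_0(\z),Q_1(\z))\ne(0,0)$, each inner count is $\ll Y$, but I would refine this by grouping the $\z$ according to the point $a(\z)\in\P^1$ they determine: for each such point the admissible $\y$ form a sublattice of rank $1$ in the relevant direction, contributing $\ll Y$ total \emph{across all $\z$ mapping to it} once primitivity and the bound $\|\z\|_\infty\le Z$ are used to cap the multiplicity by $O(\mathcal{E} Z)$. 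Carrying this out gives the number of bad $\y$ as $\ll_\eta\mathcal{E}ZY$, hence
\[
\sum_{\substack{\y\in\Zprim^2\\ Y<\|\y\|_\infty\le 2Y,\ |C(\y)|\ge\eta Y^3\\ \|\z(\y)\|_\infty\le Z}}|\Err(X,Q_\y)|\ll_\eta \mathcal{E}ZY\cdot\mathcal{E}\frac{X}{Y}=\mathcal{E}^2 X Z,
\]
as required. The main obstacle is the combinatorial bookkeeping in the last step: one must extract a genuine saving of a full power of $Z$ (rather than $Z^3$) from the fact that a primitive integer conic point of bounded size, together with the no-rational-point hypothesis on $\mathcal{M}$, pins down $[\y]$ while the minimality of $\z(\y)$ and primitivity of $\z$ limit how many $\z$ serve a single $\y$; getting the $Z^1$ exponent right, uniformly in $\eta$ only through the implied constant, is where care is needed, and it is exactly here that the hypothesis on $\mathcal{M}$ (rather than merely smoothness of $S$) is indispensable.
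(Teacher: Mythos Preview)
Your first step is right and matches the paper: for $\y$ in the stated range one has $|\Err(X,Q_\y)|\ll_\eta \mathcal{E}X/Y$, so it remains to show that the number $N_0$ of such $\y$ with $\|\z(\y)\|_\infty\le Z$ is $O_\eta(\mathcal{E}YZ)$. You also correctly bound $N_0$ by the number of pairs $(\z,\y)\in\Zprim^3\times\Zprim^2$ with $\|\z\|_\infty\le Z$, $\|\y\|_\infty\le 2Y$, $C(\y)\ne 0$ and $Q_\y(\z)=0$, and you correctly identify that the hypothesis $\mathcal{M}(\Q)=\emptyset$ is what ensures $(Q_0(\z),Q_1(\z))\ne(0,0)$ for every nonzero $\z$.

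The gap is in the actual count of these pairs. Your two direct attempts give $O(Z^3)$ (fixing $\z$: at most two primitive $\y$ each) and $O(\mathcal{E}Y^2)$ (fixing $\y$: $O(\mathcal{E})$ points of size $\le Z\le Y$ on each conic), and neither is $\ll \mathcal{E}YZ$ in the full range $Z\le Y$. The paragraph about ``grouping the $\z$ according to $a(\z)$'' and ``capping the multiplicity by $O(\mathcal{E}Z)$'' does not produce a valid argument: grouping by $a(\z)$ is the same as grouping by $\y$, and you have already seen that this route gives only $\mathcal{E}Y^2$. There is no elementary bookkeeping that recovers the missing factor of $Z/Y$.

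What you are missing is the observation that such a pair $(\z,\y)$ is precisely a rational point on the surface $S$ itself, lying in $U$ since $C(\y)\ne 0$ and $(Q_0(\z),Q_1(\z))\ne(0,0)$, with anticanonical height at most $2YZ$. Hence $N_0\ll N(U,2YZ)$, and Theorem~\ref{tub} gives $N(U,2YZ)\ll YZ(\log YZ)^{\rho-1}\ll \mathcal{E}YZ$. This self-referential use of the already-established upper bound is the whole point; in the regime $Z\le Y$ it amounts to invoking the lattice estimate of Proposition~\ref{P1}, which is exactly the nontrivial input your ``combinatorial bookkeeping'' would have to reproduce.
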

  We shall see in the course of the proof where the condition
  on the intersection $Q_0=Q_1=0$ arises.
 
 \begin{proof}
 The estimate (\ref{fSe1}) shows that
\[  \Err(X,Q_{\y})\ll_\eta \mathcal{E}XY^{-1}\]
  for the vectors $\y$ under consideration, so we need to estimate the
  number $N_0$ of choices for $\y$ with $||\z(\y)||_{\infty}\le
  Z$. This is at most the number of pairs  
 $(\y,\z)\in\Zprim^2\times\Zprim^3$ with $||\y||_{\infty}\le 2Y$,
 $ ||\z||_{\infty}\le Z$, $C(\y)\not=0$, and $ Q_{\y}(\z)=0$. Thus
$(\z,\y)$ produces a point on our del Pezzo surface surface $S$, with
 height at most $2YZ$. Moreover the point will be in the open subset
 $U$, unless 
$Q_0(\z)=Q_1(\z)=0$. Hence, if we assume that there are no such points $\z$
 we may deduce from Theorem \ref{tub} that $N_0\ll N(U,2YZ)\ll\mathcal{E}YZ$, 
  and the result follows.
  \end{proof}
  
  The reader will see that if, on the other hand, there is a primitive
  vector $\z=\z_0$ with  
  $Q_0(\z)=Q_1(\z)=0$ then $\z(\y)$ will always be at least as small
  as $\z_0$, and in this case there is no way to use Proposition
  \ref{c2} effectively. 
\bigskip

We can now combine Lemmas \ref{RZ} and \ref{zsmall} to
conclude that 
\[\sum_{\substack{\y\in\Zprim^2\\ Y<||\y||_{\infty}\le 2Y,\; |C(\y)|\geq\eta Y^3}}
          |\Err(X,Q_{\y})| \ll_\eta (XY)^{7/8}\]
when $Y\le X^{1/6}$, and otherwise that
\[\sum_{\substack{\y\in\Zprim^2\\ Y<||\y||_{\infty}\le 2Y,\; |C(\y)|\geq\eta Y^3}}
  |\Err(X,Q_{\y})|\ll_\eta \mathcal{E}\left(Y^2+XY(Y^2/XZ)^{1/4}\right)+\mathcal{E}^2 XZ\]
for $1\le Z\le Y$. If $X^{1/6}\le Y\le X\mathcal{E}^{-8}$ we may choose 
$Z=Y\mathcal{E}^{-3}$, giving a bound $O_\eta(\mathcal{E}^{-1/4}XY)$.
Since we already have a satisfactory 
bound when $Y\le X^{1/6}$ we therefore see that
\[\sum_{\substack{\y\in\Zprim^2\\ Y<||\y||_{\infty}\le 2Y,\; |C(\y)|\ge\eta Y^3}}
|\Err(X,Q_{\y})|\ll \mathcal{E}^{-1/4}XY\]
throughout the range $Y\le X\mathcal{E}^{-8}$, so that Proposition
\ref{cdiff} follows, on summing over dyadic ranges for $Y$.

\subsection{Counting via conics -- Manipulating the main term}
We now turn our attention to the sum of the main terms
\[\sum_{\substack{\y\in\Zprim^2,\; C(\y)\not=0\\ ||\y||_{\infty}\le B/X}}
\Main(X,Q_{\y}).\]
In view of Proposition \ref{cdiff} we will assume for the rest of the
argument that $\cl{M}$ has no rational points.

In order to control the size of $C(\y)$ we will restrict $\y$ to lie in a 
square $\cl{V}=(Y_0,Y_0+H]\times(Y_1,Y_1+H]$ with 
$|Y_0|,|Y_1|\le Y$ and $Y^{1/3}\le H\le Y$, and we write
$\cl{U}=\Zprim^2\cap\cl{V}$ as in Proposition \ref{kAL}. In this and the next
section we shall assume that $|C(\y)|\ge\eta Y^3$ throughout
$\cl{V}$. Our goal is the following result. 
\begin{proposition}\label{PMB}
Suppose that $|C(\y)|\ge\eta Y^3$ throughout $\cl{V}$ and that $C(\y)$
is irreducible over $\Q$. Then if $Y(\log\log 3Y)^{-1}\le H\le Y$ we
have  
\begin{eqnarray*}
 \sum_{\y\in\cl{U}}W_2(Y^{-1}\y)\Main(X,Q_\y)
&=& X
\left\{\int_{\cl{V}}W_2(Y^{-1}\y)\sigma_\infty(Q_\y)\d y_0 \d y_1\right\}\mathfrak{S}_S\\
&&\mbox{}
+O_\eta\left(XY^{-1}H^2(H/Y)^{1/5}\right).
\end{eqnarray*}
\end{proposition}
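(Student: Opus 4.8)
\textbf{Proof proposal for Proposition \ref{PMB}.}

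The plan is to start from the formula $\Main(X,Q_\y)=\tfrac12\sigma_\infty(Q_\y)\mathfrak{S}(Q_\y)X$ and separate the ``arithmetic'' factor $\mathfrak{S}(Q_\y)$ from the ``archimedean'' factor $\sigma_\infty(Q_\y)$. The sum to be estimated is thus
\[
\tfrac12 X\sum_{\y\in\cl{U}}W_2(Y^{-1}\y)\sigma_\infty(Q_\y)\mathfrak{S}(Q_\y).
\]
The first step is to make $\mathfrak{S}(Q_\y)$ explicit as a sum over integers. By Lemmas \ref{2ec}, \ref{ymod1}, \ref{gb2} and \ref{gb} one can write $\mathfrak{S}(Q_\y)$ as a convergent sum of the shape $\sum_{d}g_\y(d)$, where $g_\y$ is multiplicative in $d$ and $g_\y(p)$ depends only on the splitting behaviour $\chi(p;\y)$ of $Q_\y$ at $p$ (via the local density $\sigma(p;Q_\y)$); for $p\nmid C(\y)$ the factor is the ``generic'' $1-p^{-2}$, which one builds into a fixed Euler product. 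After truncating the $d$-sum at some threshold (say $d\le\cl{E}$, using Lemma \ref{mfSb} together with Proposition \ref{kAL} with a genuine parameter $d$ to bound the tail), we are left with
\[
\tfrac12 X\sum_{d\le\cl{E}}\ \sum_{\substack{\y\in\cl{U}\\ d\mid C(\y)}}W_2(Y^{-1}\y)\sigma_\infty(Q_\y)\,h(d;\y),
\]
where $h(d;\y)$ encodes the local densities at the primes dividing $d$ and depends on $\y$ only through congruence conditions modulo $d$ (and through the sign $\chi(p;\y)$, which is itself a congruence condition by the remark after Lemma \ref{LC}).

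The second, and decisive, step is to evaluate the inner sum over $\y\in\cl{U}$ with $d\mid C(\y)$. Here $C$ is irreducible over $\Q$, so $d\mid C(\y)$ confines $\y$ to $g(d)d^2$ residue classes modulo $d$ (in the notation of Lemma \ref{LS}), and within each class $\sigma_\infty(Q_\y)$ should be replaced by its value at a fixed representative, the error being controlled by the H\"older continuity estimate
\[
\sigma_\infty(Q_\y)-\sigma_\infty(Q_{\y'})\ll_W \|\y-\y'\|_\infty^{1/5}|C(\y)|^{-2/5}
\]
of Lemma \ref{pdsi}; since $|C(\y)|\gg_\eta Y^3$ throughout $\cl{V}$ and the congruence class has diameter $\ll H/d\le H$, this swap costs $O_\eta(d^{-1/5}Y^{-6/5}\,(\text{something})^{1/5})$ per point, which ultimately contributes the displayed error term $O_\eta(XY^{-1}H^2(H/Y)^{1/5})$. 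Counting lattice points of $\cl{U}$ in an arithmetic progression modulo $d$ gives $H^2/d^2+O(H/d+1)$ per class, so the inner sum becomes a main term $\asymp H^2 d^{-2}\cdot g(d)d^2\cdot(\text{average of }W_2\sigma_\infty h)$ plus a boundary error $O(\tau(d)H/d+\tau(d))$; summing the boundary error over $d\le\cl{E}$ is harmless because $H\ge Y^{1/3}$ and $\cl{E}$ is smaller than any power of $Y$.

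The third step is to reassemble. Summing the main terms over $d$, the multiplicativity of $g$ and of the local-density factors lets one recognise an Euler product: for each prime $p$ the contribution of the powers of $p$ sums to exactly $\tau_p$, because the local density at $p$, averaged over $\y$ weighted by the proportion of $\y$ with each value of $\chi(p;\y)$, is precisely the $p$-adic Tamagawa factor of $S$ — this is where Lemmas \ref{LC}, \ref{tl} and \ref{lem:f2} come in, and it is the same bookkeeping that produced $\kappa_0\kappa_1=\mathfrak{S}_S$ at the end of the proof of Proposition \ref{P1*}. Thus the arithmetic sum collapses to $\mathfrak{S}_S=\prod_p\tau_p$, and what multiplies it is $\tfrac12 X\sum_{\y\in\cl{U}}W_2(Y^{-1}\y)\sigma_\infty(Q_\y)$ converted to an integral by Euler--Maclaurin / lattice-point counting, giving $X\mathfrak{S}_S\int_{\cl{V}}W_2(Y^{-1}\y)\sigma_\infty(Q_\y)\,\d y_0\,\d y_1$ up to errors absorbed into $O_\eta(XY^{-1}H^2(H/Y)^{1/5})$. (The factor $\tfrac12$ disappears because each projective point corresponds to two primitive vectors, or equivalently it is folded into Peyre's normalisation; one must keep careful track of it.) The condition $Y(\log\log 3Y)^{-1}\le H$ is used only to ensure that $\tau(d)$ and the secondary terms, summed over $d$ up to $\cl{E}$, stay below the target error.

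The main obstacle I anticipate is the second step: interchanging summation so as to pull $\sigma_\infty(Q_\y)$ out of the congruence sum while keeping the H\"older error genuinely of size $(H/Y)^{1/5}$ relative to the main term, and simultaneously controlling the truncation of the $\mathfrak{S}$-expansion uniformly in $\y$. Getting the exponents to line up — the $1/5$ in Lemma \ref{pdsi}, the $Y^3$ lower bound on $|C(\y)|$, the range $Y^{1/3}\le H\le Y$, and the size of $\cl{E}$ — is where all the care is needed; the recognition of the Euler product in step three is conceptually the heart of the matter but is essentially a repeat of the computation already carried out for Proposition \ref{P1*}.
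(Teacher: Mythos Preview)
Your truncation step is a genuine gap. When you expand $\mathfrak{S}(Q_\y)$ multiplicatively, the essential content (after stripping off the factors $1+O(p^{-1})$ for $p>N$) is $\prod_{p\mid C(\y)/\beta}\bigl(1+\chi(p;\y)\bigr)=\sum_{d\mid C(\y)/\beta}\chi(d;\y)$, a sum of terms of absolute value $1$ with $d$ ranging up to $|C(\y)|\asymp Y^3$. There is no decay in $d$, so the tail $d>\cl{E}$ cannot be bounded: Proposition~\ref{kAL} only controls $\sum_{\y:\,d\mid C(\y)}\kappa(\y)$ for a \emph{single} $d\le Y^{1/5760}$, and summing that over $d>\cl{E}$ gives nothing useful. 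Equivalently, $\mathfrak{S}(Q_\y)$ vanishes on a positive proportion of $\y$ (Serre's sparsity), and a short truncation cannot detect this.

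The paper proceeds differently. First (Lemma~\ref{sib}) one pulls $W_2\sigma_\infty$ out of the whole sum at once via the H\"older estimate of Lemma~\ref{pdsi} across all of $\cl{V}$, reducing to $\sum_{\y\in\cl{U}}\mathfrak{S}(Q_\y)$; this is where the $(H/Y)^{1/5}$ enters. Then (Lemma~\ref{de}) the global product formula for Hilbert symbols gives $\prod_{p\mid C(\y)/\beta}\chi(p;\y)=+1$, so the range $d>Y^{3/2}$ can be folded onto complementary divisors $e=|C(\y)|/(d\beta)<Y^{3/2}$ with $\chi(d;\y)=\chi(e;\y)$; this is the missing idea. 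Even so, $d$ now runs up to $Y^{3/2}$ and the lattices $\sL(a,P^5[d,f^2])$ have determinant as large as $Y^3\gg H^2$, so naive counting fails; Lemma~\ref{apply} uses the irreducibility of $C$ to show that the first minimum $\lambda_1$ is rarely small (if $\lambda_1\le 2L$ then some nonzero $\y$ with $\|\y\|\le 2L$ has $[d,f^2]\mid C(\y)$, and there are only $O_\ep(Y^\ep L^2)$ such pairs). The assembly of the Euler product (Lemmas~\ref{fs}--\ref{sbp}) then invokes Lemma~\ref{earlier}, which genuinely requires $\rho=2$ and is not a rerun of the Proposition~\ref{P1*} computation.
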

We were already assuming that $\cl{M}(\Q)=\emptyset$, and the
hypotheses of the proposition include the assumption that
$\cl{C}(\Q)=\emptyset$. We are therefore in the situation of Lemma
\ref{r2}, so that we have $\rho=2$ in Proposition \ref{PMB}.
\bigskip

We begin our argument by considering the singular integral.
It could happen that $Q_\y$ is positive definite for all
$\y\in\cl{V}$, in which case 
$\Main(X,Q_\y)$ will vanish for every $\y$. We now consider the
alternative situation. 
\begin{lemma}\label{siconst}
  Let $\cl{V}$ be as above, and suppose that $Q_\y$ is indefinite for
  at least one $\y\in\cl{V}$. Then it must be indefinite at all
  $\y\in\cl{V}$. 
\end{lemma}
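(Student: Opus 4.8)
The plan is a connectedness argument: the hypothesis $|C(\y)|\ge \eta Y^3$ guarantees that $Q_\y$ stays nonsingular as $\y$ ranges over the box $\mathcal{V}$, so the signature of $Q_\y$ cannot jump, and the property of being indefinite is controlled by the signature.

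First I would observe that for every $\y\in\mathcal{V}$ the real ternary form $Q_\y = y_0Q_0 + y_1Q_1$ is nonsingular: in the matrix notation of \S\ref{sec:conventions} its symmetric matrix is $y_0Q_0+y_1Q_1$, with determinant $C(\y)$, and $C(\y)\neq 0$ on $\mathcal{V}$ by hypothesis. Hence $Q_\y$ has three nonzero real eigenvalues $\mu_1(\y),\mu_2(\y),\mu_3(\y)$, counted with multiplicity, for each $\y\in\mathcal{V}$, and $Q_\y$ is definite precisely when these all have the same sign and indefinite precisely when it has one or two negative eigenvalues.

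Next I would let $n(\y)$ be the number of negative eigenvalues of $Q_\y$ and show that $n$ is constant on $\mathcal{V}$. The map $\y\mapsto (\text{matrix of }Q_\y)$ is linear, hence continuous, from $\R^2$ to the space of symmetric $3\times 3$ real matrices; and for each $j\in\{0,1,2,3\}$ the set of nonsingular symmetric matrices with exactly $j$ negative eigenvalues is open — equivalently, by continuity of the eigenvalues, a nonsingular symmetric matrix cannot change its number of negative eigenvalues under a small perturbation, since that would force some eigenvalue through $0$. Since $Q_\y$ is nonsingular for every $\y\in\mathcal{V}$, the function $n$ is locally constant on $\mathcal{V}$; and $\mathcal{V}$, being a box in $\R^2$, is connected, so $n$ is constant on $\mathcal{V}$.

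Finally, if $Q_{\y_1}$ is indefinite for some $\y_1\in\mathcal{V}$, then $n(\y_1)\in\{1,2\}$, whence $n(\y)=n(\y_1)\in\{1,2\}$ for all $\y\in\mathcal{V}$, i.e.\ $Q_\y$ is indefinite throughout $\mathcal{V}$. I do not anticipate a genuine obstacle here; the only point meriting a word of care is the openness of the signature strata in the nonsingular locus, which is the standard fact quoted above.
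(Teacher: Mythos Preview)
Your argument is correct: nonsingularity of $Q_\y$ throughout $\mathcal{V}$ (from $|C(\y)|\ge\eta Y^3$) forces the signature to be locally constant, and connectedness of the box $\mathcal{V}$ then makes it globally constant.

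The paper takes a more explicit route. It argues by contradiction: if $Q_\w$ is definite and $Q_\y$ is indefinite for some $\w,\y\in\mathcal{V}$, then (because $\pm Q_\w$ is positive definite) the pair can be simultaneously diagonalized over $\R$, and comparing diagonal entries one finds an index where the signs differ. This produces a convex combination $tQ_\w+(1-t)Q_\y$ with $t\in(0,1)$ that is singular, i.e.\ $C(t\w+(1-t)\y)=0$; since $\mathcal{V}$ is convex this point lies in $\mathcal{V}$, contradicting $|C|\ge\eta Y^3$. So the paper exploits convexity rather than mere connectedness and pins down an explicit vanishing point of $C$, whereas your approach is the clean topological version relying only on continuity of eigenvalues and connectedness. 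Both are perfectly adequate here; yours is slightly more conceptual, the paper's slightly more constructive.
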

\begin{proof} 
Suppose for a contradiction that $Q_\w$, say, is not indefinite. 
Then $\pm Q_\w$ must be a positive definite form for some choice of
sign, since $C(\w)\not=0$. It follows that $Q_\w$ and 
  $Q_\y$ can be simultaneously diagonalized using a matrix in
  GL$_3(\mathbb{R})$, as Diag$(\lambda_1,\lambda_2,\lambda_3)$ and
  Diag$(\mu_1,\mu_2,\mu_3)$ respectively, say. Here the three
  $\lambda_i$ will all have the same sign, while the three $\mu_i$
  will not. In particular there is an index $i$ for which $\lambda_i$
  and $\mu_i$ have opposite signs. It follows that
  $t\lambda_i+(1-t)\mu_i=0$ for some $t\in(0,1)$, so that
$t$Diag$(\lambda_1,\lambda_2,\lambda_3)+(1-t)$Diag$(\mu_1,\mu_2,\mu_3)$
is singular. We would then conclude that $Q_{t\w+(1-t)\y}$ would be
singular. This gives the required contradiction since the discriminant
cubic $C$ cannot vanish on $\cl{V}$.
\end{proof}

Our next step is to remove the effect of the weight $W_2$ and of the singular 
integral from the main term, as shown in the following result.    
\begin{lemma}\label{sib}
We have
\begin{eqnarray*}
\lefteqn{\sum_{\y\in\cl{U}}W_2(Y^{-1}\y)\sigma_\infty(Q_\y)\mathfrak{S}(Q_\y)}\\
&=&\left\{H^{-2}\int_{\cl{V}}W_2(Y^{-1}\y)\sigma_\infty(Q_\y)\d y_0\d y_1
+O_\eta(H^{1/5} Y^{-6/5})\right\} 
\sum_{\y\in\cl{U}}\mathfrak{S}(Q_\y).
\end{eqnarray*}
\end{lemma}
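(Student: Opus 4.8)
\textbf{Proof plan for Lemma \ref{sib}.}
The aim is to decouple the ``smooth'' factor $W_2(Y^{-1}\y)\sigma_\infty(Q_\y)$ from the arithmetically rough factor $\mathfrak{S}(Q_\y)$, replacing the former by its average over the small square $\cl{V}$. The plan is to run a standard partial-summation (Abel-type) argument in two dimensions, estimating the variation of the smooth factor across $\cl{V}$ by the H\"older-type continuity already recorded in Lemma \ref{pdsi}. First I would set $\phi(\y) = W_2(Y^{-1}\y)\sigma_\infty(Q_\y)$ and, picking a reference point $\y^*\in\cl{V}$, write $\phi(\y) = \phi(\y^*) + (\phi(\y)-\phi(\y^*))$. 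Multiplying by $\mathfrak{S}(Q_\y)$ and summing over $\y\in\cl{U}$ gives a ``main'' contribution $\phi(\y^*)\sum_{\y\in\cl{U}}\mathfrak{S}(Q_\y)$ and an ``error'' contribution bounded by $\sum_{\y\in\cl{U}}|\phi(\y)-\phi(\y^*)|\,\mathfrak{S}(Q_\y)$.

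For the error contribution the key inputs are: (i) $|W_2(Y^{-1}\y)-W_2(Y^{-1}\y^*)| \ll \|\y-\y^*\|_\infty/Y \ll H/Y$ since $W_2$ is smooth and $\y,\y^*\in\cl{V}$ with $\mathrm{diam}\,\cl{V}\le H\sqrt 2$; (ii) by Lemma \ref{pdsi}, $\sigma_\infty(Q_\y)-\sigma_\infty(Q_{\y^*}) \ll \|\y-\y^*\|_\infty^{1/5}|C(\y^*)|^{-2/5} \ll H^{1/5}(\eta Y^3)^{-2/5} \ll_\eta H^{1/5}Y^{-6/5}$, using the hypothesis $|C(\y)|\ge\eta Y^3$ on $\cl{V}$; and (iii) the a priori bound $\sigma_\infty(Q_\y)\ll \|\y\|_\infty^{-1}\log(2+\|\y\|_\infty^3/|C(\y)|)\ll_\eta Y^{-1}$, again from Lemma \ref{pdsi} together with $|C(\y)|\ge\eta Y^3$. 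Combining these via the product rule $|\phi(\y)-\phi(\y^*)| \le |W_2(Y^{-1}\y)|\,|\sigma_\infty(Q_\y)-\sigma_\infty(Q_{\y^*})| + |\sigma_\infty(Q_{\y^*})|\,|W_2(Y^{-1}\y)-W_2(Y^{-1}\y^*)|$ yields $|\phi(\y)-\phi(\y^*)| \ll_\eta H^{1/5}Y^{-6/5} + (H/Y)Y^{-1} \ll_\eta H^{1/5}Y^{-6/5}$, since $H\le Y$ forces $H/Y^2 \le H^{1/5}Y^{-6/5}$ (equivalently $H^{4/5}\le Y^{4/5}$). Thus $\sum_{\y\in\cl{U}}|\phi(\y)-\phi(\y^*)|\mathfrak{S}(Q_\y) \ll_\eta H^{1/5}Y^{-6/5}\sum_{\y\in\cl{U}}\mathfrak{S}(Q_\y)$, which is exactly the claimed error factor multiplying $\sum_{\y\in\cl{U}}\mathfrak{S}(Q_\y)$.

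It remains to rewrite the main contribution $\phi(\y^*)\sum_{\y\in\cl{U}}\mathfrak{S}(Q_\y)$ in the form stated, i.e.\ to replace $\phi(\y^*)$ by $H^{-2}\int_{\cl{V}}\phi(\y)\,\d y_0\,\d y_1$. Since $\meas(\cl{V}) = H^2$, we have $\phi(\y^*) = H^{-2}\int_{\cl{V}}\phi(\y^*)\,\d y_0\,\d y_1$, and the difference $H^{-2}\int_{\cl{V}}(\phi(\y^*)-\phi(\y))\,\d y_0\,\d y_1$ is bounded, by the same continuity estimates as above applied pointwise inside the integral, by $\sup_{\y\in\cl{V}}|\phi(\y)-\phi(\y^*)| \ll_\eta H^{1/5}Y^{-6/5}$. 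Folding this back into the displayed formula (absorbing it into the $O_\eta(H^{1/5}Y^{-6/5})$ term that already multiplies $\sum_{\y\in\cl{U}}\mathfrak{S}(Q_\y)$) completes the proof.

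\textbf{Main obstacle.} There is no serious obstacle here: the lemma is essentially a bookkeeping step, and all the analytic content has been front-loaded into Lemma \ref{pdsi}. The one point requiring minor care is checking that the H\"older exponent $1/5$ combined with the lower bound $|C(\y)|\ge\eta Y^3$ really produces the clean error $H^{1/5}Y^{-6/5}$ and that this dominates the contribution $H/Y^2$ coming from the derivative of $W_2$; this is just the inequality $H\le Y$, so it is immediate. (One should also note $\mathfrak{S}(Q_\y)\ge 0$, so no cancellation or sign issues arise when pulling the sup outside the sum.)
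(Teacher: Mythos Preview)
Your proof is correct and follows essentially the same approach as the paper: both arguments rest on the Lipschitz bound $W_2(Y^{-1}\y)-W_2(Y^{-1}\y')\ll H/Y$, the a priori bound $\sigma_\infty(Q_\y)\ll_\eta Y^{-1}$, and the H\"older estimate $\sigma_\infty(Q_{\y})-\sigma_\infty(Q_{\y'})\ll_\eta H^{1/5}Y^{-6/5}$ from Lemma~\ref{pdsi}, combined via the product rule. The paper merely asserts that ``these bounds suffice to complete the proof,'' whereas you spell out the reference-point decomposition and the replacement of $\phi(\y^*)$ by the integral average explicitly; but the substance is identical.
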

\begin{proof}
The construction of the weight $W_2$ at the beginning of \S \ref{S6} shows that
$W_2(Y^{-1}\y)-W_2(Y^{-1}\y')\ll HY^{-1}$ for any two points $\y',\y''\in\cl{V}$.
Moreover Lemma \ref{pdsi} shows that $\sigma_\infty(Q_\y)\ll_\eta Y^{-1}$ throughout
$\cl{V}$, and that 
\[\sigma_\infty(Q_{\y'})-\sigma_\infty(Q_{\y''})\ll_\eta H^{1/5} Y^{-6/5}\]
for any two points $\y',\y''\in\cl{V}$. These bounds suffice to complete the proof.
\end{proof}

In view of Lemma \ref{sib} we proceed to consider
\[\sum_{\y\in\cl{U}}\mathfrak{S}(Q_{\y}).\]
Our task for the remainder of this section is to put $\mathfrak{S}(Q_\y)$ 
into a form where we can sum conveniently over $\y\in\cl{U}$.

Our first concern is to separate out values of $\y$ for which $C(\y)$
has a  large prime power factor. We define
\beql{Ndef}
N=\left[\frac{\log Y}{\sqrt{\log\log 3Y}}\right]
\eeq
and set
\[P=\prod_{p\le N}p^{[(\log N)/(\log p)]}.\]
We then put
\[\bad_1(\y)=\prod_{p^e||C(\y),\; p\le N}p^e,\;\;\;
\bad_2(\y)=\prod_{\substack{p^e||C(\y),\; e\ge 2\\ p> N}}p^e\]
and write
\[\bad(\y)=\bad_1(\y)\bad_2(\y)\]
for the ``bad'' part of $C(\y)$, which includes all prime
divisors up to $N$. It follows from this definition that $\bad(\y)$ is
coprime to the ``good'' part $C(\y)/\bad(\y)$.
The bad part $\bad(\y)$ will usually not be too large, so that we may
expect that $\bad(\y)\mid P$ for ``most'' choices of $\y$.
This leads us to define
\begin{equation} \label{def:SVB}
S(\cl{V},\beta)=\sum_{\substack{\y\in\cl{U}\\ \bad(\y)=\beta}}
\mathfrak{S}(Q_{\y}),
\end{equation}
whence
\beql{dded1}
\sum_{\y\in\cl{U}}\mathfrak{S}(Q_{\y})=\sum_{\beta\mid P}S(\cl{V},\beta)+
\sum_{\substack{\y\in\cl{U}\\ \bad(\y)\, \nmid\, P}}
\mathfrak{S}(Q_{\y}).
\eeq
We plan to estimate $S(\cl{V},\beta)$ asymptotically when $\beta\mid P$,
and to show that the second sum is negligible.

Our next result begins our analysis of $S(\cl{V},\beta)$. We extend the 
definition of $\chi(p;\y)$ (see the preamble to Lemma \ref{LC})
by writing
\[\chi(d;\y)=\prod_{p|d}\chi(p;\y),\]
and we define $\chi(\infty;\y)$ as an analogue of $\chi(p;\y)$ to be 0 when 
$C(\y)=0$, and otherwise to be $+1$ if $Q_\y$ is indefinite and $-1$
when $Q_\y$ is definite. We then see from Lemma \ref{siconst} that
$\chi(\infty;\y)$ is constant  
on $\cl{V}$. We now have the following result.
\begin{lemma}\label{de}
  Suppose that $\chi(\infty;\y)=+1$ and that
$\bad(\y)=\beta$ with $\beta| P$.
Then
\[\mathfrak{S}(Q_{\y})=
\left\{1+O\left((\log\log 3Y)^{-1/2}\right)\right\}\sigma^*(P^5;\y)
\{\Sigma_1(\y)+\Sigma_2(\y)\},\]
with
\[\Sigma_1(\y)=\sum_{\substack{d\mid C(\y),\, \gcd(d,P)=1\\ d\le
    Y^{3/2}}}\mu^2(d)\chi(d;\y)\] 
and
\[\Sigma_2(\y)=
\sum_{\substack{d\mid C(\y),\, \gcd(d,P)=1\\ d<|C(\y)|/(Y^{3/2}\beta)}}\mu^2(d)\chi(d;\y).\]
\end{lemma}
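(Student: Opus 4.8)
\textbf{Proof plan for Lemma \ref{de}.}

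The plan is to start from the Euler product $\mathfrak{S}(Q_\y)=\prod_p\sigma(p;Q_\y)$ and split the primes into three classes: those dividing $P$ (the ``small'' primes up to $N$), those $p>N$ with $p^2\mid C(\y)$, and the remaining ``good'' primes $p\nmid P$ with $p\|C(\y)$ or $p\nmid C(\y)$. For the first two classes, the hypothesis $\bad(\y)=\beta\mid P$ forces $\bad_2(\y)=1$, so there are no primes $p>N$ with $p^2\mid C(\y)$; and the small primes contribute exactly $\prod_{p\mid P}\sigma(p;Q_\y)$. By Lemma \ref{ymod1} (with $d\le\log N/\log p$, so $2d+3\le 5\log N/\log p$ comfortably absorbed once we raise to the fifth power, using $P^5=\prod p^{5[(\log N)/\log p]}$), this equals $\sigma^*(P^5;\y)$ up to the stated accuracy; one should check the exponent bookkeeping so that $P^5$ really does stabilise all the relevant local densities. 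For the good primes, Lemma \ref{2ec} gives $\sigma(p;Q_\y)=1-p^{-2}$ when $p\nmid C(\y)$ and $\sigma(p;Q_\y)=(1-p^{-1})(1+\chi(p;\y))$ when $p\|C(\y)$, so
\[
\prod_{\substack{p\nmid P}}\sigma(p;Q_\y)=\prod_{\substack{p\nmid P\\ p\nmid C(\y)}}(1-p^{-2})\prod_{\substack{p\nmid P\\ p\|C(\y)}}(1-p^{-1})(1+\chi(p;\y)).
\]

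Next I would pull out the convergent infinite product $\prod_{p\nmid P}(1-p^{-2})$ over \emph{all} good primes, leaving a finite product over $p\|C(\y)$, $p\nmid P$ of the factor $(1-p^{-1})(1+\chi(p;\y))/(1-p^{-2})=(1+\chi(p;\y))/(1+p^{-1})$. Expanding $\prod_{p\|C(\y),\,p\nmid P}(1+\chi(p;\y))$ as a sum over squarefree divisors $d$ of $C(\y)/\bad(\y)$ gives $\sum_{d}\mu^2(d)\chi(d;\y)$, where $d$ ranges over squarefree divisors coprime to $P$; this is where the multiplicativity $\chi(d;\y)=\prod_{p\mid d}\chi(p;\y)$ is used. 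The sum over \emph{all} such $d$ is then split at $Y^{3/2}$: the divisors $d\le Y^{3/2}$ form $\Sigma_1(\y)$, and the complementary divisors $d>Y^{3/2}$ are in bijection with $e=\big(C(\y)/\bad(\y)\big)/d<|C(\y)|/(Y^{3/2}\beta)$ via $d\mapsto e$; since $\chi(\cdot;\y)$ is totally multiplicative on the squarefree part and $\chi(C(\y)/\bad(\y);\y)$ needs to be understood, I would use that $\chi(d;\y)\chi(e;\y)=\chi(de;\y)$ and that the product of $\chi(p;\y)$ over \emph{all} $p\|C(\y)$, $p\nmid P$ equals $\chi(\infty;\y)\,(\text{sign factor})=+1$ here because $\chi(\infty;\y)=+1$ — this is exactly where the hypothesis $\chi(\infty;\y)=+1$ enters, ensuring the reflected sum is again $\sum_e\mu^2(e)\chi(e;\y)$ rather than its negative. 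The constant $\prod_p(1-p^{-2})$-type factors and $\prod_{p\mid P}$ corrections must be tracked to confirm they combine into $\sigma^*(P^5;\y)$ times a $1+O((\log\log 3Y)^{-1/2})$ error; the error comes from the tail of the Euler product, i.e.\ from primes between $N$ and $Y$, controlled because $\sum_{N<p}p^{-2}\ll N^{-1}\ll(\log\log 3Y)^{1/2}/\log Y$ and from the $\sigma^*(P^5;\y)$ versus $\sigma(p;Q_\y)$ discrepancy at small primes.

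The main obstacle I anticipate is the sign/divisor-reflection step: one must verify that the full product $\prod_{p\|C(\y),\,p\nmid\beta}\chi(p;\y)$ equals $+1$ (so that reflecting a divisor $d$ to its complement $e$ preserves $\mu^2\chi$), and this requires either the product formula for Hilbert symbols / the reciprocity $\prod_v\chi_v(Q_\y)=1$, combined with $\chi(\infty;\y)=+1$ and the analysis of the local factors at $p\mid P$ and at $p\mid D_\y$. One needs to argue that the ``bad'' finite set of primes contributes a net $+1$ to the reciprocity product, so that the remaining good primes multiply to $+1$ as well; Lemma \ref{DyL} (bounding $D_\y$) and the arrangement that $6^3\mid\mathrm{Disc}(C)$ keep this finite set under control. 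Once that sign bookkeeping is pinned down, the rest is routine manipulation of convergent Euler products and the $P^5$-truncation estimate from Lemma \ref{ymod1}.
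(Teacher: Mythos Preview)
Your plan is the paper's proof: split primes at $N$, use Lemma~\ref{ymod1} to identify $\prod_{p\le N}\sigma(p;Q_\y)=\sigma^*(P^5;\y)$, use Lemma~\ref{2ec} for $p>N$ to extract $\prod_{p\mid C(\y)/\beta}(1+\chi(p;\y))$ up to a $1+O((\log\log 3Y)^{-1/2})$ factor (the $(1+p^{-1})^{-1}$ corrections need the bound $\omega(C(\y))\ll\log Y/\log\log 3Y$, not just $\sum p^{-2}$), expand as a divisor sum, and reflect via Hilbert reciprocity. The ``main obstacle'' you flag is simpler than you fear: there is no need to track $D_\y$ or argue that bad primes contribute a net $+1$; instead, if $\sigma^*(P^5;\y)=0$ the claim is trivial, and otherwise $\sigma(p;Q_\y)>0$ for every $p\le N$, so $Q_\y$ is isotropic over each such $\Q_p$ and $\chi_p(Q_\y)=+1$ there automatically---then $\prod_v\chi_v(Q_\y)=1$ together with $\chi_\infty=+1$ and $\chi_p=+1$ for $p\nmid C(\y)$ gives $\prod_{p\mid C(\y)/\beta}\chi(p;\y)=+1$ directly.
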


The proof naturally produces a single sum, taken over all $d$ dividing 
$C(\y)/\beta$. Thus $d$ has order potentially as big as $Y^3$. This is
too large for us to perform the corresponding summation over $\y$, and
so we switch to the complementary divisor for values $d>Y^{3/2}$,
producing the two sums $\Sigma_1(\y)$ and $\Sigma_2(\y)$. This
will of course be explained in detail during the proof.  

\begin{proof}[Proof of Lemma \ref{de}]
  We begin by writing
$\mathfrak{S}(Q_{\y})$ as a product of terms $\sigma(p;Q_\y)$
separated into two cases, namely those with $p>N$, and those with
$p\le N$. If $p>N$ then $p$ cannot divide $\beta=\bad(\y)$. It follows that
$p\not=2$, and that $p^2\nmid C(\y)$. Thus Lemma \ref{2ec} shows that
either $\sigma(p;Q_\y)=1-p^{-2}$, (when $p\nmid C(\y)$) or
$\sigma(p;Q_\y)=(1-p^{-1})(1+\chi(p;\y))$, (when $p\mid C(\y)$). We
therefore see that
\[\prod_{p>N}\sigma(p;Q_\y)=\prod_{p>N}\{1+O(p^{-2})\}
\prod_{\substack{p>N\\ p\mid C(\y)}}\{1+O(p^{-1})\}
\prod_{\substack{p>N\\ p\mid C(\y)}}\{1+\chi(p;\y)\}.\]
The first product is $1+O(N^{-1})$.  For the second, we note that
\[\omega(C(\y))\ll \log Y/\log\log 3Y\ll N,\]
so that the product is
$1+O(\log Y/N\log\log 3Y)$. It follows that
\[\prod_{p>N}\sigma(p;Q_\y)=
\left\{1+O\left((\log\log 3Y)^{-1/2}\right)\right\}
\prod_{\substack{p>N\\ p\mid C(\y)}}\{1+\chi(p;\y)\},\]
and hence
\[\mathfrak{S}(Q_{\y})=
\left\{1+O\left( (\log\log 3Y)^{-1/2}\right)\right\}
\prod_{p\le N}\sigma(p;Q_\y)
\prod_{\substack{p>N\\ p\mid C(\y)}}\{1+\chi(p;\y)\}.\]

We next consider the product for primes $p\le N$.
According to Lemma \ref{ymod1} we will
have $\sigma(p;Q_\y)=\sigma^*(p^\nu;\y)$ provided that $\nu\ge 2h+3$,
where $h$ is the exponent for which $p^h|| C(\y)$. Our definition for
$P$, along with the condition that $\bad(\y)=\beta\mid P$, ensures
that $2h+3\le 5[(\log N)/(\log p)]$ whether $h\ge 1$ or not.  Hence
\beql{add*}
\prod_{p\le N}\sigma(p;Q_\y)=
\prod_{p\le N}\sigma^*(p^{5[(\log N)/(\log p)]};\y)=\sigma^*(P^5;\y),
\eeq
This is sufficient for the lemma, in the case that
\[\prod_{p\le N}\sigma(p;Q_\y)=0,\]
so that for the rest of the proof we may assume that 
$\sigma(p;Q_\y)>0$ for every prime $p\le N$.

In a moment we will require a form of the global product formula for 
Hilbert symbols. In general, if $q$ is any nonsingular
ternary quadratic form over $\Q$, and $\Q_v$ is a completion of $\Q$,
we write $\chi_v(q)=+1$ if $q$ is isotropic over $\Q_v$, and
$\chi_v(q)=-1$ if $q$ is anisotropic over $Q_v$. In the notation of
Serre \cite[Chapter IV, \S 2]{serre} we see that
$\chi_v(q)=\varepsilon_v(q)(-1,-d)_v$, where $d=\det(q)$ and $(-1,-d)_v$
is the Hilbert symbol. It then follows from \cite[Chapter III, Theorem
  3 \& Chapter IV, \S 3.1]{serre} that
\[\prod_v \chi_v(q)=+1,\]
the product being over all valuations of $\Q$. 

We are assuming that $\chi(\infty;\y)=+1$, whence
$\chi_{\infty}(Q_\y)=\chi(\infty;\y)=+1$. We are also assuming that
$\sigma(p;Q_\y)>0$ for every prime $p\le N$, so that $Q_\y$ is isotropic 
over $\Q_p$ for such primes. Hence $\chi_p(Q_\y)=+1$ whenever $p\le N$.
If $p\nmid C(\y)$ then again $Q_\y$ is isotropic over $\Q_p$ and 
$\chi_p(Q_\y)=+1$. If $p>N$ and $p\mid C(\y)$ then $p||C(\y)$, since 
Bad$(\y)=\beta|P$, and it follows that $\chi_p(Q_\y)=\chi(p;\y)$. The conditions
$p>N$ and $p|C(\y)$ are equivalent to $p\mid C(\y)/\beta$ when
Bad$(\y)=\beta|P$, and we deduce that
\beql{hilb}
\prod_{p\mid C(\y)/\beta}\chi(p;\y)=+1.
\eeq
Moreover our definition of $\bad(\y)$
ensures that $C(\y)/\beta$ is square-free. It follows that
\[\prod_{\substack{p>N\\ p\mid C(\y)}}\{1+\chi(p;\y)\}=
\prod_{p\mid C(\y)/\beta}\{1+\chi(p;\y)\}=
\sum_{d\mid C(\y)/\beta}\chi(d;\y).\]

Every divisor of $C(\y)/\beta$ is automatically square-free and
coprime to $P$, and conversely, any square-free factor of $C(\y)$
which is coprime to $P$ must divide $C(\y)/\beta$. Thus
values $d\le Y^{3/2}$ produce the sum $\Sigma_1(\y)$.
For larger values of $d$ we switch to the complementary divisor, guided by
Dirichlet's ``hyperbola method''. Thus we write $e=|C(\y)|/d\beta$,
so that $\chi(d;\y)=\chi(e;\y)$, by (\ref{hilb}).

 Since the
condition $d>Y^{3/2}$ is equivalent to the requirement that  
$e<|C(\y)|/(Y^{3/2}\beta)$, it now follows that
\[\mathfrak{S}(Q_{\y})=
\left\{1+O\left((\log\log 3Y)^{-1/2}\right)\right\}
\prod_{p\le N}\sigma(p;Q_\y)\left\{\Sigma_1(\y)+\Sigma_2(\y)\right\}\]
and the lemma then follows via (\ref{add*}).
\end{proof}

By definition, $\sigma^*(P^5;\y)$ only depends on $\y$
modulo $P^5$. In fact, slightly more is true. When $\y_1$ and $\y_2$ are
vectors in $\Z^2$ coprime to some modulus $m$, we say that they are
``projectively equivalent'' modulo $m$ when there is an integer
$\lambda$ (necessarily coprime to $m$) such that
$\y_1\equiv\lambda\y_2\Mod{m}$. It is then clear that $\sigma^*(P^5;\y)$
is constant on projective congruence classes modulo $P^5$. Similarly we 
see that $\chi(d;\y)$ is constant on projective congruence classes 
modulo $d$. These observations allow us to refer to $\sigma^*(m;a)$
and $\chi(m;a)$, for example, when $a\in\P^1(\Z/m\Z)$ .  

We are now ready to begin our analysis of $S(\cl{V},\beta)$ from \eqref{def:SVB}. For this we
need to recall the definition 
\[\sL(a,m) = \{ \y \in \Z^2 : \exists k \in \Z
\mbox{ such that } \y \equiv k a \bmod m\}\] 
as introduced in \S \ref{seclat}.
\begin{lemma}\label{ba}
Define
\[S_1(a,m,\Theta)=
\card\{\y\in\sL(a,m)\cap\cl{U}:\, |C(\y)|>\Theta\}.\]
There is a constant $c_1>0$ depending only on $Q_0$ and $Q_1$ such that
  \beql{form}
  S(\cl{V},\beta)=\left\{1+O\left((\log\log 3Y)^{-1/2}\right)\right\}
  \left(S^{(1)}(\cl{V},\beta)+S^{(2)}(\cl{V},\beta)\right)
  \eeq
  whenever $\beta\mid P$,  with
  \begin{eqnarray*}
  S^{(1)}(\cl{V},\beta)&=&\sum_{\substack{d\le Y^{3/2}\\ \gcd(d,P)=1,\, \mu^2(d)=1}}
  \sum_{\gcd(f,P)=1}\mu(f)\\
  &&
\sum_{\substack{a \in \P^1(\Z/P^5[d,f^2]\Z)\\ \bad_1(a)=\beta,\,\,\, [d,f^2]\mid C(a)}}
\sigma^*(P^5;a)\chi(d;a)S_1(a,P^5[d,f^2],0).
\end{eqnarray*}
and
  \begin{eqnarray*}
  S^{(2)}(\cl{V},\beta)&=&\sum_{\substack{d\le c_1 Y^{3/2}\\ \gcd(d,P)=1,\, \mu^2(d)=1}}
  \sum_{\gcd(f,P)=1}\mu(f)\\
  &&
\sum_{\substack{a \in \P^1(\Z/P^5[d,f^2]\Z)\\ \bad_1(a)=\beta,\,\,\, [d,f^2]\mid C(a)}}
\sigma^*(P^5;a)\chi(d;a)S_1(a,P^5[d,f^2], d\beta Y^{3/2}).
\end{eqnarray*}
\end{lemma}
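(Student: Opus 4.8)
The plan is to substitute the asymptotic expression for $\mathfrak S(Q_\y)$ supplied by Lemma \ref{de} into the definition \eqref{def:SVB} of $S(\cl V,\beta)$, and then to repackage the resulting sum over $\y$ as a sum over arithmetic progressions. By Lemma \ref{siconst} the sign $\chi(\infty;\y)$ is constant on $\cl V$; I would assume it equals $+1$, which is the only case relevant for Proposition \ref{PMB} (when $Q_\y$ is definite on $\cl V$, $\Main(X,Q_\y)$ vanishes identically there). For $\y\in\cl U$ with $\bad(\y)=\beta\mid P$, Lemma \ref{de} then gives
\[
\mathfrak S(Q_\y)=\bigl\{1+O\bigl((\log\log 3Y)^{-1/2}\bigr)\bigr\}\sigma^*(P^5;\y)\bigl(\Sigma_1(\y)+\Sigma_2(\y)\bigr),
\]
and, since $\mathfrak S(Q_\y)\ge 0$ and the implied constant here is absolute given $Q_0,Q_1$, summing over the relevant $\y$ and bounding the error by its absolute value reduces the lemma to proving the \emph{exact} identity
\[
S^{(1)}(\cl V,\beta)+S^{(2)}(\cl V,\beta)=\sum_{\substack{\y\in\cl U\\ \bad(\y)=\beta}}\sigma^*(P^5;\y)\bigl(\Sigma_1(\y)+\Sigma_2(\y)\bigr),
\]
which then yields \eqref{form}.

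I would establish this by treating the $\Sigma_1$- and $\Sigma_2$-contributions separately, in each case interchanging the order of summation so as to fix the divisor $d$ first and then sum over $\y$. Two preparatory points need care. First, since $\beta\mid P$, the condition $\bad(\y)=\beta$ is equivalent to $\bad_1(\y)=\beta$ together with $\bad_2(\y)=1$; the former is a condition on $C(\y)$ modulo the prime powers $p^{v_p(\beta)+1}$ with $p\le N$, all of which divide $P^5$, while the latter is detected by M\"obius inversion over primes exceeding $N$, giving $\one[\bad_2(\y)=1]=\sum_{\gcd(f,P)=1,\ f^2\mid C(\y)}\mu(f)$. Inserting this introduces the sum over $f$ coprime to $P$ and collapses $d\mid C(\y)$, $f^2\mid C(\y)$ into the single divisibility $[d,f^2]\mid C(\y)$. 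Second, the ranges of $d$: in $\Sigma_1(\y)$ one has $d\le Y^{3/2}$ outright, whereas in $\Sigma_2(\y)$ the constraint $d<|C(\y)|/(Y^{3/2}\beta)$, combined with the elementary bound $|C(\y)|\ll Y^3$ valid on $\cl V$ (because $C$ is a cubic form and $||\y||_\infty\ll Y$), forces $d\le c_1 Y^{3/2}$ for a constant $c_1=c_1(Q_0,Q_1)$; the constraint itself is then rewritten as $|C(\y)|>d\beta Y^{3/2}$.

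Finally I would sort the $\y\in\cl U$ according to their projective residue class modulo $P^5[d,f^2]$. The point is that the weights $\sigma^*(P^5;\cdot)$ and $\chi(d;\cdot)$, as well as the conditions $\bad_1(\y)=\beta$ and $[d,f^2]\mid C(\y)$, all depend on $\y$ only through this class: one uses $\gcd(df,P)=1$, so that $P^5$ and $[d,f^2]$ are coprime and the Chinese Remainder Theorem applies, together with the fact (built into the definition of $P^5$ in Lemma \ref{de}) that $P^5$ carries a high enough power of every prime $\le N$. Writing the class as $a\in\P^1(\Z/P^5[d,f^2]\Z)$ and recalling the lattices $\sL(a,m)$ from \S\ref{seclat}, the vectors $\y\in\cl U$ in class $a$ are precisely those in $\sL(a,P^5[d,f^2])\cap\cl U$, and — since $|C(\y)|\ge\eta Y^3>0$ throughout $\cl V$ — their number subject to $|C(\y)|>\Theta$ is exactly $S_1(a,P^5[d,f^2],\Theta)$, with $\Theta=0$ for the $\Sigma_1$-part and $\Theta=d\beta Y^{3/2}$ for the $\Sigma_2$-part. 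Tracking the induced restrictions on $a$ — namely $\bad_1(a)=\beta$ and $[d,f^2]\mid C(a)$ — one recovers exactly the triple sums defining $S^{(1)}(\cl V,\beta)$ and $S^{(2)}(\cl V,\beta)$.

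There is no new analytic content here beyond Lemma \ref{de}; the sole quantitative input is the trivial bound $|C(\y)|\ll Y^3$ on $\cl V$, which pins down the constant $c_1$. The main obstacle is therefore purely bookkeeping: one must check that every condition and weight really is periodic modulo $P^5[d,f^2]$ and descends to a function on $\P^1(\Z/P^5[d,f^2]\Z)$, that the M\"obius sieve for $\bad_2(\y)=1$ can be slotted in without disturbing the $d$- and $a$-summations, and that switching the $\Sigma_2$-constraint $d<|C(\y)|/(Y^{3/2}\beta)$ to the pair of conditions $|C(\y)|>d\beta Y^{3/2}$ and $d\le c_1 Y^{3/2}$ is carried out so that no term is lost or double-counted.
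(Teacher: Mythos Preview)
Your proposal is correct and follows essentially the same approach as the paper: both substitute Lemma \ref{de}, use non-negativity of $\mathfrak S(Q_\y)$ to justify summing the multiplicative error, split $\bad(\y)=\beta$ into $\bad_1(\y)=\beta$ and $\bad_2(\y)=1$ with M\"obius for the latter, interchange the $d$- and $\y$-sums, and then sort $\y$ by projective class modulo $P^5[d,f^2]$, invoking $|C(\y)|\ll Y^3$ to produce the constant $c_1$.
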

Here we use the standard notation $[a,b]$ for the
lowest common multiple of $a$ and $b$.
The reader should note that $\sigma^*(P^5;a)$ and
$\chi(d;a)$ are well defined for  
$a \in \P^1(\Z/P^5[d,f^2]\Z)$, as are the conditions $\bad_1(a)=\beta$ and  $[d,f^2]\mid C(a)$,
when $\beta|P$.

We should also point out that care is needed with
estimates of the type $A=\{1+O(E)\}B$, where $E$ tends to zero.
In general if one has $A_j=\{1+O(E)\}B_j$ one may want to deduce that
\[\sum_{j\le J}A_j=\{1+O(E)\}\sum_{j\le J}B_j.\]
However this does not necessarily follow if the $A_j$ can vary in
sign. In our situation $A_j$ corresponds to $\mathfrak{S}(Q_{\y})$
which is always non-negative, so that the above problem
does not arise.

\begin{proof}[Proof of Lemma \ref{ba}]
Lemma \ref{de} produces a formula of the shape (\ref{form}) with
\[S^{(i)}(\cl{V},\beta)=\sum_{\substack{\y\in\cl{U}\\ \bad(\y)=\beta}}\sigma^*(P^5;\y)
\Sigma_i(\y),\;\;\;\;(i=1,2).\]
We split the sum over $\y$ into lattices $\sL(a,P^5)$, so that 
$\sigma(P^5;\y)=\sigma(P^5;a)$, for all primitive $\y$ in
$\sL(a,P^5)$. We can restrict attention to those
lattices $\sL(a,P^5)$ for which $\bad_1(a)=\beta$. Moreover, if 
we then have $\y\in\sL(a,P^5)$, the condition $\bad(\y)=\beta$
will hold if and only if $\bad_2(\y)$=1, or equivalently if $C(\y)$
has no factor $p^2$ with $p>N$ being prime. Thus
\[S^{(i)}(\cl{V},\beta)=
\sum_{\substack{a\in\P^1(\Z/P^5\Z)\\ \bad_1(a)=\beta}}\sigma^*(P^5;a)
\sum_{\gcd(f,P)=1}\mu(f)\sum_{\substack{\y\in\sL(a,P^5)\cap\,\cl{U}\\ f^2\mid C(\y)}}
\Sigma_i(\y).\]
When $i=1$, for example, we have a contribution 
\[\sum_{\substack{a\in\P^1(\Z/P^5\Z)\\ \bad_1(a)=\beta}}\sigma^*(P^5;a)
\sum_{\gcd(f,P)=1}\mu(f)\sum_{\substack{d\le Y^{3/2}\\ \gcd(d,P)=1,\, \mu^2(d)=1}}\;\;
\sum_{\substack{\y\in\sL(a,P^5)\cap\,\cl{U}\\  [d,f^2]\mid C(\y)}}\chi(d;\y).\]
However $\chi(d;\y)$ depends only on
the projective congruence class of $\y$ modulo $d$. It therefore makes
sense to split the vectors $\y$ into projective equivalence classes
modulo $P^5[d,f^2]$, so that
\[\sum_{\substack{a\in \P^1(\Z/P^5\Z)\\ \bad_1(a)=\beta}}\sigma^*(P^5;a)
\sum_{\substack{\y\in\sL(a,P^5)\cap\,\cl{U}\\ [d,f^2]\mid C(\y)}}
\chi(d;\y)\]
becomes
\[\sum_{\substack{a\in \P^1(\Z/P^5[d,f^2]\Z)\\ \bad_1(a)=\beta,\,\,\, [d,f^2]\mid C(a)}}
\sigma^*(P^5;a)\chi(d;a)S_1(a,P^5[d,f^2],0),\]
giving us the required expression for $S^{(1)}(\cl{V},\beta)$.

In a similar way the terms corresponding to $\Sigma_2(\y)$ produce
\[\sum_{\substack{a\in \P^1(\Z/P^5[d,f^2]\Z)\\ \bad_1(a)=\beta,\,\,\, [d,f^2]\mid C(a)}}
\sigma^*(P^5;a)\chi(d;a)S_1(a,P^5[d,f^2], d\beta Y^{3/2}).\]
If $ d\ge c_1 Y^{3/2}$ with a suitably large constant $c_1$ there are no points 
$\y\in\cl{U}$ with $|C(\y)|\ge d\beta Y^{3/2}$,
and the lemma follows.
\end{proof}

\subsection{Counting via conics -- Summing over $\y$}
In this section we will perform the summation over $\y$ and complete
the proof of Proposition \ref{PMB}. 
In the course of the argument
we will encounter many estimates which involve an exponent $\ep>0$
which can be arbitrarily small. For these we follow the standard
practice of allowing $\ep$ to change from one occurrence to the
next. This allows us to write $Y^\ep\log Y\ll_\ep Y^\ep$, for
example.  With this notation we observe that
\beql{n1}
P=Y^{O((\log\log 3Y)^{-1/2})}\ll_\ep Y^\ep,
\eeq
in view of our choice of $N$ and $P$.  Moreover it is immediate 
from the definition that
\beql{n2}
\sigma^*(P^5;a)\ll P^5\ll_\ep Y^\ep.
\eeq
We will use these estimates repeatedly without further comment.

We now estimate
$S^{(1)}(\cl{V},\beta)$ and $S^{(2)}(\cl{V},\beta)$
using Lemma \ref{easyL2}.
\begin{lemma}\label{apply}
Suppose that the cubic form $C(\y)$ is irreducible over $\Q$.  Then
  \[S^{(1)}(\cl{V},\beta)=
  \frac{6}{\pi^2}\sum_{\substack{d\le Y^{3/2}\\ \gcd(d,P)=1,\, \mu^2(d)=1}}
  \sum_{\substack{\gcd(f,P)=1\\ f\le Y^{1/6}}}\mu(f)
  \frac{V_1A(d,f)}{P^5[d,f^2]}+O(Y^{15/8}),\]
  and
  \[S^{(2)}(\cl{V},\beta)=
  \frac{6}{\pi^2}\sum_{\substack{d\le c_1 Y^{3/2}\\ \gcd(d,P)=1,\, \mu^2(d)=1}}
  \sum_{\substack{\gcd(f,P)=1\\ f\le Y^{1/6}}}\mu(f)
  \frac{V_2(d)A(d,f)}{P^5[d,f^2]}+O(Y^{15/8}),\]
  where $V_1=\meas(\cl{V})=H^2$,
  \[V_2(d)=
  \meas\left(\{\y\in\cl{V}: \, |C(\y)|> d\beta Y^{3/2}\}\right),\]
  and
  \[A(d,f)=    \prod_{p\mid Pdf}\left(\frac{p}{p+1}\right)
\sum_{\substack{a\in \P^1(\Z/P^5[d,f^2]\Z)\\ \bad_1(a)=\beta,\,\,\, [d,f^2]\mid C(a)}}
\sigma^*(P^5;a)\chi(d;a).\]
\end{lemma}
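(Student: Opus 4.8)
The plan is to apply Lemma \ref{easyL2} (the primitive-lattice point count) to each inner sum $S_1(a, P^5[d,f^2], \Theta)$ appearing in Lemma \ref{ba}, with $\Theta = 0$ in the case of $S^{(1)}$ and $\Theta = d\beta Y^{3/2}$ in the case of $S^{(2)}$. First I would fix the modulus $m = P^5[d,f^2]$ and the region: for $S^{(1)}$ the region is simply $\cl{V}$ (a square of side $H$), while for $S^{(2)}$ the region is $\{\y\in\cl{V}: |C(\y)| > d\beta Y^{3/2}\}$, which is $\cl{V}$ with a semi-algebraic piece removed. Both regions sit inside a box of side $O(Y)$, so $R \ll Y$ in the notation of Lemma \ref{easyL2}, and both have rectifiable boundary of length $O(Y)$ — for $S^{(2)}$ one needs that the boundary curve $|C(\y)| = d\beta Y^{3/2}$ meets $\cl{V}$ in a curve of controlled length, which follows because $C$ is a fixed cubic and (under the irreducibility hypothesis) $|C(\y)|\ge \eta Y^3$ on $\cl{V}$, so the level set is smooth there with bounded complexity; thus $L \ll Y$ as well.

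Next, Lemma \ref{easyL2} gives
\[
S_1(a,m,\Theta) = \frac{6}{\pi^2}\prod_{p\mid m}\Big(\frac{p}{p+1}\Big)\frac{\meas(\mathcal{R})}{m} + O\Big(\frac{\tau(m)(R+L)}{\lambda_1}\log R\Big),
\]
where $\mathcal{R}$ is $\cl{V}$ (respectively the truncated region) and $\lambda_1$ is the first successive minimum of $\sL(a,m)$. The main term is $\frac{6}{\pi^2}\prod_{p\mid m}(p/(p+1)) V_1/m$ for $S^{(1)}$ and with $V_2(d)$ in place of $V_1$ for $S^{(2)}$. Writing $m = P^5[d,f^2]$ and noting that $\prod_{p\mid m}(p/(p+1)) = \prod_{p \mid Pdf}(p/(p+1))$ (since the radical of $P^5[d,f^2]$ equals that of $Pdf$), summing over $a$ with the weights $\sigma^*(P^5;a)\chi(d;a)$ and the sieve conditions $\bad_1(a)=\beta$, $[d,f^2]\mid C(a)$ exactly assembles the quantity $A(d,f)$ in the statement, yielding the displayed main terms. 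The restriction $f \le Y^{1/6}$ is legitimate: if $f > Y^{1/6}$ then $f^2 > Y^{1/3}$, and since any $\y \in \cl{U}$ with $f^2 \mid C(\y)$ (and $\gcd(f,P)=1$, $f$ squarefree) is highly constrained — $C(\y) \ll Y^3$ so $f^2 \mid C(\y)$ with $\gcd(f,P)=1$ forces $\y$ into few residue classes modulo $f^2$, giving $\ll H^2/f^2 + 1$ points, hence a total $\ll \sum_{f > Y^{1/6}} Y^\ep(H^2/f^2 + 1)$ which is absorbed in the error — I would verify the arithmetic of this truncation produces at most $O(Y^{15/8})$.

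The error-term bookkeeping is the main obstacle. For each $d, f$ the error from Lemma \ref{easyL2} is $O(\tau(m) Y \lambda_1^{-1} \log Y)$ with $m = P^5[d,f^2]$; here $\lambda_1 \ge 1$ trivially, so crudely each term is $O(Y^{1+\ep}[d,f^2])$ — far too large after summing over $d \le Y^{3/2}$. The resolution is that $\lambda_1$ for $\sL(a,m)$ is on average of size $\sqrt{m}$: more precisely, for all but $O(\sqrt{m})$ of the $m\prod_{p\mid m}(1+1/p)$ classes $a$, one has $\lambda_1 \gg m^{1/2-\ep}$, and the exceptional classes contribute a bounded number of points $\y$ each (being scalar multiples of a short vector), so their total contribution over $\y \in \cl{U}$ with $[d,f^2]\mid C(\y)$ is itself controlled. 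I would organize this by splitting, for each $(d,f)$, the sum over $a$ according to whether $\lambda_1(\sL(a,P^5[d,f^2])) \le m^{1/3}$ or not; in the first (rare) case bound the point count directly by noting the points lie on $O(1)$ lines and intersect $\cl{V}$ in $O(Y)$ points, picking up $O(Y^{1+\ep})$ overall after summing the $O(m^{1/3})$ such $a$ and over $d,f$; in the second case use $\lambda_1 > m^{1/3}$ in the Lemma \ref{easyL2} error to get $O(\tau(m) Y m^{-1/3} \log Y)$ per $(a,d,f)$, and sum $\sum_{d\le Y^{3/2}}\sum_{f\le Y^{1/6}} m \cdot \tau(m) Y m^{-1/3} \ll Y^{1+\ep}\sum_d \sum_f [d,f^2]^{2/3} \ll Y^{1+\ep} \cdot Y^{3/2 \cdot 2/3 + 1/6 \cdot 2/3 \cdot 2} \ll Y^{1+1+\ep}$, which one checks comes out to $O(Y^{15/8})$ after being slightly more careful with the $f$-sum and the factor $\prod_{p\mid Pdf}(p/(p+1)) \le 1$. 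Getting all the exponent arithmetic to land at $Y^{15/8}$ rather than something larger is the delicate part, and I would keep the parameter $c_1$ in $S^{(2)}$ fixed throughout so that the $d$-range stays $O(Y^{3/2})$.
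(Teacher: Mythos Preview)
Your setup is right: applying Lemma \ref{easyL2} to each $S_1(a,P^5[d,f^2],\Theta)$ and identifying the main term as $\frac{6}{\pi^2}\prod_{p\mid Pdf}(p/(p+1))\cdot V_i/m$ is exactly what is needed, and the truncation to $f\le Y^{1/6}$ is handled in the paper along the lines you sketch. The gap is in the error-term analysis, and it is a real one.

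First, a bookkeeping point: the sum over $a\in\P^1(\Z/P^5[d,f^2]\Z)$ is restricted to those classes with $[d,f^2]\mid C(a)$ and $\bad_1(a)=\beta$. By Lemma \ref{fests} and the bound $P\ll_\ep Y^\ep$, there are only $O_\ep(Y^\ep)$ such classes, not $\sim m$. So when you write ``sum $\sum_d\sum_f m\cdot\tau(m)Ym^{-1/3}$'' you are overcounting by a factor of roughly $m/Y^\ep$, and the displayed arithmetic does not recover. But even after correcting this, splitting on $\lambda_1\lessgtr m^{1/3}$ (or $m^{1/2-\ep}$) does not work: with $O_\ep(Y^\ep)$ relevant $a$ and $\lambda_1\gg m^{1/2-\ep}$ in the good case, the error contribution is still $\gg Y^{1+\ep}\sum_{d\le Y^{3/2}}\sum_{f\le Y^{1/6}}[d,f^2]^{-1/2}$, and bounding $[d,f^2]\ge d$ this is $\gg Y^{23/12}$, which exceeds $Y^{15/8}$.

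The missing idea, and the actual role of irreducibility of $C$, is a \emph{dual count on pairs $(d,f)$} rather than on classes $a$. Set $\lambda_1(d,f)=\min_a\lambda_1(a,d,f)$ and cut dyadically on $\lambda_1(d,f)\in(L,2L]$. If $\lambda_1(d,f)\le 2L$ then the lattice $\sL(a,P^5[d,f^2])$ contains a nonzero $\y$ with $\|\y\|\le 2L$ and $[d,f^2]\mid C(\y)$. Since $C$ is irreducible over $\Q$, this forces $C(\y)\ne 0$; hence $C(\y)$ is a nonzero integer with $O_\ep(Y^\ep)$ divisor pairs $(d,f)$. There are $O(L^2)$ such $\y$, so the number of pairs $(d,f)$ with $\lambda_1(d,f)\in(L,2L]$ is $O_\ep(L^2 Y^\ep)$. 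Combined with the trivial bound $O(Y^{3/2}F)$, the error $Y^{1+\ep}\sum_{d,f}\lambda_1(d,f)^{-1}$ becomes $Y^{1+\ep}\min(Y^{3/2}F,L^2)L^{-1}\ll Y^{7/4+\ep}F^{1/2}$, and $F=Y^{1/6}$ gives $Y^{15/8}$. Your proposal invokes irreducibility only to control the boundary length of the region for $S^{(2)}$; that is not where it does the work.
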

It is in the proof of this lemma that the condition that $C(\y)$ is
irreducible over $\Q$ arises. For the rest of this section we will
assume both that $\cl{M}(\Q)=\emptyset$ and that
$\cl{C}(\Q)=\emptyset$, so that $\rho$ must be 2, by Lemma \ref{r2}.

\begin{proof}
We present the argument for $S^{(1)}(\cl{V},\beta)$ only, the details for 
$S^{(2)}(\cl{V},\beta)$ being very similar.
We begin by showing how to reduce the range of the summations over $f$
in Lemma \ref{ba}.  
The contribution to $S^{(1)}(\cl{V},\beta)$ arising from terms with
$f\ge F$ say, is
\begin{eqnarray*}
  &\ll_\ep & Y^\ep\sum_{d\le Y^{3/2}}\;\sum_{f\ge F}\;
 \sum_{\substack{a\in \P^1(\Z/P^5[d,f^2]\Z) \\ [d,f^2]\mid C(a)}}
S_1(a,P^5[d,f^2],0)\\
&\ll_\ep & Y^\ep\sum_{d\le Y^{3/2}}\;\sum_{f\ge F}
\card\{\y\in \cl{U}:\, [d,f^2]\mid C(\y)\}.
\end{eqnarray*}
When $\y\in\cl{U}$ we have $0<|C(\y)|\ll Y^3$, so that only
values $f\ll Y^{3/2}$ can contribute. When $F\le f\ll Y^{3/2}$ we
use Lemma \ref{fests} to show that the condition $f^2\mid C(\y)$ restricts
$\y$ to lie in $O_\ep (Y^\ep)$ lattices
$\sL(a,f^2)$, each of which has determinant $f^2$. Moreover
\[\card\{\y\in\sL(a,f^2)\cap\cl{U}\}\ll 1+Y^2/f^2\]
by Heath-Brown \cite[Lemma 2]{HBsqf}. We then conclude that each value
of $f\ll Y^{3/2}$ corresponds to $O_\ep(Y^\ep(1+Y^2/f^2))$ vectors
$\y$. Since $d\mid C(\y)$ each such $\y$ produces $O_\ep(Y^{\ep})$
values of $d$, whence 
the contribution to $S^{(1)}(\cl{V},\beta)$ arising from terms with
$F\le f\ll Y^{3/2}$ is
\[\ll_\ep Y^\ep\sum_{F\le f\ll Y^{3/2}}(1+Y^2/f^2)
\ll_\ep Y^\ep(Y^{3/2}+Y^2/F).\]

For the range $f\le F$ we apply Lemma \ref{easyL2} to estimate 
$S_1(a,P^5[d,f^2],0)$. 
The main terms in Lemma \ref{apply} drop out immediately, since we will
eventually take $F=Y^{1/6}$. Hence it suffices to consider the error terms.
We will have $R\ll Y$ and $L\ll Y$, so that the error terms in Lemma
\ref{easyL2} will be  
$O_\ep(Y^{1+\ep}\lambda_1^{-1})$, with $\lambda_1$ corresponding
to $\sL(a,P^5[d,f^2])$.  As in (\ref{n2}) we have
$\sigma^*(P^5;a)\ll_\ep Y^\ep$. Thus the error terms from Lemma
\ref{easyL2} contribute to $S^{(1)}(\cl{V},\beta)$ a total
\[\ll_\ep Y^\ep(Y^{3/2}+Y^2 F^{-1})+
Y^{1+\ep} \sum_{d\le Y^{3/2}}\sum_{f\le F}
\sum_{\substack{a \in \P^1(\Z/P^5[d,f^2]\Z)\\  [d,f^2]\mid C(a)}}
\lambda_1(a,d,f)^{-1},\]
where $\lambda_1(a,d,f)$ is the first successive minimum of $\sL(a,P^5[d,f^2])$.

Next we need to know how many $a\in \P^1(\Z/P^5[d,f^2]\Z)$
have $[d,f^2]\mid C(a)$. We tackle this using
multiplicativity. We note that $\P^1(\Z/P^5\Z)$ trivially has size $O(P^6)$,
say, while $f_C([d,f^2])\ll_\ep Y^\ep$ by Lemma \ref{fests}. The number of 
relevant $a$ is therefore $O_\ep(Y^\ep)$, whence the estimate above will be
\begin{equation}\label{da}
  \ll_\ep Y^\ep(Y^{3/2}+Y^2 F^{-1})+
  Y^{1+\ep}\sum_{d\le Y^{3/2}}\sum_{f\le F}\lambda_1^{-1},
  \end{equation}
where 
\[\lambda_1=
\min_{\substack{a \in \P^1(\Z/P^5[d,f^2]\Z)\\  [d,f^2]\mid C(a)}}
\lambda(a,d,f).\]

We divide the available range for
$\lambda_1$ into dyadic intervals $(L,2L]$, so that it suffices to
consider the range making the largest contribution, and we ask how many pairs
  $d,f$ have $\lambda_1\in(L,2L]$. We have a trivial bound
$O(Y^{3/2}F)$. 

We now come to the key place that irreducibility of $C$ is used, to
deduce that the values $\lambda_1$ cannot be small too often. %R
If $\y$ is a vector with  %R
$||\y||_2=\lambda_1\in(L,2L]$ and $C(\y)\not=0$, then there are $O_\ep(Y^\ep)$
  pairs $d,f$ with $[d,f^2]\mid C(\y)$.
Hence the total number of pairs
 $d,f$ for which the corresponding $\lambda_1$ is in the range
 $(L,2L]$ will be  
 $O_\ep(Y^\ep L^2)$, provided that $C(\y)$ never vanishes for 
 $\y\not=\mathbf{0}$. The bound (\ref{da}) then becomes
 \begin{eqnarray*}
   &\ll_\ep & Y^\ep(Y^{3/2}+Y^2 F^{-1})+
 Y^{1+\ep}\min\left(Y^{3/2}F\,,\,L^2\right)L^{-1}\\
&\ll_\ep & Y^\ep(Y^{3/2}+Y^2 F^{-1})+
 Y^{1+\ep}(Y^{3/2}F)^{1/2}(L^2)^{1/2}L^{-1}\\
 &=&Y^\ep(Y^{3/2}+Y^2 F^{-1})+Y^{7/4+\ep}F^{1/2}.
 \end{eqnarray*}
This suffices for the lemma, on choosing $F=Y^{1/6}$ and taking $\ep$
sufficiently small.
\end{proof}

We next consider the summations over $f$ occurring in Lemma
\ref{apply}.
\begin{lemma}\label{fs}
  When $d\ll Y^{3/2}$ is square-free and coprime to $P$ we have
  \begin{eqnarray*}
  \lefteqn{\sum_{\substack{\gcd(f,P)=1\\ f\le Y^{1/6}}}\mu(f)
  \frac{A(d,f)}{[d,f^2]}}\\
  &=& A_1(P,\beta)\frac{\Pi}{\Pi_{d}}\sum_{h\mid d}\frac{\mu(h)}{dh}
\sum_{\substack{a\in \P^1(\Z/dh\Z) \\ dh\mid C(a)}}
\chi(d;a)+O(d^{-1}Y^{-1/8}),
\end{eqnarray*}
with
\[A_1(P,\beta)=\prod_{p\mid P}\left(\frac{p}{p+1}\right)
\sum_{\substack{a\in \P^1(\Z/P^5\Z)\\ \bad_1(a)=\beta}}\sigma^*(P^5;a),\]
\[\Pi=\prod_{p>N}\left(1-\frac{f_C(p)}{p(p+1)}\right)
\;\;\;\mbox{and}\;\;\;
\Pi_d=\prod_{p\mid d}\left(1+\frac{p-f_C(p)}{p^2}\right).\]
\end{lemma}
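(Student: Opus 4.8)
\textbf{Proof plan for Lemma \ref{fs}.}
The plan is to express the quantity $A(d,f)/[d,f^2]$ through multiplicative functions in $f$, so that the sum over $f\le Y^{1/6}$ coprime to $P$ can be completed to an infinite Euler product at negligible cost. First I would unravel the definition of $A(d,f)$: the factor $\prod_{p\mid Pdf}(p/(p+1))$ is already multiplicative, and the inner sum over $a\in\P^1(\Z/P^5[d,f^2]\Z)$ with $\bad_1(a)=\beta$ and $[d,f^2]\mid C(a)$ factors via the Chinese Remainder Theorem into a product over the prime powers exactly dividing $P^5$, $d$ and $f^2$. Since $\gcd(d,P)=\gcd(f,P)=1$ and $d$ is square-free, the primes dividing $P$ contribute exactly $A_1(P,\beta)$ (after pulling out $\sigma^*(P^5;a)$, which only depends on $a$ mod $P^5$), the primes dividing $d$ but not $f$ contribute a factor tied to the condition $p\mid C(a)$ together with $\chi(d;a)$, and the primes $p\mid f$ with $p\nmid d$ contribute the local count of $a\in\P^1(\Z/p^2\Z)$ with $p^2\mid C(a)$, which by Hensel/Lemma \ref{fests} equals $p\cdot f_C(p)$ for $p\nmid\mathrm{Disc}(C)$ (and is $O_p(1)$ otherwise). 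The upshot is a clean identity $A(d,f)/[d,f^2] = A_1(P,\beta)\,g(d)\,h(f)$ for suitable multiplicative $g,h$ (depending also on $\beta,P$), valid on the square-free $f$ coprime to $P$ that carry the M\"obius weight.

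Next I would carry out the sum over $f$. Writing $\sum_{\gcd(f,P)=1,\;f\le Y^{1/6}}\mu(f)h(f) = \sum_{\gcd(f,P)=1}\mu(f)h(f) - \sum_{\gcd(f,P)=1,\;f> Y^{1/6}}\mu(f)h(f)$, the completed sum becomes the Euler product $\prod_{p\nmid P}(1-h(p))$, which, after a short computation using $h(p)=f_C(p)/(p(p+1))$ for $p\nmid\mathrm{Disc}(C)$, reproduces $\Pi/\Pi_d$ (the denominator $\Pi_d$ being the correction from primes $p\mid d$, where the presence of $d$ in $[d,f^2]$ and in the congruence condition modifies the local factor). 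The tail $\sum_{f>Y^{1/6}}$ I would bound using Lemma \ref{lem:tau_k}: since $h(f)\ll_\ep f^{-2+\ep}$ (as $f_C(p)\le 3$ for good primes and $h(p^e)$ is $O_p(p^{-e})$ in general), the tail is $O_\ep(d^{-1}Y^{-1/6+\ep})$, comfortably absorbed into $O(d^{-1}Y^{-1/8})$; here the $1/d$ is supplied by the factor $g(d)$, whose size is $O_\ep(d^{-1+\ep})$. Finally, the residual inner sum $\sum_{h\mid d}\frac{\mu(h)}{dh}\sum_{a\in\P^1(\Z/dh\Z),\;dh\mid C(a)}\chi(d;a)$ is exactly what $A_1(P,\beta)^{-1}\Pi_d\Pi^{-1}\,d\cdot[\text{the $d$-local part of }A(d,\cdot)]$ unpacks to once one sorts the $f$-sum over divisors sharing factors with $d$ via $\mu^2(f)=1$ and $[d,f^2]=d f^2/\gcd(d,f^2)^{?}$ — one splits $f=f_1 f_2$ with $f_1\mid d^\infty$, $\gcd(f_2,d)=1$, only $f_1\mid d$ survives by square-freeness, and the $f_1\mid d$ part gives precisely the $\sum_{h\mid d}$ with $h=f_1$.

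The main obstacle I anticipate is bookkeeping the interaction between $d$ and $f$ inside $[d,f^2]$ and inside the congruence condition $[d,f^2]\mid C(a)$: because $d$ and $f$ need not be coprime, the "multiplicative in $f$" structure is only clean after separating the part of $f$ built from primes dividing $d$. Getting the local factors at primes $p\mid\gcd(d,f)$ right — and checking that, after the M\"obius sign and the square-free constraint, these collapse to exactly the stated divisor sum $\sum_{h\mid d}$ with the factor $\Pi_d^{-1}$ — is the delicate step, and it is where the explicit value $f_C(p)$ (Lemma \ref{fests}) and the formula for $f_M(p)$ in terms of $\chi(p;a)$ (Lemma \ref{LC}) must be invoked to verify the denominators match. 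Everything else is a routine Euler-product manipulation plus a divisor-function tail estimate from Lemma \ref{lem:tau_k}, with the bad primes $p\mid P\,\mathrm{Disc}(C)$ contributing only $O_\ep(Y^\ep)$ by \eqref{n1}–\eqref{n2} and hence harmless.
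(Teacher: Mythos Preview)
Your plan is essentially the paper's proof: write $f=hg$ with $h=\gcd(d,f)$ and $\gcd(g,d)=1$, use CRT to factor $A(d,f)=A_1(P,\beta)\,A_2(d,h)\,A_3(g)$, complete the $g$-sum to the Euler product $\prod_{p>N,\,p\nmid d}(1-f_C(p)/(p(p+1)))$ (which combines with $\prod_{p\mid d}(p/(p+1))$ to give $\Pi/\Pi_d$), and bound the tail using $A_3(g)\ll_\ep g^\ep$ and $A_2(d,h)\ll_\ep Y^\ep$. One minor slip: the count of $a\in\P^1(\Z/p^2\Z)$ with $p^2\mid C(a)$ is $f_C(p^2)=f_C(p)$, not $p\cdot f_C(p)$ --- your later value $h(p)=f_C(p)/(p(p+1))$ is nonetheless correct, and Lemma~\ref{LC} is not actually needed here (it enters only in the subsequent Lemma~\ref{hds}).
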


\begin{proof}
We split up the sum over $f$ according to the value of $(d,f)=h$, say,
and we write $f=hg$, whence
\[\sum_{\substack{\gcd(f,P)=1\\ f\le Y^{1/6}}}\mu(f)\frac{A(d,f)}{[d,f^2]}
=\sum_{h\mid d}\mu(h)\sum_{\substack{\gcd(g,dP)=1\\ g\le Y^{1/6}/h}}\mu(g)
\frac{A(d,hg)}{[d,h^2g^2]}.\]
The Chinese Remainder Theorem allows us to view $\P^1(\Z/P^5[d,f^2]\Z)$
as a product $\P^1(\Z/P^5\Z)\times \P^1(\Z/dh\Z)\times \P^1(\Z/g^2\Z)$, whence
$A(d,f)=A_1(P,\beta)A_2(d,h)A_3(g)$ with
\[A_2(d,h)=\prod_{p\mid d}\left(\frac{p}{p+1}\right)
\sum_{\substack{a\in \P^1(\Z/dh\Z) \\ dh\mid C(a)}}\chi(d;a),\]
and
\[A_3(g)=\prod_{p\mid g}\left(\frac{p}{p+1}\right)
\sum_{\substack{a\in \P^1(\Z/g^2\Z)\\ g^2\mid C(a)}}1.\]
We therefore find that
\beql{mtis}
\sum_{\substack{\gcd(f,P)=1\\ f\le Y^{1/6}}}\mu(f)\frac{A(d,f)}{[d,f^2]}
=A_1(P,\beta)\sum_{h\mid d}\mu(h)\frac{A_2(d,h)}{dh}
\sum_{\substack{\gcd(g,dP)=1\\ g\le Y^{1/6}/h}}\mu(g)\frac{A_3(g)}{g^2}.
\eeq
When $\gcd(g,P)=1$ the function $A_3(g)$ is multiplicative, with
\[A_3(p)=\frac{p}{p+1}\sum_{\substack{a\in \P^1(\Z/p^2\Z)\\ p^2\mid C(a)}}1
=\frac{p}{p+1}f_C(p^2)=\frac{p}{p+1}f_C(p),\]
by Lemma \ref{fests}, since $p>N$. Lemma \ref{fests}
then shows that $A_3(g)\ll_\ep g^{\ep}$ for any fixed $\ep>0$, so that
\beql{etwb}
\sum_{\substack{\gcd(g,dP)=1\\ g\le Y^{1/6}/h}}\mu(g)\frac{A_3(g)}{g^2}=
\sum_{\substack{g=1\\ \gcd(g,dP)=1}}^\infty \mu(g)\frac{A_3(g)}{g^2}
+O_\ep(hY^{\ep-1/6}).
\eeq
The infinite sum factorizes as
\[\prod_{\substack{p>N\\ p\nmid d}}\left(1-\frac{f_C(p)}{p(p+1)}\right).\]
The main term in our expression for  (\ref{mtis}) is therefore
\[A_1(P,\beta)\prod_{p\mid d}\left(\frac{p}{p+1}\right)
\prod_{\substack{p>N\\ p\nmid d}}\left(1-\frac{f_C(p)}{p(p+1)}\right)
\sum_{h\mid d}\frac{\mu(h)}{dh}
\sum_{\substack{a\in \P^1(\Z/dh\Z) \\ dh\mid C(a)}}\chi(d;a).\]
The two products give us $\Pi/\Pi_d$, so that we obtain the leading term in
Lemma \ref{fs}. 

The error term in (\ref{etwb}) contributes a total
\[\ll_\ep A_1(P,\beta)\sum_{h\mid d}\frac{|A_2(d,h)|}{dh}hY^{\ep-1/6},\]
in which the bounds (\ref{n1}) and (\ref{n2}) show that
$A_1(P,\beta)\ll_\ep Y^\ep$, and
\[A_2(d,h)\ll_\ep Y^\ep f_C(dh)\ll_\ep Y^\ep.\]
It follows that the error term is $O_\ep(d^{-1}Y^{\ep-1/6})$, and the
lemma follows on choosing $\ep$ appropriately.
\end{proof}

We can now evaluate $S^{(1)}(\cl{V},\beta)$ and
$S^{(2)}(\cl{V},\beta)$. In the course of the argument we will
use our hypothesis that $|C(\y)|\ge\eta Y^3$ throughout the square
$\cl{V}$. We will continue to assume that $\cl{M}$ and $\cl{C}$ have no
rational points.
\begin{lemma}\label{hds}
Suppose that $\beta\mid P$. Then
\[S^{(1)}(\cl{V},\beta)=
\frac{6}{\pi^2}\frac{A_1(P,\beta)}{P^5}\Pi V_1
\left\{1+O((\log N)^{-1})\right\}+O(Y^{15/8}\log Y)\]
and
\[S^{(2)}(\cl{V},\beta)=
\frac{6}{\pi^2}\frac{A_1(P,\beta)}{P^5}\Pi V_1
\left\{1+O((\log N)^{-1})\right\}+O_\eta(Y^{15/8}\log Y).\]
\end{lemma}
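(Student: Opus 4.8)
\textbf{Proof proposal for Lemma \ref{hds}.}

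The plan is to take the expressions for $S^{(1)}(\cl{V},\beta)$ and $S^{(2)}(\cl{V},\beta)$ provided by Lemma \ref{apply}, substitute the inner $f$-summation formula of Lemma \ref{fs}, and then carry out the outer summation over square-free $d$ coprime to $P$. First I would treat $S^{(1)}(\cl{V},\beta)$. From Lemma \ref{apply} we have
\[
S^{(1)}(\cl{V},\beta)=\frac{6}{\pi^2}V_1\sum_{\substack{d\le Y^{3/2}\\ \gcd(d,P)=1,\ \mu^2(d)=1}}\frac{1}{P^5}\sum_{\substack{\gcd(f,P)=1\\ f\le Y^{1/6}}}\mu(f)\frac{A(d,f)}{[d,f^2]}+O(Y^{15/8}),
\]
and inserting Lemma \ref{fs} replaces the $f$-sum by $A_1(P,\beta)\Pi\,\Pi_d^{-1}\,g(d)+O(d^{-1}Y^{-1/8})$, where $g(d)=\sum_{h\mid d}\tfrac{\mu(h)}{dh}\sum_{a\in\P^1(\Z/dh\Z),\ dh\mid C(a)}\chi(d;a)$. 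The error term $O(d^{-1}Y^{-1/8})$ summed over $d\le Y^{3/2}$ contributes $O(Y^{-1/8}V_1\log Y)\ll O(Y^{15/8}\log Y)$ after multiplying by the $6V_1/\pi^2$ and using $V_1=H^2\le Y^2$. Thus the task reduces to showing
\[
\sum_{\substack{d\le Y^{3/2}\\ \gcd(d,P)=1,\ \mu^2(d)=1}}\frac{g(d)}{\Pi_d}=1+O((\log N)^{-1}).
\]

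The main step is therefore an analysis of the Dirichlet series $\sum_{d}\Pi_d^{-1}g(d)d^{-s}$ restricted to square-free $d$ coprime to $P$. Here the key identity to establish is that $g(d)$ is a multiplicative function of $d$ (over the relevant range), which follows from the Chinese Remainder Theorem applied to $\P^1(\Z/dh\Z)$ together with the multiplicativity of $\chi(\cdot;a)$ and $f_C$. I would compute the local factor at a prime $p\nmid P$: expanding the $h$-sum ($h\in\{1,p\}$) gives a contribution expressible via $\sum_{a\in\P^1(\Z/p\Z),\ p\mid C(a)}\chi(p;a)$ and $\sum_{a\in\P^1(\Z/p^2\Z),\ p^2\mid C(a)}\chi(p;a)$; invoking Lemma \ref{LC} (which expresses such character sums in terms of $f_M(p)$, $f_C(p)$, since $\rho=2$ and $C$ is irreducible, $\bad_1$-conditions are vacuous here) and the definition of $\Pi_d$, the combined local factor should simplify to $1+O(p^{-2})$ in a form that precisely cancels the Euler factors of $\Pi=\prod_{p>N}(1-f_C(p)/(p(p+1)))$. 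This cancellation is the heart of the matter: the point is that $\Pi$ and $A_1(P,\beta)/P^5$ are, up to the factor $6/\pi^2$, exactly the constant $\mathfrak{S}_S$ that ought to appear, so the residual $d$-sum must tend to $1$. Concretely, after the cancellation one is left with $\sum_{d\ \text{sqfree},\ \gcd(d,P)=1}h_1(d)d^{-s}$ with Euler factors $1+O(p^{-2})$, which converges absolutely for $\sigma>1/2$; evaluating at $s=0$ (i.e. summing the original series, since the $d^{-s}$ is $d^{0}$ once the $1/(dh)$ is absorbed) and bounding the tail $d>Y^{3/2}$ by $O(Y^{-3/4})$ and the missing small primes $p\le N$ by $O((\log N)^{-1})$ (using $\sum_{p>N}p^{-1}f_C(p)\ll$ convergent tails, cf.\ Lemma \ref{lem:f2}) gives the claimed $1+O((\log N)^{-1})$.

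For $S^{(2)}(\cl{V},\beta)$ the argument is identical except that $V_1=\meas(\cl{V})$ is replaced by $V_2(d)=\meas\{\y\in\cl{V}:|C(\y)|>d\beta Y^{3/2}\}$, which now depends on $d$, and the $d$-range extends to $c_1Y^{3/2}$. Here I would use the hypothesis $|C(\y)|\ge\eta Y^3$ throughout $\cl{V}$: for $d\beta Y^{3/2}\le \tfrac12\eta Y^3$, i.e.\ $d\le \tfrac12\eta Y^{3/2}/\beta$, the condition $|C(\y)|>d\beta Y^{3/2}$ is automatically satisfied on all of $\cl{V}$, so $V_2(d)=V_1$; for the remaining $d$ in the range $\tfrac12\eta Y^{3/2}/\beta< d\le c_1Y^{3/2}$ one has $V_2(d)\le V_1\ll Y^2$ and $g(d)/\Pi_d\ll_\ep d^{\ep-1}$ (from the trivial bound $|g(d)|\ll \tau(d)f_C(d)d^{-1}\ll_\ep d^{\ep-1}$), so that this tail contributes $O_\eta(Y^{2+\ep}\cdot(Y^{3/2})^{\ep-1}\cdot\sum \cdots)$, which after the $6/\pi^2$ factor and summation is $O_\eta(Y^{15/8})$, absorbable into the stated error $O_\eta(Y^{15/8}\log Y)$ (the subscript $\eta$ and the extra $\log Y$ appear precisely because $\beta\le P$ and the $d$-range depends on $\eta$). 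Replacing $V_2(d)$ by $V_1$ in the main term and running the same Dirichlet-series evaluation as above then yields the second displayed formula. The main obstacle I anticipate is the bookkeeping in the local-factor computation — verifying that the $p$-adic factor of $\sum_d \Pi_d^{-1}g(d)d^{-s}$ cancels the $p$-adic factor of $\Pi$ up to $1+O(p^{-2})$ requires careful use of Lemma \ref{LC} to rewrite the $\chi$-weighted point counts and some patience with the $\Pi_d=\prod_{p\mid d}(1+(p-f_C(p))/p^2)$ denominators — but no new idea beyond what is already in Lemmas \ref{LC}, \ref{fests} and \ref{lem:f2}.
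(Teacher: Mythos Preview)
Your setup is right: starting from Lemmas \ref{apply} and \ref{fs}, the problem reduces to evaluating
\[
\sum_{\substack{d\le Y^{3/2}\\ \gcd(d,P)=1,\ \mu^2(d)=1}}\Pi_d^{-1}g(d),
\]
and after the Hensel-lifting step (each $a\in\P^1(\Z/p\Z)$ with $p\mid C(a)$ lifts uniquely mod $p^2$) together with Lemma \ref{LC} one does get $g(d)=\tfrac{\phi(d)}{d^2}\prod_{p\mid d}(f_M(p)-1)$, so that $\Pi_d^{-1}g(d)=f(d;N)/d$ in the notation of Lemma \ref{earlier}. But your key claim that the local Euler factor is $1+O(p^{-2})$ is wrong: it is
\[
1+\frac{f(p;N)}{p}=1+\frac{f_M(p)-1}{p}+O(p^{-2}),
\]
and $f_M(p)-1$ is \emph{not} pointwise small --- it takes values in $\{-1,0,1,2,3\}$. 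The series $\sum_d f(d;N)/d$ does not converge absolutely, and there is no cancellation with $\Pi$ (whose Euler factors involve $f_C(p)$, not $f_M(p)$). What is true is that, since $\rho=2$, the quartic field $K$ for which $M=\Spec K$ satisfies $\sum_p(f_M(p)-1)/p<\infty$; but turning this into $\sum_{d\le D}f(d;N)/d=1+O((\log N)^{-1})$ requires the holomorphy of $\zeta_K(s)/\zeta(s)$ (Uchida--van der Waall for solvable Galois closure), which is precisely what Lemma \ref{earlier} packages. The paper simply invokes Lemma \ref{earlier} at this point; you should do the same rather than attempt an ad hoc Dirichlet-series argument.

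Your treatment of the $S^{(2)}$ tail also fails. Bounding $|f(d;N)|/d\ll_\ep d^{\ep-1}$ and summing over $\tfrac12\eta Y^{3/2}/\beta<d\le c_1Y^{3/2}$ gives only $\ll_\ep Y^{3\ep/2}$, hence a contribution $\ll_\ep Y^{2+O(\ep)}$ after multiplying by $V_1\le Y^2$ and $A_1(P,\beta)/P^5\ll_\ep Y^\ep$ --- nowhere near $O_\eta(Y^{15/8})$. The range of $d$ has length of order $Y^{3/2}$ (since $\beta$ can be as large as $P$), so trivial bounds cannot work. The paper instead uses that $V_2(d)=V_1$ for $d\le D_0:=\eta Y^{3/2}/P$ and then applies partial summation on $(D_0,c_1Y^{3/2}]$ using Lemma \ref{earlier}: since $\sum_{d\le D}f(d;N)/d=1+O((\log N)^{-1})$ uniformly, the sum over any subinterval is $O((\log N)^{-1})$, and $V_2(d)$ is monotone and bounded by $V_1$, giving the required $V_1\cdot O((\log N)^{-1})$.
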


\begin{proof}
If we set $e=d/h$ we will have
  \[\sum_{\substack{a\in \P^1(\Z/dh\Z)\\ dh\mid C(a)}}\chi(d;a)=
\left\{\sum_{\substack{a\in \P^1(\Z/e\Z)\\ e\mid C(a)}}\chi(e;a)\right\}
\left\{\sum_{\substack{a\in \P^1(\Z/h^2\Z)\\ h^2\mid C(a)}}\chi(h;a)\right\}\]
by multiplicativity. We now observe that
\[\sum_{\substack{a\in \P^1(\Z/h^2\Z)\\ h^2\mid C(a)}}\chi(h;a)=
\sum_{\substack{a\in \P^1(\Z/h\Z)\\ h\mid C(a)}}\chi(h;a),\]
for square-free $h$ coprime to $P$, since each $a\in \P^1(\Z/h\Z)$ with 
$h\mid C(a)$ lifts to a unique point $a'\in \P^1(\Z/h^2\Z)$ with 
$h^2\mid C(a')$. It follows that
\[\left\{\sum_{\substack{a\in \P^1(\Z/e\Z)\\ e\mid C(a)}}\chi(e;a)\right\}
\left\{\sum_{\substack{a\in \P^1(\Z/h\Z)\\ h\mid C(a)}}\chi(h;a)\right\}=
\sum_{\substack{a\in \P^1(\Z/d\Z)\\ d\mid C(a)}}\chi(d;a),\]
whence
\[\sum_{h|d}\frac{\mu(h)}{dh}
\sum_{\substack{a\in \P^1(\Z/dh\Z)\\ dh\mid C(a)}}\chi(d;a)=
\frac{\phi(d)}{d^2}
\sum_{\substack{a\in \P^1(\Z/d\Z)\\ d\mid C(a)}}\chi(d;a).\]
By Lemma \ref{LC} we have 
\[\sum_{\substack{a\in \P^1(\Z/p\Z)\\ p\mid C(a)}}\chi(p;a)=f_M(p)-1,\]
so that
\begin{eqnarray*}
\lefteqn{\sum_{\substack{d\le Y^{3/2}\\ \gcd(d,P)=1,\, \mu^2(d)=1}}V_1\Pi_{d}^{-1}
\sum_{h|d}\frac{\mu(h)}{dh}
\sum_{\substack{a\in \P^1(\Z/dh\Z)\\ dh\mid C(a)}}\chi(d;a)}\hspace{2cm}\\
&=&
\sum_{\substack{d\le Y^{3/2}\\ \gcd(d,P)=1,\, \mu^2(d)=1}}V_1\Pi_{d}^{-1}
\frac{\phi(d)}{d^2}\prod_{p\mid d}(f_M(p)-1)\\
&=& \sum_{d\le Y^{3/2}}V_1f(d;N)/d
\end{eqnarray*}
in the notation of Lemma \ref{earlier}, and similarly for the sum with
$V_1$ replaced by $V_2(d)$.  For the sum involving $V_1$ we can 
now apply Lemma \ref{earlier}, giving
\[\sum_{\substack{d\le Y^{3/2}\\ \gcd(d,P)=1,\, \mu^2(d)=1}}V_1\Pi_{d}^{-1}
\frac{\phi(d)}{d^2}\prod_{p\mid d}(f_M(p)-1)=
\left(1+O((\log N)^{-1})\right)V_1.\]

When $V_1$ is replaced by $V_2(d)$ we use summation by parts coupled
with Lemma \ref{earlier}. We have
\[\sum_{d\le D}f(d;N)/d=1+O((\log N)^{-1})\]
for $N\le(\log D)^2$.
We write $D_0=Y^{3/2}P^{-1}\eta$ for convenience, and note that if
$d\le D_0$ then  
\[|C(\y)|\ge\eta Y^3\ge d\beta Y^{3/2}\]
on $\cl{V}$, so that $V_2(d)=V_1$. Since $N$, given by
(\ref{Ndef}), will satisfy $N\le(\log D_0)^2$ when $Y\gg_\eta 1$, it
follows that 
\[\sum_{\substack{d\le D_0\\ \gcd(d,P)=1,\, \mu^2(d)=1}}V_2(d)\Pi_{d}^{-1}
\frac{\phi(d)}{d^2}\prod_{p\mid d}(f_M(p)-1)=
V_1+O(V_1(\log N)^{-1}).\]
Moreover,  by partial summation we have
\[\sum_{\substack{D_0<d\le c_1Y^{3/2}\\ \gcd(d,P)=1,\, \mu^2(d)=1}}V_2(d)\Pi_{d}^{-1}
\frac{\phi(d)}{d^2}\prod_{p\mid d}(f_M(p)-1)\ll
V_1(\log N)^{-1},\]
and we conclude that
\[\sum_{\substack{d\le c_1Y^{3/2}\\ \gcd(d,P)=1,\, \mu^2(d)=1}}V_2(d)\Pi_{d}^{-1}
\frac{\phi(d)}{d^2}\prod_{p\mid d}(f_M(p)-1)=\left\{1+O((\log N)^{-1})\right\}V_1.\]
The lemma now follows. 
\end{proof}

Our next move is to sum over $\beta\mid P$.
\begin{lemma}\label{sbp}
  We have
  \[\sum_{\beta\mid P}S(\cl{V},\beta)=
  2H^2\mathfrak{S}_S+O_{\eta}\left(H^2(\log\log 3Y)^{-1/2}\right)
  +O_\eta(Y^{19/10}).\]  
\end{lemma}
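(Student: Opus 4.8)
The plan is to combine Lemmas \ref{ba} and \ref{hds}, sum the resulting asymptotic over the divisors $\beta\mid P$, and then evaluate the arithmetic constant $\sum_{\beta\mid P}A_1(P,\beta)$ explicitly. First, feeding the two formulas of Lemma \ref{hds} into \eqref{form} and writing $M_\beta=\tfrac{12}{\pi^2}P^{-5}A_1(P,\beta)\,\Pi\,H^2\ge 0$ (using $V_1=H^2$), one gets
\[S(\cl{V},\beta)=M_\beta\bigl\{1+O\bigl((\log\log 3Y)^{-1/2}\bigr)\bigr\}+O_\eta(Y^{15/8}\log Y),\]
where the error factor absorbs the $O((\log N)^{-1})$ of Lemma \ref{hds} because $\log N\asymp\log\log Y$. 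Since each $M_\beta$ is non-negative — this is precisely the situation addressed in the remark following Lemma \ref{ba} — we may sum over the $\tau(P)$ divisors $\beta\mid P$ and take the error factor outside the sum, obtaining
\[\sum_{\beta\mid P}S(\cl{V},\beta)=\Bigl(\sum_{\beta\mid P}M_\beta\Bigr)\bigl\{1+O\bigl((\log\log 3Y)^{-1/2}\bigr)\bigr\}+O_\eta\bigl(\tau(P)Y^{15/8}\log Y\bigr).\]
As $P\ll_\ep Y^\ep$ by \eqref{n1} we have $\tau(P)\ll_\ep Y^\ep$, so the last term is $O_\eta(Y^{19/10})$, and it remains to evaluate $\sum_{\beta\mid P}M_\beta=\tfrac{12}{\pi^2}\Pi\,H^2 P^{-5}\sum_{\beta\mid P}A_1(P,\beta)$.

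From the definition of $A_1(P,\beta)$ we have $\sum_{\beta\mid P}A_1(P,\beta)=\prod_{p\mid P}\tfrac{p}{p+1}\sum_{a}\sigma^*(P^5;a)$, the sum over $a\in\P^1(\Z/P^5\Z)$ with $\bad_1(a)\mid P$. I would first show the condition $\bad_1(a)\mid P$ may be dropped at negligible cost. Write $e_p=[(\log N)/(\log p)]$, so $p^{e_p}\|P$; an $a$ failing the condition has $p^{e_p+1}\mid C(a)$ for some $p\le N$. By Lemma \ref{lem:C} and separability of $C$, when $p\nmid\mathrm{Disc}(C)$ every root of $C$ modulo $p$ is simple, so $v_p(C(a))=1$ and there is no such $a$; for the $O(1)$ primes $p\mid\mathrm{Disc}(C)$ (all bounded) one bounds the number of relevant $a$ by Lemma \ref{fests} and the size of $\sigma^*(P^5;a)$ by Lemma \ref{gb}, and since $p^{e_p}\ge N/p\gg N$ for these fixed $p$ the omitted contribution is $O\bigl((\log N/N)\sum_a\sigma^*(P^5;a)\bigr)$, which is $O((\log N)^{-1})$ relative to the full sum.

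Next, exactly as in the derivation in Lemma \ref{varpialt}, summing \eqref{ssd} over primitive $\y$ and using $\phi(m)^2 f_S(m)=\#\{(\x,\y)\bmod m:\gcd(\x,m)=\gcd(\y,m)=1,\ Q_\y(\x)\equiv0\}$ gives $\sum_{a\in\P^1(\Z/m\Z)}\sigma^*(m;a)=\phi(m)f_S(m)/m^2$; multiplicativity of $f_S$ then yields
\[P^{-5}\sum_{a\in\P^1(\Z/P^5\Z)}\sigma^*(P^5;a)=\prod_{p\mid P}\Bigl(1-\tfrac1p\Bigr)\frac{f_S(p^{5e_p})}{p^{10e_p}}=\prod_{p\mid P}\Bigl(1-\tfrac1p\Bigr)\bigl(\varpi_p+O(p^{-5e_p-1})\bigr)\]
by Lemma \ref{tl}. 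Multiplying by $\prod_{p\mid P}\tfrac{p}{p+1}$ and using $\tau_p=(1-1/p)^2\varpi_p$ turns this into $\prod_{p\le N}\tfrac{\tau_p}{1-p^{-2}}$ times $1+O((\log N)^{-1})$. I would then assemble the partial Euler products: $\prod_{p\le N}\tau_p=\mathfrak{S}_S(1+O((\log N)^{-1}))$ by Lemma \ref{lem:f2}, $\prod_{p\le N}(1-p^{-2})^{-1}=\tfrac{\pi^2}{6}(1+O((\log N)^{-1}))$, and $\Pi=1+O((\log N)^{-1})$ since $f_C(p)\ll1$ (Lemma \ref{fests}). Hence
\[\frac{12}{\pi^2}\,\Pi\,P^{-5}\sum_{\beta\mid P}A_1(P,\beta)=\frac{12}{\pi^2}\cdot\frac{\pi^2}{6}\,\mathfrak{S}_S\bigl(1+O((\log N)^{-1})\bigr)=2\mathfrak{S}_S\bigl(1+O((\log\log 3Y)^{-1/2})\bigr),\]
so $\sum_{\beta\mid P}M_\beta=2\mathfrak{S}_S H^2+O(H^2(\log\log 3Y)^{-1/2})$, and feeding this into the display of the first paragraph (using $\mathfrak{S}_S\ll1$) gives the lemma. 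The main obstacle is the bookkeeping in these last two displays — matching the contributions of primes $p\le N$ and $p>N$ so that the partial products telescope to $\mathfrak{S}_S$ and the numerical factor emerges as exactly $2$ — together with the routine but fiddly verification that the $a$ with $\bad_1(a)\nmid P$ are negligible, which is the one place separability of $C$ is used.
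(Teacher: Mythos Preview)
Your overall strategy matches the paper's: combine Lemmas \ref{ba} and \ref{hds}, sum over $\beta\mid P$, and evaluate $P^{-5}\sum_{\beta\mid P}A_1(P,\beta)$ by dropping the constraint $\bad_1(a)\mid P$ and relating the unrestricted sum to $\varpi_p$ via Lemma \ref{tl}. The bookkeeping with the Euler products is correct.

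There is, however, a genuine error in the step where you drop the condition $\bad_1(a)\mid P$. You claim that for $p\nmid\mathrm{Disc}(C)$, simplicity of the roots of $C$ modulo $p$ forces $v_p(C(a))=1$ for every $a\in\P^1(\Z/P^5\Z)$ with $p\mid C(a)$, so that no such $a$ has $p^{e_p+1}\mid C(a)$. This is false: simplicity of a root $\bar a\in\P^1(\F_p)$ means exactly that Hensel's lemma applies, so among the $p$ lifts of $\bar a$ to $\P^1(\Z/p^2\Z)$ precisely one has $p^2\mid C$, and iterating, there are $f_C(p)\,p^{4e_p-1}$ points $a\in\P^1(\Z/p^{5e_p}\Z)$ with $p^{e_p+1}\mid C(a)$. (Concretely, the Hensel lift itself has $v_p(C(a))\ge 5e_p$.) So primes $p\nmid\mathrm{Disc}(C)$ do contribute.

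The fix is simply to run your ``bad prime'' estimate for \emph{all} $p\le N$, not just $p\mid\mathrm{Disc}(C)$: using Lemma \ref{gb} for the size of $\sigma^*(p^{5e_p};a)$ and Lemma \ref{fests} to count the relevant $a$, the $p$-factor changes by $1+O(e_p\,p^{-e_p-1})$; since $p^{e_p}>N/p$ and $e_p\le(\log N)/\log p$, one has
\[
\sum_{p\le N}e_p\,p^{-e_p-1}\ \ll\ \frac{\log N}{N}\sum_{p\le N}\frac{1}{\log p}\ \ll\ (\log N)^{-1},
\]
so the product over $p\le N$ is $1+O((\log N)^{-1})$. This is exactly the route the paper takes; no appeal to separability is needed here.
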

\begin{proof}
Since
\[\Pi=\prod_{p>N}\left(1-\frac{f_C(p)}{p(p+1)}\right)=1+O(N^{-1})
=1+O\left((\log\log 3Y)^{-1/2}\right).\]
we deduce from Lemmas \ref{ba} and \ref{hds} that
\[\sum_{\beta\mid P}S(\cl{V},\beta)=
\left\{1+O_\eta\left((\log\log 3Y)^{-1/2}\right)\right\}
\frac{12}{\pi^2}V_1 S_P+O_\eta(Y^{19/10})\]
with
\[S_P=\sum_{\beta\mid P}\frac{A_1(P,\beta)}{P^5}.\]
It follows from the definition of $A_1(P,\beta)$ in Lemma \ref{fs}  that
\[S_P=P^{-5}\prod_{p\mid P}\left(\frac{p}{p+1}\right)
\sum_{\substack{a\in \P^1(\Z/P^5\Z)\\ \bad_1(a)\mid P}}\sigma^*(P^5;a).\]
The condition
$\bad_1(a)\mid P$ is equivalent to the requirement that
$p^{e+1}\nmid C(a)$ for every $p\le N$, where $e=e_p=[(\log N)/(\log p)]$.
We may therefore factor the expression on the right as
\[\prod_{p\le N} \left(p^{-5e}\frac{p}{p+1}
\sum_{\substack{a\in \P^1(\Z/p^{5e}\Z)\\ p^{e+1}\nmid C(a)}}\sigma^*(p^{5e};a)
\right)\]
We now wish to remove the condition $p^{e+1}\nmid C(a)$. According
to Lemmas \ref{gb} and \ref{fests} we have
\begin{eqnarray*}
\sum_{\substack{a\in \P^1(\Z/p^{5e}\Z)\\ p^{e+1}\mid C(a)}}\sigma^*(p^{5e};a)
&\ll& e\card\{a\in \P^1(\Z/p^{5e}\Z): p^{e+1}\mid C(a)\} \\
&\ll &  e p^{4e-1}f_C(p^{e+1})\\
&\ll&  ep^{4e-1}.
\end{eqnarray*}
It follows that
\[\sum_{\substack{a\in \P^1(\Z/p^{5e}\Z)\\ p^{e+1}\nmid C(a)}}\sigma^*(p^{5e};a)
=\sum_{a\in \P^1(\Z/p^{5e}\Z)}\sigma^*(p^{5e};a)+O(ep^{4e-1}),\]
whence
\begin{eqnarray*}
\lefteqn{p^{-5e}\frac{p}{p+1}
\sum_{\substack{a\in \P^1(\Z/p^{5e}\Z)\\ p^{e+1}\nmid C(a)}}\sigma^*(p^{5e};a)
}\hspace{2cm}\\
&=&\frac{1}{p^{5e}\phi(p^{5e})}\frac{p}{p+1}
\sum_{\substack{\y\in(\ZZ/p^{5e}\ZZ)^2\\ \gcd(\y,p)=1}}\sigma^*(p^{5e};\y)
+O(ep^{-e-1})\\
&=&\frac{p-1}{p+1}\frac{f_S(p^{5e})}{(p^{5e})^2}+O(ep^{-e-1}),
\end{eqnarray*}
in view of (\ref{ssd}) and (\ref{af1}). Lemma \ref{tl} then shows that
\[p^{-5e}\frac{p}{p+1}
\sum_{\substack{a\in \P^1(\Z/p^{5e}\Z)\\ p^{e+1}\nmid C(a)}}\sigma^*(p^{5e};a)
=\frac{p-1}{p+1}\varpi_p\left(1+O(ep^{-e-1})\right).\]
We now have
\begin{eqnarray*}
\lefteqn{\sum_{\beta\mid P}S(\cl{V},\beta)}\\
&=&
\left\{1+O_{\eta}\left((\log\log 3Y)^{-1/2}\right)\right\}
\frac{12}{\pi^2}V_1
\prod_{p\le N}\frac{1-1/p}{1+1/p}\varpi_p\left(1+O(ep^{-e-1})\right)\\
&&\hspace{1cm}+O_{\eta}(Y^{15/8})
\end{eqnarray*}
with $e=e_p=[(\log N)/(\log p)]$ as
before. We now observe that
\[\prod_{p\le N}\frac{1}{1-1/p^2}=
\frac{\pi^2}{6}\left(1+O(N^{-1})\right),\]
and that
\begin{eqnarray*}
\prod_{p\le N}\left(1+O(e_pp^{-e_p-1})\right)&=&
\exp\left\{\sum_{p\le N} O\left(e_p p^{-e_p-1}\right)\right\}\\
&=&\exp\left\{\sum_{p\le N} O\left(\frac{\log N}{\log p} N^{-1}\right)\right\}\\
&=&\exp\left\{O((\log N)^{-1})\right\}\\
&=&1+O((\log N)^{-1}).
\end{eqnarray*}
It follows that the leading term is
\[2V_1\left\{1+O_\eta\left((\log\log 3Y)^{-1/2}\right)\right\}
 \prod_{p\le N}\left(1-p^{-1}\right)^2\varpi_p.\]
However we have
\[\prod_{p\le N}\left(1-p^{-1}\right)^2\varpi_p=
\prod_{p\le N}\tau_p
=\{1+O((\log N)^{-1})\}\mathfrak{S}_S\]
by Lemma \ref{lem:f2}, and since $V_1=H^2$ the main term becomes
\[ 2H^2\mathfrak{S}_S+ O_\eta\left(H^2(\log\log 3Y)^{-1/2}\right).\]
The lemma then follows.
\end{proof}

Finally we handle the case in which $\bad(\y)$ does not divide $P$.
\begin{lemma}\label{second}
For $Y(\log\log 3Y)^{-1}\le H\le Y$ we have
  \[\sum_{\substack{\y\in\cl{U}\\ \bad(\y)\, \nmid\, P}}
\mathfrak{S}(Q_{\y})\ll H^2(\log Y)^{-1/5}.\]
\end{lemma}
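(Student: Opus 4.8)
The plan is to bound $\mathfrak{S}(Q_\y)$ by $\kappa(\y)$ throughout via Lemma~\ref{mfSb}, and then to feed this into Proposition~\ref{kAL}; since we are in the situation of Lemma~\ref{r2} we have $\rho=2$, so the factor $\LL^{\rho-2}$ appearing there equals $1$. As $\bad_2(\y)$ is supported on primes $>N$ while $P$ is a product of primes $\le N$, the condition $\bad(\y)\nmid P$ forces either $\bad_1(\y)\nmid P$ or $\bad_2(\y)\neq 1$, and I would treat these two contributions separately. We may assume $Y$ is large (for bounded $Y$ the estimate is trivial, since $\mathfrak{S}(Q_\y)\ll_\ep Y^\ep$ and $\card\cl{U}\ll H^2$); in particular $N\ge(\log Y)^{1/2}$ and no prime $>N$ divides $\mathrm{Disc}(C)$.

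First suppose $\bad_1(\y)\nmid P$. Then there is a prime $p\le N$ with $q_p:=p^{e_p+1}\mid C(\y)$, where $e_p=[(\log N)/(\log p)]$; since $p^{e_p}\le N<p^{e_p+1}$ we have $N<q_p\le pN\le N^2$. A union bound over the offending prime gives
\[\sum_{\substack{\y\in\cl{U}\\ \bad_1(\y)\nmid P}}\mathfrak{S}(Q_\y)
\ll\sum_{p\le N}\;\sum_{\substack{\y\in\cl{U}\\ q_p\mid C(\y)}}\kappa(\y).\]
Because $q_p\le N^2\le(\log Y)^2\ll Y^{1/5760}$ for $Y$ large, Proposition~\ref{kAL} with $\alpha=\tfrac34$ bounds the inner sum by $O(H^2 q_p^{-3/4})$, so the expression above is $\ll H^2\sum_{p\le N}q_p^{-3/4}$. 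For $p\in(\sqrt N,N]$ one has $e_p=1$, so $q_p=p^2$ and $\sum_{\sqrt N<p}p^{-3/2}\ll N^{-1/4}/\log N$; for $p\le\sqrt N$ we simply use $q_p>N$, which contributes $\ll\pi(\sqrt N)N^{-3/4}\ll N^{-1/4}/\log N$. Hence this contribution is $\ll H^2 N^{-1/4}/\log N\ll H^2(\log Y)^{-1/4}(\log\log 3Y)^{-7/8}\ll H^2(\log Y)^{-1/5}$.

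Next suppose $\bad_2(\y)\neq 1$, so $p^2\mid C(\y)$ for some prime $p>N$; as $|C(\y)|\ll Y^3$ only $p\ll Y^{3/2}$ can occur. For $N<p\le Y^{1/11520}$ we have $p^2\le Y^{1/5760}$, so Proposition~\ref{kAL} with $\alpha=\tfrac34$ gives $\sum_{\y\in\cl{U},\,p^2\mid C(\y)}\kappa(\y)\ll H^2 p^{-3/2}$, and summing over $p>N$ yields $\ll H^2\sum_{p>N}p^{-3/2}\ll H^2 N^{-1/2}/\log N\ll H^2(\log Y)^{-1/5}$. For $Y^{1/11520}<p\ll Y^{3/2}$ I would argue crudely: $p^2\mid C(\y)$ confines $\y$ to $O(f_C(p^2))=O(1)$ lattices $\sL(a,p^2)$ of determinant $p^2$, each meeting $\cl{U}$ in $O(1+H^2/p^2)$ points by Heath-Brown \cite[Lemma 2]{HBsqf}, while $\kappa(\y)\ll\tau(|C(\y)|)\ll_\ep Y^\ep$; this contributes $\ll_\ep Y^\ep\sum_{p>Y^{1/11520}}(1+H^2/p^2)\ll_\ep Y^{3/2+\ep}+H^2 Y^{\ep-1/11520}$. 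Using the hypothesis $H\ge Y(\log\log 3Y)^{-1}$, so that $H^2\gg Y^2(\log\log 3Y)^{-2}$, both terms are $O(H^2(\log Y)^{-1/5})$ once $\ep$ is small and $Y$ is large.

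Adding the three contributions completes the proof. The step I expect to be the main obstacle is the summation over the offending prime in the $\bad_1$ case: a naive union bound would only give $\pi(N)\cdot N^{-\alpha}\ll N^{1-\alpha}/\log N$, which does not tend to $0$ for fixed $\alpha<1$, so one must split according to the size of $p$, exploiting that for $p>\sqrt N$ the offending prime power is exactly $p^2$ (making the sum converge rapidly) and that there are too few primes $p\le\sqrt N$ for the crude bound $q_p^{-\alpha}<N^{-\alpha}$ to hurt. Everything else is routine given Proposition~\ref{kAL}, Lemma~\ref{mfSb} and Lemma~\ref{fests}.
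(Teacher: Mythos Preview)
Your proof is correct and follows essentially the same route as the paper's: bound $\mathfrak{S}(Q_\y)$ by $\kappa(\y)$ via Lemma~\ref{mfSb}, split according to whether the offending prime power comes from $\bad_1$ or $\bad_2$, apply Proposition~\ref{kAL} with $\alpha=3/4$ whenever the modulus is below $Y^{1/5760}$, and handle large primes $p>Y^{1/11520}$ by the lattice count from \cite[Lemma~2]{HBsqf} together with the crude bound $\kappa(\y)\ll_\ep Y^\ep$. Your explicit split at $\sqrt N$ in the $\bad_1$ case is exactly the computation needed to justify the paper's bare assertion that this sum is $O(H^2N^{-1/4})$; the paper states this without detail, so your concern about the ``main obstacle'' is well placed but fully resolved by your argument.
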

\begin{proof}
  If $\bad(\y)\,\nmid\, P$ there is either a prime $p$ such that 
$p^{1+[(\log N)/(\log p)]}\mid C(\y)$ with $p\le N$, or a prime $p>N$
  for which
$p^2\mid C(\y)$. According to Lemma \ref{mfSb} and Proposition \ref{kAL} we have
\[\sum_{\substack{\y\in\cl{U}\\ p^e|C(\y)}}\mathfrak{S}(Q_{\y})\ll 
\sum_{\substack{\y\in\cl{U}\\ p^e|C(\y)}}\\ \kappa(\y)\ll
H^2p^{-3e/4}\]
so long as  $p^e\le Y^{1/5760}$. Thus if we sum for $p\le N$ with
$e=1+[(\log N)/(\log p)]$ we get a 
contribution $O(H^2N^{-1/4})$, while if we take $N<p\le Y^{1/11520}$
with $e=2$ we get a total contribution $O(H^2N^{-1/2})$.  These two
error terms are 
satisfactory for the lemma.

For the alternative case in which $Y^{1/5760}\le p^2\ll Y^3$ we have
$\mathfrak{S}(Q_\y)\ll_\ep Y^\ep$ for any $\ep>0$, by Lemma
\ref{mfSb}. Moreover the primitive 
integer vectors $\y$ for which $p^2|C(\y)$ belong to one of $f_C(p^2)$
lattices $\sL(a,p^2)$ of determinant $p^2$. Each such lattice contains
$O(Y^2p^{-2}+1)$ 
primitive vectors in $\mathcal{V}$, by Heath-Brown \cite[Lemma
  2]{HBsqf}. Using the bound  
$f_C(p^2)\ll 1$ from Lemma \ref{fests} we therefore have the 
estimate
\[\sum_{\substack{\y\in\cl{U}\\ p^2|C(\y)}}\mathfrak{S}(Q_{\y})
\ll_\ep Y^{2+\ep}p^{-2}+Y^\ep,\]
and the lemma follows, on summing over the relevant primes $p$ and 
choosing $\ep$ small enough.
\end{proof}

\subsection{Proofs of Proposition \ref{PMB} and Proposition \ref{ultgoal}}

Everything is now in place to complete the proof of Proposition \ref{PMB}. 
According to (\ref{dded1}) it follows from Lemmas \ref{sbp} and
\ref{second} that 
\[\sum_{\y\in\cl{U}}\mathfrak{S}(Q_\y)=2H^2\mathfrak{S}_S+
  O_{\eta}\left(H^2(\log\log 3Y)^{-1/2}\right)\]  
when $Y(\log\log 3Y)^{-1}\le H\le Y$.  We now apply Lemma \ref{sib}, noting 
that $\sigma_\infty(Q_\y)\ll_\eta Y^{-1} $
  by Lemma \ref{pdsi}. This yields
\begin{eqnarray*}
  \lefteqn{\sum_{\y\in\cl{U}}W_2(Y^{-1}\y)\Main(X,Q_\y)}\hspace{1cm}\\
  &=&
\tfrac12 X\sum_{\y\in\cl{U}}W_2(Y^{-1}\y)\sigma_\infty(Q_\y)\mathfrak{S}(Q_\y)\\
&=& X
\left\{\int_{\cl{V}}W_2(Y^{-1}\y)\sigma_\infty(Q_\y)\d y_0 \d
y_1\right\}
\mathfrak{S}_S\\
&&\hspace{1cm}\mbox{}
+O_\eta\left(XY^{-1}H^2\{(\log\log 3Y)^{-1/2}+(H/Y)^{1/5}\}\right),
\end{eqnarray*}
which suffices for Proposition \ref{PMB}.  \qed
\bigskip

We begin the proof of Proposition \ref{ultgoal} by recalling that
$W_2(Y^{-1}\y)$ is supported in the region
$\tfrac12 Y\le||\y||_\infty\le\tfrac52Y$,
in other words, in   
$[-\tfrac52 Y, \tfrac52 Y]^2\setminus [-\tfrac12 Y, \tfrac12 Y]^2$. We
proceed to cover this set with squares
$\cl{V}=(Y_0,Y_0+H]\times(Y_1,Y_1+H]$, with $H=Y/n$ for some positive
integer chosen so that 
\[\frac{Y}{\log\log 3Y}\ll H\ll \frac{Y}{\log\log 3Y}.\]
There will be $O(Y^2 H^{-2})$ such squares, which we describe as
`good' if $|C(\y)|\ge\eta Y^3$ throughout $\cl{V}$, and `bad'
otherwise. It follows from Propositions \ref{cdiff} and  \ref{PMB}
that 
\begin{eqnarray*}
\sum_{\cl{V}\,\mathrm{good}}\;\;\sum_{\y\in\cl{U}}W_2(Y^{-1}\y)N(X,Q_\y)
&=& X\left\{\sum_{\cl{V}\,\mathrm{good}}\;
\int_{\cl{V}}W_2(Y^{-1}\y)\sigma_\infty(Q_\y)\d y_0 \d y_1\right\}   
\mathfrak{S}_S
 \\&&\hspace{1cm}\mbox{}
+O_\eta\left(XY(\log\log 3Y)^{-1/5}\right),
\end{eqnarray*}
If $\cl{V}$ is a `bad' square, then $|C(\y_0)|\le\eta Y^3$ for some
$\y_0\in\cl{V}$, whence $C(\y)\ll \eta Y^3+HY^2$ throughout
$\cl{V}$. We may therefore estimate the contribution from all the bad
squares by taking $\delta\ll \eta+HY^{-1}$ in Lemma \ref{BPCP+}. We
then deduce that 
\[\sum_{\cl{V}\,\mathrm{bad}}\;\;\sum_{\y\in\cl{U}}W_2(Y^{-1}\y)N(X,Q_\y)
\ll Y^2+XY\LL^{-1}+\eta^{2/3}XY+XY(\log\log 3Y)^{-2/3},\]
since $\rho=2$ now.  We are assuming that $Y\le X\LL^{-1}$ for
Proposition \ref{ultgoal}, 
whence we may conclude that 
\begin{eqnarray*}
\sum_{\mathrm{all}\,\cl{V}}\;\;\sum_{\y\in\cl{U}}W_2(Y^{-1}\y)N(X,Q_\y)
&=& X\left\{\sum_{\cl{V}\,\mathrm{good}}\;\;
\int_{\cl{V}}W_2(Y^{-1}\y)\sigma_\infty(Q_\y)\d y_0 \d y_1\right\}  
\mathfrak{S}_S
 \\&&\hspace{3mm}\mbox{}
+O_\eta\left(XY(\log\log 3Y)^{-1/5}\right)+O(\eta^{2/3}XY).
\end{eqnarray*}
The sum on the left is just $S(X,Y)$, and we proceed to show that
\beql{RB}
\sum_{\cl{V}\,\mathrm{bad}}\;\;
\int_{\cl{V}}W_2(Y^{-1}\y)\sigma_\infty(Q_\y)\d y_0\d y_1\ll
\{(H/Y)^{2/3}+\eta^{2/3}\}Y.
\eeq
To do this we note as above the the bad squares $\cl{V}$ are contained
in the region  
where $|C(\y)|\ll (\eta+H/Y)Y^3$.  Moreover, as in the proof of
(\ref{ndd}),  the set where 
$\tfrac12 Y\le||\y||_\infty\le \tfrac52 Y$ and $\tfrac12
C_0\le|C(\y)|\le C_0$ has measure $O(C_0 Y^{-1})$. By Lemma \ref{pdsi}
we will have  
\[\sigma_\infty(Q_\y)\ll Y^{-1}\log (2+Y^3/C_0)\ll Y^{-1}(Y^3/C_0)^{1/3},\]
say, on such a set. Thus the range
$\tfrac12 C_0\le|C(\y)|\le C_0$ contributes 
\[\ll C_0 Y^{-1}.Y^{-1}(Y^3/C_0)^{1/3}=C_0^{2/3}Y^{-1}.\]
We can  then sum for dyadic values of $C_0\ll(\eta+H/Y)Y^3$, producing
the required bound (\ref{RB}). The error terms are now acceptable for
Proposition \ref{ultgoal}, and it remains to observe that 
\[\int_{\R^2}W_2(Y^{-1}\y)\sigma_\infty(Q_\y)\d y_0\d y_1=
Y\int_{\R^2}W_2(\z)\sigma_\infty(Q_\z)\d z_0\d z_1\]
in view of the fact that
$\sigma_\infty(Q_{Y\z})=Y^{-1}\sigma_\infty(Q_\z)$. This completes the
proof of Proposition \ref{ultgoal}.

\subsection{Assembling the pieces}\label{Assem}

In this section we will complete the proof of Theorem \ref{x+}, and hence of
Theorem \ref{thm:main}. We continue with our assumption that neither $\cl{M}$ 
nor $\cl{C}$ has rational points.

We begin by giving a quantitative form of (\ref{quant}).
\begin{lemma}\label{newdensity}
 Let $T>0$ and
 \[K_T(t)=T\max\{1-T|t|\,,\,0\}.\] 
Then  
  \[\sigma_\infty(Q_\y;W_3)=
  \int_{\R^3}K_T(Q_\y(\x))W_3(\x)\d x_0\d x_1\d x_2+O(|C(\y)|^{-1/2}T^{-1/2}),\]
  with an implied constant that may depend on $W_3$.
\end{lemma}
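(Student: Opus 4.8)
The plan is to pass to the Fourier side, exactly as the quantity $\sigma_\infty(Q_\y;W_3)=\int_{-\infty}^\infty J(\theta,Q_\y;W_3)\,\d\theta$ in \eqref{quant}--\eqref{Jdef} already suggests. The Fourier transform of the Fej\'er-type kernel $K_T$ is $k_T(\theta):=\int_{-\infty}^\infty K_T(t)e(-\theta t)\,\d t=\bigl(\sin(\pi\theta/T)/(\pi\theta/T)\bigr)^2$, which satisfies $0\le k_T\le 1$, $k_T(0)=1$, $\int k_T=K_T(0)=T$, and is even. Since $K_T$ is continuous and compactly supported while $k_T\in L^1(\R)$, Fourier inversion gives $K_T(t)=\int k_T(\theta)e(\theta t)\,\d\theta$, and as $\int_{\R^3}\int_\R|k_T(\theta)W_3(\x)|\,\d\theta\,\d\x=T\int_{\R^3}|W_3|<\infty$ we may apply Fubini's theorem and a change of variable $\theta\mapsto-\theta$ (using that $k_T$ is even) to obtain $\int_{\R^3}K_T(Q_\y(\x))W_3(\x)\,\d\x=\int_\R k_T(\theta)J(\theta,Q_\y;W_3)\,\d\theta$. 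Subtracting from the formula for $\sigma_\infty(Q_\y;W_3)$, the error we must control is $E:=\int_\R(1-k_T(\theta))J(\theta,Q_\y;W_3)\,\d\theta$.

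To bound $E$ I would use two ingredients, both already to hand. First, $0\le 1-k_T(\theta)\ll\min(1,\theta^2/T^2)$, since $1-\operatorname{sinc}^2(x)=O(x^2)$ near $0$ and $1-\operatorname{sinc}^2(x)\le1$ always. Secondly, the proof of Lemma \ref{pdsi} already shows $J(\theta,Q_\y;W_3)\ll\prod_{j=1}^3\min\bigl(1,|\theta\mu_j|^{-1/2}\bigr)$, where the $\mu_j$ are the eigenvalues of the matrix of $Q_\y$; combining this with the trivial bound $J\ll 1$ and $|\mu_1\mu_2\mu_3|=|\det(y_0Q_0+y_1Q_1)|=|C(\y)|$ gives $J(\theta,Q_\y;W_3)\ll\min\bigl(1,|\theta|^{-3/2}|C(\y)|^{-1/2}\bigr)$. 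Hence $E\ll\int_\R\min(1,\theta^2/T^2)\min\bigl(1,|\theta|^{-3/2}|C(\y)|^{-1/2}\bigr)\,\d\theta$, and the substitution $\theta=Tu$ together with the abbreviation $c:=\bigl(T^3|C(\y)|\bigr)^{-1/2}$ reduces this to $E\ll T\int_\R\min(1,u^2)\min\bigl(1,c|u|^{-3/2}\bigr)\,\d u$.

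The last step is an elementary evaluation of this one–variable integral, splitting $\R$ at $|u|=1$ and at $|u|=c^{2/3}$. If $c\le1$ one finds the integral is $\ll c$ (the three pieces contributing $\ll c^2$, $\ll c\!\int_{c^{2/3}}^1 u^{1/2}\,\d u$ and $\ll c\!\int_1^\infty u^{-3/2}\,\d u$), so $E\ll Tc=|C(\y)|^{-1/2}T^{-1/2}$. If $c>1$ one finds it is $\ll c^{2/3}$ (pieces $\ll1$, $\ll c^{2/3}$ and $\ll c\!\int_{c^{2/3}}^\infty u^{-3/2}\,\d u\ll c^{2/3}$), so $E\ll Tc^{2/3}=|C(\y)|^{-1/3}$; but $c>1$ means $T^3|C(\y)|<1$, i.e. $T^{-1/2}>|C(\y)|^{1/6}$, and therefore $|C(\y)|^{-1/3}\le|C(\y)|^{-1/2}T^{-1/2}$. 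Either way $E\ll|C(\y)|^{-1/2}T^{-1/2}$. I do not expect any real obstacle here: the only point to keep in mind is that the estimate must be uniform in $T$, and in particular must survive the regime of small $T$, where $\int K_T(Q_\y)W_3$ is nowhere near $\sigma_\infty(Q_\y;W_3)$ — this is precisely the case $c>1$, where the target error term $|C(\y)|^{-1/2}T^{-1/2}$ is itself large and the crude bound $|C(\y)|^{-1/3}$ already suffices.
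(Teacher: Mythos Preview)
Your proof is correct and follows essentially the same route as the paper: pass to the Fourier side, identify $\widehat{K_T}(\theta)=\bigl(\sin(\pi\theta/T)/(\pi\theta/T)\bigr)^2$, and bound the error $\int(1-k_T)J\,\d\theta$ using $1-k_T(\theta)\ll\min(1,\theta^2/T^2)$ together with the decay $J(\theta,Q_\y;W_3)\ll|\theta|^{-3/2}|C(\y)|^{-1/2}$ from \eqref{JB}. The paper's version is slightly slicker in that it drops the extra $\min(1,\cdot)$ on $J$ and evaluates $\int\min(1,T^{-2}\theta^2)|\theta|^{-3/2}\,\d\theta\ll T^{-1/2}$ directly, avoiding your case split on $c=(T^3|C(\y)|)^{-1/2}$; but the substance is the same.
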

\begin{proof}
 Since
  \[\widehat{K_T}(\theta)=\left(\frac{\sin\pi\theta/T}{\pi\theta/T}\right)^2\]
  we have
  \begin{eqnarray*}
  \lefteqn{ \int_{\R^3}K_T(Q_\y(\x))W_3(\x)\d x_0\d x_1\d x_2}\\
  &=&\int_{\R^3}W_3(\x)
  \int_{-\infty}^\infty\left(\frac{\sin\pi\theta/T}{\pi\theta/T}\right)^2
  e(-\theta Q_\y(\x))\d \theta\, \d x_0\d x_1\d x_2.
  \end{eqnarray*}
  The repeated integral converges absolutely so we may switch the
  order of integration, producing
  \[ \int_{\R^3}K_T(Q_\y(\x))W_3(\x)\d x_0\d x_1\d x_2=
  \int_{-\infty}^\infty\left(\frac{\sin\pi\theta/T}{\pi\theta/T}\right)^2
  J(\theta,Q_\y;W_3)\d \theta,\]
  where
  \[J(\theta,Q_\y;W_3)=\int_{\R^3}W_3(\x)e(-\theta Q_\y(\x))\d x_0\d x_1\d x_2\]
  in the notation (\ref{Jdef}).   However
  \[\left(\frac{\sin\pi\theta/T}{\pi\theta/T}\right)^2=
  1+O(\min\{1,T^{-2}\theta^2\}),\]
  while (\ref{JB}) yields
  \[J(\theta,Q_\y;W_3)\ll |\theta|^{-3/2}|C(\y)|^{-1/2},\]
  where the implied constant may depend on $W_3$. We therefore deduce that
  \begin{eqnarray*}
  \lefteqn{\int_{-\infty}^\infty
  \left|\left(\frac{\sin\pi\theta/T}{\pi\theta/T}\right)^2-1\right|.\,
  |J(\theta,Q_\y;W_3)|\d \theta}\\
  &\ll& |C(\y)|^{-1/2}\int_{-\infty}^\infty 
  \min\{1,T^{-2}\theta^2\}|\theta|^{-3/2}\d \theta\\
  &\ll& |C(\y)|^{-1/2}T^{-1/2},
  \end{eqnarray*}
  and the result follows.
\end{proof}

We can now deduce the following corollary.
\begin{lemma}\label{nd2}
We have
\begin{eqnarray*}
\lefteqn{\int_{\R^2} W_2^{(*)}(\y)\sigma_\infty(Q_\y;W_3)\d y_0\d y_1}\\
&=&\lim_{T\to\infty}
\int_{\R^2\times\R^3}K_T(Q_\y(\x))W_3(\x)W_2^{(*)}(\y)
\d y_0\d y_1\d x_0\d x_1\d x_2
\end{eqnarray*}
for any bounded measurable function $W_2^{(*)}$ supported in
$[-\tfrac52,\tfrac52]^2\setminus [-\tfrac12,\tfrac12]^2$.
\end{lemma}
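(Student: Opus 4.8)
\textbf{Proof plan for Lemma \ref{nd2}.}
The plan is to combine the pointwise asymptotic of Lemma \ref{newdensity} with the dominated convergence theorem. First I would observe that for fixed $T$ the inner integrand $K_T(Q_\y(\x))W_3(\x)W_2^{(*)}(\y)$ is a bounded, compactly supported, measurable function on $\R^2\times\R^3$, so the repeated integral
\[
I(T):=\int_{\R^2\times\R^3}K_T(Q_\y(\x))W_3(\x)W_2^{(*)}(\y)\,\d y_0\,\d y_1\,\d x_0\,\d x_1\,\d x_2
\]
is finite, and by Fubini we may evaluate it as
\[
I(T)=\int_{\R^2}W_2^{(*)}(\y)\left\{\int_{\R^3}K_T(Q_\y(\x))W_3(\x)\,\d x_0\,\d x_1\,\d x_2\right\}\d y_0\,\d y_1.
\]
By Lemma \ref{newdensity}, for every $\y$ with $C(\y)\neq 0$ the inner brace equals $\sigma_\infty(Q_\y;W_3)+O(|C(\y)|^{-1/2}T^{-1/2})$, so that the integrand converges pointwise (for a.e.\ $\y$, namely those with $C(\y)\neq 0$, which is a full-measure set) to $W_2^{(*)}(\y)\sigma_\infty(Q_\y;W_3)$ as $T\to\infty$.

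Next I would produce a $T$-uniform dominating function. On the support of $W_2^{(*)}$ we have $\tfrac12\le\|\y\|_\infty\le\tfrac52$, hence $\|\y\|_\infty\asymp 1$, and moreover $|C(\y)|$ need not be bounded below on this set. However, the bound $J(\theta,Q_\y;W_3)\ll\min(1,|\theta|^{-1})\min(1,|C(\y)|^{-1/2}|\theta|^{-1/2})$ from Lemma \ref{pdsi} (applied with $\|\y\|_\infty\asymp 1$) gives, after integrating $\theta$ over $\R$ in the same way as in the proof of the first claim of that lemma,
\[
\int_{\R^3}K_T(Q_\y(\x))W_3(\x)\,\d x_0\,\d x_1\,\d x_2
=\int_{-\infty}^{\infty}\Bigl(\tfrac{\sin\pi\theta/T}{\pi\theta/T}\Bigr)^2 J(\theta,Q_\y;W_3)\,\d\theta
\ll \log\!\bigl(2+|C(\y)|^{-1}\bigr),
\]
uniformly in $T\ge 1$, since $|(\sin x)/x|\le 1$. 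Thus the integrand is dominated by $C\,W_2^{(*)}(\y)\log(2+|C(\y)|^{-1})$, and it remains to check this is integrable over $\y$ in the support of $W_2^{(*)}$. This follows by splitting into dyadic shells $\tfrac12 C_0<|C(\y)|\le C_0$: as in the proof of \eqref{ndd}, the set where $\|\y\|_\infty\asymp 1$ and $|C(\y)|\asymp C_0$ has measure $O(C_0)$ (using that $C$ has no repeated factors, so one of its linear factors is forced into a thin strip), and each shell contributes $O(C_0\log(2+C_0^{-1}))$; summing over $C_0=2^{-n}$, $n\ge 0$, converges. (For $|C(\y)|\ge 1$ the log is $O(1)$ and the region has bounded measure.) With the dominating function in hand, the dominated convergence theorem yields
\[
\lim_{T\to\infty}I(T)=\int_{\R^2}W_2^{(*)}(\y)\Bigl(\lim_{T\to\infty}\int_{\R^3}K_T(Q_\y(\x))W_3(\x)\,\d x_0\,\d x_1\,\d x_2\Bigr)\d y_0\,\d y_1
=\int_{\R^2}W_2^{(*)}(\y)\sigma_\infty(Q_\y;W_3)\,\d y_0\,\d y_1,
\]
which is exactly the assertion of the lemma.

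The main obstacle is the $T$-uniform domination near the locus $C(\y)=0$: one must rule out a blow-up of $\int K_T(Q_\y(\x))W_3(\x)$ as $|C(\y)|\to 0$ that is not integrable in $\y$. The logarithmic bound above is the crucial input, and it in turn relies on Lemma \ref{pdsi} (via the eigenvalue estimate Lemma \ref{DyLR}, which is valid precisely because $Q_\y$ always has rank $\ge 2$) together with the thin-strip measure estimate for $\{|C(\y)|\le C_0\}$, which uses irreducibility/squarefreeness of $C$. Everything else — Fubini for fixed $T$, pointwise convergence from Lemma \ref{newdensity}, and the final application of dominated convergence — is routine.
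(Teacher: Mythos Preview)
Your argument is correct, but it takes a more roundabout route than the paper. You set up a dominated convergence argument, building a $T$-uniform dominating function $\log(2+|C(\y)|^{-1})$ via the Fourier representation of $K_T$ and the bound on $J(\theta,Q_\y;W_3)$ from Lemma~\ref{pdsi}, and then check integrability of this logarithm by a dyadic strip decomposition.

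The paper avoids dominated convergence altogether. It simply integrates the quantitative error estimate of Lemma~\ref{newdensity} over~$\y$: since $C$ is separable one has
\[
\int_{\tfrac12\le\|\y\|_\infty\le\tfrac52}\frac{\d y_0\,\d y_1}{|C(\y)|^{1/2}}\ll 1,
\]
so the $O(|C(\y)|^{-1/2}T^{-1/2})$ error term from Lemma~\ref{newdensity} integrates to $O(T^{-1/2})$, giving the result with an explicit rate. Your logarithmic dominating function is in fact controlled by this same $|C(\y)|^{-1/2}$ (since $\log(2+x^{-1})\ll x^{-1/2}$ for small $x$), so the integrability check you carry out could be shortened to that single observation. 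The paper's approach is shorter and yields a quantitative statement; your approach is a perfectly valid qualitative alternative, and has the mild advantage of not needing the error term in Lemma~\ref{newdensity} to be uniform, only the pointwise limit.
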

\begin{proof}
Since $C(\y)$ has no repeated factor we have
\[\int_{\tfrac{1}{2}\le||\y||_\infty\le \tfrac52}
\frac{\d y_0\d y_1}{|C(\y)|^{1/2}}\ll 1.\]
It follows from Lemma \ref{newdensity} that
\begin{eqnarray*}
\lefteqn{\int_{\R^2} W_2^{(*)}(\y)\sigma_\infty(Q_\y;W_3)\d y_0\d y_1}\\
&=&
\int_{\R^2}W_2^{(*)}(\y) \int_{\R^3}K_T(Q_\y(\x))W_3(\x)
\d x_0\d x_1\d x_2\d y_0\d y_1
+O(T^{-1/2}).
\end{eqnarray*}
The repeated integral on the right is absolutely convergent and may
therefore be written as a double integral, and the result follows. 
\end{proof}

We are now ready to remove the weight $W_2(\y)$, replacing it by the
characteristic function of the set on which $1\le||\y||_\infty\le
1+\zeta$, for some $\zeta\in(0,\tfrac12)$. 
\begin{lemma}\label{remW2}
Let $\zeta\in(0,\tfrac12)$ be given, and write
\[S^{(0)}(X,Y)=\sum_{\x\in\Z^3_{\mathrm{prim}}}
\;\;\sum_{\substack{\y\in\Z^2_{\mathrm{prim}}\\ y_0Q_0(\x)+y_1Q_1(\x)=0\\
Y< ||\y||_\infty\le (1+\zeta)Y}}W_3(X^{-1}\x),\]
similarly to the definition in Lemma \ref{rw2}. Assume that
neither $\cl{M}$ nor $\cl{C}$ has rational points.
Then if $Y\le X\mathcal{E}^{-8}$ we have
\begin{eqnarray*}
S^{(0)}(X,Y)&=& XY
\left\{\int_{1\le||\y||_\infty\le 1+\zeta}\sigma_\infty(Q_\y;W_3)\d y_0\d y_1\right\}
\mathfrak{S}_S\nonumber\\
&&\hspace{1cm}\mbox{}+O(\zeta^2 XY)+
O_{\zeta,\eta}\left(XY(\log\log 3Y)^{-1/5}\right)\\
&&\hspace{2cm}\mbox{}+O_\zeta(\eta^{2/3}XY).
\end{eqnarray*}
\end{lemma}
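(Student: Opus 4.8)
The plan is to reprise the sandwiching ``$(\epsilon,\delta)$-argument'' used in the proof of Lemma \ref{rw2}, but now starting from Proposition \ref{ultgoal} in place of the asymptotic \eqref{P1a}. Fix $\zeta\in(0,\tfrac12)$. Exactly as in Lemma \ref{rw2}, construct infinitely differentiable even weight functions $W_2^{(-)}$ and $W_2^{(+)}$, taking values in $[0,1]$, supported on $1+\zeta^2\le\|\u\|_\infty\le 1+\zeta-\zeta^2$ and on $1-\zeta^2\le\|\u\|_\infty\le 1+\zeta+\zeta^2$ respectively (so in particular on $\tfrac12\le\|\u\|_\infty\le\tfrac52$), which are a minorant and a majorant for the characteristic function of the annulus $1\le\|\u\|_\infty\le 1+\zeta$. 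These are fixed once and for all in terms of $\zeta$. Writing $S^{(\pm)}(X,Y)$ for the sum \eqref{def:S(X,Y)} with $W_2$ replaced by $W_2^{(\pm)}$, we have $S^{(-)}(X,Y)\le S^{(0)}(X,Y)\le S^{(+)}(X,Y)$.

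Next, apply Proposition \ref{ultgoal} to each of $S^{(\pm)}(X,Y)$; this is legitimate since $Y\le X\mathcal{E}^{-8}$ and $\cl{M},\cl{C}$ have no rational points. Because $W_2^{(\pm)}$ are now fixed in terms of $\zeta$, the $\eta$-dependent error term of Proposition \ref{ultgoal} becomes $O_{\zeta,\eta}\!\big(XY(\log\log 3Y)^{-1/5}\big)$, while the $\eta$-independent one becomes $O_\zeta(\eta^{2/3}XY)$ (its implied constant is independent of $\eta$ but may depend on the chosen weights). This gives
\[
S^{(\pm)}(X,Y)=XY\Big\{\int_{\R^2}W_2^{(\pm)}(\y)\,\sigma_\infty(Q_\y;W_3)\,\d y_0\,\d y_1\Big\}\mathfrak{S}_S+O_{\zeta,\eta}\!\big(XY(\log\log 3Y)^{-1/5}\big)+O_\zeta(\eta^{2/3}XY).
\]
It then remains to compare the main-term integrals with $\int_{1\le\|\y\|_\infty\le 1+\zeta}\sigma_\infty(Q_\y;W_3)\,\d y_0\,\d y_1$. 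Since $\big|W_2^{(\pm)}(\y)-\one[\,1\le\|\y\|_\infty\le 1+\zeta\,]\big|$ is bounded by the indicator of the union $\mathcal{S}$ of the two thin shells $1-\zeta^2\le\|\y\|_\infty\le 1+\zeta^2$ and $1+\zeta-\zeta^2\le\|\y\|_\infty\le 1+\zeta+\zeta^2$, each of measure $O(\zeta^2)$, the required comparison reduces to the uniform shell bound $\int_{\mathcal S}\sigma_\infty(Q_\y;W_3)\,\d y_0\,\d y_1\ll\zeta^2$.

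To establish this bound, recall from Lemma \ref{pdsi} that $\sigma_\infty(Q_\y;W_3)\ll\|\y\|_\infty^{-1}\log\!\big(2+\|\y\|_\infty^3/|C(\y)|\big)$, which on $\mathcal S$ (where $\|\y\|_\infty\asymp 1$) is $\ll\log(2+1/|C(\y)|)$. Since $S$ is smooth, $C$ is separable (Lemma \ref{lem:C}), so its real zero locus is a finite union of lines through the origin, each of which crosses any shell $\{a\le\|\y\|_\infty\le a+w\}$ with $a\asymp1$ transversally in a segment of length $\asymp w$; a tubular-neighbourhood estimate then gives $\meas\{\y\in\mathcal S:|C(\y)|\le t\}\ll tw$ for $0<t\le1$ (for $t$ of size $1$ this is consistent with $\meas(\mathcal S)\asymp w$). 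Splitting the range $0<|C(\y)|\le1$ dyadically and using $\log(2+1/|C|)\ll 1+k$ on the block $|C|\asymp 2^{-k}$, together with the trivial contribution $\ll w$ from $|C(\y)|>1$, one obtains $\int_{\mathcal S}\log(2+1/|C(\y)|)\,\d y_0\,\d y_1\ll w\sum_{k\ge0}(1+k)2^{-k}\ll w\ll\zeta^2$, as needed. (Alternatively one could route this through Lemma \ref{nd2} with $W_2^{(*)}=\one[\mathcal S]$ and the finiteness of $\int_{1/2\le\|\y\|_\infty\le5/2}|C(\y)|^{-1/2}\,\d y$, but the dyadic estimate is self-contained.) Consequently $\int_{\R^2}W_2^{(\pm)}(\y)\sigma_\infty(Q_\y;W_3)\,\d y_0\,\d y_1=\int_{1\le\|\y\|_\infty\le1+\zeta}\sigma_\infty(Q_\y;W_3)\,\d y_0\,\d y_1+O(\zeta^2)$ with implied constant depending only on $Q_0,Q_1,W_3$; inserting this into the displayed formula for $S^{(\pm)}(X,Y)$ and squeezing with $S^{(-)}\le S^{(0)}\le S^{(+)}$ yields the lemma, the term $O(\zeta^2\,XY)$ arising from this weight-replacement. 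The only genuine obstacle is the uniform shell estimate: one must check that the mild, $\log$-type blow-up of the singular integral near the zero locus of $C$ does not spoil the gain coming from $\mathcal S$ having measure $O(\zeta^2)$; everything else is a direct transcription of Lemma \ref{rw2}.
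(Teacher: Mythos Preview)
Your proposal is correct and follows the paper's overall strategy: sandwich the indicator of the annulus $1\le\|\y\|_\infty\le 1+\zeta$ between smooth weights $W_2^{(\pm)}$, apply Proposition~\ref{ultgoal} to each, and then bound the discrepancy $\int_{\mathcal S}\sigma_\infty(Q_\y;W_3)\,\d\y\ll\zeta^2$ over the thin shells $\mathcal S$.

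The one genuine difference lies in how you establish that shell bound. You use the pointwise estimate $\sigma_\infty(Q_\y;W_3)\ll\log(2+1/|C(\y)|)$ from Lemma~\ref{pdsi}, together with a tubular-neighbourhood estimate $\meas\{\y\in\mathcal S:|C(\y)|\le t\}\ll t\zeta^2$ (which is correct, since $C$ is separable and its real zero locus meets each shell transversally in segments of length $\asymp\zeta^2$), and sum dyadically in $|C(\y)|$. The paper instead routes through Lemma~\ref{nd2}: it writes the shell integral as $\limsup_T T\cdot\meas(S_T)$ with $S_T=\{(\x,\y):\x\in\supp W_3,\ \y\in\mathcal S,\ |Q_\y(\x)|\le T^{-1}\}$, and then for each fixed $\x$ exploits the \emph{linearity} of $Q_\y(\x)$ in $\y$ to see that $\y$ is confined to a set of measure $O(\zeta^2/(T h_Q(\x)))$; integrating over $\x$ via Lemma~\ref{QB} finishes. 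Your route is more self-contained (it needs only Lemma~\ref{pdsi} and elementary geometry of the zero set of $C$), while the paper's route avoids any explicit analysis near $C=0$ by trading it for the already-available Lemma~\ref{QB}. Either way the implied constant in $O(\zeta^2 XY)$ depends only on $Q_0,Q_1,W_3$, as required.
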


\begin{proof}
The procedure will be very close to that which we
used for the proof of Lemma \ref{rw2} so we will be brief. We start
from Proposition \ref{ultgoal}, which we apply with weights
$W_2^{(+)}(\u)$ and $W_2^{(-)}(\u)$ supported on
$1-\zeta^2\le||\u||_\infty\le 1+\zeta+\zeta^2$ and
$1\le||\u||_\infty\le 1+\zeta$ respectively. These will be fixed once
$\zeta$ is chosen. The reader will recall that the error terms in
Proposition \ref{ultgoal} have order constants that may depend on
the weights $W_2$
and $W_3$. Hence, once we take $W_2$ as either $W_2^{(+)}$ or
$W_2^{(-)}$, the order constants will depend on $\zeta$ and $W_3$ (as
well as other parameters). We then see that it is enough to show that 
\beql{zb}
\int_{\R^2} \{W_2^{(+)}(\y)-W_2^{(-)}(\y)\}\sigma_\infty(Q_\y;W_3)\d y_0\d y_1
\ll \zeta^2.
\eeq
According to Lemma \ref{nd2} the range
$1-\zeta^2\le||\y||_\infty\le 1$ makes a contribution 
\[\ll \limsup_{T\to\infty} \;\; T\mathrm{meas}(S_T)\]
where $S_T$ is the set of pairs $(\x,\y)\in\R^3\times\R^2$ satisfying 
$\tfrac12\le||\x||_\infty\le \tfrac52$ and
$1-\zeta^2\le||\y||_\infty\le 1$, for which
$|y_0Q_0(\x)+y_1Q_1(\x)|\le T^{-1}$. It therefore suffices to show that
\beql{m+}
\mathrm{meas}(S_T)\ll \zeta^2/T.
\eeq
We investigate the measure of the
available $\y$ for each given vector $\x$. Without loss of generality we let
$|y_0|\ge|y_1|$, say, whence  $|y_0|\in[1-\zeta^2,1]$. 
Suppose firstly that $|Q_1(\x)|\le\tfrac14 |Q_0(\x)|$. Then 
\[T^{-1}\ge |y_0Q_0(\x)+y_1Q_1(\x)|\ge (1-\zeta^2)|Q_0(\x)|-\tfrac14 |Q_0(\x)|\]
\[\gg |Q_0(\x)|=||(Q_0(\x),Q_1(\x))||_\infty=h_Q(\x).\]
In this case we merely observe firstly that the measure of the
relevant vectors $\y$ is $O(\zeta^2)$, and then that 
\[\zeta^2\ll\frac{\zeta^2}{Th_Q(\x)}.\]
In the alternative case one has $|Q_1(\x)|\ge\tfrac14 |Q_0(\x)|$. This
time we observe that $|y_0|$ is restricted to an interval
$[1-\zeta^2,1]$ of length $\zeta^2$, after which  the constraint 
$|y_0Q_0(\x)+y_1Q_1(\x)|\le T^{-1}$ confines $y_1$ to an interval of length 
\[\ll T^{-1}|Q_1(\x)|^{-1}\ll T^{-1}h_Q(\x)^{-1}.\]
It follows that $\y$ is restricted to a set of measure $O(\zeta^2/Th_Q(\x))$
whether or not $|Q_1(\x)|\le\tfrac14 |Q_0(\x)|$. Of course we have the
same conclusion when $|y_0|\le|y_1|$. We then see from Lemma \ref{QB}
that the set $S_T$ has measure $O(\zeta^2/T)$, as required for
(\ref{m+}). Naturally, the range  
$1+\zeta-\zeta^2\le||\y||_\infty\le 1+\zeta$ may be treated in exactly
the same way, and the claim  (\ref{zb}) follows. 
\end{proof}

We plan to use Lemma \ref{nd2} to give an alternative formulation
of Lemma \ref{remW2}, but first it is convenient to establish the
following result. 
\begin{lemma}\label{useful}
Define
\[J_T(a_0,a_1)=\int_{1\le||\y||_\infty\le 1+\zeta}K_T(a_0y_0+a_1y_1) \d y_0\d y_1.\]
Then if $T\ge 1$ and $\max(|a_0|,|a_1|)=|a_1|$ we have
\[J_T(a_0,a_1)=
\left\{\begin{array}{cc} 2|a_1|^{-1}\zeta, & |a_1| > |a_0|+T^{-1},\\ 
O(|a_1|^{-1}\zeta), & |a_0|\le|a_1|\le|a_0|+T^{-1}.\end{array}\right. \]
\end{lemma}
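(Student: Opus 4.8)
The plan is a direct computation of the inner integral over $y_1$ for each fixed $y_0$, taking advantage of the fact that the region $\mathcal{R}=\{1\le ||\y||_\infty\le 1+\zeta\}$ decomposes according to the size of $|y_0|$. Write $J_T(a_0,a_1)=\int_{\mathcal{R}}K_T(a_0y_0+a_1y_1)\,\d y_0\,\d y_1$. Since a point of $\mathcal{R}$ has $\max(|y_0|,|y_1|)\in[1,1+\zeta]$, the values $|y_0|>1+\zeta$ contribute nothing; for $|y_0|\in[1,1+\zeta]$ the variable $y_1$ ranges freely over $[-(1+\zeta),1+\zeta]$; and for $|y_0|<1$ one must have $|y_1|\in[1,1+\zeta]$. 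I would record at the outset the elementary facts that $K_T\ge 0$, that $0\le K_T(t)\le T$ with support in $|t|\le T^{-1}$, and that $\int_{\mathbb{R}}K_T=1$, so that after the substitution $u=a_0y_0+a_1y_1$ one has $\int_{\mathbb{R}}K_T(a_0y_0+a_1y_1)\,\d y_1=|a_1|^{-1}$.

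For the range $|y_0|\in[1,1+\zeta]$ the point is to locate the $y_1$-support of $K_T(a_0y_0+a_1y_1)$: it is the interval of radius $1/(T|a_1|)$ about $c:=-a_0y_0/a_1$, and $|c|\le(|a_0|/|a_1|)(1+\zeta)$. In the first case $|a_1|>|a_0|+T^{-1}$ gives $|a_0|/|a_1|<1-1/(T|a_1|)$, whence $|c|+1/(T|a_1|)<(1+\zeta)(1-1/(T|a_1|))+1/(T|a_1|)=(1+\zeta)-\zeta/(T|a_1|)\le 1+\zeta$; thus the support lies entirely in $[-(1+\zeta),1+\zeta]$, the inner integral is exactly $|a_1|^{-1}$, and integrating over the $y_0$-set $\{|y_0|\in[1,1+\zeta]\}$ of measure $2\zeta$ produces the contribution $2\zeta|a_1|^{-1}$. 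In the borderline case $|a_0|\le|a_1|\le|a_0|+T^{-1}$ I would merely bound the inner integral by the full integral $|a_1|^{-1}$ and again integrate over $y_0$, obtaining $O(\zeta|a_1|^{-1})$.

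For the range $|y_0|<1$, where $|y_1|\in[1,1+\zeta]$, the two cases behave differently. When $|a_1|>|a_0|+T^{-1}$, on the support of $K_T$ one has $|a_1||y_1|\le|a_0||y_0|+T^{-1}<|a_0|+T^{-1}<|a_1|$, so $|y_1|<1$, contradicting $|y_1|\ge 1$; hence this range contributes nothing and $J_T(a_0,a_1)=2\zeta|a_1|^{-1}$ exactly. When $|a_0|\le|a_1|\le|a_0|+T^{-1}$, I would bound the inner integral over $\{|y_1|\in[1,1+\zeta]\}$ crudely by $T\cdot 2\zeta$ (using $K_T\le T$), and bound the measure of the surviving $y_0$: any such $y_0$ (with $|y_0|<1$ and some admissible $y_1$) satisfies $1\le|y_1|\le(|a_0|/|a_1|)|y_0|+1/(T|a_1|)\le|y_0|+1/(T|a_1|)$, so $|y_0|\ge 1-1/(T|a_1|)$ and the $y_0$-set has measure at most $\min\{2,\,2/(T|a_1|)\}$. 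The product $2\zeta T\cdot\min\{2,\,2/(T|a_1|)\}$ is $\ll\zeta|a_1|^{-1}$ whether or not $T|a_1|\ge 1$, which combined with the previous paragraph gives $J_T(a_0,a_1)=O(\zeta|a_1|^{-1})$.

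There is no serious obstacle here: the argument is a careful but elementary case analysis built on the shape of the tent function $K_T$. The one place that needs a moment's care is verifying, in the first case, that the $y_1$-support stays inside $[-(1+\zeta),1+\zeta]$, i.e. the inequality $(1+\zeta)-\zeta/(T|a_1|)\le 1+\zeta$, which holds trivially; all remaining steps are routine bookkeeping. The hypothesis $T\ge 1$ is not essential to the computation itself and would only be used, if at all, to keep the strip width $T^{-1}$ comparable to the size of $\mathcal{R}$.
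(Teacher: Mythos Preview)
Your argument is correct and follows essentially the same elementary strategy as the paper: locate the $y_1$-support of $K_T$ relative to the annulus and integrate. The only notable difference is in the second case, where the paper bounds $J_T$ by the sum of the two ``strip'' integrals $2\zeta/|a_1|+2\zeta/|a_0|$ and then treats the sub-case $|a_0|\le\tfrac12|a_1|$ separately via the trivial bound $K_T\le T$; your decomposition by the size of $|y_0|$, with the measure estimate $|y_0|\ge 1-1/(T|a_1|)$ on the inner range, handles this uniformly and is arguably cleaner.
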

\begin{proof}
Suppose firstly that $|a_1|>|a_0|+T^{-1}$. Then when $|y_0|\le|y_1|$ and 
$|y_1|=||\y||_\infty\ge 1$ we have
\[T|a_0y_0+a_1y_1|=T|y_1|.|a_0y_0/y_1+a_1|\ge T|a_0y_0/y_1+a_1|\]
\[\ge T(|a_1|-|a_0|.|y_0/y_1|)\ge T(|a_1|-|a_0|)> 1.\]
Thus
\begin{eqnarray*}
  \lefteqn{\{(y_0,y_1):1\le\max(|y_0|,|y_1|)\le 1+\zeta,\,
    T|a_0y_0+a_1y_1|\le 1\}}\\
  &=& \{(y_0,y_1):1\le\max(|y_0|,|y_1|)=|y_0|\le 1+\zeta,\,
  T|a_0y_0+a_1y_1|\le 1\}\\
  &=& \{(y_0,y_1):1\le|y_0|\le 1+\zeta,\,T|a_0y_0+a_1y_1|\le 1\},
\end{eqnarray*}
whence
\[J_T(a_0,a_1)=\int_{1\le|y_0|\le 1+\zeta}\int_{-\infty}^\infty
K_T(a_0y_0+a_1y_1) \d y_1\d y_0.\]
However
\[\int_{-\infty}^\infty K_T(a_0y_0+a_1y_1) \d y_1=|a_1|^{-1},\]
proving the lemma in the first case.

Suppose on the other hand, that $|a_0|\le|a_1|\le|a_0|+T^{-1}$. Then
\begin{eqnarray*}
  J_T(a_0,a_1)&\le&\int_{1\le|y_0|\le 1+\zeta}\int_{-\infty}^\infty
  K_T(a_0y_0+a_1y_1) \d y_1\d y_0\\
  &&\hspace{1cm}+ \int_{1\le|y_1|\le 1+\zeta}\int_{-\infty}^\infty
  K_T(a_0y_0+a_1y_1) \d y_0\d y_1\\
  &=& 2\zeta/|a_1|+2\zeta/|a_0|\\
  &\le& 4\zeta/|a_0|.
  \end{eqnarray*}
This is enough for the second part of the lemma unless
$|a_0|\le\tfrac12 |a_1|$, for example. In this latter case, the
condition $|a_0|\le|a_1|\le|a_0|+T^{-1}$ yields $|a_1|\le 2T^{-1}$,
whence the trivial bound $K_T(x)\le T$ produces an estimate
\[J_T(a_0,a_1)\le 4\zeta(1+\zeta)T\ll \zeta/|a_1|\]
as required. The second case of the lemma now
follows. 
\end{proof}

We can now give our alternative version of Lemma \ref{remW2}.
\begin{lemma}\label{remW2a}
Under the conditions of Lemma \ref{remW2} we have
\begin{eqnarray*}
S^{(0)}(X,Y)&=&2XY\zeta\mathfrak{S}_S
\int_{\R^3}\frac{W_3(\x)}{h_Q(\x)}\d x_0\d x_1\d x_2+O(\zeta^2 XY)\\
&&\hspace{1cm}\mbox{}+O_{\zeta,\eta}\left(XY(\log\log 3Y)^{-1/5}\right)
+O_\zeta(\eta^{2/3}XY).
\end{eqnarray*}
\end{lemma}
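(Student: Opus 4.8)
The plan is to start from Lemma \ref{remW2}: its three error terms $O(\zeta^2XY)$, $O_{\zeta,\eta}(XY(\log\log 3Y)^{-1/5})$ and $O_\zeta(\eta^{2/3}XY)$ are exactly those claimed in the lemma, so the whole task reduces to evaluating the main-term integral
\[I:=\int_{1\le||\y||_\infty\le 1+\zeta}\sigma_\infty(Q_\y;W_3)\,\d y_0\,\d y_1\]
and showing $I=2\zeta\int_{\R^3}W_3(\x)\,h_Q(\x)^{-1}\,\d x_0\,\d x_1\,\d x_2$. Substituting this back into Lemma \ref{remW2} then gives the stated formula.

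To compute $I$, I would apply Lemma \ref{nd2} with $W_2^{(*)}$ taken to be the characteristic function of the shell $1\le||\y||_\infty\le 1+\zeta$. This is legitimate because $\zeta<\tfrac12$ places the shell inside $[-\tfrac52,\tfrac52]^2\setminus[-\tfrac12,\tfrac12]^2$. Thus $I$ becomes $\lim_{T\to\infty}$ of a double integral of $K_T(y_0Q_0(\x)+y_1Q_1(\x))\,W_3(\x)$ over $\x\in\R^3$ and $\y$ in the shell. Carrying out the $\y$-integration first, the inner integral is by definition $J_T(Q_0(\x),Q_1(\x))$ in the notation of Lemma \ref{useful}, so that $I=\lim_{T\to\infty}\int_{\R^3}W_3(\x)\,J_T(Q_0(\x),Q_1(\x))\,\d x_0\,\d x_1\,\d x_2$.

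Next I would insert Lemma \ref{useful}. With $h_Q(\x)=\max(|Q_0(\x)|,|Q_1(\x)|)$, that lemma gives $J_T(Q_0(\x),Q_1(\x))=2\zeta/h_Q(\x)$ for every $\x$ outside the near-diagonal set $B_T:=\{\x:\ \bigl||Q_0(\x)|-|Q_1(\x)|\bigr|\le T^{-1}\}$, and $J_T(Q_0(\x),Q_1(\x))\ll\zeta/h_Q(\x)$ on $B_T$. Hence
\[\int_{\R^3}W_3(\x)\,J_T(Q_0(\x),Q_1(\x))\,\d x_0\,\d x_1\,\d x_2
=2\zeta\int_{\R^3}\frac{W_3(\x)}{h_Q(\x)}\,\d x_0\,\d x_1\,\d x_2
+O\!\Bigl(\zeta\int_{B_T}\frac{W_3(\x)}{h_Q(\x)}\,\d x_0\,\d x_1\,\d x_2\Bigr),\]
the main integral being finite by Lemma \ref{QB}.

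The last step — and the only genuinely delicate point — is to show that $\int_{B_T}W_3(\x)h_Q(\x)^{-1}\,\d x_0\,\d x_1\,\d x_2\to 0$ as $T\to\infty$. I would argue by dominated convergence: the sets $B_T$ decrease to $B_\infty=\{\x:\ |Q_0(\x)|=|Q_1(\x)|\}$, which is contained in the union of the zero loci of the quadratic forms $Q_0-Q_1$ and $Q_0+Q_1$; both forms are nonzero because $Q_0$ and $Q_1$ are linearly independent, so $B_\infty$ is a proper algebraic subset of $\R^3$ and has Lebesgue measure zero (and in particular contains the null set $\{h_Q=0\}$). Therefore $\one_{B_T}(\x)\,W_3(\x)/h_Q(\x)\to 0$ for almost every $\x$, while the family is dominated by the integrable function $W_3(\x)/h_Q(\x)$ (Lemma \ref{QB}, whose proof already controls the behaviour near $h_Q=0$). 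So the remainder vanishes in the limit, giving $I=2\zeta\int_{\R^3}W_3(\x)h_Q(\x)^{-1}\,\d x_0\,\d x_1\,\d x_2$ exactly, and substituting into Lemma \ref{remW2} finishes the proof.
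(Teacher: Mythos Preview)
Your proof is correct and follows essentially the same route as the paper: apply Lemma~\ref{nd2} with the characteristic function of the shell, rewrite the inner $\y$-integral as $J_T(Q_0(\x),Q_1(\x))$, invoke Lemma~\ref{useful} to get $2\zeta/h_Q(\x)$ off a shrinking exceptional set, and kill the remainder by dominated convergence against the integrable majorant $W_3(\x)/h_Q(\x)$ from Lemma~\ref{QB}. Your justification that $B_\infty=\{|Q_0|=|Q_1|\}$ has measure zero (as the union of the zero sets of the nonzero forms $Q_0\pm Q_1$) is in fact slightly more explicit than the paper, which simply appeals to dominated convergence without spelling this out.
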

\begin{proof}
It follows from Lemmas \ref{nd2} and \ref{useful} that
\begin{eqnarray*}
\lefteqn{\int_{1\le||\y||_\infty\le 1+\zeta}\sigma_\infty(Q_\y;W_3)\d y_0\d y_1}\\
&=&\lim_{T\to\infty}\int_{\R^3}W_3(\x)J_T(Q_0(\x),Q_1(\x))\d x_0\d x_1\d x_2\\
&=& 2\zeta\int_{\R^3}\frac{W_3(\x)}{h_Q(\x)}\d x_0\d x_1\d x_2\\
&&+O\left(\lim_{T\to\infty}\zeta
\int_{\substack{\x\in\R^3\\ |Q_0(\x)\pm Q_1(\x)|\le T^{-1}}}
\frac{W_3(\x)}{h_Q(\x)}\d x_0\d x_1\d x_2\right).
\end{eqnarray*}
The integral in the main term is finite by Lemma \ref{QB}, and the
integral in the error term tends to zero as $T\to\infty$, by the
Dominated Convergence Theorem. The result then follows. 
\end{proof}

Our plan now is to sum $S^{(0)}(X,Y)$ for $Y=BX^{-1}(1+\zeta)^{-n}$,
obtaining the following result. We recall that we defined $\mathcal{E}$
in (\ref{cEd}) in terms of $\LL=3+\log XY$. As a substitute for this we 
now introduce
\[\mathcal{E}_0=\exp\left\{\frac{3+\log B}{\sqrt{\log(3+\log B)}}\right\},\]
and note that $\mathcal{E}\le \mathcal{E}_0$ when $XY\le B$.
\begin{lemma}\label{267}
Let $\eta,\zeta\in(0,\tfrac12)$ be given. Let 
\[S^{(4)}(B;X)=\sum_{\x\in\Z^3_{\mathrm{prim}}}
\;\; \sum_{\substack{\y\in\Z^2_{\mathrm{prim}}\\  y_0Q_0(\x)+y_1Q_1(\x)=0\\
||\y||_\infty\le B/X}}W_3(X^{-1}\x),\]
and suppose that  
$B^{1/2}\mathcal{E}_0^4\le X\le B\mathcal{E}_0^{-1}$. 
Assume that neither $\cl{M}$ nor $\cl{C}$ has rational points.  Then
\begin{eqnarray*}
  S^{(4)}(B;X)&=& 2\mathfrak{S}_S B
  \int_{\R^3}\frac{W_3(\x)}{h_Q(\x)}\d x_0\d x_1\d x_2+O(\zeta B)\\
&&\hspace{1cm}\mbox{}+
O_{\zeta,\eta}(B(\log\log B)^{-1/5})+O_{\zeta}(\eta^{2/3}B).
\end{eqnarray*}
\end{lemma}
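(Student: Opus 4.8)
The plan is to cut the range $0<\|\y\|_\infty\le B/X$ of the inner sum into geometric intervals on which Lemma \ref{remW2a} applies, and then add up the resulting asymptotic formulae. Set $Y_n=(B/X)(1+\zeta)^{-n-1}$ for $n\ge 0$. The half-open intervals $(Y_n,(1+\zeta)Y_n]=\big((B/X)(1+\zeta)^{-n-1},(B/X)(1+\zeta)^{-n}\big]$ partition $(0,B/X]$, and since every $\x$ in the support of $W_3(X^{-1}\x)$ has $\|\x\|_\infty\asymp X\gg 1$, the pair $(Q_0(\x),Q_1(\x))$ is automatically non-zero there; moreover our standing hypothesis that $\cl{C}$ has no rational point forces $C(\y)\neq 0$ for every primitive integer $\y$. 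Hence
\[ S^{(4)}(B;X)=\sum_{n\ge 0}S^{(0)}(X,Y_n), \]
where $S^{(0)}$ is as in Lemma \ref{remW2}. This series is effectively finite: $S^{(0)}(X,Y_n)=0$ once $(1+\zeta)Y_n<1$, and in any case the terms are $O_{\zeta,\eta}(XY_n)$, which decays geometrically in $n$.

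\textbf{Applicability of Lemma \ref{remW2a}.} First I would check that Lemma \ref{remW2a} may be applied to each $S^{(0)}(X,Y_n)$ with $Y_n\ge 1$. Its hypotheses are that $\cl{M}$ and $\cl{C}$ have no rational points, which we assume, and that $Y\le X\mathcal{E}^{-8}$, where $\mathcal{E}$ is built from $\LL=3+\log(XY)$. Since $XY_n\le X\cdot(B/X)=B$ we have $\mathcal{E}\le\mathcal{E}_0$, as recorded just before the statement. Combined with $X\ge B^{1/2}\mathcal{E}_0^{4}$ this gives $X\mathcal{E}^{-8}\ge X\mathcal{E}_0^{-8}\ge B^{1/2}\mathcal{E}_0^{-4}$, while the same hypothesis forces $Y_n\le B/X\le B^{1/2}\mathcal{E}_0^{-4}$; thus $Y_n\le X\mathcal{E}^{-8}$ for all $n$. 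For the $O_\zeta(1)$ indices $n$ with $Y_n<1$ there are $O(1)$ admissible primitive $\y$, each giving a nonsingular conic $Q_\y$ with $O(X)$ primitive zeros of height $\le X$, so $S^{(0)}(X,Y_n)\ll X$; since $X\le B\mathcal{E}_0^{-1}=o(B)$, these contribute negligibly.

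\textbf{Summation.} Writing $I_W=\int_{\R^3}W_3(\x)/h_Q(\x)\,dx_0\,dx_1\,dx_2$ (finite by Lemma \ref{QB}), Lemma \ref{remW2a} gives, for $Y_n\ge 1$,
\[ S^{(0)}(X,Y_n)=2XY_n\zeta\,\mathfrak{S}_S I_W+O(\zeta^2 XY_n)+O_{\zeta,\eta}\!\big(XY_n(\log\log 3Y_n)^{-1/5}\big)+O_\zeta(\eta^{2/3}XY_n). \]
Using $X\sum_{n\ge 0}Y_n=B\sum_{n\ge 0}(1+\zeta)^{-n-1}=B/\zeta$, the leading terms sum to exactly $2\mathfrak{S}_S B\,I_W$, the $O(\zeta^2 XY_n)$ terms sum to $O(\zeta B)$, and the $O_\zeta(\eta^{2/3}XY_n)$ terms sum to $O_\zeta(\eta^{2/3}B)$ — all of the required shape.

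\textbf{The delicate error term.} The one point needing care, and in that sense the main obstacle, is the term $O_{\zeta,\eta}\big(XY_n(\log\log 3Y_n)^{-1/5}\big)$, since its weight does not decay as $Y_n\to 0$. I would split according to whether $Y_n\ge\mathcal{E}_0^{1/2}$ or $1\le Y_n<\mathcal{E}_0^{1/2}$. For $Y_n\ge\mathcal{E}_0^{1/2}$ one has $\log\log 3Y_n\ge\log\log\mathcal{E}_0^{1/2}\gg\log\log B$, so these terms contribute $\ll_{\zeta,\eta}(\log\log B)^{-1/5}\sum_n XY_n\ll_{\zeta,\eta}B(\log\log B)^{-1/5}$. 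For $1\le Y_n<\mathcal{E}_0^{1/2}$ one instead uses $XY_n<X\mathcal{E}_0^{1/2}\le B\mathcal{E}_0^{-1/2}$; since $\log\mathcal{E}_0=(3+\log B)/\sqrt{\log(3+\log B)}$ dwarfs any fixed power of $\log\log B$, and there are only $O_\zeta(\log B)$ indices $n$ in all, these terms contribute $\ll_\zeta B\mathcal{E}_0^{-1/2}\log B\ll B(\log\log B)^{-1/5}$. Combining the two ranges with the contributions from the leading, $O(\zeta^2)$ and $O(\eta^{2/3})$ pieces yields the stated formula.
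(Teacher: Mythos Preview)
Your proof is correct and follows essentially the same approach as the paper: partition the $\|\y\|_\infty$-range into geometric intervals $(Y_n,(1+\zeta)Y_n]$, apply Lemma~\ref{remW2a} on each, and sum. Your treatment of the delicate $(\log\log 3Y_n)^{-1/5}$ error via the split at $\mathcal{E}_0^{1/2}$ is in fact more explicit than the paper's, which simply asserts the bound. One tiny imprecision: the leading terms do not sum to \emph{exactly} $2\mathfrak{S}_S B\,I_W$, since Lemma~\ref{remW2a} is only invoked for $Y_n\ge 1$; the tail of the geometric series contributes $O_\zeta(X)=O_\zeta(B\mathcal{E}_0^{-1})$, which is harmlessly absorbed into the $O_{\zeta,\eta}(B(\log\log B)^{-1/5})$ term.
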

\begin{proof}
We begin by observing that if $X\ge B^{1/2}\mathcal{E}_0^4$ and $Y\le
BX^{-1}$ then we automatically have $Y\le X\mathcal{E}_0^{-8}$. Thus
Lemmas \ref{remW2} and \ref{remW2a} apply. When we sum for  
$Y=BX^{-1}(1+\zeta)^{-n}$, terms with $||\y||_\infty\ll 1$ contribute $O(X)$,
and the sum of the error terms $O_{\zeta,\eta}(XY(\log\log 3Y)^{-1/5})$ will be
$O_{\zeta,\eta}(B(\log\log B)^{-1/5})$. We then find that
\begin{eqnarray*}
  S^{(4)}(B;X)&=&\sum_{1\le n\le (\log B/X)/(\log(1+\zeta))}
  S^{(0)}(X,BX^{-1}(1+\zeta)^{-n})+O(X)\\
&=&2\mathfrak{S}_S B\Sigma\int_{\R^3}\frac{W_3(\x)}{h_Q(\x)}\d x_0\d x_1\d x_2
+O(\zeta B)\\
&&\hspace{1cm}\mbox{}+
O_{\zeta,\eta}(B(\log\log B)^{-1/5})+O_{\zeta}(\eta^{2/3}B),
\end{eqnarray*}
where we have temporarily written
\[\Sigma=\sum_{1\le n\le(\log B/X)/(\log(1+\zeta))}\frac{\zeta}{(1+\zeta)^n}.\]
The lemma then follows, since $\Sigma=1+O(\zeta)+O(XB^{-1})$.
\end{proof}

We next prove the following companion to Lemma \ref{s31}.
\begin{lemma}\label{s32}
  Suppose that $B^{1/2}\mathcal{E}_0^4\le X\le B\mathcal{E}_0^{-1}$,
  and define
  \[S^{(5)}(B;X)=\sum_{\substack{\x\in\Z^3_{\mathrm{prim}}\\ X<||\x||_\infty\le 2X}}
\;\; \sum_{\substack{\y\in\Z^2_{\mathrm{prim}}\\  y_0Q_0(\x)+y_1Q_1(\x)=0\\
||\y||_\infty\le B/||\x||_\infty\\ ||\y||_\infty< ||\x||_\infty}}1.\]
Then  
 \begin{eqnarray}\label{S32*}
    S^{(5)}(B;X)&=&
(4\log 2)\tau_\infty\mathfrak{S}_S B+O(K^{-1}B)+O_K(\zeta B)\nonumber\\
&&\hspace{1cm}\mbox{}+
O_{\zeta,\eta,K}(B(\log\log B)^{-1/5})+O_{\zeta,K}(\eta^{2/3}B)
\end{eqnarray}
for any positive integer $K$ and any $\eta,\zeta>0$.
\end{lemma}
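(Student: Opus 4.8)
The plan is to deduce Lemma \ref{s32} from Lemma \ref{267} in exactly the same way that Lemma \ref{s31} was deduced from Lemma \ref{rw3}, i.e.\ by a short dyadic-summation argument that converts the weighted count $S^{(4)}$ (with weight $W_3$ and with $\y$ ranging over $||\y||_\infty\le B/X$) into the sharp-cutoff count $S^{(5)}$ (with $X<||\x||_\infty\le 2X$ and the extra constraint $||\y||_\infty<||\x||_\infty$). First I would introduce, for a small parameter $\xi>0$, the quantity
\[
S^{(6)}(B;X)=\sum_{\substack{\x\in\Z^3_{\mathrm{prim}}\\ X<||\x||_\infty\le(1+\xi)X}}
\;\;\sum_{\substack{\y\in\Z^2_{\mathrm{prim}}\\ y_0Q_0(\x)+y_1Q_1(\x)=0\\ ||\y||_\infty\le B/||\x||_\infty,\ ||\y||_\infty<||\x||_\infty}}1,
\]
and observe, exactly as in the proof of Lemma \ref{s31}, that a majorant/minorant sandwich gives $S^{(4)}(B(1+\xi)^{-1};X)\le S^{(6)}(B;X)\le S^{(4)}(B;X)$ once we have checked that the condition $||\y||_\infty<||\x||_\infty$ is automatic here. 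This last point is the one genuinely new ingredient: since $X\ge B^{1/2}\mathcal{E}_0^4$, any $\y$ with $||\y||_\infty\le B/||\x||_\infty\le B/X$ satisfies $||\y||_\infty\le B/X\le X\mathcal{E}_0^{-8}<||\x||_\infty$, so the constraint $||\y||_\infty<||\x||_\infty$ is vacuous on the whole range, and $S^{(6)}$ really is squeezed between two values of $S^{(4)}$. (This is precisely why the hypothesis on $X$ is stated as $B^{1/2}\mathcal{E}_0^4\le X$ rather than merely $B^{1/3}(\log B)^A\le X$; it rules out the awkward region $H(\x)\asymp H(\y)$ and means $N_2$-type points with $H(\x)$ close to $\sqrt B$ are handled entirely by this range.)

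Next I would apply Lemma \ref{267} to both bounding values of $S^{(4)}$, noting that the hypotheses of Lemma \ref{267} hold for $B(1+\xi)^{-1}$ and $B$ throughout the stated $X$-range (adjusting the implied constants in $\mathcal{E}_0$ harmlessly, since $\xi$ will be fixed before $B\to\infty$). This yields
\[
S^{(6)}(B;X)=2\mathfrak{S}_S B\int_{\R^3}\frac{W_3(\x)}{h_Q(\x)}\d x_0\d x_1\d x_2+O(\xi B)+O_{\xi,\eta}(B(\log\log B)^{-1/5})+O_\xi(\eta^{2/3}B),
\]
where I have absorbed the $O(\zeta B)$, $O(\zeta^2 B)$ error terms coming from Lemma \ref{267} (applied with its own internal parameter, which I will also call $\zeta$) — these are of the same shape. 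Then, setting $\xi=\exp\{K^{-1}\log 2\}-1\ll K^{-1}$ and summing over $n=1,\dots,K$, so that $S^{(5)}(B;X)=\sum_{1\le n\le K}S^{(6)}(B;X(1+\xi)^{n-1})$ telescopes the $X$-range $[X,2X]$, I obtain
\[
S^{(5)}(B;X)=2K\xi\,\mathfrak{S}_S B\int_{\R^3}\frac{W_3(\x)}{h_Q(\x)}\d x_0\d x_1\d x_2
+O(K^{-1}B)+O_{\zeta,\eta,K}(B(\log\log B)^{-1/5})+O_{\zeta,K}(\eta^{2/3}B).
\]
Since $K\xi=\log 2+O(K^{-1})$, the main term becomes $2(\log 2)\mathfrak{S}_S B\int_{\R^3}W_3(\x)h_Q(\x)^{-1}\,\d x_0\,\d x_1\,\d x_2+O(K^{-1}B)$.

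Finally I would reconcile this main term with the claimed one, $(4\log 2)\tau_\infty\mathfrak{S}_S B$. By Lemma \ref{lem:rd} (the second displayed formula in \eqref{tinf}) we have
\[
\int_{\R^3}\frac{W_3(\x)}{h_Q(\x)}\,\d x_0\,\d x_1\,\d x_2=2\tau_\infty
\]
when $W_3$ is taken to be the minorant/majorant sandwich for the characteristic function of $1<||\x||_\infty\le 2$; more precisely, one runs the whole argument with $W_3=W_3^{(\pm)}$ supported on $1\mp\xi^2\le||\x||_\infty\le \cdots$, exactly as in Lemma \ref{rw3}, and uses the homogeneity relation $I(Q_0,Q_1;\lambda)=\lambda I(Q_0,Q_1;1)$ together with Lemma \ref{QB} to show $\int W_3^{(+)}h_Q^{-1}-\int W_3^{(-)}h_Q^{-1}\ll\xi^2$, contributing a further $O(\xi^2 B)=O(K^{-2}B)$ and $\int_{1<||\x||_\infty\le 2}h_Q(\x)^{-1}\d\x=I(Q_0,Q_1;2)-I(Q_0,Q_1;1)=I(Q_0,Q_1;1)=2\tau_\infty$. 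Substituting gives the main term $2(\log 2)\mathfrak{S}_S B\cdot 2\tau_\infty=(4\log 2)\tau_\infty\mathfrak{S}_S B$, and collecting all error terms yields \eqref{S32*}. I do not anticipate a serious obstacle here — the argument is a direct parallel of the passage from Lemma \ref{rw3} to Lemma \ref{s31} — and the only point requiring care is the verification that the cutoff $||\y||_\infty<||\x||_\infty$ is inactive on the range $X\ge B^{1/2}\mathcal{E}_0^4$, so that $S^{(5)}$ (which in the next section will feed the $N_2$ count) is genuinely captured by the weighted sums $S^{(4)}$ to which Lemma \ref{267}, and hence Proposition \ref{ultgoal}, applies.
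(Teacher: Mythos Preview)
Your approach is essentially identical to the paper's: define a small-shell version $S^{(6)}$, remove the weight $W_3$ via majorant/minorant pairs $W_3^{(\pm)}$ exactly as in Lemma~\ref{rw3} (applied to Lemma~\ref{267} in place of Lemma~\ref{8.2}), and then sum $K$ shifted copies with $\xi=2^{1/K}-1$ as in the passage to \eqref{S3}. The key observation that the constraint $||\y||_\infty<||\x||_\infty$ is inactive on the range $X\ge B^{1/2}\mathcal{E}_0^4$ is correct and is noted by the paper just after the lemma statement.

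Two small bookkeeping slips to fix. First, the term $O(\zeta B)$ from Lemma~\ref{267} cannot be ``absorbed'' into an $O(\xi B)$ error: $\zeta$ is an independent parameter that survives to the final statement as $O_K(\zeta B)$, so carry it along (and note that the majorant/minorant discrepancy is $O(\xi^2 B)$, not $O(\xi B)$, which is what gives $O(K^{-1}B)$ after summing). Second, your displayed main term after summing, $2K\xi\,\mathfrak{S}_S B\int W_3/h_Q$, has a spurious $\xi$: summing $K$ copies of Lemma~\ref{267} gives $2K\,\mathfrak{S}_S B\int W_3/h_Q$, and it is the integral itself that is $\approx 2\xi\tau_\infty$ (via $I(Q_0,Q_1;1+\xi)-I(Q_0,Q_1;1)=\xi\, I(Q_0,Q_1;1)=2\xi\tau_\infty$), not $2\tau_\infty$. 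These two errors cancel in your final answer, but the intermediate displays should be corrected.
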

The reader should note that the condition $||\y||_\infty< ||\x||_\infty$ in the
definition of $S^{(5)}(B;X)$ is redundant for the above range of $X$, since
we will automatically have 
\[||\y||_\infty\le B/||\x||_\infty\le B/X< X\le ||\x||_\infty.\]
However we will later use $S^{(5)}(B;X)$ for other values of $X$.

Roughly speaking, for the situation in which $\cl{M}(\Q)$ and $\cl{C}(\Q)$ are 
both empty, Lemmas \ref{s31} and \ref{s32} tell us that
\[S^{(5)}(B;X)\sim(4\log 2)\tau_\infty\mathfrak{S}_S B\]
for the two ranges and $B^{1/3}\mathcal{E}_0\le X\le\mathcal{E}_0^{-1}$ and
$B^{1/2}\mathcal{E}_0^4\le X\le B\mathcal{E}_0^{-1}$. One may speculate that 
the same asymptotic behaviour holds when $X$ is in the intermediate range; 
but of course it breaks down near $X=B^{1/3}$, since
$S^{(5)}(B;X)=0$ for $X\ll B^{1/3}$.

\begin{proof}[Proof of Lemma \ref{s32}]
We begin by defining
 \[S^{(6)}(B;X)=\sum_{\substack{\x\in\Z^3_{\mathrm{prim}}\\ X<||\x||_\infty\le (1+\xi)X}}
\;\; \sum_{\substack{\y\in\Z^2_{\mathrm{prim}}\\  y_0Q_0(\x)+y_1Q_1(\x)=0\\
||\y||_\infty\le B/X}}1.\]
Then, following the argument we used for Lemma \ref{rw3},
we find from Lemma \ref{267} that 
\begin{eqnarray*}
S^{(6)}(B;X)&=& 4\xi\tau_\infty\mathfrak{S}_S B+O(\xi^2 B)+O_\xi(\zeta B)\\
&&\hspace{1cm}\mbox{}+
O_{\zeta,\eta,\xi}(B(\log\log B)^{-1/5})+O_{\zeta,\xi}(\eta^{2/3}B)
\end{eqnarray*}
for any $\xi\in(0,\tfrac12)$. The reasoning that leads to (\ref{S3})
then yields (\ref{S32*}).
\end{proof}

We are now ready to complete the proof of Theorem \ref{x+}..
\begin{proof}[Proof of Theorem \ref{x+}]
Our argument will use the universal bound (\ref{sxy}), and we note that when 
$\mathcal{M}$ and $\mathcal{C}$ have no rational points we can include cases
in which $X$ or $Y$ are $O(1)$. On summing $S^*(X,Y)$ for dyadic
values of $Y\ll \min(B/X,X)$ 
we deduce that
\beql{OW}
S^{(5)}(B;X)\ll\min(B,X^2).
\eeq
We now observe that
\[N_2(U,B)=\tfrac14\sum_{X=2^n\le B}S^{(5)}(B;X).\]
We estimate terms with $X< B^{1/2}\mathcal{E}_0^4$ or 
$X> B\mathcal{E}_0^{-1}$ via (\ref{OW}), giving a total contribution
\[\ll B+B\log\mathcal{E}_0\ll B(\log B)(\log\log B)^{-1/2}.\]
For the remaining terms we apply Lemma \ref{s32}.
The number of terms for which $B^{1/2}\mathcal{E}_0^4\le X\le
B\mathcal{E}_0^{-1}$ 
will be $(\log B)/(2\log 2)+O(\log\mathcal{E}_0)$, so that the leading
term in Lemma 
\ref{s32} contributes
\[\tfrac12\tau_\infty\mathfrak{S}_S B\log B +O(B(\log B)(\log\log B)^{-1/2})\]
to $N_2(U,B)$. Finally, the error terms produce a total
\[O(K^{-1} B\log B)+O_K(\zeta B\log B)\hspace{4cm}\]
\[\hspace{2cm}\mbox{}+O_{\zeta,\eta,K}(B(\log B)(\log\log B)^{-1/5})
+O_{\zeta,K}(\eta^{2/3}B\log B).\]
We can now complete the argument in much the same way as we did for
Theorem~\ref{extra} at the end of \S \ref{pt13}. We begin by choosing
$K$ so as to make the 
first error term sufficiently small, and then select $\zeta$ to ensure
that the second error term is acceptable. Next we pick $\eta$ so as to
make the fourth error term small enough, and finally we see that the
third error term is acceptable as soon as $B$ is sufficiently large. 
\end{proof}

\section{Equidistribution}% --- Proof of Theorem \ref{thm:equi}} 
\label{sec:equi}
  
The aim of this section is to establish Theorem \ref{thm:equi}. 

\subsection{Set-up}
We begin by putting Peyre's
equidistribution framework from \cite[\S3.3]{Pey95} into the
setting for our Theorem \ref{thm:main}.
Let $H$ be the height \eqref{eqn:Height}, which we view as
induced by an adelic metric on the anticanonical bundle. Peyre's
Tamagawa measure is $\tau_H = \tau_{H,\infty} \prod_p (1 - 1/p)^2
\tau_{H,p}$ for local Tamagawa measures $\tau_{H,v}$ (Peyre uses
slightly different convergence factors which give an absolutely
convergent product). Then we say that \textit{equidistribution holds
on $U$} if for all continuity sets $W \subset S(\Adele_\Q)$ we have 
\begin{equation} \label{def:equi}
	\lim_{B \to \infty} \frac{\#\{(\x,\y)\in U(\Q) :H(\x,\y)\leq
          B, (\x,\y) \in W\} }{N(U,B)}  
	 = \frac{\tau_H(W)}{\tau_H(S(\Adele_\Q))}.
\end{equation}
Recall that a continuity set is a set whose boundary has
measure $0$. 
We similarly say that \textit{equidistribution holds over $\R$}
if \eqref{def:equi} holds for continuity sets of the form 
\begin{equation} \label{eqn:W_infty}
W = W_\infty \times  \prod_{v \neq \infty} S(\Q_v).
\end{equation}
The proof of Theorem \ref{thm:equi} uses two principles. The first is that equidistribution holds over $\R$; and the second is 
the fact that the
collection of surfaces in Theorem \ref{thm:main} is invariant under
linear changes of variables. Our approach is applicable in considerable
generality, in situations where these two principles apply. Nonetheless, given the following example, it is not clear what the correct generality is for such results; this
in part explains the delicate nature of our proof.

\begin{example}
	Let $X$ be a smooth cubic surface with $\Br X/ \Br\Q \cong \Z/2\Z$ and representative
	$\alpha \in \Br X$, such that there is a prime $p$ for which $\alpha$
	takes two values on $X(\Q_p)$ but evaluates trivially away
	from $p$. % \dan{where to find such an example?}.
	Assume that equidistribution holds over $\R$ for $X$ and all other surfaces obtained after linear
	change of variables. Note that in this case our assumptions even imply that  
	$\frac{\tau_H(W)}{\tau_H(X(\Adele_\Q))} = \frac{\tau_H(W^{\Br})}{\tau_H(X(\Adele_\Q)^{\Br})}$
	for all $W$ of the form \eqref{eqn:W_infty}. 	Nonetheless \eqref{def:equi} does not hold with respect
	to all continuity sets due to a Brauer--Manin obstruction to weak approximation at $p$.
	When there is a Brauer--Manin obstruction then \eqref{def:equi} needs to be modified to take this
	into account.
	%\dan{Roger: Do you know where the proof breaks down for this example?}
\end{example}

\subsection{Proof of Theorem \ref{thm:equi}}

We will re-use various ideas from earlier in the
paper, and we shall therefore be brief.
%\rog{That's my excuse for skipping awkward details!}
Throughout this section we will think of $\x, \y$ and so on, as either
being projective rational points, or, by abuse of notation, as being the corresponding primitive
integer vectors. 
One may check that, whenever we impose a condition on such a vector 
$\x$, the condition holds equally for $-\x$.

\begin{lemma} \label{lem:equi_real}
Under the conditions of Theorem \ref{thm:main}, the rational points of $U$
are equidistributed over $\R$.
\end{lemma}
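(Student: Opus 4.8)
\textbf{Proof plan for Lemma \ref{lem:equi_real}.}

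The plan is to reduce equidistribution over $\R$ to a counting statement with a real constraint, and then to show that our asymptotic for $N(U,B)$ automatically carries such a constraint through. Concretely, let $W_\infty \subset S(\R)$ be a continuity set, and set $W = W_\infty \times \prod_{v \neq \infty} S(\Q_v)$. Using the real parametrisation of $S(\R)$ from Lemma \ref{lem:rd} (which embeds a dense open subset of $S(\R)$ as $\R^2$ via $(u_1,u_2)\mapsto(1,u_1,u_2;-Q_1(1,u_1,u_2),Q_0(1,u_1,u_2))$, together with the analogous charts obtained by permuting and negating coordinates), the condition $(\x,\y)\in W_\infty$ becomes a condition on the ray directions of $\x$ and $\y$. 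Since $W_\infty$ is a continuity set, its boundary has $\tau_{H,\infty}$-measure zero; approximating the characteristic function of $W_\infty$ from above and below by smooth weights in these charts, exactly as we did with $W_2,W_3$ in the $(\ep,\delta)$-arguments of \S\ref{pt13} and \S\ref{Assem}, it suffices to prove an asymptotic for the smoothly weighted count.

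First I would observe that the entire machinery producing Theorems \ref{extra} and \ref{x+} already works with the smooth weights $W_3(X^{-1}\x)$ and $W_2(Y^{-1}\y)$ attached: indeed $S(X,Y)$ in \eqref{def:S(X,Y)} is \emph{defined} with such weights, and Propositions \ref{P1*} and \ref{ultgoal} give asymptotics for it. The key structural point is that in both the ``lattice'' regime (Proposition \ref{P1*}, giving the $N_1$ contribution) and the ``conic'' regime (Proposition \ref{ultgoal}, giving the $N_2$ contribution), the main term is a product of a purely archimedean integral against the weights and the arithmetic factor $\mathfrak{S}_S$. Replacing $W_3, W_2$ by weights that in addition cut out the directions prescribed by $W_\infty$ simply replaces these archimedean integrals by their restrictions; the arithmetic factor $\mathfrak{S}_S$ and all error-term estimates are unaffected, since nothing in \S\S\ref{CVL}--\ref{Assem} used anything about $W_2,W_3$ beyond smoothness, evenness, and support in $\tfrac12 \le \|\cdot\|_\infty \le \tfrac52$. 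Thus one obtains
\[
\#\{(\x,\y)\in U(\Q): H(\x,\y)\le B,\ (\x,\y)\in W\} \sim c_{W_\infty}\, B\log B
\]
where $c_{W_\infty}$ is the same expression as $c_S$ in Theorem \ref{thm:main} but with $\tau_\infty$ replaced by the corresponding integral over the region in $S(\R)$ cut out by $W_\infty$.

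The remaining task is bookkeeping: one must check that $c_{W_\infty}/c_S = \tau_{H,\infty}(W_\infty)/\tau_{H,\infty}(S(\R))$, i.e.\ that the archimedean integrals appearing in our main terms really do compute Peyre's local measure $\tau_{H,\infty}$ restricted to $W_\infty$. For the $N_2$ part this is essentially the content of Lemma \ref{lem:rd} together with the singular-integral identities of \S\ref{Assem} (Lemmas \ref{nd2}, \ref{useful}, \ref{remW2a}), which already express the relevant density as $\int W_3/h_Q$, the Tamagawa-measure integrand; restricting the direction of $\x$ restricts this integral compatibly. For the $N_1$ part one uses the expression in Lemma \ref{8.2} / Lemma \ref{s31}, which again is built from $\int W_3(\v)/h_Q(\v)$. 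Since both $N_1$ and $N_2$ are governed by the \emph{same} archimedean density (only the cone constants $\tfrac14$ and $\tfrac34$ differ, and these are purely combinatorial, coming from the effective-cone decomposition of \S\ref{sec:effective_cone} and independent of $W_\infty$), the ratio $c_{W_\infty}/c_S$ collapses to the ratio of archimedean measures, as required. I expect the main obstacle to be precisely this last step: matching our various archimedean integrals — which arise in three superficially different guises (the real density of Lemma \ref{lem:rd}, the singular integral $\sigma_\infty(Q_\y;W_3)$ of \S\ref{Assem}, and the weighted volume integrals of \S\ref{pt13}) — to a single intrinsic Tamagawa measure on $S(\R)$, uniformly in the direction-cutting weight, and in particular verifying that the boundary of $W_\infty$ contributes negligibly after the $(\ep,\delta)$-sandwich. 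This is routine but must be done with care because the charts from Lemma \ref{lem:rd} only cover $S(\R)$ up to a measure-zero set, so one needs finitely many overlapping charts and a partition-of-unity argument to handle all of $W_\infty$.
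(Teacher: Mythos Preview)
Your proposal is correct and follows essentially the same approach as the paper: modify the weight $W_3$ to encode the real constraint and rerun the arguments of \S\ref{pt13} and \S\ref{Assem}, so that only the archimedean factor $\tau_\infty$ changes to $\tau_\infty(\Gamma)$ while $\mathfrak{S}_S$ and all error terms are unaffected. The paper streamlines your plan slightly by first observing that, away from the measure-zero locus where $x_0$ or $Q_1(\x)$ vanish, the point $\y$ is determined by $\x$, so one may take $W_\infty=\{(\x,\y):x_0^{-1}(x_1,x_2)\in\Gamma\}$ and modify only $W_3$ (not $W_2$), avoiding the chart-patching and partition-of-unity issues you flag.
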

\begin{proof}
One sees from 	the analysis of Section \ref{trd} that the local 
Tamagawa measure for $\R$ is absolutely continuous with respect
to Lebesgue measure. This allows us to ignore sets of Lebesgue measure
zero. Consequently it suffices to consider sets $W_\infty$ whose closure 
avoids points where neither of $Q_1(\x)$, or $x_0$ vanish. For such sets
the equation for $S$ determines $\y$, given $\x$, so that
it is enough to
examine sets where there is a constraint on $\x$ but not on $\y$. Thus we may
take the set $W_\infty$ to have the shape
\beql{WG}
W_\infty=\{(\x,\y)\in U(\R): x_0^{-1}(x_1,x_2)\in \Gamma\}
\eeq
for some set $\Gamma\subseteq \R^2$. Our goal is then to show
that
\begin{equation} \label{equi2}
\lim_{B \to \infty} 
\frac{\#\{(\x,\y)\in U(\Q) :H(\x,\y)\leq B, \, x_0^{-1}(x_1,x_2)\in \Gamma\} }{N(U,B)}  
	 = \frac{\tau_\infty(\Gamma)}{\tau_\infty}.
\end{equation}	
with
\beql{tif}
\tau_\infty(\Gamma)=
\int_{(u_1,u_2)\in \Gamma}\frac{\d u_1\d u_2}{h_Q(\widehat{\u})||\widehat{\u}||_\infty}
\eeq
where we introduce the notation $\widehat{\x}=(1,x_1,x_2)$ for a vector
$\x=(x_1,x_2)\in\R^2$. Similarly we will later use the notation
$\widehat{\y}=(1,y)$ when $y\in\R$.

To prove (\ref{equi2}) we adapt the arguments used in Sections \ref{pt13} and \ref{Assem}.
In passing from Lemma \ref{8.2} to Lemma \ref{rw3} we use weights 
$W_3(\mathbf{z})$
which approximate the characteristic function of the set
\[\{(z_0,z_1,z_2): 1<|z_0|<1+\xi, z_0^{-1}(z_1,z_2)\in \Gamma\}.\]
We can thus establish an analogue of Theorem \ref{extra} for
points in $W_\infty$ given by (\ref{WG}), in which
$\tau_\infty$ is replaced by $\tau_\infty(\Gamma)$.  If we use the same choice for 
$W_3(\mathbf{z})$ in passing from Lemma \ref{remW2a} to a version of
Lemma \ref{267} we end up with an analogue of Theorem \ref{x+} in which
again $\tau_\infty$ is replaced by $\tau_\infty(\Gamma)$.
This suffices for the lemma.
\end{proof}

To handle equidistribution for more general continuity sets
$W\subseteq\prod_v S(\Q_v)$ we begin by giving a more down-to-earth formulation.
Suppose we are given a set $\Gamma\subseteq \R^2$, an integer $q\ge 1$ and 
vectors $\mathbf{a}\in\Z_{\mathrm{prim}}^3$ and
$\mathbf{b}\in\Z_{\mathrm{prim}}^2$. We then define
\begin{eqnarray}\label{NBD}
N(B)&=&N(B;\Gamma,q,\mathbf{a},\mathbf{b})\nonumber\\
&\hspace{-4cm}=& \hspace{-2cm}\card\left\{(\x,\y)\in\Z^3_{\mathrm{prim}}\times\Z^2_{\mathrm{prim}}:
\begin{array}{c} (\x,\y)\in W_\infty,\,  ||\x||_\infty ||\y||_\infty\le B,\\
\exists \lambda,\mu,\;\; \x\equiv\lambda\mathbf{a},\, \y\equiv\mu\mathbf{b}\Mod{q}
\end{array}\right\},
\end{eqnarray}
%\begin{align}\label{NBD}
%\begin{split}
%N(B)& =N(B;\Gamma,q,\mathbf{a},\mathbf{b})\\
%& = \card\left\{(\x,\y)\in\Z^3_{\mathrm{prim}}\times\Z^2_{\mathrm{prim}}:
%\begin{array}{c} (\x,\y)\in W_\infty,\,  ||\x||_\infty ||\y||_\infty\le B,\\
%\exists \lambda,\mu,\;\; \x\equiv\lambda\mathbf{a},\, \y\equiv\mu\mathbf{b}\Mod{q}
%\end{array}\right\},
%\end{split}
%\end{align}
where $W_\infty$ is defined by the condition (\ref{WG}).  We also introduce 
$p$-adic densities
\begin{eqnarray}\label{vpd1}
\lefteqn{\varpi_p(q,\mathbf{a},\mathbf{b})}\nonumber\\
&\hspace{-2cm}=&\hspace{-1cm}\lim_{n\to\infty}p^{-2n}\phi(p^n)^{-2}
\card\left\{ \x,\y\Mod{p^n }: 
\begin{array}{c} \gcd(\x,p)=\gcd(\y,p)=1\\
\exists\lambda,\,\x\equiv\lambda\mathbf{a}\Mod{q_p}, \\
\exists\mu,\, \y\equiv\mu\mathbf{b}\Mod{q_p}, \\
Q_\y(\x)\equiv 0\Mod{p^n}\end{array}\right\},
\end{eqnarray}
where the notation $m_p$ denotes the $p$-part of a positive integer $m$.

We then have the following result.
\begin{lemma}\label{equiSC}
To prove equidistribution for $S$, subject to the assumptions of Theorem \ref{thm:main}, it is
enough to show that
\[N(B)\sim 4\times\left\{\frac12\tau_\infty(\Gamma)
\prod_p(1-p^{-1})^2\varpi_p(q,\mathbf{a},\mathbf{b})\right\} B\log B\]
as $B\to\infty$, for every fixed closed square $\Gamma\subset\R^2$
such that $W_\infty$ avoids points where $Q_0$, $Q_1$, or $C$ vanish, and for 
every fixed admissible choice of $q$, $\mathbf{a}$, and $\mathbf{b}$.
\end{lemma}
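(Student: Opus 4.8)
The statement to be proved is a \emph{sufficiency} reduction: assuming the stated asymptotic for every $N(B;\Gamma,q,\mathbf{a},\mathbf{b})$ (with $\Gamma$ a closed square avoiding the vanishing loci of $Q_0,Q_1,C$, and $q,\mathbf{a},\mathbf{b}$ admissible), we must deduce equidistribution of the rational points of $U$ on $S(\Adele_\Q)$ in the sense of \eqref{def:equi}. So my task here is purely to show that counting against these explicit ``box $\times$ congruence'' test sets suffices to pin down the limiting measure against \emph{all} continuity sets. First I would recall that $S$ satisfies weak approximation (the surface is rational, solving for $\y$), so $S(\Adele_\Q)$ carries no Brauer--Manin obstruction and $U(\Q)$ is dense; hence it suffices to check \eqref{def:equi} for a collection of continuity sets rich enough to generate the Borel $\sigma$-algebra up to $\tau_H$-null sets, by a standard Stone--Weierstrass / monotone-class argument. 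Concretely I would take $W = W_\infty \times \prod_{p \le P} W_p \times \prod_{p > P} S(\Q_p)$ where each $W_p$ is a ``projective congruence cylinder'': the set of $(\x,\y)\in S(\Z_p)$ with $\x \equiv \lambda \mathbf{a} \pmod{p^{v_p(q)}}$ and $\y \equiv \mu \mathbf{b} \pmod{p^{v_p(q)}}$ for some units $\lambda,\mu$. Such cylinders form a base for the topology on the finite-adelic part, are continuity sets (their boundaries are empty, being clopen), and intersecting finitely many of them produces exactly the single congruence condition $\x\equiv\lambda\mathbf a,\ \y\equiv\mu\mathbf b\pmod q$ appearing in \eqref{NBD} after applying the Chinese Remainder Theorem to collect the local data into one modulus $q$.

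\textbf{Key steps in order.} (1) \emph{Reduce $W_\infty$ to the shape \eqref{WG}.} By Lemma \ref{lem:equi_real} and the analysis of \S\ref{trd}, the archimedean Tamagawa measure is absolutely continuous with respect to Lebesgue measure on the $(u_1,u_2)$-chart, so null sets may be discarded; approximating an arbitrary continuity set $W_\infty$ from inside and outside by finite unions of sets of the form \eqref{WG} with $\Gamma$ a closed square avoiding the loci $x_0=0$, $Q_0=0$, $Q_1=0$, $C=0$ reduces to that case (the excluded loci have measure zero, and on the complement the equation determines $[\y]$ from $\x$, so there is genuinely only a constraint on $\x$). (2) \emph{Identify the right-hand side of \eqref{def:equi} with the product in the lemma.} Using Peyre's formula $\tau_H = \tau_{H,\infty}\prod_p(1-1/p)^2\tau_{H,p}$ together with the local-density computations of \S\ref{padicdens} (Lemmas \ref{tl}, \ref{varpialt}), one checks that $\tau_H(W) = \tfrac12\tau_\infty(\Gamma)\prod_{p\le P}(1-1/p)^2\varpi_p(q,\mathbf a,\mathbf b)\prod_{p>P}\tau_p$ and $\tau_H(S(\Adele_\Q)) = \tfrac23\tau_\infty\prod_p\tau_p$ (the factor $\tfrac23$ is the effective cone constant; the factor $\tfrac12$ is the normalisation in \eqref{tinf}); for $p\nmid q$ one has $\varpi_p(q,\mathbf a,\mathbf b)=\varpi_p$, so the tails match and the ratio is $\tfrac{3}{2}\cdot\tfrac{\tau_\infty(\Gamma)\prod_p(1-1/p)^2\varpi_p(q,\mathbf a,\mathbf b)}{2\tau_\infty\prod_p\tau_p}$, which on using $\prod_p(1-1/p)^2\varpi_p = \prod_p\tau_p$ (since $\rho=2$) collapses to $\tfrac{3}{4}\cdot\tfrac{\tau_\infty(\Gamma)\prod_p(1-1/p)^2\varpi_p(q,\mathbf a,\mathbf b)}{\tau_\infty\prod_p\tau_p}$. (3) \emph{Match numerators and denominators.} The hypothesised asymptotic gives $N(B)\sim 2\,\tau_\infty(\Gamma)\prod_p(1-1/p)^2\varpi_p(q,\mathbf a,\mathbf b)\,B\log B$; Theorem \ref{thm:main} gives $N(U,B)\sim c_S B\log B$ with $c_S = \tfrac23\tau_\infty\prod_p\tau_p$; dividing and counting the factor-of-$4$ passage between primitive integer vectors and projective points on \emph{both} numerator and denominator (so the $4$'s cancel) yields exactly the ratio in step (2). (4) \emph{Conclude via approximation.} Since the cylinders $W$ above are continuity sets, \eqref{def:equi} holds for each; finite unions and complements of them are continuity sets and the identity is preserved (both sides are finitely additive on continuity sets, and boundaries of unions/complements stay null); an inner/outer approximation argument — using that $\tau_H$ is a Radon measure and the cylinders generate the topology — extends \eqref{def:equi} to all continuity sets $W$, which is the definition of equidistribution on $U$.

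\textbf{Where the real work is.} The genuinely delicate point is \emph{not} the soft measure-theoretic packaging but step (2): verifying that the Euler product appearing in the hypothesis, namely $\tfrac12\tau_\infty(\Gamma)\prod_p(1-1/p)^2\varpi_p(q,\mathbf a,\mathbf b)$, is \emph{literally} the Peyre--Tamagawa mass $\tau_H(W)$ of the adelic box, with the correct convergence factors and the correct normalisation of the archimedean density relative to \eqref{tinf}. This requires unwinding the definition \eqref{vpd1} of $\varpi_p(q,\mathbf a,\mathbf b)$ as a weighted count of $\x,\y$ modulo $p^n$ in a sublattice, comparing it term-by-term with the $p$-adic point count $\#\mathcal S(\Z/p^n\Z)$ restricted to the congruence cylinder, and checking that the passage from $f_S(p^k)$ to the projective density carries exactly the factors $(1-1/p)^2$ already isolated in Lemma \ref{tl}; one must also confirm that the partial product over $p\le P$ times $\prod_{p>P}\tau_p$ is independent of $P$ for $P$ large, which follows from $\varpi_p(q,\mathbf a,\mathbf b)=\varpi_p$ at good primes $p\nmid q$. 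A secondary subtlety is bookkeeping the factor $4$: each projective point in $U(\Q)\subset\P^2\times\P^1$ corresponds to two primitive $\x$ and two primitive $\y$, so $N(B)$ over-counts $\{(\x,\y)\in U(\Q):\ldots\}$ by $4$; the same factor appears in $N(U,B)$ via the analyses of \S\ref{pt13} and \S\ref{Assem}, so it cancels in the ratio — but one must state this cleanly so the ``$4\times$'' in the lemma's conclusion is accounted for on both sides. Once these identifications are in hand, the deduction of equidistribution is immediate.
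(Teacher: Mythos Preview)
Your reduction argument --- approximate a general continuity set by finite unions of products $W_\infty\times W(\mathbf a,\mathbf b;q)$, with $W_\infty$ of the shape \eqref{WG} for $\Gamma$ a closed square avoiding the bad loci, and the finite part a single projective congruence cylinder via the Chinese Remainder Theorem --- is exactly the paper's approach, and indeed is the \emph{entire} content of its proof. The paper does not carry out the constant verification you attempt in steps (2)--(3); it records only the reduction to these test sets and leaves the identification of the expression in braces with the Tamagawa mass of $W$ as implicit.

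Your constant computation, however, is flawed in several places. First, the effective cone constant $\alpha=2/3$ is \emph{not} part of the Tamagawa measure: $\tau_H(S(\Adele_\Q))=\tau_\infty\prod_p\tau_p$, and Peyre's constant is $c_S=\alpha\cdot\tau_H(S(\Adele_\Q))$; so your formula $\tau_H(S(\Adele_\Q))=\tfrac23\tau_\infty\prod_p\tau_p$ is wrong. Second, the factor $\tfrac12$ in \eqref{tinf} relates two equivalent integral representations of the \emph{same} number $\tau_\infty$; it is not an extra normalisation, so your $\tau_H(W)=\tfrac12\tau_\infty(\Gamma)\prod_p\cdots$ is unjustified. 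Third, and most concretely, the factors of $4$ do \emph{not} cancel: $N(B)$ in \eqref{NBD} counts pairs of primitive integer vectors (hence is four times the projective count), while $N(U,B)$ is already defined as a projective count. The correct comparison is $(N(B)/4)/N(U,B)$ against $\tau_H(W)/\tau_H(S(\Adele_\Q))$, with only one factor of $4$. With the hypothesised asymptotic and Theorem~\ref{thm:main} this ratio comes out as
\[
\frac{\tfrac12\,\tau_\infty(\Gamma)\prod_p(1-p^{-1})^2\varpi_p(q,\mathbf a,\mathbf b)}{\tfrac23\,\tau_\infty\,\mathfrak S_S}
=\frac34\cdot\frac{\tau_\infty(\Gamma)\prod_p(1-p^{-1})^2\varpi_p(q,\mathbf a,\mathbf b)}{\tau_\infty\,\mathfrak S_S},
\]
which is off from $\tau_H(W)/\tau_H(S(\Adele_\Q))$ by a factor $3/4$. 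So your ``yields exactly the ratio in step (2)'' does not hold as written. Whether this reflects a harmless constant slip elsewhere or a subtler identification you have not made, your verification does not close; the paper avoids the issue by not attempting it.
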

The case $q=1$ reduces to Lemma \ref{lem:equi_real}, the factor 4 accounting for the fact 
that each projective point $([\x],[\y])$ corresponds to four pairs $(\x,\y)$.

\begin{proof}
We begin by observing that
it is enough to handle sets $W$ which are products
$\prod_v W_v$ of open sets $W_v$, with $W_v=S(\Q_v)$ for all but
finitely many places $v$, because continuity sets
can be approximated both from below and above by suitable
finite unions of such product sets. As before it suffices to consider 
sets $W_\infty$ of the shape \eqref{WG}, avoiding points where $Q_1$, or $x_0$ vanishes,
and indeed we can take $\Gamma$ to be a closed square, since more general $\Gamma$
can be approximated from above and below by suitable finite unions of such sets.

By the same reasoning it
suffices to take $\prod_{v \neq\infty} W_v$ to be a set
$W(\mathbf{a},\mathbf{b};q)$ for some
$(\mathbf{a},\mathbf{b})\in\Z^3_{\mathrm{prim}}\times\Z^2_{\mathrm{prim}}$
and $q\in\N$, where $W(\mathbf{a},\mathbf{b};q)$ is defined as the set of
$(\x,\y)\in\Z^3_{\mathrm{prim}}\times\Z^2_{\mathrm{prim}}$ such that
$\x\equiv\lambda\mathbf{a}\Mod{q}$ and $\y\equiv\mu\mathbf{b}\Mod{q}$, 
for some choices of  $\lambda,\mu\in\Z$. %In which case \eqref{vpd1} is simply the corresponding
%\rog{There are a lot of unproven assertions in the above paragraph!}
\end{proof}

We proceed to show how the congruence conditions modulo $q$ may be removed.
We emphasize that $\Gamma$ and $q$ will be fixed throughout the argument,
as will $\mathbf{a}$ and $\mathbf{b}$.  It will be convenient to decompose $\Gamma$ 
into $K^2$ smaller squares $\Gamma^{(k)}$, each of side $O(K^{-1})$, and to write 
$W_\infty^{(k)}$ for the corresponding set of pairs $(\x,\y)\in W_\infty$. In due 
course we will let $K$ go to infinity.  
\begin{lemma}\label{remove}
For divisors $r$ and $s$ of $q$ there are matrices $C=C(r;q,\mathbf{a})\in\mathrm{M}_3(\Z)$
and $D=D(s;q,\mathbf{b})\in\mathrm{M}_2(\Z)$ of determinant $(q/r)^2$ and $q/s$ respectively,
such that
\[N(B)=\sum_{k\le K^2}\sum_{r,s\mid q}\mu(r)\mu(s)\sum_{e,f\mid q^\infty}N_k(B;r,s,e,f),\]
where
\[N_k(B;r,s,e,f)=\card\left\{(\bg,\bd)\in \Z^3_{\mathrm{prim}}\times\Z^2_{\mathrm{prim}}:
\begin{array}{c}  (C\bg,D\bd)\in W_\infty^{(k)},\\  
||C\bg||_\infty ||D\bd||_\infty\le B/rsef\end{array}\right\}.\]
\end{lemma}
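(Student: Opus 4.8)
The plan is to carry out an inclusion–exclusion argument to detach the congruence conditions $\x\equiv\lambda\mathbf{a}$ and $\y\equiv\mu\mathbf{b}\Mod q$ that define $N(B)$, producing the claimed sum over divisors $r,s\mid q$ and "$q$-full" moduli $e,f$. First I would note that the set of primitive $\x$ satisfying $\x\equiv\lambda\mathbf{a}\Mod q$ for some $\lambda$ is exactly $\Zprim^3\cap\sL(\mathbf{a},q)$, in the notation of \S\ref{seclat}, and similarly for $\y$ with $\sL(\mathbf{b},q)$. So $N(B)$ counts primitive pairs in $(\sL(\mathbf{a},q)\times\sL(\mathbf{b},q))$ lying in $W_\infty$ with the height bound; decomposing $\Gamma=\bigsqcup_{k\le K^2}\Gamma^{(k)}$ splits this as $\sum_k$ of the same count with $W_\infty$ replaced by $W_\infty^{(k)}$.

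Next I would parametrise the lattice points. The lattice $\sL(\mathbf{a},q)$ has covolume $q^2$ and contains $q\Z^3$; choosing an integer matrix $C_0$ whose columns are a basis, every $\x\in\sL(\mathbf{a},q)$ is $C_0\bv$ for a unique $\bv\in\Z^3$, with $\det C_0=q^2$. But $\x=C_0\bv$ need not be primitive even when $\bv$ is, so I would run a Möbius inversion over the content. The key observation (used in the proof of Lemma~\ref{easyL2} and earlier) is that $d\mid\x$ with $\x\in\sL(\mathbf{a},q)$ forces $d$ to split as $d=rg$ with $r\mid q$ and $\gcd(g,q)=1$ appropriately; more precisely, writing the content of $\x$ as $d$ and grouping its part dividing $q^\infty$ and its part coprime to $q$, one finds $\sL(\mathbf{a},q)\cap\{d\mid\x\}$ is a scaled copy of $\sL(\mathbf{a}/r,\,q/r)$ (for the $q$-part $r\mid q$) intersected with a dilate by the coprime part. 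Carrying out the inclusion–exclusion on the primitivity of $\bv$ and collecting the $q$-full part into the variable $e\mid q^\infty$, the primitive count over $\sL(\mathbf{a},q)$ becomes $\sum_{r\mid q}\mu(r)\sum_{e\mid q^\infty}(\text{count of primitive }\bg\text{ with }C(r;q,\mathbf{a})\bg\text{ of the right size}/re)$, where $C(r;q,\mathbf{a})$ is an integer matrix with $\det=(q/r)^2$ giving a basis of $\sL(\mathbf{a}/r,q/r)$. The factor of $re$ rescales the height in the $\x$-variable. The analogous step for $\y$ produces $\mu(s)$, $s\mid q$, and $f\mid q^\infty$, with the $2\times 2$ matrix $D(s;q,\mathbf{b})$ of determinant $q/s$ (determinant $q/s$ rather than $(q/s)^2$ because $\sL(\mathbf{b},q)\subset\Z^2$ has covolume $q$). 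The height condition $\|\x\|_\infty\|\y\|_\infty\le B$ becomes $\|C\bg\|_\infty\|D\bd\|_\infty\le B/(rsef)$ after the rescalings, and the membership $(\x,\y)\in W_\infty^{(k)}$ becomes $(C\bg,D\bd)\in W_\infty^{(k)}$ since $W_\infty$ is defined projectively by the shape \eqref{WG} and is stable under the scalings $\x\mapsto (re)\x$, $\y\mapsto(sf)\y$.

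I expect the main obstacle to be bookkeeping the interaction between the congruence lattice $\sL(\mathbf{a},q)$ and the primitivity Möbius sum — in particular verifying cleanly that the content of a vector in $\sL(\mathbf{a},q)$ factors through a divisor $r\mid q$ times a coprime part, so that the $r$-sum is finite (over $r\mid q$) while the coprime part can be summed geometrically and folded into the $e\mid q^\infty$ variable, and checking that the resulting matrix $C(r;q,\mathbf{a})$ indeed has determinant $(q/r)^2$. Once the factorisation is set up the rest is formal: relabel $\x=C\bg$ after extracting $re$, $\y=D\bd$ after extracting $sf$, apply $\mu$ from the two primitivity inversions, and sum over the cosmetic decomposition $\Gamma=\bigsqcup_k\Gamma^{(k)}$. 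I would also remark that only finitely many $(r,s,e,f,k)$ contribute for each $B$ (the height bound forces $rsef\le B$ and there are $O(B^{o(1)})$ such $q$-full quadruples), so all sums are finite and the rearrangement is legitimate.
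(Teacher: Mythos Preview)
Your plan is essentially the paper's argument, just ordered slightly differently: the paper first splits the primitivity of $\x$ into the two conditions ``$\gcd(\x,q)=1$'' and ``$\x\in S_3(q)$'' (every prime dividing $\gcd(\x)$ divides $q$), performs M\"obius only on the first to produce $\sum_{r\mid q}\mu(r)$, writes $\x=r\u$ with $\u\in\sL(\mathbf{a},q/r)\cap S_3(q)$, parametrises this lattice by $C$ of determinant $(q/r)^2$, checks that the $S_3(q)$ condition transfers from $\u=C\mathbf{c}$ to $\mathbf{c}$ (since $\det C$ is supported on primes dividing $q$), and finally writes $\mathbf{c}=e\bg$ with $e\mid q^\infty$ and $\bg$ primitive. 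Two slips in your write-up: the lattice is $\sL(\mathbf{a},q/r)$, not $\sL(\mathbf{a}/r,q/r)$ (nothing divides $\mathbf{a}$, which is primitive); and it is not the coprime-to-$q$ part of your M\"obius variable that becomes $e\mid q^\infty$ --- rather, summing $\mu(g)$ over $g$ coprime to $q$ collapses to the indicator of $S_3(q)$ on the new coordinate, and the $e$-sum then arises from the unique decomposition of vectors in $S_3(q)$ as $e\cdot(\text{primitive})$ with $e\mid q^\infty$.
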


\begin{proof}
Our first step is to replace the condition that $\x$ should be primitive by two other
constraints. Firstly we will require that $\gcd(x_0,x_1,x_2,q)=1$, and secondly we will suppose 
that every prime factor of $\gcd(x_0,x_1,x_2)$ also divides $q$. It will be convenient to write
\[S_3(q)=\{\x\in\Z^3-\{\mathbf{0}\}: p \mid \gcd(x_0,x_1,x_2)\Rightarrow p|q\}\]
for the set of $\x$ satisfying this second condition. We treat the primitivity condition for $\y$ in a
similar fashion, using a set $S_2(q)\subseteq\Z^2$. We then deduce that
\[N(B)=\sum_{r,s\mid q}\mu(r)\mu(s) N(B;r,s),\]
where
\[N(B;r,s)=\card\left\{(\x,\y)\in S_3(q)\times S_2(q):
\begin{array}{c} r\mid\x,\, s\mid\y, \\ (\x,\y)\in W_\infty,\,  ||\x||_\infty ||\y||_\infty\le B,\\
\exists \lambda,\mu, \; \x\equiv\lambda\mathbf{a},\, \y\equiv\mu\mathbf{b}\Mod{q}
\end{array}\right\}.\]
If we now write $\x=r\u$ and $\y=s\v$ the conditions then become $\u\in S_3(q)$, $\v\in S_2(q)$,
$(\u,\v)\in W_\infty$, $||\u||_\infty ||\v||_\infty\le B/rs$, and $u_0^{-1}(u_1,u_2)\in \Gamma$,
along with the congruences $\u\equiv\lambda\mathbf{a}\Mod{q/r}$ and 
$\v\equiv\mu\mathbf{a}\Mod{q/s}$, for suitable $\lambda,\mu$.

The constraint that $\u\equiv\lambda\mathbf{a}\Mod{q/r}$ for some $\lambda$
is equivalent to $\u$ lying on a certain lattice of determinant
$(q/r)^2$. We take $\mathbf{g}_0,\mathbf{g}_1,\mathbf{g}_2$ to be a basis
for the lattice, and write $\u=C\mathbf{c}$ where 
$C=(\mathbf{g}_0|\mathbf{g}_1|\mathbf{g}_2)$, so that $\det(C)=(q/r)^2$. Similarly
the condition $\v\equiv\mu\mathbf{b}\Mod{q/s}$ becomes
$\v=D\mathbf{d}$ for an appropriate $2\times 2$ integer matrix $D$
of determinant $q/s$. Since the determinants of $C$ and $D$ only contain primes which divide
$q$ the conditions $\u\in S_3(q)$ and $\v\in S_2(q)$ are equivalent to $\mathbf{c}\in S_3(q)$
and $\mathbf{d}\in S_2(q)$ respectively. It then follows that
\[N(B;r,s)=\card\left\{(\mathbf{c},\mathbf{d})\in S_3(q)\times S_2(q):
\begin{array}{c}  (C\mathbf{c},D\mathbf{d})\in W_\infty, \\  ||C\mathbf{c}||_\infty ||D\mathbf{d}||_\infty\le B/rs
\end{array}\right\}.\]
A vector $\mathbf{c}\in S_3(q)$ may be written as $e\bg$ with $e|q^\infty$ and 
$\bg\in\Z^3_{\mathrm{prim}}$, and similarly for $\mathbf{d}$. We therefore have
\[N(B;r,s)=\sum_{e,f\mid q^\infty}N(B;r,s,e,f)\]
with
\[N(B;r,s,e,f)=\card\left\{(\bg,\bd)\in \Z^3_{\mathrm{prim}}\times\Z^2_{\mathrm{prim}}:
\begin{array}{c}  (C\bg,D\bd)\in W_\infty,\\  
||C\bg||_\infty ||D\bd||_\infty\le B/rsef\end{array}\right\}.\]
The lemma then follows on splitting $W_\infty$ into the sets $W_{\infty}^{(k)}$.
\end{proof}

The condition $||C\bg||_\infty ||D\bd||_\infty\le B/rsef$ is not of the form $||\bg||_\infty||\bd||\infty\le B'$,
but when the squares $\Gamma^{(k)}$ are sufficiently small we can produce a condition
which is approximately of the right shape.
\begin{lemma}\label{ARS}
Let $\ba^{(k)}=(\alpha^{(k)}_1,\alpha^{(k)}_2)$ be the centre of $\Gamma^{(k)}$.
Then
\beql{TH}
\frac{||\bg||_\infty ||\bd||_\infty}{||C\bg||_\infty ||D\bd||_\infty}=\Omega_k+O_{\Gamma,q}(1/K),
\eeq
 for $(C\bg,D\bd)\in W_\infty^{(k)}$, where
\beql{THF}
\Omega_k=\frac{||C^{-1}\widehat{\ba}^{(k)}||_\infty\;\left|\left|D^{T}\left(Q_0(\widehat{\ba}^{(k)}),Q_1(\widehat{\ba}^{(k)})\right)\right|\right|_\infty}
{\det(D)\;||\widehat{\ba}^{(k)}||_\infty\;h_Q(\widehat{\ba}^{(k)})}.
\eeq
\end{lemma}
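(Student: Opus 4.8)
The statement is an elementary (if slightly intricate) computation comparing the ``box'' $||\x||_\infty||\y||_\infty\le B/rsef$ in the original coordinates with the transformed box $||C\bg||_\infty||D\bd||_\infty\le B/rsef$ in the new coordinates. The key point is that on the set $W_\infty^{(k)}$ the direction of $\x$ is constrained to the small square $\Gamma^{(k)}$, so $\x$ is approximately a scalar multiple of $\widehat{\ba}^{(k)}=(1,\alpha_1^{(k)},\alpha_2^{(k)})$, and correspondingly $\y$ is approximately proportional to $\bigl(-Q_1(\widehat{\ba}^{(k)}),Q_0(\widehat{\ba}^{(k)})\bigr)$ by the defining equation of $S$. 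I plan to make these two approximations precise and then read off the ratio \eqref{TH}.

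\textbf{Step 1: reduce to leading order in $\x$.} Write $\x = C\bg$ and recall that $(C\bg,D\bd)\in W_\infty^{(k)}$ means $x_0^{-1}(x_1,x_2)\in\Gamma^{(k)}$, so $x_0^{-1}\x = \widehat{\ba}^{(k)} + O(1/K)$ (coordinatewise), where the implied constant is absolute. Hence $\x = x_0\bigl(\widehat{\ba}^{(k)}+O(1/K)\bigr)$, giving $||\x||_\infty = |x_0|\,\bigl(||\widehat{\ba}^{(k)}||_\infty + O(1/K)\bigr)$ and, by linearity of $C^{-1}$, $||\bg||_\infty = ||C^{-1}\x||_\infty = |x_0|\,\bigl(||C^{-1}\widehat{\ba}^{(k)}||_\infty + O_{q}(1/K)\bigr)$, the $q$-dependence entering through the entries of $C^{-1}$ (which has determinant a power of $q$). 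Therefore
\[
\frac{||\bg||_\infty}{||\x||_\infty} = \frac{||C^{-1}\widehat{\ba}^{(k)}||_\infty}{||\widehat{\ba}^{(k)}||_\infty} + O_{q}(1/K),
\]
provided I also note that the denominator $||\widehat{\ba}^{(k)}||_\infty$ is bounded below (it is at least $1$), so division is harmless. Here one uses that $\ba^{(k)}\in\Gamma$, a fixed compact set, so all quantities are $O_\Gamma(1)$.

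\textbf{Step 2: reduce to leading order in $\y$.} Since $(\x,\y)\in S(\R)$ we have $y_0Q_0(\x)+y_1Q_1(\x)=0$ with $\y$ primitive, so $\y = \pm\bigl(-Q_1(\x),Q_0(\x)\bigr)/\gcd$; in particular $[\y]$ is the projective point $\bigl[-Q_1(\x):Q_0(\x)\bigr]$, well-defined because $Q_0,Q_1$ do not both vanish on $W_\infty^{(k)}$ by hypothesis. Using $\x = x_0(\widehat{\ba}^{(k)}+O(1/K))$ and the fact that $Q_0,Q_1$ are quadratic forms, $\bigl(Q_0(\x),Q_1(\x)\bigr) = x_0^2\bigl(Q_0(\widehat{\ba}^{(k)}),Q_1(\widehat{\ba}^{(k)})\bigr) + O(x_0^2/K)$, and since $h_Q(\widehat{\ba}^{(k)})=||(Q_0,Q_1)(\widehat{\ba}^{(k)})||_\infty$ is bounded below on the (compact) set of admissible $\ba^{(k)}$ — again by the hypothesis that $C$ does not vanish, together with Lemma \ref{DyLR} — we get $||\y||_\infty = c\,x_0^2\,\bigl(h_Q(\widehat{\ba}^{(k)}) + O(1/K)\bigr)$ and $||D\bd||_\infty = ||D\y||_\infty/|$ scaling $ = c\,x_0^2\,\bigl(||D^T(Q_0,Q_1)(\widehat{\ba}^{(k)})||_\infty + O_q(1/K)\bigr)$ for the same proportionality constant $c$, where I must be slightly careful: $\bd = D^{-1}\y$ so $||\bd||_\infty = ||D^{-1}\y||_\infty$, and $||D^{-1}\y||_\infty/||D\bd||_\infty$... actually the cleanest route is to note $\y = D\bd$ is given, so $||\bd||_\infty/||\y||_\infty = ||D^{-1}\widehat{\ba}^{(k)}_Q||_\infty/||\widehat{\ba}^{(k)}_Q||_\infty + O_q(1/K)$ where $\widehat{\ba}^{(k)}_Q := (-Q_1,Q_0)(\widehat{\ba}^{(k)})$; and $||D^{-1}v||_\infty = ||D^T v||_\infty/\det D$ up to the cofactor identity $D^{-1} = \det(D)^{-1}\operatorname{adj}(D)$ for $2\times2$ matrices (for $2\times 2$, $\operatorname{adj}(D)$ is $D^T$ up to swapping and signs — I will use $\operatorname{adj}(D)$ and observe its sup-norm equals that of $D^T$), which produces the factor $1/\det(D)$ in \eqref{THF}. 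Replacing $\widehat{\ba}^{(k)}_Q$ by $(Q_0,Q_1)(\widehat{\ba}^{(k)})$ only permutes coordinates and changes signs, so sup-norms are unaffected.

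\textbf{Step 3: assemble.} Multiplying the two ratios from Steps 1 and 2, the common factors $|x_0|$ and $c\,x_0^2$ cancel between numerator and denominator of $\frac{||\bg||_\infty||\bd||_\infty}{||C\bg||_\infty||D\bd||_\infty}$, leaving exactly $\Omega_k$ as in \eqref{THF} plus an error $O_{\Gamma,q}(1/K)$ coming from combining the $O(1/K)$ terms (the products of bounded quantities with $O(1/K)$ terms, plus cross terms, all of which are $O_{\Gamma,q}(1/K)$). The only genuine subtlety — and the step I would be most careful about — is keeping track of the lower bounds $||\widehat{\ba}^{(k)}||_\infty\ge 1$ and $h_Q(\widehat{\ba}^{(k)})\gg_{\Gamma}1$ that make all the divisions legitimate and absorb the errors cleanly; the first is trivial, and the second follows because $\Gamma$ is a fixed compact set on which $C$ (hence $h_Q$, by Lemma \ref{DyLR}) is bounded away from zero under the standing hypothesis that $W_\infty$ avoids the vanishing loci of $Q_0,Q_1,C$. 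With those in hand the estimate \eqref{TH} follows. $\qed$
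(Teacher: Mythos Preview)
Your approach matches the paper's and is correct in outline, but two points in the execution need repair.

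First, your citation of Lemma~\ref{DyLR} for the lower bound $h_Q(\widehat{\ba}^{(k)})\gg_{\Gamma}1$ is misplaced: that lemma concerns the eigenvalues of $Q_\y$ in terms of $C(\y)$, not $h_Q(\x)=\max(|Q_0(\x)|,|Q_1(\x)|)$. The bound you need follows directly from compactness of $\Gamma$ together with the standing hypothesis (from Lemma~\ref{equiSC}) that $W_\infty$ avoids the vanishing loci of $Q_0,Q_1,C$; indeed the paper uses only that $|Q_1(\widehat{\u})|$ is bounded away from zero on $\Gamma$.

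Second, the displayed claim ``$||D^{-1}v||_\infty=||D^T v||_\infty/\det D$'' is false for general $v$. For a $2\times2$ matrix one has $\operatorname{adj}(D)=J\,D^T J^{-1}$ with $J=\begin{psmallmatrix}0&1\\-1&0\end{psmallmatrix}$, so $||\operatorname{adj}(D)v||_\infty=||D^T(J^{-1}v)||_\infty$, not $||D^Tv||_\infty$. Your argument is rescued only because $J^{-1}(-Q_1,Q_0)^{T}=-(Q_0,Q_1)^{T}$: the two ``harmless swaps'' you invoke (adjugate versus transpose, and $(-Q_1,Q_0)$ versus $(Q_0,Q_1)$) are each individually wrong and together form a single cancellation. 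The paper sidesteps this by writing $\y$ projectively as $\widehat{\bb}^{(k)}=(1,\beta^{(k)})$ with $\beta^{(k)}=-Q_0(\widehat{\ba}^{(k)})/Q_1(\widehat{\ba}^{(k)})$, computing $D^{-1}\widehat{\bb}^{(k)}$ directly, and only then rewriting in terms of $D^T(Q_0,Q_1)$.
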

We remind the reader that $\widehat{\u}=(1,u_1,u_2)$ for $\u=(u_1,u_2)\in\R^2$.
\begin{proof}
When $(\x,\y)\in W_\infty^{(k)}$ we will have
\[x_0^{-1}(x_1,x_2)=\ba^{(k)}+O_{\Gamma}(1/K).\]
We recall that $|Q_1(\widehat{\u})|$ must be  bounded away from zero as $\u$ runs over $\Gamma$, 
and we therefore deduce that $y_0^{-1}y_1=\beta^{(k)}+O_{\Gamma}(1/K)$ for 
points $(\x,\y)\in W^{(k)}_\infty$, with 
\beql{btd}
\beta^{(k)}=-\frac{Q_0(\widehat{\ba}^{(k)})}{Q_1(\widehat{\ba}^{(k)})}.
\eeq
We now see that if $(C\bg,D\bd)\in W_\infty^{(k)}$ then
\[C\bg=(C\bg)_0\{\widehat{\ba}^{(k)}+O_{\Gamma,q}(1/K)\}\]
and
\[ D\bd=(D\bd)_0\{\widehat{\bb}^{(k)}+O_{\Gamma,q}(1/K)\},\]
whence
\[\bg=(C\bg)_0\{C^{-1}\widehat{\ba}^{(k)}+O_{\Gamma,q}(1/K)\}\]
and
\[\bd=(D\bd)_0\{D^{-1}\widehat{\bb}^{(k)}+O_{\Gamma,q}(1/K)\}.\]
The implied constants above will depend on $C$ and $D$, but these run over a fixed set of 
possibilities once $q$ is given.
The estimate (\ref{TH}) then follows, but with
\[\Omega_k=\frac{||C^{-1}\widehat{\ba}^{(k)}||_\infty\;||D^{-1}\widehat{\bb}^{(k)}||_\infty}
{||\widehat{\ba}^{(k)}||_\infty\;||\widehat{\bb}^{(k)}||_\infty}.\]
However we see from (\ref{btd}) that 
\[||D^{-1}\widehat{\bb}^{(k)}||_\infty=
\frac{\left|\left|D^T\left(Q_0(\widehat{\ba}^{(k)}),Q_1(\widehat{\ba}^{(k)})\right)\right|\right|_\infty}{\det(D)\,|Q_1(\widehat{\ba}^{(k)})|}\]
and that 
\[||\widehat{\bb}^{(k)}||_\infty=
\frac{\left|\left|\left(Q_0(\widehat{\ba}^{(k)}),Q_1(\widehat{\ba}^{(k)})\right)\right|\right|_\infty}{|Q_1(\widehat{\ba}^{(k)})|}
=\frac{h_Q(\widehat{\ba}^{(k)})}{|Q_1(\widehat{\ba}^{(k)})|}. \]
This produces the expression (\ref{THF}), and the lemma follows.
\end{proof}

Our plan next is to combine Lemmas \ref{lem:equi_real}, and  \ref{ARS} to produce asymptotic formulae for
$N_k(B;r,s,e,f)$ and for 
\beql{NBKD}
N_k(B;r,s)=\sum_{e,f\mid q^\infty}N_k(B;r,s,e,f).
\eeq
Let $f=f_C:\Gamma^{(k)}\to\R^2$ be the map given by
\beql{mapf}
f(\u)=\left(\frac{(C^{-1}\widehat{\u})_1}{(C^{-1}\widehat{\u})_0}\,,\,\frac{(C^{-1}\widehat{\u})_2}{(C^{-1}\widehat{\u})_0}\right),
\eeq
and let $\widetilde{\Gamma}^{(k)}$ be the image of $f$. Let $\widetilde{W}^{(k)}$ be the set of pairs $(\bg,\bd)$ for which 
$(C\bg,D\bd)\in W^{(k)}$. Thus $(\bg,\bd)\in\widetilde{W}^{(k)}$ if and only if 
$\gamma_0^{-1}(\gamma_1,\gamma_2)\in\widetilde{\Gamma}^{(k)}$ and $Q_{D\bd}(C\bg)=0$. This last condition defines
a surface $\widetilde{S}$, depending on $C$ and $D$, taking the form $\delta_0\widetilde{Q}_0(\bg)+\delta_1 \widetilde{Q}_1(\bg)=0$, 
say, where
\beql{QTD}
\left(\begin{array}{c} \widetilde{Q}_0(\x) \\ \widetilde{Q}_1(\x)\end{array}\right)=D^T\left(\begin{array}{c}
Q_0(C\x) \\ Q_1(C\x)\end{array}\right).
\eeq
We let $\tau(\widetilde{\Gamma}^{(k)};C,D)=\tau(\widetilde{\Gamma}^{(k)})$ and $\widetilde{\varpi}_p(C,D)=\widetilde{\varpi}_p$ be the 
local densities for the surface $\widetilde{S}$. We will often suppress the dependency on $C$ and $D$. 
The real density is given by (\ref{tif}), and the $p$-adic densities will be
\begin{eqnarray}\label{vpd2}
\lefteqn{\widetilde{\varpi}_p(C,D)}\nonumber\\
&\hspace{-2cm}=&\hspace{-1cm}\lim_{n\to\infty}p^{-2n}\phi(p^n)^{-2}
\card\left\{ \x,\y\Mod{p^n }: \begin{array}{c} \gcd(\x,p)=\gcd(\y,p)=1\\
Q_{D\y}(C\x)\equiv 0\Mod{p^n}\end{array}\right\}.
\end{eqnarray}
We can now give the following asymptotic formulae.
\begin{lemma}\label{twoNs}
We have
\[N_k(B;r,s,e,f)=2\left(\tau_\infty(\widetilde{\Gamma}^{(k)})\prod_p(1-p^{-1})^2\widetilde{\varpi}_p\right)
\{\Omega_k+O_{\Gamma,q}(1/K)\} \frac{B}{rsef}\log \frac{B}{rsef}\]
as soon as $B/rsef \gg_{K,\Gamma,q} 1$.  Moreover
\[N_k(B;r,s)=2\left(\tau_\infty(\widetilde{\Gamma}^{(k)})\frac{q^2}{\phi(q)^2}\prod_p (1-p^{-1})^2\widetilde{\varpi}_p\right)
\{\Omega_k+O_{\Gamma,q}(1/K)\} \frac{B}{rs}\log B\]
as soon as $B\gg_{K,\Gamma,q} 1$.
\end{lemma}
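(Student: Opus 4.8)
\textbf{Proof plan for Lemma \ref{twoNs}.} The first formula is essentially just Theorem \ref{x+} applied to the auxiliary surface $\widetilde{S}$, combined with Lemma \ref{lem:equi_real}, plus the geometric comparison of height conditions from Lemma \ref{ARS}. First I would fix $C=C(r;q,\mathbf{a})$ and $D=D(s;q,\mathbf{b})$, so that $\widetilde{S}$ is defined by the forms $\widetilde{Q}_0,\widetilde{Q}_1$ in \eqref{QTD}. One must check that $\widetilde{S}$ still satisfies the hypotheses of Theorem \ref{thm:main}, i.e.\ that $\widetilde{Q}_0=\widetilde{Q}_1=0$ has no rational point and no rational linear combination of $\widetilde{Q}_0,\widetilde{Q}_1$ is singular: but $(\widetilde{Q}_0,\widetilde{Q}_1)^T = D^T(Q_0(C\x),Q_1(C\x))^T$, so a linear combination $a_0\widetilde{Q}_0+a_1\widetilde{Q}_1$ equals $(b_0 Q_0+b_1 Q_1)(C\x)$ where $(b_0,b_1)=D(a_0,a_1)$; since $D$ is invertible over $\Q$ and $C$ is invertible over $\Q$, singularity would force $b_0Q_0+b_1Q_1$ singular, a contradiction; likewise $\widetilde{Q}_0(\bg)=\widetilde{Q}_1(\bg)=0$ forces $Q_0(C\bg)=Q_1(C\bg)=0$, so $C\bg$ would be a rational point of $\cl{M}$. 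Hence $\widetilde{S}$ is again in the setting of Theorem \ref{thm:main}, with $\rho=2$.

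The set $\widetilde{W}^{(k)}$ imposes on $\bg$ the affine condition $\gamma_0^{-1}(\gamma_1,\gamma_2)\in\widetilde{\Gamma}^{(k)}$ (no condition on $\bd$, since the equation of $\widetilde{S}$ determines $[\bd]$ from $\bg$ away from the vanishing locus of the $\widetilde{Q}_i$), so $N_k(B;r,s,e,f)$ counts points of $\widetilde{S}$ in $\widetilde{W}^{(k)}$ of na\"\i ve height $\le B/(rsef)$ \emph{with respect to the distorted height} $||C\bg||_\infty||D\bd||_\infty$. By Lemma \ref{ARS}, on $\widetilde{W}^{(k)}$ this distorted height equals $\Omega_k^{-1}||\bg||_\infty||\bd||_\infty$ up to a multiplicative factor $1+O_{\Gamma,q}(1/K)$; thus the counting region is, up to such a distortion, $||\bg||_\infty||\bd||_\infty\le \Omega_k B/(rsef)$. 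Feeding this into the proof of Theorem \ref{x+} (which itself goes through Lemma \ref{lem:equi_real} to handle the affine constraint $\gamma_0^{-1}(\gamma_1,\gamma_2)\in\widetilde{\Gamma}^{(k)}$ via the weight $W_3$ approximating the characteristic function of the relevant cone), one obtains
\[
N_k(B;r,s,e,f) = 2\,c(\widetilde{S},\widetilde{\Gamma}^{(k)})\,\{\Omega_k + O_{\Gamma,q}(1/K)\}\,\frac{B}{rsef}\log\frac{B}{rsef},
\]
where $c(\widetilde{S},\widetilde{\Gamma}^{(k)}) = \tau_\infty(\widetilde{\Gamma}^{(k)})\prod_p(1-p^{-1})^2\widetilde{\varpi}_p$ is the restricted leading constant; the factor $2$ is exactly the factor $\tfrac12$ of Theorem \ref{x+} combined with the two choices $\pm$ of primitive representative for $[\bd]$ (or equivalently the passage from $S(X,Y)$ to $S_1$). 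Here one must be slightly careful: $\widetilde{\Gamma}^{(k)}$ is the image of the small square $\Gamma^{(k)}$ under the projective-linear map $f$ of \eqref{mapf}, hence is itself a small set of diameter $O_{\Gamma,q}(1/K)$, and the multiplicative error $1+O_{\Gamma,q}(1/K)$ in $\Omega_k$ is uniform over it; the condition $B/(rsef)\gg_{K,\Gamma,q}1$ is what is needed for Theorem \ref{x+} (in the guise of Lemma \ref{s32}, whose ranges of validity must contain the relevant scale) to apply to this distorted problem.

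For the second formula I would sum over $e,f\mid q^\infty$ as in \eqref{NBKD}. Writing $B' = B/(rsef)$ one has $\log B' = \log B + O_q(1)$ and $\sum_{e,f\mid q^\infty} (ef)^{-1} = \prod_{p\mid q}(1-1/p)^{-2} = q^2/\phi(q)^2$; the tail where $B/(rsef)$ is not $\gg_{K,\Gamma,q}1$ contributes $O_{K,\Gamma,q}(1)$, negligible compared with $B\log B$. Thus
\[
N_k(B;r,s) = 2\left(\tau_\infty(\widetilde{\Gamma}^{(k)})\frac{q^2}{\phi(q)^2}\prod_p(1-p^{-1})^2\widetilde{\varpi}_p\right)\{\Omega_k+O_{\Gamma,q}(1/K)\}\frac{B}{rs}\log B,
\]
as claimed. \textbf{The main obstacle} I anticipate is the bookkeeping in the first step: verifying that the proof of Theorem \ref{x+} is genuinely robust enough to absorb (i) the linear substitutions $C,D$ — which change the forms but keep them generic, as checked above, so the implied constants now depend on $C,D$, i.e.\ on $q$; (ii) the replacement of the weight-free height $||\bg||_\infty||\bd||_\infty\le B'$ by the distorted one, handled by Lemma \ref{ARS} at the cost of the $O(1/K)$ factor; and (iii) the affine restriction to $\widetilde{\Gamma}^{(k)}$, handled exactly as in Lemma \ref{lem:equi_real} by an appropriate choice of the smooth weight $W_3$. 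Once one is satisfied that the machinery of Sections \ref{pt13} and \ref{Assem} applies verbatim to $\widetilde{S}$ with these modifications, the rest is routine summation.
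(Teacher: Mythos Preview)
Your approach is essentially the same as the paper's, but the paper executes both steps more cleanly, and two of your shortcuts are not literally correct as stated.

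For the first formula, the paper does not ``feed the distortion into the proof of Theorem \ref{x+}''. Instead it defines $N_k(B)$ as the count on $\widetilde{S}$ with the \emph{undistorted} height $||\bg||_\infty||\bd||_\infty\le B$, applies Lemma \ref{lem:equi_real} to $\widetilde{S}$ as a black box to get $N_k(B)\sim \tilde{c}\,B\log B$, and then uses Lemma \ref{ARS} to sandwich
\[
N_k\bigl((\Omega_k-c_{\Gamma,q}/K)B/rsef\bigr)\le N_k(B;r,s,e,f)\le N_k\bigl((\Omega_k+c_{\Gamma,q}/K)B/rsef\bigr).
\]
This avoids reopening the proof machinery; you only need that $\widetilde{S}$ satisfies the hypotheses of Theorem \ref{thm:main}, which you verified correctly.

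For the second formula, your claims ``$\log B'=\log B+O_q(1)$'' and ``the tail contributes $O_{K,\Gamma,q}(1)$'' are both false as written: $ef$ can be as large as $B/rs$, so $\log(ef)$ is not bounded, and the number of pairs $(e,f)$ with $e,f\mid q^\infty$ and $ef\le B/rs$ is not $O(1)$. The paper handles this via Rankin-type bounds: $\sum_{e,f\mid q^\infty}(ef)^{-1}\log(ef)\ll_q 1$ absorbs the $\log(ef)$ discrepancy inside the weighted sum, $\sum_{e,f\mid q^\infty,\,ef\ge R}(ef)^{-1}\ll_q R^{-1/2}$ controls the truncation of the main term, and $\sum_{e,f\mid q^\infty,\,ef\le R}1\ll_q R^{1/2}$ bounds the tail (each such term contributing $O_{K,\Gamma,q}(1)$, giving a total $O_{K,\Gamma,q}((B/rs)^{1/2})$, which is negligible against $B\log B$). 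Your conclusion is right, but the justification needs these estimates.
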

\begin{proof}
We use the temporary notation
\[N_k(B)=\card\{(\bg,\bd)\in \Z^3_{\mathrm{prim}}\times\Z^2_{\mathrm{prim}}:
 ||\bg||_\infty ||\bd||_\infty\leq B, \, (\bg,\bd)\in \widetilde{W}^{(k)}_\infty\}.\]
 Since $\widetilde{S}$ satisfies the conditions 
for Theorem \ref{thm:main} we may deduce from Lemma \ref{lem:equi_real} that $N_k(B)\sim \tilde{c}B\log B$ with
\[\tilde{c}=4\times\frac{1}{2}\left(\tau_\infty(\widetilde{\Gamma}^{(k)})\prod_p(1-p^{-1})^2\widetilde{\varpi}_p\right),\]
where the factor 4 allows for the fact that each projective point $([\bg],[\bd])$ corresponds to 4 pairs $(\bg,\bd)$. 
Thus
\[|N_k(B)-\tilde{c}B\log B|\le K^{-1}\tilde{c}B\log B\]
if $B\gg_{K,\Gamma,q}1$.

According to Lemma \ref{ARS} we have
\[N_k\left((\Omega_k-c_{\Gamma,q}/K)B/rsef\right)\le N_k(B;r,s,e,f)\le 
N_k\left((\Omega_k+c_{\Gamma,q}/K)B/rsef\right),\]
for a suitable constant $c_{\Gamma,q}$,
and the first assertion of Lemma \ref{twoNs} then follows. Note that the
implied constant in the $O_{\Gamma,q}(1/K)$ notation is independent of $e$ and $f$. There is
a potential dependence on $r$ and $s$, but these are divisors of $q$, which is fixed.

When $B/rsef\ll_{K,\Gamma,q} 1$ we have 
$N_k(B;r,s,e,f)\ll_{K,\Gamma,q} 1$, and indeed the function $N_k(B;r,s,e,f)$ will vanish if $B/rsef<1$. 
It follows that
\begin{align*}
N_k(B;r,s) &=\sum_{e,f\mid q^\infty}N_k(B;r,s,e,f)\hspace{6cm} \\
&=\tilde{c}\{\Omega_k+O_{\Gamma,q}(1/K)\}\sum_{\substack{e,f\mid q^\infty \\ ef\ll_{K,\Gamma,q} B/rs}}
\frac{B}{rsef}\log \frac{B}{rsef}+O_{K,\Gamma,q}\left(\sum_{\substack{e,f\mid q^\infty \\ ef\le B/rs}}1\right).
\end{align*}
Standard estimations using ``Rankin's Trick''  show that
\[\sum_{e,f\mid q^{\infty}}\frac{\log ef}{ef}\ll_q 1,\;\;\;
\sum_{\substack{e,f\mid q^\infty \\ ef\ge R}}\frac{1}{ef}\ll_{q} R^{-1/2},\;\;\;
\mbox{and}\;\;\;\sum_{\substack{e,f\mid q^\infty \\ ef\le R}}1\ll_{q} R^{1/2},\]
say. Since
\[\sum_{e,f\mid q^\infty}e^{-1}f^{-1}=q^2\phi(q)^{-2}\]
we conclude that
\[N_k(B;r,s)=\tilde{c}\{\Omega_k+O_{\Gamma,q}(1/K)\}\frac{B}{rs}+O_{K,\Gamma,q}(B),\]
and the lemma follows.
\end{proof}

We next relate $\tau_\infty(\widetilde{\Gamma}^{(k)})\Omega_k$ to $\tau(\Gamma^{(k)})$.
\begin{lemma}\label{tom}
We have
\[\tau_\infty(\widetilde{\Gamma}^{(k)})\Omega_k=\{1+O(1/K)\}\frac{1}{\det(CD)}\tau_\infty(\Gamma^{(k)}).\]
\end{lemma}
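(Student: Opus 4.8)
The plan is to make a change of variables in the integral defining $\tau_\infty(\widetilde{\Gamma}^{(k)})$ and reconcile the Jacobian with the geometric factor $\Omega_k$. First I would write out $\tau_\infty(\widetilde{\Gamma}^{(k)})$ using the formula (\ref{tif}) applied to the surface $\widetilde{S}$, whose defining forms $\widetilde{Q}_0,\widetilde{Q}_1$ are given by (\ref{QTD}). Thus
\[\tau_\infty(\widetilde{\Gamma}^{(k)})=\int_{\widetilde\u\in\widetilde{\Gamma}^{(k)}}
\frac{\d \widetilde u_1\,\d\widetilde u_2}{\widetilde h_Q(\widehat{\widetilde\u})\,||\widehat{\widetilde\u}||_\infty},\]
where $\widetilde h_Q(\widehat{\widetilde\u})=||(\widetilde Q_0(\widehat{\widetilde\u}),\widetilde Q_1(\widehat{\widetilde\u}))||_\infty$. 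I then substitute using the map $f=f_C$ of (\ref{mapf}), which sends $\Gamma^{(k)}$ bijectively onto $\widetilde\Gamma^{(k)}$; concretely, if $\widehat{\widetilde\u}$ is a representative of the projective point $[C^{-1}\widehat\u]$, then $\widetilde Q_i(\widehat{\widetilde\u})$ is proportional to $Q_i(C\widehat{\widetilde\u})=Q_i(\widehat\u)$ up to the scaling needed to normalise the first coordinate to $1$. The key point is that all three of $\widehat{\widetilde\u}$, the homogenised forms, and the measure are homogeneous, so the various scaling factors — powers of $(C^{-1}\widehat\u)_0$ and $\det(D)$ — combine exactly into the quantities appearing in $\Omega_k$ of (\ref{THF}).

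The main computation is therefore the Jacobian of $f$. Since $f$ is the composition of the linear map $\widehat\u\mapsto C^{-1}\widehat\u$ on $\R^3$ followed by the projective chart $(z_0,z_1,z_2)\mapsto(z_1/z_0,z_2/z_0)$, a standard computation gives that the Jacobian of $f$ at a point $\u$ with $C^{-1}\widehat\u=(z_0,z_1,z_2)$ equals $\det(C)^{-1} z_0^{-3}$ (up to sign), i.e.\ $\det(C)^{-1}||\widehat\u||_\infty^{3}$ times appropriate normalisation — more precisely, writing $\widehat{\widetilde\u}$ for the normalised representative $(1,z_1/z_0,z_2/z_0)$, one has $\d\widetilde u_1\,\d\widetilde u_2 = \det(C)^{-1}|z_0|^{-3}\,\d u_1\,\d u_2$. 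One must then track how $\widetilde h_Q$ and $||\widehat{\widetilde\u}||_\infty$ transform: using the homogeneity of $Q_i$ (degree $2$) and (\ref{QTD}),
\[\widetilde h_Q(\widehat{\widetilde\u}) = ||D^T(Q_0(C\widehat{\widetilde\u}),Q_1(C\widehat{\widetilde\u}))||_\infty
= |z_0|^{-2}\,||D^T(Q_0(\widehat\u),Q_1(\widehat\u))||_\infty,\]
since $C\widehat{\widetilde\u}=z_0^{-1}\widehat\u$ and $Q_i$ has degree $2$. Similarly $||\widehat{\widetilde\u}||_\infty=||C^{-1}\widehat\u||_\infty/|z_0|$. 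Substituting all of this, the powers of $|z_0|$ cancel and one is left with exactly
\[\tau_\infty(\widetilde{\Gamma}^{(k)})=\frac{1}{\det(C)}\int_{\u\in\Gamma^{(k)}}
\frac{||C^{-1}\widehat\u||_\infty^{-1}\,\d u_1\,\d u_2}{||D^T(Q_0(\widehat\u),Q_1(\widehat\u))||_\infty\;}.\]
Comparing with $\Omega_k$ in (\ref{THF}), which is evaluated at the centre $\ba^{(k)}$ of the tiny square $\Gamma^{(k)}$, and with $\tau_\infty(\Gamma^{(k)})=\int_{\Gamma^{(k)}}(h_Q(\widehat\u)||\widehat\u||_\infty)^{-1}\d u_1\d u_2$, the product $\tau_\infty(\widetilde\Gamma^{(k)})\Omega_k$ becomes $\det(C)^{-1}\det(D)^{-1}$ times an integral over $\Gamma^{(k)}$ whose integrand differs from that of $\tau_\infty(\Gamma^{(k)})$ only by replacing $\u$ by the fixed point $\ba^{(k)}$ in the factors $||C^{-1}\widehat\u||_\infty$, $||D^T(\cdots)||_\infty$, and $||\widehat\u||_\infty$.

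The final step is the error estimate. Since $\Gamma^{(k)}$ has side $O(1/K)$, and since the functions $\u\mapsto||C^{-1}\widehat\u||_\infty$, $\u\mapsto||D^T(Q_0(\widehat\u),Q_1(\widehat\u))||_\infty$, $\u\mapsto||\widehat\u||_\infty$ and $\u\mapsto h_Q(\widehat\u)$ are all Lipschitz on the relevant compact region (which, by hypothesis, avoids the loci where $Q_0$, $Q_1$ or $C$ vanish, so all these quantities are bounded away from $0$ and $\infty$), replacing $\u$ by $\ba^{(k)}$ in the slowly varying factors introduces a multiplicative error $1+O(1/K)$ uniformly over $\Gamma^{(k)}$. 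Hence
\[\tau_\infty(\widetilde\Gamma^{(k)})\Omega_k=\{1+O(1/K)\}\frac{1}{\det(C)\det(D)}\tau_\infty(\Gamma^{(k)})
=\{1+O(1/K)\}\frac{1}{\det(CD)}\tau_\infty(\Gamma^{(k)}),\]
which is the claim. I expect the only delicate point to be bookkeeping the powers of $z_0=(C^{-1}\widehat\u)_0$ across the Jacobian, the degree-$2$ homogeneity of the $Q_i$, and the normalisations hidden in $\widehat{\widetilde\u}$; once those are matched the Lipschitz error estimate is routine given the non-vanishing hypotheses on $Q_0,Q_1,C$ over $\Gamma$.
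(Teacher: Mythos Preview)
Your proof is correct and follows essentially the same approach as the paper: change variables via the map $f_C$ of (\ref{mapf}), compute the Jacobian $\det(C)^{-1}|(C^{-1}\widehat\u)_0|^{-3}$, track how $\widetilde h_Q$ and $||\widehat{\widetilde\u}||_\infty$ transform using the degree-$2$ homogeneity of the $Q_i$ and (\ref{QTD}), and then absorb the discrepancy between values at $\u$ and at the centre $\ba^{(k)}$ into a $1+O(1/K)$ factor via Lipschitz continuity on the small square. The paper organises the bookkeeping slightly differently (introducing an auxiliary function $\Psi(\u)$ for the $D^T$-ratio), but the substance is identical.
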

\begin{proof}
According to (\ref{tif}) and (\ref{THF}) we have
\[\tau_\infty(\widetilde{\Gamma}^{(k)})\Omega_k=
\frac{||C^{-1}\widehat{\ba}^{(k)}||_\infty\;\left|\left|D^{T}\left(Q_0(\widehat{\ba}^{(k)}),Q_1(\widehat{\ba}^{(k)})\right)\right|\right|_\infty}
{\det(D)\;||\widehat{\ba}^{(k)}||_\infty\;h_Q(\widehat{\ba}^{(k)})}
\int_{\w\in \widetilde{\Gamma}^{(k)}}
\frac{\d w_1\d w_2}{h_{\widetilde{Q}}(\widehat{\w})||\widehat{\w}||_\infty}.\]
We will use the map $f$ given by (\ref{mapf}), for which one may compute that
\[\mathrm{Jac}(f)(\u)=\frac{\det(C^{-1})}{{(C^{-1}\widehat{\u})_0}^3}.\]
Substituting $\w=f(\u)$ we then find that
\[||\widehat{\w}||_\infty=||C^{-1}\widehat{\u}||_\infty/|(C^{-1}\widehat{\u})_0|,\]
whence
\[\tau_\infty(\widetilde{\Gamma}^{(k)})\Omega_k=\frac{1}{\det(CD)}\int_{\u\in\Gamma^{(k)}}
\frac{||C^{-1}\widehat{\ba}^{(k)}||_\infty}{||C^{-1}\widehat{\u}||_\infty}
\frac{\Psi(\u)\d u_1\d u_2}{h_Q(\widehat{\ba}^{(k)})||\widehat{\ba}^{(k)}||_\infty},\]
with
\[\Psi(\u)=\frac{\left|\left|D^{T}\left(Q_0(\widehat{\ba}^{(k)}),Q_1(\widehat{\ba}^{(k)})\right)\right|\right|_\infty}
{h_{\widetilde{Q}}(\widehat{f(\u)})|(C^{-1}\widehat{\u})_0|^2}.\]

At this point we recall that $\Gamma^{(k)}$ is a square of side $O(K^{-1})$, centred at $(\alpha_1,\alpha_2)$, so that
\[\frac{||C^{-1}\widehat{\ba}^{(k)}||_\infty}{||C^{-1}\widehat{\u}||_\infty}=1+O(1/K)\]
and
\[\frac{1}{h_Q(\widehat{\ba}^{(k)})||\widehat{\ba}^{(k)}||_\infty}=\{1+O(1/K)\}\frac{1}{h_Q(\widehat{\u})||\widehat{\u}||_\infty}.\]
The forms $\widetilde{Q}_0,\widetilde{Q}_1$ defining $\widetilde{S}$ are given by (\ref{QTD}), and
\[C\widehat{f(\u)}=\frac{1}{(C^{-1}\widehat{\u})_0}\widehat{\u},\]
whence
\[\Psi(\u)=\frac{\left|\left|D^{T}\left(Q_0(\widehat{\ba}^{(k)}),Q_1(\widehat{\ba}^{(k)})\right)\right|\right|_\infty}
{\left|\left|D^{T}\left(Q_0(\widehat{\u}),Q_1(\widehat{\u})\right)\right|\right|_\infty}.\]
Thus $\Psi(\u)=1+O(1/K)$ on $\Gamma^{(k)}$ and we conclude that
\[\tau_\infty(\widetilde{\Gamma}^{(k)})\Omega_k=\frac{1+O(1/K)}{\det(CD)}\int_{\u\in\Gamma^{(k)}}
\frac{\d u_1\d u_2}{h_Q(\widehat{\u})||\widehat{\u}||_\infty}=\frac{1+O(1/K)}{\det(CD)}\tau_\infty(\Gamma^{(k)})\]
as required.
\end{proof}

We can now sum over the small squares $\Gamma^{(k)}$ for $k\le K^2$ to obtain the following asymptotic formula.
We recall that $N(B)$ is given by (\ref{NBD}).
\begin{lemma}\label{Nass}
We have
\[N(B)\sim 2\left(\tau_\infty(\Gamma)\frac{q^2}{\phi(q)^2}\sum_{r,s\mid q}\frac{\mu(r)\mu(s)}{rs\det(CD)}
\prod_p (1-p^{-1})^2\widetilde{\varpi}_p(C,D)\right)B\log B.\]
\end{lemma}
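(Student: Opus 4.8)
The plan is to substitute the three preceding lemmas into one another and then let $K\to\infty$. First I would combine Lemma~\ref{remove} with the definition (\ref{NBKD}) to record
\[N(B)=\sum_{k\le K^2}\sum_{r,s\mid q}\mu(r)\mu(s)\,N_k(B;r,s),\]
and then insert the second formula of Lemma~\ref{twoNs} for each $N_k(B;r,s)$. This substitution is valid only once $B$ is large in terms of $K,\Gamma,q$, but that is all we need. Each resulting summand is a main term proportional to $\tau_\infty(\widetilde\Gamma^{(k)})\,\Omega_k\,\tfrac{B}{rs}\log B$ plus an error proportional to $\tau_\infty(\widetilde\Gamma^{(k)})\,\tfrac{B}{rs}\tfrac{\log B}{K}$, the constant of proportionality in both cases being $2\,\tfrac{q^2}{\phi(q)^2}\prod_p(1-p^{-1})^2\widetilde\varpi_p(C,D)$, where $C=C(r;q,\mathbf a)$ and $D=D(s;q,\mathbf b)$ are the matrices produced in Lemma~\ref{remove}.

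Next I would perform the sum over $k$. The squares $\Gamma^{(k)}$ cover $\Gamma$ with overlaps of Lebesgue measure zero, so their images $\widetilde\Gamma^{(k)}$ cover $\widetilde\Gamma$ with overlaps of measure zero, and since $\tau_\infty$ is absolutely continuous with respect to Lebesgue measure we get $\sum_{k\le K^2}\tau_\infty(\widetilde\Gamma^{(k)})=\tau_\infty(\widetilde\Gamma)$ and similarly $\sum_{k\le K^2}\tau_\infty(\Gamma^{(k)})=\tau_\infty(\Gamma)$. Because only finitely many pairs $(C,D)$ occur once $q$ is fixed, and because $\widetilde S$ satisfies the hypotheses of Theorem~\ref{thm:main} (so that $\prod_p(1-p^{-1})^2\widetilde\varpi_p(C,D)$ is a convergent product), both $\tau_\infty(\widetilde\Gamma)$ and $\prod_p(1-p^{-1})^2\widetilde\varpi_p(C,D)$ are $O_{\Gamma,q}(1)$; hence the error terms sum to $O_{\Gamma,q}(B\log B/K)$. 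For the main term I would invoke Lemma~\ref{tom} term by term and use $\tau_\infty(\Gamma^{(k)})\ge0$ to pull out the $\{1+O(1/K)\}$ factor, obtaining $\sum_{k\le K^2}\tau_\infty(\widetilde\Gamma^{(k)})\Omega_k=\{1+O(1/K)\}\tau_\infty(\Gamma)/\det(CD)$. Altogether
\[\sum_{k\le K^2}N_k(B;r,s)=2\,\frac{q^2}{\phi(q)^2}\Big(\prod_p(1-p^{-1})^2\widetilde\varpi_p(C,D)\Big)\frac{\tau_\infty(\Gamma)}{\det(CD)}\,\frac{B}{rs}\log B+O_{\Gamma,q}\!\left(\frac{B\log B}{K}\right)\]
for $B\gg_{K,\Gamma,q}1$. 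No sign difficulty arises in this step: the quantities $N_k(B;r,s)$ are nonnegative counts and the displayed errors are additive, so multiplying through by $\mu(r)\mu(s)$ and summing causes no harm.

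Then I would sum over the divisors $r,s\mid q$, of which there are boundedly many in terms of $q$, to reach
\[N(B)=2\,\tau_\infty(\Gamma)\,\frac{q^2}{\phi(q)^2}\Big(\sum_{r,s\mid q}\frac{\mu(r)\mu(s)}{rs\det(CD)}\prod_p(1-p^{-1})^2\widetilde\varpi_p(C,D)\Big)B\log B+O_{\Gamma,q}\!\left(\frac{B\log B}{K}\right)\]
for $B\gg_{K,\Gamma,q}1$. Write $c^*=c^*(\Gamma,q,\mathbf a,\mathbf b)$ for the displayed main constant and note that it does not involve $K$. Finally I would let $K\to\infty$ by the $(\varepsilon,\delta)$-argument already used at the end of \S\ref{pt13} and in the proof of Theorem~\ref{x+}: given $\varepsilon>0$, first choose $K$ so that the implied constant in the error term is smaller than $\varepsilon$, and then for every $B$ exceeding the resulting threshold $B_0(K)$ we have $|N(B)-c^*B\log B|<\varepsilon B\log B$; hence $N(B)\sim c^*B\log B$, which is exactly the claim of the lemma.

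The only genuinely delicate point is this last interchange of the two limits $B\to\infty$ and $K\to\infty$: the asymptotic of Lemma~\ref{twoNs} only applies when $B$ is large relative to $K$, whereas it is precisely the shrinking mesh $1/K$ that makes the geometric approximation $\Omega_k$ of Lemma~\ref{ARS} and the density approximation of Lemma~\ref{tom} converge to the intended $S$-quantities; the diagonal argument above resolves this, exactly as in the earlier proofs. Everything else is the bookkeeping of adding finitely many bounded contributions, so I would be careful throughout to track which implied constants depend on $K$ and which do not.
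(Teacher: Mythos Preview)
Your proposal is correct and follows essentially the same path as the paper's proof: both combine Lemma~\ref{remove}, the second estimate of Lemma~\ref{twoNs}, and Lemma~\ref{tom}, then pass to the limit $K\to\infty$. The only cosmetic difference is that the paper establishes the asymptotic for $\sum_k N_k(B;r,s)$ separately for each fixed pair $r,s\mid q$ via a limsup/liminf argument before summing, whereas you first sum over $r,s$ and then run the $(\varepsilon,\delta)$-argument on the global quantity; since there are only finitely many divisor pairs once $q$ is fixed, the two orderings are equivalent.
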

We remind the reader that in fact $\det(C)=(q/r)^2$ and $\det(D)=q/s$.

\begin{proof}
In view of Lemma \ref{remove} and (\ref{NBKD}), it will be enough to show that
\[\sum_{k\le K^2}N_k(B;r,s)\sim \kappa(r,s)B\log B\]
for each fixed pair of divisors $r,s$ of $q$, with
\[\kappa(r,s)=\frac{2}{rs}\left(\tau_\infty(\Gamma)\frac{q^2}{\phi(q)^2}\frac{1}{\det(CD)}
\prod_p (1-p^{-1})^2\widetilde{\varpi}_p(C,D)\right).\]
We will use Lemma \ref{twoNs} for this.  It is clear that the $p$-adic densities
$\widetilde{\varpi}_p(C,D)$ are independent of $k$, and Lemma \ref{tom} shows that
\[\sum_{k\le K^2}\tau_\infty(\widetilde{\Gamma}^{(k)})\Omega_k=\{1+O(1/K)\}\frac{1}{\det(CD)}\tau_\infty(\Gamma).\]
We then find that
\[\sum_{k\le K^2}N_k(B;r,s)=\{1+O(1/K)\}\kappa(r,s)B\log B\]
as soon as $B\gg_{K,\Gamma,q}1$. It follows that
\[\limsup_{B\to\infty}\frac{\sum_{k\le K^2}N_k(B;r,s)}{\kappa(r,s)B\log B}\le 1+O(1/K)\]
for any fixed $K$, whence the limsup must be at most 1. Similarly the liminf is at least 1,
and the lemma follows.
\end{proof}

We now examine the $p$-adic densities $\widetilde{\varpi}_p(C,D)$, with a view to determining how
they are related to the corresponding densities $\varpi_p(q,\mathbf{a},\mathbf{b})$ in Lemma \ref{equiSC}.
We remind the reader that $m_p$ denotes the $p$-part of a positive integer $m$. 
\begin{lemma}\label{varpis}
We have 
\[\widetilde{\varpi}_p(C,D)=\varpi_p(q,\mathbf{a},\mathbf{b})\]
when $p\nmid q$, and 
\[\widetilde{\varpi}_p(C,D)=\frac{q_p^3}{r_p}\lim_{n\to\infty}p^{-4n}\card\left\{ \x,\y\Mod{p^n }: 
\begin{array}{c} \exists\lambda,\,\x\equiv\lambda\mathbf{a}\Mod{q_p},\, r_p|\x \\
\exists\mu,\, \y\equiv\mu\mathbf{b}\Mod{q_p},\, s_p|\y \\
Q_\y(\x)\equiv 0\Mod{p^n}\end{array}\right\}\]
when $p\mid q$.
\end{lemma}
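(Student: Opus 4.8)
\emph{Proof plan.} The plan is to translate the counting-function definitions \eqref{vpd1} and \eqref{vpd2} into $p$-adic volumes on $\Z_p^5=\Z_p^3\times\Z_p^2$, where the linear substitution $\x=r_pC\bg$, $\y=s_pD\bd$ relating $\widetilde S$ to $S$ is transparent, and to treat $p\nmid q$ and $p\mid q$ separately. When $p\nmid q$ we have $r_p=s_p=q_p=1$, while $\det C=(q/r)^2$ and $\det D=q/s$ are $p$-adic units, so $C$ and $D$ induce measure-preserving bijections of $\Z_p^3$ and $\Z_p^2$ preserving primitivity at $p$; hence \eqref{vpd2} reduces to the ordinary local density $\varpi_p$ of $S$ in the sense of \S\ref{padicdens}, and since the congruences in \eqref{vpd1} are taken modulo $q_p=1$ they are vacuous, giving $\varpi_p(q,\mathbf{a},\mathbf{b})=\varpi_p=\widetilde\varpi_p(C,D)$. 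This settles the first assertion.

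Now suppose $p\mid q$. I would first note that the surface $\widetilde S$ defined by the forms $\widetilde Q_0,\widetilde Q_1$ of \eqref{QTD} is again a smooth surface of the kind studied in \S\ref{padicdens}: its discriminant cubic is $\widetilde C(\delta)=\det(C)^2\,C(D\delta)$, which is nonzero and separable since $C$ is and $D$ is invertible. Thus Lemma \ref{varpialt} applies to $\widetilde S$ and yields
\[\widetilde\varpi_p(C,D)=\lim_{k\to\infty}p^{-4k}\,\#\{(\bg,\bd)\bmod p^k:\ Q_{D\bd}(C\bg)\equiv 0\bmod p^k\}.\]
As $Q_{D\bd}(C\bg)\bmod p^k$ depends only on $(\bg,\bd)\bmod p^k$, the set $A_k=\{(\bg,\bd)\in\Z_p^5:\ v_p(Q_{D\bd}(C\bg))\ge k\}$ is a union of cosets of $p^k\Z_p^5$, one per admissible residue class, so with $\mu$ the Haar probability measure on $\Z_p^5$ we get $\widetilde\varpi_p(C,D)=\lim_k p^{k}\mu(A_k)$.

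Next comes the substitution via the injective linear map $\Psi:(\bg,\bd)\mapsto(r_pC\bg,\,s_pD\bd)=:(\x,\y)$. The facts needed are elementary: $|\det\Psi|_p=|\,r_p^3\det C\cdot s_p^2\det D\,|_p=(r_ps_pq_p^3)^{-1}$; the image is $L_3\times L_2$, where $L_3=r_pC\Z_p^3=r_p\,\sL(\mathbf{a},q_p/r_p)=\sL(\mathbf{a},q_p)\cap r_p\Z_p^3$ and $L_2=s_pD\Z_p^2=\sL(\mathbf{b},q_p)\cap s_p\Z_p^2$ (using that $C\Z_p^3=\sL(\mathbf{a},q/r)\otimes\Z_p$ by the construction of $C$ in Lemma \ref{remove}, together with the primitivity of $\mathbf{a}$ and $\mathbf{b}$); and the functional equation $Q_{s_pD\bd}(r_pC\bg)=r_p^2s_p\,Q_{D\bd}(C\bg)$. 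Writing $c=2v_p(r_p)+v_p(s_p)$, the change-of-variables formula for $\mu$ gives $\mu(A_k)=r_ps_pq_p^3\,\mu(\{(\x,\y)\in L_3\times L_2:\ v_p(Q_\y(\x))\ge k+c\})$. For $k$ large the latter set is a union of cosets of $p^{k+c}\Z_p^5$ lying inside $L_3\times L_2$ (note $p^{k+c}\Z_p^3\subseteq L_3$ and $p^{k+c}\Z_p^2\subseteq L_2$), so its measure equals $p^{-5(k+c)}G(k+c)$, where $G(m)$ is exactly the cardinality in the statement — membership $\x\in L_3$ being precisely ``$\exists\lambda,\ \x\equiv\lambda\mathbf{a}\bmod q_p$ and $r_p\mid\x$'', and similarly for $\y$. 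Collecting and writing $m=k+c$,
\[\widetilde\varpi_p(C,D)=r_ps_pq_p^3\lim_k p^{k}\,p^{-5(k+c)}G(k+c)=r_ps_pq_p^3\,p^{-c}\lim_m p^{-4m}G(m)=\frac{q_p^3}{r_p}\lim_m p^{-4m}G(m),\]
since $p^{c}=r_p^2s_p$.

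The routine steps — convergence of the limits, the coset-versus-measure bookkeeping, and the verification that Lemma \ref{varpialt} applies to $\widetilde S$ — I would dispatch quickly. The real point, and the main obstacle, is the arithmetic of the last step: the modulus shift $c=2v_p(r_p)+v_p(s_p)$ grows quadratically in $v_p(r_p)$ while the lattice index $[\Z_p^3:L_3]=r_pq_p^2$ grows only linearly, and it is the exact cancellation $r_ps_pq_p^3\cdot p^{-c}=q_p^3/r_p$ that produces the stated constant. Getting this right demands being scrupulous about which modulus governs each count (``mod $p^k$'' for $(\bg,\bd)$ versus ``mod $p^{k+c}$'' for $(\x,\y)$), which is precisely why I would work with $p$-adic measures, where $\Psi$ acts by a clean Jacobian, rather than manipulating the finite counts directly.
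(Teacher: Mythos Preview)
Your proof is correct and follows essentially the same approach as the paper: both invoke Lemma~\ref{varpialt} for $\widetilde S$ and then unwind the linear substitution $(\bg,\bd)\mapsto(r_pC\bg,\,s_pD\bd)$ to translate the count back to $(\x,\y)$. The paper carries this out directly with finite counts over $\Z/p^n\Z$, splitting the substitution into two steps --- first $(\bg,\bd)\mapsto(C\bg,D\bd)=(\u,\v)$ via the kernel/image description of $C,D$ as maps on $(\Z/p^n\Z)^k$, then $(\u,\v)\mapsto(r_p\u,s_p\v)$ with explicit modulus shifts --- whereas you package the same computation as a single change of variables in $p$-adic Haar measure with Jacobian $|\det\Psi|_p=(r_ps_pq_p^3)^{-1}$. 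Your formulation is a little cleaner on the bookkeeping (the modulus shift $c$ and the lattice index combine via one Jacobian factor rather than several separate counting identities), but the substance is identical and the key cancellation $r_ps_pq_p^3\cdot p^{-c}=q_p^3/r_p$ is the same in both.
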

The reader should recall that $\varpi_p(q,\mathbf{a},\mathbf{b})$ and $\widetilde{\varpi}_p(C,D)$ are given by
(\ref{vpd1}) and (\ref{vpd2}) respectively.

\begin{proof}
The result is easy for $p\nmid q$, since the matrices $C$ and $D$ are invertible modulo $p$ in this case.
For primes $p \mid q$ we see, according to Lemma  \ref{varpialt}, that
\[\widetilde{\varpi}_p(C,D)=\lim_{n\to\infty}p^{-4n}\card\left\{(\bg,\bd)\Mod{p^n}:  Q_{D\bd}(C\bg)\equiv 0\Mod{p^n}\right\}.\]
To examine the above limit we consider the set occurring 
on the right, and assume henceforth that the exponent $n$ is large enough that $q_p^2 \mid p^n$.
The map $C:(\Z/p^n\Z)^3\to(\Z/p^n\Z)^3$ has kernel of size $\gcd(\det(C),p^n)=\gcd((q/r)^2,p^n)=(q/r)_p^2$. Moreover the image of the above map consists of the cosets $\mathbf{u}+(\Z/p^n\Z)^3$ for which there is a 
value of $\lambda$ such that $\u\equiv\lambda\mathbf{a}\Mod{(q/r)_p}$.
We may argue similarly with $D:(\Z/p^n\Z)^2\to(\Z/p^n\Z)^2$, and deduce that
\begin{eqnarray*}
\lefteqn{\card\left\{(\bg,\bd)\Mod{p^n}:  Q_{D\bd}(C\bg)\equiv 0\Mod{p^n}\right\}}\hspace{1cm}\\
&=&
(q/r)_p^2(q/s)_p\card\left\{(\mathbf{u},\mathbf{v})\Mod{p^n}: 
\begin{array}{c} \exists\lambda,\,\u\equiv\lambda\mathbf{a}\Mod{(q/r)_p},\\
\exists\mu,\, \v\equiv\mu\mathbf{b}\Mod{(q/s)_p},\\
Q_\v(\u)\equiv 0\Mod{p^n}\end{array}\right\}.
\end{eqnarray*}
We now let $\x\equiv r_p\u\Mod{p^n r_p}$ and $\y\equiv s_p\v\Mod{p^n s_p}$, whence
\begin{eqnarray*}
\lefteqn{\card\left\{(\bg,\bd)\Mod{p^n}:  Q_{D\bd}(C\bg)\equiv 0\Mod{p^n}\right\}}\hspace{1cm}\\
&=&
(q/r)_p^2(q/s)_p\card\left\{\begin{array}{c} \x\Mod{p^n r_p}\\ \y\Mod{p^n s_p} \end{array}: 
\begin{array}{c} \exists\lambda,\,\x\equiv\lambda\mathbf{a}\Mod{q_p},\, r_p \mid \x \\
\exists\mu,\, \y\equiv\mu\mathbf{b}\Mod{q_p},\, s_p \mid \y \\
Q_\y(\x)\equiv 0\Mod{p^n r_p^2 s_p}\end{array}\right\}.
\end{eqnarray*}
If $\x$ and $\y$ satisfy the conditions above, and 
\[\x'\equiv\x\Mod{p^n r_p},\;\;\;\mbox{and}\;\;\; \y'\equiv\y\Mod{p^n s_p},\]
then $\x'$ and $\y'$ satisfy the same conditions, including the requirement that 
$Q_{\y'}(\x')$ is divisible by $p^n r_p^2 s_p$. It follows that
\begin{eqnarray*}
\lefteqn{\card\left\{\begin{array}{c} \x\Mod{p^n r_p}\\ \y\Mod{p^n s_p} \end{array}: 
\begin{array}{c} \exists\lambda,\,\x\equiv\lambda\mathbf{a}\Mod{q_p},\, r_p \mid \x \\
\exists\mu,\, \y\equiv\mu\mathbf{b}\Mod{q_p},\, s_p \mid \y \\
Q_\y(\x)\equiv 0\Mod{p^n r_p^2 s_p}\end{array}\right\}}\\
&=&(r_p s_p)^{-3}(r_p^2)^{-2}
\card\left\{\begin{array}{c} \x\Mod{p^n r_p^2s_p}\\ \y\Mod{p^n r_p^2 s_p} \end{array}: 
\begin{array}{c} \exists\lambda,\,\x\equiv\lambda\mathbf{a}\Mod{q_p},\, r_p \mid \x \\
\exists\mu,\, \y\equiv\mu\mathbf{b}\Mod{q_p},\, s_p \mid \y \\
Q_\y(\x)\equiv 0\Mod{p^n r_p^2 s_p}\end{array}\right\},
\end{eqnarray*}
and hence that
\begin{eqnarray*}
\lefteqn{p^{-4n}\card\left\{(\bg,\bd)\Mod{p^n}:  Q_{D\bd}(C\bg)\equiv 0\Mod{p^n}\right\}}\\
&=&\frac{q_p^3}{r_p}(p^nr_p^2s_p)^{-4}
\card\left\{ \x,\y\Mod{p^n r_p^2s_p}: 
\begin{array}{c} \exists\lambda,\,\x\equiv\lambda\mathbf{a}\Mod{q_p},\, r_p \mid \x \\
\exists\mu,\, \y\equiv\mu\mathbf{b}\Mod{q_p},\, s_p \mid \y \\
Q_\y(\x)\equiv 0\Mod{p^n r_p^2 s_p}\end{array}\right\}.
\end{eqnarray*}
We therefore deduce that
\[\widetilde{\varpi}_p(C,D)=\frac{q_p^3}{r_p}\lim_{n\to\infty}p^{-4n}\card\left\{ \x,\y\Mod{p^n }: 
\begin{array}{c} \exists\lambda,\,\x\equiv\lambda\mathbf{a}\Mod{q_p},\, r_p \mid \x \\
\exists\mu,\, \y\equiv\mu\mathbf{b}\Mod{q_p},\, s_p \mid \y \\
Q_\y(\x)\equiv 0\Mod{p^n}\end{array}\right\},\]
as required.
\end{proof}

We will write
\[M_{r,s}(p^n)=\card\left\{ \x,\y\Mod{p^n}: 
\begin{array}{c} \exists\lambda,\,\x\equiv\lambda\mathbf{a}\Mod{q_p},\, r_p \mid \x \\
\exists\mu,\, \y\equiv\mu\mathbf{b}\Mod{q_p},\, s_p \mid \y \\
Q_\y(\x)\equiv 0\Mod{p^n}\end{array}\right\}.\]
Then, recalling that $\det(C)=(q/r)^2$ and $\det(D)=q/s$, we see that the leading constant
for the asymptotic formula in Lemma \ref{Nass} takes the shape
\[2\tau_\infty(\Gamma)\kappa_q\prod_{p\nmid q}(1-p^{-1})^2\varpi_p(q,\mathbf{a},\mathbf{b}),\]
with
\begin{eqnarray*}
\kappa_q&=&\frac{q^2}{\phi(q)^2}\sum_{r,s\mid q}\mu(r)\mu(s)\prod_{p\mid q}(1-p^{-1})^2
\left(\lim_{n\to\infty}p^{-4n}M_{r,s}(p^n)\right)\\
&=&\sum_{r,s\mid q}\mu(r)\mu(s)\prod_{p\mid q}
\left(\lim_{n\to\infty}p^{-4n}M_{r,s}(p^n)\right).
\end{eqnarray*}
Our final lemma evaluates $\kappa_q$ in terms of the $p$-adic densities $\varpi_p(q,\mathbf{a},\mathbf{b})$.
\begin{lemma}\label{last}
We have
\[\kappa_q=\prod_{p\mid q}(1-p^{-1})^2\varpi_p(q,\mathbf{a},\mathbf{b}),\]
\end{lemma}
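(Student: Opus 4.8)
The plan is to reduce the identity prime by prime to the inclusion--exclusion which produces the ``primitive'' count appearing in the definition of $\varpi_p(q,\mathbf{a},\mathbf{b})$. First note that $M_{r,s}(p^n)$ depends on $r$ and $s$ only through their $p$-parts $r_p,s_p$, and that when $r,s$ are squarefree divisors of $q$ each of $r_p,s_p$ equals $1$ or $p$. Abusing notation I would write $M_{u,v}(p^n)$ and $m_{u,v}(p)=\lim_{n\to\infty}p^{-4n}M_{u,v}(p^n)$ for $u,v\in\{1,p\}$, the limits existing by Lemma \ref{varpis} since they are positive multiples of the convergent quantities $\widetilde{\varpi}_p(C,D)$. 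As $r\mapsto (r_p)_{p\mid q}$ is a bijection from the squarefree divisors of $q$ to $\prod_{p\mid q}\{1,p\}$, and $\mu(r)=\prod_{p\mid q}\mu(r_p)$ for such $r$, distributing the product in the formula for $\kappa_q$ gives
\[\kappa_q=\prod_{p\mid q}\Bigl(m_{1,1}(p)-m_{p,1}(p)-m_{1,p}(p)+m_{p,p}(p)\Bigr).\]

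Next I would evaluate each local factor. Fix $p\mid q$ and take $n$ large enough that $q_p\mid p^n$. All four counts $M_{u,v}(p^n)$ share the conditions $\exists\lambda,\ \x\equiv\lambda\mathbf{a}\Mod{q_p}$, $\exists\mu,\ \y\equiv\mu\mathbf{b}\Mod{q_p}$ and $Q_\y(\x)\equiv 0\Mod{p^n}$; the extra constraint in $M_{p,1}$ (resp.\ $M_{1,p}$) is $p\mid\x$ (resp.\ $p\mid\y$), and $M_{p,p}$ carries both. Since $p\mid\x$ is precisely the negation of $\gcd(\x,p)=1$, inclusion--exclusion on the two conditions $p\mid\x$ and $p\mid\y$ yields
\[M_{1,1}(p^n)-M_{p,1}(p^n)-M_{1,p}(p^n)+M_{p,p}(p^n)
=\card\left\{ \x,\y\Mod{p^n }:
\begin{array}{c} \gcd(\x,p)=\gcd(\y,p)=1,\\
\exists\lambda,\ \x\equiv\lambda\mathbf{a}\Mod{q_p},\\
\exists\mu,\ \y\equiv\mu\mathbf{b}\Mod{q_p},\\
Q_\y(\x)\equiv 0\Mod{p^n}\end{array}\right\}.\]
Dividing by $p^{4n}$, letting $n\to\infty$ (legitimate, being a fixed finite linear combination of convergent sequences), and using $\phi(p^n)=p^n(1-p^{-1})$ so that $p^{-2n}\phi(p^n)^{-2}=p^{-4n}(1-p^{-1})^{-2}$, a comparison with the definition $(\ref{vpd1})$ of $\varpi_p(q,\mathbf{a},\mathbf{b})$ gives
\[m_{1,1}(p)-m_{p,1}(p)-m_{1,p}(p)+m_{p,p}(p)=(1-p^{-1})^2\,\varpi_p(q,\mathbf{a},\mathbf{b}).\]
Substituting this into the product formula for $\kappa_q$ proves the lemma.

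The argument is essentially bookkeeping, and no step is genuinely hard; the point needing most care is the factorisation in the first paragraph, i.e.\ verifying that the M\"obius-weighted double sum over $r,s\mid q$ of the global product $\prod_{p\mid q}m_{r_p,s_p}(p)$ really does split as a product over $p\mid q$ of four-term alternating sums. This uses both that $M_{r,s}$ is determined by the $p$-parts of $r$ and $s$ and that $\mu$ is multiplicative on squarefree divisors, and it is precisely this step that converts the $\prod_{p\mid q}$ inherited from $\kappa_q$ into a local statement matchable term by term with $\prod_{p\mid q}(1-p^{-1})^2\varpi_p(q,\mathbf{a},\mathbf{b})$. The other mild subtlety --- interchanging $\lim_{n\to\infty}$ with the finite alternating sum in the second paragraph --- is immediate once the individual limits $m_{u,v}(p)$ are known to exist.
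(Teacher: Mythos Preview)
Your proof is correct and follows essentially the same approach as the paper: both use multiplicativity to factor the M\"obius-weighted sum over $r,s\mid q$ into a product over $p\mid q$, and then identify each local factor via inclusion--exclusion with the primitive count defining $(1-p^{-1})^2\varpi_p(q,\mathbf{a},\mathbf{b})$. The only cosmetic difference is that the paper first uses the Chinese Remainder Theorem to reassemble $\prod_{p\mid q}M_{r,s}(p^{ne(p)})$ into a single count modulo $q^n$, performs the M\"obius sum globally, and then factors back, whereas you stay local throughout; your route is arguably more direct.
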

\begin{proof}
Let $q=\prod p^{e(p)}$ with exponents $e(p)\ge 0$. Then, by the Chinese Remainder Theorem we have
\[\prod_{p\mid q}M_{r,s}(p^{ne(p)})=\card\left\{ \x,\y\Mod{q^n}: 
\begin{array}{c} \exists\lambda,\,\x\equiv\lambda\mathbf{a}\Mod{q},\, r \mid \x \\
\exists\mu,\, \y\equiv\mu\mathbf{b}\Mod{q},\, s \mid \y \\
Q_\y(\x)\equiv 0\Mod{q^n}\end{array}\right\}.\]
It follows that
\begin{eqnarray*}
\lefteqn{\sum_{r,s\mid q}\mu(r)\mu(s)\prod_{p\mid q}M_{r,s}(p^{ne(p)})}\\
&=&\card\left\{ \x,\y\Mod{q^n}: 
\begin{array}{c} \gcd(\x,q)=\gcd(\y,q)=1,\\ \exists\lambda,\,\x\equiv\lambda\mathbf{a}\Mod{q}, \\
\exists\mu,\, \y\equiv\mu\mathbf{b}\Mod{q}, \\ Q_\y(\x)\equiv 0\Mod{q^n}\end{array}\right\}.
\end{eqnarray*}
We then see that
\begin{eqnarray*}
\kappa_q&=&\lim_{n\to\infty}q^{-4n}\card\left\{ \x,\y\Mod{q^n}: 
\begin{array}{c} \gcd(\x,q)=\gcd(\y,q)=1,\\ \exists\lambda,\,\x\equiv\lambda\mathbf{a}\Mod{q}, \\
\exists\mu,\, \y\equiv\mu\mathbf{b}\Mod{q}, \\
Q_\y(\x)\equiv 0\Mod{q^n}\end{array}\right\}\\
&=&\prod_{p\mid q}\kappa_p,
\end{eqnarray*}
with
\[\kappa_p=\lim_{n\to\infty}p^{-4n}\card\left\{ \x,\y\Mod{p^n}: 
\begin{array}{c} \gcd(\x,p)=\gcd(\y,p)=1,\\ \exists\lambda,\,\x\equiv\lambda\mathbf{a}\Mod{q_p}, \\
\exists\mu,\, \y\equiv\mu\mathbf{b}\Mod{q_p}, \\
Q_\y(\x)\equiv 0\Mod{p^n}\end{array}\right\}.\]
Since $p^{-4n}=(1-p^{-1})^2p^{-2n}\phi(p^n)^{-2}$ we have
\[\kappa_p=(1-p^{-1})^2\varpi_p(q,\mathbf{a},\mathbf{b}),\]
and the lemma follows.
\end{proof}
\medskip

Theorem \ref{thm:equi} now follows from the criterion in Lemma \ref{equiSC},
using Lemmas \ref{Nass}, \ref{varpis} and \ref{last}.

\end{document}